\documentclass[showinstructions,faculty=fw,department=wis,phddegree=mat,final]
{adsphd}


\title{\textcolor{black}{{ Groupoids and singular foliations}}}

\author{Alfonso Gadir}{Garmendia Gonz\'alez}

\supervisor{{Prof. Marco Zambon}}{}
\president{Prof. Paul Igodt}
\jury{Prof. Karel Dekimpe\\
	Prof. Gabor Szabo\\
	Prof. Joeri Van der Veken
}
\externaljurymember{Prof. Camille Laurent-Gengoux}{Univ. de Lorraine}

\researchgroup{Geometry Section}
\website{http://wis.kuleuven.be/meetkunde} 
\email{alfonso.garmendia@kuleuven.be} 

\address{Celestijnenlaan 200B box 2400}

\date{July 2019}
\copyyear{2019}

\setlength{\adsphdspinewidth}{9mm}

\usepackage{amsthm}
\usepackage{txfonts}
\usepackage{mathtools}
\usepackage{color}
\usepackage{hyperref} 
\usepackage{tikz-cd}
\usetikzlibrary{babel}
\usepackage[all,poly,web,knot]{xy}
\newtheorem{theorem}{Theorem}[section]
\newtheorem{lemma}[theorem]{Lemma}
\newtheorem{proposition}[theorem]{Proposition}
\newtheorem{corollary}[theorem]{Corollary}
\newtheorem{cor}[theorem]{Corollary}
\newtheorem{definition}[theorem]{Definition}
\newtheorem{defi}[theorem]{Definition}
\newtheorem{ex}[theorem]{Example}
\newtheorem{con}[theorem]{Convention}
\newtheorem{rem}[theorem]{Remark}
\newtheorem{prop}[theorem]{Proposition}
\newtheorem{lem}[theorem]{Lemma}
\newtheorem{thm}[theorem]{Theorem}

\newcommand{\NN}{\mathbb{N}}
\newcommand{\ZZ}{\mathbb{Z}}

\newcommand{\RR}{\mathbb{R}}

\newcommand{\vX}{\mathfrak{X}}

\newcommand{\bt}{\mathbf{t}}                  
\newcommand{\bs}{\mathbf{s}}  

\newcommand{\CR}{\mathcal{R}}
\newcommand{\CL}{\mathcal{L}}

\newcommand{\CV}{\mathcal{V}}

\newcommand{\CK}{\mathcal{K}}
\newcommand{\CC}{\mathcal{C}}
\newcommand{\CSH}{\mathcal{SH}}

\newcommand{\CX}{\mathfrak{X}}
\newcommand{\CS}{\mathcal{S}}
\newcommand{\CF}{\mathcal{F}}
\newcommand{\cF}{\mathcal{F}}
\newcommand{\CU}{\mathcal{U}}
\newcommand{\CH}{\mathcal{H}}
\newcommand{\CG}{\mathcal{G}}
\newcommand{\CCI}{\mathcal{I}}
\newcommand{\CP}{\mathcal{P}}
\newcommand{\g}{\mathfrak{g}}
\newcommand{\h}{\mathfrak{h}}

\newcommand{\CI}{C^\infty}

\newcommand{\SEC}{\Gamma}

\usepackage{scalerel,stackengine}
\stackMath
\newcommand\reallywidehat[1]{%
	\savestack{\tmpbox}{\stretchto{%
			\scaleto{%
				\scalerel*[\widthof{\ensuremath{#1}}]{\kern.1pt\mathchar"0362\kern.1pt}%
				{\rule{0ex}{\textheight}}
			}{\textheight}%
		}{2.4ex}}%
	\stackon[-6.9pt]{#1}{\tmpbox}%
}

\newcommand{\smallwhitestar}{\star}
\newcommand{\cev}[1]{\reflectbox{\ensuremath{\vec{\reflectbox{\ensuremath{#1}}}}}}

\newcommand{\de}{\partial}
\newcommand{\fto}{\rightarrow}
\newcommand{\soutar}{\rightrightarrows}

\newcommand{\st}{\hspace{.05in}:\hspace{.05in}}
\newcommand{\y}{\hspace{.05in}\text{and}\hspace{.05in}}
\newcommand{\Fr}{\colon}
\newcommand{\img}{\mathrm{Img}}

\begin{document}
	

	\maketitle
	
	\frontmatter 
	
	\chapter*{Acknowledgements}                                  \label{ch:preface}
	
	To those who have contributed directly to the development of this thesis:{\it
		\begin{itemize}
			\item[]
			First of all, I want to thank my advisor Marco Zambon who believed in me, a wanderer that passed by his office one random day in Madrid. His guidance has been a major contribution to the way I see math.
			\item[] The members of the jury, Prof. Paul Igodt, Karel Dekimpe, Gabor Szabo, Joeri Van der Veken and Camille Laurent-Gengoux for asking interesting questions and helping me become a better mathematician. I admire all of you in a different way each.
			\item[]
			The professors Iakovos Androulidakis and Georges Skandalis for  their research and key comments on this work.
			\item[]
			My colleague Stephane Geudens who read my whole thesis and helped me use the correct English expressions. I also want to thank him for his substantial comments on my jokes, which made them even funnier than before.
			\item[]
			My friend Ori Yudilevich with whom I discussed a large part of my research and with whom I wrote an article. I also want to thank him for being a source of jokes that I could follow up on, most of the time making them less funny than before.
	\end{itemize}}
	
	To those who helped me not to become crazy in the process:
	{\it
		\begin{itemize}
			\item[]
			My parents Carmen Gonzalez and Gadir Garmendia for all the moral support throughout my entire life.
			
			\item[]
			My family, especially Nathalia Garmendia, Adela Gonzalez, Carlos, Isabel and Lucia Ortiz for being on this side of the pond and making my life happier each time I visit you.
			
			\item[]
			My officemate Leyli Mammadova for all the grammar and math questions, pizza nights and for the motivating cat pictures. 
			
			\item[]
			My next door neighbor at work Marco Usula for all the math and life discussions and for introducing me to pasta bottarga and broccoli Parmesan (not inclusive). 
			
			\item[]
			My study partner  Robin Van der Veer; We followed three master courses together and he helped me understand the evaluation system and the subjects in class.
			
			\item[]
			My PhD companion Marcel Rubio, we started and are ending together. It was nice to have a friend through all these 4 years. You did it first and that made it easier for me.	
			
			\item[]
			My travel companion Marta Cobo, with whom I have visited many places in Belgium and with whom I learned how to make my life more active.	
			
			\item[]
			The société de jeux de société by Charlotte Kirchhoff-Lukat, Marilena Moruz, Joel Villatoro and Anne Wijffels. We enjoyed many excellent evenings together.
			
			\item[]
			My namesake Alfonso Tortorella, for his support during my job search, and his constant company for coffee.
			
			\item[]
			My Sparty group Sarah Nissen, Annelies van Vijver, Yamid Gomez and Rodolfo Mar\'in. For all the interesting conversations and activities in Spanish. Each one of you changed me one way or another. 	
			
			\item[] My "Bowling" team of Elisa Ledesma and Selyna Smekens for all those evenings together that helped me forget the stress of the thesis. 
			
			\item[] My old friend Rodrigo Anzola for supporting me all the time and making English learning more interesting. 
	\end{itemize}}
	
	I am lucky. Normally I should know less than half of you half as well as I do now; but life let me know all of you half as well as you deserve (maybe I lied about the not becoming crazy part).

	
	
	\chapter*{Abstract} \label{ch:abstract}
	
	This thesis has two objectives. The first objective is to introduce a notion of equivalence for singular foliations  that preserves their transverse geometry and is compatible with the notions of Morita equivalence of the holonomy groupoids and the transverse equivalence for regular foliations that appeared in the 1980's.

	The second one is to describe the structures behind quotients of singular foliations and to connect these results with their associated holonomy groupoids.

	We also want to give an introduction to the notion of singular foliations as given in \cite{AndrSk}, as well as to their relation with Lie groupoids and Lie algebroids.

	\instructionsabstract

	

	\chapter*{Beknopte samenvatting}
	
	Deze thesis heeft twee doelstellingen. De eerste bestaat erin een notie van equivalentie voor singuliere foliaties in te voeren, die hun
	transversale meetkunde bewaart. Deze equivalentierelatie wordt zodanig gedefinieerd dat de holonomiegroepoïden geassocieerd met equivalente singuliere foliaties Morita-equivalent zijn. Tevens veralgemeent ze noties van transversale equivalentie voor reguliere foliaties die in de jaren '80 verschenen.
	
	Het twede doel van deze thesis is het beschrijven van de achterliggende structuren van quotiënten van singuliere foliaties.
	
	We willen ook een inleiding geven op het begrip "singuliere foliaties", dat verscheen in \cite{AndrSk}, alsook de relatie tussen singuliere foliaties enerzijds, en Lie groepoïden en Lie algebroïden anderzijds bespreken.

	\cleardoublepage
	
	


	
	
	\chapter*{Contents}
	\makeatletter
	\@starttoc{toc}
	\makeatother
	
	
	\mainmatter  
	

	\chapter*{Introduction}\label{ch:introduction}
	
	I started my PhD with the project of quotients of singular foliation, a subject suggested by my adviser Marco Zambon. At that time, he gave me to study an article by Prof. Androulidakis and Prof. Skandalis \cite{AndrSk}, which I started reading right away. Trying to understand the proofs of every theorem we discovered step by step new properties and relations. After many discussions with him and my colleagues, this thesis was born. 
	
	This thesis studies properties of singular foliations, focusing on their associated holonomy groupoids. It tries to be a self contained work and to give an introduction to the notions of singular foliations, Lie algebroids and Lie groupoids, as well as their relations.
	
	\section*{The structure}
	
	The first three chapters provide an introduction to many important notions for this thesis, such as singular foliations, bisubmersions, Lie groupoids, Lie algebroids and Morita equivalence. The results of this thesis lie in the last two chapters, the fourth and the fifth ones.
	
	The first chapter of this thesis is an introduction to singular foliations in the sense of the article \cite{AndrSk}. Here we also explain in detail many of the notions from different points of view. The sections with new results are \S \ref{sec:trans.maps}, \S \ref{sec:prod.fol} and \S \ref{sec:fol.sheaf}.
	
	In the second chapter we give an introduction to bisubmersions, which first appeared in \cite{AndrSk} and inspired our notion of Hausdorff Morita equivalence for singular foliations, which we also present in the same chapter. The sections with new results are \S \ref{sec:aut.fol} \S \ref{sec:ME1}, \S \ref{subsec:firstinv}  and \S \ref{sec:exam}. In \S \ref{sec:aut.fol} we give a new proof of an old result. In  \S\ref{sec:ME1}, \S\ref{subsec:firstinv} and \S\ref{sec:exam} we  give an introduction of Hausdorff Morita equivalence for singular foliations, as in \cite{ME2018} by Marco Zambon and myself.
	
	The third chapter gives the preliminaries for Lie groupoids, Lie algebroids and Morita equivalence. We also connect these objects with singular foliations. There are no new results but \S\ref{sec:pull.grpd.algd} and \S \ref{sec:ME.gpd} are particularly important.
	
	The fourth chapter gives an introduction to the holonomy groupoid of a singular foliation, as in article \cite{AndrSk}. It also relates our notion of Hausdorff Morita equivalence for singular foliation with the classical notion of Morita equivalence for the holonomy groupoids. The sections with new results are \S \ref{sec:pullb.grpd}, \S \ref{sec:hol.trans} and \S \ref{sec:me.hol.grpd}, the last one being one of the most important sections of this thesis.
	
	In the fifth chapter we show the results we obtained for the quotients of singular foliations. The sections with new results are \S \ref{sec:Qfolman} and \S \ref{sec:general}.
	
	\section*{The motivation}
	
	This work is motivated mainly by the theory of foliations and Poisson geometry.
	
	\subsubsection*{Foliations} 
	
	The notion of regular foliation is widely studied in differential geometry, to the extent that it regularly appears in general master courses in mathematics. A regular foliation can be seen as  either one of two equivalent notions: integrable distributions or regular smooth partitions.
	
	\textbf{Definition.} \textit{An \textbf{integrable distribution} $D$ on a manifold $M$ is a vector subbundle of the tangent bundle $TM$, such that it is involutive i.e. $[\Gamma(D),\Gamma(D)]\subset \Gamma(D)$.}
	
	\textbf{Definition.} \textit{A \textbf{smooth partition} on $M$ is a partition $\{L_i\}_{i\in I}$ of $M$, where every class $L_i$ is a connected immersed\footnote{In this thesis any immersion is considered to be injective.} submanifold of $M$ called a leaf; and such that for each $p\in M$ and $v\in T_p M$ tangent to the leaves, there exists a vector field $X\in \CX(M)$ with $X(p)=v$ and tangent to every leaf.} \textit{A smooth partition is \textbf{regular} if every leaf has the same dimension.}	
	
	The notions of integrable distribution and regular smooth partition are equivalent because of the Frobenius theorem. Given a regular smooth partition, one can get an integrable distribution using the tangent spaces of the leaves. Moreover given an integrable distribution $D$ one can follow the flows of the vector fields in $\Gamma(D)\subset \CX(M)$ and get a regular smooth partition of $M$.
	
	An important notion for this thesis is the holonomy groupoid of a foliation. In the case of regular foliations, this groupoid was introduced by Ehresmann \cite{EhrGrpd} and Winkelnkemper \cite{WinkGrpd}. Its construction can be seen in section \ref{sec:hol.grpd} of this thesis.
	
	On the one hand, the holonomy groupoid encodes the information of the leaf space (which  is typically not smooth) and the holonomy groups, so this feature gives a geometric motivation for its study. On the other hand, in non commutative geometry, it gives a geometric interpretation for certain $C^*$-algebras. Every Lie groupoid has an associated $C^*$-algebra \cite{GrpdCAlg} therefore every regular foliation has an associated $C^*$-algebra given by its holonomy groupoid. This construction was given by A. Connes in \cite{Connes3}.
	
	A well known fact is that any commutative $C^*$-algebra is isomorphic to the space of functions on a topological space. This gives a geometric interpretation of an, in principle, algebraic object. The $C^*$-algebra defined by A. Connes to a regular foliation is usually non commutative, giving a geometric interpretation for a more general kind of $C^*$-algebras. Moreover, using this construction, in the articles \cite{Connes1}, \cite{Connes2} and \cite{Connes3} the authors developed a longitudinal psedo-differential calculus and an index theory for regular foliations. 
	
	One can also generalize these constructions for singular foliations. Here the term ``singular'' comes in when we allow smooth partitions whose leaves vary in dimension. Similarly to the Frobenius theorem, the Stefan-Sussmann theorem states that any smooth partition of $M$ gives a unique ``singular integrable distribution'' on $M$ and vice-versa. We will not discuss this characterization. The reader can check \cite[\S 2.3]{GrudMSch} for more details.
	
	In this thesis we define singular foliations as Androulidakis and Skandalis in \cite{AndrSk}. By the Stefan-Sussmann theorem, one can check that this definition gives indeed a smooth partition of $M$. However, this process is not reversible. In chapter \ref{ch:introduction} we will give a proper introduction to this object.
	
	\textbf{Definition.}
	\textit{A \textbf{singular foliation} on a manifold $M$ is a $\CI(M)$-submodule $\CF$
		{of the compactly supported vector fields}
		$\CX_c(M)$, closed under the Lie bracket and locally finitely generated.
		A \textbf{foliated manifold} is a manifold with a singular foliation.}
	
	Note that, the above definition generalizes canonically the notion of integrable distribution. Indeed, for any integrable distribution $D\subset TM$ there exists a singular foliation $\CF:=\SEC_c(D)\subset \CX_c(M)$, and for any singular foliation $\CF\subset \CX(M)$, satisfying some regularity condition, one can get an integrable distribution $D\subset TM$.
	
	Although foliated manifolds as in the above definition are recent objects, they have been studied by many people, as the reader can see by the following articles: \cite{AndrSk}, \cite{AZ1}, \cite{AZ2}, \cite{AMsheaf}, \cite{HenT}, \cite{DebordJDG}, \cite{Debord2013},  \cite{SylvainArticle}, \cite{Kwang}. 
	
	In \cite{AndrSk} the authors generalized the holonomy groupoid to {any foliated manifold}. We will review this construction and guide the reader step by step in section \ref{sec:holconstr}. Even though in many cases this groupoid fails to be smooth (i.e, Lie), it has a partial smooth structure \cite{Debord2013}. By using this partial smooth structure, Androulidakis and Skandalis defined a  $C^*$-algebra of a foliated manifold. This was an important step towards the generalization of the work of A. Connes to the singular case.
	
	\subsubsection*{Poisson geometry, $C^*$-algebras and Morita Equivalence}
	
	Modern Poisson geometry, which takes its roots in the classical work of Poisson in the 19th century on classical mechanics, was established following   the pioneering work of Weinstein~\cite{WeinPois83} in 1983, and it has become an active research field with connections to a variety of fields, such as algebra, differential geometry and mathematical physics.
	
	Any Poisson manifold $(M,\pi)$ has an { associated Lie algebroid} \cite{WAPois}, namely the cotangent bundle of $M$ with anchor $\pi^\sharp\colon T^*M\fto TM; \alpha\mapsto \pi(\alpha,-)$. This Lie algebroid { induces a foliated manifold $(M,\pi^\sharp(\Omega^1_c(M)))$} where each leaf has a symplectic structure. Moreover, this Lie algebroid can sometimes be integrated into a source simply connected { symplectic Lie groupoid} (\cite{CFPois}) and the orbits of this Lie groupoid coincide with the symplectic leaves of the singular foliation.
	
	Poisson geometry is also related to the study of {quantization} in physics (\cite{Dirac1}, \cite{kostant}, \cite{DefQ1} and \cite{DefQ2}). The goal of quantization is to relate classical mechanics, described in mathematical terms by {Poisson geometry} (\cite{AnaSympl} and \cite{WAPois}), and quantum mechanics, described by {non-commutative $C^*$-algebras}.
	
	A notion which lies at the intersection of the theories of $C^*$-algebras, Poisson manifolds and Lie groupoids is the notion of {\bf Morita equivalence}. This notion first appeared in algebra as a weak version of isomorphisms, in the sense that two $C^*$-algebras are Morita equivalent if their categories of modules are equivalent. This is an important notion because for some cases one does not care for the algebraic object, but for the possible actions (or representations) of it.
	
	Morita equivalence was later extended to Lie groupoids and Poisson manifolds. In \cite{WAPois} it is explained in more detail how the concepts of Morita equivalence for these three different objects are related: for example, if two Lie groupoids are Morita equivalent, then their corresponding $C^*$-algebras are also Morita equivalent. In this thesis we do not address $C^*$-algebras. We will focus on groupoids and then many results for their $C^*$-algebras follow immediately.
	
	The notion of Morita equivalence for Lie groupoids has a geometric interpretation. Namely, if two Lie groupoids $\CG$ and $\CH$ are Morita equivalent, their orbitspaces (i.e. their spaces of orbits) are homeomorphic to the same topological space $S$ and their isotropy Lie groups are isomorphic. Hence one can regard $\CG$ and $\CH$ as equivalent smooth structures (or atlases) for $S$. This has consequences for foliated manifolds, namely if $\CG\soutar M$ is a source connected Lie groupoid with associated foliated manifold $(M,\CF)$, then the orbitspace of $\CG$ coincides with the leafspace of $(M,\CF)$, and therefore $\CG$ can be seen as a ``smooth structure'' for the leafspace of $(M,\CF)$. 
	
	One of the goals of this thesis is to generalize the notion of Morita equivalence from Poisson manifolds, Lie groupoids and $C^*$-algebras to the setting of foliated manifolds, and to relate it with the notion of transverse equivalence given by Molino in \cite{MolME}.\\

	\section*{The results}
	
	This thesis has two main results. The first one is about Morita equivalence of singular foliations. The second one is about quotients of foliated manifolds and of their associated holonomy groupoids. 
	
	\subsubsection*{Hausdorff Morita equivalence of singular foliations}
	
	The transverse geometry of a regular foliation was studied in the 1980's and 1990's. Haefliger  stated that a property of a regular foliation is {transverse} if it can be described in terms of the Morita equivalence class of its holonomy groupoid (see the first paragraph  of \cite[\S 1.5]{HaefligerHolonomieClassifiants}).
	
	Molino introduced various notions of {transverse equivalence} of regular foliations (see \cite[\S2.7]{MolME} and \cite[\S 2.2 d]{MolinoOrbit-LikeFol}). His notion of transverse equivalence requires that the pullbacks of the foliations to suitable spaces agree, and does not make any reference to the holonomy groupoid.
	
	In the same spirit as Molino, M. Zambon and I introduced in \cite{ME2018} a definition of  Morita equivalence  of singular foliations. In section \ref{sec:ME1} we give an introduction to this notion.
	
	One of the results in this thesis is that Morita equivalence  of singular foliations have many invariants, which should be regarded as constituents of the ``transverse geometry'' of a singular foliation:
	
	\textbf{Theorem.} 
	\textit{If two singular foliations are  Hausdorff Morita equivalent, then:
		\begin{itemize}
			\item[a)] the leaf spaces are homeomorphic,
			\item[b)] the isotropy Lie groups (and isotropy Lie algebras) of corresponding leaves are isomorphic,
			\item[c)] the representations of corresponding isotropy Lie groups on   
			normal spaces to the leaves are isomorphic.
	\end{itemize}}

	Notice that this is analogous to the Morita equivalence of Lie groupoids, for which the space of orbits, the isotropy Lie groups and their normal representations are a complete set of invariants  \cite[theorem.  4.3.1]{MatiasME}.
	
	Several geometric objects have singular foliations associated to them. In section \ref{sec:me.hol.grpd} we will prove the following statements:
	
	\textbf{Theorem.} \textit{If two source connected Hausdorff Lie groupoids,
		two Lie algebroids \cite[\S 6.2]{GinzburgGrot} or two Poisson manifolds \cite{xuME} are Morita equivalent, then their underlying singular foliations are  Hausdorff Morita equivalent. }
	
	\textbf{Theorem.} \textit{If two singular foliations are Hausdorff Morita equivalent then their holonomy groupoids   are Morita equivalent (as open topological groupoids).}
	
	\noindent For regular foliations -- and more generally for projective foliations -- the holonomy groupoid is a source connected Lie groupoid. Therefore, in this case, Hausdorff Morita equivalence of singular foliations coincides with Morita equivalence of their associated holonomy groupoids (under Hausdorffness assumptions).
	
	From the above it is clear that the notions of Morita equivalence for singular foliations and Morita equivalence for the associated holonomy groupoids are closely connected. We emphasize that our definition of Hausdorff Morita equivalence is expressed in terms of the singular foliation alone, without making any reference to the associated holonomy groupoid, and as such, it has the advantage of being easy to handle.
	
	\subsubsection*{Quotients of singular foliations}
	
	Let $(P,\CF)$ be a foliated manifold and $\pi \colon P \to M$ a {surjective} submersion with connected fibers (in particular, $M$ is the quotient of $P$ by the regular foliation given by the fibers of $\pi$).
	Under mild conditions, $\cF$ can be ``pushed forward'' along $\pi$ to a singular foliation $\cF_M$ on $M$. 
	
	As recalled earlier, every singular foliation has a canonically associated
	topological groupoid, called holonomy groupoid \cite{AndrSk}. Since the singular foliation $\cF_M$ is obtained from $\cF$ by a quotient procedure, it is natural to wonder whether the holonomy groupoid $\CH(\cF_M)$ of $\cF_M$ is also a quotient of the holonomy groupoid $\CH(\cF)$. In Chapter \ref{ch:conclusion}, Theorem \ref{thm:sur.hol} we show that this is always the case: there is a canonical surjective morphism 
	
	$$\Xi\colon \CH(\cF)\to \CH(\cF_M).$$

	We then refine the above result in the case of Lie group actions. That is, we assume that a Lie group $G$ acts freely and properly on $P$ preserving the singular foliation $\cF$, and $\pi$ is the projection to the orbit space $M=P/G$. The action lifts naturally to a $G$-action on $\CH(\cF)$, but a simple dimension count shows that the quotient can not be isomorphic to $\CH(\cF_M)$ in general. In \S\ref{sec:quotgrouppull} we show that 
	when  $\cF$ contains the infinitesimal generators of the $G$-action,
	there is a natural action of a semidirect product Lie group $G \rtimes G$ on $\CH(\cF)$ -- not by Lie groupoid automorphisms -- with quotient $\CH(\cF_M)$.
	Remarkably, this is a group action in the category of Lie groupoids (see Theorem
	\ref{thm:act.q.fol}).
	
	{In the general case when $\cF$ does not necessarily contain the infinitesimal generators of the $G$-action, it is no longer true that the $\Xi$-fibers are given by orbits of a Lie 2-group action. In \S\ref{sec:groidorbits} we describe the fibers of $\Xi$ as orbits of a certain \emph{groupoid} action on $\CH(\cF)$ (see Prop. \ref{prop:fib.xi}). Under additional assumptions, this can be promoted to a Lie group action.
		In the end of \S\ref{sec:general} we show that there is a canonical Lie ideal $\h$ of $\g$ 
		which gives rise to a Lie 2-group $H \rtimes G$ and a Lie 2-group action on $\CH(\cF)$
		whose orbits are contained  in the $\Xi$-fibers (see Prop. \ref{prop:Lie.2.grpACTION}). The concrete form of this Lie 2-group action is inspired by a special case, in which $\cF$ contains the infinitesimal generators of the $G$-action (hence $H=G$). Indeed in that case we recover  the Lie 2-group action given in \S\ref{sec:quotgrouppull}, 
		as we explain in Prop. \ref{prop:sameaction}.
	}

	\section*{Funding}
	
	This thesis was partially supported by IAP Dygest and by the FWO under EOS project G0H4518N.

	\cleardoublepage

	\chapter{Singular foliations}\label{ch:sing foliation}
	
	The objective of this chapter is to introduce singular foliation in the sense of the article \cite{AndrSk}. In the first section we introduce this definition, its properties and give some examples.
	
	In the second and third sections we introduce the definitions of transversal maps to a foliation, the pullback of a foliation under a transversal map and under a submersion, and the local description of any singular foliation. In the fourth section we construct the product of foliations and show some of their properties with the pullbacks.
	
	In the last section of this chapter we give a different but equivalent characterization for foliations, using sheaves, and motivate its advantages. 
	
	\section{Definition of singular foliations}\label{sec:def.singfol}
	
	This subsection is an introduction to the definition of singular foliation given by I. Androulidakis and G. Skandalis in \cite{AndrSk} therefore most of the result can be found there. We will make clear when there are new results.

	\begin{defi} A $\CI_c(M)$-submodule $\mathcal{F}\subset \mathfrak{X}_c(M)$ is \textbf{finitely generated} if there exists a finite set of vector fields $Y^1,...,Y^r\in\mathfrak{X}(M)$, called \textit{generators}, such that
		\begin{equation*}
			\mathcal{F} = \langle Y^1,...,Y^r \rangle_{C^\infty_c(M)},
		\end{equation*}
		i.e. $\mathcal{F}$ is the $C^\infty_c(M)$-linear span of the generators.
	\end{defi}
	
	Note that the generators of $\mathcal{F}$ are not required to be in $\mathcal{F}$ and their support is not necessarily compact.
	
	\begin{defi}\label{def:loc.gen} A submodule $\mathcal{F}\subset \mathfrak{X}_c(M)$ is \textbf{locally finitely generated} if every point $x\in M$ has a neighborhood $U\subset M$ such that $$\iota_U^{-1}\CF:=\{ X|_U \st X\in \CF \y \text{supp}(X)\subset U\},$$ is finitely generated as a $\CI_c(U)$-module. 
	\end{defi}
	
	\begin{rem}
		The notation ``$\iota_U^{-1}\CF$'' comes from the pullback under the inclusion map $\iota_U: U\fto M$ given in definition \ref{def:pullback}.
	\end{rem}
	
	The following lemma will be useful later. In particular it states that finitely generated modules can be restricted to smaller open sets and glued finitely many times.
	
	\begin{lem}
		\label{lemma:unionoffinite} 
		Let $\mathcal{F}\subset \mathfrak{X}_c(M)$ be a submodule.
		\begin{enumerate}
			\item[(a)] If $\mathcal{F}$ is finitely generated, then $\iota_U^{-1}\CF$ is finitely generated for all open subsets $U\subset M$.
			
			\item[(b)] If $U,V\subset M$ are open subsets such that $\iota_U^{-1}\CF$ and $\iota_V^{-1}\CF$ are finitely generated, then $\iota_{U\cup V}^{-1}\CF$ is finitely generated. 
			
			\item[(c)]  $\mathcal{F}$ is locally finitely generated if and only if, for all $\rho\in C^\infty_c(M)$, the submodule 
			\begin{equation*}
				\rho\mathcal{F}:=\{ \rho X \;|\; X\in\mathcal{F} \}\subset \mathfrak{X}_c(M)
			\end{equation*}
			is finitely generated. 
		\end{enumerate}
	\end{lem}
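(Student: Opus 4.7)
I treat the three parts in turn, in each case producing an explicit candidate generating set and then reducing the identification of a general element to a bump-function manipulation.

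For (a), I would take generators $Y^1,\ldots,Y^r\in\mathfrak{X}(M)$ of $\mathcal{F}$ and propose $Y^1|_U,\ldots,Y^r|_U$ as generators of $\iota_U^{-1}\mathcal{F}$. If $X=X'|_U$ with $X'\in\mathcal{F}$ and $\mathrm{supp}(X')\subset U$ compact, then $X'=\sum_i f_i Y^i$ with $f_i\in C^\infty_c(M)$. The naive restriction $f_i|_U$ need not be compactly supported in $U$, but I circumvent this by choosing $\sigma\in C^\infty_c(M)$ with $\sigma\equiv 1$ on $\mathrm{supp}(X')$ and $\mathrm{supp}(\sigma)\subset U$: since $\sigma X'=X'$, one has $X'=\sum_i(\sigma f_i)Y^i$, and now $(\sigma f_i)|_U\in C^\infty_c(U)$.

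For (b), I pick a partition of unity $\rho_U+\rho_V=1$ on $U\cup V$ subordinate to $\{U,V\}$ together with cutoffs $\phi,\psi\in C^\infty(U\cup V)$ satisfying $\phi\equiv 1$ on $\mathrm{supp}(\rho_U)$ with $\mathrm{supp}(\phi)\subset U$ closed in $U\cup V$, and symmetrically for $\psi$. Letting $Y^1,\ldots,Y^r$ generate $\iota_U^{-1}\mathcal{F}$ and $Z^1,\ldots,Z^s$ generate $\iota_V^{-1}\mathcal{F}$, I extend $\phi Y^i$ and $\psi Z^j$ by zero to $\tilde Y^i,\tilde Z^j\in\mathfrak{X}(U\cup V)$. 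For $X\in\iota_{U\cup V}^{-1}\mathcal{F}$ I decompose $X=\rho_U X+\rho_V X$; each summand has compact support in the corresponding open set and lies in $\iota_U^{-1}\mathcal{F}$ or $\iota_V^{-1}\mathcal{F}$, yielding $\rho_U X|_U=\sum_i f_i Y^i$ with $f_i\in C^\infty_c(U)$. The crucial identity $\phi\rho_U=\rho_U$ then upgrades this to $\rho_U X=\sum_i f_i\tilde Y^i$ on all of $U\cup V$, since the two sides agree on $U$ after multiplication by $\phi$, and both vanish on $(U\cup V)\setminus U$ for support reasons. Combining with the analogous formula for $\rho_V X$ gives the required finite decomposition of $X$ with coefficients in $C^\infty_c(U\cup V)$.

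For (c), the $(\Leftarrow)$ direction is short: given $x\in M$, choose $\rho\in C^\infty_c(M)$ with $\rho\equiv 1$ on some open $U\ni x$; if $Y^1,\ldots,Y^r$ generate $\rho\mathcal{F}$, then any $X''\in\mathcal{F}$ with $\mathrm{supp}(X'')\subset U$ satisfies $X''=\rho X''\in\rho\mathcal{F}$, and the same bump argument as in (a) shows that $Y^i|_U$ generate $\iota_U^{-1}\mathcal{F}$. For $(\Rightarrow)$, I cover the compact set $\mathrm{supp}(\rho)$ by finitely many opens $U_k$ with $\iota_{U_k}^{-1}\mathcal{F}$ finitely generated, apply (b) inductively to obtain $\iota_U^{-1}\mathcal{F}$ finitely generated for $U=\bigcup_k U_k$, and then multiply the restricted generators by a cutoff $\phi\in C^\infty_c(M)$ equal to $1$ on $\mathrm{supp}(\rho)$ with $\mathrm{supp}(\phi)\subset U$ to manufacture generators of $\rho\mathcal{F}$ in $\mathfrak{X}_c(M)$, verifying the generation property by the same trick as in part (b).

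The main technical difficulty lies in (b): the generating set on $U\cup V$ must be independent of $X$, which rules out using $X$-dependent bump functions and forces the introduction of the fixed cutoffs $\phi,\psi$. The delicate point is the global identity $\rho_U X=\sum_i f_i\tilde Y^i$, whose verification rests on the interplay $\phi\rho_U=\rho_U$ inside $U$ together with both sides vanishing outside $U$. Parts (a) and (c) are then essentially variations on the same bump-function theme, with (c)($\Rightarrow$) reduced to (b) via a compactness argument.
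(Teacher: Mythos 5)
Your parts (a), (b) and the ($\Leftarrow$) direction of (c) are correct and follow essentially the paper's partition-of-unity strategy; in (b) your variant (cutoffs $\phi,\psi$ placed in the generators, the element split with $\rho_U,\rho_V$) is a mirror image of the paper's argument (generators $\rho_U X^i,\rho_V Y^j$, auxiliary cutoffs $\lambda_U,\lambda_V$ on the coefficient side), and the identity $\phi\rho_U=\rho_U$ does exactly what you claim. Do note, however, that in (b) you only verify the hard inclusion: since ``finitely generated'' means \emph{equality} with the $C^\infty_c$-span, you must also record that each $f\,\tilde Y^i$ with $f\in C^\infty_c(U\cup V)$ lies in $\iota_{U\cup V}^{-1}\mathcal{F}$ (true here, because $(f\phi)|_U\in C^\infty_c(U)$ and $\iota_U^{-1}\mathcal{F}\subset\iota_{U\cup V}^{-1}\mathcal{F}$).

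The genuine gap is in (c)($\Rightarrow$): that same ``easy'' inclusion fails for your proposed generators. You take $\phi\in C^\infty_c(M)$ with $\phi\equiv 1$ on $\mathrm{supp}(\rho)$, $\mathrm{supp}(\phi)\subset U$, and claim the $\phi X^i$ generate $\rho\mathcal{F}$. The inclusion $\rho\mathcal{F}\subset\langle \phi X^i\rangle_{C^\infty_c(M)}$ does follow by your trick, but the reverse inclusion is false in general: every element of $\rho\mathcal{F}$ vanishes wherever $\rho$ vanishes, while $f\phi X^i$ need not (already at boundary points of $\mathrm{supp}(\rho)$ one has $\rho=0$ but $\phi=1$; take $\mathcal{F}=\mathfrak{X}_c(\RR)$, $X^1=\partial_x$ for a concrete failure). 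So $\langle\phi X^i\rangle_{C^\infty_c(M)}$ is in general strictly larger than $\rho\mathcal{F}$, and finite generation of $\rho\mathcal{F}$ is not established. The fix is to use $\rho$ itself as the multiplier: the fields $\rho X^i$ (extended by zero to $M$) do generate $\rho\mathcal{F}$. Indeed, choosing $\lambda\in C^\infty_c(M)$ with $\mathrm{supp}(\lambda)\subset U$ and $\lambda\equiv 1$ on $\mathrm{supp}(\rho)$, for $X\in\mathcal{F}$ one has $(\lambda X)|_U=\sum_i f_i X^i$ with $f_i\in C^\infty_c(U)$, hence $\rho X=\rho\lambda X=\sum_i f_i\,\rho X^i$; conversely, $f\rho X^i=\rho\,(f\lambda X^i)$, and $\lambda X^i\in\langle X^1,\dots,X^r\rangle_{C^\infty_c(U)}=\iota_U^{-1}\mathcal{F}$ is the restriction of an element of $\mathcal{F}$ supported in $U$, so $f\lambda X^i\in\mathcal{F}$ and $f\rho X^i\in\rho\mathcal{F}$. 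This is precisely what the paper's terse claim $\rho\mathcal{F}=\rho\,\mathcal{F}_U$ amounts to.
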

	
	\begin{proof}
		(a) Using that $\iota_U^{-1}\CF=\left<\mathcal{F}\right>_{C^\infty_c(U)}$, one easily sees that that the restrictions of generators of $\mathcal{F}$ to $U$ are generators of $\iota_U^{-1}\CF$. 
		
		(b) Let $X^1,...,X^r\in \mathfrak{X}(U)$ be generators of $\iota_U^{-1}\CF$ and $Y^1,...,Y^s\in \mathfrak{X}(V)$ of $\iota_V^{-1}\CF$, and choose a partition of unity $\{\rho_U,\rho_V\}$ subordinate to the open cover $\{U,V\}$ of $U\cup V$. We show that $\rho_U X^1,...,\rho_U X^r,\rho_V Y^1,...,\rho_V Y^s\in\mathfrak{X}(U\cup V)$ (or rather their extension by zero to $U\cup V$) are generators of $\iota_{U\cup V}^{-1}\CF$, i.e. that $\iota_{U\cup V}^{-1}\CF$ is equal to the $C^\infty_c(U\cup V)$-linear span of the generators. 
		
		On the one hand, if $f\in C^\infty_c(U\cup V)$ then $f\rho_U\in C^\infty_c(U)$, since $\mathrm{supp}(f\rho_U) = \mathrm{supp}(f)\cap\mathrm{supp}(\rho_U)$ and a closed subset of a compact set is compact. Similarly, $f\rho_V\in C^\infty_c(V)$. Thus, using that $\iota_{U}^{-1}\CF \subset \iota_{U\cup V}^{-1}\CF$ and $\iota_{V}^{-1}\CF \subset \iota_{U\cup V}^{-1}\CF$, we see that any $C^\infty_c(U\cup V)$-linear combination of the generators is an element of $\iota_{U\cup V}^{-1}\CF$. 
		
		On the other hand, let $Z\in \iota_{U\cup V}^{-1}\CF$ and write $Z = \rho_U Z + \rho_V Z$. For the reason explained above, $\rho_U Z \in \mathfrak{X}_c(U)$ and $\rho_V Z \in \mathfrak{X}_c(V)$. Choose functions $\lambda_U\in C^\infty_c(U)$ and $\lambda_V\in C^\infty_c(V)$ such that $\lambda_U|_{\mathrm{supp}(\rho_U Z)}=1$ and $\lambda_V|_{\mathrm{supp}(\rho_V Z)}=1$, we have that $Z = \rho_U \lambda_U Z + \rho_V \lambda_V Z$. Finally, $\lambda_U Z = \sum_{i=1}^r f_i X^i$ for some $f^i\in C^\infty_c(U)$ and $\lambda_V Z = \sum_{i=1}^s g_i \textcolor{blue}{Y}^i$ for some $g^i\in C^\infty_c(V)$, and so
		\begin{equation*}
			Z = \sum_{i=1}^r f_i \, \rho_U X^i + \sum_{i=1}^s g_i \, \rho_U Y^i. 
		\end{equation*}
		
		(c) We begin with the forward implication. Let $\rho\in C^\infty_c(M)$. There exists an open subset $U\subset M$ containing $\mathrm{supp}(\rho)$ such that $\iota_{U}^{-1}\CF$ is finitely generated, namely we may choose a finite covering of $\mathrm{supp}(\rho)$ by open subsets $U_1,...,U_r$ such that all restrictions $\iota_{U_i}^{-1}\CF$ are finitely generated and then apply part (b) a finite number of times to conclude that $\iota_{U_1\cup...\cup U_r}^{-1}\CF$ is finitely generated. It is now easy to see that $\rho \mathcal{F}= \rho \mathcal{F}_U$ and hence $\rho\mathcal{F}$ is finitely generated.
		
		For the reverse implication, given $x\in M$, we may construct an open neighborhood $U$ of $x$ and a $\rho\in C^\infty_c(M)$ such that $\rho|_U =1$, and we immediately see that $\iota_{U}^{-1}\CF=\rho (\iota_{U}^{-1}\CF)=\iota_{U}^{-1}(\rho\CF)$, which is finitely generated by part (a).
	\end{proof}
	
	\begin{definition}\label{defi:sing.fol}
		A \textbf{singular foliation} on a manifold $M$ (in the sense of Androulidakis and Skandalis \cite{AndrSk}) is a locally finitely generated submodule $\mathcal{F}\subset \CX_c(M)$ that is involutive, i.e. $[\mathcal{F},\mathcal{F}]\subset \mathcal{F}$. A manifold with a singular foliation is called a {\bf foliated manifold}.
	\end{definition}
	
	\begin{ex}\label{ex:reg.fol} By the Frobenius theorem, any regular foliation has an associated integrable distribution $D\subset TM$. Then $\CF:=\SEC_c(D)$ is a singular foliation.	
	\end{ex}
	
	Given a set of vector fields $X_1,\dots X_n\in \CX(M)$, its span $\CF=\left< X_1,\dots, X_n\right>_{\CI_c(M)}$ is a singular foliation if and only if it is involutive. A particular example where this happens is the following:
	
	\begin{ex} Let $\g$ be a Lie algebra acting infinitesimally on $M$. This action consists of a vector bundle map $\rho\colon A_\g\fto TM$ which preserves the Lie bracket, where $A_\g:=\g\times M$. Given a basis $v_1,\dots,v_k$ for $\g$ the vectorfields $X_i (p):=\rho(v_i,p)\in \CX(M)$ span a singular foliation $\CF_{\g}:=\left<X_1,\dots,X_k\right>_{\CI_c(M)}$.
	\end{ex}
	
	\begin{ex}\label{ex:alg.fol} In section \S \ref{sec:Lie.Algd}, particularly in \ref{prop:Folie.Algd}, we will see that any Lie algebroid canonically defines a singular foliation. Moreover in example \ref{ex:Algd,Gprd} we will see that any Lie groupoid has an associated Lie algebroid and therefore a singular foliation.
	\end{ex}
	
	One of the motivations for definition \ref{defi:sing.fol} is that it gives an integrable singular distribution and, by the Stefan-Sussmann theorem \cite[\S 2.3]{GrudMSch}, a smooth partition of $M$. Another motivation for definition \ref{defi:sing.fol} is that it also allows us to define the holonomy groupoid for a singular foliation, as we will show in chapter \ref{ch:conclusion}. A drawback is that not any singular integrable distribution is a singular foliation, nevertheless singular foliations are general enough to contain many key examples, for instance: \ref{ex:reg.fol} and \ref{ex:alg.fol}.
	
	For a singular foliation there are certain interesting spaces given point wise. These spaces measure the regularity of the foliation around a point.
	
	\begin{defi} Let $(M,\CF)$ be a foliated manifold and $x\in M$. Denote $I_x:=\{f\in \CI(M) \st f(x)=0\}$. We denote:
		
		The {\bf tangent of $\CF$} at $x$ by: $F_x:=\{X(x) \st X\in \CF\}\subset T_x M$.
		
		The {\bf fibre of $\CF$} at $x$ by: $\CF_x:= \CF/I_x\CF$.
	\end{defi}
	
	Note that the evaluation map $\mathrm{ev}_x\colon \CF_x\fto F_x$ at the point $x$ induces a short exact sequence of vector spaces:
	$$0\to \ker(\mathrm{ev}_x)\to \CF_x \to F_x\to 0.$$
	Moreover, the Lie bracket on $M$ descends canonically to a Lie bracket on $\ker(\mathrm{ev}_x)$, giving a Lie algebra structure on it. 
	
	\begin{defi} The space $\g_x^{\CF}:=\ker(ev_x)$ is called the {\bf isotropy Lie algebra} of $\CF$ at $x$.
	\end{defi}
	
	The tangent space $F_x$ gives a way to determine if $\CF$ is a regular foliation, as it is shown in the following corollary, which is a direct consequence of the Frobenius theorem:
	
	\begin{cor}\label{cor:tan.reg.fol} $\CF$ is a regular foliation if and only if the space $F_x$ has constant dimension  for all $x\in M$. This also implies $F_x\simeq\CF_x$ and $\g_x^{\CF}=0$.
	\end{cor}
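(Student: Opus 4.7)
The plan is to prove both implications separately and then deduce the final identifications.

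For the forward direction, suppose $\CF$ is regular, so by Example \ref{ex:reg.fol} there is an integrable distribution $D \subset TM$ with $\CF = \SEC_c(D)$. Then $F_x$ equals the fibre $D_x$ for every $x \in M$, so $\dim F_x$ is constant equal to the rank of $D$. This is immediate.

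For the reverse direction, assume $\dim F_x = k$ is constant. The goal is to construct a smooth rank-$k$ subbundle $D \subset TM$ such that $\CF = \SEC_c(D)$, and then apply Frobenius. Fix $x_0 \in M$, and by local finite generation choose an open neighborhood $U$ of $x_0$ on which $\iota_U^{-1}\CF$ is generated by $Y^1,\ldots,Y^r \in \CX(U)$. Since these span $F_{x_0}$, we may extract $k$ of them, say $Y^1,\ldots,Y^k$, whose values at $x_0$ form a basis of $F_{x_0}$. By continuity, the vectors $Y^1(y),\ldots,Y^k(y)$ remain linearly independent for $y$ in some smaller neighborhood $V\subset U$; by the constant-dimension hypothesis, they then form a basis of $F_y$ on $V$. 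Thus $D_y := F_y$ is smoothly spanned on $V$ by the frame $Y^1,\ldots,Y^k$. These local spans agree on overlaps (they all equal $F_y$ pointwise), so they assemble into a smooth rank-$k$ subbundle $D$ of $TM$. Involutivity of $\Gamma(D)$ inherited from $[\CF,\CF]\subset\CF$, together with Frobenius, then produces a regular foliation with tangent distribution $D$.

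The next step, and in my view the main obstacle, is to show $\CF = \SEC_c(D)$, not merely $\CF \subset \SEC_c(D)$. Working on the neighborhood $V$ above, each remaining generator $Y^j$ with $j > k$ lies pointwise in $F_y = \mathrm{span}(Y^1(y),\ldots,Y^k(y))$, so by the standard local frame argument $Y^j = \sum_{i=1}^k a^j_i\, Y^i$ for unique smooth functions $a^j_i \in \CI(V)$. Hence $\iota_V^{-1}\CF$ is generated by the local frame $Y^1,\ldots,Y^k$ of $D$, which means $\iota_V^{-1}\CF = \SEC_c(D|_V)$. To globalize, take an arbitrary $Z \in \SEC_c(D)$ and a partition of unity $\{\rho_\alpha\}$ subordinate to a cover of $\mathrm{supp}(Z)$ by such neighborhoods $V_\alpha$; each $\rho_\alpha Z$ belongs to $\SEC_c(D|_{V_\alpha}) = \iota_{V_\alpha}^{-1}\CF \subset \CF$, and summing gives $Z \in \CF$.

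Finally, for the additional claims, surjectivity of $\mathrm{ev}_x\colon \CF_x \to F_x$ is by definition; it remains to show injectivity. Given $X \in \CF$ with $X(x) = 0$, write $X = \sum_{i=1}^k f_i Y^i$ on a neighborhood $V$ of $x$ using the local frame above; then $f_i(x) = 0$ since $Y^1(x),\ldots,Y^k(x)$ are linearly independent. Choosing a bump function $\chi \in \CI_c(V)$ with $\chi \equiv 1$ near $x$, one writes $\chi X = \sum f_i \cdot (\chi Y^i) \in I_x\CF$, while $(1-\chi)X$ vanishes near $x$ and can likewise be placed in $I_x\CF$ by multiplying by a function vanishing only at $x$. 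Hence $X \in I_x\CF$, so $\mathrm{ev}_x$ is injective, yielding $\CF_x \simeq F_x$ and $\g_x^\CF = \ker(\mathrm{ev}_x) = 0$.
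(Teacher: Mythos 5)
Your proof is correct and matches the paper's intent: the paper gives no written proof at all, stating the corollary as ``a direct consequence of the Frobenius theorem'', and your argument is exactly the standard fleshing-out of that appeal (constant rank of $F_x$ plus local generators yields a smooth involutive distribution $D$ with $\CF=\SEC_c(D)$, Frobenius gives the regular foliation, and the local frame argument shows $\ker(\mathrm{ev}_x)=0$, hence $\CF_x\simeq F_x$ and $\g_x^{\CF}=0$). The only nitpicks are cosmetic and routine: the coefficient functions $f_i$ are defined only on $V$ and should be cut off by a bump function to become elements of $I_x\subset \CI(M)$ before writing $\chi X\in I_x\CF$, and the cover of $\mathrm{supp}(Z)$ in the globalization step should be taken finite by compactness.
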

	
	\begin{definition} For the rest of this thesis a \textbf{regular foliation} on a manifold $M$ is a singular foliation $\CF$ on $M$ such that the spaces $F_x$ have constant dimension for all $x\in M$.
	\end{definition}
	
	The fiber $\CF_x$ is related to a minimal amount of generators for $\CF$ near $x$, as the following lemma  (proved in \cite{AndrSk}) shows:
	
	\begin{lem} Let $\{X_1,\dots X_n\}\in \CF$ be a finite subset and fix $x_0\in M$. If the class of elements in $\{X_1,\dots X_n\}$ are a basis for $\CF_{x_0}$ then there exists a neighborhood $U\subset M$ of $x_0$ such that $\{X_1,\dots X_n\}$ generates $\iota_U^{-1}\CF$.
	\end{lem}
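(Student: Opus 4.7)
The plan is to prove this by a Nakayama-style argument, combined with local finite generation of $\CF$. First, since $\CF$ is locally finitely generated, I would choose a relatively compact open neighborhood $V \subset M$ of $x_0$ together with generators $Y_1,\dots,Y_m \in \CX(V)$ of $\iota_V^{-1}\CF$. I would pick a bump function $\chi \in \CI_c(V)$ that equals $1$ on a smaller neighborhood $V' \subset V$ of $x_0$, so that each $\chi Y_j \in \CF$ and its germ at $x_0$ agrees with $Y_j$.

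Next, using that the classes $[X_1],\dots,[X_n]$ form a basis of the finite-dimensional vector space $\CF_{x_0} = \CF/I_{x_0}\CF$, I would write each class $[\chi Y_j] = \sum_i a_{ij}[X_i]$ for constants $a_{ij} \in \RR$. This gives $\chi Y_j = \sum_i a_{ij} X_i + R_j$ with $R_j \in I_{x_0}\CF$. Unpacking, $R_j = \sum_s h_{js} W_{js}$ with $h_{js}(x_0)=0$ and $W_{js} \in \CF$. Since each $W_{js}$ restricted (suitably cut off) to $V'$ lies in $\iota_{V'}^{-1}\CF$, it can be written as a $\CI_c(V')$-linear combination of $Y_1,\dots,Y_m$. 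Collecting coefficients, I get, on some neighborhood $V'' \subset V'$ of $x_0$, an identity of the form
\[
Y_j \;=\; \sum_i a_{ij} X_i \;+\; \sum_k u_{jk} Y_k \qquad (j=1,\dots,m),
\]
where the smooth functions $u_{jk}$ all vanish at $x_0$.

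The main step is then the Nakayama trick: writing this as the matrix equation $(I - U(x))\, Y(x) = A^T X(x)$ on $V''$, with $U(x_0) = 0$, the matrix $I - U(x)$ is invertible on a possibly smaller neighborhood $U \subset V''$ of $x_0$. Multiplying by $(I-U)^{-1}$ shows that on $U$ each $Y_j$ is a $\CI(U)$-linear combination of $X_1,\dots,X_n$.

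Finally, I would conclude as follows: given any $Z \in \iota_U^{-1}\CF$, extend $Z$ by zero to $V$; it lies in $\iota_V^{-1}\CF$, so $Z = \sum_k f_k Y_k$ for some $f_k \in \CI_c(V)$. Restricting to $U$ and substituting the expressions $Y_k = \sum_i c_{ik} X_i$ valid on $U$, we obtain $Z = \sum_i (\sum_k f_k|_U c_{ik})\,X_i$ on $U$, with the coefficients lying in $\CI_c(U)$ because $\mathrm{supp}(Z) \subset U$ allows us to absorb a cutoff. Hence $X_1,\dots,X_n$ generate $\iota_U^{-1}\CF$ as a $\CI_c(U)$-module. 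The most delicate point is the Nakayama step together with the bookkeeping needed to rewrite the remainder terms $R_j$ in terms of the generators $Y_k$ with coefficients vanishing at $x_0$; everything else is routine.
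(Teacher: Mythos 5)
Your proposal is correct and follows essentially the same route as the paper's proof: express the local generators $Y_j$ in terms of the $X_i$ modulo $I_{x_0}\CF$, rewrite the remainder as a matrix of functions vanishing at $x_0$ applied to the $Y_k$, and invert $\mathrm{Id}-B$ on a small neighborhood of $x_0$. The extra care you take with cutoff functions and with absorbing a cutoff to get compactly supported coefficients in the final step only fills in details the paper leaves implicit.
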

	\begin{proof} Because $\CF$ is locally finitely generated, there exists a neighborhood $V$ of $x_0$ and vector fields $Y_1,\dots, Y_N$ spanning $\iota_V^{-1}\CF$. Because the classes of $X_1,\dots,X_n$ in $\CF_{x_0}$ are a basis there exists a unique matrix $A\in M_{n\times N}(\RR)$ such that:
		$$\left[\begin{matrix}
			(Y_1)_{x_0} \\
			\vdots\\
			(Y_N)_{x_0}
		\end{matrix}\right] =
		A\cdot \left[\begin{matrix}
			(X_1)_{x_0} \\
			\vdots\\
			(X_n)_{x_0} 
		\end{matrix}\right],$$
		where $(-)_{x_0}$ denotes the class in $\CF_{x_0}$. Then using that $Y_1,\dots, Y_N$ are generators near $x_0$, there exists $B\in C(M_{N\times N}(\RR))$ such that $B(x_0)=0$ and
		$$\left[\begin{matrix}
			Y_1 \\
			\vdots\\
			Y_N
		\end{matrix}\right] -
		A\cdot \left[\begin{matrix}
			X_1 \\
			\vdots\\
			X_n
		\end{matrix}\right]=
		B \left[\begin{matrix}
			Y_1 \\
			\vdots\\
			Y_N
		\end{matrix}\right].$$
		Therefore,
		$$
		A\cdot \left[\begin{matrix}
			X_1 \\
			\vdots\\
			X_n
		\end{matrix}\right]=
		(Id-B) \left[\begin{matrix}
			Y_1 \\
			\vdots\\
			Y_N
		\end{matrix}\right].$$
		Using the smoothness of $B$ and that $B(x_0)=0$ there must exists a neighborhood $U\subset V$ of $x_0$ such that $Id-B$ is invertible. Finally on $U$:
		$$
		\left[\begin{matrix}
			Y_1 \\
			\vdots\\
			Y_N
		\end{matrix}\right]=
		(Id-B)^{-1} A\cdot \left[\begin{matrix}
			X_1 \\
			\vdots\\
			X_n
		\end{matrix}\right].$$
		which proves that $X_1,\dots,X_n$ are generators of $\iota_U^{-1}\CF$.	
	\end{proof}
	
	This fiber $\CF_x$ also determines whether or not $\CF$ is a projective module. Recall that, an $\CI_c(M)$-module $\CF$ is \textbf{projective} if there is another $\CI_c(M)$-module $Q$ such that $\CF\oplus Q$ is a free $\CI_c(M)$-module. The Serre-Swan theorem says that any $\CI_c(M)$-module is projective if and only if there exists a vector bundle $A\to M$ such that $\CF \cong \Gamma_c(A)$ as $\CI_c(M)$-modules (the free $\CI_c(M)$-modules are given by trivial vector bundles over $M$).
	
	This fact says that a projective foliation $\CF$ behaves quite well (see \cite{DebordJDG}, where they are called almost regular foliations). If you see the definition of Lie algebroids \ref{def:Lie.algd}, the vector bundle $A$ associated to $\CF$ acquires canonically a Lie algebroid structure for which the anchor map is injective on an open dense set.
	
	\begin{lem}\label{lem:fiber.proy} $\CF_x$ has constant dimension for all $x\in M$ if and only if $\CF$ is a projective module.
	\end{lem}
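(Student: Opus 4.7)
I would prove the two implications separately. For the direction $(\Leftarrow)$, assuming $\CF\cong\SEC_c(A)$ for a vector bundle $A\to M$, the key step is to identify $I_x\SEC_c(A)$ with the submodule of sections of $A$ vanishing at $x$. One inclusion is immediate, and for the other I would split a section $s$ vanishing at $x$ as $\chi s + (1-\chi)s$ using a bump function $\chi$ supported in a trivialising neighbourhood of $x$, express $\chi s$ in a local frame with coefficients vanishing at $x$, and absorb $(1-\chi)s$ by multiplying against a cutoff that vanishes at $x$ but equals $1$ on its support. This gives $\CF_x \cong A_x$, so $\dim \CF_x = \mathrm{rank}(A)$ independently of $x$.

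For the direction $(\Rightarrow)$, assuming $\dim \CF_x = n$ is constant, the preceding lemma provides for each $x_0\in M$ a neighbourhood $U$ and vector fields $X_1,\dots,X_n$ generating $\iota_U^{-1}\CF$ whose classes form a basis of $\CF_{x_0}$. I would show, after possibly shrinking $U$, that $[X_1]_x,\dots,[X_n]_x$ is a basis of $\CF_x$ for every $x\in U$: spanning follows from the generating property together with a bump-function argument (any $Y\in\CF$ may be multiplied by a cutoff supported in $U$ and equal to $1$ near $x$ without changing its class at $x$, after which it lies in $\iota_U^{-1}\CF$ and expands in the $X_i$), while linear independence is forced by the constant-dimension hypothesis. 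Consequently $\iota_U^{-1}\CF$ is a free $\CI_c(U)$-module of rank $n$ with basis $X_1,\dots,X_n$.

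To conclude, I would assemble the local free modules into a vector bundle $A\to M$ with fibre $\CF_x$ over $x$. On an overlap $U\cap V$, two local frames are related by a unique change-of-basis matrix $(g_{ij})$, whose smoothness I would establish by comparing both frames to a common set of local generators supplied by local finite generation and inverting a smooth matrix that is the identity at a chosen point, exactly as in the previous lemma. The resulting cocycle defines $A$, and the evaluation map $\CF\to\SEC_c(A)$, $Y\mapsto(x\mapsto [Y]_x)$, is injective by the pointwise basis property above and surjective via a partition-of-unity patching of local expressions. The main obstacle I anticipate is precisely the smoothness of the transition functions $g_{ij}$; once this is handled, the remainder is routine bookkeeping with bump functions and partitions of unity.
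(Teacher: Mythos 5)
Your proof is correct and takes essentially the same route as the paper's, which is only a two-line sketch invoking the Serre--Swan identification: if $\CF\cong\Gamma_c(A)$ then $\CF_x\cong A_x$ has constant dimension, and conversely the local-generators lemma yields pointwise bases so that the fibres $\CF_x$ assemble into a vector bundle $A$ with $\Gamma_c(A)\cong\CF$. You simply fill in the bump-function, transition-function and patching details that the paper's sketch omits.
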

	\begin{proof} (Sketch)
		If $\CF$ is projective, there exists a vector bundle $A\fto M$ whose module of sections are isomorphic to $\CF$. Then $\dim(\CF_x)=\dim(A_x)$ which is constant.
		
		For the converse suppose $\dim(\CF_x)$ is constant. Then one can create a vector bundle $A$ with the condition $A_x\cong\CF_x$. Finally $\Gamma_c(A)\simeq \CF$.
	\end{proof}
	
	An important characteristic of singular foliations is that they give a singular distribution on and a smooth partition of their underlying manifolds. Nevertheless, the singular foliation contains more information than the smooth partition, for example:
	
	In $\RR^2$ we can choose
	$$\CF^0:=\left< x\de_x +y\de_y, y\de_x -x\de_y \right>_{\CI_c(M)},$$
	$$\CF^1:=\left<x\de_x, y\de_y, y\de_x, x\de_y \right>_{\CI_c(M)}.$$
	
	Both foliations give the smooth partition $L_0=\{(0,0)\}$ and $L_1=(\RR^2-\{(0,0)\})$ but they do not define the same submodule. In fact, looking at the dimensions of the fibers at the origin we see that  $dim(\CF^0_{(0,0)})=2$ and $dim(\CF^1_{(0,0)})=4$, which implies that $\CF^0$ is a projective module whereas $\CF^1$ is not.
	
	\section{Transversal maps}\label{sec:trans.maps}
	Let $P,M$ be manifold and $\pi:P\fto M$ a smooth map. Consider the following maps of sections:
	\begin{align*}
		d\pi\colon  \CX(P) &\fto \Gamma(P,\pi^*TM), Y\mapsto d\pi Y,\\
		\pi^*\colon\CX(M)&\fto \SEC(P,\pi^*TM), X\mapsto X\circ \pi.
	\end{align*}
	
	A definition of transversality given in \cite{AndrSk} is the following:
	
	\begin{defi}
		A map $\pi\Fr P\fto M$ is transversal to a foliation $\CF$ on $M$ if the canonical map:
		\begin{equation}\label{defi:canmap}\CX_c(P)\oplus \left< \pi^*\CF \right>_{\CI_c(P)} \fto \SEC_c(\pi^*TM);Y\oplus \xi\mapsto (d\pi Y)+\xi
		\end{equation}
		is onto.
	\end{defi}
	
	This condition is a point wise property. This means that it is equivalent to the following statement, which was suggested by Henrique Bursztyn in his article \cite{HenT}:
	
	\begin{prop}
		A map $\pi\Fr P\fto M$ is transversal to a foliation $\CF$ on $M$ if and only if for all $p\in P$ the following condition is satisfied:
		\begin{equation}\label{eq:trans}
			\img(d_p\pi)+F_p=T_{\pi(p)}M,\end{equation}
		where $F_p:=\{X(p)\st X\in \CF\}$.
	\end{prop}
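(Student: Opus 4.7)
The plan is to prove both implications separately. The forward direction is essentially a pointwise evaluation, while the reverse direction is the one where the hypothesis that $\CF$ is locally finitely generated plays an essential role.

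For the forward direction, assume that the map (\ref{defi:canmap}) is surjective, fix $p\in P$, and take any $v\in T_{\pi(p)}M$. Using a bump function on $P$, I extend $v$ to a compactly supported section $\sigma\in \SEC_c(\pi^*TM)$ with $\sigma(p)=v$. By surjectivity, write $\sigma = d\pi(Y)+\xi$ with $Y\in\CX_c(P)$ and $\xi=\sum_j g_j\,\pi^* W_j$, where $g_j\in\CI_c(P)$ and $W_j\in\CF$. Evaluating at $p$ gives $v = d_p\pi(Y(p)) + \sum_j g_j(p)\,W_j(\pi(p))$, and the second summand lies in $F_{\pi(p)}$ since each $W_j(\pi(p))\in F_{\pi(p)}$, proving (\ref{eq:trans}).

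For the reverse direction, assume (\ref{eq:trans}) at every $p\in P$, and let $\sigma\in\SEC_c(\pi^*TM)$. Fix $p\in P$ and pick local generators $X^1,\dots,X^l\in\CX(V)$ of $\iota_V^{-1}\CF$ on an open neighborhood $V$ of $\pi(p)$ (using Definition \ref{def:loc.gen}). Consider the bundle morphism on $\pi^{-1}(V)$,
\begin{equation*}
\Phi\colon TP|_{\pi^{-1}(V)}\oplus \underline{\RR^l}\fto \pi^*TM|_{\pi^{-1}(V)},\qquad (w,(a_i))\mapsto d\pi(w)+\sum_{i=1}^l a_i\, (X^i\circ\pi).
\end{equation*}
At the point $p$, hypothesis (\ref{eq:trans}) says exactly that $\Phi_p$ is surjective (since the values $X^i(\pi(p))$ span $F_{\pi(p)}$). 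Surjectivity of a bundle map is open in the base, so $\Phi$ is surjective on some open neighborhood $U_p\subset \pi^{-1}(V)$ of $p$, and therefore admits a smooth right inverse over $U_p$. Applying this right inverse to $\sigma|_{U_p}$ produces $Y_p\in\CX(U_p)$ and $f_p^1,\dots,f_p^l\in\CI(U_p)$ with $\sigma|_{U_p}=d\pi(Y_p)+\sum_i f_p^i\,\pi^*X^i$.

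To finish, I cover $\mathrm{supp}(\sigma)$ by finitely many such neighborhoods $U_{p_1},\dots,U_{p_r}$ (using compactness), together with the open set $P\setminus\mathrm{supp}(\sigma)$, and pick a subordinate partition of unity $\{\chi_\alpha\}$. Each $\chi_\alpha\sigma$ is then $d\pi(\chi_\alpha Y_{p_\alpha})+\sum_i (\chi_\alpha f_{p_\alpha}^i)\,\pi^*X^{i,\alpha}$. The terms $\chi_\alpha Y_{p_\alpha}$ extend by zero to elements of $\CX_c(P)$, and each $(\chi_\alpha f_{p_\alpha}^i)\,\pi^*X^{i,\alpha}$ can be rewritten as $h_{\alpha,i}\cdot\pi^*(\rho_\alpha X^{i,\alpha})$ for a compactly supported bump $\rho_\alpha\in\CI_c(V_\alpha)$ equal to $1$ on $\pi(\mathrm{supp}(\chi_\alpha))$ and $h_{\alpha,i}\in\CI_c(P)$, where now $\rho_\alpha X^{i,\alpha}\in\iota_{V_\alpha}^{-1}\CF\subset\CF$; so this term lies in $\langle \pi^*\CF\rangle_{\CI_c(P)}$. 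Summing over $\alpha$ yields the desired decomposition of $\sigma$.

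The main technical point is the reverse direction: the pointwise condition must be upgraded to a statement about sections. The obstruction is resolved by the local finite generation of $\CF$, which turns $F$ locally into the image of a bundle map from a trivial bundle, so that standard techniques (openness of surjectivity and partition of unity) apply; the bookkeeping with compact supports is routine but must be done carefully so that the gluing lands in $\langle \pi^*\CF\rangle_{\CI_c(P)}$ rather than merely $\langle\pi^*\CF\rangle_{\CI(P)}$.
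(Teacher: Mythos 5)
Your proposal is correct and follows essentially the same route as the paper's proof: the forward direction by extending a tangent vector to a compactly supported section of $\pi^*TM$ and evaluating at the point, and the reverse direction by converting the pointwise condition into local solutions built from finitely many local generators of $\CF$ and then gluing with a partition of unity (the paper extracts a local frame of $\pi^*TM$ from $d\pi$ of a frame of $TP$ together with pullbacks of elements of $\CF$, where you instead invoke openness of surjectivity and a smooth right inverse of the bundle morphism $\Phi$ --- the same argument in different packaging). Your bookkeeping step replacing the local generators $X^{i,\alpha}$ by genuine elements $\rho_\alpha X^{i,\alpha}\in\CF$ so that the glued expression lands in $\left<\pi^*\CF\right>_{\CI_c(P)}$ is a detail the paper passes over silently, so no gap there.
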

	\begin{proof}
		{\bf Transversal $\Rightarrow$ condition (\ref{eq:trans}):} Take arbitrary $p\in P$ and $z\in T_{\pi(p)} M$. There is a section $Z\in \SEC_c(\pi^*TM)$ such that $Z(p)=z$. Using tranversality in equation (\ref{defi:canmap}), there exist a section $Y\in \CX_c(P)$, elements $X_i\in \CF$ and $g_i\in \CI_c(P)$ such that 
		\[d\pi(Y)+\sum_i g_i\pi^*X_i=Z.\]
		Applying this formula to the point $p$ we get:
		$$d_p\pi Y+\sum_i g_i(p)X_i(\pi(p))=Z(p)=z.$$
		
		Denote $X:=\Sigma_i g_i(p)X_i\in \CF$, then $d_p\pi Y+X(\pi(p))=Z(p)=z$. Because $p$ and $z$ are arbitrary we get condition (\ref{eq:trans}).
		
		{\bf Condition (\ref{eq:trans}) $\Rightarrow$ transversal:} Take an arbitrary $p\in P$, there exists a local frame $\{Y_i\}$ for $TP$ around $p$ and some local generators $\{X_j\}$'s for $\CF$ nearby $\pi(p)$. Using the condition \ref{eq:trans} we get that the $\{d_p\pi Y_i\}$ and the $\{X_j(\pi(p))\}$ span $T_{\pi(p)}M$. It is possible to choose among the $d_p\pi Y_i$'s and the $X_j(\pi(p))$'s a basis for $T_{\pi(p)}M$. Because of the regularity of $\pi^*TM$ as a vector bundle over $P$ the chosen $d\pi Y_i$'s and $\pi^*X_j$'s form a local frame of $\pi^*TM$ nearby $p$. Therefore for each $p$ there exists a neighborhood $U_p$, some vectorfields $Y_i^p\in \CX(P)$ and $X_j^p\in \CF$ such that the $d\pi Y_i^p$ and the $\pi^*X_j^p$ form a basis for $\pi^*TM|_{U_p}$.
		
		Take $Z\in \SEC_c(\pi^*TM)$. For each $p\in \text{supp}(Z)$ we find a $U_p$, $d\pi Y_i^p$ and $\pi^*X_j^p$ as in the last paragraph. There exist unique smooth functions $g^p_i,g^p_j$ on $U_p$ such that:
		\begin{equation}\label{equ:trans.z}
			Z|_{U_p}=\sum_i g_i^p (d\pi  Y^p_i)+\sum_j g_j^p (\pi^*X^p_j).
		\end{equation}
		
		Because $\text{supp}(Z)$ is compact, there exist finitely many $U_p$ that cover $\text{supp}(Z)$, take $U_k$'s as this finite cover for $\text{supp}(Z)$ and $g_i^k, g_j^k, d\pi Y^k_i,  \pi^* X^k_j$ the correspondent functions and vector fields satisfying equation (\ref{equ:trans.z}) for each $U_k$. Take a partition of unity $f_k$'s subordinate to the $U_k$'s. Therefore, $Z$ can be written as the following finite sum of elements:
		
		\begin{align*}
			Z &=  \sum_{i,k} f_k g_i^k (d\pi Y^k_i )+\sum_{j,k} f_k g_j^k (\pi^* X^k_j),\\
			&= d\pi\left(\sum_{i,k} f_k g_i^k  Y^k_i \right)+\sum_{j,k} f_k g_j^k (\pi^* X^k_j),
		\end{align*}
		where $(\Sigma  f_k g_i^k  Y^k_i)\in \CX_c(P)$ and $(\Sigma f_k g_j^k (\pi^* X^k_j))\in \left<\pi^*\CF\right>_{\CI_c(P)}$. Then $Z$ is in the image of the canonical map (\ref{defi:canmap}). Finally, because $Z$ is arbitrary, we have transversality.
	\end{proof}
	
	\section{Pullbacks and push forwards}\label{sec:pull.push}
	
	In this section we review the pullback foliation for transversal maps, as it was first appeared in \cite{AndrSk}. We will also give an easier description of this pullback foliation when the map is a submersion and explain a local picture for singular foliations.
	
	We start with the definition of pullback foliation.
	
	\begin{defi}\label{def:pullback}
		Let $P,M$ be manifolds, $\CF$ a foliation on $M$  and  $\pi:P\fto M$ a smooth map. Recall the maps of sections:
		\begin{align*}
			d\pi\colon  \CX(P) &\fto \SEC(P,\pi^*TM), Y\mapsto d\pi Y,\\
			\pi^*\colon\CX(M)&\fto \SEC(P,\pi^*TM), X\mapsto X\circ \pi.
		\end{align*}
		The {\bf pullback foliation} of $\CF$ under $\pi$ \cite[proposition 1.10]{AndrSk} is the submodule of $\CX(P)$ given by:
		\[\pi^{-1}(\CF):= \left( d\pi^{-1}\left(\left<\pi^*\CF\right>_{\CI_c(P)}\right)\right)_c.\]
	\end{defi}
	Let us explain the construction of the pullback foliation step by step: take the image $\pi^*(\CF)$, then make it a $\CI_c(P)$-module $\left<\pi^*\CF\right>_{\CI_c(P)}$, take the preimage by the module map $d\pi$ and finally the compactly supported elements of it.
	
	\begin{prop}Let $P,M$ be manifolds, $\CF$ a foliation on $M$  and $\pi\Fr P\fto M$ be a smooth map.
		\begin{enumerate}
			\item $\pi^{-1}(\CF)$ is closed under the Lie bracket.
			\item If $\pi$ is transverse to $\CF$ then $\pi^{-1}(\CF)$ is locally finitely generated.
			\item Let $S$ be a manifold and $\varphi\Fr S\fto P$ be a smooth map. The map $\varphi$ is transverse to $\pi^{-1}\CF$ if and only if $\pi\circ \varphi$ is transverse to to $\CF$ and we have $(\pi\circ \varphi)^{-1}\CF=\varphi^{-1}(\pi^{-1}\CF)$.
		\end{enumerate}
	\end{prop}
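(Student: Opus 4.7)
For any $Y \in \pi^{-1}\CF$ I would write $d\pi Y = \sum_i f_i\, \pi^*X_i$ with $X_i \in \CF$ and $f_i \in \CI_c(P)$ (which one may arrange by multiplication against a bump function supported where $Y$ is). The key identity, obtained by evaluating at a point, is $Y(h\circ\pi) = \sum_i f_i\,(X_i h)\circ\pi$ for any $h\in\CI(M)$. Applying this to $Y_1,Y_2 \in \pi^{-1}\CF$ with data $(f_i,X_i)$ and $(g_j,Z_j)$, I would compute $[Y_1,Y_2](h\circ\pi)$ by Leibniz and observe that the iterated second-order terms combine into a bracket on the $M$-side, giving
\[
d\pi[Y_1,Y_2] \;=\; \sum_j Y_1(g_j)\,\pi^*Z_j \;-\; \sum_i Y_2(f_i)\,\pi^*X_i \;+\; \sum_{i,j} f_i g_j\,\pi^*[X_i,Z_j].
\]
The first two sums are in $\langle\pi^*\CF\rangle_{\CI_c(P)}$ by inspection, and involutivity $[X_i,Z_j]\in\CF$ handles the third; compact support of $[Y_1,Y_2]$ is automatic.

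\textbf{Part (2).} Fix $p\in P$, local generators $X_1,\ldots,X_n$ of $\CF$ near $\pi(p)$, and a local frame $e_1,\ldots,e_m$ of $TP$ near $p$. The transversality equation (\ref{eq:trans}) says that $\{d\pi(e_k)(p)\}\cup \{X_i(\pi(p))\}$ spans $T_{\pi(p)}M$. Shrinking the neighborhood, I would extract from this list a local frame of $\pi^*TM$ and, by inverting a smooth square matrix, produce finitely many local vector fields $\widetilde Y_1,\ldots,\widetilde Y_N\in\CX(P)$ with $d\pi(\widetilde Y_\alpha) \in \langle \pi^*\CF\rangle$, together with the finite submodule of fields annihilated by $d\pi$ which are captured by those $e_k$ not selected into the frame. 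Multiplying the resulting generators by a bump function and using Lemma~\ref{lemma:unionoffinite}(c) yields finite generation of $\iota_U^{-1}(\pi^{-1}\CF)$. I expect this step to be the main obstacle: when $\pi$ is not a submersion, $\ker d\pi$ is not a subbundle, so one must argue carefully that the ``vertical'' module is still finitely generated locally, which is where transversality (controlling the cokernel of $d\pi$ modulo $\CF$) really enters.

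\textbf{Part (3).} The linchpin is the pointwise identity $(\pi^{-1}\CF)_p = (d_p\pi)^{-1}(F_{\pi(p)})$, immediate from the definition combined with transversality of $\pi$. Using it, transversality of $\varphi$ to $\pi^{-1}\CF$ at $s$ reads $d_s\varphi(T_sS)+(d_{\varphi(s)}\pi)^{-1}(F_{\pi(\varphi(s))})=T_{\varphi(s)}P$; pushing forward by $d_{\varphi(s)}\pi$ and invoking transversality of $\pi$ yields transversality of $\pi\circ\varphi$. For the reverse implication, given $v\in T_{\varphi(s)}P$, transversality of $\pi\circ\varphi$ produces $w\in T_sS$ and $X\in F$ with $d\pi(v)=d(\pi\circ\varphi)w+X$, so $v-d\varphi(w)\in (d\pi)^{-1}(F)$, completing the equivalence. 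For the module equality, the inclusion $(\pi\circ\varphi)^{-1}\CF\supseteq \varphi^{-1}(\pi^{-1}\CF)$ is the chain rule: if $d\varphi W=\sum h_j\varphi^*Y_j$ with $d\pi Y_j=\sum_k f_{jk}\pi^*X_{jk}$, then $d(\pi\circ\varphi)W=\sum_{j,k}h_j(f_{jk}\circ\varphi)(\pi\circ\varphi)^*X_{jk}$. The opposite inclusion requires lifting: starting from $d(\pi\circ\varphi)W\in\langle(\pi\circ\varphi)^*\CF\rangle$, I would use the local generators of $\pi^{-1}\CF$ produced in (2), together with transversality of $\varphi$, to rewrite $d\varphi W$ as a $\CI_c$-combination of $\varphi^*Y_\alpha$ for $Y_\alpha\in\pi^{-1}\CF$, thereby placing $W$ in $\varphi^{-1}(\pi^{-1}\CF)$.
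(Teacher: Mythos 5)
Your part (1) is correct and complete: the Leibniz computation of $[Y_1,Y_2](h\circ\pi)$ is exactly the right identity, and compact supports and involutivity of $\CF$ do the rest. The pointwise manipulations in (3) (the equivalence of the two transversality statements via $(d\pi)^{-1}(F)$, and the chain-rule inclusion $\varphi^{-1}(\pi^{-1}\CF)\subseteq(\pi\circ\varphi)^{-1}\CF$) are also fine. Be aware, though, that the paper offers no proof of its own here: it cites Propositions 1.10 and 1.11 of \cite{AndrSk}, where the argument factors $\pi$ locally as the graph embedding $p\mapsto(p,\pi(p))$ into $P\times M$ followed by the projection $P\times M\to M$, treating the submersion case (as in Lemma \ref{lem:pullback}) and the transverse-submanifold case separately. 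Your direct frame-and-matrix route is therefore a genuinely different approach and has to stand on its own --- and as written it does not yet.

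The genuine gap is in (2), exactly where you flag it, and your proposed fix misidentifies what is needed. The frame vectors $e_k$ not selected into the frame of $\pi^*TM$ are \emph{not} annihilated by $d\pi$, and $\ker d\pi$ is in general neither a subbundle nor a module with an obvious finite generating set, so ``the finite submodule of fields annihilated by $d\pi$'' is not available; fortunately it is also not needed. After choosing near $p$ a frame of $\pi^*TM$ of the form $\{d\pi\, e_k\}_{k\le r}\cup\{\pi^*X_j\}_{j\le s}$, consider the bundle map $\alpha\colon TU\to \pi^*TM/E$ induced by $d\pi$, where $E$ is the subbundle spanned by $\pi^*X_1,\dots,\pi^*X_s$. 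Each remaining generator satisfies $\pi^*X_i=\sum_k c_{ik}\,d\pi\, e_k+\sum_j d_{ij}\,\pi^*X_j$ with smooth coefficients, so condition (\ref{eq:trans}) forces $\alpha$ to be fibrewise surjective; hence $\ker\alpha=(d\pi)^{-1}(E)$ --- strictly larger than $\ker d\pi$ --- is an honest subbundle of $TU$. A local frame of $\ker\alpha$ (whose sections lie in $\pi^{-1}\CF$, since their $d\pi$-image has smooth coefficients in the $\pi^*X_j$, $j\le s$) together with the fields $W_i:=\sum_k c_{ik}e_k$ for $i>s$ (which satisfy $d\pi\,W_i=\pi^*X_i-\sum_j d_{ij}\pi^*X_j$) generates $\iota_U^{-1}(\pi^{-1}\CF)$: given $Z$ with $d\pi Z=\sum_i f_i\,\pi^*X_i$, the difference $Z-\sum_{i>s}f_iW_i$ lies in $\Gamma_c(\ker\alpha)$, and Lemma \ref{lemma:unionoffinite} handles the cutoffs. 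Without this observation (or the graph factorization used in \cite{AndrSk}), part (2) is unproved; and the reverse inclusion $(\pi\circ\varphi)^{-1}\CF\subseteq\varphi^{-1}(\pi^{-1}\CF)$ in (3), which you likewise only gesture at, requires precisely these local generators of $\pi^{-1}\CF$ (or again the factorization), so it inherits the same gap. Your pointwise identity $\{Z(p):Z\in\pi^{-1}\CF\}=(d_p\pi)^{-1}(F_{\pi(p)})$ is true but is itself a consequence of this local analysis, not ``immediate from the definition''.
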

	\begin{proof} Prop 1.10 and 1.11 in \cite{AndrSk}
	\end{proof}
	
	The following lemma contains a simplified description for the pullback foliation when the transversal map is replaced by a submersion:
	
	\begin{lem}\label{lem:pullback}
		Let $(M,\CF)$ be a foliated manifold and $\pi\colon P\rightarrow M$ a submersion, then
		\[\pi^{-1}(\CF)= \left<(d\pi)^{-1} \pi^* (\CF)\right>_{\CI_c(P)}.\]
	\end{lem}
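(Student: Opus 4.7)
The plan is to show both inclusions. The inclusion $\supseteq$ is essentially immediate from the definitions: for any $Y \in (d\pi)^{-1}\pi^*(\CF)$, meaning $d\pi(Y) = \pi^*(X)$ for some $X \in \CF$, and any $f \in \CI_c(P)$, the vector field $fY$ has compact support and satisfies $d\pi(fY) = f\,\pi^*(X) \in \langle \pi^*\CF \rangle_{\CI_c(P)}$, hence $fY \in \pi^{-1}(\CF)$. Taking $\CI_c(P)$-linear combinations gives the containment.

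For the reverse inclusion $\subseteq$, the crucial use of the submersion hypothesis is to obtain a smooth splitting of $d\pi$. Since $d\pi\colon TP \to \pi^*TM$ is a surjective bundle map over $P$, the short exact sequence of vector bundles $0 \to \ker(d\pi) \to TP \to \pi^*TM \to 0$ splits smoothly; I would fix once and for all a smooth bundle map $\sigma\colon \pi^*TM \to TP$ with $d\pi \circ \sigma$ the identity on $\pi^*TM$. Given $Y \in \pi^{-1}(\CF)$, by definition $Y$ has compact support and $d\pi(Y) = \sum_i g_i \pi^*(X_i)$ for finitely many $g_i \in \CI_c(P)$ and $X_i \in \CF$. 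Define $\tilde X_i := \sigma(\pi^*(X_i)) \in \CX(P)$: by construction $d\pi(\tilde X_i) = \pi^*(X_i)$, so each $\tilde X_i$ lies in $(d\pi)^{-1}\pi^*(\CF)$, and the residual $V := Y - \sum_i g_i \tilde X_i$ is $\pi$-vertical ($d\pi(V) = 0$) and compactly supported.

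The final step is to absorb $V$ into the module on the right. Every vertical vector field lies in $(d\pi)^{-1}\pi^*(\CF)$ (taking $X = 0 \in \CF$, whose pullback is the zero section), so choosing a cutoff $\chi \in \CI_c(P)$ equal to $1$ on $\mathrm{supp}(V)$ allows me to write $V = \chi \cdot V$, placing $V$ in $\langle (d\pi)^{-1}\pi^*(\CF)\rangle_{\CI_c(P)}$. Then $Y = \sum_i g_i \tilde X_i + \chi V$ lies in the desired module. The main potential obstacle is the global existence of the splitting $\sigma$, but this is a standard fact about short exact sequences of smooth vector bundles (for instance, choose a Riemannian metric on $TP$ and take the orthogonal complement of $\ker d\pi$). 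The only other subtlety is tracking compact supports throughout, which works out cleanly because $\langle \pi^*\CF \rangle_{\CI_c(P)}$ already consists of compactly supported sections by construction.
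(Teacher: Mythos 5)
Your proof is correct and follows essentially the same route as the paper's: both directions are handled by writing an element of $\pi^{-1}(\CF)$ as an $\CI_c(P)$-combination of lifts of elements of $\pi^*\CF$ plus a compactly supported vertical field, which is absorbed into the right-hand module via a cutoff (using $0\in\CF$). The only cosmetic difference is that you make the existence of lifts explicit through a global splitting $\sigma$ of $d\pi$, whereas the paper simply invokes $\pi^*\CF\subset\img(d\pi)$, which rests on the same standard fact about submersions.
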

	
	Note that the set $(d\pi)^{-1} \pi^* (\CF)$ consists of the projectable vector fields in $\CX(P)$ going to $\CF$. Therefore $\pi^{-1}(\CF)$ is generated by projectable vector fields going to $\CF$
	
	\begin{proof} Call $H:=\pi^*\CF$. We will prove that:
		$$\left( d\pi^{-1}\left(\left< H\right>_{\CI_c(P)}\right)\right)_c=\left<(d\pi)^{-1} H\right>_{\CI_c(P)}.$$
		To prove this fact we will use double inclusion and the following facts:
		\begin{itemize}
			\item  $\pi$ is a submersion and therefore $H\subset \img(d\pi)$.
			\item It is clear that $0\in H$, which implies that $\ker(d\pi)\subset (d\pi)^{-1}H$ and $\left(\ker(d\pi)\right)_c\subset \left<(d\pi)^{-1}H\right>_{\CI_c(P)}$.
		\end{itemize}
		Now we will start the proof:\\

		($\supseteq$):  Note that every element of $\left< \left((d\pi)^{-1}H\right)\right>_{\CI_c(P)}$ has compact support, therefore we only need to prove: \[(d\pi)^{-1}\left(\left< H\right>_{\CI_c(P)}\right)\supseteq  \left<(d\pi)^{-1}H\right>_{\CI_c(P)}.\]
		
		To do so, note that $$d\pi\left(\left< d\pi^{-1}(H)\right>_{\CI_c(P)}\right)=\left<d\pi( d\pi^{-1}(H))\right>_{\CI_c(P)}=\left< H\right>_{\CI_c(P)},$$
		using in the last equality that $H\subset \img(d\pi)$ implies $d\pi (d\pi^{-1}(H))= H$.
		
		Finally:
		\[ \left<(d\pi)^{-1}H\right>_{\CI_c(P)}\subseteq (d\pi)^{-1}\left(\left< H\right>_{\CI_c(P)}\right).\]
		
		($\subseteq$): Take $X\in \left((d\pi)^{-1}\left( \left<H\right>_{\CI_c(P)}\right)\right)_c$, we have that $X\in \CX_c(P)$ and:\\
		\[d\pi(X)=\sum_i f_i Y_i, \hspace{.8in} \text{for} \hspace{.1in} f_i\in \CI_c(P) \hspace{.1in}\text{and} \hspace{.1in} Y_i\in H.\]
		
		Using again that $H\in Im(d\pi)$, there are $X_i\in (d\pi)^{-1}H$ such that $d\pi(X_i)=Y_i$. These elements are not necessarily compactly supported, but the elements $f_i X_i\in \CX_c(P)$ have compact support. Then $X-\Sigma f_i X_i\in \CX_c(P)$ has compact support and therefore:
		\[\left(X-\Sigma f_i X_i\right)\in Ker(d\pi)_c\subset \left<(d\pi)^{-1}H\right>_{\CI_c(P)}.\]
		Finally, because $\Sigma f_i X_i\in \left<(d\pi)^{-1}H\right>_{\CI_c(P)}$, we get $X\in \left<(d\pi)^{-1}H\right>_{\CI_c(P)}$, concluding the statement.
	\end{proof}
	
	\begin{rem} Note that in the proof of \ref{lem:pullback}, the hypothesis  on $\pi$ being a submersion is only used to justify that $H\subset \img(d\pi)$ where $H=\pi^*\CF$.
	\end{rem}
	
	\begin{ex} Let $(M,\CF)$ a foliated manifold and $U\subset M$ an open set. Consider the inclusion $\iota_U\colon U\fto M$, which is clearly a submersion. The set $(d\iota_U)^{-1} \iota_U^* (\CF)$ is equal to $\CF|_U$, therefore by Lemma \ref{lem:pullback} we have:
		$$\iota_U^{-1}\CF=\left<\CF|_U\right>_{\CI_c(U)}=\{X|_U \st X\in \CF \y \mathrm{supp}(X)\subset U\}.$$	
	\end{ex}
	
	A result of I. Androulidakis and M. Zambon in \cite[lemma 3.2]{AZ1} shows us that there is a way to push forward foliations via surjective submersions with connected fibers, as the following proposition:
	
	\begin{lem}\label{lem:submfol} Let $\pi\colon P \to M$ be a {surjective} submersion with connected fibres. Let $\cF$ be a singular foliation on $P$, such that $\Gamma_c(\ker d\pi) \subset \cF$. Then there is a unique singular foliation  $\cF_M$ on $M$
		with $\pi^{-1}(\cF_M) = \cF$. 
	\end{lem}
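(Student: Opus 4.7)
The uniqueness part should follow by a localization argument. Suppose $\cF_M$ and $\cF_M'$ both satisfy $\pi^{-1}(\cF_M) = \cF = \pi^{-1}(\cF_M')$. Given $X \in \cF_M$, cover $\mathrm{supp}(X)$ by open sets $U_\alpha \subset M$ each admitting a section $\sigma_\alpha \colon U_\alpha \to P$ of $\pi$, and let $\{\chi_\alpha\}$ be a subordinate partition of unity. For each $\alpha$, one can construct a projectable vector field $\tilde X_\alpha$ on $P$ globally lifting $\chi_\alpha X$ (i.e.\ $d\pi \tilde X_\alpha = \pi^*(\chi_\alpha X)$), patched together from the local lifts $(y,z)\mapsto (\chi_\alpha X)(y)$ provided by the submersion structure via a partition of unity on $P$. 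Multiplying by a cutoff $f \in \CI_c(P)$ with $f\circ\sigma_\alpha=1$ on $\mathrm{supp}(\chi_\alpha)$ yields $f\tilde X_\alpha\in\pi^{-1}(\cF_M)=\pi^{-1}(\cF_M')$, so by Lemma~\ref{lem:pullback} we may write $f\tilde X_\alpha = \sum_i g_i \tilde Y_i$ with $g_i \in \CI_c(P)$ and each $\tilde Y_i$ a projectable lift of some $Y_i \in \cF_M'$. Applying $d\pi$ and restricting along $\sigma_\alpha$ gives $\chi_\alpha X = \sum_i (\chi_\alpha\cdot g_i\circ \sigma_\alpha)\, Y_i$, so $\chi_\alpha X \in \cF_M'$. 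Summing over $\alpha$ shows $X \in \cF_M'$; by symmetry $\cF_M = \cF_M'$.

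For existence, I would define $\cF_M$ by local pullback along sections. Around each $m_0 \in M$, choose a section $\sigma \colon U \to P$ of $\pi$ on a neighborhood $U \ni m_0$. The hypothesis $\Gamma_c(\ker d\pi) \subset \cF$, together with the splitting $T_{\sigma(m)}P = d\sigma(T_m U) \oplus \ker d_{\sigma(m)}\pi$, makes $\sigma$ transverse to $\cF$, so by the pullback construction of \S\ref{sec:pull.push} the module $\sigma^{-1}(\cF)$ is a singular foliation on $U$. Set $\cF_M|_U := \sigma^{-1}(\cF)$.

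The key step is to show this definition is independent of the chosen section. Given two sections $\sigma, \sigma'$ over a common (possibly shrunk) $U$, their images in each fiber $\pi^{-1}(m)$ are two points of the same connected fiber. Connectedness of the fibers permits the construction of a compactly supported vertical vector field $Z$ near $\sigma(U) \cup \sigma'(U)$ whose time-one flow satisfies $\phi^Z_1 \circ \sigma = \sigma'$. Since $Z \in \Gamma_c(\ker d\pi) \subset \cF$, the flow $\phi^Z_t$ preserves $\cF$ (see \S\ref{sec:aut.fol}), so $(\sigma')^{-1}(\cF) = (\phi^Z_1 \circ \sigma)^{-1}(\cF) = \sigma^{-1}((\phi^Z_1)^{-1}(\cF)) = \sigma^{-1}(\cF)$. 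Consequently the local definitions glue into a well-defined singular foliation $\cF_M$ on $M$.

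Finally, the identity $\pi^{-1}(\cF_M) = \cF$ is checked locally via Lemma~\ref{lem:pullback}. The module $\pi^{-1}(\cF_M)$ is $\CI_c(P)$-generated by vertical vector fields (which are in $\cF$ by hypothesis, as $0 \in \cF_M$) together with projectable lifts of $\cF_M$-elements. By the local identity $\cF_M|_U = \sigma^{-1}(\cF)$, any generator of $\cF_M|_U$ is of the form $d\pi(Y_i)|_{\sigma(U)}$ with $Y_i \in \cF$, and adjusting $Y_i$ by vertical fields (all of which lie in $\cF$) yields a projectable lift inside $\cF$, giving $\pi^{-1}(\cF_M) \subset \cF$. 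Conversely, in a submersion chart $(y,z)$ one decomposes any $Y \in \cF$, modulo $\Gamma_c(\ker d\pi)\subset\cF$, into a piece whose restriction along $\sigma$ represents an element of $\cF_M|_U$, placing $Y$ in $\pi^{-1}(\cF_M)$. The main technical obstacle will be the gluing step—proving that $\sigma^{-1}(\cF)$ does not depend on the local section—since it is precisely there that both hypotheses (connected fibers and $\Gamma_c(\ker d\pi)\subset \cF$) must be combined, via the principle that flows of $\cF$-vector fields are symmetries of $\cF$.
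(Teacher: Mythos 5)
Your overall strategy (define $\cF_M$ locally as $\sigma^{-1}(\cF)$ for local sections $\sigma$ of $\pi$, prove independence of $\sigma$, glue via the sheaf property of Prop.~\ref{prop:fol.loc.pro}, then verify $\pi^{-1}(\cF_M)=\cF$) is reasonable, and your uniqueness argument is essentially correct (it can be streamlined: $\iota_U^{-1}\cF_M=(\pi\circ\sigma)^{-1}\cF_M=\sigma^{-1}(\pi^{-1}\cF_M)$, so equality of the pullbacks forces equality of the restrictions, and then Prop.~\ref{prop:fol.loc.pro} applies). However, the two steps you state as if immediate are exactly where the content lies, and as written they do not go through. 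First, the section-independence step: it is not true in general that there is a single compactly supported vertical vector field $Z$ with $\phi^Z_1\circ\sigma=\sigma'$. Connectedness of the fibres only gives, via Lemma~\ref{lem:leaves} applied to the foliation $\Gamma_c(\ker d\pi)$, that $\sigma'(m_0)$ is reached from $\sigma(m_0)$ by a \emph{finite composition} of time-one flows of vertical fields; this composition matches the two sections at the single point $m_0$, not on a neighbourhood, so you still need a second, local argument (a fibrewise isotopy in a submersion chart, generated by a time-dependent vertical field, after shrinking $U$) to compare two sections through the same point, plus the fact that flows of vertical fields preserve $\cF$ (Prop.~\ref{prop:exp.in.aut}, using $\Gamma_c(\ker d\pi)\subset\cF$). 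Second, in the verification $\pi^{-1}(\cF_M)\subset\cF$, your phrase ``adjusting $Y_i$ by vertical fields yields a projectable lift inside $\cF$'' hides two problems: the difference between a projectable lift of a generator $W\in\sigma^{-1}(\cF)$ and a combination of the $Y_i$ is vertical only \emph{along} $\sigma(U)$, not on a neighbourhood, so it need not lie in $\Gamma(\ker d\pi)$; and even once membership in $\cF$ is secured near $\sigma(U)$, a generator of $\pi^{-1}(\cF_M)$ lives on the whole tube $\pi^{-1}(U)$, so you must propagate membership to points of the fibre far from the image of the section -- again by connectedness plus invariance of $\cF$ under vertical flows (which carry projectable fields to projectable fields over the same base field). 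The correct local input is that, because $\Gamma_c(\ker d\pi)\subset\cF$, in a submersion chart $\cF$ admits generators that are chart-projectable modulo vertical fields; with such generators both inclusions near $\sigma(U)$ become transparent.

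Note also that the thesis itself does not prove this lemma; it is quoted from \cite[Lemma 3.2]{AZ1} (see also Prop.~\ref{prop:submfol}), and the argument there is organised differently: $\cF_M$ is defined as the pushforward $\pi_*(\cF)$ generated by projections of $\pi$-projectable elements of $\widehat\cF$ (the shape of Lemma~\ref{lem:projgen2}), with connectedness of the fibres and the vertical flows entering precisely to propagate data along fibres and to show that the projections of finitely many projectable local generators of $\cF$ near one point of a fibre generate $\cF_M$ near the corresponding point of $M$. That formulation avoids the exact section-straightening your sketch relies on; if you keep your section-based formulation, you must supply the finite-composition-of-flows plus local-isotopy argument and the fibrewise propagation step explicitly.
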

	
	A version of the ``splitting theorem'' it is shown in \cite{AndrSk}, and we will recall it in the following proposition:
	
	\begin{prop}\label{prop:loc.pic.fol} Let $(M,\CF)$ be a foliated manifold and $x\in M$. Denote $k:=\dim(F_x)$ and $q:=\dim(M)-k$. There exists a neighborhood $U$ of $x$, a foliated manifold $(S,\CF_S)$ of dimension $q$ and a surjective submersion with connected fibers $\pi\colon U\fto S$ such that $\CF|_U=\pi^{-1}\CF_S$, $(F_S)_{\pi(x)}=0$ and $F_x=\ker(d_x \pi)$.	
	\end{prop}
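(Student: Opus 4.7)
The plan is to construct local ``foliated coordinates'' that straighten a basis of $F_x$ inside $\CF$, so that the transverse projection in these coordinates gives the desired submersion.

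First I would choose vector fields $X_1,\dots,X_k \in \CF$ whose values at $x$ form a basis of $F_x$, and a $q$-dimensional embedded submanifold $S\subset M$ through $x$ with $T_xS \oplus F_x = T_xM$. Define
\begin{equation*}
\phi\Fr V\times S' \fto M, \qquad (t_1,\dots,t_k,s)\mapsto \phi^{X_1}_{t_1}\circ\cdots\circ\phi^{X_k}_{t_k}(s),
\end{equation*}
where $V\subset \RR^k$ and $S'\subset S$ are small neighborhoods of $0$ and $x$. At $(0,x)$, the differential sends $\de_{t_i}$ to $X_i(x)$ and restricts to the identity on $T_xS$, so by the inverse function theorem, after shrinking $V$ and $S'$, the map $\phi$ is a diffeomorphism onto an open neighborhood $U$ of $x$. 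Define $\pi\Fr U\fto S'$ to be the projection $\phi(t,s)\mapsto s$; it is a surjective submersion with connected fibers $\phi(V\times\{s\})$, and at $x$ the kernel $\ker d_x\pi$ is spanned by $\phi_*\de_{t_i}|_{(0,x)} = X_i(x)$, giving $\ker d_x\pi = F_x$.

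The central step is to show $\SEC_c(\ker d\pi)\subset \CF|_U$, so that Lemma \ref{lem:submfol} produces a unique foliation $\CF_S$ on $S'$ with $\pi^{-1}\CF_S = \CF|_U$. The bundle $\ker d\pi$ is framed by $\phi_*\de_{t_1},\dots,\phi_*\de_{t_k}$, and a direct computation from the chain rule gives, at $p=\phi(t,s)$,
\begin{equation*}
\phi_*\de_{t_i}\big|_{p} \;=\; \bigl(\phi^{X_1}_{t_1}\circ\cdots\circ\phi^{X_{i-1}}_{t_{i-1}}\bigr)_* X_i \Big|_{\phi^{X_i}_{t_i}\circ\cdots\circ\phi^{X_k}_{t_k}(s)}.
\end{equation*}
Invoking the stability of $\CF$ under the flow of its own elements (a nontrivial fact proved in \cite{AndrSk} using involutivity and local finite generation), the pushforward of $X_i\in \CF$ by a composition of such flows defines an element of $\CF$, uniformly in the flow parameters. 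Hence $\phi_*\de_{t_i}\in \CF|_U$ and the required containment holds.

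Applying Lemma \ref{lem:submfol} yields $\CF_S$ on $S'$ with $\pi^{-1}\CF_S = \CF|_U$. For the final claim $(F_S)_{\pi(x)}=0$, given $W\in \CF_S$ one produces a $\pi$-projectable lift $Z\in \CF|_U$ with $d\pi\,Z = W\circ\pi$; evaluating at $x$ gives $W(\pi(x)) = d_x\pi\bigl(Z(x)\bigr)$, which vanishes because $Z(x)\in F_x = \ker d_x\pi$. Since $W$ is arbitrary, $(F_S)_{\pi(x)}=0$. The main obstacle is the flow-invariance step: one must verify that the time-dependent pushforward $(\phi^{X}_t)_*Y$ of $Y\in \CF$ along the flow of $X\in \CF$ defines a genuine $\CI_c$-module element of $\CF$ and not merely a pointwise-in-$\CF$ family. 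This is precisely where local finite generation plays a nontrivial role, and it is the technical heart of the splitting theorem in \cite{AndrSk}.
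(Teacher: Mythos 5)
Your proposal is correct and takes essentially the same route as the paper: the paper's own proof is only a sketch that produces a transversal $S$, a neighborhood $U$ and a submersion $\pi\colon U\fto S$ with $\Gamma_c(\ker d\pi)\subset \CF$ (deferring this construction to Proposition 1.12 of \cite{AndrSk}) and then applies Lemma \ref{lem:submfol}, exactly as you do. Your flow-box construction of $\pi$, the verification $\Gamma_c(\ker d\pi)\subset \CF|_U$ via the (parametric) flow-invariance of $\CF$ --- the point you rightly flag, which is handled by the explicit formula in the proof of Proposition \ref{prop:exp.in.aut} and in \cite{AndrSk} --- and the check that $(F_S)_{\pi(x)}=0$ simply fill in details the paper delegates to the reference.
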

	
	\begin{proof}
		This is Proposition 1.12 of \cite{AndrSk}, but we can make a sketch of the proof following Lemma \ref{lem:submfol}. First, there is a small enough transversal $S$ of $F_x$ with dimension $q$, an open neighborhood $U\subset M$ of $x$ and a surjective submersion with connected fibers $\pi\colon U \fto S$ satisfying $\Gamma_c\ker(d\pi)\subset \CF$. Then using Lemma \ref{lem:submfol} we get the desired foliation $\CF_S$ satisfying the conditions of this proposition.
	\end{proof}
	
	Consider the setting of proposition \ref{prop:loc.pic.fol}, i.e. let  $(M,\CF)$ be a foliated manifold, $x\in U\subset M$, $(S,\CF_S)$ and $\pi\colon U\fto S$. Using the constant rank theorem for $\pi$ one can see $U$ as an open set of $\RR^k\times \RR^{n-k}$, $S$ as an open set of $\RR^{n-k}$ and $\pi$ as the projection map. Then $\CF|_U$ is equal to $\pi^{-1}\CF_S$,  where $\CF_S$ is a foliation in $\RR^{n-k}$  that vanishes at the origin. 
	
	Therefore using Lemma \ref{lem:pullback} the local picture $\CF|_U=\pi^{-1}\CF_S$ is the span $\left<\de_{x_1},\dots , \de_{x_k}, \CF_S\right>_{\CI_c(U)}$. In particular, the local picture of a regular foliation of rank $k$ is given as the span $\left<\de_{x_1},\dots, \de_{x_k}\right>_{\CI_c(U)}$.
	
	\section{Product foliations}\label{sec:prod.fol}
	
	Let $(M, \CF_M)$ and $(N,\CF_N)$ be two foliated manifolds. In this section we use the definition of pullback foliation to get a foliation on $M\times N$ which we call the product foliation. This section is an original part of this thesis.
	
	\begin{defi}
		Given two foliated manifolds $(M,\CF_M)$ and $(N,\CF_N)$ we define the following submodule $\CF_M\times \CF_N$ of $\CX_c(M\times N)$:
		\[\CF_M\times \CF_N:=\pi_M^{-1}\CF_M\cap \pi_N^{-1} \CF_N.\]
		Here $\pi_M\Fr M\times N\fto M$ and $\pi_N\Fr M\times N\fto N$ are the projections.
	\end{defi}
	
	In order to simplify notation, for a foliated manifold $(M,\CF_M)$ and a manifold $N$, we denote:
	\begin{equation}\label{eq:prod.fol}
		\CF_M^{M\times N}:=d\pi_M^{-1}(\pi_M^*\CF_M)=\{(X,Y)\in \CX(M\times N) \st X\in \CF_M\},
	\end{equation}
	
	which consists of the projectable vector fields in $M\times N$ going to $\CF_M$.
	
	\begin{lem}\label{lem:product}
		$$\CF_M\times \CF_N=\left<\{(X_M ,X_N) \st X_M \in\CF_N \y X_N\in \CF_N\}\right>_{\CI_c (M\times N)}$$
	\end{lem}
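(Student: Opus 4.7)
The plan is to exploit that the projections $\pi_M$ and $\pi_N$ are submersions, combined with the canonical splitting $T(M\times N) = \pi_M^*TM \oplus \pi_N^*TN$, to reduce the statement to a direct inspection on generators. First, since both projections are submersions, Lemma \ref{lem:pullback} rewrites $\pi_M^{-1}\CF_M$ and $\pi_N^{-1}\CF_N$ as $\CI_c(M\times N)$-spans of projectable vector fields going to $\CF_M$ and $\CF_N$ respectively. Thus $Z\in \CF_M\times \CF_N$ amounts to $Z\in \CX_c(M\times N)$ together with $d\pi_M(Z)\in \langle \pi_M^*\CF_M\rangle_{\CI_c(M\times N)}$ and $d\pi_N(Z)\in \langle \pi_N^*\CF_N\rangle_{\CI_c(M\times N)}$.

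For the inclusion $\supseteq$, I would check that every generator $(X_M,X_N)$ (where $X_M\in\CF_M$ and $X_N\in\CF_N$, interpreted as a vector field on $M\times N$ via the product structure) satisfies $d\pi_M(X_M,X_N)=\pi_M^*X_M\in\pi_M^*\CF_M$ and $d\pi_N(X_M,X_N)=\pi_N^*X_N\in\pi_N^*\CF_N$. Any $\CI_c(M\times N)$-linear combination is automatically compactly supported, and the projections land in the required modules by linearity, placing such combinations in both $\pi_M^{-1}\CF_M$ and $\pi_N^{-1}\CF_N$.

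For the inclusion $\subseteq$, which is where the actual work lies, I would use the product splitting to decompose any $Z\in\CF_M\times\CF_N$ as $Z=Z_M+Z_N$, where $Z_M(m,n)=(d\pi_M(Z)(m,n),0)$ and $Z_N(m,n)=(0,d\pi_N(Z)(m,n))$. Writing $d\pi_M(Z)=\sum_i f_i\,\pi_M^*X_M^i$ with $f_i\in \CI_c(M\times N)$ and $X_M^i\in\CF_M$, I can express
\[
Z_M=\sum_i f_i\,(X_M^i,0),
\]
where each $(X_M^i,0)$ is a generator of the right-hand side (taking $0\in\CF_N$). Doing the same for $Z_N$ with generators of the form $(0,X_N^j)$ yields $Z$ in the claimed span.

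The only real subtlety is confirming that the decomposition $Z=Z_M+Z_N$ produces elements of $\langle\{(X_M,X_N):X_M\in\CF_M,\,X_N\in\CF_N\}\rangle_{\CI_c(M\times N)}$ rather than just of the two pullback modules separately; this is ensured precisely because $0$ belongs to both $\CF_M$ and $\CF_N$, so the ``one-sided'' lifts $(X_M^i,0)$ and $(0,X_N^j)$ are legitimate generators. Compact supports are no obstacle since they are built into the $\CI_c(M\times N)$-coefficients.
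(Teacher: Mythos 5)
Your proof is correct and follows essentially the same route as the paper: both arguments use the splitting $T(M\times N)=\pi_M^*TM\oplus\pi_N^*TN$ to decompose an element of $\pi_M^{-1}\CF_M\cap\pi_N^{-1}\CF_N$ into its two components and rewrite each one via the corresponding expansion, landing on generators of the form $(X_M,0)$ and $(0,X_N)$ (with $0$ allowed since it lies in both foliations). The only cosmetic difference is that you expand $d\pi_M(Z)$ and $d\pi_N(Z)$ directly from the definition of the pullback foliation, whereas the paper first invokes Lemma \ref{lem:pullback} to write $Z$ itself as a combination of projectable vector fields going to $\CF_M$ (respectively $\CF_N$) and then compares components.
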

	
	\begin{proof}
		Note that $\{(X_M,X_N) \st X_M\in\CF_N \y X_N\in \CF_N\}=\CF_M^{M\times N}\cap \CF_N^{M\times N}$. Therefore this lemma asks us to prove that:
		\[ \left(\left< \CF_M^{M\times N}\right>_{\CI_c(M\times N)}\right)\cap \left( \left< \CF_N^{M\times N}\right>_{\CI_c(M\times N)}\right)= \left< \CF_M^{M\times N}\cap \CF_N^{M\times N}\right>_{\CI_c(M\times N)}.\]

		We continue the proof by double inclusion:
		
		It is clear that
		\[ \left(\left< \CF_M^{M\times N}\right>_{\CI_c(M\times N)}\right)\cap \left( \left< \CF_N^{M\times N}\right>_{\CI_c(M\times N)}\right)\supset \left< \CF_M^{M\times N}\cap \CF_N^{M\times N}\right>_{\CI_c(M\times N)}.\]

		For the other inclusion take
		\[X\in \CF_M\times \CF_N=\left(\left< \CF_M^{M\times N}\right>_{\CI_c(M\times N)}\right)\cap \left( \left< \CF_N^{M\times N}\right>_{\CI_c(M\times N)}\right).\]
		There are functions $f_1\cdots f_n, g_1,\cdots g_k\in \CI_c(M\times N)$ and vector fields $X_1,\cdots ,X_n\in \CF_M^{M\times N}$ and $Y_1,\cdots ,Y_k\in \CF_N^{M\times N}$ such that:
		\begin{equation}\label{eq1:lem:product}
			\begin{matrix}
				X= f_1 X_1+\cdots + f_n X_n,\\
				X= g_1 Y_1+\cdots + g_k Y_k.\\
			\end{matrix}
		\end{equation}
		We can write $X_i=(x_i,x'_i)$ and $Y_i=(y_i,y'_i)$ for $x_i,y_i\in \SEC(\pi_M^* TM)$ and $x_i',y_i'\in \SEC(\pi_N^* TN)$. Because $X_i\in \CF_M^{M\times N}$ then $x_i\in \CF_M$ and using a similar argument $y_i'\in \CF_N$.\\
		
		Now using equation \ref{eq1:lem:product} we have that $\Sigma_i f_i (0,x'_i)=\Sigma_j g_j (0,y'_j)$ and $\Sigma_i f_i (x_i,0)=\Sigma_j g_j (y_j,0)$. Finally: 
		\[X=\sum_i f_i\cdot(x_i,0)+\sum_j g_j\cdot(0,y'_j)\in\left< \CF_M^{M\times N}\cap \CF_N^{M\times N}\right>_{\CI_c(M\times N)} .\]
	\end{proof}
	
	\begin{prop}
		$\CF_M\times \CF_N$ is a singular foliation. 
	\end{prop}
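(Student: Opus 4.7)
The plan is to verify the three defining properties of Definition \ref{defi:sing.fol}: that $\CF_M\times \CF_N$ is a $\CI_c(M\times N)$-submodule of $\CX_c(M\times N)$, that it is involutive, and that it is locally finitely generated.

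The first two are essentially bookkeeping. Since $\CF_M\times \CF_N=\pi_M^{-1}\CF_M\cap \pi_N^{-1}\CF_N$ is by definition the intersection of two $\CI_c(M\times N)$-submodules of $\CX_c(M\times N)$, the submodule property is immediate. For involutivity, I would note that the projections $\pi_M$ and $\pi_N$ are submersions, so their differentials are pointwise surjective and transversality to $\CF_M$ and $\CF_N$ holds automatically. By the first clause of the proposition following Definition \ref{def:pullback}, each of $\pi_M^{-1}\CF_M$ and $\pi_N^{-1}\CF_N$ is closed under the Lie bracket, and closure of their intersection under the bracket follows.

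The substantive step is local finite generation. Given $(x_0,y_0)\in M\times N$, pick neighborhoods $U\ni x_0$ and $V\ni y_0$ together with generators $X_1,\dots,X_r\in \CX(U)$ of $\iota_U^{-1}\CF_M$ and $Y_1,\dots,Y_s\in \CX(V)$ of $\iota_V^{-1}\CF_N$. My claim would be that the naturally lifted vector fields $(X_i,0)$ and $(0,Y_j)$ generate $\iota_{U\times V}^{-1}(\CF_M\times \CF_N)$ as a $\CI_c(U\times V)$-module. To prove this, take $Z$ in this local submodule and select a bump function $\lambda\in \CI_c(U\times V)$ equal to $1$ on $\mathrm{supp}(Z)$, so that $Z=\lambda Z$. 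Applying Lemma \ref{lem:product} (using the decomposition derived in its proof), write $Z=\sum_\alpha f_\alpha(\tilde x_\alpha,0)+\sum_\beta g_\beta(0,\tilde y_\beta)$ with $\tilde x_\alpha\in \CF_M$ and $\tilde y_\beta\in \CF_N$; after absorbing $\lambda$ into the coefficients, the $f_\alpha$ and $g_\beta$ are compactly supported in $U\times V$. For each $\alpha$ choose $\eta_\alpha\in \CI_c(U)$ that equals $1$ on the compact set $\pi_M(\mathrm{supp}(f_\alpha))$, so that $f_\alpha(\tilde x_\alpha,0)=f_\alpha(\eta_\alpha \tilde x_\alpha,0)$. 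Because $\eta_\alpha \tilde x_\alpha\in \CF_M$ has support inside $U$, it lies in $\iota_U^{-1}\CF_M$ and therefore expands as $\sum_i c_i^\alpha X_i$ with $c_i^\alpha\in \CI_c(U)$, turning $f_\alpha(\tilde x_\alpha,0)$ into a $\CI_c(U\times V)$-linear combination of the $(X_i,0)$. The $(0,\tilde y_\beta)$ terms are handled symmetrically.

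The main obstacle is exactly this last paragraph: one must transport the global decomposition of Lemma \ref{lem:product} to the local picture without losing compact support, while simultaneously replacing generic vector fields of $\CF_M$ (respectively $\CF_N$) by the chosen local generators $X_i$ (respectively $Y_j$). The two-stage cutoff, first by $\lambda$ to localize to $U\times V$ and then by $\eta_\alpha$ so that each $\tilde x_\alpha$ can be rewritten using $X_1,\dots,X_r$, is the technical core of the argument; everything else is formal.
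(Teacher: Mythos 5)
Your proposal is correct and follows essentially the same route as the paper: the paper's proof simply asserts that the generating set of product vector fields from Lemma \ref{lem:product} is involutive and locally finitely generated on square open sets $U\times V$, and your argument fills in exactly those details (the two-stage cutoff for local finite generation, and involutivity via bracket-closedness of the two pullbacks and of their intersection, a harmless variant of the paper's observation that brackets of product vector fields are again product vector fields).
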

	\begin{proof}
		The set $\{(X_M,X_N) \st X_M\in\CF_N \y X_N\in \CF_N\}$ is involutive and locally finitely generated on square open sets. Then $\CF_M\times \CF_N$ is involutive and locally finitely generated.
	\end{proof}
	
	Now we prove that the definition of product foliation is well behaved under pullbacks:
	
	\begin{prop}\label{prop:product}
		Let $(M,\CF_M)$ and $(N,\CF_N)$ be foliated manifolds and $P, S$ manifolds with surjective submersions $\tau: P\fto M$, $\sigma: S\fto N$. Then 
		\begin{equation}\label{prop:product:eq}
			\tau^{-1}\CF_M\times \sigma^{-1} \CF_N=(\tau\times \sigma)^{-1}(\CF_M\times \CF_N).
		\end{equation}
	\end{prop}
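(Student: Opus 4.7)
The plan is to prove the equality by double inclusion, using Lemma \ref{lem:product} to describe the left hand side as the $\CI_c(P\times S)$-span of pairs $(Y,Z)$ with $Y\in\tau^{-1}\CF_M$ and $Z\in\sigma^{-1}\CF_N$, and using Lemma \ref{lem:pullback} to describe the right hand side: since $\tau\times\sigma$ is a submersion (being a product of submersions), $(\tau\times\sigma)^{-1}(\CF_M\times\CF_N)$ is the $\CI_c(P\times S)$-span of $(\tau\times\sigma)$-projectable vector fields going to $\CF_M\times\CF_N$. Write $p_P$ and $p_S$ for the two projections out of $P\times S$; it suffices to verify the two inclusions on generators.

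For $(\subseteq)$, I would take a generator $(Y,Z)$ of the left hand side and multiply by an arbitrary $h\in\CI_c(P\times S)$. By Lemma \ref{lem:pullback} applied to $\tau$ and $\sigma$ separately, write $Y=\sum_i f_i A_i$ with $A_i$ being $\tau$-projectable to some $X_i\in\CF_M$, and $Z=\sum_j g_j B_j$ with $B_j$ being $\sigma$-projectable to some $W_j\in\CF_N$. The decomposition
\[ h\cdot(Y,Z)=\sum_i (h\cdot p_P^*f_i)\,(A_i,0)+\sum_j (h\cdot p_S^*g_j)\,(0,B_j) \]
expresses $h\cdot(Y,Z)$ as a $\CI_c(P\times S)$-combination of the vector fields $(A_i,0)$ and $(0,B_j)$, each of which is $(\tau\times\sigma)$-projectable to $(X_i,0)$ or $(0,W_j)$; these targets lie in $\CF_M\times\CF_N$ by Lemma \ref{lem:product}, hence $h\cdot(Y,Z)$ lies in the right hand side.

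The harder direction is $(\supseteq)$. A generator of the right hand side is a vector field $W\in\CX(P\times S)$ that is $(\tau\times\sigma)$-projectable to some $V\in\CF_M\times\CF_N$. Using Lemma \ref{lem:product}, expand $V=\sum_k\lambda_k(X_k,W_k)$ with $\lambda_k\in\CI_c(M\times N)$, $X_k\in\CF_M$, and $W_k\in\CF_N$. I then pick $\tau$-projectable lifts $A_k\in\CX(P)$ of $X_k$ and $\sigma$-projectable lifts $B_k\in\CX(S)$ of $W_k$ (these exist via a standard partition-of-unity argument since $\tau$ and $\sigma$ are surjective submersions), and form the model lift
\[ \tilde W := \sum_k (\lambda_k\circ(\tau\times\sigma))\bigl((A_k,0)+(0,B_k)\bigr). \]
A direct check gives $d(\tau\times\sigma)\tilde W=V\circ(\tau\times\sigma)$, so $W-\tilde W$ takes values pointwise in $\ker d(\tau\times\sigma)$. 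For any $h\in\CI_c(P\times S)$, the part $h\cdot\tilde W$ lies in the left hand side by the same splitting used for $(\subseteq)$.

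The crucial remaining step is to show that $h\cdot(W-\tilde W)\in\Gamma_c(\ker d(\tau\times\sigma))$ lies in the left hand side. Since $\ker d(\tau\times\sigma)_{(p,s)}=\ker d\tau_p\oplus\ker d\sigma_s$, this section splits canonically as a piece $U_P$ with zero $S$-component and $P$-component in $\ker d\tau$, plus a piece $U_S$ with zero $P$-component and $S$-component in $\ker d\sigma$. Each piece belongs to both $p_P^{-1}(\tau^{-1}\CF_M)$ and $p_S^{-1}(\sigma^{-1}\CF_N)$: for instance, $U_P\in\Gamma_c(\ker d(\tau\circ p_P))\subseteq p_P^{-1}(\tau^{-1}\CF_M)$ because any pullback foliation under a submersion contains the compactly supported sections of its kernel distribution, while $U_P\in\Gamma_c(\ker dp_S)\subseteq p_S^{-1}(\sigma^{-1}\CF_N)$ for the same reason (its $S$-component vanishes). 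Thus both pieces, and therefore $h\cdot(W-\tilde W)$, lie in the left hand side. The main obstacle I foresee is precisely the construction of the model lift $\tilde W$: pullback does not in general commute with intersection of submodules, so the argument must exploit the particular structure of $\CF_M\times\CF_N$ exposed by Lemma \ref{lem:product} to reduce the problem to the kernel fields of the two submersions.
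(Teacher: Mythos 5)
Your proposal is correct and rests on the same two ingredients as the paper's own proof (Lemma \ref{lem:product} and Lemma \ref{lem:pullback}), but it organizes the double inclusion differently. The paper first rewrites both sides over $W:=P\times S$: by functoriality of pullbacks ($\pi_P^{-1}\tau^{-1}\CF_M=(\tau\times\sigma)^{-1}\pi_M^{-1}\CF_M$, and similarly for $N$) together with Lemma \ref{lem:pullback}, the right hand side becomes the $\CI_c(W)$-span of the intersection of the two sets of $(\tau\times\sigma)$-projectable fields going to $\pi_M^{-1}\CF_M$ and to $\pi_N^{-1}\CF_N$, while the left hand side becomes the intersection of the corresponding spans; the only nontrivial point is then that the intersection of the spans equals the span of the intersection, which is settled by re-running the component-splitting argument of Lemma \ref{lem:product} with respect to $T(P\times S)=TP\oplus TS$. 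You instead describe the left side by product-type generators $(Y,Z)$ and the right side by $(\tau\times\sigma)$-projectable generators, so for you the nontrivial inclusion is the opposite one, and you handle it by constructing an explicit product-form lift $\tilde W$ of the projected field and absorbing the discrepancy $W-\tilde W$ into compactly supported sections of $\ker d(\tau\times\sigma)=\ker d\tau\oplus\ker d\sigma$, whose two canonical pieces lie in both pullback foliations. Both arguments ultimately exploit the same canonical splitting of $T(P\times S)$; yours is more hands-on and makes explicit where the kernel directions get absorbed, while the paper's is shorter because it reuses the Lemma \ref{lem:product} computation essentially verbatim.

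One point to tighten: fields such as $(X_i,0)$ on $M\times N$, or the lifts $A_k\in\CX(P)$ and $B_k\in\CX(S)$, are in general not compactly supported, hence not literally elements of $\CF_M\times\CF_N$, respectively of $\tau^{-1}\CF_M$ or $\sigma^{-1}\CF_N$; so, as stated, they are not admissible targets or generators in the sense of Lemma \ref{lem:pullback} and Lemma \ref{lem:product}. Since every coefficient you multiply by (such as $h\cdot p_P^*f_i$ or $h\cdot(\lambda_k\circ(\tau\times\sigma))$) is compactly supported, this is repaired by inserting a bump function equal to $1$ on the image of the relevant compact support: for instance replace $(X_i,0)$ by $\chi\cdot(X_i,0)$ with a suitable $\chi\in\CI_c(M\times N)$, which does lie in $\CF_M\times\CF_N$ by Lemma \ref{lem:product}, and replace $A_k$ by $\chi' A_k$ with $\chi'\in\CI_c(P)$, which lies in $\tau^{-1}\CF_M$. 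With that routine adjustment your argument goes through.
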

	\begin{proof}
		Denote $W=P\times S$. Consider the following commutative diagram:
		\[\begin{tikzcd}
			P \arrow[d, "\tau"]& \arrow[l,swap,"\pi_P"] W \arrow[r,"\pi_S"] \arrow[d, "\tau\times\sigma"] & S \arrow[d,"\sigma"]\\
			M & M\times N \arrow[l,swap,"\pi_M"] \arrow[r,"\pi_N"]& N \\
		\end{tikzcd}\]
		
		Similar to equation (\ref{eq:prod.fol}), for any submodule $\CS\subset\CX(M\times N)$ we will denote:
		$$(\CS)^{W}:=d(\tau\times \sigma)^{-1}((\tau\times\sigma)^*(\CS)),$$
		which are the projectable vector fields on $W$ going to $\CS$.
		
		Note that:
		\[\left(\pi_M^{-1}\CF_M \cap \pi_N^{-1} \CF_N\right)^{W}=(\pi_M^{-1}\CF_M)^{W}\cap (\pi_N^{-1} \CF_N)^{W}.\]
		
		{\bf (1)} The right hand side of equation \eqref{prop:product:eq} is
		\begin{align*}(\tau\times \sigma)^{-1}(\CF_M\times \CF_N) &= (\tau\times\sigma)^{-1}\left(\pi_M^{-1}\CF_M \cap \pi_N^{-1} \CF_N\right)\\
			&= \left<\left(\pi_M^{-1}\CF_M \cap \pi_N^{-1} \CF_N\right)^{W}\right>_{\CI_c(W)}\\
			&= \left<(\pi_M^{-1}\CF_M)^{W}\cap (\pi_N^{-1} \CF_N)^{W}\right>_{\CI_c(W)}.
		\end{align*}	
		
		{\bf (2)} The left hand side of equation \eqref{prop:product:eq} is
		\begin{align*}
			\tau^{-1}\CF_M\times \sigma^{-1} \CF_N &= \left\{\pi_P^{-1}\tau^{-1}\CF_M\right\} \cap \left\{\pi_S^{-1}\sigma^{-1} \CF_N\right\}\\
			& = \left\{(\tau\times\sigma)^{-1}\pi_M^{-1}\CF_M\right\} \cap \left\{(\tau\times\sigma)^{-1}\pi_N^{-1} \CF_N\right\}\\
			&=\left\{\left< (\pi_M^{-1}\CF_M)^{W}\right>_{\CI_c(W)} \right\}\cap \left\{\left<(\pi_N^{-1} \CF_N)^{W}\right>_{\CI_c(W)}\right\}.
		\end{align*}
		
		Due to (1) and (2) above we only need to prove that
		{\footnotesize
			\[\left<(\pi_M^{-1}\CF_M)^{W}\cap (\pi_N^{-1} \CF_N)^{W}\right>_{\CI_c(W)}= \left\{\left< (\pi_M^{-1}\CF_M)^{W}\right>_{\CI_c(W)}\right\}\cap \left\{\left< (\pi_N^{-1} \CF_N)^{W}\right>_{\CI_c(W)}\right\}.\]}
		
		\noindent By double inclusion, using a similar argument as in lemma \ref{lem:product}, we get the desired result.\\
	\end{proof}
	
	\section{Singular foliations as sheaves}\label{sec:fol.sheaf}
	
	The definition \ref{defi:sing.fol} of foliations as submodules becomes very restrictive in the analytic and holomorphic setting or for non Hausdorff manifolds. Some examples of this fact are:
	
	\begin{itemize}
		\item For $M$ a non compact analytic or holomorphic manifold there is no compactly supported analytic or holomorphic vector field on $M$ different from zero.
		\item Consider $\RR\times \NN$ and the equivalent relation given by $(x,n)\sim (y,k)$ iff $x=y$ and $y\neq 0$. Take $M=(\RR\times \NN)/\sim$, which is a non-hausdorff manifold. Note that $M$ can be thought as the real line $\RR$ with infinitely many origins. Every compactly supported smooth vector field on $M$ must vanish in all the origins $(0,n)$.
	\end{itemize}

	Therefore, it appeared in the articles \cite{SylvainArticle} and \cite{ME2018} a different characterization of singular foliations in terms of sheaves:
	\begin{center}
		\textit{A singular foliation is an involutive and locally finitely generated subsheaf of the vectorfields sheaf $\CX$}.
	\end{center}
	
	Before we start, we give a brief introduction to sheaves. Recall that a presheaf of groups on a topological space $M$ consists of a choice of a group $\CS(U)$ for every open set $U\subset M$ and of morphisms $(-)|_V\colon \CS(U)\fto \CS(V);x\mapsto x|_V$ for every open set $V\subset U$. 
	
	This choice must satisfy some conditions, namely it must act as a contravariant functor $\CS\colon \mathrm{Op}(M)\fto \CC$, where $\mathrm{Op}(M)$ is the category of open sets on $M$ and $\CC$ is the category of groups (it is possible to have $\CC$ as the category of algebras or modules).
	
	A presheaf is a sheaf if for every cover $\{U_i\}_{i\in I}$ of an open set $U$ the following axioms are satisfied:
	
	\begin{itemize}
		\item \textbf{Locality:} If $x,y\in \CS(U)$ are such that $x|_{U_i} = y|_{U_i}$ for each $i$ then $x = y$.
		\item \textbf{Gluing:} For every family $x_i\in \CS(U_i)$ satisfying $x_i|_{U_i\cap U_j} = x_j|_{U_i\cap U_j}$ for all $i,j\in I$, there is a section $x \in \CS(U)$ such that $x|_{U_i} = x_i$.
	\end{itemize}  
	
	Examples of sheaves on a manifold $M$ are the presheaf of smooth functions $\CI$ and of vector fields $\CX$. The presheaf of compactly supported functions $\CI_c$ fails to satisfy the gluing axiom for infinite covers, therefore is not a sheaf.
	
	In this section we prove that any singular foliation (as submodule of $\CX_c(M)$) on a smooth (Hausdorff) manifold can be considered as an involutive and locally finitely generated smooth subsheaf of the vector fields sheaf $\CX$, as the following theorem shows.
	\begin{theorem}\label{thm:sheafol} For any smooth (Hausdorff) manifold $M$, we have the following:
		\begin{itemize}
			\item There is a bijection between submodules of $\CX_c(M)$ and subsheaves of $\CX$, described in proposition \ref{prop:sheafol}.
			\item The condition of being locally finitely generated is invariant under this bijection.
			\item The condition of involutivity is invariant under this bijection.
		\end{itemize}
	\end{theorem}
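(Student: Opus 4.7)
The plan is to exhibit the bijection explicitly and then translate each of the two conditions, locally finitely generated and involutive, across the correspondence. Given a submodule $\CF \subset \CX_c(M)$, I would associate to it the presheaf
$$\tilde{\CF}(U) := \{X \in \CX(U) : \rho X \in \CF \text{ for every } \rho \in \CI_c(U)\},$$
where $\rho X$ is implicitly extended by zero to $M$ (which makes sense because $\rho$ has compact support in $U$). In the reverse direction, a subsheaf $\CS \subset \CX$ goes to $\CS(M) \cap \CX_c(M)$. The sheaf axioms for $\tilde{\CF}$ follow from the usual locality of vector fields together with a partition-of-unity argument: for the gluing axiom, given compatible $X_i \in \tilde{\CF}(U_i)$, the glued $X \in \CX(U)$ satisfies $\rho X \in \CF$ for $\rho \in \CI_c(U)$ by decomposing $\rho$ along a finite subcover of $\mathrm{supp}(\rho)$ via a subordinate partition of unity, each summand belonging to $\CF$ by hypothesis. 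That the two maps are mutually inverse is checked by choosing bump functions equal to $1$ on a given compact support (here Hausdorffness enters, guaranteeing such cutoffs exist).

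For the invariance of local finite generation, I would show that $\iota_U^{-1}\CF$ coincides with the compactly supported elements of $\tilde{\CF}(U)$. If $\CF$ is locally finitely generated, then for each $x$ a neighborhood $U$ carries generators $Y^1, \dots, Y^r \in \CX(U)$ of $\iota_U^{-1}\CF$. Given $X \in \tilde{\CF}(V)$ on any $V \subset U$ and $x \in V$, multiplying $X$ by a bump function $\rho \in \CI_c(V)$ with $\rho(x) = 1$ yields $\rho X \in \CF$, which by the generation property equals $\sum f_i Y^i|_V$ near $x$; uniqueness of the $f_i$ on the support of $\rho$ then expresses $X$ itself locally in the $Y^i$. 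This shows $\tilde{\CF}$ is locally finitely generated as a sheaf with the same generators. The converse uses Lemma \ref{lemma:unionoffinite}(c): sheaf generators on $U$ give finite generation of $\rho \CF$ for any $\rho \in \CI_c(U)$, and hence local finite generation of $\CF$.

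The invariance of involutivity reduces to the locality of the Lie bracket. If $[\CF, \CF] \subset \CF$ and $X, Y \in \tilde{\CF}(U)$, then for $\rho \in \CI_c(U)$ pick $\sigma \in \CI_c(U)$ with $\sigma \equiv 1$ on $\mathrm{supp}(\rho)$; then $\sigma X, \sigma Y \in \CF$ and
$$\rho [X,Y] = \rho [\sigma X, \sigma Y] \in \CF,$$
so $[X,Y] \in \tilde{\CF}(U)$. The reverse implication is immediate: for $X, Y \in \CF$ viewed in $\tilde{\CF}(M)$, sheaf involutivity gives $[X,Y] \in \tilde{\CF}(M)$, and since $[X,Y] \in \CX_c(M)$ it lies in $\CF$ by the correspondence.

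The main obstacle I expect is purely bookkeeping, not conceptual: verifying the gluing axiom and the mutual inverse property requires careful manipulation of extensions by zero together with partition-of-unity decompositions, and one must ensure at each step that the summands obtained from cutoffs genuinely land in $\CF$ (rather than merely in $\CX_c(M)$). Hausdorffness plays a quiet but essential role by allowing all the required bump functions and subordinate partitions of unity. Once the correspondence is rigorously established, the remaining two bullets follow quickly from the same partition-of-unity toolbox.
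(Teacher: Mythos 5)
Your construction is, up to notation, the paper's own: the presheaf $\tilde{\CF}(U)=\{X\in\CX(U) \st \rho X\in\CF \text{ for all }\rho\in\CI_c(U)\}$ is exactly $\CS^{\CF}(U)=\widehat{\iota_U^{-1}\CF}$ from Definition \ref{defi:globalhull} and Proposition \ref{prop:sheafol}, your inverse $\CS\mapsto\CS(M)\cap\CX_c(M)$ is the map $\CS\mapsto(\CS(M))_c$ there, and your gluing, mutual-inverse and involutivity arguments are the same partition-of-unity and cutoff tricks as in Lemma \ref{lem:sheaf.fol}, Lemma \ref{lem:glob.eq} and Proposition \ref{prop:sheafinv}. (Small point on the bracket: you need $\sigma\equiv 1$ on a \emph{neighbourhood} of $\mathrm{supp}(\rho)$, so that $\rho$ kills the terms involving $X(\sigma)$ and $Y(\sigma)$ and the identity $\rho[X,Y]=\rho[\sigma X,\sigma Y]$ really holds.)

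The one step that does not work as written is the appeal to ``uniqueness of the $f_i$ on the support of $\rho$'' in the locally-finitely-generated part. The coefficients in $\rho X=\sum_i f_i Y^i$ are not unique: generators of a singular foliation are in general far from being a basis (for $\CF^1=\left< x\de_x,\, y\de_y,\, y\de_x,\, x\de_y\right>_{\CI_c(\RR^2)}$ there are many relations), so no such uniqueness is available, and in any case uniqueness would not by itself turn an expression for $\rho X$ into one for $X$. The repair is standard and costless: take $\rho\equiv 1$ on a neighbourhood of $x$, so that $X=\sum_i f_i Y^i$ holds on that neighbourhood; this gives generation of $\tilde{\CF}$ near every point by the same $Y^i$. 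If you want the stronger statement the paper proves, namely that the $Y^i$ generate $\tilde{\CF}(U)$ as a $\CI(U)$-module (Lemma \ref{lem:loc.gen}), you must in addition patch: write $X=\sum_i\varphi_i X$ for a compactly supported partition of unity on $U$, expand each $\varphi_i X\in\iota_U^{-1}\CF$ in the $Y^j$ with coefficient families arranged (via cutoffs) to be locally finite, and sum the coefficients, exactly as in that lemma. Your converse direction via Lemma \ref{lemma:unionoffinite}(c) is fine with either formulation.
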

	\begin{proof} This theorem is a direct consequence of propositions \ref{prop:sheafol}, \ref{prop:sheafingen} and \ref{prop:sheafinv}, that will appear later in this section.
	\end{proof}
	
	Theorem \ref{thm:sheafol} is a logical continuation to the work in article \cite{AMsheaf} by I. Androulidakis and M. Zambon. Moreover, its content can be easily generalized. Given a Lie algebroid $A$, replacing $\CX_c(M)$ by $\Gamma_c(A)$ and $\CX$ by $\Gamma_A:=\Gamma(-,A)$, one can follow the proofs given here and get the same results.
	
	It is important to mention that these results are equivalent to some statements appeared earlier in \cite{RoyThesis} by Roy Wang. We claim that our work differs from \cite{RoyThesis} in a matter of presentation, being more direct. In what follows we will explain our work.
	
	We use the notion of global hull to ``forget'' the compactly supported condition:
	
	\begin{defi}\label{defi:globalhull} Given $\CF$ a submodule of $\CX_c(M)$, the {\bf global hull} of $\CF$ is given by:
		$$\widehat{\CF}:=\{X\in \CX(M) \st fX\in \CF \hspace{.1in} \forall f\in \CI_c(M)\}.$$
		Given $\CS$ a submodule of $\CX(M)$ one can also define its compact elements:
		$$(\CS)_c:=\{X\in \CS \st \mathrm{supp}(X) \text{ is compact}\}=\left<\CS\right>_{\CI_c(M)}.$$
	\end{defi}

	An important property of global hulls is the following:
	
	\begin{lemma} For $\CF\subset \CX_c(M)$ a submodule, $\CS$ a subsheaf of $\CX$ and $U\subset M$ open we get the following equalities:
		$$\left(\widehat{\CF}\right)_c=\CF$$
		$$\reallywidehat{(\left(\CS(U)\right)_c)}=\CS(U)$$
	\end{lemma}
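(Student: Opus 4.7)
The plan is to prove both equalities by double inclusion, exploiting the fact that on a smooth (Hausdorff) manifold we always have bump functions.

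For the first equality $\left(\widehat{\CF}\right)_c=\CF$, the inclusion $\supseteq$ is immediate: if $X \in \CF$, then $X$ already has compact support, and since $\CF$ is a $\CI_c(M)$-submodule, $fX \in \CF$ for every $f \in \CI_c(M)$, so $X \in \widehat{\CF}$. For the inclusion $\subseteq$, take $X \in (\widehat{\CF})_c$, which by definition has compact support $K$. Using Hausdorffness and a partition-of-unity argument, pick a bump function $\rho \in \CI_c(M)$ with $\rho \equiv 1$ on $K$. Then $X = \rho X$, and by the definition of the global hull $\rho X \in \CF$, so $X \in \CF$.

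For the second equality $\reallywidehat{(\left(\CS(U)\right)_c)}=\CS(U)$, note first that since $\CS$ is a subsheaf of the $\CI$-module sheaf $\CX$, every $\CS(V)$ is in particular a $\CI(V)$-submodule of $\CX(V)$. The inclusion $\supseteq$ then goes as before: for $X \in \CS(U)$ and any $f \in \CI_c(U)$, we have $fX \in \CS(U)$ with compact support, hence $fX \in (\CS(U))_c$, so $X$ lies in the global hull. The inclusion $\subseteq$ is the interesting direction and uses the sheaf axioms. Take $X \in \reallywidehat{(\CS(U))_c}$; we must show $X \in \CS(U)$. For each $x \in U$, choose an open neighborhood $V_x \subset U$ of $x$ with $\overline{V_x}$ compact in $U$, and a bump function $\rho_x \in \CI_c(U)$ with $\rho_x \equiv 1$ on $V_x$. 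By the definition of the hull, $\rho_x X \in (\CS(U))_c \subset \CS(U)$, and hence by the restriction morphism $\rho_x X|_{V_x} \in \CS(V_x)$. Since $\rho_x \equiv 1$ on $V_x$, this means $X|_{V_x} \in \CS(V_x)$. The family $\{X|_{V_x}\}_{x\in U}$ is trivially compatible on overlaps (all restrictions of the single vector field $X$), so the gluing axiom of the sheaf $\CS$ produces an element $\tilde X \in \CS(U)$ with $\tilde X|_{V_x} = X|_{V_x}$ for every $x$. By the locality axiom applied to $\CX$ (or simply because $\tilde X$ and $X$ agree on the cover $\{V_x\}$ pointwise), $\tilde X = X$, so $X \in \CS(U)$.

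The only mildly delicate point is the second direction of the second equality, where one must combine the bump-function localization with the sheaf gluing axiom; the first equality is essentially a one-line bump-function argument. No involutivity or finite-generation hypotheses are used, so the lemma really only records that the operations $\widehat{(-)}$ and $(-)_c$ are mutually inverse on their respective domains.
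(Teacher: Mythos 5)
Your proof is correct and follows essentially the same route as the paper: compactly supported multiples of $X$ lie in $\CS(U)$ by the definition of the global hull, these determine $X$ locally, and the sheaf gluing and locality axioms finish the argument, while the first equality is the standard bump-function observation. The only cosmetic difference is that you use, for each point, a single bump function equal to $1$ on a precompact neighbourhood, whereas the paper localizes via a compactly supported partition of unity and locally finite sums; both are valid and yours is marginally more direct.
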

	\begin{proof}
		The first equality and the inclusion $\CS(U)\subset \reallywidehat{(\left(\CS(U)\right)_c)}$ are clear, we only need to prove $\reallywidehat{(\left(\CS(U)\right)_c)}\subset \CS(U)$.
		
		Take $X\in \reallywidehat{(\left(\CS(U)\right)_c)}$ and $\{\varphi_i\}_{i\in I}$ a partition of unity for $U$ with functions with compact support. Then there exists a cover $\{U_j\}_{j\in J}$ of $U$ such that the sum $\Sigma_i \varphi_i X$ is finite in each $U_j$. Moreover, $\varphi_i X\in \CS(U)$ therefore $X |_{U_j}=\Sigma_i \varphi_i X |_{U_j}\in \CS(U_j)$. By the gluing axiom of sheaves, there exists $Y\in \CS(U)$ such that $Y|_{U_i}=X|_{U_j}$ and by the locality axiom of sheaves $X=Y\in \CS(U)$.
	\end{proof}
	
	Given a subsheaf $\CS$ of $\CX$ it is easy to define a submodule of $\CX_c(M)$, just take $\CF:=(\CS(M))_c$. A reasonable question is if we can recover $\CS$ from $\CF$, which is answered in the following proposition:
	
	\begin{lem}\label{lem:glob.eq} Let $\CS$ be a subsheaf of $\CX$. Denote $\CF=(\CS(M))_c$ then for any $U\subset M$ open we have $(\CS(U))_c=\iota_U^{-1} \CF$ and therefore
		\[\CS(U)=\widehat{\iota_U^{-1} \CF}.\]
	\end{lem}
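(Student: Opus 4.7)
The plan is to prove the set equality $(\CS(U))_c = \iota_U^{-1}\CF$ by double inclusion, and then deduce the ``therefore'' clause by invoking the preceding lemma which states $\reallywidehat{(\CS(U))_c} = \CS(U)$.

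For the inclusion $\iota_U^{-1}\CF \subseteq (\CS(U))_c$, I would take an element of the form $Y|_U$ where $Y\in \CF = (\CS(M))_c$ has $\mathrm{supp}(Y)\subset U$. Since $Y$ lies in $\CS(M)$, the sheaf restriction morphism gives $Y|_U \in \CS(U)$; and since $\mathrm{supp}(Y)\subset U$, the support of $Y|_U$ equals $\mathrm{supp}(Y)$, which is compact. Hence $Y|_U \in (\CS(U))_c$. This direction is essentially just unwinding the definitions.

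The harder direction is $(\CS(U))_c \subseteq \iota_U^{-1}\CF$, and here the gluing axiom does the real work. Given $X\in \CS(U)$ with compact support $K:=\mathrm{supp}(X)\subset U$, I need to produce a compactly supported element of $\CS(M)$ supported in $U$ whose restriction to $U$ equals $X$. The natural candidate is the extension by zero $\widetilde{X}$. To show $\widetilde{X}\in \CS(M)$, I cover $M$ by the two open sets $U$ and $V := M\setminus K$ (using that $M$ is Hausdorff, so $K$ is closed in $M$). Define $X_U := X \in \CS(U)$ and $X_V := 0 \in \CS(V)$. On the overlap $U\cap V = U\setminus K$, both restrict to the zero section, so they agree; the gluing axiom yields a section $Z\in \CS(M)$ with $Z|_U = X$ and $Z|_V=0$. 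Since $\mathrm{supp}(Z)\subset K$ is compact, $Z\in (\CS(M))_c = \CF$, and $\mathrm{supp}(Z)\subset U$, so $X = Z|_U \in \iota_U^{-1}\CF$.

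For the final assertion, once $(\CS(U))_c = \iota_U^{-1}\CF$ is established, apply the preceding lemma to the subsheaf $\CS$ restricted to $U$ (i.e.\ the identity $\reallywidehat{(\CS(U))_c}=\CS(U)$, with the hull taken inside $\CX(U)$) to obtain $\CS(U) = \widehat{\iota_U^{-1}\CF}$. The main subtlety to watch out for is the implicit use of Hausdorffness when claiming $K$ is closed in $M$ (needed to ensure $V$ is open), and making sure the hull in the concluding formula is interpreted as the hull computed within $\CX(U)$ rather than $\CX(M)$.
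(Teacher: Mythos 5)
Your proposal is correct and follows essentially the same route as the paper's proof: double inclusion, with the easy direction given by the sheaf restriction morphism and the hard direction by gluing $X$ with the zero section on $M\setminus \mathrm{supp}(X)$, then concluding via the preceding lemma $\reallywidehat{(\CS(U))_c}=\CS(U)$. Your extra remarks on checking the overlap condition and on Hausdorffness are just more explicit versions of steps the paper leaves implicit.
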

	\begin{proof} We will prove by double inclusion that for every open set $U\subset M$ we get the following equality: $(\CS(U))_c=\iota_U^{-1}\CF$, which also implies that $\CS(U)=\widehat{\iota_U^{-1}\CF}$.
		
		Take $X\in (\CS(U))_c$, then $X\in \CS(U)$ and the two sets $U$ and $M/\text{supp}(X)$ are a cover of $M$. Using the gluing property of $\CS$ as sheaf, there exists $Y\in \CS(M)= \widehat{\CF}$ such that $Y|_U= X$ and $Y_{M/\text{supp}(X)}= 0$. Note that $Y$ has compact support, then $Y\in \CF$ and this support is in $U$, therefore by definition $X= Y|_U\in \iota_U^{-1}\CF$.
		
		For the converse take $X\in \iota_U^{-1} \CF$. By definition there exists $Y\in \CF=(\CS(M))_c\subset \CS(M)$ such that $Y|_U= X$ and $\text{supp}(Y)\subset U$. Therefore $X=Y|_U\in (\CS(U))_c$. 
	\end{proof}

	Therefore a natural way to define a presheaf for a singular foliation $\CF$ is as follows:
	\[\CS^\CF(U):=\widehat{\iota^{-1}_U \CF}.\]
	
	An easy consequence of lemma \ref{lem:glob.eq} is that starting from a sheaf $\CS$ we get the module $\CF$ and $\CS^\CF=\CS$, i.e. the process of getting a module from a sheaf is invertible.
	
	Now we just need to prove that, starting from a module $\CF\subset \CX(M)$, the presheaf $\CS^\CF$ is indeed a sheaf.
	
	\begin{lem}\label{lem:sheaf.fol} Given $\CF\subset \CX_c(M)$ a $\CI(M)$-submodule, $\CS^\CF$ is a sheaf.
	\end{lem}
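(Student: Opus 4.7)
The plan is to verify the three requirements for $\CS^\CF$ to be a sheaf: existence of restriction morphisms, locality, and gluing. The key subtlety throughout is bookkeeping of supports, since $\iota_U^{-1}\CF$ requires the underlying element of $\CF$ to have support literally inside $U$ (not just inside $\overline{U}$), and this must be propagated correctly through every step.

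First I would build the restriction morphism. Given $V\subset U$ open and $X\in \widehat{\iota_U^{-1}\CF}$, I want to show $X|_V\in \widehat{\iota_V^{-1}\CF}$. Take $f\in \CI_c(V)$ and extend by zero to $\widetilde{f}\in \CI_c(U)$; by hypothesis $\widetilde{f}X\in \iota_U^{-1}\CF$, so there is $Y\in\CF$ with $\mathrm{supp}(Y)\subset U$ and $Y|_U=\widetilde{f}X$. Since $\mathrm{supp}(\widetilde{f}X)\subset \mathrm{supp}(f)$, which is compact in $V$, and $Y$ vanishes outside $U$, one gets $\mathrm{supp}(Y)=\mathrm{supp}(Y|_U)=\mathrm{supp}(\widetilde{f}X)\subset V$. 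Thus $Y|_V=(\widetilde{f}X)|_V = f\cdot X|_V$, showing $f\cdot X|_V\in \iota_V^{-1}\CF$. Locality is free of content: sections of $\CS^\CF(U)$ are ordinary vector fields on $U$, so two sections agreeing on every $U_i$ of a cover agree everywhere.

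The heart of the argument is gluing. Suppose $\{U_i\}_{i\in I}$ is an open cover of $U$ and $X_i\in \CS^\CF(U_i)$ satisfy $X_i|_{U_i\cap U_j}=X_j|_{U_i\cap U_j}$. These glue (as smooth vector fields) to a unique $X\in\CX(U)$ with $X|_{U_i}=X_i$. To check $X\in \widehat{\iota_U^{-1}\CF}$, fix $f\in \CI_c(U)$ and choose a locally finite partition of unity $\{\varphi_i\}$ subordinate to $\{U_i\}$. Since $\mathrm{supp}(f)$ is compact, only finitely many $\varphi_i$ meet it, so $f=\sum_i f\varphi_i$ is a finite sum on $U$. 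For each such $i$ we have $f\varphi_i\in \CI_c(U_i)$, hence $(f\varphi_i)X_i\in \iota_{U_i}^{-1}\CF$: there exists $Y_i\in\CF$ with $\mathrm{supp}(Y_i)\subset U_i$ and $Y_i|_{U_i}=(f\varphi_i)X_i$. Setting $Y:=\sum_i Y_i$ (a finite sum) gives an element of $\CF$ with $\mathrm{supp}(Y)\subset \bigcup_i \mathrm{supp}(Y_i)\subset U$. On each $U_j$ the compatibility of the $X_i$ together with $\sum_i \varphi_i\equiv 1$ yields
\[
Y|_{U_j}=\sum_i (f\varphi_i X_i)|_{U_j}=\sum_i f\varphi_i X_j = fX_j=(fX)|_{U_j},
\]
so $Y|_U=fX$, proving $fX\in \iota_U^{-1}\CF$. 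As $f$ was arbitrary, $X\in \widehat{\iota_U^{-1}\CF}=\CS^\CF(U)$.

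The main obstacle I anticipate is entirely support-theoretic: in the restriction step one must argue that the $\CF$-witness for $\widetilde{f}X$ can be taken to be supported in $V$ (not merely $\overline{V}$), and in the gluing step one must ensure that the finite sum $Y$ actually has support inside $U$ and realises $fX$ globally on $U$. Both points reduce to the observation that $\mathrm{supp}(Y_i)$ equals the support of the restriction $Y_i|_{U_i}$ once $\mathrm{supp}(Y_i)\subset U_i$, combined with local finiteness of the partition of unity; no further combinatorics is needed.
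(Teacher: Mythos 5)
Your proof is correct and follows essentially the same route as the paper: locality is immediate because $\CS^\CF$ is a subpresheaf of the sheaf $\CX$, and gluing is handled by using compactness of $\mathrm{supp}(f)$ together with a partition of unity to write $fX$ as a finite sum of terms $f\varphi_i X_i$ lying in $\iota_{U_i}^{-1}\CF\subset\iota_U^{-1}\CF$. The only difference is cosmetic: you additionally verify that the restriction maps are well defined and make the extension-by-zero witnesses $Y_i\in\CF$ explicit, steps the paper leaves implicit.
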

	\begin{proof} Note that $\CS^\CF$ satisfies the locality axiom because it is a sub presheaf of $\CX$, which is a sheaf.
		
		We will prove the gluing condition. Take $\{U_i\}_{i\in I}$ a family of open sets contained in and covering $U$, and $X_i\in \CS^\CF(U_i)$ such that $X_i |_{U_i\cap U_j} = X_j |_{U_i\cap U_j}$. Then there exists $X\in \CX(U)$ such that $X|_{U_i}=X_i$. We will show that $X\in \CS^\CF(U)$.
		
		Take $f\in \CI_c(U)$. Then there exists finitely many $U_i$ in the family that cover $\text{supp}(f)$. Without loss of generality call them $U_1,\dots, U_k$ and denote $U_0=U-\mathrm{supp}(f)$. There exists a partition of unity $\varphi_0,\varphi_1,\dots,\varphi_k\subset \CI_c(U)$  subordinated to $U_0,U_1,\dots,U_k$. For all $j>0$ the functions $\varphi_j$ have compact support on $U_j$, then $\varphi_j f X = \varphi_j f X_j \in \iota_U^{-1}\CF$, therefore:
		$$fX= \sum_{j>0} \varphi_j f X\in \iota_U^{-1}\CF.$$
		
		Using that $f$ is arbitrary, we get $X\in \widehat{\iota_U^{-1} \CF}=\CS^\CF(U)$.
	\end{proof}
	
	\begin{prop}\label{prop:sheafol} Let $\CC$ denote the collection of submodules of $\CX_c(M)$ and $\CSH$ the subsheaves of $\CX$.
		
		The map $\CC\fto\CSH;\CF\mapsto \CS^\CF$ is a bijection. The inverse map is $\CS\mapsto \CF^\CS:=(\CS(M))_c$.
	\end{prop}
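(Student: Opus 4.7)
The plan is to verify that the two constructions $\CF \mapsto \CS^\CF$ and $\CS \mapsto \CF^\CS$ are mutually inverse, since well-definedness of the forward map (that $\CS^\CF$ is actually a sheaf, not just a presheaf) is already handled by Lemma \ref{lem:sheaf.fol}, and well-definedness of the backward map is immediate from the definition. So the work splits into checking the two composition identities $\CF^{\CS^\CF} = \CF$ and $\CS^{\CF^\CS} = \CS$, and essentially all the content is carried by the two earlier identities $(\widehat{\CF})_c = \CF$ and $\CS(U) = \reallywidehat{(\CS(U))_c}$ together with Lemma \ref{lem:glob.eq}.

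First I would show $\CF^{\CS^\CF} = \CF$. Unwinding the definitions,
\[
\CF^{\CS^\CF} = \bigl(\CS^\CF(M)\bigr)_c = \bigl(\reallywidehat{\iota_M^{-1}\CF}\bigr)_c = (\widehat{\CF})_c,
\]
using that $\iota_M$ is the identity so $\iota_M^{-1}\CF = \CF$. The equality $(\widehat{\CF})_c = \CF$ is exactly the first identity from the lemma on global hulls, so this direction closes immediately.

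Next I would show $\CS^{\CF^\CS} = \CS$ by checking equality on each open $U \subset M$. Setting $\CF := \CF^\CS = (\CS(M))_c$, Lemma \ref{lem:glob.eq} directly gives
\[
\CS^{\CF^\CS}(U) = \reallywidehat{\iota_U^{-1}\CF} = \CS(U),
\]
since that lemma was precisely the statement that $\CS(U) = \reallywidehat{\iota_U^{-1}\CF}$ when $\CF = (\CS(M))_c$. This equality holds for every open $U$ and is compatible with restrictions (both sides are subsheaves of $\CX$), so the two sheaves coincide.

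There is no real obstacle here: the heavy lifting was already done in the preceding lemmas, in particular showing that $\CS^\CF$ satisfies the gluing axiom (Lemma \ref{lem:sheaf.fol}) and that the local sections $\iota_U^{-1}(\CS(M))_c$ recover $(\CS(U))_c$ for a sheaf $\CS$ (Lemma \ref{lem:glob.eq}). The proposition is therefore a clean assembly of those results, and the only thing worth flagging is to make sure that in the first identity one uses $\iota_M = \mathrm{id}_M$ so that the global hull construction meshes correctly with the sheaf evaluated on the whole manifold.
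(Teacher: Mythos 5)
Your proposal is correct and follows essentially the same route as the paper: well-definedness of $\CF\mapsto\CS^\CF$ comes from Lemma \ref{lem:sheaf.fol}, the composition $\CS^{\CF^\CS}=\CS$ is exactly Lemma \ref{lem:glob.eq}, and $\CF^{\CS^\CF}=(\widehat{\CF})_c=\CF$ is the global hull identity. You merely spell out the two composition checks that the paper's two-line proof leaves implicit.
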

	\begin{proof} Lemma \ref{lem:sheaf.fol} gives a map $\CC\fto\CSH;\CF\mapsto \CS^\CF$. Lemma \ref{lem:glob.eq} shows that this map is the inverse of the map $\CSH\fto \CC;\CS\mapsto \CF^\CS:=(\CS(M))_c$. 
	\end{proof}
	
	Now we want to prove the following:
	
	\begin{prop}\label{prop:sheafingen} $\CF$ is locally finitely generated as in the sense of definition \ref{def:loc.gen} if and only if $\CS^\CF$ is locally finitely generated as a sheaf.
	\end{prop}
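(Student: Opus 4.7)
I would establish both implications separately, interpreting ``locally finitely generated as a sheaf'' in the standard way: every $x\in M$ admits a neighborhood $U$ and finitely many sections $Y_1,\dots,Y_r\in \CS^\CF(U)$ such that $\CS^\CF(V)=\left<Y_1|_V,\dots,Y_r|_V\right>_{\CI(V)}$ for every open $V\subset U$. The whole point is to relate compactly-supported generation of the module $\iota_V^{-1}\CF$ over $\CI_c(V)$ with ordinary generation of the sheaf sections $\CS^\CF(V)=\widehat{\iota_V^{-1}\CF}$ over $\CI(V)$.

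For the forward direction ($\Rightarrow$), fix $x\in M$ and use local finite generation of $\CF$ to find a neighborhood $U$ and $Y_1,\dots,Y_r\in \CX(U)$ generating $\iota_U^{-1}\CF$ as a $\CI_c(U)$-module. By Lemma \ref{lemma:unionoffinite}(a) the same family generates $\iota_V^{-1}\CF$ as a $\CI_c(V)$-module for every open $V\subset U$, and since multiplication by any $f\in \CI_c(U)$ keeps $Y_i$ inside $\iota_U^{-1}\CF$, we have $Y_i\in \CS^\CF(U)$. The claim is then that the $Y_i$ generate $\CS^\CF$ over $U$ as a sheaf. Given $X\in \CS^\CF(V)$ I would pick a locally finite partition of unity $\{\varphi_j\}\subset \CI_c(V)$ and auxiliary cutoffs $\chi_j\in \CI_c(V)$ with $\chi_j\equiv 1$ on $\mathrm{supp}(\varphi_j)$, chosen so that $\{\mathrm{supp}(\chi_j)\}$ is locally finite as well. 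Since $\varphi_j X$ is compactly supported it lies in $\iota_V^{-1}\CF$, so $\varphi_j X=\sum_i g_{ij}\,Y_i|_V$ for some $g_{ij}\in \CI_c(V)$. Setting $h_i:=\sum_j \chi_j g_{ij}\in \CI(V)$ (well defined by local finiteness) yields $X=\sum_j \varphi_j X=\sum_i h_i\, Y_i|_V$.

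For the reverse direction ($\Leftarrow$), fix $x$ and choose a neighborhood $U$ with generators $Y_1,\dots,Y_r\in \CS^\CF(U)$ of the sheaf over $U$. I would verify that these same $Y_i$ exhibit $\iota_U^{-1}\CF$ as finitely generated over $\CI_c(U)$, where recall from Definition \ref{defi:sing.fol} the generators themselves need not be compactly supported. Indeed, take $X\in \iota_U^{-1}\CF\subset \CS^\CF(U)$. Sheaf generation gives $X=\sum_i h_i Y_i$ with $h_i\in \CI(U)$. Pick $\chi\in \CI_c(U)$ equal to $1$ on $\mathrm{supp}(X)$; then $X=\chi X=\sum_i (\chi h_i)\, Y_i$ with $\chi h_i\in \CI_c(U)$, as desired.

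The only subtle step is the cutoff bookkeeping in the forward direction: one must produce a global $h_i\in \CI(V)$ from the compactly supported local coefficients $g_{ij}$ without destroying the identity $X=\sum_i h_i Y_i|_V$. The auxiliary functions $\chi_j$ localize the $g_{ij}$ to a locally finite family of supports, which makes the sum $\sum_j \chi_j g_{ij}$ well defined; everything else is formal.
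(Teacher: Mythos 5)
Your proposal is correct and takes essentially the same route as the paper: both directions reduce to the fact that the global hull of $\left<Y_1,\dots,Y_r\right>_{\CI_c(U)}$ equals $\left<Y_1,\dots,Y_r\right>_{\CI(U)}$, which the paper isolates as Lemma \ref{lem:loc.gen} and proves with a compactly supported partition of unity, while the converse direction is the same single-cutoff trick you use to turn $\CI(U)$-coefficients into $\CI_c(U)$-coefficients. Your explicit auxiliary cutoffs $\chi_j$ merely make precise the support bookkeeping that the paper states tersely as $\mathrm{supp}(\alpha^j_i)\subset \mathrm{supp}(\varphi_i)$, so the two arguments coincide in substance.
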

	For a manifold $M$ remember than any open set $U\subset M$ is a manifold. Therefore Proposition \ref{prop:sheafingen} is a consequence of the following lemma:
	
	\begin{lem}\label{lem:loc.gen}
		Let $U$ be a manifold, $X_1,\cdots, X_n\in \CX(U)$, and $\CF:=\left< X_1,\cdots,X_n \right>_{\CI_c(U)}$, then $\widehat{\CF}=\left< X_1,\cdots,X_n \right>_{\CI(U)}$.
	\end{lem}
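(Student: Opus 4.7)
The plan is to prove the equality by double inclusion, with the nontrivial direction handled via a locally finite partition of unity argument.

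The inclusion $\left< X_1,\dots,X_n \right>_{\CI(U)} \subset \widehat{\CF}$ is immediate from the definitions: if $X = \sum_i f_i X_i$ with $f_i \in \CI(U)$, then for any $g \in \CI_c(U)$ we have $gX = \sum_i (gf_i) X_i$ with $gf_i \in \CI_c(U)$, so $gX \in \CF$ and therefore $X \in \widehat{\CF}$.

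For the reverse inclusion, I would start by choosing a locally finite open cover $\{V_\alpha\}_{\alpha \in A}$ of $U$ by relatively compact sets, and a subordinate partition of unity $\{\varphi_\alpha\}$ with $\mathrm{supp}(\varphi_\alpha) \subset V_\alpha$. Take $X \in \widehat{\CF}$. Since $\varphi_\alpha \in \CI_c(U)$, the definition of $\widehat{\CF}$ gives $\varphi_\alpha X \in \CF$, so we may write
\[
\varphi_\alpha X = \sum_{i=1}^n h_i^\alpha X_i
\]
for some $h_i^\alpha \in \CI_c(U)$, but a priori the supports of these coefficients are not controlled.

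The key technical step is to arrange the supports. For each $\alpha$, pick a bump function $\psi_\alpha \in \CI_c(U)$ with $\mathrm{supp}(\psi_\alpha) \subset V_\alpha$ and $\psi_\alpha \equiv 1$ on $\mathrm{supp}(\varphi_\alpha)$. Multiplying the previous identity by $\psi_\alpha$ gives
\[
\varphi_\alpha X = \psi_\alpha \varphi_\alpha X = \sum_{i=1}^n (\psi_\alpha h_i^\alpha) X_i,
\]
and $\psi_\alpha h_i^\alpha$ has support in $V_\alpha$. So, replacing $h_i^\alpha$ by $\psi_\alpha h_i^\alpha$, we may assume $\mathrm{supp}(h_i^\alpha) \subset V_\alpha$ for every $\alpha$ and $i$.

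Finally, because $\{V_\alpha\}$ is locally finite, for each $i$ the sum $f_i := \sum_\alpha h_i^\alpha$ has at each point only finitely many nonzero terms in a neighbourhood, hence defines an element of $\CI(U)$. Then
\[
X = \sum_\alpha \varphi_\alpha X = \sum_\alpha \sum_{i=1}^n h_i^\alpha X_i = \sum_{i=1}^n f_i X_i \in \left< X_1,\dots,X_n \right>_{\CI(U)},
\]
which closes the argument. The main point to be careful about is exactly this support control: without it, the infinite sum $\sum_\alpha h_i^\alpha$ need not be locally finite and would not define a smooth function on $U$.
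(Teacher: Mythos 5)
Your proof is correct and takes essentially the same route as the paper's: a partition of unity subordinate to a suitable cover, rewriting each $\varphi_\alpha X$ as a combination of the $X_i$ with compactly supported coefficients, and summing the resulting locally finite family to obtain coefficients in $\CI(U)$. In fact your bump-function step makes explicit the support control $\mathrm{supp}(h_i^\alpha)\subset V_\alpha$ that the paper's proof simply asserts, so your write-up is, if anything, slightly more careful on that point.
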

	\begin{proof}
		Let $H=\left< X_1,\cdots,X_n \right>_{\CI(U)}$, we want to show that $\widehat{\CF}=H$. It is clear that $H\subset \widehat{\CF}$. Now we prove the other inclusion. Take $X\in \widehat{\CF}$ and $\{\varphi_i\}_{i\in I}$ a partition of unity for $U$ with compact support. Then for every $i$ we have that $\varphi_i X\in \CF$. Therefore there exist $\alpha^1_i,\dots,\alpha^n_i\in \CI_c(U)$ such that:
		\[\varphi_i X= \alpha^1_i X_1 +\dots +\alpha^n_i X_n\]
		
		with $\text{supp}(\alpha^j_i)\subset \text{supp}(\varphi_i)$, therefore $\Sigma_i \alpha^j_i \in \CI(M)$ for all $j$. Finally:
		\begin{eqnarray*}
			X=\sum_{i\in I} \varphi_i X &=&  \sum_{i\in I} \left(\alpha^1_i X_1 +\dots +\alpha^n_i X_n\right)\\
			&=&  \left(\sum_{i\in I} \alpha^1_i\right) X_1 +\dots +\left(\sum_{i\in I} \alpha^n_i\right) X_n\in H\\
		\end{eqnarray*}
	\end{proof}
	
	Lemma \ref{lem:loc.gen} shows that if $\iota^{-1}_U \CF$ is generated by some elements in $\widehat{\iota^{-1}_U \CF}$ then these elements also generate the global hull $\CS^\CF(U)=\widehat{\iota^{-1}_U \CF}$ and viceversa.\\
	
	Finally we need to prove the involutivity condition:
	
	\begin{prop}\label{prop:sheafinv} A submodule $\CF$ of $\CX_c(M)$ is involutive if and only if $\CS^\CF$ is involutive.
	\end{prop}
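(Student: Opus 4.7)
The plan is to prove both implications by unraveling the definition $\CS^\CF(U) = \widehat{\iota_U^{-1}\CF}$ and exploiting the fact that, under the bijection of Proposition \ref{prop:sheafol}, the compactly supported part of $\CS^\CF(M)$ is exactly $\CF$.

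For the easier direction ($\CS^\CF$ involutive $\Rightarrow$ $\CF$ involutive), I would simply take $X, Y \in \CF$. By definition $\CF = (\CS^\CF(M))_c \subset \CS^\CF(M)$, so $[X,Y] \in \CS^\CF(M)$ by hypothesis. Since the Lie bracket of two compactly supported vector fields is supported in the intersection of their supports, $[X,Y]$ is compactly supported, and therefore $[X,Y] \in (\CS^\CF(M))_c = \CF$.

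For the converse, I first observe that if $\CF$ is involutive, then each $\iota_U^{-1}\CF$ is involutive: given $X,Y \in \CF$ with supports in $U$, the bracket $[X,Y] \in \CF$ has support contained in $\mathrm{supp}(X) \cap \mathrm{supp}(Y) \subset U$, so its restriction lies in $\iota_U^{-1}\CF$. The main step is then to pass from involutivity of $\iota_U^{-1}\CF$ to involutivity of its global hull $\CS^\CF(U) = \widehat{\iota_U^{-1}\CF}$. Given $X, Y \in \widehat{\iota_U^{-1}\CF}$ and $f \in \CI_c(U)$, I need to show $f[X,Y] \in \iota_U^{-1}\CF$. The idea is to pick a bump function $g \in \CI_c(U)$ with $g \equiv 1$ on a neighborhood of $\mathrm{supp}(f)$. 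Then $gX, gY \in \iota_U^{-1}\CF$ by the definition of the global hull, so $[gX, gY] \in \iota_U^{-1}\CF$ by the previous step, and therefore $f[gX, gY] \in \iota_U^{-1}\CF$.

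The key computation is to verify that $f[gX, gY] = f[X,Y]$. Expanding,
\[
[gX, gY] = g^2 [X,Y] + g(X\cdot g) Y - g(Y \cdot g) X,
\]
and on $\mathrm{supp}(f)$ we have $g \equiv 1$ together with $X \cdot g = Y \cdot g = 0$ (since $g$ is locally constant there), so the last two terms vanish when multiplied by $f$, and $fg^2 = f$. Hence $f[X,Y] = f[gX, gY] \in \iota_U^{-1}\CF$. Since $f$ was arbitrary, $[X,Y] \in \widehat{\iota_U^{-1}\CF} = \CS^\CF(U)$. The only delicate point is the bump function argument above; once that is in place, the two implications close up cleanly and, together with Propositions \ref{prop:sheafol} and \ref{prop:sheafingen}, complete the proof of Theorem \ref{thm:sheafol}.
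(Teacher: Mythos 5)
Your proof is correct and follows essentially the same route as the paper: both directions reduce to the fact that $\iota_U^{-1}\CF$ is involutive and then pass to the global hull $\widehat{\iota_U^{-1}\CF}$ using a compactly supported bump function and the Leibniz rule. The only cosmetic difference is the identity employed — you use $f[X,Y]=f[gX,gY]$, while the paper uses $f[X,Y]=[fX,gY]-(gY(f))X$ with $g\equiv 1$ on $\mathrm{supp}(f)$ — so the two arguments are interchangeable.
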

	\begin{proof}
		($\Leftarrow$) Suppose $\CS^\CF$ is involutive, then $\CS^\CF(M)$ is involutive and so $\CF=(\CS^\CF(M))_c$.
		
		($\Rightarrow$) Suppose $\CF$ is involutive, then for all $U\subset M$ open we get that $\iota^{-1}_U\CF=(\CS^\CF(U))_c$ is involutive. We just need to prove that if a submodule $H\subset \CX(U)$ is involutive then $\widehat{H}$ is also. Then take $X,Y\in \widehat{H}$ and $f\in \CI_c(U)$ we will prove that $f[X,Y]\in H$. Take $g\in \CI_c(M)$ such that $g|_{\text{supp}(f)}=1$. Then using Leibniz formula we obtain $f[X,Y]=[fX,gY]-(gY(f)) X \in H$.
	\end{proof}
	
	As a consequence of the equivalent description between subsheaves and submodules given in theorem \ref{thm:sheafol}, get the following proposition:
	
	\begin{prop}\label{prop:fol.loc.pro}
		Let $\CF_1$ and $\CF_2$ be two singular foliations on a manifold $M$ and $\CU:=\{U\subset M\}_{i\in I}$ an open cover. Then $\CF_1=\CF_2$ if and only if $\iota_{U_i}^{-1}\CF_1=\iota_{U_i}^{-1}\CF_2$ for all $i\in I$.
	\end{prop}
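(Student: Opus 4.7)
The forward implication is immediate: if $\CF_1=\CF_2$ then all derived objects, in particular $\iota_{U_i}^{-1}\CF_j$, agree.

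For the backward implication, the plan is to prove $\CF_1\subset \CF_2$ directly; the reverse inclusion then follows by symmetry. Take an arbitrary $X\in \CF_1$. Since $X$ has compact support, I would choose a partition of unity $\{\varphi_j\}_{j\in J}$ subordinate to the cover $\CU$, with each $\varphi_j\in \CI_c(M)$ supported in some $U_{i(j)}$. Local finiteness together with the compactness of $\mathrm{supp}(X)$ ensures that $X=\sum_j \varphi_j X$ is effectively a finite sum. Each summand $\varphi_j X$ belongs to $\CF_1$ (by the $\CI_c(M)$-module structure) and has support inside $U_{i(j)}$, so under the canonical identification of elements of $\CF_1$ supported in $U_{i(j)}$ with elements of $\iota_{U_{i(j)}}^{-1}\CF_1$, the hypothesis $\iota_{U_{i(j)}}^{-1}\CF_1=\iota_{U_{i(j)}}^{-1}\CF_2$ forces $\varphi_j X$ (extended by zero) to lie in $\CF_2$ as well. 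Summing yields $X\in \CF_2$.

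A cleaner alternative would be to appeal directly to Theorem \ref{thm:sheafol}: the hypothesis together with Lemma \ref{lem:glob.eq} gives $\CS^{\CF_1}(U_i)=\CS^{\CF_2}(U_i)$ for every $i\in I$; since both $\CS^{\CF_j}$ are subsheaves of the sheaf $\CX$ and the family $\{U_i\}$ covers $M$, the sheaf axioms (applied also to the covers $\{V\cap U_i\}_{i\in I}$ of arbitrary open $V\subset M$) force the equality $\CS^{\CF_1}=\CS^{\CF_2}$ of sheaves. The bijection of Theorem \ref{thm:sheafol} then yields $\CF_1=\CF_2$.

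No serious obstacle is expected. The only point that needs careful bookkeeping is the identification between compactly supported vector fields on $U$ and vector fields on $M$ with support contained in $U$ (given by extension by zero), which is what makes the statement ``$\varphi_j X\in \iota_{U_{i(j)}}^{-1}\CF_2\subset \CF_2$'' meaningful.
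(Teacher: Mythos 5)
Your proof is correct, and your primary argument takes a genuinely different (and more elementary) route than the paper. The paper proves the converse by passing through the sheaf correspondence of Theorem \ref{thm:sheafol}: the hypothesis gives $\CS^{\CF_1}(U_i)=\widehat{\iota_{U_i}^{-1}\CF_1}=\widehat{\iota_{U_i}^{-1}\CF_2}=\CS^{\CF_2}(U_i)$, and the locality and gluing axioms then force $\CS^{\CF_1}(M)=\CS^{\CF_2}(M)$, hence $\CF_1=\CF_2$ -- this is essentially your ``cleaner alternative''. Your main argument instead works directly with the modules: decompose an arbitrary $X\in\CF_1$ by a partition of unity subordinate to the cover (a finite sum thanks to compact support), note $\varphi_j X\in\CF_1$ with $\mathrm{supp}(\varphi_j X)\subset U_{i(j)}$, and use the definition $\iota_U^{-1}\CF=\{Y|_U : Y\in\CF,\ \mathrm{supp}(Y)\subset U\}$ together with extension by zero to conclude $\varphi_j X\in\CF_2$; since any two fields supported in $U_{i(j)}$ that agree on $U_{i(j)}$ agree globally, this identification is indeed legitimate, which is the one bookkeeping point you correctly flag. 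The direct argument buys self-containedness (it uses nothing beyond the $\CI_c(M)$-module structure and the definition of $\iota_U^{-1}\CF$, so it does not need \S\ref{sec:fol.sheaf} at all), whereas the paper's route buys the slightly stronger conclusion $\CS^{\CF_1}=\CS^{\CF_2}$ and fits the section's theme that the sheaf picture and the module picture are interchangeable. One minor remark on your sheaf variant: to get equality of the full sheaves you restrict to covers $\{V\cap U_i\}$ of an arbitrary open $V$; this works, but note that it uses the fact that equality of the modules $\iota_{U_i}^{-1}\CF_1=\iota_{U_i}^{-1}\CF_2$ passes to further restrictions $\iota_W^{-1}$ for open $W\subset U_i$, not merely the restriction maps of the sheaf -- and for the proposition itself the equality $\CS^{\CF_1}(M)=\CS^{\CF_2}(M)$ already suffices.
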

	\begin{proof} It is clear that $\CF_1=\CF_2$ implies $\iota_{U_i}^{-1}\CF_1=\iota_{U_i}^{-1}\CF_2$ for all $i\in I$.
		
		For the converse take $\CS^{\CF_1}$ and $\CS^{\CF_2}$ the corresponding sheaves of $\CF_1$ and $\CF_2$. By hypothesis we have: $$\CS^{\CF_1}(U_i)=\widehat{\iota_{U_i}^{-1}\CF_1}=\widehat{\iota_{U_i}^{-1}\CF_2}=\CS^{\CF_1}(U_i).$$
		Using elemental properties of sheaves (the restriction maps and the gluing axiom) it is possible to prove by double inclusion that $\CS^{\CF_1}(M)=\CS^{\CF_2}(M)$ and therefore $\CF_1=\CF_2$ (also $\CS^{\CF_1}=\CS^{\CF_2}$).
	\end{proof}
	
	This proposition says that two foliations are equal if and only if they are locally equal.
	
	\cleardoublepage

	
	\chapter{Hausdorff Morita equivalence for singular foliations}\label{ch:2}
	
	The first two sections of this chapter are a summary of sections 1.2.2 and 2 of the article \cite{AndrSk} by I. Androulidakis and G. Skandalis. There are no new results but we give a new proof for proposition \ref{prop:exp.in.aut} and we changed the order and presentation of the theorems. The intention for the rest of this chapter is to introduce Hausdorff Morita equivalence for singular foliations as in the article \cite{ME2018} by M. Zambon and myself. We show the definition, display some easy invariants, and present several classes of examples.

	\section{The automorphism group of a foliated manifold}\label{sec:aut.fol}
	
	We start this section with the group of automorphisms for a singular foliation. This is a quite interesting object by its own and it serves as a motivation for many other notions that  we will introduce later on:
	
	\begin{defi}
		Let $\mathcal{F}$ be a singular foliation on a manifold $M$. The 
		\textbf{automorphism group} of $\mathcal{F}$ is the group
		\begin{equation*}
			\mathrm{Aut}(\mathcal{F}) := \{\; \varphi\in\mathrm{Diff}(M)\;|\; \varphi^{-1}(\mathcal{F})\subset \mathcal{F} \;\}.
		\end{equation*} 
		
		Also due to the compact support for any element $X\in\mathcal{F}$ we may define the \textbf{exponential map} $\mathrm{exp}:\mathcal{F}\to \mathrm{Diff}(M)$ by $\mathrm{exp}(X):=\varphi_X^{1}$, where $\varphi_X^{1}$ is the flow of $X$ at time $1$. The exponential map allows us to define the {\bf exponential group}:
		\begin{equation*}
			\mathrm{exp}(\mathcal{F}) :=\textnormal{The group generated by } \{\; \mathrm{exp}(X)\in\mathrm{Diff}(M)\;|\; X\in \mathcal{F} \;\}.
		\end{equation*}
	\end{defi}
	
	\begin{rem} By definition, for any $\phi\in \mathrm{Diff}(M)$ and $X\in \CX(M)$, the pushforward of $X$ under $\phi$ is given by:
		\[(\phi_* X)_x=(d\phi) X_{\phi^{-1}x}=\left((d\phi\circ (\phi^{-1})^*) X\right)_x=\left(((\phi^{-1})^*\circ d\phi) X\right)_x.\]
	\end{rem}
	
	The following proposition was proven first in \cite{AndrSk} using infinite dimensional techniques. Ori Yudilevich and I provided a ``finite dimensional'' proof in \cite{AutOri} which we will show here: 
	
	\begin{prop}\label{prop:exp.in.aut}
		Let $\mathcal{F}$ be a singular foliation on $M$. For any $X\in \CX_c(M)$ such that $[X,\CF]\subset \CF$ we have that
		$$(\varphi_X^1)^{-1}(\mathcal{F})=\mathcal{F}.$$
		
		This also implies that
		\begin{equation*}
			\mathrm{exp}(\mathcal{F})\subset \mathrm{Aut}(\mathcal{F}).
		\end{equation*}
	\end{prop}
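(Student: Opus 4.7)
The plan is to prove the stronger statement that $(\varphi_X^t)^* Y \in \CF$ for every $Y \in \CF$ and every $t \in \RR$. Specializing to $t=1$ gives $(\varphi_X^1)^{-1}\CF \subseteq \CF$, and since $-X$ also satisfies the hypothesis $[-X,\CF] \subseteq \CF$, applying the same statement to $-X$ yields the opposite inclusion, hence equality.

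Since $X$ has compact support, the flow $\varphi_X^t$ is the identity outside a compact set, and the sheaf characterization of foliations from Section~\ref{sec:fol.sheaf} reduces the problem to a local one. Around each point $p$, I would shrink to an open $U$ on which $\CF|_U$ is generated by vector fields $Y_1, \dots, Y_n$, and on which the hypothesis yields relations $[X, Y_i] = \sum_j f_{ij} Y_j$ for some $f_{ij} \in \CI(U)$. It then suffices to show $(\varphi_X^t)^* Y_i \in \CF|_U$, since $\CI$-linearity of pullback extends this to arbitrary elements of $\CF|_U$.

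The technical heart is to exhibit explicit coefficients expressing $(\varphi_X^t)^* Y_i$ as a combination of the $Y_j$. For each $x_0 \in U$ I would solve the linear matrix ODE $\dot{A}(t) = A(t) F(t)$ with $A(0) = I$, where $F(t)_{ij} := f_{ij}(\varphi_X^{-t}(x_0))$, obtaining smooth functions $a_{ij}(t, x_0)$. Setting $c_{ij}(t, x) := a_{ij}(t, \varphi_X^t(x))$ and $Z_i(t) := \sum_j c_{ij}(t, \cdot)\, Y_j \in \CF|_U$, a chain-rule computation shows that the $c_{ij}$ satisfy the first-order PDE $\partial_t c_{ij} = X(c_{ij}) + \sum_k c_{ik} f_{kj}$, which is precisely what is needed for $Z_i(t)$ to satisfy $\frac{d}{dt} Z_i(t) = [X, Z_i(t)]$ with $Z_i(0) = Y_i$. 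Since $(\varphi_X^t)^* Y_i$ satisfies the same ODE and initial condition in the ambient space $\CX(U)$, uniqueness forces $(\varphi_X^t)^* Y_i = Z_i(t) \in \CF|_U$.

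The main obstacle is that the generators $Y_1, \dots, Y_n$ are typically not pointwise linearly independent, so the functions $f_{ij}$ are not uniquely determined and one cannot naively equate coefficients in the module generated by the $Y_j$. The argument circumvents this by comparing $Z_i(t)$ with $(\varphi_X^t)^* Y_i$ as ambient vector fields via uniqueness of ODE solutions, rather than through their expansions in the generators. A secondary technical point is the smooth dependence of $a_{ij}(t, x_0)$ on the parameter $x_0$, which follows from standard parameter-dependence results for linear ODEs, and the gluing of the local conclusion into the global statement, which is handled by the sheaf formalism of Section~\ref{sec:fol.sheaf}.
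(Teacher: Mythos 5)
Your core computation is sound and is essentially the paper's: express the bracket of $X$ with a finite generating set in terms of those generators, reduce to a linear ODE with smooth coefficients, and conclude that the time-one flow maps generators back into the module with smooth coefficients. The genuine problem is the localization step. You propose to shrink to a small neighbourhood $U$ of each point on which $\CF$ is finitely generated, and then to run the ODE for $t\in[0,1]$ with coefficients $F(t)_{ij}=f_{ij}(\varphi_X^{-t}(x_0))$. But for a small $U$ the backward orbit $\varphi_X^{-t}(x_0)$ typically leaves $U$ long before $t=1$, so the $f_{ij}(\varphi_X^{-t}(x_0))$ are simply undefined, and $(\varphi_X^t)^*Y_i$ is not a vector field on $U$ either (it lives on $\varphi_X^{-t}(U)$). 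As stated, the local comparison on $[0,1]$ does not make sense; you would need either a short-time statement combined with a subdivision/compactness argument along each orbit, or a better choice of $U$. The fix used in the paper is the latter: take $U$ to be a single precompact open neighbourhood of all of $\mathrm{supp}(X)$; then $\iota_U^{-1}\CF$ is still finitely generated (this is exactly what parts (b) and (c) of Lemma \ref{lemma:unionoffinite} are for, and it is not a ``shrink around a point'' statement), and such a $U$ is automatically invariant under the flow, since integral curves of $X$ are either constant or remain inside $\mathrm{supp}(X)$. Outside $\mathrm{supp}(X)$ the flow is the identity, which handles the rest of $M$ via the cutoff $\rho_V Y$ (or, in your language, via the sheaf gluing of \S\ref{sec:fol.sheaf}). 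With this choice of $U$ your coefficient ODE and comparison argument go through verbatim.

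Two smaller points should also be justified. First, the relations $[X,Y_i]=\sum_j f_{ij}Y_j$ with $f_{ij}\in\CI(U)$ do not follow immediately from $[X,\CF]\subset\CF$, because the generators $Y_i$ need not be compactly supported; you need the cutoff/Leibniz argument showing that bracket-invariance passes to the global hull (the same trick as in Proposition \ref{prop:sheafinv}, or the paper's $\lambda$-trick), together with Lemma \ref{lem:loc.gen} to know that the hull is the $\CI(U)$-span of the $Y_j$. Second, ``uniqueness'' for the equation $\frac{d}{dt}W(t)=[X,W(t)]$ in $\CX(U)$ is not an off-the-shelf Picard--Lindel\"of statement, since $[X,\cdot]$ is an unbounded first-order operator on an infinite-dimensional space; it is true, but the proof is to observe that $t\mapsto \bigl(d\varphi_X^{-t}\bigr)\bigl(W(t)_{\varphi_X^t(x)}\bigr)$ is constant for each $x$, which is a pointwise finite-dimensional argument -- at which point you are very close to the paper's direct computation of $((\varphi_X^t)_*Y^i)_x$ and its explicit solution. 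So: same method at heart, correct technical engine, but the ``shrink around each point'' reduction must be replaced by a flow-invariant neighbourhood of $\mathrm{supp}(X)$ (or supplemented by a time-subdivision argument) for the proof to be complete.
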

	
	\begin{proof}
		We need to show that $(\varphi_X^1)_*(Y)\in \mathcal{F}$ for all $Y\in\mathcal{F}$. Indeed, this implies that $(\varphi_X^1)^{-1}(\mathcal{F})\subset\mathcal{F}$, and since $X\in\mathcal{F} \Rightarrow -X\in\mathcal{F}$ and $(\varphi_X^1)_*((\varphi_{-X}^1)_*(Y))=Y$, also that $(\varphi_X^1)^{-1}(\mathcal{F})=\mathcal{F}$. 
		
		Let then $X,Y\in\mathcal{F}$ and let us shorten the notation for the flow of $X$ to $\varphi^t = \varphi_X^t$. Since $\mathrm{supp}(X)\subset M$ is compact, there exists a precompact open neighborhood $U$ of $\mathrm{supp}(X)$ in $M$. Let $\{\rho_U, \rho_V\}$ be a partition of unity subordinate to the open cover $\{U,V:=M\backslash \mathrm{supp}(X)\}$ of $M$. Since $U$ is precompact, $\rho_U$ has compact support, and hence $\rho_U\mathcal{F}$ is finitely generated by part (c) of Lemma \ref{lemma:unionoffinite}. Fixing generators $Y^1,...,Y^N\in\mathfrak{X}(U)$ of $\rho_U\mathcal{F}$, we may write
		\begin{equation*}
			Y = \rho_U Y + \rho_V Y =  \sum_{i=1}^N f_i Y^i + \rho_V Y,
		\end{equation*}
		for some $f_i\in C^\infty_c(U)$. Now, since $\varphi^t\big|_{M\backslash\mathrm{supp}(X)} = \mathrm{Id}$ for all $t$ and hence $(\varphi^1)_*(\rho_VY)=\rho_VY$, we see that the problem is reduced to showing that $(\varphi^1)_*(Y^i) = \sum_j f^i_j Y^j$, for some functions $f^i_j\in C^\infty(U)$. To this end, we compute the following:
		\begin{equation*}
			\begin{split}
				\frac{d}{dt}((\varphi^t)_* Y^i)_x &= \frac{d}{dt}(d\varphi^t) Y^i_{\varphi^{-t}(x)} \\ &= - \frac{d}{ds}\Big|_{s=0}(d\varphi^{t-s}) Y^i_{\varphi^{-t+s}(x)} \\ &= - (d\varphi^t) \frac{d}{ds}\Big|_{s=0} (d\varphi^{-s}) Y^i_{\varphi^s(\varphi^{-t}(x))} \\
				&= (d\varphi^t)[Y^i,X]_{\varphi^{-t}(x)}.
			\end{split}
		\end{equation*}
		We claim that $[Y^i,X] \in \rho_U\mathcal{F}$. Indeed, choosing $\lambda\in C^\infty_c(U)$ that is 1 on $\mathrm{supp}(X)$, we have $[Y^i,X]= [Y^i,\lambda X] = Y^i(\lambda) X + \lambda[Y^i,X] = Y^i(\lambda) X + [\lambda Y^i,X]\in \rho_U\mathcal{F}$, where we used that $X(\lambda)=0$, $X\in \rho_U\mathcal{F}\subset \CF$ and $\mathcal{F}$ is involutive. Hence, we can write $[Y^i,X] = \sum_j \gamma^i_j Y^j$ for some functions $\gamma^i_j\in C^\infty_c(M)$. We thus have:
		\begin{equation*}
			\begin{split}
				\frac{d}{dt}(d\varphi^t) Y^i_{\varphi^{-t}(x)} =  \sum_{j=1}^N \gamma^i_j(\varphi^{-t}(x)) \left((d\varphi^t)Y^j_{\varphi^{-t}(x)}\right).
			\end{split}
		\end{equation*}
		Now, for a fixed $x\in M$, this is a system of $N$ equations indexed by $i$, each an equality  of curves in $T_xM$. Fixing a basis of $T_xM$, every component is a linear first ordinary partial differential equation of the type $\dot{v}(t)=A(t)v(t)$, with $v:I\to \mathbb{R}^N$ and $A:I\to \mathrm{End}(\mathbb{R}^N)$, and its solution is given by $v(t)= e^{\int_0^t A(\epsilon)d\epsilon}v(0)$. Thus, writing $\gamma$ for the $N\times N$ matrix whose entries are $\gamma^i_j$, we have at $t=1$ that:
		\begin{equation*}
			((\varphi^1)_*Y^i)_x =  (d\varphi^1)Y^i_{\varphi^{-1}(x)} = \sum_{j=1}^N(e^{\int_0^1\gamma(\varphi^{-\epsilon}(x))d\epsilon})^i_j Y^j_x,
		\end{equation*}
		where the exponential is the exponential of $N\times N$ matrices. Clearly, the coefficients of $Y^j_x$ in the final expression are smooth functions of $x$, and hence we are done. As a bonus, we have also obtained an explicit formula for $(\varphi^1)_*Y^i$ in terms of the bracket of the $Y^i$'s with $X$ (which is encoded in the $\gamma$ matrices). 
	\end{proof}
	
	\begin{rem} Proposition \ref{prop:exp.in.aut} can be improved by saying that $\mathrm{exp}(\CF)\subset \mathrm{Aut}(\CF)$ is a normal subgroup. This is a consequence of the fact that for any $\phi\in \mathrm{Aut}(\CF)$ and $X\in \CF$ we have $\phi\circ \mathrm{exp}(X)\circ \phi^{-1}=\mathrm{exp}(\phi_* X)$ with $\phi_* X\in \CF$.
	\end{rem}
	
	\begin{lem}\label{lem:leaves} The leaves of $\CF$ are the orbits of the group $\mathrm{exp}(\CF)$.
	\end{lem}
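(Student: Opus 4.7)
The plan is to prove both inclusions. First, every $\exp(\CF)$-orbit sits inside a single leaf; second, every leaf coincides with a single $\exp(\CF)$-orbit. The first assertion is straightforward: for any $X \in \CF$ and any $p \in M$ we have $X(q) \in F_q$ for every $q$, so the integral curve $t \mapsto \varphi_X^t(p)$ is everywhere tangent to the singular distribution $F$. Since a leaf is, by the Stefan-Sussmann characterization, a maximal connected immersed integral submanifold of $F$, the entire integral curve stays in the leaf $L_p$ through $p$; in particular $\exp(X)(p) \in L_p$, and iterating over finite compositions gives that the whole $\exp(\CF)$-orbit of $p$ is contained in $L_p$.

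For the reverse inclusion, the key step is to show that the $\exp(\CF)$-orbit of $y$ is open in the leaf $L_y$ for every $y \in M$. I would apply Proposition \ref{prop:loc.pic.fol} to obtain coordinates $(x_1,\ldots,x_n)$ on some neighborhood $U$ of $y$ and a submersion $\pi \colon U \to S$ with $(F_S)_{\pi(y)} = 0$ such that $\CF|_U = \langle \partial_{x_1},\ldots,\partial_{x_k}, \CF_S\rangle_{\CI_c(U)}$. Because $(F_S)_{\pi(y)} = 0$, the leaf of $\CF_S$ through $\pi(y)$ reduces locally to the single point $\{\pi(y)\}$, so $L_y \cap U'$ equals the fiber $\pi^{-1}(\pi(y)) \cap U'$ on a possibly smaller neighborhood $U' \subset U$, a $k$-dimensional open disk through $y$. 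Choosing a bump function $\rho \in \CI_c(U)$ that is identically $1$ on a neighborhood of $y$, the vector fields $\rho \partial_{x_1}, \ldots, \rho \partial_{x_k}$ lie in $\CF$; they pairwise commute on the set where $\rho \equiv 1$, and a finite composition of their flows at small times parametrizes a whole neighborhood of $y$ inside the fiber, and hence inside $L_y$. Therefore this neighborhood is contained in the $\exp(\CF)$-orbit of $y$.

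The final step is a connectedness argument. By the first inclusion, a fixed leaf $L$ is a disjoint union of $\exp(\CF)$-orbits; by the openness just established, each such orbit is open in the manifold topology of $L$. Since $L$ is connected, there is only one such orbit, so $L$ coincides with the $\exp(\CF)$-orbit of any of its points. The main technical obstacle is the claim in the second paragraph that the flows of $\rho \partial_{x_i}$ cover a full neighborhood of $y$ inside the fiber: one needs to invoke continuity of flows to stay within the region $\{\rho \equiv 1\}$ for small times, and verify that the composition map $(t_1, \ldots, t_k) \mapsto \varphi^{t_1}_{\rho \partial_{x_1}} \circ \cdots \circ \varphi^{t_k}_{\rho \partial_{x_k}}(y)$ has invertible differential at the origin into the fiber, which is immediate since that differential sends $\partial_{t_i}$ to $\partial_{x_i}(y)$ and the latter are linearly independent.
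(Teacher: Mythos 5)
Your proof is correct in substance, but it takes a genuinely different route from the paper: the paper disposes of this lemma in one line by appealing to the proof of the Stefan--Sussmann theorem in \cite[\S 2.3]{GrudMSch} (where the leaves are essentially constructed as orbits of flows), whereas you give a self-contained open--closed argument inside the thesis itself, combining the local splitting Proposition \ref{prop:loc.pic.fol} with the cutoff trick $\rho\,\de_{x_i}\in\CF$ and connectedness of the leaf. What your approach buys is that the only external input is the characterization of leaves as maximal connected integral submanifolds of $F$ (used in your first inclusion, where one also tacitly uses that flows of vector fields tangent to $F$ preserve such maximal integral manifolds); everything else is internal to the paper, and in particular your argument makes visible exactly where compact supports matter, namely that $\rho\,\de_{x_i}$ extends by zero to a genuine element of $\CF$ so that its time-$t$ flows lie in $\mathrm{exp}(\CF)$. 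The paper's route is shorter but leaves all of this inside the cited reference.

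One imprecision to fix: it is not true in general that $L_y\cap U'$ equals the fiber $\pi^{-1}(\pi(y))\cap U'$ -- a leaf may re-enter $U'$ in many plaques lying over other points of $S$ (think of a dense leaf). What is true, and what your argument needs, is the statement for the plaque through $y$: any point of $L_y$ close to $y$ in the leaf topology is joined to $y$ by a short path tangent to $F$, whose $\pi$-projection is tangent to $F_S$ and hence stays in the $\CF_S$-leaf of $\pi(y)$, which is the single point $\{\pi(y)\}$ since $(F_S)_{\pi(y)}=0$; so that component of $L_y\cap U'$ sits inside the fiber, and your flow map then shows it lies in the $\mathrm{exp}(\CF)$-orbit. (Equivalently, one can argue that the $k$-disk you produce in the fiber is contained in $L_y$ by your first inclusion and is open in $L_y$ because leaves are initial submanifolds of the same dimension $k=\dim F_y$.) With that local statement in place, your connectedness argument goes through unchanged.
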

	\begin{proof} It is a direct consequence of the proof of the Stefan-Sussmann theorem \cite[\S 2.3]{GrudMSch}.
	\end{proof}
	
	\section{Bisubmersions}\label{sec:bisub}
	
	Bisubmersions were first introduced in \cite{AndrSk}. In this section bisubmersions will be seen as a groupoid-like structure for the group of automorphisms. In lemma \ref{lem:bisect.diff} and in corollary \ref{cor:diff.bisect} we explain more explicitly what this means.
	
	Most of the results in this section are taken from \cite{AndrSk} with a variation in the order and interpretation. We will make notice of the original results.
	
	\begin{defi}\label{def:bisub} Let $(M,\CF_M)$ and $(N,\CF_N)$ foliated manifolds.
		\begin{enumerate}
			\item A bisubmersion between $(M,\CF_M)$ and $(N,\CF_N)$ is a smooth manifold $U$ with two (not necesarily surjective) submersions $\bs\colon U\fto M$ and $\bt\colon U\fto N$ such that $\bs^{-1}\CF_M=\bt^{-1}\CF_N=\ker_c(d\bs)+\ker_c(d\bt)$.
			\item A bisubmersion of $(M,\CF_M)$ is a bisubmersion between $(M,\CF_M)$ and itself.
		\end{enumerate}
	\end{defi}
	
	Being a bisubmersion is a local statement as the following lemma shows:
	
	\begin{lemma}\label{lem:bisub.loc} Let $(M,\CF_M)$ be a foliated manifold $U$ be a manifold, $\bt,\bs\colon U\fto M$ submersions and $\{U_i\}_{i\in I}$ a cover of $U$. The triple $(U,\bt,\bs)$ is a bisubmersion if and only if $(U_i,\bt|_{U_i},\bs|_{U_i})$ are bisubmersions for all $i\in I$.	
	\end{lemma}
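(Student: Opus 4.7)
The plan is to reduce the equivalence to the local-vs-global principle for singular foliations, namely Proposition \ref{prop:fol.loc.pro}, which states that two singular foliations on a manifold agree if and only if their restrictions to the opens of a cover agree. Being a bisubmersion is the conjunction of two equalities of submodules of $\CX_c(U)$, namely $\bs^{-1}\CF_M=\ker_c(d\bs)+\ker_c(d\bt)$ and $\bt^{-1}\CF_M=\ker_c(d\bs)+\ker_c(d\bt)$. Since by Theorem \ref{thm:sheafol} these are singular foliations on $U$ (on each side, after verifying involutivity and local finite generation hold under the stated hypotheses locally), it suffices to show that each of the three submodules above is compatible with the restriction operation $\iota_{U_i}^{-1}$, and then to invoke Proposition \ref{prop:fol.loc.pro}.

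First I would establish the identity $\iota_{U_i}^{-1}(\bs^{-1}\CF_M)=(\bs|_{U_i})^{-1}\CF_M$, and analogously for $\bt$. This follows from the functoriality of the pullback under composition stated in part (3) of the proposition on pullback foliations: writing $\bs|_{U_i}=\bs\circ\iota_{U_i}$, noting that the open inclusion $\iota_{U_i}$ is a submersion and hence transverse to every foliation, we obtain $(\bs|_{U_i})^{-1}\CF_M=\iota_{U_i}^{-1}(\bs^{-1}\CF_M)$.

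Next I would verify the identity
\[
\iota_{U_i}^{-1}\bigl(\ker_c(d\bs)+\ker_c(d\bt)\bigr)=\ker_c(d\bs|_{U_i})+\ker_c(d\bt|_{U_i}).
\]
The inclusion $\supseteq$ is immediate by extension by zero: a compactly supported vertical vector field on $U_i$ extends to a compactly supported vertical vector field on $U$. The subtle direction is $\subseteq$: given $X\in \ker_c(d\bs)+\ker_c(d\bt)$ with $\mathrm{supp}(X)\subset U_i$, one writes $X=X_1+X_2$ with $X_j$ in the respective kernel, but a priori neither $X_1$ nor $X_2$ is supported in $U_i$. The remedy is to multiply by a bump function $\rho\in\CI_c(U)$ with $\mathrm{supp}(\rho)\subset U_i$ and $\rho\equiv 1$ on $\mathrm{supp}(X)$, which gives $X=\rho X_1+\rho X_2$ with each $\rho X_j$ in the appropriate kernel and supported in $U_i$. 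I expect this to be the main technical point of the argument, as it is the one place where the $\CI_c(U)$-module structure must be used explicitly.

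With these two identities in hand, the proof is immediate in both directions. For the forward implication, one simply restricts the two equalities defining a bisubmersion to each $U_i$ and reads off the corresponding equalities for $(U_i,\bt|_{U_i},\bs|_{U_i})$. For the converse, the local equalities obtained on each $U_i$ assemble, via Proposition \ref{prop:fol.loc.pro} applied to the cover $\{U_i\}_{i\in I}$, into the two global equalities of singular foliations that characterize $(U,\bt,\bs)$ as a bisubmersion.
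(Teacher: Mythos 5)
Your proposal is correct and follows essentially the same route as the paper, whose proof of Lemma \ref{lem:bisub.loc} simply invokes Proposition \ref{prop:fol.loc.pro}. You merely make explicit the compatibility of $\bs^{-1}\CF_M$, $\bt^{-1}\CF_M$ and $\ker_c(d\bs)+\ker_c(d\bt)$ with restriction to the $U_i$ (via functoriality of pullbacks and a bump-function argument), which is exactly the verification the paper leaves implicit.
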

	\begin{proof}This is a direct consequence of proposition \ref{prop:fol.loc.pro}.
	\end{proof}
	
	\begin{defi} Consider a foliated manifold $(M,\CF)$, a bisubmersion  $(V,\bt,\bs)$  and $x\in \bs(V)$. 
		\begin{enumerate}
			\item[(i)] A {\bf bisection} at $x$ consists of a local $\bs$-section $\sigma\colon M'\fto V$, where $M'$ is a neighbourhood of $x$ in $\bs(V)$, such that the image of $\sigma$ is transverse to the fibres of $\bt$.
			\item[(ii)] {Given a  diffeomorphism $\phi$ between open subsets of $M$, a bisubmersion $(V,\bt,\bs)$ is said to {\bf carry} $\phi$ at $v\in V$ if  there exists a bisection $\sigma$ through $v$ such that $\phi=\bt\circ\sigma$.}
		\end{enumerate}
	\end{defi}
	
	\begin{lem}\label{lem:e.bisect}
		Let $(V,\bt,\bs)$ be a bisubmersion of a foliated manifold $(M,\CF)$ and $v\in V$. There exists a bisection $\sigma$ near $\bs(v)$.
	\end{lem}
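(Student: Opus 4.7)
The plan is to reduce the problem to pointwise linear algebra at $v$: construct a subspace $W\subset T_vV$ of dimension $\dim M$ which is simultaneously complementary to $\ker d_v\bs$ and to $\ker d_v\bt$, and then realize $W$ as the tangent at $v$ of a local $\bs$-section.

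First I would observe that, since $\bs$ and $\bt$ are both submersions from $V$ to $M$, the subspaces $K_s:=\ker d_v\bs$ and $K_t:=\ker d_v\bt$ have the same dimension $\dim V-\dim M$. The bisubmersion hypothesis involving $\bs^{-1}\CF=\bt^{-1}\CF$ is not needed for this lemma; only the fact that both maps are submersions matters.

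Next I would produce $W$ by elementary linear algebra. Write $I:=K_s\cap K_t$, choose decompositions $K_s=I\oplus J$ and $K_t=I\oplus L$, and pick a complement $U$ of $K_s+K_t$ in $T_vV$. Since $\dim J=\dim L$, fix a linear isomorphism $\alpha\colon J\to L$ and set
\[
W:=\{\,j+\alpha(j)\ :\ j\in J\,\}\ \oplus\ U.
\]
A dimension count shows $\dim W=\dim M$, and a direct check shows $W\cap K_s=W\cap K_t=0$; hence $W$ is complementary to both $K_s$ and $K_t$.

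Then I would realize $W$ as the tangent of an $\bs$-section. Using the normal form of the submersion $\bs$, pick coordinates $(x,y)$ centered at $v$ with $\bs(x,y)=x$, so that any local $\bs$-section takes the form $x\mapsto(x,\phi(x))$. Since $W$ is transverse to $K_s=\{\partial_y\}$, there is a unique affine $\phi$ with $\phi(0)=y(v)$ whose differential at $0$ makes $d_v\sigma(T_{\bs(v)}M)=W$. Transversality of $\sigma$ to the $\bt$-fibres at $\bs(v)$ then reads $W+K_t=T_vV$, which holds by construction. Since transversality is an open condition, shrinking the domain $M'$ yields a bisection $\sigma$ defined in a neighborhood of $\bs(v)$.

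The only genuinely substantive step is the linear-algebra construction of $W$; everything else is a direct application of the submersion normal form and the openness of transversality.
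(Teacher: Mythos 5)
Your proof is correct, and it is essentially the standard argument: the paper itself gives no proof of this lemma (it is imported from \cite{AndrSk}), and the reasoning used there is exactly your general-position one --- only the fact that $\bs$ and $\bt$ are submersions to the same manifold $M$ matters, one chooses a subspace of $T_vV$ complementary to both $\ker d_v\bs$ and $\ker d_v\bt$, realizes it as the tangent space at $v$ of a local $\bs$-section, and shrinks the domain using openness of transversality. As a bonus, your construction (with $\phi(0)=y(v)$) produces a bisection \emph{through} $v$, which is the slightly stronger form of the statement that is actually needed later, e.g.\ in the notion of carrying a diffeomorphism and in Corollary \ref{cor:diff.bisect}.
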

	
	A relation between bisections and the automorphism group is shown in the following lemma:
	
	\begin{lem}\label{lem:bisect.diff} Let $\phi$ be a diffeomorphism of $M$ carried by a bisubmersion $(V,\bt,\bs)$ at $v\in V$. Then $\phi$ preserves the foliation near $\bt(v)$, i.e. in a neighborhood of $\bt(v)$ we get $\phi^{-1}\CF=\CF$.	
	\end{lem}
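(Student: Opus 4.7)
The plan is to reduce the statement to a diagram chase using the functoriality of pullback foliations (part~3 of the proposition recalled just after Definition~\ref{def:pullback}). The only real input we need beyond that, besides the bisubmersion identity $\bs^{-1}\CF=\bt^{-1}\CF=\ker_c(d\bs)+\ker_c(d\bt)$, is that $\sigma$ is transverse to this common pullback foliation.

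First I would record that, shrinking $M'$ if necessary, $\sigma\colon M'\fto V$ is an embedding (being a smooth section of the submersion $\bs$), and that $\phi=\bt\circ\sigma$ is a local diffeomorphism from $M'$ onto an open neighbourhood of $\bt(v)$: the bisection condition says that $\sigma(M')$ is transverse to the fibres of $\bt$ and, since $\dim\sigma(M')=\dim M=\dim V-\dim\ker d\bt$, this forces $\bt|_{\sigma(M')}$ to be a local diffeomorphism. Then I would check that $\sigma$ is transverse to the singular foliation $\bt^{-1}\CF$ on $V$. Indeed, because $\sigma$ is an $\bs$-section we have $d\bs\circ d\sigma=\mathrm{id}$, so along $\sigma(M')$ the subspace $d\sigma(TM')$ is a complement to $\ker d\bs$ in $TV$. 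In particular
\[
d\sigma(TM')+\bigl(\ker d\bs+\ker d\bt\bigr)=TV
\]
at every point of $\sigma(M')$, which is precisely the pointwise transversality condition for $\sigma$ to the foliation whose tangent space at each point of $V$ is $\ker d_p\bs+\ker d_p\bt$, i.e.\ to $\bt^{-1}\CF=\bs^{-1}\CF$.

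With transversality established, I would apply part~3 of the pullback proposition twice. On the one hand,
\[
\sigma^{-1}\bigl(\bs^{-1}\CF\bigr)=(\bs\circ\sigma)^{-1}\CF=\mathrm{id}_{M'}^{-1}\CF=\iota_{M'}^{-1}\CF,
\]
since $\sigma$ is a section of $\bs$ on $M'$. On the other hand,
\[
\sigma^{-1}\bigl(\bt^{-1}\CF\bigr)=(\bt\circ\sigma)^{-1}\CF=\phi^{-1}\CF.
\]
Combining these two identities with the equality $\bs^{-1}\CF=\bt^{-1}\CF$ coming from the bisubmersion axiom gives $\phi^{-1}\CF=\iota_{M'}^{-1}\CF$ on $M'$, which is exactly the assertion that $\phi$ preserves $\CF$ on a neighbourhood of $\bs(v)$ (and hence on the image neighbourhood of $\bt(v)$).

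The main obstacle is purely bookkeeping: making sure all three maps ($\sigma$, $\bt\circ\sigma$, $\bs\circ\sigma$) are defined on the same shrunken open set where $\sigma$ is an embedding, where $\phi=\bt\circ\sigma$ is a diffeomorphism onto its image, and where the transversality condition for $\sigma$ holds, so that part~3 of the pullback proposition may legitimately be invoked. After that shrinking, the argument is a one-line diagram chase.
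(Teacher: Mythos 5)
Your proposal is correct and follows essentially the same route as the paper: use that $\sigma$ is an $\bs$-section to get $\sigma^{-1}(\bs^{-1}\CF)=\CF$, then the bisubmersion identity $\bs^{-1}\CF=\bt^{-1}\CF$ to conclude $\phi^{-1}\CF=(\bt\circ\sigma)^{-1}\CF=\CF$ near the relevant point. The only difference is that you spell out the transversality of $\sigma$ to $\bs^{-1}\CF$ needed to invoke the composition law for pullbacks, a detail the paper leaves implicit.
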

	\begin{proof}
		Denote $\sigma:U_1\fto V$ the bisection and $U_2:=\bt\circ\sigma(U_1)$. Note that $\bs\circ \sigma:U_1\fto U_1$ is equal to the identity, then $\sigma^{-1}(\bs^{-1}\CF)=\CF$. Finally, using the property $\bs^{-1}\CF=\bt^{-1}\CF$ of bisubmersions, we get $\sigma^{-1}(\bt^{-1}\CF)=\CF$, which is $\phi^{-1}\CF=\CF$ in a neighborhood of $\bt(v)$ (namely $U_2$).
	\end{proof}
	
	In corollary \ref{cor:diff.bisect}, we will prove that the converse is also true, i.e. any element $\phi\in \mathrm{Aut}(\CF)$ can be carried locally by a bisubmersion. Before that, we will present certain constructions of bisubmersions which endow the collection of bisubmersions with a group like structure:

	\begin{defi}\label{def:inv.comp.bisub}
		Let $(U,\bt_U,\bs_U)$ and $(V,\bt_V,\bs_V)$ be bisubmersions.
		\begin{enumerate}
			\item[(i)] The {\bf inverse} bisubmersion of $U$ is $U^{-1}:=(U,\bs_U,\bt_U)$, the bisubmersion obtained by interchanging source and target.
			\item[(ii)] Consider the following diagram:
			\[\begin{tikzcd}
				& & U{}_{\bs_U} \!\times_{\bt_V} V \arrow[rd,"p_V"] \arrow[ld,swap,"p_U"] &  & \\
				& U \arrow[dr,"\bs_U"] \arrow[dl,swap, "\bt_U"] & & V \arrow[dl,swap,"\bt_V"] \arrow[dr,"\bs_V"] & \\
				M& 	& M & & M \\
			\end{tikzcd}\]
			where $p_U,p_V$ are the projections onto $U$ and $V$. Denote $W:= U{}_{\bs_U} \!\times_{\bt_V} V$, then the \textbf{composition} of $U$ with $V$ is defined as: $$U\circ V:=(W,\bt_W:=\bt_U\circ p_U,\bs_W:=\bs_V\circ p_V).$$
		\end{enumerate}
	\end{defi} 
	
	\begin{lem}\label{lem:sub.bisub} Let $(U,\bt,\bs)$ be a bisubmersion of $(M,\CF)$ and $\pi:W\fto U$ be a submersion. Then $(W,\bt\circ\pi,\bs\circ\pi)$ is a bisubmersion.
	\end{lem}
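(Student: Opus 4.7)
The plan is to verify the two requirements of Definition \ref{def:bisub}: that $\bt\circ\pi$ and $\bs\circ\pi$ are submersions, and that $(\bs\circ\pi)^{-1}\CF=(\bt\circ\pi)^{-1}\CF=\ker_c(d(\bs\circ\pi))+\ker_c(d(\bt\circ\pi))$. The submersion claim is immediate since a composition of submersions is a submersion. For the equality of the two pullback foliations, I would invoke the composition rule for pullbacks (item 3 of the proposition after Definition \ref{def:pullback}): since $\pi$ is a submersion it is automatically transverse to any foliation on $U$, so $(\bs\circ\pi)^{-1}\CF=\pi^{-1}(\bs^{-1}\CF)$ and similarly $(\bt\circ\pi)^{-1}\CF=\pi^{-1}(\bt^{-1}\CF)$; the bisubmersion property $\bs^{-1}\CF=\bt^{-1}\CF$ of $U$ then identifies these two.

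It then remains to prove $\pi^{-1}(\bs^{-1}\CF)=\ker_c(d(\bs\circ\pi))+\ker_c(d(\bt\circ\pi))$. The inclusion $\supseteq$ is free: vector fields killed by $d(\bs\circ\pi)$ are tautologically in $(\bs\circ\pi)^{-1}\CF$, and the previous paragraph handles the $\bt$-kernel via the equality of pullback foliations. For the reverse inclusion, I plan to apply Lemma \ref{lem:pullback} to the submersion $\pi$ to express any $Y\in\pi^{-1}(\bs^{-1}\CF)$ as a finite sum $\sum_k h_k Z_k$ with $h_k\in\CI_c(W)$ and each $Z_k$ a $\pi$-projectable vector field satisfying $d\pi(Z_k)=\pi^*X_k$ for some $X_k\in\bs^{-1}\CF=\ker_c(d\bs)+\ker_c(d\bt)$. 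Writing $X_k=A_k+B_k$ with $A_k\in\ker(d\bs)$ and $B_k\in\ker(d\bt)$, the submersion property of $\pi$ yields a lift $\widetilde A_k\in\CX(W)$ with $d\pi(\widetilde A_k)=\pi^*A_k$.

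The key observation is that such lifts automatically land in the correct kernels on $W$: indeed $d(\bs\circ\pi)(\widetilde A_k)=d\bs\,d\pi(\widetilde A_k)=\pi^*(d\bs\,A_k)=0$, so $\widetilde A_k\in\ker(d(\bs\circ\pi))$, and by the same computation $Z_k-\widetilde A_k\in\ker(d(\bt\circ\pi))$ since it is a lift of $B_k$. The decomposition $Z_k=\widetilde A_k+(Z_k-\widetilde A_k)$, once multiplied by the compactly supported coefficients $h_k$ and summed, exhibits $Y$ as an element of $\ker_c(d(\bs\circ\pi))+\ker_c(d(\bt\circ\pi))$. The main obstacle I expect is the bookkeeping around compact supports --- the lifts $\widetilde A_k$ are a priori not compactly supported --- which is precisely why the multiplication by the $h_k$ is postponed to the final step. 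A cleaner alternative would be to invoke Lemma \ref{lem:bisub.loc} and argue locally, reducing to the case where $\pi$ is a trivial projection $U\times\RR^k\to U$, in which the statement can be verified by a direct computation.
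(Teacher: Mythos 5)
Your proof is correct, but it follows a genuinely different route from the paper. The paper's own proof is a two-line reduction: by Lemma \ref{lem:bisub.loc} being a bisubmersion is a local property, so by the constant rank theorem one may assume $W=U\times V$ with $\pi$ the projection onto $U$, and in that product case the conditions are immediate --- this is precisely the ``cleaner alternative'' you mention in your last sentence. You instead verify Definition \ref{def:bisub} globally: the composition rule for pullbacks (item 3 of the proposition following Definition \ref{def:pullback}, applicable since a submersion is transverse to everything) gives $(\bs\circ\pi)^{-1}\CF=\pi^{-1}(\bs^{-1}\CF)=(\bt\circ\pi)^{-1}\CF$, and then Lemma \ref{lem:pullback} plus the decomposition $\bs^{-1}\CF=\ker_c(d\bs)+\ker_c(d\bt)$ and a lift of the $\ker(d\bs)$-component along $\pi$ yields the equality with $\ker_c(d(\bs\circ\pi))+\ker_c(d(\bt\circ\pi))$; your computation $d(\bs\circ\pi)\widetilde{A}_k=d\bs\,(\pi^*A_k)=0$ is right, and postponing multiplication by the compactly supported coefficients $h_k$ correctly disposes of the support issue (the lift $\widetilde{A}_k$ exists globally for a submersion, e.g.\ via a horizontal complement of $\ker d\pi$, as the paper itself does in the proof of Lemma \ref{lem:pullbackalgfol}). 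What each approach buys: the paper's reduction is shorter and leans on machinery already in place (locality of bisubmersions and the submersion normal form), while yours is self-contained, makes the content of ``the Lemma is clear'' in the product case explicit, and produces a concrete decomposition of an arbitrary element of $(\bs\circ\pi)^{-1}\CF$ into the two kernels.
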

	\begin{proof} By lemma \ref{lem:bisub.loc}, this is a local statement. Using the constant rank theorem, it is enough to prove it for $W=U\times V$ and $\pi$ the projection onto $U$, in which case the Lemma is clear.
	\end{proof}
	
	\begin{prop} The inverse and compositions of bisubmersions are again bisubmersions.
	\end{prop}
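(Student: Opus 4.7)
The inverse case is essentially free: the condition defining a bisubmersion is symmetric in $\bs$ and $\bt$, so swapping them yields another bisubmersion between the same foliated manifolds (with source and target interchanged). I would dispose of this in one sentence.

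For the composition $W = U {}_{\bs_U}\!\times_{\bt_V} V$, the plan is to reduce as much as possible to Lemma \ref{lem:sub.bisub} (``submersion pullback of a bisubmersion is a bisubmersion''). First I would observe that the projections $p_U : W\to U$ and $p_V : W\to V$ are submersions, being pullbacks of the submersions $\bt_V$ and $\bs_U$; consequently $\bt_W = \bt_U\circ p_U$ and $\bs_W = \bs_V\circ p_V$ are submersions. Setting $\mu := \bs_U\circ p_U = \bt_V\circ p_V$, Lemma \ref{lem:sub.bisub} applied along $p_U$ and $p_V$ produces two bisubmersions $(W,\bt_W,\mu)$ and $(W,\mu,\bs_W)$. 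This immediately yields the chain of equalities
\[
\bt_W^{-1}\CF_M \;=\; \mu^{-1}\CF_M \;=\; \bs_W^{-1}\CF_M,
\]
as well as
\[
\mu^{-1}\CF_M \;=\; \ker_c(d\bt_W)+\ker_c(d\mu) \;=\; \ker_c(d\mu)+\ker_c(d\bs_W).
\]

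What remains, and is the only real content of the argument, is to upgrade the first decomposition to the required
\[
\bt_W^{-1}\CF_M \;=\; \ker_c(d\bt_W)+\ker_c(d\bs_W).
\]
The key technical step is the inclusion $\ker_c(d\mu)\subset \ker_c(d\bt_W)+\ker_c(d\bs_W)$. At a point $w=(u,v)\in W$ one has $T_wW=\{(X,Y)\in T_uU\oplus T_vV : d\bs_U X = d\bt_V Y\}$, and so $\ker(d_w\mu)$ splits as the sum of the subbundles $\ker(d\bs_U)\times\{0\}$ and $\{0\}\times\ker(d\bt_V)$. The former is contained in $\ker(d\bs_W)$ because $dp_V$ annihilates it, and the latter in $\ker(d\bt_W)$ because $dp_U$ annihilates it. Since this is a decomposition of vector bundles, any compactly supported section of $\ker(d\mu)$ decomposes accordingly (using a partition of unity if needed), giving the desired inclusion.

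Combining everything,
\[
\bt_W^{-1}\CF_M \;=\; \ker_c(d\bt_W)+\ker_c(d\mu) \;\subset\; \ker_c(d\bt_W)+\ker_c(d\bs_W) \;\subset\; \bt_W^{-1}\CF_M,
\]
the last inclusion holding because $\ker_c(d\bs_W)\subset \bs_W^{-1}\CF_M = \bt_W^{-1}\CF_M$. This closes the argument. The main potential obstacle is the bundle-level splitting of $\ker(d\mu)$, but once one writes out $T_wW$ explicitly and notes the constant-rank nature of the two summands, the step becomes routine.
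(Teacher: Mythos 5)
Your argument is correct and follows essentially the same route as the paper: you apply Lemma \ref{lem:sub.bisub} along $p_U$ and $p_V$ to get the bisubmersions $(W,\bt_W,\mu)$ and $(W,\mu,\bs_W)$, and then absorb $\ker_c(d\mu)$ into $\ker_c(d\bt_W)+\ker_c(d\bs_W)$ via the splitting $\ker(d\mu)=\ker(dp_U)\oplus\ker(dp_V)$ with $\ker(dp_U)\subset\ker(d\bt_W)$ and $\ker(dp_V)\subset\ker(d\bs_W)$, which is exactly the paper's chain of inclusions (your pointwise description of the two summands is just a more explicit rendering of the same decomposition). The treatment of the inverse by symmetry also matches the paper.
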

	\begin{proof} The inverse of a bisubmersion is clearly a bisubmersion. Therefore we will only prove that the composition of two bisubmersions is again a bisubmersion.
		
		Use $(W,\bt_W,\bs_W)$, $p_U,p_V$ as in definition \ref{def:inv.comp.bisub} and denote $\mu:=\bs_U\circ p_U=\bt_V\circ p_V$. Note that, $\ker(d\mu)=\ker(dp_U)\oplus \ker(dp_V)$. From Lemma \ref{lem:sub.bisub}, it follows that $(W,\bt_W,\mu)$ and $(W,\mu,\bs_W)$ are bisubmersions, and also that $(\bt_W)^{-1}\CF=(\bs_W)^{-1}\CF$. We only need to prove that $(\bt_W)^{-1}\CF=\ker(d\bt_W)+\ker{(d\bs_W)}$. We will do it by double inclusion. $\supseteq$ is clear. For the other inclusion we have the following:
		\begin{align*}
			(\bt_W)^{-1}\CF &= \ker{(d\bt_W)}+\ker{(d\mu)}\\
			&= \ker{(d\bt_W)}+\ker{(dp_U)}+\ker(dp_V)\\
			&= \ker{(d\bt_W)}+\ker(dp_V)\\
			&\subseteq \ker{(d\bt_W)}+\ker(d\bs_W).
		\end{align*}
		since $\ker{(dp_U)} \subseteq \ker{(d\bt_W)}$ and $\ker(dp_V)\subset \ker(d\bs_W)$.
	\end{proof}
	
	\begin{defi}
		Given a foliated manifold $(M,\CF)$ and two bisubmersions {$U$ and $V$,} a smooth map $f\colon U\fto V$ is called a {\bf morphism} of bisubmersions if it commutes with the source and the target maps of $U$ and $V$.
	\end{defi}
	
	For any two bisubmersions $U$ and $V$, If $\sigma$ is a bisection of $U$ and $f\colon U\fto V$ a morphism, then $f\circ \sigma$ is a bisection of $V$ carrying the same diffeomorphism as $\sigma$.
	
	The following proposition can be found in \cite[\S 2.3]{AndrSk}.
	
	\begin{prop}\label{path} Given $x_0\in M$, let $X_1,\dots ,X_n\in \CF$ be vector fields whose classes in the fibre $\CF_{x_0}$ form a basis. For $v=(v_1,\dots, v_n)\in \RR^n$, put $\varphi_v= exp(\Sigma_i v_i X_i)$, {where $exp$ denotes the time one flow.} 
		
		Put $W=\RR^n\times M$, $\bs(v,x)=x$ and $\bt(v,x)=\varphi_v(x)$. There is a neighbourhood $U\subset W$ of ${(0,x_0)}$ such that $(U,\bt,\bs)$ is a bisubmersion.	
	\end{prop}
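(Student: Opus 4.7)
The plan is to verify the two submersion conditions at $(0,x_0)$ and then prove the equality $\bs^{-1}\mathcal F=\bt^{-1}\mathcal F=\ker_c d\bs+\ker_c d\bt$ on a small enough neighbourhood $U$ of $(0,x_0)$, invoking Lemma~\ref{lem:bisub.loc} to keep the argument local.

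First, $\bs$ is the projection $\RR^n\times M\to M$, so it is a submersion everywhere. For $\bt$ I would compute $d\bt_{(0,x_0)}$ and note that its restriction to $\{0\}\times T_{x_0}M$ is the identity on $T_{x_0}M$; hence $\bt$ is a submersion near $(0,x_0)$. Shrinking further, the ``minimal generators'' lemma lets me assume that $X_1,\dots,X_n$ generate $\mathcal F$ on $\bs(U)\cup\bt(U)$.

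By Lemma~\ref{lem:pullback}, $\bs^{-1}\mathcal F|_U$ is the $\CI_c(U)$-span of $\partial_{v_1},\dots,\partial_{v_n}$ (framing $\ker d\bs$) together with the trivial lifts $(0,X_1),\dots,(0,X_n)$. I would then check that each of these $2n$ generators lies in $\bt^{-1}\mathcal F$ (and symmetrically for generators of $\bt^{-1}\mathcal F$), giving $\bs^{-1}\mathcal F=\bt^{-1}\mathcal F$. For the lift $(0,X_j)$ one has
\begin{equation*}
  d\bt_{(v,x)}(0,X_j)=\bigl((\varphi_v)_*X_j\bigr)(\bt(v,x)),
\end{equation*}
and Proposition~\ref{prop:exp.in.aut}, applied to $\sum v_iX_i\in\mathcal F$, ensures $(\varphi_v)_*X_j\in\mathcal F$; moreover the explicit exponential-matrix formula in its proof provides smooth coefficients $a_{jk}(v,y)$ with $(\varphi_v)_*X_j=\sum_k a_{jk}(v,\cdot)X_k$, so $d\bt(0,X_j)$ is a $\CI(U)$-combination of the $\bt^*X_k$. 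For $\partial_{v_i}$, the vector $d\bt(\partial_{v_i})|_{(v,x)}$ is the velocity of the curve $s\mapsto \varphi_{v+se_i}(x)$, which by Lemma~\ref{lem:leaves} lies in the leaf through $x$; a parallel variational computation writes it smoothly in terms of $\bt^*X_k$.

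Finally I would establish $\bs^{-1}\mathcal F=\ker_c d\bs+\ker_c d\bt$. The inclusion $\supseteq$ follows since $\ker_c d\bt\subseteq \bt^{-1}\mathcal F=\bs^{-1}\mathcal F$. For $\subseteq$, the generator $\partial_{v_i}$ already lies in $\ker d\bs$; for $(0,X_j)$ the identity $d\bt(\partial_{v_j})|_{(0,x)}=X_j(x)=d\bt(0,X_j)|_{(0,x)}$ implies that $\partial_{v_j}-(0,X_j)$ belongs to $\ker d\bt$ along $\{v=0\}$, and extending this to a local smooth section of the rank-$n$ subbundle $\ker d\bt$ near $(0,x_0)$ and rearranging produces the required decomposition of $(0,X_j)$ modulo $\ker d\bs$. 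The main obstacle throughout is securing the smoothness of the coefficients expressing $d\bt(0,X_j)$ and $d\bt(\partial_{v_i})$ as combinations of $\bt^*X_k$: the vectors $X_k(y)$ may fail to be pointwise independent at singular points, so one cannot simply invert a matrix, and this is precisely the place where the variational/exponential-matrix formula from the proof of Proposition~\ref{prop:exp.in.aut} is essential, as it provides such coefficients intrinsically from the flow.
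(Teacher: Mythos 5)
Your route is the standard one: the thesis itself does not prove this proposition (it defers to \cite[\S 2.3]{AndrSk}), and the argument there is essentially what you outline -- localize so that $X_1,\dots,X_n$ generate $\CF$ on $\bs(U)\cup\bt(U)$, describe $\bs^{-1}\CF$ via Lemma \ref{lem:pullback} by the generators $\partial_{v_1},\dots,\partial_{v_n},(0,X_1),\dots,(0,X_n)$, and control $d\bt$ of these generators through the smooth-coefficient pushforward formula from the proof of Proposition \ref{prop:exp.in.aut} (whose linear-ODE argument indeed extends to give coefficients depending smoothly on the parameter $v$, since $[X_j,\sum_i v_iX_i]=\sum_i v_i[X_j,X_i]$) together with the variation-of-flow formula $d\bt(\partial_{v_i})|_{(v,x)}=\int_0^1\bigl((\varphi^{1-u}_{X_v})_*X_i\bigr)(\bt(v,x))\,du$, where $X_v=\sum_i v_iX_i$. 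You also correctly identify the crux: pointwise tangency to the leaves is worthless here, what matters is producing \emph{smooth} coefficients in front of the $\bt^*X_k$, and the exponential-matrix formula is what supplies them.

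The step that fails as written is the last one, the inclusion of the generators $(0,X_j)$ into $\ker_c(d\bs)+\ker_c(d\bt)$. You observe that $\partial_{v_j}-(0,X_j)$ lies in $\ker d\bt$ along $\{v=0\}$ and propose to extend it to a smooth section $B_j$ of the rank-$n$ bundle $\ker d\bt$ and ``rearrange''. But the extension agrees with $\partial_{v_j}-(0,X_j)$ only on the slice $\{v=0\}$: writing $(0,X_j)=\partial_{v_j}-B_j+E_j$, the error $E_j$ vanishes on $\{v=0\}$ but off that slice it is an arbitrary vector field, and nothing places it in $\ker d\bs+\ker d\bt$ (nor even in $\bs^{-1}\CF$); the decomposition must hold at every point of $U$, not just where $v=0$. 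The correct mechanism is the same matrix trick as in the ``minimal generators'' lemma: the variational formula above gives smooth functions $A_{ik}$ with $d\bt(\partial_{v_i})=\sum_k A_{ik}\,\bt^*X_k$ and $A_{ik}(0,x)=\delta_{ik}$, so after shrinking $U$ the matrix $A$ is invertible and $\bt^*X_j=\sum_i (A^{-1})_{ji}\,d\bt(\partial_{v_i})$; combining with $d\bt(0,X_j)=\sum_k a_{jk}\,\bt^*X_k$ produces smooth $c_{ji}$ with $(0,X_j)-\sum_i c_{ji}\partial_{v_i}\in\Gamma(\ker d\bt)$, which is exactly the required decomposition. The same invertibility is what makes your ``symmetrically for the generators of $\bt^{-1}\CF$'' precise (it exhibits a frame of $\ker d\bt$ whose $d\bs$-image has smooth coefficients in the $X_k$, and $\bt$-lifts of the $X_j$ lying in $\ker d\bs$), and it is also where you should record the minor point that, for $(v,x)$ close enough to $(0,x_0)$, the whole flow segment $\varphi^u_{X_v}(x)$, $u\in[0,1]$, stays in the neighbourhood where $X_1,\dots,X_n$ generate, so the ODE argument applies.
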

	
	\begin{defi}
		A bisubmersion as in proposition \ref{path}, {when it has $\bs$-connected fibres,} is called {\bf path holonomy bisubmersion}.
	\end{defi}

	\begin{rem}
		Neighbourhoods of points of the form $(0,x_0)$ in  path holonomy bisubmersions carry the identity.
	\end{rem}
	
	\begin{lem}\label{lem:sub.to.U} Let $(V,\bt_V,\bs_V)$ a bisubmersion and $v_0\in V$. Assume that $s(v_0)=x_0$ and that $V$ carries the identity diffeomorphism at $v_0$. Then for every path holonomy bisubmersion $U$ containing $(0,x_0)$, there exists an open neighbourhood $V'\subset V$ of $v_0$, and a submersion $f:V'\fto U$ which is a morphism of bisubmersions and $f(v_0)=(0,x_0)$.
	\end{lem}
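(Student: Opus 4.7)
The strategy is to realise $f$ as a local retraction of a naturally defined morphism $\Psi\colon U'\fto V$ that goes the other way.

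First, by Lemma \ref{lem:e.bisect} combined with the identity-carrying hypothesis, I produce a bisection $\sigma\colon M''\fto V$ through $v_0$ with $\bs_V\circ\sigma=\bt_V\circ\sigma=\mathrm{id}_{M''}$ on a neighbourhood $M''\subset M$ of $x_0$. Next I lift each $X_i$ to a vector field $A_i$ on a neighbourhood of $v_0$ in $V$ satisfying $A_i\in\ker(d\bs_V)$ and $d\bt_V(A_i)=X_i\circ\bt_V$. Such lifts exist: any local lift of $X_i$ through the submersion $\bt_V$ lies in $\bt_V^{-1}\CF=\ker(d\bs_V)+\ker(d\bt_V)$ by the bisubmersion property, and its $\ker(d\bs_V)$-component provides $A_i$.

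Define $\Psi\colon\RR^n\times M''\fto V$ by $\Psi(v,x)=\phi^1_{\sum v_i A_i}(\sigma(x))$, the time-one flow. Since each $A_i$ is tangent to $\bs_V$-fibres and $\bt_V$-related to $X_i$, one computes $\bs_V(\Psi(v,x))=x$ and $\bt_V(\Psi(v,x))=\exp(\sum v_i X_i)(x)=\bt_U(v,x)$. Hence $\Psi$ restricts to a morphism of bisubmersions from a neighbourhood $U'$ of $(0,x_0)$ in $U$ into $V$, with $\Psi(0,x_0)=v_0$. By choosing the generators so that $X_1(x_0),\dots,X_r(x_0)$ form a basis of $F_{x_0}$ while $X_{r+1},\dots,X_n$ represent a basis of $\g_{x_0}^\CF$, and by selecting the lifts $A_{r+1},\dots,A_n$ carefully inside the isotropy $\ker(d\bs_V)\cap\ker(d\bt_V)$ of $V$ at $v_0$, one arranges the vectors $A_1(v_0),\dots,A_n(v_0)$ to be linearly independent in $T_{v_0}V$. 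Combined with the transversal image $d\sigma(T_{x_0}M)$ this forces $d\Psi$ to be injective at $(0,x_0)$, so $\Psi$ is a local embedding.

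A smooth retraction $f\colon V'\fto U'$ onto $\Psi(U')$ is then built using a tubular neighbourhood of $\Psi(U')$ in $V'$, chosen so that the fibres lie pointwise in $\ker(d\bs_V)\cap\ker(d\bt_V)$; this guarantees that $\bs_V$ and $\bt_V$ are constant along fibres of the retraction, so $f$ is a morphism of bisubmersions. The retraction of an embedding is automatically a submersion, and $f(v_0)=f(\Psi(0,x_0))=(0,x_0)$. The main obstacle is producing the tubular neighbourhood with fibres inside the bi-isotropy $\ker(d\bs_V)\cap\ker(d\bt_V)$: a dimension count using $\bs_V^{-1}\CF=\ker(d\bs_V)+\ker(d\bt_V)$ together with the surjectivity of $d\bt_V|_{\ker(d\bs_V)}$ onto the tangent space to the leaf shows that a transverse complement to the image of $d\Psi$ inside the bi-isotropy exists at $v_0$ of the required rank $\dim V-n-m$, and this can be extended to a smooth subbundle along $\Psi(U')$ after possibly shrinking $V'$.
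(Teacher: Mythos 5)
The paper itself does not prove this lemma (it cites \cite[\S 2.3]{AndrSk}), so let me assess your argument on its own terms. Your first half is sound and is the standard starting point: the identity-carrying bisection $\sigma$, the $\bs_V$-vertical lifts $A_i$ with $d\bt_V(A_i)=X_i\circ\bt_V$ obtained from $\bt_V^{-1}\CF=\ker_c(d\bs_V)+\ker_c(d\bt_V)$, and the morphism $\Psi(v,x)=\phi^1_{\sum v_iA_i}(\sigma(x))$ from a neighbourhood of $(0,x_0)$ in $U$ into $V$ are all correct. But already your immersion claim is not justified as written: the only freedom you have in modifying a lift $A_j$ is to add a vector field annihilated \emph{identically} by both $d\bs_V$ and $d\bt_V$, and $\ker(d\bs_V)\cap\ker(d\bt_V)$ is not a subbundle (its rank is $\dim V-\dim M-\dim F_{\bt_V(\cdot)}$, which jumps precisely in the singular case), so a pointwise dimension count at $v_0$ does not produce such a field with a prescribed value at $v_0$. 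The claim is true, but the proof must go through the bisubmersion axiom at the level of modules: $d\bt_V$ maps $\Gamma(\ker d\bs_V)$ onto $\left<\bt_V^*\CF\right>$, and reducing mod $I_{v_0}$ gives a surjection $\ker(d_{v_0}\bs_V)\to\CF_{x_0}$ from which lifts with prescribed, hence independent, values at $v_0$ are manufactured.

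The genuine gap is the final retraction. A tubular neighbourhood built from a complement chosen \emph{along} $\Psi(U')$ only gives fibres tangent to $\ker(d\bs_V)\cap\ker(d\bt_V)$ at points of $\Psi(U')$; first-order tangency at the core does not make $\bs_V$ and $\bt_V$ constant along the fibres, which is what you need for $f$ to be a morphism. For actual constancy the fibres would have to be integral manifolds of the bi-isotropy at \emph{every} one of their points, i.e.\ you would need a smooth fibration of rank $\dim V-n-m$ contained in a distribution of non-constant rank -- but producing exactly such a fibration (its leaves are the fibres of the desired $f$) is the content of the lemma, so this step assumes what is to be proved; moreover the ``extension to a smooth subbundle along $\Psi(U')$'' inside a rank-jumping distribution is itself unjustified. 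The repair, which is essentially the argument in \cite{AndrSk}, is to avoid retracting altogether: using the same module-level surjectivity as above, produce vector fields $W_1,\dots,W_{k-n}$ ($k=\dim V-\dim M$) annihilated identically by both $d\bs_V$ and $d\bt_V$ whose values at $v_0$ complete $A_1(v_0),\dots,A_n(v_0)$ to a basis of $\ker(d_{v_0}\bs_V)$. Then $(\lambda,c,y)\mapsto\phi^1_{\sum_i\lambda_iA_i+\sum_jc_jW_j}(\sigma(y))$ is a local diffeomorphism at $(0,0,x_0)$ sending $\bs_V$ to $(\lambda,c,y)\mapsto y$ and $\bt_V$ to $(\lambda,c,y)\mapsto\exp(\sum_i\lambda_iX_i)(y)$, because the flows of the $W_j$ preserve both $\bs_V$ and $\bt_V$; in these coordinates the required submersive morphism is simply $f(\lambda,c,y)=(\lambda,y)$, with $f(v_0)=(0,x_0)$.
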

	\begin{proof} The proof of this lemma can be found in \cite[\S 2.3]{AndrSk}
	\end{proof}

	\begin{cor}\label{carry}
		Let $(V_1,\bt_1,\bs_1)$ and $(V_2,\bt_2,\bs_2)$ be bisubmersions and $v_1\in V_1$, $v_2\in V_2$ be such that $\bs_1(v_1)=\bs_2(v_2)=:x_0$. Then:
		\begin{enumerate}
			\item[(i)] If there is a local diffeomorphism carried both by $V_1$ at $v_1$ and by $V_2$ at $v_2$, there exists an open neighbourhood $V'_1$ of $v_1$ in $V_1$ and a morphism $f:V_1'\fto V_2$ such that $f(v_1)=v_2$.
			\item[(ii)] If there is a morphism $g:V_1\fto V_2$ such that $g(v_1)=v_2$ then there exists an open neighbourhood $V'_2$ of $v_2$ in $V_2$ and a morphism $f:V_2'\fto V_1$ such that $f(v_2)=v_1$.
		\end{enumerate}
	\end{cor}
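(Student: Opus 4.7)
The two parts are closely related, and it turns out that (ii) follows from (i) once we recognize that the existence of a morphism forces both bisubmersions to carry a common diffeomorphism at the marked points. I would therefore focus the bulk of the argument on (i) and deduce (ii) from it. The main ingredients are Lemma \ref{lem:sub.to.U} (local morphisms into path holonomy bisubmersions) and the composition/inversion operations from Definition \ref{def:inv.comp.bisub}.

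For part (i), let $\phi$ be the common local diffeomorphism carried by $V_1$ at $v_1$ and by $V_2$ at $v_2$. Form the composition bisubmersion $W := V_1 \circ V_2^{-1}$; a direct check shows that $W$ carries the identity diffeomorphism at the point $w_0 := (v_1, v_2)$, with $\bs_W(w_0) = \bt_2(v_2) = \phi(x_0) = \bt_1(v_1) = \bt_W(w_0)$. By Lemma \ref{lem:sub.to.U}, there is a submersive morphism $h : W' \to U$ to any path holonomy bisubmersion $U$ through $(0, \phi(x_0))$, with $h(w_0) = (0, \phi(x_0))$. The plan is to extract from $h$ a smooth section $\tau : V_1' \to W$ of the projection $p_{V_1} : W \to V_1$ through $w_0$ whose image lies in the \emph{identity-carrying locus} $\Delta := \{(u,v) \in W : \bt_1(u) = \bt_2(v)\}$. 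Then $f := p_{V_2} \circ \tau : V_1' \to V_2$ sends $v_1$ to $v_2$, and the morphism conditions $\bs_2 \circ f = \bs_1$ and $\bt_2 \circ f = \bt_1$ follow respectively from the defining equation of the fibre product $W$ and from $\tau(V_1') \subset \Delta$. Producing such a $\tau$ is the technical heart of the argument and exploits the canonical ``zero-section'' bisection $y \mapsto (0, y)$ of the path holonomy $U$, pulled back through the submersion $h$.

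For part (ii), choose any bisection $\sigma_1 : M' \to V_1$ through $v_1$ (which exists by Lemma \ref{lem:e.bisect}), say carrying some local diffeomorphism $\phi$. I claim $g \circ \sigma_1$ is a bisection of $V_2$ through $v_2$ carrying the same $\phi$: it is a section of $\bs_2$ since $\bs_2 \circ g = \bs_1$; it carries $\phi$ since $\bt_2 \circ g = \bt_1$; and its image at $v_2$ is transverse to $\bt_2$-fibres because $d\bt_2 \circ d(g \circ \sigma_1) = d\bt_1 \circ d\sigma_1 = d\phi$ is an isomorphism onto $T_{\phi(x_0)} M$. Thus $V_1$ at $v_1$ and $V_2$ at $v_2$ carry the same diffeomorphism $\phi$, and part (i) applied with the roles of $V_1$ and $V_2$ interchanged produces the desired morphism $f : V_2' \to V_1$ with $f(v_2) = v_1$. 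The main obstacle, as noted, lies in the construction of the section $\tau$ in (i), which requires a careful dimension count together with the explicit product structure $\RR^n \times M$ of the path holonomy bisubmersion to ensure that $p_{V_1}$ restricted to $h^{-1}(\text{zero-section})$ remains submersive onto a neighbourhood of $v_1$.
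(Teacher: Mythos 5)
Your part (ii) is fine, and in fact supplies the detail the paper leaves implicit: pushing a bisection of $V_1$ through $v_1$ forward along $g$ gives a bisection of $V_2$ through $v_2$ carrying the same diffeomorphism (the transversality check via $d\bt_2\circ d(g\circ\sigma_1)=d\phi$ is exactly right), so (ii) reduces to (i) with the roles exchanged.

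Part (i), however, has a genuine gap at precisely the point you flag as the ``technical heart''. After forming $W=V_1\circ V_2^{-1}$ and obtaining the submersive morphism $h\colon W'\fto U$ from Lemma \ref{lem:sub.to.U}, you need the section $\tau$, i.e.\ you need $p_{V_1}$ restricted to $h^{-1}(\{0\}\times M)$ to be submersive near $w_0$. Unwinding this, it is equivalent to the statement that $dh$ maps $\ker d_{w_0}p_{V_1}=\{0\}\times\ker d_{v_2}\bs_2$ \emph{onto} $\ker d_{(0,\phi(x_0))}\bt_U$. What you get for free from $h$ being a submersive morphism of bisubmersions is only that $dh$ maps $\ker d\bt_W$ (respectively $\ker d\bs_W$, respectively all of $T_{w_0}W$) onto $\ker d\bt_U$ (resp.\ $\ker d\bs_U$, resp.\ $T U$): indeed, if $d\bt_U\circ dh=d\bt_W$ and $dh$ is onto, any preimage of an element of $\ker d\bt_U$ automatically lies in $\ker d\bt_W$. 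But $\{0\}\times\ker d\bs_2$ is in general a \emph{proper} subspace of $\ker d_{w_0}\bt_W=\{(\xi,\eta)\st d\bs_1\xi=d\bs_2\eta,\ d\bt_1\xi=0\}$, and surjectivity of a linear map does not pass to restrictions to subspaces; a dimension count cannot decide this, since the issue is possible degeneracy of $dh$ on that particular slice, not a shortage of dimensions. So as written the construction of $\tau$ is asserted, not proved, and the suggested justification (``dimension count plus the product structure $\RR^n\times M$'') does not close it.

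The standard way around this difficulty --- and the paper's proof --- avoids forming $W$ altogether: replace the targets by $\phi^{-1}\circ\bt_1$ and $\phi^{-1}\circ\bt_2$, so that \emph{both} $(V_1,\phi^{-1}\bt_1,\bs_1)$ and $(V_2,\phi^{-1}\bt_2,\bs_2)$ carry the identity at $v_1$, $v_2$; apply Lemma \ref{lem:sub.to.U} to each separately to get submersive morphisms $f_1\colon V_1'\fto U$ and $f_2\colon V_2'\fto U$ with $f_1(v_1)=(0,x_0)=f_2(v_2)$; choose a local section $\sigma$ of the submersion $f_2$ with $\sigma(0,x_0)=v_2$ (such a section is automatically a morphism of bisubmersions, since $\bs_2\circ\sigma=\bs_U\circ f_2\circ\sigma=\bs_U$ and likewise for the twisted targets); and set $f:=\sigma\circ f_1$. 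Here the surjectivity one needs is exactly the one that \emph{is} free (full submersivity of $f_2$), which is why this route requires no analogue of your section $\tau$. If you want to keep your $W$-based argument, you would have to prove that the variation of the diffeomorphisms carried by $V_2$ along its $\bs_2$-fibre through $v_2$ already has full rank $n$ when measured in $U$ --- which, once made precise, is essentially the content of applying Lemma \ref{lem:sub.to.U} to $V_2$ directly, i.e.\ the paper's argument.
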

	\begin{proof}  \textbf{(i):} Take $(V_1,\bt_1,\bs_1)$ and $(V_2,\bt_2,\bs_2)$ two bisubmersions carrying a diffeomorphism $\phi$ at $v_1\in V_1$ and at $v_2\in V_2$ respectively. Note that $(V_1,\phi^{-1}\bt_1,\bs_1)$ and $(V_2,\phi^{-1}\bt_2,\bs_2)$ are bisubmersions that carry the identity at $v_1,v_2$ respectively.
		
		By proposition \ref{lem:sub.to.U}, for any path holonomy bisubmersion $U$ containing $(0,x_0)$, there exist submersion maps of bisubmersions $f_1\colon V_1'\fto U$ and $f_2\colon V_2'\fto U$ such that $f_1(v_1)=(0,x)=f_2(v_2)$.
		
		Because $f_2$ is a submersion, it is possible to find a local section $f_2^{-1}$ nearby $(0,x_0)$ such that $f_2^{-1}(0,x_0)=v_2$. Note that $f_2^{-1}$ is also a map of bisubmersions. Finally the map $f:=f_2^{-1}\circ f_1$ is a morphism of bisubmersion defined locally around $v_1$, going to $V_2$ and such that $f(v_1)=v_2$.
		
		\textbf{(ii):} This is a consequence of part (i).
	\end{proof}

	\begin{rem}\label{rem:invertmorph} Note that, if $V$ is a bisubmersion that carries the identity diffeomorphism around $x_0\in M$, and $U$ is a path holonomy bisubmersion containing $(0,x_0)$, then there is an open neighborhood $U'\subset U$ of $(0,x_0)$ which can be embedded into $V$. This follows from  lemma \ref{lem:sub.to.U} and corollary \ref{carry} (ii).
	\end{rem}

	Note that, if $\phi$ is carried by a bisubmersion $U$ and $\sigma$ by a bisubmersion $V$ then $\phi\circ\sigma$ is carried by $U\circ V$. Also, the inverse $\phi^{-1}$ is carried by $U^{-1}$.
	
	\begin{cor}\label{cor:diff.bisect} If $\phi\in \mathrm{Aut}(\CF)$ then for each $x\in M$ there exists a bisubmersion $V_x$ carrying $\varphi$ at an element $v_x\in V_x$.
	\end{cor}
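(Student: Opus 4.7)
The plan is to construct $V_x$ by twisting a path holonomy bisubmersion at $x$ by $\phi$. Concretely, I would fix $x\in M$, invoke Proposition \ref{path} to obtain a path holonomy bisubmersion $(U,\bt,\bs)$ around $v_x:=(0,x)$, where $\bs(v,y)=y$ and $\bt(v,y)=\mathrm{exp}(\sum_i v_i X_i)(y)$. Then $\sigma_0(y):=(0,y)$ defines a local $\bs$-section at $x$ whose image is transverse to the $\bt$-fibres and satisfies $\bt\circ\sigma_0 = \mathrm{id}$, so $U$ carries the identity at $v_x$. The candidate bisubmersion is then simply $V_x:=(U,\phi\circ\bt,\bs)$.

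The main verification is that this new triple is again a bisubmersion from $(M,\CF)$ to itself. Two things need to be checked. First, the kernel identity still holds: since $d\phi$ is a fibrewise isomorphism, $\ker d(\phi\circ\bt)=\ker d\bt$, so $\ker_c d(\phi\circ\bt)+\ker_c d\bs = \ker_c d\bt + \ker_c d\bs = \bs^{-1}\CF$, which is the bisubmersion property of $U$. Second, one needs $(\phi\circ\bt)^{-1}\CF = \bs^{-1}\CF$; this reduces to $\bt^{-1}(\phi^{-1}\CF) = \bt^{-1}\CF$, i.e.\ to the identity $\phi^{-1}\CF = \CF$. This is the only mild obstacle: the definition of $\mathrm{Aut}(\CF)$ literally yields only the inclusion $\phi^{-1}\CF\subset\CF$, but using that $\mathrm{Aut}(\CF)$ is a group, applied both to $\phi$ and $\phi^{-1}$, the reverse inclusion follows and the equality holds.

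Once $V_x$ is known to be a bisubmersion, I would finish by showing that the very same section $\sigma_0$ carries $\phi$ in $V_x$. As a map into $U$ it is unchanged, hence still a local $\bs$-section through $v_x$, and its image remains transverse to the new $\bt$-fibres because these are identical to the old $\bt$-fibres. Finally, $(\phi\circ\bt)\circ\sigma_0(y) = \phi(\bt(0,y)) = \phi(y)$ in a neighborhood of $x$, so $\sigma_0$ realises $\phi$ as a bisection of $V_x$ through $v_x$, as required.
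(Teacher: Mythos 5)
Your proposal is correct and follows essentially the same route as the paper: take a path holonomy bisubmersion at $x$, which carries the identity at $(0,x)$, twist its target by $\phi$, and use $\phi^{-1}\CF=\CF$ to see that the twisted triple is a bisubmersion carrying $\phi$. Your extra verifications (the kernel identity, transversality of the constant section, and the remark that the literal definition of $\mathrm{Aut}(\CF)$ only gives an inclusion) just make explicit what the paper's one-line argument leaves implicit.
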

	\begin{proof} Given $x\in M$ take $(V_x,\bt,\bs)$ a path holonomy bisubmersion for $x$. $V_x$ carries the identity at $v_x=(0,x)$. Note that using that $\varphi^{-1}\CF=\CF$ we get easily that $(V_x,\varphi\circ\bt,\bs)$ is a bisubmersion that carries $\varphi$ at $v_x$.
	\end{proof}
	
	\begin{rem} Unless there exists a global bisubmersion, not every element in $\mathrm{Aut}(\CF)$ can be seen as a bisection. The statement of corollary \ref{cor:diff.bisect} is local.
	\end{rem}
	
	\section{Definition of {Hausdorff} Morita equivalence}\label{sec:ME1}
	
	In this chapter we will introduce the main definition of article \cite{ME2018} by Marco Zambon and myself. Before we start, we state a lemma that will be used repeatedly, and which is an immediate consequence of \cite[proposition 7.1]{GinzburgGrot}:
	
	\begin{lem}\label{lem:subcon}
		Let $A$ and $B$ be manifolds, { $k\ge 0$}, and $f\colon A\fto B$ a surjective submersion with $k$-connected fibers. If $B$ is $k$-connected then $A$ is $k$-connected.
	\end{lem}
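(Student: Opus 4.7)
My approach is to apply the long exact sequence of homotopy groups of $f$ and combine the $k$-connectedness hypotheses on $B$ and on the fibers. The key preliminary observation is that, since $f$ is a submersion, the constant rank theorem provides, for every $a\in A$, a smooth local section of $f$ through $a$, defined on a neighborhood of $f(a)$ in $B$. This is enough lifting structure to treat $f$ like a fibration for the purposes of low-dimensional homotopy groups.

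I would first handle the case $k=0$ directly. Given $a_0,a_1\in A$, use path-connectedness of $B$ to choose a path from $f(a_0)$ to $f(a_1)$, cover its compact image by finitely many open sets admitting local sections of $f$, lift the path piecewise, and repair the breaks by paths inside the (path-connected) fibers. This produces a path in $A$ ending somewhere in $f^{-1}(f(a_1))$, and a final path inside that fiber joins it to $a_1$.

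For general $k\geq 1$, I would invoke the fibration long exact sequence
\[\cdots\to \pi_{i+1}(B)\to \pi_i(F)\to \pi_i(A)\to \pi_i(B)\to \cdots,\]
where $F$ denotes a fiber of $f$. For every $i\leq k$ the outer groups $\pi_i(F)$ and $\pi_i(B)$ vanish by hypothesis, so exactness forces $\pi_i(A)=0$. Together with the basepoint case this gives the $k$-connectedness of $A$.

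The main obstacle is justifying the exact sequence itself, since a general surjective submersion need not be a Serre fibration globally. I would either establish the homotopy lifting property for $D^i$ and $S^{i-1}$ with $i\leq k+1$ by the same patching-and-bridging scheme as in the $k=0$ case (using the $k$-connectedness of the fibers to glue partial lifts coming from different local sections), or simply appeal to \cite[proposition 7.1]{GinzburgGrot}, which records precisely this consequence for surjective submersions with sufficiently connected fibers.
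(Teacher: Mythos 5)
The paper offers no argument of its own for this lemma: it is stated as an immediate consequence of \cite[proposition 7.1]{GinzburgGrot}, which is precisely the fallback you name, so on that branch your proposal coincides with the paper. Your primary route --- the homotopy long exact sequence of $f$ --- is the standard heuristic, and your $k=0$ argument by piecewise lifting of a path through local sections and bridging the breaks inside the (path-connected) fibers is fine; but be aware that justifying the exact sequence is the entire mathematical content here. A surjective submersion is in general not a Serre fibration (already restricting a trivial bundle to a suitable open subset of its total space destroys the homotopy lifting property), so the sequence cannot simply be invoked, and the ``patching-and-bridging scheme'' you propose for lifting maps of $D^i$, $i\le k+1$, is genuinely more delicate than the $k=0$ case: partial lifts coming from different local sections must be corrected coherently over overlaps of all dimensions, which amounts to an induction over the skeleta of a triangulation of the disk, using the connectivity of the fibers degree by degree. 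As written, that part of your argument is a sketch of the proof of the cited proposition rather than a proof. So either carry out that induction explicitly, or lean on the citation --- which is exactly what the paper does.
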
	
	
	Now we state an original definition first introduced in \cite{ME2018} which is inspired by: Morita equivalence for Lie algebroids in \cite[\S 6.2]{GinzburgGrot}, for Poisson manifolds as in \cite[\S 9.2]{CFPois} and for regular foliations as in \cite[\S 2.7]{MolME}.
	
	\begin{defi}\label{def:defMEfol}
		Two singular foliations  $(M,\cF_M)$ and $(N,\cF_N)$
		are {\bf Hausdorff
			Morita equivalent} if there exists a manifold $P$ and two \emph{surjective submersions with connected fibers} $\pi_M:P\fto M$ and $\pi_N:P\fto N$  such that $\pi_M^{-1}\CF_M=\pi_N^{-1}\CF_N$. In this case we write $(M,\cF_M)\simeq_{ME} (N,\cF_N)$.
		\[\begin{tikzcd}
			& P \arrow[dl,swap, "\pi_M"] \arrow[dr, "\pi_N"]   &\\
			(M,\CF_M)& &(N,\CF_N)   
		\end{tikzcd}\]
	\end{defi}
	
	Definition \ref{def:defMEfol} states when two foliated manifolds have the same ``transverse geometry''. Following the proof of Lemma \ref{lem:bisect.diff} one can also motivate definition \ref{def:defMEfol} by means of the automorphism group, using its relation with the local automorphisms $\phi$ from $M$ to $N$ such that $\phi^{-1}\CF_N=\CF_M$.
	
	\begin{lem}\label{lem:HausMEfolER}
		Hausdorff Morita equivalence is an equivalence relation on foliated manifolds.
	\end{lem}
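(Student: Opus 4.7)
The plan is to verify the three defining properties of an equivalence relation in turn, with only transitivity requiring genuine work.

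For \emph{reflexivity}, I would take $P=M$ and let both $\pi_M$ and $\pi_N$ be the identity map, which are trivially surjective submersions with connected (one-point) fibers, and obviously $\mathrm{id}^{-1}\CF_M = \CF_M$. For \emph{symmetry}, given an equivalence $(M,\CF_M) \simeq_{ME} (N,\CF_N)$ witnessed by $P$ with projections $\pi_M,\pi_N$, the same manifold $P$ with the roles of $\pi_M$ and $\pi_N$ exchanged provides the reverse equivalence; the defining condition $\pi_M^{-1}\CF_M = \pi_N^{-1}\CF_N$ is symmetric.

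The main content is \emph{transitivity}. Suppose $(M,\CF_M) \simeq_{ME} (N,\CF_N)$ via $P_1$ with maps $\pi_{M,1}\colon P_1 \to M$ and $\pi_{N,1}\colon P_1 \to N$, and $(N,\CF_N) \simeq_{ME} (Q,\CF_Q)$ via $P_2$ with maps $\pi_{N,2}\colon P_2 \to N$ and $\pi_{Q,2}\colon P_2 \to Q$. I would form the fiber product
\[
P := P_1 \,{}_{\pi_{N,1}}\!\times_{\pi_{N,2}} P_2,
\]
which is a manifold because $\pi_{N,1}$ and $\pi_{N,2}$ are submersions, with projections $pr_1\colon P \to P_1$ and $pr_2 \colon P \to P_2$. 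Define the candidate maps $\tau_M := \pi_{M,1}\circ pr_1\colon P \to M$ and $\tau_Q := \pi_{Q,2}\circ pr_2\colon P \to Q$.

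I then need to verify the three required properties. First, $pr_1$ and $pr_2$ are themselves surjective submersions; their fibers are identified with the fibers of $\pi_{N,2}$ and $\pi_{N,1}$ respectively, which are connected by hypothesis. Second, $\tau_M$ is a composition of submersions, hence a submersion, and it is surjective because both $\pi_{M,1}$ and $pr_1$ are. To see that the $\tau_M$-fibers are connected, observe that $pr_1$ restricts to a surjective submersion with connected fibers from the preimage $\tau_M^{-1}(m) = pr_1^{-1}(\pi_{M,1}^{-1}(m))$ onto the connected set $\pi_{M,1}^{-1}(m)$; Lemma \ref{lem:subcon} applied with $k=0$ gives that $\tau_M^{-1}(m)$ is connected. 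The same argument works for $\tau_Q$. Third, for the pullback foliations, using the functoriality recalled in \S\ref{sec:pull.push} (specifically $(f\circ g)^{-1}\CF = g^{-1}(f^{-1}\CF)$ and the compatibility of pullback with fiber products, which amounts to the identity $pr_1^{-1}\pi_{N,1}^{-1}\CF_N = pr_2^{-1}\pi_{N,2}^{-1}\CF_N$ since $\pi_{N,1}\circ pr_1 = \pi_{N,2}\circ pr_2$), I would compute
\[
\tau_M^{-1}\CF_M = pr_1^{-1}(\pi_{M,1}^{-1}\CF_M) = pr_1^{-1}(\pi_{N,1}^{-1}\CF_N) = pr_2^{-1}(\pi_{N,2}^{-1}\CF_N) = pr_2^{-1}(\pi_{Q,2}^{-1}\CF_Q) = \tau_Q^{-1}\CF_Q,
\]
using the two given Morita equivalences in the second and fourth equalities.

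The only non-trivial step is verifying that the composed projections $\tau_M, \tau_Q$ still have connected fibers, and this is precisely what Lemma \ref{lem:subcon} is designed to handle, so no serious obstacle remains.
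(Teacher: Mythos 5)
Your proposal is correct and follows essentially the same route as the paper: reflexivity via the identity, symmetry by exchanging the two maps, and transitivity via the fiber product $P_1\times_N P_2$, with the chain of pullback identities coming from commutativity of the diagram and connectedness of the composed fibers obtained by applying Lemma \ref{lem:subcon} to the restriction of the projection over each fiber. No gaps; the only implicit point, that the pullback of foliations composes along the submersions involved, is justified exactly as in the paper by the functoriality of pullbacks recalled in \S\ref{sec:pull.push}.
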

	\begin{proof}
		A foliated manifold $(M,\cF_M)$ is equivalent to itself, by means of $(M, Id_M, Id_M)$, therefore this relation is reflexive. It is  clearly symmetric. We now prove that it is transitive.
		
		For the transitivity, let $(M,\CF_M)$, $(N,\CF_N)$ and $(S,\CF_S)$ be foliated manifold such that $M\simeq_{ME} N$ and $N\simeq_{ME} S$. There exists manifolds $P_1,P_2$  and surjective submersions with connected fibers $\pi_M, \pi_N^1, \pi_N^2, \pi_S$ as in this diagram, inducing the above {Hausdorff } Morita equivalences.
		
		\[\begin{tikzcd}
			& & P_1\times_N P_2 \arrow[dl,swap, "\text{Pr}_1"] \arrow[dr, "\text{Pr}_2"] & &\\
			& P_1 \arrow[dl, swap, "\pi_M"] \arrow[dr,"\pi_N^1"]& & P_2 \arrow[dl, swap, "\pi_N^2"] \arrow[dr,"\pi_S"] &\\
			(M,\CF_M)& &(N,\CF_N) & & (S,\CF_S)
		\end{tikzcd}\]
		Take $P:= P_1\times_N P_2$, and denote the projections onto the factors by $\text{Pr}_1$  and $\text{Pr}_2$. 
		{The commutativity of the diagram implies that} 
		\[(\pi_M\circ \text{Pr}_1)^{-1}(\CF_M)=\text{Pr}_1^{-1}((\pi_N^1)^{-1}(\CF_N))={\text{Pr}_2^{-1}((\pi_N^2)^{-1}(\CF_N))}=(\pi_S\circ \text{Pr}_2)^{-1}(\CF_S).\]
		{The maps $\pi_M\circ \text{Pr}_1:P\fto M$ and $\pi_S\circ \text{Pr}_2:P\fto S$ are clearly surjective submersions. We    prove below that they have connected fibres, allowing to conclude that  $M\simeq_{ME} S$ via $P=P_1\times_N P_2$ and thus finishing the proof.} 
		
		{For any $m\in M$ we now show that $(\pi_M\circ \text{Pr}_1)^{-1}(m)$ is connected. Notice that the map}
		
		\[\text{Pr}_1:(\pi_M\circ \text{Pr}_1)^{-1}(m)\fto \pi_M^{-1}(m),\]
		is a surjective submersion with connected fibres, because  {the fibre over} $p_1\in \pi_M^{-1}(m)$ is \[\text{Pr}_1^{-1}(p_1)=\{(p_1,p_2)\st p_2\in (\pi_N^2)^{-1}(\pi_N^1(p_1))\}\cong (\pi_N^2)^{-1}(\pi_N^1(p_1)),\]
		which is connected. Then using the connectedness of $\pi_M^{-1}(m)$ {and lemma \ref{lem:subcon}} we get that $(\pi_M\circ {Pr_1})^{-1}(m)$ is connected. The same argument shows that $\pi_S\circ \text{Pr}_2:P\fto S$ also  has connected fibres.
	\end{proof}
	
	Note that Definition \ref{def:defMEfol} does not require the property of bisubmersions that $\pi_M^{-1}\CF_M=\pi_N^{-1}\CF_N$ equals $\ker_c(d\pi_M)+\ker_c(d\pi_N)$.
	The main reason for not including this property is that it is not needed to prove any of the features that we want Hausdorff Morita equivalence to have. 
	
	Another reason is that, given a  singular foliation $(M,\cF)$, there may not exist any global bisubmersion between $(M,\cF)$ and itself. Indeed, assume such a global bisubmersion $(U,\bt,\bs)$ exists.
	For every $p\in U$ we have $\cF/I_{\bs(p)}\cF\cong \bs^{-1}\cF/I_p(\bs^{-1}\cF)$.
	The dimension  of the latter is $\le 2 (dim(U)-dim(M))$, since the map:
	\begin{align*}
		\left(\ker_c(d\bs)/I_p \ker_c(d\bs)\right)\oplus \left(\ker_c(d\bt)/I_p \ker_c(d\bt)\right) &\to  \bs^{-1}\cF/I_p(\bs^{-1}\cF),\\
		[X]+[Y] & \mapsto  [X+Y],
	\end{align*} 
	is surjective. Combining these two facts we see that, at every $x\in M$, the dimension of $\cF/{I_x}\cF$ is bounded above by $2 (dim(U)-dim(M))$. However there exist singular foliations (on non-compact manifolds) for which this dimension is unbounded. A  concrete example is displayed in \cite[lemma 1.3]{AZ1}.
	
	\section{First invariants}\label{subsec:firstinv}
	
	The rough idea of definition \ref{def:defMEfol} is that two singular foliations are Hausdorff Morita equivalent if they have the same leaf space and if the ``transverse geometry'' at corresponding leaves is the same. In this chapter we will show what this means locally, but there is also a global and more abstract implication that we will show in theorem \ref{thm:MEfolgroids}.
	
	\begin{prop}\label{prop:corrleaves}
		Let  $(M,\cF_M)$ and $(N,\cF_N)$ be  singular foliations which are Hausdorff Morita equivalent, by means of $(U,\pi_M,\pi_N)$.   Then:
		\begin{enumerate}
			\item[(i)] There is a homeomorphism between the {leaf space}
			of $(M,\cF_M)$ and the {leaf space} of $(N,\cF_N)$: it maps the leaf through $x\in M$   to {the leaf of $\cF_N$ containing}    $ {\pi_N({\pi_M}^{-1}(x))}$. 
			{It preserves the codimension of leaves and the property of being an embedded leaf.}
			
			\item[(ii)] 
			{Let $x\in M$ and $y\in N$ be a points lying in corresponding leaves.}
			Choose slices $S_x$ at $x$ and $S_y$ at $y$. Then the foliated manifolds $(S_x,{\iota_{S_x}^{-1}\cF_M})$ and $(S_y,{\iota_{S_y}^{-1}\cF_N})$ are diffeomorphic.
			\item[(iii)] Let $x\in M$ and $y\in N$ be points lying in corresponding leaves. Then the isotropy Lie algebras $\g^{\cF_M}_x$ and $\g^{\cF_N}_y$ are isomorphic.
		\end{enumerate}
	\end{prop}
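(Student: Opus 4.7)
The proof of all three parts stems from one key observation: because $\pi_M$ and $\pi_N$ are submersions, Lemma \ref{lem:pullback} gives $\Gamma_c(\ker d\pi_M)\subset \pi_M^{-1}\cF_M=:\cF_P$ and similarly $\Gamma_c(\ker d\pi_N)\subset \cF_P$. Hence the $\pi_M$- and $\pi_N$-fibres (which are connected) are each contained in a single leaf of $\cF_P$. Combined with the fact that any path in a leaf $L_M$ of $\cF_M$ can be lifted to a path in $P$ staying in a leaf of $\cF_P$ (using projectable generators of $\cF_P$ supplied by Lemma \ref{lem:pullback}), this yields that the leaves of $\cF_P$ are exactly the sets $\pi_M^{-1}(L_M)$ for $L_M$ a leaf of $\cF_M$, and likewise $\pi_N^{-1}(L_N)$ for leaves $L_N$ of $\cF_N$.

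For (i), the assignment $L_M \mapsto \pi_N(\pi_M^{-1}(L_M))$ is then a bijection between the leaves of $\cF_M$ and $\cF_N$. It descends to a homeomorphism of leaf spaces because $\pi_M$ and $\pi_N$ are open and their leaf-saturations agree. For codimensions, the restriction $\pi_M|_{L_P}\colon L_P \to L_M$ is a surjective submersion with connected fibres of dimension $\dim P - \dim M$ (those fibres are the $\pi_M$-fibres), so $\dim L_P - \dim L_M = \dim P - \dim M$; combining with the analogous identity for $\pi_N$ yields equal codimensions. For embeddedness: if $L_M$ is embedded then $L_P=\pi_M^{-1}(L_M)$ is embedded in $P$ as the preimage of an embedded submanifold under a submersion, and then $\pi_N|_{L_P}\colon L_P \to L_N$, being a surjective submersion with connected fibres onto the immersed submanifold $L_N$, forces $L_N$ to be embedded.

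For (ii), first reduce to the case $x = \pi_M(p)$ and $y = \pi_N(p)$ for some $p \in P$: given arbitrary $x \in L_M$ and $y \in L_N$, choose $p\in \pi_M^{-1}(x)$ and set $y':=\pi_N(p)\in L_N$; then use the standard fact (via flows of vector fields in $\cF_N$) that slices at distinct points of a single leaf of $\cF_N$ give diffeomorphic foliated neighborhoods to replace $y'$ by $y$. In the case $\pi_M(p)=x$, $\pi_N(p)=y$, choose a slice $S_p \subset P$ at $p$ transverse to $L_P$. A dimension count gives $\dim S_p = \mathrm{codim}\, L_M = \mathrm{codim}\, L_N$, and since $\ker d_p \pi_M \subset T_p L_P$ is transverse to $T_p S_p$, the map $\pi_M|_{S_p}$ is a local diffeomorphism at $p$; after shrinking $S_p$ it is a diffeomorphism onto a slice at $x$, which (possibly shrinking the given $S_x$) we identify with an open piece of $S_x$. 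The same argument applies to $\pi_N|_{S_p}$ and $S_y$. The composition
\[\phi := (\pi_N|_{S_p})\circ(\pi_M|_{S_p})^{-1}\colon S_x \to S_y\]
is therefore a diffeomorphism. To see that $\phi^{-1}(\iota_{S_y}^{-1}\cF_N)=\iota_{S_x}^{-1}\cF_M$, observe that both $(\pi_M|_{S_p})^{-1}(\iota_{S_x}^{-1}\cF_M)$ and $(\pi_N|_{S_p})^{-1}(\iota_{S_y}^{-1}\cF_N)$ equal $\iota_{S_p}^{-1}\cF_P$, using transitivity of pullback together with the transversality of $S_p$ to $\cF_P$ inherent in it being a slice.

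For (iii), it suffices by (ii) and functoriality to identify $\g^{\cF_M}_x \cong \g^{\iota_{S_x}^{-1}\cF_M}_x$. This follows from the local splitting Proposition \ref{prop:loc.pic.fol}: near $x$, $\cF_M$ is spanned by $\partial_{z_1},\dots,\partial_{z_k}$ (tangent to $L_M$) together with the lift of $\iota_{S_x}^{-1}\cF_M$; hence the fibre $(\cF_M)_x$ splits compatibly with the evaluation map, and the kernel of $\mathrm{ev}_x$ coincides with the kernel of evaluation on $(\iota_{S_x}^{-1}\cF_M)_x$ (the $\partial_{z_i}$ contribute precisely $T_xL_M=F_x$, while $\iota_{S_x}^{-1}\cF_M$ vanishes at $x$ by the construction of the slice). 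Transporting across the diffeomorphism of (ii) then gives $\g^{\cF_M}_x \cong \g^{\cF_N}_y$.

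The main obstacle I anticipate is the reduction step in (ii) handling arbitrary $x\in L_M$ and $y \in L_N$: for a generic pair there need not be $p\in P$ with $\pi_M(p)=x$ and $\pi_N(p)=y$ simultaneously, so one must invoke the holonomy-type result that slices at different points of the same leaf of a singular foliation are diffeomorphic as foliated manifolds. Once this is in hand, the slice construction in $P$ proceeds by a straightforward tangential dimension count, and the foliated diffeomorphism follows from the compatibility of pullbacks.
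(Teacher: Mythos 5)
Your proposal is correct and follows essentially the same route as the paper: the leaves of the common pullback foliation on $U$ are exactly the $\pi_M$- and $\pi_N$-preimages of leaves (giving (i)), slices in $U$ map diffeomorphically onto slices in $M$ and $N$ with the pullback foliations corresponding (giving (ii) via transitivity of pullbacks), and (iii) reduces to (ii) by identifying the isotropy Lie algebra with that of the transverse slice foliation. The only deviations are minor: you justify that last identification by the splitting theorem where the paper simply cites \cite[Rem.~2.6]{AZ1}, and you spell out the invariance of transverse slice foliations along a leaf (via $\mathrm{exp}(\CF)$-automorphisms) needed to treat arbitrary points $x,y$ and arbitrary slices, a point the paper leaves implicit.
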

	\begin{proof}
		(i) {For every leaf $L_M$ on $M$, the preimage $\pi_M^{-1}(L_M)$ is a leaf of 
			$\pi_M^{-1}\CF_M=\pi_N^{-1}\CF_N$. Hence it equals $\pi_N^{-1}(L_N)$ for a unique leaf $L_N$ on $N$, which has the same codimension as $L_M$.} {Since $\pi_M$ and $\pi_N$ are continuous open maps, this assignment is a homeomorphism}. {If $L_M$ is an embedded leaf, then a chart on $M$ adapted to $L_M$ induces a  chart on $U$ adapted to $\pi_M^{-1}(L_M)$, and vice versa.} 
		
		(ii) By Definition \ref{def:defMEfol}, it suffices to work with the submersion $\pi_M\colon U\to M$. Take $u\in U$ and let $S_u$ be a transversal for $\pi_M^{-1}\CF_M$ at $u$. Then $S_x:=\pi_M(S_u)$ is a transversal for $\CF_M$ at $x:=\pi_M(u)$. Counting dimensions, {and shrinking $S_u$ if necessary,} we see that ${\pi_M|_{S_u}}\colon S_u\fto S_x$ is a diffeomorphism. The commutativity of the diagram  
		
		\[\begin{tikzcd}
			S_u \arrow[r,"\iota_{S_u}"] \arrow[d,swap, "\pi_M|_{S_u}", "\wr"']& U \arrow[d, "\pi_M"]  \\
			S_x  \arrow[r, "\iota_{S_x}"] & M\\
		\end{tikzcd}\]
		{implies that the singular foliations ${\iota_{S_x}^{-1}\cF_M}$ and $ {\iota_{S_u}^{-1}\cF_U}$ correspond under the above diffeomorphism, where $\cF_U:=\pi_M^{-1}\CF_M$.}
		
		(iii) follows from (ii), since the isotropy Lie algebra at a point coincides with the isotropy Lie algebra of the transverse foliation at that point, see \cite[Rem. 2.6]{AZ1}.
	\end{proof}
	
	\begin{ex}
		a) Given distinct  integers $k,l>0$, the singular foliation on the real line generated by the vector field $x^k \frac{\partial}{\partial x}$ and the one generated by $x^l \frac{\partial}{\partial x}$ lie in different Morita equivalence classes. This can be seen noticing that there is no diffeomorphism between neighbourhoods $S^k$ and $S^l$ of the origin
		that maps $x^k \frac{\partial}{\partial x}|_{S^k}$ to the product of  $x^l \frac{\partial}{\partial x}|_{S^l}$ with a no-where vanishing function, and then applying proposition \ref{prop:corrleaves} ii).
		Notice however that the underlying partitions into leaves and the isotropy {Lie algebras} are the same.
		
		b) Consider the singular foliations on $\RR^2$ given by the linear actions of $GL(2,\RR)$
		and  $SL(2,\RR)$. They have the same leaves, namely the origin and its complement. The isotropy Lie algebras at the origin are the Lie algebras of $GL(2,\RR)$
		and  $SL(2,\RR)$ respectively, hence by proposition \ref{prop:corrleaves} iii) these two singular foliations are not Hausdorff Morita equivalent.
	\end{ex}
	
	Recall from lemma \ref{lem:fiber.proy} that the projective foliations are foliations with fibers of constant dimension.
	
	\begin{prop}\label{prop:MEregproj}
		Hausdorff Morita equivalence of singular foliations {preserves the following families of singular foliations:}
		\begin{enumerate}
			\item [(i)] regular foliations
			\item [(ii)] projective foliations
		\end{enumerate}
	\end{prop}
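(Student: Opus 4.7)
The plan is to prove both parts by reducing each property to a pointwise dimension condition on the foliation and then comparing these dimensions on $M$, on $P$ (the intermediate manifold in the Morita equivalence), and on $N$. Since $\pi_M$ and $\pi_N$ are surjective, constancy of a fibrewise dimension function on $P$ is equivalent to constancy of the corresponding function on $M$ (resp.\ $N$).

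For (i), a singular foliation $\CF$ is regular exactly when all of its leaves have a common dimension, equivalently a common codimension in the ambient manifold. By Proposition \ref{prop:corrleaves}(i), Hausdorff Morita equivalence induces a bijection of leaf spaces that preserves codimension. Hence if every leaf of $\CF_M$ has codimension $c$, then every leaf of $\CF_N$ has codimension $c$, and so $\CF_N$ is regular.

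For (ii), recall that $\CF$ is projective iff $\dim \CF_x$ is constant, and that $\dim \CF_x = \dim F_x + \dim \g^{\CF}_x$. Setting $\CF_P := \pi_M^{-1}\CF_M = \pi_N^{-1}\CF_N$, I would establish the identity
$$\dim (\CF_P)_p = \dim (\CF_M)_{\pi_M(p)} + (\dim P - \dim M) \qquad \forall p \in P,$$
together with its analogue for $\pi_N$. This splits into two ingredients. First, one shows $(F_P)_p = (d_p\pi_M)^{-1}((F_M)_{\pi_M(p)})$, whence $\dim (F_P)_p = \dim (F_M)_{\pi_M(p)} + (\dim P - \dim M)$; this follows from Lemma \ref{lem:pullback} together with the fact that $\ker d\pi_M$ lifts to vector fields in $\CF_P$. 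Second, the triple $(P, \mathrm{id}_P, \pi_M)$ is itself a Hausdorff Morita equivalence between $(P, \CF_P)$ and $(M, \CF_M)$ (both maps are surjective submersions with connected fibres, and the pullback condition is a tautology), so Proposition \ref{prop:corrleaves}(iii) yields $\g^{\CF_P}_p \cong \g^{\CF_M}_{\pi_M(p)}$ and in particular equality of dimensions. Adding the two contributions gives the displayed identity; the same argument with $\pi_N$ gives the analogue. Combining the two formulas, $\dim(\CF_M)_x$ is constant iff $\dim(\CF_P)_p$ is constant iff $\dim(\CF_N)_y$ is constant, proving (ii).

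The main technical point is the inclusion $(F_P)_p \supseteq (d_p\pi_M)^{-1}((F_M)_{\pi_M(p)})$: given $v \in T_p P$ with $d_p\pi_M(v) = X(\pi_M(p))$ for some $X \in \CF_M$, one must realise $v$ as $Y(p)$ for a compactly supported $Y \in \CF_P$. This is done by taking a local horizontal lift $\tilde{X}$ of $X$ through the submersion $\pi_M$, correcting $v - \tilde{X}(p) \in \ker d_p\pi_M$ by a vertical vector field, and multiplying the sum by a bump function equal to $1$ near $p$; compact support then puts both summands into $\CF_P$ by the description in Lemma \ref{lem:pullback}. The reverse inclusion is immediate from the same lemma.
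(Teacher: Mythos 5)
Your argument is correct, but it follows a genuinely different route from the one in the text. The proof given here reduces the statement to showing that the pullback of a regular (resp.\ projective) foliation along a surjective submersion is again regular (resp.\ projective): the regular case is immediate, and the projective case is handled with the Lie algebroid machinery of Chapter 3 -- a projective $\CF$ corresponds to an almost injective Lie algebroid $A$ (example \ref{ex:L.alg.F}), the pullback algebroid of Definition \ref{def:pullbackLA} is again almost injective, and its foliation is $\pi^{-1}\CF$ by Lemma \ref{lem:pullbackalgfol}. You instead work entirely at the level of pointwise dimensions: for (i) you invoke the codimension-preserving leaf correspondence of Proposition \ref{prop:corrleaves}(i), and for (ii) you combine the exact sequence $0\to\g_x^{\CF}\to\CF_x\to F_x\to 0$ with the identity $F^{P}_p=(d_p\pi_M)^{-1}\bigl((F_M)_{\pi_M(p)}\bigr)$ (which indeed follows from Lemma \ref{lem:pullback} and the bump-function argument you sketch, since $\Gamma_c(\ker d\pi_M)\subset \pi_M^{-1}\CF_M$) and with the isotropy isomorphism $\g^{\CF_P}_p\cong\g^{\CF_M}_{\pi_M(p)}$, obtained by the nice observation that $(P,\mathrm{id}_P,\pi_M)$ is itself a Hausdorff Morita equivalence so that Proposition \ref{prop:corrleaves}(iii) applies; together with Lemma \ref{lem:fiber.proy} this gives $\dim(\CF_P)_p=\dim(\CF_M)_{\pi_M(p)}+(\dim P-\dim M)$ and the conclusion. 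Your approach buys two things: it stays within the elementary toolkit of Chapters 1--2 (no pullback algebroids needed), and, because it produces an equality of fiber dimensions, it automatically gives the two-way statement (the pullback is projective if and only if the base foliation is), which is really what is needed to pass from $\CF_M$ through $\CF_P$ to $\CF_N$ -- a direction the algebroid argument only addresses implicitly. The text's approach is shorter once the algebroid machinery is available and has the added value of exhibiting the almost injective algebroid realizing the pullback foliation. The only cosmetic caveats: in (i) you should say that constant codimension of leaves is equivalent to regularity because $M$ and $N$ are of pure dimension and the leaf through $x$ has tangent space $F_x$, and in (ii) the horizontal lift you use is only locally defined, so the cut-off must be applied before extending by zero -- both exactly as you indicate.
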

	\begin{proof} {It suffices to show that, given a surjective submersion  
			$\Psi\colon M\to N$   and   a singular foliation $\CF$ on $N$, the pullback foliation $\Psi^{-1}\cF$ is regular (resp. projective) whenever $\CF$ is. For the regular case this is clear, implying (i). For the projective case, by example \ref{ex:L.alg.F} there is} 
		an almost injective Lie algebroid  $A$ associated to $\cF$. The pullback Lie algebroid $\Psi^{-1}A$ is also almost injective, as one checks using Def.  \ref{def:pullbackLA}(definition of the pullback for a Lie Algebroid), and its underlying foliation is $\Psi^{-1}\cF$ by lemma \ref{lem:pullbackalgfol} (which express how is the foliation of the pullback algebroid).\end{proof}
	
	\section{Examples}\label{sec:exam}
	
	{The next three subsections are dedicated to  examples 
		of  {Hausdorff} Morita equivalent singular foliations, starting with the elementary ones.}
	
	\subsubsection{{Elementary examples}}\label{subsec:elemex}

	\begin{ex}[Isomorphic foliations]
		Two foliated manifolds $(M,\CF_M)$ and $(N,\CF_N)$ are said to be isomorphic if there exists a diffeomorphism $\phi:M\fto N$ such that $\phi^{-1}\CF_N=\CF_M$. Two isomorphic foliated manifolds are {Hausdorff} Morita equivalent.
	\end{ex}
	
	\begin{ex}[Full foliations]
		Any two connected manifolds $M$ and $N$ with the full foliations {$\vX_c(M)$ and $\vX_c(N)$} are {Hausdorff} Morita equivalent, using $P=M\times N$ and its projection maps. 
	\end{ex}
	
	\begin{ex}[Zero foliations]
		Two manifolds $M$ and $N$ with the zero foliations are {Hausdorff} Morita equivalent if and only if they are diffeomorphic.
	\end{ex}
	
	\begin{ex}[Simple foliations]
		{A regular foliation $F$ on a manifold $M$ is called simple if the leaf space $M/F$ is a smooth manifold such that the projection map  is a submersion. The foliation $F$ on $M$ and the zero foliation on $M/F$ are Hausdorff Morita equivalent.}
	\end{ex}
	
	The following corollary also counts as an example:
	
	\begin{cor}\label{cor:productfol}
		If $(M_1,\CF_M^1)$ is Morita equivalent to $(M_2,\CF^2_M)$ and $(N_1, \CF_N^1)$ to $(N_2,\CF_N^2)$ then $(M_1\times N_1, \CF_M^1\times \CF_N^1)$ is Morita equivalent to $(M_2\times N_2,\CF_M^2\times \CF_N^2)$.
	\end{cor}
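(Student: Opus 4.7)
The plan is to use the product of the two Morita equivalence manifolds as a single Morita equivalence for the product foliations, and then invoke Proposition \ref{prop:product} to exchange pullbacks with products.

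More concretely, by hypothesis there exist manifolds $P$ and $Q$ together with surjective submersions with connected fibers
\[
\pi_{M_i}\colon P \to M_i, \qquad \pi_{N_i}\colon Q \to N_i, \qquad i=1,2,
\]
such that $\pi_{M_1}^{-1}\CF_M^1=\pi_{M_2}^{-1}\CF_M^2$ and $\pi_{N_1}^{-1}\CF_N^1=\pi_{N_2}^{-1}\CF_N^2$. I would take as intermediate manifold $P\times Q$, equipped with the two maps $\pi_{M_i}\times \pi_{N_i}\colon P\times Q \to M_i\times N_i$. First I would verify that these are surjective submersions with connected fibers: both properties are inherited from the factors, since the fiber of $\pi_{M_i}\times \pi_{N_i}$ over $(m,n)$ is the product $\pi_{M_i}^{-1}(m)\times \pi_{N_i}^{-1}(n)$ of connected sets, and a product of submersions is a submersion.

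The key step is to show equality of the two pullback foliations on $P\times Q$. Applying Proposition \ref{prop:product} with $\tau=\pi_{M_i}$ and $\sigma=\pi_{N_i}$ gives
\[
(\pi_{M_i}\times \pi_{N_i})^{-1}(\CF_M^i\times \CF_N^i)=\pi_{M_i}^{-1}\CF_M^i \times \pi_{N_i}^{-1}\CF_N^i
\]
for $i=1,2$. Combining this with the Morita equivalence hypotheses $\pi_{M_1}^{-1}\CF_M^1=\pi_{M_2}^{-1}\CF_M^2$ and $\pi_{N_1}^{-1}\CF_N^1=\pi_{N_2}^{-1}\CF_N^2$ yields
\[
(\pi_{M_1}\times \pi_{N_1})^{-1}(\CF_M^1\times \CF_N^1)=(\pi_{M_2}\times \pi_{N_2})^{-1}(\CF_M^2\times \CF_N^2),
\]
which is precisely the condition in Definition \ref{def:defMEfol}. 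Thus $(P\times Q,\pi_{M_1}\times \pi_{N_1},\pi_{M_2}\times \pi_{N_2})$ realizes the desired Hausdorff Morita equivalence.

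There is essentially no obstacle: the entire argument reduces to the compatibility between products and pullbacks of singular foliations, which has already been established as Proposition \ref{prop:product}. The only minor bookkeeping issue is checking that connectedness of fibers is preserved under products, which is immediate since the product of two connected sets is connected.
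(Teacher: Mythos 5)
Your proposal is correct and follows essentially the same route as the paper: take the product $P\times Q$ of the two equivalence manifolds with the product maps $\pi_{M_i}\times\pi_{N_i}$, and invoke Proposition \ref{prop:product} to identify $(\pi_{M_i}\times \pi_{N_i})^{-1}(\CF_M^i\times \CF_N^i)$ with $\pi_{M_i}^{-1}\CF_M^i \times \pi_{N_i}^{-1}\CF_N^i$, after which the hypotheses give the required equality of pullback foliations. The only difference is that you spell out the routine verification that the product maps are surjective submersions with connected fibers, which the paper leaves implicit.
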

	\begin{proof}
		There are $P$ and $S$ with surjective submersions with connected fibers making the Morita equivalences:
		\[\begin{tikzcd}
			& \arrow[dl,swap,"\pi_1"]P \arrow[dr,"\pi_2"] & & & \arrow[dl,swap,"\sigma_1"]S \arrow[dr,"\sigma_2"] & \\
			M_1& & M_2 & N_1 & & N_2
		\end{tikzcd}\]
		Then using the proposition \ref{prop:product} we get that:
		\[\begin{tikzcd}
			& \arrow[dl,swap,"\pi_1\times \sigma_1"] P\times S \arrow[dr,"\pi_2\times \sigma_2"] & \\
			M_1\times N_1 & & M_2\times N_2 
		\end{tikzcd}\]
		is a Morita equivalence between $(M_1\times N_1, \CF_M^1\times \CF_N^1)$ and $(M_2\times N_2,\CF_M^2\times \CF_N^2)$.
	\end{proof}
	
	\subsubsection{{Examples obtained by pushing forward foliations}} 
	\label{subsection:expush}

	{Note that any bisubmersion $(V,\bt,\bs)$ 
		between foliated manifolds $(M,\CF_M)$ and $(N,\CF_N)$ (see Def. \ref{def:bisub}), when $\bs$ and $\bt$ have connected fibres, is a Morita equivalence 
		between $\cF_M|_{\bs(V)}$ and $\cF_N|_{\bt(V)}$. Here we construct Morita equivalences of this kind, starting from simple data for which concrete examples can be found quite easily.}
	
	\begin{rem}\label{rem:easyME}
		Given two {surjective} submersions $\bs \colon U \to M$ and $\bt \colon U \to N$  with connected fibres, let $\cF_M$ be a singular foliation on $M$ such that $\bs^{-1}(\cF_M)\supset  \Gamma_c(\ker d\bt)$. Then,  by lemma \ref{lem:submfol}, there is a unique singular foliation $\cF_N$ on $N$  such that  $\bs^{-1}(\cF_M)={\bt}^{-1}(\cF_N)$. {In particular,} $(M,\CF_M)\simeq_{ME} (N,\CF_N)$. {In other words,} we can ``transport'' the foliation ${\cF_M}$ on $M$ to a Hausdorff Morita equivalent foliation on $N$.
	\end{rem}
	
	\begin{cor}\label{cor:pushdowntwo}
		Given two submersions $\bs \colon U \to M$ and $\bt \colon U \to N$  with connected fibres, assume that 
		\begin{equation*}
			[\Gamma_c(\ker d\bs), \Gamma_c(\ker d\bt)]\subset \Gamma_c(\ker d\bs) + \Gamma_c(\ker d\bt)=:\cF_U.
		\end{equation*}
		Then there are  unique foliations $\cF_M$ and $\cF_N$ on $M$ and $N$ respectively such that  $\bs^{-1}(\cF_M)={\bt}^{-1}(\cF_N) = \cF_U$. In particular,  $(M,\CF_M)\simeq_{ME} (N,\CF_N)$.
	\end{cor}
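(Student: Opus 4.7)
The plan is to reduce the statement directly to Lemma \ref{lem:submfol} applied twice, once for each of the two submersions $\bs$ and $\bt$. The only thing one has to check by hand is that $\cF_U := \Gamma_c(\ker d\bs) + \Gamma_c(\ker d\bt)$ is genuinely a singular foliation on $U$; once this is established, the rest is essentially formal.

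First I would show that $\cF_U$ is locally finitely generated. This is immediate from the fact that $\ker d\bs$ and $\ker d\bt$ are smooth subbundles of $TU$ (since $\bs$ and $\bt$ are submersions), so $\Gamma_c(\ker d\bs)$ and $\Gamma_c(\ker d\bt)$ are separately locally finitely generated, and their sum therefore is as well. Next I would verify involutivity. We have
\begin{equation*}
[\cF_U,\cF_U] \subset [\Gamma_c(\ker d\bs),\Gamma_c(\ker d\bs)] + [\Gamma_c(\ker d\bt),\Gamma_c(\ker d\bt)] + [\Gamma_c(\ker d\bs),\Gamma_c(\ker d\bt)].
\end{equation*}
The first two terms stay inside $\Gamma_c(\ker d\bs)$ and $\Gamma_c(\ker d\bt)$ respectively, because the kernel of a submersion is an integrable distribution (its leaves are the connected components of the fibres), hence its compactly supported sections are closed under the Lie bracket. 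The third term lies in $\cF_U$ by the standing hypothesis. Thus $\cF_U$ is a singular foliation on $U$ in the sense of Definition \ref{defi:sing.fol}.

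Now observe that $\Gamma_c(\ker d\bs) \subset \cF_U$ by construction. Assuming $\bs$ is surjective with connected fibres (as in Remark \ref{rem:easyME} to which this corollary is a counterpart), Lemma \ref{lem:submfol} yields a unique singular foliation $\cF_M$ on $M$ such that $\bs^{-1}(\cF_M) = \cF_U$. Exchanging the roles of $\bs$ and $\bt$ and repeating the argument produces a unique singular foliation $\cF_N$ on $N$ with $\bt^{-1}(\cF_N) = \cF_U$. Combining the two identities gives $\bs^{-1}(\cF_M) = \bt^{-1}(\cF_N)$, so Definition \ref{def:defMEfol} immediately yields $(M,\cF_M) \simeq_{ME} (N,\cF_N)$, with $U$ as the equivalence manifold.

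The only step that requires any thought is checking involutivity, and even there the real work has been packed into the hypothesis: only the cross-bracket $[\Gamma_c(\ker d\bs),\Gamma_c(\ker d\bt)]$ is nontrivial, and that is exactly what is assumed. Everything else is bookkeeping via Lemma \ref{lem:submfol} and the definition of Hausdorff Morita equivalence.
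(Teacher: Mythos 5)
Your proof is correct and follows the same route as the paper, which simply applies Lemma \ref{lem:submfol} twice (once to $\bs$ and once to $\bt$); your extra verification that $\cF_U$ is indeed a singular foliation is exactly the routine check the paper leaves implicit.
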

	\begin{proof}
		{Apply lemma \ref{lem:submfol} for the foliation $\cF_U$ twice: to the map $\bs$ and to the map $\bt$.}
	\end{proof}
	
	An interesting special case of Cor. \ref{cor:pushdowntwo} is when the submersions arise from Lie group actions.
	
	\begin{cor}\label{cor:action}
		Consider two \emph{connected} Lie groups $G_1,G_2$ acting\footnote{The actions can be both right actions, both left actions, or one right and one left action.}
		freely and properly on a manifold $P$ with commuting actions. Then the following singular foliations are {Hausdorff} Morita equivalent:
		\begin{enumerate}
			\item the singular foliation on $P/G_1$  given by the induced $G_2$ action, 
			\item the singular foliation on $P/G_2$   given by the induced  $G_1$ action.\end{enumerate}
	\end{cor}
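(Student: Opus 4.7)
The natural strategy is to apply Corollary \ref{cor:pushdowntwo} directly to the two quotient maps $\bs\colon P\fto P/G_1$ and $\bt\colon P\fto P/G_2$. Since both actions are free and proper with connected acting groups, these are surjective submersions with connected fibres. It remains to verify the bracket condition and then to identify the resulting foliations on $P/G_1$ and $P/G_2$ with the $G_2$- and $G_1$-induced foliations respectively.

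For the bracket condition, I would first note that $\Gamma(\ker d\bs)$ is precisely the $\CI(P)$-module of vector fields tangent to the $G_1$-orbits, which by freeness of the action is the $\CI(P)$-linear span of the infinitesimal generators $\xi_i$ of the $G_1$-action; similarly $\Gamma(\ker d\bt) = \langle \eta_j\rangle_{\CI(P)}$ for the infinitesimal generators $\eta_j$ of the $G_2$-action. Because the two actions commute, $[\xi_i,\eta_j]=0$ for all $i,j$. Writing $X=\sum f_i \xi_i\in \Gamma_c(\ker d\bs)$ and $Y=\sum g_j \eta_j\in \Gamma_c(\ker d\bt)$ and expanding the Leibniz rule gives
\[
[X,Y] \;=\; \sum_{i,j}\bigl(f_i\,\xi_i(g_j)\bigr)\,\eta_j \;-\; \sum_{i,j}\bigl(g_j\,\eta_j(f_i)\bigr)\,\xi_i,
\]
and the two sums clearly lie in $\Gamma_c(\ker d\bt)$ and $\Gamma_c(\ker d\bs)$ respectively (the compactness of supports is preserved because $f_i,g_j$ have compact support). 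Corollary \ref{cor:pushdowntwo} then produces unique singular foliations $\cF_M$ on $M:=P/G_1$ and $\cF_N$ on $N:=P/G_2$ with $\bs^{-1}\cF_M = \bt^{-1}\cF_N = \Gamma_c(\ker d\bs)+\Gamma_c(\ker d\bt)$, so $(M,\cF_M)\simeq_{ME}(N,\cF_N)$.

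The main obstacle is then step three: identifying $\cF_M$ with the singular foliation $\cF_M^{G_2}$ on $P/G_1$ generated by the infinitesimal generators of the induced $G_2$-action (and analogously for $\cF_N$). Because the two actions commute, each $\eta_j$ is $G_1$-invariant, hence $\bs$-projectable, and its projection along $\bs$ generates $\cF_M^{G_2}$. Using Lemma \ref{lem:pullback}, $\bs^{-1}\cF_M^{G_2}$ is the $\CI_c(P)$-span of those projectable vector fields on $P$ whose $\bs$-projection lies in $\cF_M^{G_2}$; this span certainly contains $\Gamma_c(\ker d\bs)$ (which projects to zero) and all the $\eta_j$, so it contains $\Gamma_c(\ker d\bs)+\Gamma_c(\ker d\bt)$. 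For the converse inclusion I would argue locally: by the tube theorem for the free proper $G_1$-action, one can work in a slice where $\bs$ becomes a trivial projection, and in that chart a $\bs$-projectable vector field projecting to a combination of the pushed-down $\eta_j$'s differs from a combination of the $\eta_j$'s by a vertical vector field, so it lies in $\Gamma_c(\ker d\bs)+\Gamma_c(\ker d\bt)$. By uniqueness in Lemma \ref{lem:submfol}, $\cF_M = \cF_M^{G_2}$, and the symmetric argument gives $\cF_N = \cF_N^{G_1}$, completing the proof.
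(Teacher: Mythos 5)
Your proposal is correct and follows essentially the same route as the paper: verify the bracket hypothesis of Corollary \ref{cor:pushdowntwo} via the commuting infinitesimal generators, and then identify the pushed-forward foliations with those induced by the residual $G_2$- and $G_1$-actions using the pullback description and uniqueness in Lemma \ref{lem:submfol}. The only difference is that you spell out the Leibniz computation and the identification step, which the paper dismisses as straightforward.
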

	\begin{proof} Since the {infinitesimal} generators of the $G_1$-action commute with those of the $G_2$-action, the hypotheses of Cor. \ref{cor:pushdowntwo} are satisfied. It is straightforward that the singular foliation on $P/G_1$   induced by the $G_2$ action pulls back to the singular foliation on $P$   induced by the $G_1\times G_2$ action, {and similarly for the singular foliation on $P/G_2$.}
	\end{proof}
	Notice that the {Hausdorff} Morita equivalence is realised by $P$ with the projection map.
	When the singular foliation on $P$ given by the $G_1\times G_2$ action is  regular, the induced foliations on $P/G_1$ and $P/G_2$ are also regular.
	
	Now we specialize {Cor. \ref{cor:action}} even further, taking $P$ to be a Lie group   and $G_1$,$G_2$ to be two connected closed subgroups acting  respectively by left and right multiplication. We  present  {two} examples.
	\begin{ex}
		Let ${P}=U(2)$, $G_1=SU(2)$, and let $G_2$ consist of the diagonal matrices in $SU(2)$
		(hence $G_2\cong U(1)$). 
		{The quotient of the left action of $SU(2)$ on $U(2)$ is $SU(2)\backslash U(2)\cong S^1$, since the homomorphism $det\colon U(2)\to S^1$ has kernel $SU(2)$.}  The   action of $G_2$ on $U(2)$ by right multiplication descends to the trivial action on $S^1$.
		Hence on $S^1$ we obtain the (regular) foliation by points.
		By Cor. \ref{cor:action}, it is { Hausdorff} Morita equivalent to the (regular) foliation on $U(2)/G_2$ by orbits of the left $SU(2)$-action.
	\end{ex}
	
	\begin{ex}
		{We apply Cor. \ref{cor:action} to actions of the Lie groups $SO(2n)$ and $U(n)$ on   $P=SO(2n+1)$.}
		We can include $SO(2n)$ in $SO(2n+1)$ {as matrices with  $1$ in the bottom right corner}. {Left multiplication induces a left action of $SO(2n)$ on $SO(2n+1)$ with quotient manifold $SO(2n)\backslash SO(2n+1) \cong S^{2n}$.}
		
		On the other hand, we can include $U(n)$ in $SO(2n+1)$ {as the unitary matrices with  $1$ in the bottom right corner.}
		{Right multiplication induces a right action of $U(n)$ on $SO(2n+1)$. The quotient manifold is $$SO(2n+1)/U(n) \cong J(2n+2),$$ where $J(2n+2)$ denotes the set of complex structures in $\RR^{2n+2}$ preserving the canonical inner product and orientation.} {In fact,   there is a diffeomorphism $SO(2n+1)/U(n) \cong SO(2n+2)/U(n+1)$, 
			induced by the {transitive} action of  $SO(2n+1)$ on $SO(2n+2)/U(n+1)$ inherited from the  left multiplication, which has isotropy group $U(n)$. In turn, $SO(2n+2)/U(n+1)\cong  J(2n+2)$ by considering the 
			action of $SO(2n+2)$ on $J(2n+2)$ by pullbacks, which has isotropy group $U(n+1)$.}
		
		Hence by Corollary. \ref{cor:action} the following singular foliations are { Hausdorff} Morita equivalent: 
		\begin{itemize}
			\item the singular foliation on $J(2n+2)$  induced by the action of \\ ${SO(2n)\subset SO(2n+2)}$  via pullbacks, 
			\item the singular foliation on $S^{2n}$  induced by the  action by {right   matrix multiplication of $U(n)\subset SO(2n+1)$.}
		\end{itemize}
		{Note that the South pole and North pole of $S^{2n}$ are the only fixed points of the action of $U(n)$, therefore on $S^{2n}$ we have a genuinely singular foliation. 
			As a consequence of Morita equivalence, the foliation on $J(2n+2)$ is also non-regular.} 
	\end{ex}

	\subsubsection{{Examples from Morita equivalent Poisson manifolds}}
	
	A Poisson manifold $(M,\Pi_M)$ gives rise to a singular foliation $\cF_{\Pi_M}$ that consists of $C^{\infty}_c(M)$-linear combinations of Hamiltonian vector fields on $M$. In particular, the leaves of $\cF_{\Pi_M}$ are exactly the symplectic leaves of the Poisson structure.

	\begin{ex}\label{ex:fulldualpair}
		Let $(M,\Pi_M)$ and $(N,\Pi_N)$  be Poisson manifolds. A \emph{full dual pair} \cite[\S 8]{We} consists of a symplectic manifold $(U,\omega)$ with surjective submersions $\bs\colon U\to M$ and $\bt\colon U\to N$ which are Poisson and anti-Poisson maps respectively, and such that $ker(d_u\bs)$ and $ker(d_u\bt)$ are symplectic orthogonal subspaces of $T_uU$ for all $u\in U$. 
		Notice that $\Gamma(\ker d\bs)$ is generated  by $\{X_{\bt^*g}:g\in C^{\infty}(N)\}$ as a $C^{\infty}(U)$-module, while $\Gamma(\ker d\bt)$ is generated by $\{X_{\bs^*g}:g\in C^{\infty}(M)\}$. Here we denote by $X_F$ the Hamiltonian vector field of the function $F$.
		
		A full dual pair with connected fibres is a global bisubmersion with connected fibres  for the foliations  $\cF_{\Pi_M}$ and  $\cF_{\Pi_N}$ (see Def. \ref{def:bisub}).
		Indeed, since $\bs$ is a Poisson map, for any Hamiltonian vector field $X_{g}$, an $\bs$-lift is given by $X_{\bs^*g}$, hence 
		$$\bs^{-1}(\cF_{\Pi_M})=\left<\{X_{\bs^*g}:g\in C^{\infty}(M)\}+\Gamma(\ker d\bs)\right>_
		{C^{\infty}_c(U)}=\Gamma_c(\ker d\bt)+\Gamma_c(\ker d\bs),$$
		and the analogous equation holds for $\bt$. As a consequence, $(M,\cF_{\Pi_M})\simeq_{ME} (N,\cF_{\Pi_N})$.
	\end{ex}

	\begin{cor}\label{cor:PoisME}
		If two Poisson manifolds are Morita equivalent
		\cite{xuME} then their singular foliations   are Hausdorff Morita equivalent.
	\end{cor}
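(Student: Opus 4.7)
The plan is to deduce this corollary essentially as a direct application of Example \ref{ex:fulldualpair}. First, I would recall Xu's definition of Morita equivalence for Poisson manifolds from \cite{xuME}: two Poisson manifolds $(M,\Pi_M)$ and $(N,\Pi_N)$ are Morita equivalent when there exists a symplectic manifold $(U,\omega)$ together with a full dual pair $\bs\colon U \to M$ and $\bt\colon U \to N$, where both $\bs$ and $\bt$ are complete surjective submersions with simply connected (in particular connected) fibres.

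Given such data, the hypotheses of Example \ref{ex:fulldualpair} are satisfied, so the computation carried out there immediately yields
\[
\bs^{-1}(\cF_{\Pi_M}) \;=\; \Gamma_c(\ker d\bs) + \Gamma_c(\ker d\bt) \;=\; \bt^{-1}(\cF_{\Pi_N}).
\]
Since $\bs$ and $\bt$ are surjective submersions with connected fibres, the triple $(U,\bs,\bt)$ is exactly a witness of Hausdorff Morita equivalence in the sense of Definition \ref{def:defMEfol}. Therefore $(M,\cF_{\Pi_M}) \simeq_{ME} (N,\cF_{\Pi_N})$, which is the claim.

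There is essentially no obstacle beyond bookkeeping: the only point to verify is that Xu's fibre-connectedness requirement (actually simple connectedness) supplies precisely the connectedness condition demanded by our definition, and that the symplectic orthogonality condition in the full dual pair guarantees the identity of the two pullback foliations, both of which are handled in Example \ref{ex:fulldualpair}. Consequently, the proof reduces to invoking that example.
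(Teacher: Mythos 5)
Your proposal is correct and follows essentially the same route as the paper: recall that Xu's Morita equivalence is realized by a complete full dual pair with simply connected (hence connected) fibres, and then invoke Example \ref{ex:fulldualpair} to identify the two pullback foliations on $U$, which is exactly a witness of Definition \ref{def:defMEfol}. Your write-up merely spells out the bookkeeping that the paper's two-line proof leaves implicit.
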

	\begin{proof}
		Two Poisson manifolds are Morita equivalent if they are related by a complete full dual pair with simply connected fibers. Hence the statement follows from Ex. 
		\ref{ex:fulldualpair}.
	\end{proof}

	\cleardoublepage
	

	\chapter{Lie Groupoids and Lie Algebroids}\label{ch:3}
	
	In this chapter we introduce the notions of Lie groupoids, Lie algebroids and their relation with singular foliations. Most of the results can be found in \cite{CrFeLie}, \cite{MK2}  and \cite{MRIntr}, however we have some original results.
	
	\section{Lie Groupoids}\label{sec:Lie.Grpd}
	
	This section is an introduction to Lie groupoids and can be easily skipped by those familiar with the subject. The material presented here can be also found in \cite{CrFeLie}, \cite{MK2}, \cite{MRIntr} and \cite{Kwang}.
	
	As a motivation, we come back to section \ref{sec:bisub} where we introduced a finite dimensional object, namely the bisubmersions, to study an infinite dimensional group, the group of automorphisms preserving a foliation. The same motivation can be used to introduce Lie groupoids. Indeed, the bisections of a Lie groupoid $\CG\soutar M$ give rise to a subgroup of $\mathrm{Diff}(M)$. Depending on $\CG$, this subgroup preserves certain structures on $M$. We will give more details on this idea after definition \ref{defi:bisect}.
	
	\begin{definition} A {\bf topological groupoid} consists of:
		\begin{itemize}
			\item  Two topological spaces $\CG$ and $M$. Here $\CG$ is called the set of arrows and $M$ the set of objects.
			\item  Two surjective continuous maps: $\bs\colon \CG\fto M$ called the \textbf{source} map and $\bt\colon \CG\fto M$ called the \textbf{target} map.
			\item A  continuous map $\circ \colon \CG {}_\bs \!\times_\bt \CG\fto \CG$, called \textbf{composition}.
			\item A continuous map $e\colon M\fto \CG$, called \textbf{identity}.
			\item A continuous map $(-)^{-1}\colon \CG\fto \CG$, called \textbf{inverse}.
		\end{itemize}
		such that:
		\begin{itemize}
			\item \textbf{$\circ$ is associative:} for all $(g,h,s)\in \CG {}_\bs \!\times_\bt \CG {}_\bs \!\times_\bt \CG$ we have $(g\circ h)\circ s=g\circ (h\circ s)$.
			\item \textbf{$e$ is indeed an identity:} for all $x\in M$ we get $\bt(e_x)=\bs(e_x)=x$ and for all $g\in \CG$ we have $e_{\bt(g)}\circ g= g\circ e_{\bs(g)}=g$.
			\item \textbf{$(-)^{-1}$ is indeed an inverse:} for all $g\in \CG$ we have ($g^{-1})^{-1}=g$, $\bt(g)=\bs(g^{-1})$, $\bs(g)=\bt(g^{-1})$ with $g^{-1}\circ g=e_{\bs(g)}$ and $g\circ g^{-1}=e_{\bt(g)}$.
		\end{itemize}
		A topological groupoid is denoted as $\CG\soutar M$. It is {\bf source connected} if the source fibers are path connected (this also implies that the target fibers are path connected). It is {\bf open} if the source and target maps are open maps.
	\end{definition}

	\begin{defi} A {\bf Lie groupoid} is a topological groupoid $\CG\soutar M$ such that $\CG$ and $M$ are smooth manifolds, the source and target maps are surjective submersions and the composition, identity and inverse maps are smooth.	
	\end{defi}
	
	In particular, any Lie groupoid is an open topological groupoid, and any Lie groupoid with connected source fibers is a source connected groupoid (for a smooth manifold being connected and path connected are equivalent).
	
	\begin{rem} One of the reasons to ask $\bs$ to be a submersion is that $\CG {}_\bs \!\times_\bt \CG$ must have a canonical structure of smooth manifold to make sense of the smoothness of the composition map. Also note that the map $\bs$ is a submersion if and only if $\bt=\bs\circ(-)^{-1}$ is a submersion. 
	\end{rem}
	
	\begin{con} It is customary to allow the space of arrows $\CG$ of a Lie groupoid to be a non-Hausdorff manifold. Nevertheless it is required that each source fiber (and therefore each target fiber) must be Hausdorff.
	\end{con}
	
	\begin{rem}\label{rem:grpds} A groupoid can be seen as a small category, where every arrow is invertible.
	\end{rem}
	
	\subsection*{Examples of Lie groupoids}
	
	We start with three easy examples of Lie groupoids:
	
	\begin{ex}\label{ex:pair.gpd}({\bf Pair groupoid}) Take any manifold $M$ as the set of objects and $\CG:=M\times M$ as the space of arrows. Then the following data defines a Lie groupoid, called the pair groupoid:
		\begin{itemize}
			\item $\bs\colon \CG\fto M; (x,y)\mapsto y$ and $\bt\colon \CG\fto M;(x,y)\mapsto x$.
			\item $\circ \colon \CG {}_\bs \!\times_\bt \CG\fto \CG; (x,y)\circ(y,z)\mapsto (x,z)$.
			\item $e\colon M\fto \CG; x\mapsto (x,x)$.
			\item $(-)^{-1}\colon \CG\fto \CG; (x,y)\mapsto (y,x)$.
		\end{itemize}	
	\end{ex}
	
	\begin{ex}\label{ex:loc.hol.grpd} Let $M,S$ be manifolds (in particular take $M:=\RR^k$ and $S:=\RR^{n-k}$). Take $P=M\times S$  as the set of points and $\CG:=M\times M\times S$ the space of arrows. Then the following data defines a Lie groupoid:
		\begin{itemize}
			\item $\bs\colon \CG\fto P; (x,y,c)\mapsto (y,c)$ and $\bt\colon \CG\fto P;(x,y,c)\mapsto (x,c)$.
			\item $\circ \colon \CG {}_\bs \!\times_\bt \CG\fto \CG; (x,y,c)\circ(y,z,c)\mapsto (x,z,c)$.
			\item $e\colon P\fto \CG; (x,c)\mapsto (x,x,c)$.
			\item $(-)^{-1}\colon \CG\fto \CG; (x,y,c)\mapsto (y,x,c)$.
		\end{itemize}	
	\end{ex}
	
	Example \ref{ex:loc.hol.grpd} is the local picture of the holonomy groupoid of a regular foliation which will be given in section \ref{sec:hol.grpd}.
	
	A more general example is the following:
	\begin{ex}\label{ex:fib.grpd} Let $\pi\colon P\fto S$ be a surjective submersion. Take $P$ as the set of points and the fiber product $\CG:=P\times_S P$ as the space of arrows. Then the following data defines a Lie groupoid:
		\begin{itemize}
			\item $\bs\colon \CG\fto P; (x,y)\mapsto y$ and $\bt\colon \CG\fto P;(x,y)\mapsto x$.
			\item $\circ \colon \CG {}_\bs \!\times_\bt \CG\fto \CG; (x,y)\circ(y,z)\mapsto (x,z)$.
			\item $e\colon P\fto \CG; x\mapsto (x,x)$.
			\item $(-)^{-1}\colon \CG\fto \CG; (x,y)\mapsto (y,x)$.
		\end{itemize}
	\end{ex}
	Note that, taking $P=M\times S$ and $\pi\colon P\fto S$ the natural  projection, example \ref{ex:fib.grpd} reduces to example \ref{ex:loc.hol.grpd}.
	
	Now we will show an example that illustrates how Lie groupoids are a generalization of Lie groups. In fact, Lie groupoids can be seen as Lie groups where the multiplication is defined only for composable elements. 
	
	\begin{ex}\label{ex:G.group} A Lie group $G$ is a Lie groupoid over a point $G\soutar \{*\}$. Here source and target maps are canonical. The composition, identity and inverse maps correspond to the usual multiplication, identity and inverse maps on $G$.
	\end{ex}
	
	Now we continue with a different characterization of Lie groupoids. The following two examples illustrate Lie groupoids as generalized manifolds:
	
	\begin{ex}\label{ex:Gid}({\bf Identity groupoid}) A manifold $M$ can be seen as a Lie groupoid $M\soutar M$ with source and target given by the identity map. The multiplication, identity and inverse maps are given in a canonical way.
	\end{ex}
	
	Recall that manifolds (of dimension $n\in \NN$) can be defined as a collection of open balls on $\RR^n$ that are ``glued'' together by a family of maps satisfying a cocycle condition. With this idea in mind, a manifold $M$ can also be seen as the following groupoid:
	
	\begin{ex}\label{ex:cech.grpd}({\bf Čech Groupoid}) Let $M$ be a manifold and $\{U_i\}_{i\in I}$ a family of charts covering $M$ (each $U_i$ can be seen as an open ball in $\RR^n$). Take the disjoint union $\widehat{M}:=\sqcup_{i\in I} U_i$ as the base of the groupoid. There is a canonical map $\pi\colon \widehat{M}\fto M$.
		
		The Čech Groupoid is the groupoid of example \ref{ex:fib.grpd} for the map $\pi\colon \widehat{M}\fto M$. Note that the arrows are given by:
		$$\CG:=\widehat{M}\times_\pi \widehat{M}=\sqcup_{i,j\in I} U_i\times_M U_j.$$
		
		On $\widehat{M}$, one can define an equivalence relation: $x\sim y$ if and only if there exists $g\in\CG$ such that $\bt(g)=x$ and $\bs(g)=y$. Then it is easy to see that $\widehat{M}/\CG:=(\widehat{M}/\sim) \simeq M$.
		
	\end{ex}

	The Čech Groupoid and the identity groupoid of $M$ present some similarities. Indeed, they are Morita equivalent as we will see in section \ref{sec:ME.gpd}.
	
	Now we will combine examples \ref{ex:G.group} (as a Lie group) and \ref{ex:Gid} (as a manifold) into a more general view of what a Lie groupoid is about:
	
	\begin{ex}\label{ex:act.gpd}({\bf Action groupoid}) Let $G$ be a Lie group acting on a manifold $P$. Take as set of arrows $\CG:=G\times P$ and the set of points $P$. The following data defines a Lie groupoid called the {\bf action groupoid}:
		\begin{itemize}
			\item $\bs\colon \CG\fto P; (g,x)\mapsto x$ and $\bt\colon \CG\fto P;(g,x)\mapsto gx$.
			\item $\circ \colon \CG {}_\bs \!\times_\bt \CG\fto \CG; (h,gx)\circ(g,x)\mapsto (hg,x)$.
			\item $e\colon P\fto \CG; x\mapsto (Id_G,x)$.
			\item $(-)^{-1}\colon \CG\fto \CG; (g,x)\mapsto (g^{-1},gx)$.
		\end{itemize}	
	\end{ex}
	
	If $G$ acts freely and properly on $P$, then the quotient space $M:= P/G$ is a manifold. Note that in this case the action groupoid $G\times P$ is almost the same as the groupoid in example \ref{ex:fib.grpd} for the quotient map $\pi\colon P\fto M$. Indeed they are isomorphic, as we will show after definition \ref{def:grpd.mor}.
	
	\subsection*{Bisections and group of diffeomorphisms}
	
	As is the case for bisubmersions, see section \ref{sec:bisub}, groupoids are sometimes used to study subgroups of $\mathrm{Diff}(M)$. This characterization uses the notion of bisections.
	
	\begin{defi}\label{defi:bisect} Given a groupoid $\CG\soutar M$, a \textbf{bisection} consists of an $\bs$ section $\sigma:M\fto \CG$ that is transverse to the fibers of $\bt$. 
	\end{defi}
	
	Any bisection $\sigma$ defines a diffeomorphism on $M$ given by $\widehat{\sigma}:=\bt\circ\sigma\colon M\fto M$. 
	
	Moreover, the multiplication in $\CG$ defines a multiplication on bisections given by the formula  $$(\sigma\circ \phi) (x)= \sigma(\widehat{\phi}(x))\circ \phi(x).$$
	
	Note that $\widehat{\sigma\circ \phi}=\widehat{\sigma}\circ\widehat{\phi}$.
	
	On the identity groupoid $M\soutar M$, there is only one bisection, the identity map, this bisection carries the identity diffeomorphism. In general, for any Lie groupoid $\CG\soutar M$, the identity section $e:M\fto \CG$ is a bisection and carries the identity diffeomorphism. 
	
	The bisections of a Lie groupoid $\CG\soutar M$ have a group structure with multiplication given by the composition $\circ$ and the identity given by the identity section $e:M\fto \CG$. This group can be seen as a subgroup of $\mathrm{Diff}(M)$ under the map $\sigma\mapsto \widehat{\sigma}$.

	\begin{ex} In the sense of definition \ref{defi:bisect}, the group $\mathrm{Diff}(M)$ is given by the bisections of the pair groupoid $M\times M$. 
	\end{ex}
	
	\begin{ex}Note that bisections of the groupoid in example \ref{ex:loc.hol.grpd} are constant on $S$. This means that for all $c\in S$ they send the manifold $M\times\{c\}$ to itself.
		
		In example \ref{ex:fib.grpd}, the group of bisections preserves strongly\footnote{The diffeomorphism sends each fiber to itself} the fibers of the map $\pi$.
	\end{ex}
	
	\begin{ex} In the example \ref{ex:cech.grpd} (the Čech Groupoid), the local bisections give a family of maps satisfying the cocycle condition and they glue together the components of $\widehat{M}$, giving the manifold $M$ as result.
	\end{ex}

	\begin{ex} In example \ref{ex:act.gpd} about the action groupoid, for each $g\in G$ the map $\sigma_{g}\colon M\fto G\times M;p\mapsto (g,p)$ is a bisection that carries the diffeomorphism of multiplying by $g$ on $M$.
	\end{ex}
	
	\subsection*{Intrinsic structures of Lie groupoids:}
	
	To continue our illustration of Lie groupoids we will point out important features on their structure.
	
	For any Lie groupoid $\CG\soutar M$, the inverse map is a diffeomorphism on $\CG$. Therefore, for all $x\in M$, we get the following diffeomorphism between the fibers: 
	$$(-)^{-1}|_{\bs^{-1}(x)}\colon \bs^{-1}(x)\fto \bt^{-1}(x).$$
	
	Moreover, for every $g\in \CG$ there are two diffeomorphisms:
	\begin{itemize}
		\item The {\bf left multiplication}: 
		\begin{equation}\label{eq:left.mul}
			L_g\colon \bt^{-1}(\bs(g))\fto \bt^{-1}(\bt(g));h\mapsto g\circ h.
		\end{equation}
		\item The {\bf right multiplication}:
		\begin{equation}\label{eq:right.mul}
			R_g\colon \bs^{-1}(\bt(g))\fto \bs^{-1}(\bs(g));h\mapsto h\circ g.
		\end{equation}
	\end{itemize}	
	
	For every point $x\in M$ the Lie groupoid structure gives us:
	\begin{itemize}
		\item A Lie group, called the {\bf isotropy Lie group}, given by the set $$\CG_x:=\bs^{-1}(x)\cap \bt^{-1}(x).$$ 
		\item An immersed submanifold of $M$, called the {\bf orbit through} $x$, given by $O_x:=\bt(\bs^{-1}(x))$ (an equivalent description is $\bs(\bt^{-1}(x))$).
	\end{itemize}
	
	A Lie groupoid also has an associated topological space called the {\bf orbit space}, which is given by the set $M/\CG :=\{O_x \st x\in M\}$ equipped with the quotient topology via the map $Q\colon M\fto M/\CG;x\mapsto O_x$.
	
	\begin{ex} Let $G$ be a Lie group acting on a manifold $M$. For the action groupoid $G\times M$, the notions of left/right multiplication given in equations \ref{eq:left.mul} and \ref{eq:right.mul} coincide with the respective notions on $G$. Moreover, the isotropy groups, the orbits and the orbit space coincide with the respective notions for the $G$-action on $M$.
	\end{ex}
	
	\begin{defi} Let $\CG\soutar M$ be a Lie groupoid. A submanifold $S\subset M$ is {\bf saturated} if $\bs^{-1}(S)=\bt^{-1}(S)$.
	\end{defi}
	
	\begin{lem} Let $\CG\soutar M$ be a Lie groupoid, and $x\in M$. Then $O_x$ is a saturated submanifold of $M$. Moreover $S$ is saturated if and only if for all $s\in S$ we have $O_s\subset S$.	
	\end{lem}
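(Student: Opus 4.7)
The plan is to verify both assertions by unwinding the definitions and using the groupoid operations (in particular the inverse map) to move between source and target fibres.

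For the first assertion, that $O_x$ is a saturated submanifold, I would split the claim into two parts. The submanifold statement should be taken from the standard fact (stated earlier as part of the setup of orbits) that $O_x=\bt(\bs^{-1}(x))$ is an immersed submanifold; no work is needed beyond quoting this. For saturation, I would prove $\bs^{-1}(O_x)=\bt^{-1}(O_x)$ by double inclusion. To show $\bs^{-1}(O_x)\subset \bt^{-1}(O_x)$, take $y\in\CG$ with $\bs(y)\in O_x$, so there is $g\in\CG$ with $\bs(g)=x$ and $\bt(g)=\bs(y)$; then $y\circ g$ is defined, its source is $x$, and its target is $\bt(y)$, hence $\bt(y)\in O_x$. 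The reverse inclusion is completely analogous, using $g^{-1}\circ y$ (equivalently, applying the first inclusion to $y^{-1}$ and using that $\bs(y^{-1})=\bt(y)$ and $\bt(y^{-1})=\bs(y)$).

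For the second assertion, both directions are short. For the forward direction, assume $S$ is saturated and take $s\in S$ and any $y\in O_s$; pick $g$ with $\bs(g)=s$, $\bt(g)=y$. Then $g\in\bs^{-1}(S)=\bt^{-1}(S)$, so $y=\bt(g)\in S$, proving $O_s\subset S$. For the converse direction, assume $O_s\subset S$ for every $s\in S$ and prove $\bs^{-1}(S)=\bt^{-1}(S)$ by double inclusion. If $g\in \bs^{-1}(S)$ with $s:=\bs(g)\in S$, then $\bt(g)\in O_s\subset S$, so $g\in \bt^{-1}(S)$. The reverse inclusion follows by applying the same argument to $g^{-1}$, which swaps source and target.

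I do not expect any serious obstacle here: the whole argument is a bookkeeping exercise with the two fundamental moves available in a groupoid, namely composition (to drag elements from one source fibre to another) and inversion (to exchange source with target). The only small subtlety is making sure the composable pairs used (such as $y\circ g$ and $g^{-1}\circ y$) are actually composable, which follows immediately from the matching between $\bs$ of one factor and $\bt$ of the other.
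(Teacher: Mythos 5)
Your proposal is correct: the paper states this lemma without any proof (it is treated as a standard fact, with the submanifold statement already asserted in the preceding discussion of orbits), and your argument is exactly the expected definition-unwinding one. The composability checks for $y\circ g$ and $g^{-1}\circ y$ are the only delicate points and you verify them correctly, so the proof is complete at the paper's level of detail.
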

	
	Let $\CG\soutar M$ be a Lie groupoid, and $S$ a { saturated} submanifold or an { open} subset of $M$.  Then $\CG_S:=\bs^{-1}(S)\cap \bt^{-1}(S)$ is a submanifold of $\CG$ with a canonical Lie groupoid structure with base $S$.
	
	\begin{defi} The Lie groupoid $\CG_S\soutar S$ is called the restriction groupoid of $\CG$ to $S$.
	\end{defi} 
	
	The following well known theorem helps us understand the structure of a Lie groupoid, its orbits and isotropy Lie groups.
	
	\begin{prop} Let $\CG\soutar M$ be a Lie groupoid, and $x\in M$. Then $\CG_x$ is a Lie group and a submanifold of $\CG$.
		
		The Lie group $\CG_x$ acts canonically on $\bs^{-1}(x)$ by right multiplication and $\bt\colon \bs^{-1}(x)\fto O_x$ is a right-principal $\CG_x$-bundle.
	\end{prop}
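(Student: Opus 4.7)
The plan is to proceed in three stages: first establish that $\CG_x$ is an embedded submanifold of $\CG$ inheriting a Lie group structure, then verify that $\CG_x$ acts smoothly, freely and properly on $\bs^{-1}(x)$ by right multiplication, and finally identify the orbits of this action with the fibres of $\bt\colon \bs^{-1}(x)\fto O_x$.

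For the first stage, $\bs^{-1}(x)$ is an embedded submanifold of $\CG$ because $\bs$ is a submersion. I would then show that $\bt|_{\bs^{-1}(x)}\colon \bs^{-1}(x)\fto M$ has constant rank and apply the constant rank theorem to conclude that $\CG_x=(\bt|_{\bs^{-1}(x)})^{-1}(x)$ is an embedded submanifold. For any $g\in\bs^{-1}(x)$, right multiplication gives a diffeomorphism $R_g\colon \bs^{-1}(\bt(g))\fto \bs^{-1}(x)$ satisfying $\bt\circ R_g=\bt$, so the rank of $\bt|_{\bs^{-1}(x)}$ at $g$ equals the rank at $e_{\bt(g)}$ of $\bt|_{\bs^{-1}(\bt(g))}$; this latter quantity is the rank at $\bt(g)$ of the anchor of the associated Lie algebroid. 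The main obstacle is showing that this rank is constant as $\bt(g)$ ranges over the orbit $O_x$. I would handle this by integrating right-invariant vector fields arising from sections of $\ker d\bs|_{e(M)}$: these project under $\bt$ to vector fields on $M$ lying in the image of the anchor, so by Stefan--Sussmann the orbit $O_x$ arises as an immersed leaf of an integrable singular distribution, and the anchor rank is constant along it. Granted this, $\CG_x$ is an embedded submanifold of $\bs^{-1}(x)$, and restricting the composition, inverse and identity section of $\CG$ to $\CG_x$ yields the claimed Lie group structure.

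For the second stage, since $\bs(h)=\bt(h)=x$ for $h\in \CG_x$, right multiplication $R_h(k):=k\circ h$ restricts to a smooth diffeomorphism $\bs^{-1}(x)\fto \bs^{-1}(x)$, and associativity of the groupoid composition then turns this into a smooth right action. It is free because $k\circ h=k$ forces $h=k^{-1}\circ k=e_x$. For properness I would check that the action map $\bs^{-1}(x)\times \CG_x\fto \bs^{-1}(x)\times \bs^{-1}(x)$, $(k,h)\mapsto (k,k\circ h)$, is a closed embedding onto the fibred product $\bs^{-1}(x){}_{\bt}\!\times_{\bt}\bs^{-1}(x)$, which is a closed submanifold thanks to the Hausdorffness of the source fibre.

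Finally, two points $k_1,k_2\in\bs^{-1}(x)$ lie in a common $\CG_x$-orbit if and only if $\bt(k_1)=\bt(k_2)$: the forward direction is immediate from $\bt\circ R_h=\bt$, while for the converse $h:=k_1^{-1}\circ k_2$ lies in $\CG_x$ and satisfies $k_1\circ h=k_2$. Combined with the constant rank conclusion of the first stage, this identifies $\bt\colon \bs^{-1}(x)\fto O_x$ as a surjective submersion whose fibres are precisely the $\CG_x$-orbits, which is by definition a right-principal $\CG_x$-bundle.
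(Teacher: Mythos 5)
The paper itself offers no proof of this statement (it is quoted as a well-known result), so your argument has to stand on its own; its overall plan is the standard one and most steps are sound, but there is a genuine gap at exactly the step you flag as the main obstacle. Your reduction is correct: the rank of $\bt|_{\bs^{-1}(x)}$ at $g$ equals $\dim F_{\bt(g)}$, where $F_y=\img(\rho_y)$ is the image of the anchor. However, the Stefan--Sussmann argument you propose only shows that $\dim F$ is constant along each \emph{leaf} of the singular foliation $\rho(\Gamma_c(A))$, and the identification ``$O_x$ is a leaf'' requires the $\bs$-fibres to be connected, which the proposition does not assume. For a non-source-connected groupoid the orbit is in general only a union of leaves: for the action groupoid $\ZZ\ltimes \RR$ (translations) the anchor is zero, every leaf is a single point, yet $O_x=x+\ZZ$. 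Flows of vector fields in the image of the anchor only reach the points $\bt(g)$ with $g$ in the connected component of $e_x$ inside $\bs^{-1}(x)$, so your argument as written leaves open the possibility that the leaves met by other components of $\bs^{-1}(x)$ have a different dimension, i.e.\ it does not yield constancy of the rank of $\bt|_{\bs^{-1}(x)}$ on the whole fibre.

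The gap is easily repaired, in either of two ways. (a) A pointwise linear-algebra argument at each arrow $g$: right-invariant vector fields give $d_g\bt(\ker d_g\bs)=F_{\bt(g)}$ and left-invariant ones give $d_g\bs(\ker d_g\bt)=F_{\bs(g)}$; since $\ker\bigl(d_g\bt|_{\ker d_g\bs}\bigr)=\ker d_g\bs\cap\ker d_g\bt=\ker\bigl(d_g\bs|_{\ker d_g\bt}\bigr)$ and $\dim\ker d_g\bs=\dim\ker d_g\bt=\dim\CG-\dim M$, rank--nullity gives $\dim F_{\bt(g)}=\dim F_{\bs(g)}$ for \emph{every} arrow $g$, with no connectivity hypothesis; this also makes the Stefan--Sussmann detour unnecessary. (b) Alternatively, through every arrow $g$ there exists a local bisection, and the local diffeomorphism of $M$ it carries sends $x$ to $\bt(g)$ and preserves the foliation $\rho(\Gamma_c(A))$ (this is Lemma \ref{lem:bisect.diff} combined with Proposition \ref{prop:grpd.bisub}), whence $\dim F_x=\dim F_{\bt(g)}$. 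With constant rank secured, the remainder of your proof (embeddedness of $\CG_x$, smoothness of the group operations, freeness and properness of the right action, orbits equal to $\bt$-fibres, and the principal-bundle conclusion) goes through; you should only add that the smooth structure on $O_x$ is the one coming from the quotient $\bs^{-1}(x)/\CG_x$ (equivalently from the constant-rank theorem), since $O_x$ is in general only an immersed, not embedded, submanifold of $M$.
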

	
	For all $y\in O_x$, there exists $g\in \CG$ such that $\bs(g)=x$ and $\bt(g)=y$. Then, using the right multiplication by $g$ in $\CG$, we have that $\bs^{-1}(y)$ is diffeomorphic to $\bs^{-1}(x)$. By same reasoning, the Lie group $\CG_y$ is isomorphic to $\CG_x$. 
	
	Moreover, if there exists a global section $\sigma\colon O_x\fto \bs^{-1}(x)$ then there is a canonical $\CG_x$-action on $\CG_{O_x}$ and $(\bt,\bs)\colon \CG_{O_x} \fto O_x\times O_x$ is a principal $\CG_{x}$-bundle.
	
	\subsection*{Morphisms of Groupoids}
	
	As mentioned in remark \ref{rem:grpds}, any Lie groupoid can be thought of as a smooth category where every arrow is invertible, the space of arrows and points are smooth manifolds, and the composition, identity and inverse maps are smooth maps. As such, the definition of morphism for Lie groupoids is a functor:
	
	\begin{defi}\label{def:grpd.mor} Given two Lie groupoids $\CG\soutar M$ and $\CH\soutar N$, a \textbf{morphism of Lie groupoids} is given by two smooth maps $\hat{\pi}\Fr\CG\fto \CH$ and $\pi\colon M\fto N$ such that:
		\begin{itemize}
			\item $\hat{\pi}$ and $\pi$ commute with source and target of $\CG$ and $\CH$.
			\item for all $(g,h)\in \CG {}_{\bs} \!\times_{\bt} \CG$ we have $\hat{\pi}(g\circ h)=\hat{\pi}(g)\circ \hat{\pi}(h)$.
			\item for all $x\in M$ we have $\hat{\pi}(e_x)=e_{\pi(x)}$.
		\end{itemize}
	\end{defi}
	
	\begin{rem}A consequence of definition \ref{def:grpd.mor} is that for all $g\in \CG$, $\hat{\pi}(g^{-1})=(\hat{\pi}(g))^{-1}$.
	\end{rem} 
	
	\begin{ex} Let $\CG\soutar M$ be a Lie groupoid and $S$ a saturated or open submanifold of $M$. Then the inclusion map given by $\iota\colon \CG_S\fto \CG$ and $\iota\colon S\fto M$ is a Lie groupoid morphism.
	\end{ex}
	
	\begin{ex} Let $M$ be a manifold, $\CCI_M$ the identity groupoid $M\soutar M$, $\CG\soutar M$ any Lie groupoid and $\CP_M$ the pair groupoid $M\times M\soutar M$. There exist canonical morphisms of Lie groupoids $\CCI_M\fto \CG\fto \CP_M$ given by:
		\begin{itemize}
			\item $\CCI_M\fto \CG;x\fto e_x$
			\item $\CG\fto \CP_M; g\mapsto (\bt(g),\bs(g))$
		\end{itemize}
	\end{ex}
	
	For any Lie groupoid $\CG\soutar M$, the image of the canonical map $(\bt,\bs)\colon \CG\fto M\times M$ is an equivalence relation on $M$. The quotient of $M$ by this equivalence relation is exactly the orbit space of $\CG$ i.e. $M/\CG$.
	
	\begin{defi}\label{def:grpd.iso} Two groupoids $\CG\soutar M$ and $\CH\soutar N$ are isomorphic if there exists a groupoid morphism $\widehat{\pi}\colon \CG\fto \CH$, $\pi\colon M\fto N$ such that $\widehat{\pi}$ and $\pi$ are diffeomorphisms of manifolds.
	\end{defi}
	
	\begin{ex} $M$ is diffeomorphic to $N$ if and only if their identity groupoids $\CCI_M$ and $\CCI_N$ are isomorphic (equivalently if their pair groupoids $\CP_M$ and $\CP_N$ are isomorphic).
	\end{ex}
	
	\begin{ex} Let $G$ be a Lie group acting freely and properly on $P$, and denote $M:=P/G$. Let $P\times_M P$ be the groupoid given in example \ref{ex:fib.grpd} and $G\times P$ the action groupoid of example \ref{ex:act.gpd}. The map $\widehat{\pi}\colon G\times P\fto P\times_M P; (g,p)\mapsto (gp,p)$ defines an isomorphism of groupoids covering the identity (the base map $\pi$ is the identity on $P$).	
	\end{ex}
	
	\subsection*{Groupoid actions}
	
	Let $\CG\soutar M$ be a Lie groupoid (respectively, a topological open groupoid), let $P$ be a \emph{not necessarily Hausdorff manifold} and $\pi\colon P\fto M$ a surjective submersion (resp. a surjective continuous and open map).
	
	\begin{defi}\label{def:grpd.lact} A {\bf left $\CG$-action} on $P$ is a smooth (resp. continuous) map $\star \colon \CG {}_\bs\!\times_\pi P \fto P$ such that for all $(g,h)\in \CG {}_\bs\!\times_\bt \CG$ and $p\in P$:
		$$ g\star (h \star p)=(g\circ h)\star p,\;\;\;\;\;\;\;\;
		e_{\pi(p)}\star p=p, \;\;\;\;\;\;\;\; \pi(g\star p)=\bt(g).$$
		Such a manifold $P$ with a left $\CG$-action is called a {\bf left $\CG$-module} and $\pi$ is called its {moment map}.
		
		A {\bf right $\CG$-action} over $P$ is given by a smooth (resp. continuous) map $\star \colon P {}_{\pi}\!\times_{\bt} \CG  \fto P$ and satisfying:
		$$ (p\star g) \star h=p \star (g\circ h),\;\;\;\;\;\;\;\;
		p\star e_{\pi(p)}=p, \;\;\;\;\;\;\;\; \pi(p\star g)=\bs(g).$$
	\end{defi}
	
	To illustrate this definition, we will focus on left actions, in which case we have the following diagram:
	\[\begin{tikzcd}
		\CG \arrow[dr,shift left=.2em,"\bs"] \arrow[dr,shift right=.2em,swap, "\bt"]& \curvearrowright & P \arrow[dl, "\pi"]  \\
		& M & \\
	\end{tikzcd}\]
	The first and second equations of definition \ref{def:grpd.lact} say that the action is associative and unital. The third equation says that every $g$ with source $x$ and target $y$ defines a smooth map $g\star(-)\colon \pi^{-1}(x)\fto \pi^{-1}(y)$.
	
	Similarly for right actions, every $g\in \CG$ with source $x$ and target $y$ defines a map $(-)\star g\colon \pi^{-1}(y)\fto \pi^{-1}(x)$.
	
	The following example illustrates the reason why we allow $P$ to be a non-Hausdorff manifold. 
	
	\begin{ex} Let $\CG\soutar M$ be a Lie groupoid. Then $\bt:\CG\fto M$ is a left $\CG$-module by left multiplication on $\CG$.\\
		Moreover $\bs:\CG\fto M$ is a right module and a $(\CG,\CG)$-bimodule \ref{def:bimod.grpd}.
	\end{ex}

	The following lemma illustrates the relation between Lie group actions and Lie groupoid actions:
	\begin{lem} Let $G$ be a Lie group acting on a manifold $M$. Let $P$ be a manifold and $\pi\colon P\fto M$ a submersion. The action groupoid $G\times M$ acts on $\pi$ (from the left) if and only if the group $G$ acts on $P$ (from the left) and the map $\pi$ is equivariant (i.e. $\pi(gp)=g\pi(p)$ for all $g\in G$ and $p\in P$). 
		
		Similarly for right actions.
	\end{lem}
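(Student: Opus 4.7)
The plan is to verify both implications of the equivalence by direct computation using the groupoid axioms, and the key observation is that an arrow $(g,m)$ in $G\times M$ has source $m$ and target $gm$, so the data of the groupoid action is essentially parameterized by pairs $(g, \pi(p))$.

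First I would prove the forward implication. Assume the action groupoid $G \times M$ acts on $\pi \colon P \to M$ via $\star$. Define a candidate $G$-action on $P$ by
\[
g \cdot p := (g,\pi(p)) \star p,
\]
which is well-defined since $\bs(g,\pi(p)) = \pi(p)$. Smoothness follows from smoothness of $\star$ and $\pi$. Equivariance of $\pi$ is immediate from the third condition in Definition \ref{def:grpd.lact}:
\[
\pi(g \cdot p) = \pi\bigl((g,\pi(p)) \star p\bigr) = \bt(g,\pi(p)) = g\,\pi(p).
\]
The unit axiom $e \cdot p = p$ follows from $(e_G, \pi(p))$ being the identity arrow at $\pi(p)$ in $G \times M$. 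For associativity, I would use equivariance together with the composition $(g,h\pi(p)) \circ (h,\pi(p)) = (gh,\pi(p))$ in $G \times M$:
\[
g \cdot (h \cdot p) = (g,\pi(h\cdot p)) \star (h\cdot p) = (g,h\pi(p)) \star \bigl((h,\pi(p)) \star p\bigr) = (gh,\pi(p)) \star p = (gh)\cdot p.
\]

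For the backward implication, assume $G$ acts on $P$ with $\pi$ equivariant. Define
\[
\star \colon (G\times M) {}_{\bs}\!\times_{\pi} P \to P, \qquad (g,m) \star p := g \cdot p
\]
(noting that $(g,m)$ and $p$ being composable means $m = \pi(p)$, so this does not depend on $m$ in a problematic way). The three conditions of Definition \ref{def:grpd.lact} then translate directly into the $G$-action axioms together with the equivariance of $\pi$; in particular, $\pi((g,m) \star p) = \pi(g\cdot p) = g\pi(p) = gm = \bt(g,m)$ uses equivariance in an essential way.

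I would close by noting that the right-action version is entirely analogous, replacing left multiplication by right multiplication and swapping the roles of source and target throughout. No step here should present genuine difficulty; the only potential pitfall is keeping track of which factor of $(g,\pi(p))$ is the source and which is the target, since getting this wrong would obscure where equivariance enters.
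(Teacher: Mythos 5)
Your proposal is correct and follows essentially the same route as the paper: the paper's proof simply exhibits the two maps $(g,m)\star p := g\cdot p$ and $g\cdot p := (g,\pi(p))\star p$ and asserts they define the required structures, while you additionally verify the axioms (unit, associativity via the composition $(g,h\pi(p))\circ(h,\pi(p))=(gh,\pi(p))$, and equivariance from the moment-map condition), all of which is accurate.
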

	\begin{proof} $(\Leftarrow)$ The map $\star\colon (G\times M)\times_M P\fto P; (g,x,p)\fto gp$ defines a left Lie groupoid action.
		
		$(\Rightarrow)$ The map $\cdot\colon G\times P\fto P;(g,p)\mapsto(g,\pi(p))\star p$ defines a left Lie group action commuting with the map $\pi$.
	\end{proof}
	
	\begin{defi}\label{def:bimod.grpd} A {\bf $(\CG,\CH)$-bimodule} for Lie groupoids $\CG\soutar M$ and $\CH\soutar N$ is a (not necessarily Hausdorff) manifold $P$ with two commuting actions.
	\end{defi}

	\section{The holonomy groupoid of a regular foliation}\label{sec:hol.grpd}
	
	An important example of a groupoid for this thesis is the holonomy groupoid of a singular foliation, which in the regular case has a nice construction and a canonical smooth structure. Indeed, it is a Lie groupoid.
	
	In this section we introduce such a groupoid for the regular case and describe its structure. Some references about this are \cite{MRIntr} and \cite{Kwang}. Here we present the holonomy groupoid taking into account the definition of singular foliations and bisubmersions.
	
	Something that we will use to define the holonomy groupoid is the local picture of a regular foliation. 
	
	\begin{lem} Let $\CF$ be a regular foliation on $M$ of dimension $k$. Then for all $x\in M$ there exists a chart $(U\subset \RR^n, \phi)$ such that $\phi^{-1}\CF$ is equal to the $\CI_c(U)$ span of $\{\de_{x_1},\cdots ,\de_{x_k}\}$. 
	\end{lem}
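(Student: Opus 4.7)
The plan is to reduce the statement to Proposition \ref{prop:loc.pic.fol} and then use the dimension hypothesis to simplify the transverse foliation to zero.

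First, I would apply Proposition \ref{prop:loc.pic.fol} at the point $x$. This yields an open neighborhood $V$ of $x$ in $M$, a foliated manifold $(S,\CF_S)$ of dimension $n-k$, and a surjective submersion with connected fibres $\pi\colon V\to S$ such that $\CF|_V = \pi^{-1}\CF_S$, $F_x=\ker(d_x\pi)$, and $(F_S)_{\pi(x)}=0$.

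Next, I would use the regularity hypothesis to show that $\CF_S$ vanishes on a neighborhood of $\pi(x)$. Since $\pi$ is a submersion and $\CF|_V=\pi^{-1}\CF_S$, for every $p\in V$ the map $d_p\pi$ sends $F_p$ onto $(F_S)_{\pi(p)}$ with kernel $\ker(d_p\pi)\cap F_p = \ker(d_p\pi)$ (the latter being contained in $F_p$ because $\Gamma_c(\ker d\pi)\subset \pi^{-1}\CF_S$). Counting dimensions and using that $\pi$ is a submersion onto an $(n-k)$-dimensional manifold,
\[
\dim F_p = \dim\ker(d_p\pi) + \dim (F_S)_{\pi(p)} = k + \dim (F_S)_{\pi(p)}.
\]
By Corollary \ref{cor:tan.reg.fol}, regularity gives $\dim F_p=k$ for all $p\in V$, whence $\dim (F_S)_{\pi(p)}=0$ for every $p$ in a neighborhood of $x$. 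Shrinking $V$ and the corresponding open in $S$, we may assume $\CF_S=0$, and therefore $\CF|_V = \pi^{-1}(0)=\left<\Gamma_c(\ker d\pi)\right>_{\CI_c(V)}$ by Lemma \ref{lem:pullback}.

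Finally, I would invoke the constant rank theorem for the submersion $\pi$ to obtain coordinates. Since $\pi$ is a submersion between manifolds of dimensions $n$ and $n-k$, there exists a chart $(U,\phi)$ with $U\subset V$ a neighborhood of $x$, such that in the coordinates $(x_1,\dots,x_n)$ provided by $\phi$ the map $\pi$ becomes the projection onto the last $n-k$ coordinates. Under $\phi$, $\ker d\pi$ is spanned pointwise by $\partial_{x_1},\dots,\partial_{x_k}$, and a straightforward partition of unity argument (as in the proof of Lemma \ref{lem:loc.gen}) shows that $\phi^{-1}\CF = \left<\partial_{x_1},\dots,\partial_{x_k}\right>_{\CI_c(U)}$. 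The main conceptual point is therefore the dimension count in the previous step; everything else is an application of already-proved results from the chapter.
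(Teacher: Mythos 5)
Your proposal is correct and follows essentially the same route as the paper, which obtains this local picture at the end of \S\ref{sec:pull.push} by combining Proposition \ref{prop:loc.pic.fol}, the constant rank theorem and Lemma \ref{lem:pullback}, asserting that the transverse foliation $\CF_S$ is trivial in the regular case. Your explicit rank--nullity count showing $\dim(F_S)_{\pi(p)}=0$, hence $\CF_S=0$, simply fills in the step the paper leaves implicit.
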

	
	Define the foliation $\CF_{loc}$ as the $\CI_c(\RR^n)$-span of $\de_{x_1},\cdots, \de_{x_k}$ in $\RR^n$. The foliated manifold $(\RR^n, \CF_{loc})$ has some special properties. One of them is that its leaves are given by $\RR^k\times\{c\}$ for each $c\in \RR^{n-k}$. Also, for each $x,y\in \RR^n$ lying in the same leaf (i.e. $x=(x_0,c)$ and $y=(y_0,c)$), and $\Sigma_x$, $\Sigma_y$ submanifolds of $\RR^n$ transverse to $\CF_{loc}$ at $x$ and $y$ respectively, there is a unique map from $\Sigma_x$ to $\Sigma_y$ preserving the leaves. Indeed, each leaf crosses $\Sigma_{x}$ once, and similarly for $\Sigma_y$, as shown in figure \ref{fig:lochol},
	
	\begin{figure}[h]
		\centering
		\scalebox{.4}{\includegraphics
			{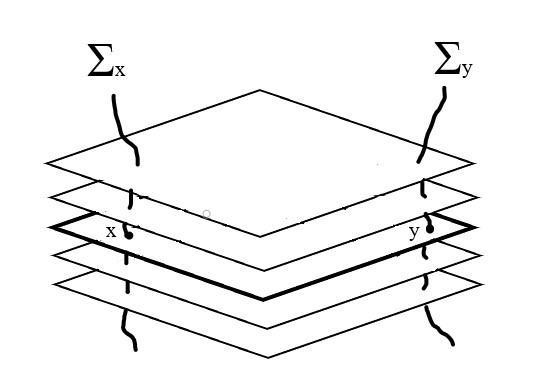}}
		\caption{$\CF_{loc}$ and transversals}
		\label{fig:lochol}
	\end{figure}
	
	In general, for any regular foliatiated manifold $(M,\CF)$ and two points $x,y\in M$ in the same leaf, there is no canonical map between transversals preserving the leaf. However, choosing a path $\gamma\colon [0,1]\fto M$ from $x$ to $y$, for each point $\gamma(t)$ there will be a chart $U_{\gamma(t)}$ where $\CF$ is isomorphic to $\CF_{loc}$. Then using compactness of $[0,1]$ there must exist finitely many $t_k\in[0,1]$ such that the family of charts $\{U_{\gamma(t_k)}\}_k$ cover $\gamma$. Now choose a transversal $\Sigma_{t_k}$ to $\CF$ at each $\gamma(t_k)$. Because of the local picture, there will be canonical maps from $\Sigma_x\fto \Sigma_{t_1}\fto \cdots \fto \Sigma_y$, as shown in figure \ref{fig:fulhol}.
	
	\begin{figure}[h]
		\centering
		\scalebox{.3}{\includegraphics{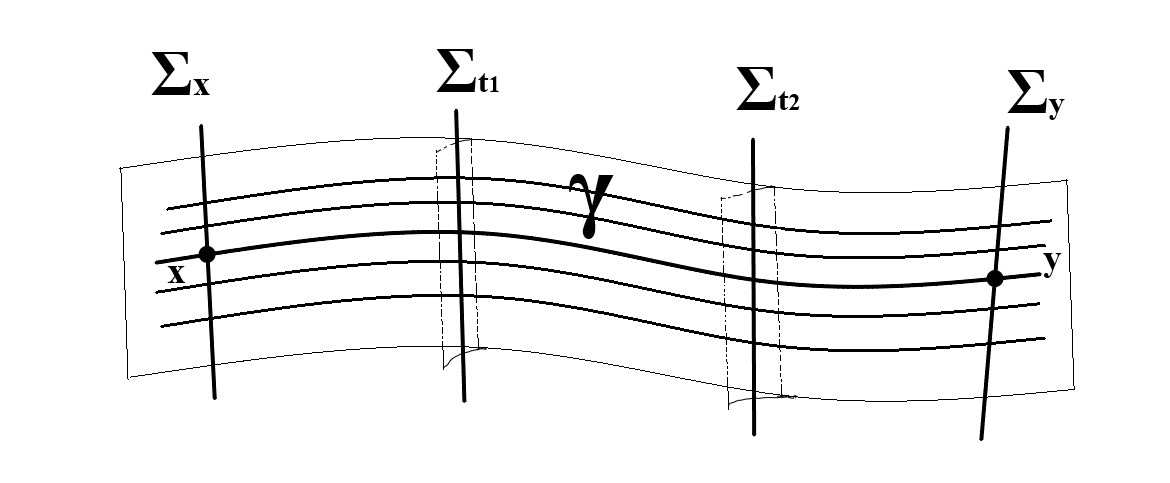}}
		\caption{The map between transversals using a path $\gamma$}
		\label{fig:fulhol}
	\end{figure}
	
	\begin{lem}\label{lem:Holo} For any regular foliatiated manifold $(M,\CF)$, $x,y\in M$ in the same leaf and a path $\gamma$ inside the leaf from $x$ to $y$, the composition map $\Sigma_x\fto \Sigma_{t_1}\fto \cdots \fto \Sigma_y$ doesn't depend on the choice of $t_k$ nor of the transversals $\Sigma_{t_k}$. It only depends on the homotopy class of $\gamma$ inside the leaf. Moreover, if two curves give the same map between $\Sigma_x$ and $\Sigma_y$, then they will give the same map between any other pair of transversals $\Sigma_x'$ and $\Sigma_y'$.
	\end{lem}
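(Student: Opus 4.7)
The plan is to isolate the ``sliding map'' as a germ and then propagate it along the path, making all compatibility checks at the germ level near the relevant points. Throughout, I will use the local model $(\RR^n,\CF_{loc})$ together with the observation that, in a single foliated chart, any two transversals through nearby points of the same plaque admit a unique germ of diffeomorphism that preserves the plaques (the ``horizontal projection'' in the chart), and this germ does not depend on which particular foliated chart containing both points we use, because two such charts are related by a transition map preserving $\CF$.

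First I would prove independence of the choice of intermediate transversals $\Sigma_{t_k}$ for a fixed subdivision $\{t_k\}$: replacing one $\Sigma_{t_k}$ by another $\Sigma_{t_k}'$ factors through the canonical local germ between them inside the common foliated chart $U_{\gamma(t_k)}$, and this germ cancels in the composition by the chain rule. Next, independence of the subdivision follows by taking a common refinement of any two subdivisions: introducing extra intermediate transversals only inserts identity germs into the composition, once we know independence of the intermediate choices. This already gives a well-defined germ $\mathrm{hol}(\gamma)\colon (\Sigma_x,x)\to(\Sigma_y,y)$ attached to a path $\gamma$ and a choice of germ-transversals $(\Sigma_x,\Sigma_y)$ at the endpoints.

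The central step, and the one I expect to be the main obstacle, is homotopy invariance. Given a homotopy $H\colon[0,1]^2\to L$ with $H(\cdot,0)=\gamma_0$ and $H(\cdot,1)=\gamma_1$, I would use compactness of $[0,1]^2$ to find an integer $N$ and foliated charts $U_{i,j}$ such that each small square $[\tfrac{i}{N},\tfrac{i+1}{N}]\times[\tfrac{j}{N},\tfrac{j+1}{N}]$ is mapped by $H$ into one $U_{i,j}$. Then I would define, for each vertex of the grid, a germ-transversal, and compare the holonomy along the two ``staircase'' paths around each elementary square: since both paths lie in a single $U_{i,j}$, the previous local rigidity shows the two composed germs agree. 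Iterating over the grid, $\mathrm{hol}(\gamma_0)=\mathrm{hol}(\gamma_1)$ at the germ level, so independence of the choice of $\{t_k\}$ and $\Sigma_{t_k}$ enhances to dependence only on the homotopy class of $\gamma$ rel endpoints.

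Finally, for the transversal-independence statement at the endpoints, suppose $\gamma_0$ and $\gamma_1$ induce the same germ $\Sigma_x\to\Sigma_y$. For any other transversals $\Sigma_x'$ and $\Sigma_y'$ at $x$ and $y$, let $\alpha\colon(\Sigma_x',x)\to(\Sigma_x,x)$ and $\beta\colon(\Sigma_y,y)\to(\Sigma_y',y)$ be the canonical sliding germs coming from foliated charts around $x$ and around $y$ respectively. By the first paragraph, these germs are intrinsic. The holonomy of $\gamma_i$ read between the primed transversals is $\beta\circ\mathrm{hol}(\gamma_i)\circ\alpha$, so the two agree. This shows that, once two paths induce the same map on one choice of transversals, they induce the same map on every choice, completing the proof.
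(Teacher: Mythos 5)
Your proposal is correct and follows essentially the same route as the paper's (sketched) proof: the key input in both is that inside a single foliated chart there is no choice for the map between transversals, and homotopy invariance is obtained by covering the homotopy square with finitely many such charts and comparing step by step, while the final statement about changing $\Sigma_x,\Sigma_y$ to $\Sigma_x',\Sigma_y'$ is handled exactly as in the paper, by pre- and post-composing with the canonical sliding germs in the charts at the endpoints. Your write-up merely fills in more of the bookkeeping (germ-level formulation, common refinement of subdivisions, staircase comparison over the grid) than the paper's sketch.
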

	
	\begin{proof}(Sketch) That the composition map $\Sigma_x\fto \Sigma_{t_1}\fto \cdots \fto \Sigma_y$ only depends on the homotopy class of $\gamma$ is a direct consequence of the fact that locally there is no choice for the maps.
		
		Take two homotopic paths $\gamma_1$ and $\gamma_2$ and a homotopy $\gamma\colon[0,1]^2\fto M$. For each $(t,s)\in[0,1]^2$ there is a neighborhood $U_{(t,s)}$ of $\gamma(t,s)$ where $\CF$ is isomorphic to $\CF_{loc}$. Using the compactness of $[0,1]^2$ there will be finitely many neighborhoods $U_{(t_k,s_j)}$ covering the homotopy, denote $U_{k,j}:=U_{(t_k,s_j)}$.
		
		One can choose a finite family of curves $\gamma_{j}$ and a finite cover $U_{k,j}$ such that, $\gamma_{\mathrm{min}(j)}=\gamma_0$, $\gamma_{\mathrm{max}(j)}=\gamma_1$ and such that for each $j$, the family $\{U_{(k,j)}\}_k$ covers $\gamma_{j}$ and $\gamma_{j+1}$. By this cover $\gamma_{j}$ and $\gamma_{j+1}$ give the same map from $\Sigma_x$ to $\Sigma_y$. Comparing 2 by 2 we get that $\gamma_0$ and $\gamma_1$ give the same map from $\Sigma_x$ to $\Sigma_y$.
		
		For the second statement, note that $U_{(0,0)}$ contains $\Sigma_x'$ and $\Sigma_x$. The statement is a direct consequence of the fact that inside $U_{(0,0)}$ there are no choices for the map from $\Sigma_x'$ to $\Sigma_x$. Using $U_{(1,1)}$ we get a similar result for $\Sigma_y$ and $\Sigma_y'$. 
	\end{proof}

	\begin{definition} We define an spacial groupoid. Let the space of arrows be the following set: \[\Pi(\CF):=\{\gamma\colon [0,1] \fto M \st \gamma \textnormal{ is in a single leaf}\}/\{\textnormal{homotopy in the leaf}\}.\]
		This space, together with the product by concatenation, the canonical source, target, identity and inverse map, is a set theoretic groupoid over $M$. Call $\Pi(\CF)$ the \textbf{monodromy groupoid} of $\CF$.
	\end{definition}
	
	We will prove that the monodromy groupoid is indeed a Lie groupoid.
	
	\begin{ex}(\textbf{Fundamental groupoid of $M$}) When $\CF=\CX_c(M)$ is the full foliation then $\Pi(\CF)$ is the collection of curves on $M$ up to homotopy. This set is also called the fundamental groupoid of $M$ and it is denoted by $\Pi(M)$. Note that the isotropy groups of $\Pi(M)$ are the fundamental groups of $M$. We will show later that $\Pi(M)$ is a Lie groupoid using the more general construction for $\Pi(\CF)$.
	\end{ex} 
	
	\begin{definition}  Let $[\gamma]\in \Pi(\CF)$ and $\Sigma_0,\Sigma_1$ be two transversals at $\gamma(0)$ and $\gamma(1)$ respectively. Denote $\widehat{\gamma}\colon \Sigma_0\fto \Sigma_1$ the map given in lemma \ref{lem:Holo}. The map $\widehat{\gamma}$ is called the {\bf holonomy transformation} of $[\gamma]$ between $\Sigma_0$ and $\Sigma_1$. 
	\end{definition}
	
	\subsection*{Smooth structure on $\Pi(\CF)$}
	
	For the regular foliated manifold $(\RR^n,\CF_{loc})$ the monodromy groupoid $\Pi(\CF_{loc})$ has a global smooth chart given by $\phi\colon \Pi(\CF_{loc})\fto \RR^k\times\RR^k\times \RR^{n-k};[\gamma_c] \mapsto(\gamma_c(1),\gamma_c(0),c)$ where $\gamma_c$ is a curve in $\RR^k\times\{c\}$ for some $c\in \RR^{n-k}$. Indeed $\Pi(\CF_{loc})$ is isomorphic to the Lie groupoid given in example \ref{ex:loc.hol.grpd}.
	
	We will use the local picture of any regular foliated manifold $(M,\CF)$ to give a chart for any $[\gamma]\in \Pi(\CF)$. To do so, take into account the following facts:
	\begin{itemize}
		\item Choose a chart centered at $\gamma(0)$; $\phi_0:U_0'\fto \RR^k\times \RR^{n-k}$ such that $\phi_0^{-1}\CF=\CF_{loc}$. This chart gives immediately a  transversal $\Sigma_0:=\phi^{-1}(\{(0,y)\st y\in \RR^{n-k}\})$ at $\gamma(0)$.
		
		Similarly for $\gamma(1)$, choose the chart $\phi_1:U_0'\fto \RR^k\times \RR^{n-k}$. Which gives the transversal $\Sigma_1:=\phi^{-1}(\{(0,y)\st y\in \RR^{n-k}\})$.
		
		\item The holonomy transformation $\widehat{\gamma}$ defines a local map $\widehat{\gamma}\colon\Sigma_0\fto \Sigma_1$. We will abuse notation and use $\Sigma_0=\phi_0(\Sigma_0)\subset \{0\}\times \RR^{n-k}\simeq \RR^{n-k}$ and $\Sigma_1=\phi_1(\Sigma_1)\subset \{0\}\times \RR^{n-k}\simeq \RR^{n-k}$. Therefore we will also call $\widehat{\gamma}$ the local diffeomorphism near $0\in \RR^{n-k}$ given by $\widehat{\gamma}$ and the compositions with the charts $\phi_0$ and $\phi_1$.
		
		\item There exists a unique (up to homotopy) smooth map $$\Sigma_0\times [0,1]\fto M;(y,t)\mapsto\gamma_y(t),$$ such that for all $y\in \Sigma_0$ the curves $\gamma_y$ sit inside a leaf of $\CF$, $\gamma_0=\gamma$, $\gamma_y(0)=y\in \Sigma_0$ and $\gamma_y(1)=\widehat{\gamma}(y)\in \Sigma_1$.
		
		\item There exists $U_0\subset U_0'$ and $U_1\subset U_1'$ such that, $\phi_0(U_0)$ and $\phi_1(U_1)$ are convex sets of $\RR^k\times \RR^{n-k}$, we abuse notation and call $U_0=\phi_0(U_0)$ and $U_1=\phi_1(U_1)$.
		
		Because $U_0\subset \RR^k\times \RR^{n-k}$ is a convex set, the leaves of $\iota^{-1}_{U_0}(\CF_{loc})$ are simply connected. Then, there is a unique (up to homotopy) curve inside the leaves connecting $(x_0,y)$ with $(0,y)$, denote this curve by $\sigma_{(x_0,y)}^0$. More precisely $$\sigma_{(x_0,y)}^0 (t):=\phi^{-1}_0((t x_0,y)).$$
		
		Similarly, on $U_1\subset \RR^k\times \RR^{n-k}$ there is a unique up to homotopy curve $\sigma_{(x_1,y)}^1$ that connects $(x_1,\widehat{\gamma}(y))$ with $(0,\widehat{\gamma}(y))$. More precisely $$\sigma_{(x_1,y)}^1 (t):=\phi^{-1}_1((t x_1,\widehat{\gamma}(y))).$$
	\end{itemize}
	
	There is an open set $W\subset \RR^k\times \RR^k\times \RR^{n-k}$ such that the following map is injective:
	\begin{equation}\label{eq:can.chrt}
		\phi:W\fto \Pi(\CF); (x_1,x_0,y)\fto [(\sigma_{(x_1,y)}^1)\circ\gamma_y\circ (\sigma_{(x_0,y)}^0)^{-1}]
	\end{equation}
	where $\circ$ denotes the concatenation of paths and $[-]$ the homotopy class inside the leaves. Note that $\phi(0,0,0)=[\gamma]$, and the curve $(\sigma_{(x_1,y)}^1)\circ\gamma_y\circ (\sigma_{(x_0,y)}^0)^{-1}$ starts in $\phi_0(x_0,y)$ and ends in $\phi_1(x_0,\widehat{\gamma}(y))$. See figure \ref{fig:charthol}. 
	
	\begin{figure}[h]
		\centering
		\scalebox{.3}{\includegraphics{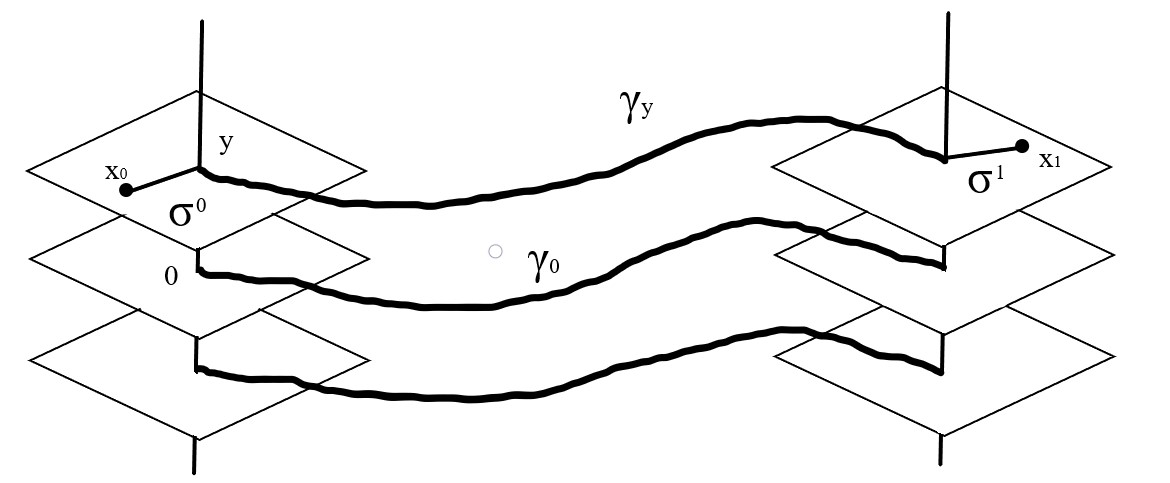}}
		\caption{The image of $(x_1,x_0,y)$ under the chart $\phi$.}
		\label{fig:charthol}
	\end{figure}
	
	We call the pair $(W,\phi)$ of equation (\ref{eq:can.chrt}) a \textbf{canonical chart} for $\Pi(\CF)$, since only depends on $[\gamma]$ and the charts $(U_0,\phi_0)$ and $(U_1,\phi_1)$. These injective charts give a locally euclidean topology on $\Pi(\CF)$. Also, they are smoothly compatible, giving a smooth structure on $\Pi(\CF)$.
	
	Moreover, $\Pi(\CF)$ is second countable because for any point $(p,q)\in M\times_{M/\CF} M$ there are countably many $[\gamma]$'s going from $p$ to $q$ and $M\times_{M/\CF} M$ is second countable. Therefore $\Pi(\CF)$ is indeed a (not necessarily Hausdorff) manifold.
	
	\begin{prop} $\Pi(\CF)$ is indeed a Lie groupoid.
	\end{prop}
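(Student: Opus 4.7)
The plan is to verify the three structural properties defining a Lie groupoid: that source and target are surjective submersions, that composition is smooth on the fibre product, and that identity and inversion are smooth. We already know from the construction that $\Pi(\CF)$ is a (possibly non-Hausdorff) smooth manifold via the canonical charts $(W,\phi)$ built around each class $[\gamma]$, so the remaining work is to express $\bs,\bt,\circ,e,(-)^{-1}$ in canonical coordinates and check smoothness.

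First I would handle the source and target. Using the canonical chart $\phi(x_1,x_0,y)=[\sigma^1_{(x_1,y)}\circ\gamma_y\circ(\sigma^0_{(x_0,y)})^{-1}]$, the starting point of the representative curve is $\phi_0(x_0,y)$ and its endpoint is $\phi_1(x_1,\widehat{\gamma}(y))$. Hence in these coordinates $\bs\circ\phi(x_1,x_0,y)=\phi_0(x_0,y)$ and $\bt\circ\phi(x_1,x_0,y)=\phi_1(x_1,\widehat{\gamma}(y))$. Both are obviously smooth, and they are submersions because they are projections in the $\RR^k\times\RR^k\times\RR^{n-k}$ coordinates postcomposed with the diffeomorphisms $\phi_0,\phi_1$ (and the leaf-preserving diffeomorphism $\widehat{\gamma}$ on the $y$-variable). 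Surjectivity is immediate, since for every $x\in M$ the constant path $c_x$ is a valid representative.

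Next I would treat the identity and inversion. For a point $x\in M$, choosing the same chart $\phi_0=\phi_1$ around $x$ and using $\gamma=c_x$ (so $\widehat{\gamma}=\mathrm{id}$), the embedding $e\colon M\to\Pi(\CF)$ reads $\phi_0(x_0,y)\mapsto\phi(0,x_0,y)$ in local coordinates, which is smooth. For inversion, starting from a canonical chart $(W,\phi)$ around $[\gamma]$ based on $(U_0,\phi_0)$ and $(U_1,\phi_1)$, one can form a canonical chart $(W',\phi')$ around $[\gamma^{-1}]$ by swapping the roles of the two endpoint charts; then $(-)^{-1}\circ\phi(x_1,x_0,y)=\phi'(x_0,x_1,\widehat{\gamma}(y))$, which is smooth.

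The main obstacle is smoothness of composition, because one must check that concatenation of homotopy classes varies smoothly across two independently chosen canonical charts. Given composable $[\gamma],[\delta]$ with $\gamma(0)=\delta(1)$, I would cover $[\gamma]$ and $[\delta]$ by canonical charts $(W_\gamma,\phi_\gamma)$ and $(W_\delta,\phi_\delta)$ whose ``inner'' endpoint charts on $M$ (for the target of $\delta$ and the source of $\gamma$) are chosen to coincide; this can always be arranged by proposition \ref{prop:fol.loc.pro} and the freedom in lemma \ref{lem:Holo}. Restricting to the open subset of $W_\gamma\times W_\delta$ cut out by $\bs\circ\phi_\gamma=\bt\circ\phi_\delta$ (a transversal condition since both are submersions, giving the smooth structure on $\Pi(\CF){}_\bs\!\times_\bt\Pi(\CF)$), the concatenation $[\gamma\circ\delta]$ admits a canonical chart $(W_{\gamma\delta},\phi_{\gamma\delta})$ whose defining family of paths is obtained by concatenating $\sigma^1_{(x_1,y)}\circ\gamma_y$ with $\delta_{\widehat{\delta}^{-1}\circ\widehat{\gamma}^{-1}(y)}\circ(\sigma^0_{(x'_0,y')})^{-1}$ after the appropriate re-parametrisation. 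In these coordinates the composition map is just a smooth reshuffling of the $(x_1,x_0,y)$-variables together with the smooth map $\widehat{\delta}$, which finishes the argument. Second countability and the required compatibility with the manifold structure were already established, so these three checks together establish that $\Pi(\CF)\rightrightarrows M$ is a Lie groupoid.
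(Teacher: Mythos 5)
Your proof takes essentially the same route as the paper: write $\bs$, $\bt$, the concatenation, the identity and the inversion in the canonical charts of equation \eqref{eq:can.chrt} and read off smoothness and submersivity there, the only difference being that you spell out the composition/identity/inverse checks which the paper dismisses as an ``analogous deduction''. One minor slip worth fixing: with $\phi_0=\phi_1$ the identity section reads $\phi_0(x_0,y)\mapsto\phi(x_0,x_0,y)$, not $\phi(0,x_0,y)$ (the latter has target $\phi_0(0,y)$ rather than $\phi_0(x_0,y)$), but the corrected formula is just as smooth, so the argument is unaffected.
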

	\begin{proof} The maps $\bt,\bs\colon \Pi(\CF)\fto M$ defined by the equations $\bt([\gamma])=\gamma(1)$ and $\bs([\gamma])=\gamma(0)$, can be expressed locally using the canonical charts of equation (\ref{eq:can.chrt}).
		
		In this local picture we get $\bs(x_1,x_0,y)=(x_0,y)$ and $\bt(x_1,x_0,y)=(x_1,\widehat{\gamma}(y))$. Therefore $\bt,\bs$ are submersions onto $M$. By analogous deduction, one can show that the product given by concatenation, the obvious identity and inverse maps are all smooth. As a result, $\Pi(\CF)$ together with these maps forms a Lie groupoid.
	\end{proof}

	The monodromy groupoid also has a relation with bisubmersions in section \ref{sec:bisub} definition \ref{def:bisub}, as the following proposition states.
	
	\begin{prop} Any Hausdorff open set $U\subset\Pi(\CF)$, together with the maps $\bt|_U$ and $\bs|_U$, forms a bisubmersion of $\CF$.
	\end{prop}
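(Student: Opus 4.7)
My plan is to reduce the statement to a purely local verification using the canonical charts on $\Pi(\CF)$ constructed in equation \eqref{eq:can.chrt}. Since Lemma \ref{lem:bisub.loc} says that being a bisubmersion is a local property with respect to the source manifold, and since any Hausdorff open $U\subset \Pi(\CF)$ admits a cover by Hausdorff sub-opens each contained in a canonical chart domain $W\subset \RR^k\times \RR^k\times \RR^{n-k}$, it is enough to check the bisubmersion condition on such a $W$. The submersion property of $\bs|_W$ and $\bt|_W$ was already established in the previous proposition, so what remains is the foliation identity $\bs^{-1}\CF=\bt^{-1}\CF=\ker_c(d\bs)+\ker_c(d\bt)$.

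On a canonical chart $(W,\phi)$ with coordinates $(x_1,x_0,y)$, the discussion preceding \eqref{eq:can.chrt} gives $\bs(x_1,x_0,y)=\phi_0(x_0,y)$ and $\bt(x_1,x_0,y)=\phi_1(x_1,\widehat{\gamma}(y))$, and moreover in the charts $\phi_0,\phi_1$ the foliation $\CF$ takes the local form $\CF_{\mathrm{loc}}=\left<\de_{x_1},\dots,\de_{x_k}\right>_{\CI_c}$. From these explicit formulas one reads off immediately that $\ker(d\bs)=\left<\de_{x_1}\right>$ and $\ker(d\bt)=\left<\de_{x_0}\right>$ (the $\de_y$ directions are moved by $d\widehat{\gamma}$, which is an isomorphism of transversals, hence not in the kernel).

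Next I would compute the two pullback foliations via Lemma \ref{lem:pullback}, which presents them as the $\CI_c(W)$-span of projectable vector fields mapping to $\CF$. Using the local expressions above, the coordinate vector fields $\de_{x_{0,i}}$ on $W$ project under $\bs$ to the generators $\de_{x_{0,i}}$ of $\CF$ near $\bs(W)$, so they lie in $\bs^{-1}\CF$; together with $\ker_c(d\bs)=\left<\de_{x_{1,j}}\right>_{\CI_c(W)}$ they visibly generate $\bs^{-1}\CF$. The symmetric computation for $\bt$, using that $\widehat\gamma$ is a diffeomorphism of transversals so $\de_{x_{1,i}}$ on $W$ is $\bt$-projectable to $\de_{x_{1,i}}$ (a generator of $\CF$ near $\bt(W)$), yields $\bt^{-1}\CF=\left<\de_{x_{0,i}},\de_{x_{1,j}}\right>_{\CI_c(W)}$ as well. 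Thus
\[
\bs^{-1}\CF \;=\; \bt^{-1}\CF \;=\; \left<\de_{x_{0,i}},\de_{x_{1,j}}\right>_{\CI_c(W)} \;=\; \ker_c(d\bs)+\ker_c(d\bt),
\]
which finishes the local check and, by Lemma \ref{lem:bisub.loc}, the proposition.

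The only delicate point I anticipate is justifying that in the $\bt$-direction the pullback $\bt^{-1}\CF$ really is spanned by the claimed projectable vector fields; this requires that $\widehat\gamma$ be a diffeomorphism near $0$ of the transversal $\Sigma_0$ onto a neighborhood in $\Sigma_1$, so that $\bt$ composed with $\phi_1^{-1}$ is genuinely a submersion onto an open set adapted to the local picture of $\CF$. This is exactly the content of Lemma \ref{lem:Holo} together with the holonomy transformation being a local diffeomorphism, so the obstacle is really only bookkeeping rather than a substantive issue.
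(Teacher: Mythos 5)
Your argument is correct and follows essentially the same route as the paper's proof: reduce to the canonical charts using the locality of the bisubmersion condition, and then verify the identity $\bs^{-1}\CF=\bt^{-1}\CF=\ker_c(d\bs)+\ker_c(d\bt)$ in the local model where $\bs(x_1,x_0,y)=\phi_0(x_0,y)$ and $\bt(x_1,x_0,y)=\phi_1(x_1,\widehat{\gamma}(y))$. The only difference is that you spell out, via Lemma \ref{lem:pullback} and the fact that $\widehat{\gamma}$ is a local diffeomorphism of transversals, the computation that the paper dismisses as clear.
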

	\begin{proof} 
		In the local case, for $(\RR^n,\CF_{loc})$, the monodromy groupoid is basically $\Pi(\CF_{loc})=\RR^k\times \RR^k\times \RR^{n-k}$ with source and target defined as $\bt(x_1,x_0,y)=(x_1,y)$ and $\bs(x_1,x_0,y)=(x_0,y)$. The manifold $\Pi(\CF_{loc})$ is clearly a bisubmersion for $\CF_{loc}$ and so is any open subset.
		
		For a more general regular foliated manifold $(M,\CF)$ take $[\gamma]\in \Pi(\CF)$, $\phi_0$ and $\phi_1$ charts compatible with $\CF$ at $\gamma(0)$ and $\gamma(1)$. The canonical chart $\phi: W\subset \RR^k\times \RR^k\times \RR^{n-k}\fto \Pi(\CF)$ with the maps $\bs(\phi(x_1,x_0,y))=\phi_0(x_0,y)$ and $\bt(\phi(x_1,x_0,y))=\phi_1(x_1, \widehat{\gamma}(y))$ is a bisubmersion for $\CF$. Being a bisubmersion is a local property, therefore for any Hausdorff open set $U\in \Pi(\CF)$ the triple $(U,\bt,\bs)$ is a bisubmersion for $\CF$.	
	\end{proof}
	
	In fact in proposition \ref{prop:grpd.bisub} we will show that any Hausdorff open set $U$ of a Lie groupoid $\CG\soutar M$ is a bisubmersion for a foliation on $M$ that depends on $\CG$.
	
	\subsection*{The holonomy groupoid}
	
	On $\Pi(\CF)$ one can define an equivalence relation compatible with the Lie groupoid structure. We say that two homotopy classes $[\gamma]$ and $[\sigma]$ are holonomy equivalent $[\gamma]\sim_\mathrm{hol} [\sigma]$ if and only if, after choosing transversals, their holonomy transformations are equal $\widehat{\gamma}=\widehat{\sigma}$.
	
	\begin{defi} The \textbf{holonomy groupoid} of a regular foliated manifold $(M,\CF)$ is the Lie groupoid given by $\CH(\CF):=(\Pi(\CF)/\sim_\mathrm{hol})$.
	\end{defi}
	
	We endow $\CH(\CF)$ with the quotient topology, which makes the quotient map $Q\colon \Pi(\CF)\fto \CH(\CF)$ a continuous map.  Note that, for any canonical chart $\phi$ of $\Pi(\CF)$ we get that the map $Q\circ \phi\colon W\fto \Pi(\CF)\fto \CH(\CF)$ is injective. Therefore the canonical charts of $\Pi(\CF)$ can be used as charts for $\CH(\CF)$ and endow it with a smooth structure that makes the quotient map $\Pi(\CF)\fto \CH(\CF)$ into an étale map.
	
	\begin{ex} Let $M$ be a connected manifold, $\widehat{M}$ its universal covering space, $x_0\in M$ and $\CF:=\CX_c(M)$. Then $\Pi(\CF)$ is isomorphic to the fundamental groupoid of $M$ which is given by  $(\widehat{M}\times \widehat{M})/(\pi_1(x_0))$. Moreover the holonomy groupoid is $\CH(M)\cong M\times M$, the pair groupoid.
	\end{ex}
	
	\begin{ex}
		For the cilinder $M=S^1\times \RR$ take $\CF$ with leaves $S^1\times\{y\}$ for $y\in \RR$. Then:
		\begin{itemize}
			\item $\Pi(\CF)=\RR\times M$ with $\bs(t,(\theta,y)):=(\theta,y)$ and $\bt(t,(\theta,y)):=(e^{it}\theta,y)$.
			\item $\CH(\CF)=S^1\times  M$ with $\bs(\sigma,(\theta,y)):=(\theta,y)$ and $\bt(\sigma,(\theta,y)):=(\sigma\theta,y)$.
		\end{itemize}
	\end{ex}
	
	\begin{ex}\label{ex:moebius}
		Take $M$ to be the Moebius band $M:= \RR\times (-1,1)/\sim$ where $(x,y)\sim (x+k,(-1)^k y)$ for $k\in \ZZ$. Let $\CF$ be the foliation with leaves $[\RR\times \{y\}]$ for fixed $y\in(-1,1)$ (see figure \ref{fig:moebius}). Then:
		\begin{itemize}
			\item $\Pi(\CF)=\RR\times M$ with $\bs(t,[(x,y)]):=[(x,y)]$ and $\bt(t,[(x,y)]):=[(t+x,y)]$.
			\item $\CH(\CF)= S^1\times M$ with $\bs(\sigma,[(x,y)]):=[(x,y)]$ and $\bt(\sigma,[(x,y)]):=[((t_\sigma/\pi)+x,y)]$ where $t_\sigma\in \RR$ is any real such that $e^{it_\sigma}=\sigma$.
		\end{itemize}	
	\end{ex}
	
	\begin{figure}[h]
		\centering
		\scalebox{.4}{\includegraphics[ height=7cm, width=12cm]{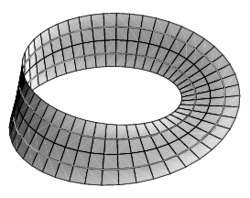}}
		\caption{The foliated manifold in example \ref{ex:moebius}}
		\label{fig:moebius}
	\end{figure}
	
	We describe in the following proposition a different characterization of the holonomy equivalence which will be used to generalize this notion to the non-regular case.
	
	\begin{prop}\label{prop:eq.reg.hol} $[\gamma]\sim_\mathrm{hol} [\sigma]$ if and only if there exists open neighborhoods $U,V\in \Pi(\CF)$ and morphisms of bisubmersions $\varphi\colon U\fto V$ such that $\varphi([\gamma])=[\sigma]$.
	\end{prop}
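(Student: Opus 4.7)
The strategy is to recognize that the holonomy transformation $\widehat{\gamma}$ is precisely the local diffeomorphism that $\Pi(\CF)$ (viewed as a bisubmersion) carries at the point $[\gamma]$, so that the statement becomes an instance of the general principle encoded in Corollary \ref{carry}. Concretely, I would first verify that if $\sigma_\gamma\colon M'\to \Pi(\CF)$ is any local $\bs$-section of $\Pi(\CF)$ through $[\gamma]$, then $\bt\circ\sigma_\gamma$ coincides with $\widehat{\gamma}$ up to the identifications induced by the chosen transversals. This follows directly from the construction of the canonical chart in equation \eqref{eq:can.chrt}, where in local coordinates $\bs(x_1,x_0,y)=(x_0,y)$ and $\bt(x_1,x_0,y)=(x_1,\widehat{\gamma}(y))$; the sections of $\bs$ through $(0,0,0)$ can be parameterized so that their image under $\bt$ reproduces $\widehat{\gamma}$ on the transversal slice. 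Hence $\Pi(\CF)$ carries $\widehat{\gamma}$ at $[\gamma]$ in the sense of bisubmersions.

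\textbf{Forward direction.} Assume $[\gamma]\sim_{\mathrm{hol}}[\sigma]$. Then $\bs([\gamma])=\bs([\sigma])$, $\bt([\gamma])=\bt([\sigma])$, and, by definition of the holonomy equivalence, $\widehat{\gamma}=\widehat{\sigma}$ as germs of diffeomorphisms between transversals. By the previous paragraph, any Hausdorff open neighborhood $U$ of $[\gamma]$ in $\Pi(\CF)$ is a bisubmersion carrying $\widehat{\gamma}$ at $[\gamma]$, and likewise a neighborhood $V$ of $[\sigma]$ is a bisubmersion carrying $\widehat{\sigma}=\widehat{\gamma}$ at $[\sigma]$. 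Corollary \ref{carry}(i) then produces, after shrinking $U$, a morphism of bisubmersions $\varphi\colon U\to V$ with $\varphi([\gamma])=[\sigma]$, which is what we want.

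\textbf{Reverse direction.} Conversely, suppose such a morphism $\varphi\colon U\to V$ exists with $\varphi([\gamma])=[\sigma]$. Pick a bisection $\sigma_\gamma$ of $U$ through $[\gamma]$ (which exists by Lemma \ref{lem:e.bisect}); then $\varphi\circ\sigma_\gamma$ is a bisection of $V$ through $[\sigma]$. Since $\varphi$ intertwines the source and target maps, these two bisections carry the same local diffeomorphism of $M$. By the identification between the carried diffeomorphism and the holonomy transformation, this gives $\widehat{\gamma}=\widehat{\sigma}$, i.e. $[\gamma]\sim_{\mathrm{hol}}[\sigma]$.

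\textbf{Main obstacle.} The one piece that requires care is the identification between ``the diffeomorphism carried by $\Pi(\CF)$ at $[\gamma]$'' and ``the holonomy transformation $\widehat{\gamma}$''. In essence this is a bookkeeping issue: one must check that in the canonical chart the choice of bisection is irrelevant (different bisections at $[\gamma]$ carry the same germ, because $\widehat{\gamma}$ depends only on the homotopy class of $\gamma$ and on the transversals, by Lemma \ref{lem:Holo}), and that the transversal identifications hidden in the canonical chart are consistent with those used to define $\widehat{\gamma}$. Once this identification is secured, the rest of the argument is a direct application of Corollary \ref{carry} and Lemma \ref{lem:bisect.diff}.
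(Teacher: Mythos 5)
Your proposal is correct and follows essentially the same route as the paper: both arguments reduce to the canonical-chart description of $\Pi(\CF)$, in which $\bs(x_1,x_0,y)=(x_0,y)$ and $\bt(x_1,x_0,y)=(x_1,\widehat{\gamma}(y))$, so that the diffeomorphism carried at $[\gamma]$ is exactly (an extension of) the holonomy transformation $\widehat{\gamma}$ — the bookkeeping step you single out as the main obstacle. The only difference is packaging: the paper verifies by a direct coordinate computation that a morphism between the two canonical charts commuting with sources and targets must be the identity (forcing $\widehat{\gamma_1}=\widehat{\gamma_2}$), and that the identity map is a morphism when they agree, whereas you route the forward direction through Corollary \ref{carry}(i) and the reverse direction through composing a bisection with the morphism; the underlying content is the same.
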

	\begin{proof} 
		Take $[\gamma_1],[\gamma_2]\in \Pi(\CF)$, such that $p:=\gamma_1(0)=\gamma_2(0)$ and $q:=\gamma_1(1)=\gamma_2(1)$. Take charts for $p,q$ adapted to $\CF$ and construct the canonical charts as given in equation (\ref{eq:can.chrt}), say $\phi_1\colon W_1\fto \Pi(\CF)$ for $[\gamma_1]$ and $\phi_2\colon W_2\fto \Pi(\CF)$ for $[\gamma_2]$. Note that $W_1,W_2\subset \RR^k\times \RR^k\times \RR^{n-k}$ are open neighborhoods of $(0,0,0)$; $\phi_1(0,0,0)=[\gamma_1]$ and $\phi_2(0,0,0)=[\gamma_2]$. We will prove that $\widehat{\gamma_1}=\widehat{\gamma_2}$ if and only if there is a local bisubmersion map $W_1\fto W_2$ sending $(0,0,0)$ to $(0,0,0)$:
		
		($\Rightarrow $) If $\widehat{\gamma_1}=\widehat{\gamma_2}$, the identity map, defined locally in $(0,0,0)$ on the neighborhood $W_1\cap W_2\subset \RR^k\times \RR^k\times \RR^{n-k}$, is clearly a bisubmersion map.
		
		($\Leftarrow$) Suppose that there is a local bisubmersion map $\varphi\colon W_1\fto W_2$ sending $(0,0,0)$ to $(0,0,0)$. Write $\varphi=(\varphi_1,\varphi_2,\varphi_3)$ and recall that $\bs_1(x_1,x_0,y)=\bs_2(x_1,x_0,y)=(x_0,y)$, $\bt_1(x_1,x_0,y)=(x_1,\widehat{\gamma}_1(y))$ and $\bt_2(x_1,x_0,y)=(x_1,\widehat{\gamma}_2(y))$. 
		By easy computations, using that $\varphi$ must commute with the sources and targets one shows that $\varphi=Id$ and $\widehat{\gamma_2}(y)=\widehat{\gamma_1}(y)$.
	\end{proof}

	\section{Lie algebroids}\label{sec:Lie.Algd}
	
	In this section we introduce the notion of Lie algebroids and its relation to singular foliations and Lie groupoids. Most of the material presented here is not original and can be found also in \cite{CrFeLie}, \cite{MK2} and \cite{Kwang}.
	
	Lie algebroids can be thought of as a generalization of the tangent bundle. We will motivate this idea in the examples \ref{ex:L.alg.TM} and \ref{ex:L.alg.Con}. 
	
	\begin{defi}\label{def:Lie.algd} A {\bf Lie algebroid } on a manifold $M$ is a triple $(A,\rho,[-,-])$, with $A\fto M$ a vector bundle, $\rho:A\fto TM$ a vector bundle morphism called anchor and $[-,-]$ a Lie bracket on the sections $\Gamma(A)$, such that:
		\[[X,fY]=f[X,Y]+(\rho(X)f) Y, \hspace{.3in} \forall \hspace{.1in} X,Y\in \Gamma(A) \y f\in \CI(M).\]
	\end{defi}
	
	\begin{rem} The map $\rho:\Gamma(A)\fto \CX(M)$ is a Lie algebra map and that $\rho$ is completely characterized by the Lie bracket $[-,-]$.
	\end{rem}
	
	\begin{ex}\label{ex:L.alg.TM} $TM$ with the identity map and the Lie bracket on vector fields is a Lie algebroid over $M$. 
	\end{ex}
	
	Note that the set of sections of $TM$ (the vector fields on $M$) corresponds to the set of derivations of the algebra $\CI(M)$, which can be seen as sections of a line bundle. Following this idea we give the next example:
	
	\begin{ex}\label{ex:L.alg.Con} Let $E\fto M$ be a vector bundle and $T\colon \Gamma(E)\fto \Gamma(E)$ be an $\RR$-linear transformation. $T$ is a differential operator of order $1$ iff for any function $f\in \CI(M)$ the commutator $[T,f](\sigma):= T(f\sigma)-fT(\sigma)$ is a $\CI(M)$-linear map, i.e. a vector bundle map. $T$ is a covariant differential operator iff for all $f\in \CI(M)$ the commutator $[T,f]$ is the multiplication by a function.
		
		Denote $\mathrm{CDO}(E)$ the set of covariant differential operators on the sections of $E$. For any $T\in \mathrm{CDO}(E)$, the map $[T,-]\colon \CI(M)\fto \CI(M)$ is always a derivation. This derivation is called the symbol of $T$.
		
		It is known that there exists a vector bundle $D\fto M$ such that $\Gamma(D)\simeq \mathrm{CDO}(E)$. On $\Gamma(D)$  there is a canonical Lie bracket $[-,-]$ given by the commutator in $\mathrm{CDO}(E)$.
		
		The equation $\rho(T)(f)=[T,f]$ defines an anchor $\rho:D\fto TM$. In conclusion $(D,\rho,[-,-])$ is a Lie algebroid. 	
	\end{ex}
	
	One of the motivations for this work is the singular foliation associated to a Poisson manifold, therefore an important example is the following: 
	
	\begin{ex}\label{ex:L.alg.Pois} Let $(M,\pi)$ be a Poisson manifold. This is a manifold $M$ together with a bi-vectorfield $\pi\in \Gamma(\wedge^2 TM)$, such that $\pi$ satisfies a Jacobi equation.
		
		There is a map $\pi^\sharp\colon T^*M\fto TM; \alpha\mapsto \pi(\alpha,-)$ and a Lie bracket on $\Omega^1 (M)$ given by the formula:
		\[[\alpha,\beta]=\CL_{\pi^\sharp\alpha} \beta- \CL_{\pi^\sharp\beta} \alpha-\pi(\alpha,\beta).\]
		$(T^*M,\pi^\sharp,[-,-])$ is a Lie algebroid and encodes the same information as the Poisson manifold.
	\end{ex}
	
	Before we continue let us give the notion of morphisms for Lie algebroids over the same manifold:
	
	\begin{defi} Let $(A,\rho,[-,-]_A)$, $(A',\rho',[-,-]_{A'})$ be Lie algebroids over $M$. A map of Lie algebroids from $A$ to $A'$ is a vector bundle map $\phi\colon A\fto A'$ covering $Id_M$, such that the induced map on sections is a Lie algebra map.
		
		Note that, the equality $\rho'\phi(x)=\rho(x)$ will be automatically satisfied.	
	\end{defi}
	
	\begin{ex} Given a Lie algebroid $(A,\rho,[-,-]_A)$, the anchor $\rho:A\fto TM$ is a Lie algebroid map to $(TM, Id_{TM},[-,-])$.
	\end{ex}
	
	\subsection*{The Lie algebroid of a Lie groupoid}
	
	A very well known result in Lie group theory is that for any Lie group $G$ there is an associated Lie algebra $\g$. In a direct analogy to this case, for any Lie groupoid $\CG$ there is an associated Lie algebroid. This subsection is dedicated to recall this construction. 
	
	Let $\CG\soutar M$ be a Lie groupoid.  To get a Lie algebroid from $\CG$, take into account section \ref{sec:Lie.Grpd} which gives us some intrinsic structures on $\CG$. We will construct a vector bundle $A$ on $M$, an anchor map $\rho$ and a Lie bracket on $\SEC(A)$.
	
	As the vector bundle we take $A:=\ker(d\bs)|_M$ (Identifying $M$ as the identity section on $\CG$). The vector bundle map $\rho:=d\bt\colon A\fto TM$ is the anchor map. For the Lie bracket one must notice the following:
	
	\begin{itemize}
		\item  Any $g\in \CG$ gives a smooth map $R_g\colon \bs^{-1}(\bt (g))\fto \bs^{-1}(\bs (g)); h\mapsto g\circ h$.
		\item The space $T_h(\bs^{-1}(x))= \ker(d_h\bs)$. Therefore $dR_g\colon \ker(d_h \bs)\fto \ker(d_{g\circ h}\bs)$.
	\end{itemize}
	
	This gives us a way to extend a section $X$ of $A$ to a vector field $\overrightarrow{X}$ on $\CG$ by $\overrightarrow{X}_g:=dR_g(X_{\bs(g)})$.
	
	Then the Lie bracket $[X,Y]$ for $X,Y\in \Gamma(A)$ is defined by the formula
	\[\overrightarrow{[X,Y]}=[\overrightarrow{X},\overrightarrow{Y}].\]
	
	\begin{defi}\label{ex:Algd,Gprd} Let $\CG\soutar M$ be a Lie groupoid. The Lie algebroid of $\CG$ is given by $(A,\rho,[-,-])$ constructed above.
	\end{defi}
	
	\begin{rem}
		It is possible to make the same construction using $A_L:=\ker(d\bt)|_M$ and the left invariant vectorfields $\overleftarrow{X}_g:= dL_g(X_{\bt(g)})$.
	\end{rem}
	
	\begin{ex} We saw in Example \ref{ex:G.group} that any Lie group is a Lie groupoid. Following the same construction we get the Lie algebroid $(\g,\rho,[-.-])$ where $\g\fto \{*\}$ is the Lie algebra associated to $G$, thought of as a vector bundle over a point; the anchor  $\rho$ is zero and the Lie bracket $[-,-]$ is the canonical one.
	\end{ex}
	
	\begin{ex} Let $\g$ be a Lie algebra. An infinitesimal action $\rho\colon \g\times M\fto TM$ is canonically a Lie algebroid. Where $A=\g\times M$ is a vector bundle $\rho$ is the anchor map and the Lie bracket on $\SEC(A)$ is given by the Lie bracket on $\g$. Moreover if this action can be integrated to a Lie group action $G\times M\fto M$, then $\g\times M\fto TM$ is the Lie algebroid of the action Lie groupoid $G\times M\soutar M$.
	\end{ex}
	
	\begin{ex} The Lie algebroid of the pair groupoid is $TM$ with the identity as anchor.	
	\end{ex}
	
	Although Example \ref{ex:L.alg.Con} is not needed in the rest of this thesis, it shows a quite interesting object, which is widely studied and has applications in many branches of mathematics. It is interesting to notice that there is a Lie groupoid which integrates this Lie algebroid of covariant differential operators. We describe its structure as follows:
	
	\begin{ex}({\bf Connection groupoid}) Let $M$ be a manifold and $\pi\colon E\fto M$ a vector bundle. Take:
		\[\CG:=\cup_{(y,x)\in M\times M} \{\textnormal{ Isomorphisms } E_x\fto E_y\}.\]
		
		The set $\CG$ is indeed a manifold, it can be seen as the vector bundle $\mathrm{Pr}_1^*E \otimes \mathrm{Pr}_2^*E^*$ over $M\times M$ where $\mathrm{Pr}_1,\mathrm{Pr}_2\colon M\times M\fto M$ are the first and second projections.
		
		We denote an element of $\CG$ as $\phi_{(y,x)}$ to indicate that it is a map from $E_x$ to $E_y$.
		\begin{itemize}
			\item The source is defined as $\bs\colon \CG\fto M; \phi_{(y,x)}\mapsto x$ and the target $\bt\colon \CG\fto M;\phi_{(y,x)}\mapsto y$.
			\item The composition is given by $\circ \colon \CG {}_{\bs} \!\times_{\bt} \CG\fto \CG; \phi_{(z,y)}\circ \sigma_{(y,x)}\mapsto (\phi\circ \sigma)_{(z,x)}$.
			\item The identity section is $e\colon M\fto \CG; x\mapsto Id_{E_x}$ where $Id_{E_x}\colon E_x\fto E_x$ is the identity map.
			\item And the inverse map is given by $(-)^{-1}\colon \CG\fto \CG; \phi_{(y,x)}\mapsto \phi^{-1}_{(x,y)}$.
		\end{itemize}
		One can see that the Lie algebroid of $\CG$ is the one of example \ref{ex:L.alg.Con} given by covariant differential operators.
	\end{ex}
	
	\begin{rem}\textbf{(Flat connections and integrability)} \label{rem:flat.conn} A flat connection $\nabla$ on a vector bundle $E\fto M$ is a Lie algebra map $\nabla\colon \CX(M)\fto \mathrm{CDO}(E);X\mapsto \nabla_X$ such that $\nabla_{fX}=f\nabla_X$ for all $f\in \CI(M)$ and such that the symbol of $\nabla_X$ is $X$. This condition implies that $\nabla$ can be seen as a Lie algebroid map $\nabla\colon TM\fto D$ where $D$ is the vectorbundle of covariant differential operators on $E$. If we suppose that $M$ is simply connected, it is possible to integrate the connection into a Lie groupoid map from the pair groupoid $M\times M$ to the connection groupoid $E\otimes E^*$.
		
		If $M$ is not simply connected, it is still possible to integrate the connection to a Lie groupoid map $\CG\fto E\otimes E^*$, where $\CG$ is the fundamental groupoid of $M$.
	\end{rem}
	
	\begin{rem}\textbf{(Flat connections and Holonomy)} From a different perspective, a connection $\nabla\colon \CX(M)\fto \mathrm{CDO}(E);X\mapsto \nabla_X$ gives us an Ehresmann connection $\rho_\nabla \colon \pi^*TM\fto TE$. If the connection is flat then the Ehresmann connection gives us an integrable distribution i.e. a Lie algebroid and a regular foliation $\CF_\nabla$ on $E$. Note that $\CH(\CF_\nabla)$, which is a groupoid over $E$, coincides with the general notion of holonomy for connections.
	\end{rem}
	
	\subsection*{Foliations, Lie algebroids and Lie groupoids}
	
	An important fact in this thesis is the close relation between Lie groupoids, Lie algebroids and singular foliations. We already saw in definition \ref{ex:Algd,Gprd} the relation between Lie groupoids and Lie algebroids and in section \S \ref{sec:hol.grpd} a groupoid associated to a regular foliation. Here we will relate Lie algebroids with foliations using the example \ref{ex:L.alg.F} and proposition \ref{prop:Folie.Algd}.
	
	\begin{ex}\label{ex:L.alg.F}({\bf Projective foliations as Lie algebroids}) Let $\CF$ be a projective foliation foliation on $M$ (In particular regular foliations are projective), then by Lemma \ref{lem:fiber.proy} there exists a vector bundle $A\fto M$ such that $\Gamma_c(A)\cong \CF$ as a $\CI(M)$-module. Then there exists a canonical almost injective Lie algebroid structure on $A$.
		
		In particular, when $\CF$ is regular, the fibers are given by $A_x:=F_x=\{X(x)\st X\in \CF\}$ for all $x\in M$, the anchor map $\rho:F\fto TM$ is the inclusion map; and the Lie bracket on $A$ is the usual Lie bracket on $M$.
	\end{ex}
	
	\begin{prop}\label{prop:Folie.Algd}({\bf Lie algebroids inducing singular foliations}) Let $(A,\rho,[-,-])$ be a Lie algebroid on $M$, then $\CF:=\rho(\Gamma_c(A))\subset \CX_c(M)$ is a singular foliation. 	
	\end{prop}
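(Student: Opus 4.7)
The plan is to verify the three defining conditions of Definition \ref{defi:sing.fol} in turn: that $\CF$ is a $\CI_c(M)$-submodule of $\CX_c(M)$, that it is involutive, and that it is locally finitely generated.

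First I would check the submodule condition. Since $\rho\colon A \fto TM$ is a vector bundle map, the induced map $\rho\colon \SEC(A)\fto \CX(M)$ is $\CI(M)$-linear. Hence for $X\in \SEC_c(A)$ and $f\in \CI_c(M)$ we have $fX\in \SEC_c(A)$ and $\rho(fX)=f\rho(X)$, so $\CF$ is closed under multiplication by elements of $\CI_c(M)$. Closure under addition is immediate. Note also that $\rho(X)$ is compactly supported whenever $X$ is, since $\mathrm{supp}(\rho(X))\subset \mathrm{supp}(X)$.

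Next I would prove involutivity. By the defining Leibniz rule for a Lie algebroid, $\rho\colon \SEC(A)\fto \CX(M)$ is a Lie algebra homomorphism (this is a standard consequence of the anchor axiom, and is stated in the remark following Definition \ref{def:Lie.algd}). Given $X,Y\in \SEC_c(A)$, the bracket $[X,Y]\in\SEC(A)$ has support contained in $\mathrm{supp}(X)\cap \mathrm{supp}(Y)$, by the Leibniz formula, hence it is compactly supported. Therefore $[\rho(X),\rho(Y)]=\rho([X,Y])\in \CF$.

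For local finite generation, the key idea is to use local frames of $A$. Given $x\in M$, pick an open neighborhood $U$ of $x$ on which $A$ trivializes, with local frame $e_1,\dots,e_k\in \SEC(U,A)$. I claim that $\rho(e_1),\dots,\rho(e_k)\in \CX(U)$ generate $\iota_U^{-1}\CF$ as a $\CI_c(U)$-module. Indeed, any $Z\in \iota_U^{-1}\CF$ has the form $Z=\rho(X)|_U$ for some $X\in\SEC_c(A)$ with $\mathrm{supp}(\rho(X))\subset U$. Choosing a bump function $\varphi\in \CI_c(U)$ equal to $1$ on $\mathrm{supp}(\rho(X))$, we get $Z=\rho(\varphi X)|_U$ with $\varphi X\in \SEC_c(A)$ supported in $U$. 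Writing $\varphi X=\sum_i f_i e_i$ for uniquely determined $f_i\in \CI_c(U)$, we conclude $Z=\sum_i f_i \rho(e_i)|_U$, as desired.

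I do not expect any of these steps to present a serious obstacle; the only subtlety is the final point, where one must keep careful track of supports in order to express a given element of $\iota_U^{-1}\CF$ as a combination of the local generators $\rho(e_i)|_U$, which themselves need not be in $\CF$ (their supports need not be compact).
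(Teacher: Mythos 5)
Your proposal is correct and follows essentially the same route as the paper, which simply observes that local frames of the vector bundle $A$ give local finite generation and that $\rho$ being a Lie algebra morphism on sections gives involutivity; you have merely filled in the support bookkeeping. The only (trivial) point left implicit is the reverse inclusion in your claim about generators, namely that $f\,\rho(e_i)|_U=\rho(fe_i)|_U\in\iota_U^{-1}\CF$ for $f\in \CI_c(U)$, which is immediate.
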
 
	\begin{proof} The set $A$ being a vector bundle implies that $\CF$ is locally finitely generated and $\rho$ being a map of Lie algebras implies that $\CF$ is closed under the Lie bracket.
	\end{proof}
	
	The following proposition shows the relation between \textbf{Lie groupoids and bisubmersions}.
	
	\begin{proposition}\label{prop:grpd.bisub} Let $\CG\soutar M$ be a Lie groupoid, $A$ its Lie algebroid, and $\CF:=\rho(\SEC_c(A))$ its singular foliation, where $\rho$ is the anchor of $A$. Any Hausdorff open set $U\subset \CG$ together with $\bt|_U,\bs|_U$ is a bisubmersion for $\CF$. When $\CG$ is a Hausdorff Lie groupoid, then it is also a bisubmersion.
	\end{proposition}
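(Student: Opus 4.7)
The plan is to verify the two defining equalities of a bisubmersion,
\[
\bs|_U^{-1}\CF \;=\; \bt|_U^{-1}\CF \;=\; \ker_c(d\bs|_U) + \ker_c(d\bt|_U),
\]
on the Hausdorff open set $U$. My first move would be to invoke Lemma~\ref{lem:bisub.loc} and reduce to a local check: it suffices to cover $U$ by Hausdorff open subsets $V$ so small that $\bs(V)$ is a trivialising domain for the Lie algebroid $A_L := \ker(d\bt)|_M$ (and symmetrically for $A := \ker(d\bs)|_M$). When $\CG$ is itself Hausdorff, the last clause of the proposition follows by taking $U = \CG$.

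The key construction is the pair of invariant extensions. For $X \in \SEC(A)$, the right-invariant extension $\overrightarrow{X}_g := dR_g(X_{\bt(g)})$ lies in $\ker(d\bs)$, because $R_g$ preserves $\bs$-fibres, and satisfies $d\bt(\overrightarrow{X}) = \rho(X) \in \CF$ since $\bt \circ R_g = \bt$ on $\bs^{-1}(\bt(g))$. Symmetrically, for $W \in \SEC(A_L)$ the left-invariant extension $\overleftarrow{W}$ lies in $\ker(d\bt)$ and is $\bs$-projectable to $\rho_L(W) := d\bs(W)|_M$. A short side remark I would insert is that the inverse diffeomorphism $\iota\colon\CG\to\CG$ swaps $\bs$ with $\bt$ and pushes $A$ to $A_L$, yielding $\rho_L(\SEC_c(A_L)) = \CF$; so every element of $\CF$ can be realised locally as $\rho_L(W)$ for some local section $W$ of $A_L$.

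I would then treat the inclusion ``$\supseteq$'' first, as it is immediate: elements of $\ker_c(d\bs|_V)$ are trivially in $\bs|_V^{-1}\CF$, while an element of $\ker_c(d\bt|_V)$ can be expanded in the local frame $\overleftarrow{W_1},\dots,\overleftarrow{W_r}$ of $\ker(d\bt)$, and Lemma~\ref{lem:pullback} identifies $\bs|_V^{-1}\CF$ with the $\CI_c(V)$-span of $\bs$-projectable vector fields going to $\CF$, into which each $\overleftarrow{W_i}$ falls. The harder, and main, obstacle is the reverse inclusion. By Lemma~\ref{lem:pullback} again it suffices to take a generator $fZ$ of $\bs|_V^{-1}\CF$ with $f \in \CI_c(V)$ and $Z \in \CX(V)$ satisfying $d\bs(Z) \in \CF$, and exhibit a decomposition $fZ = Y_1 + Y_2$ with $Y_1 \in \ker_c(d\bs|_V)$ and $Y_2 \in \ker_c(d\bt|_V)$. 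My plan is to write $d\bs(Z) = \sum_i h_i \rho_L(W_i)$ over $\bs(V)$ and to set $\widetilde{Z} := \sum_i (h_i\circ\bs)\,\overleftarrow{W_i}$, a vector field in $\ker(d\bt|_V)$ constructed so that $d\bs(\widetilde{Z}) = d\bs(Z)$; then $Z - \widetilde{Z} \in \ker(d\bs|_V)$, and multiplying by $f$ yields the required compactly supported splitting $fZ = f(Z-\widetilde{Z}) + f\widetilde{Z}$.

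The equality on the $\bt$-side would follow by running exactly the same argument with the roles of $\bs$ and $\bt$ (and of right- and left-invariant extensions) interchanged, or, more elegantly, by pulling the $\bs$-side equality back through the inverse diffeomorphism of $\CG$. The point I expect to require the most care is the compact-support bookkeeping in the reverse inclusion: the frames $W_i$ and the coefficient functions $h_i$ only exist locally on $M$, which is precisely why the localisation furnished by Lemma~\ref{lem:bisub.loc} is indispensable for the whole argument to go through.
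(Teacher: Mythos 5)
Your proposal is correct and follows essentially the same route as the paper's (sketched) proof: both arguments rest on the right- and left-invariant vector fields spanning $\ker(d\bs)$ and $\ker(d\bt)$ and projecting under $\bt$, resp.\ $\bs$, to anchor images that locally generate $\CF$. Your write-up merely fills in the details the paper omits (the localisation via Lemma~\ref{lem:bisub.loc}, the use of Lemma~\ref{lem:pullback}, and the splitting $fZ=f(Z-\widetilde{Z})+f\widetilde{Z}$), all of which check out.
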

	\begin{proof}(Sketch) Consider the set of right invariant vectorfields $\overrightarrow{A}:=\{\overrightarrow{X} \st X\in\SEC(A)\}$, and the set of left invariant vectorfields $\overleftarrow{A}:=\{\overleftarrow{X} \st X\in\SEC(A)\}$. Note that:
		\[\ker_c(d\bs|_U)= \left<\overrightarrow{A}|_U\right>_{\CI_c(U)} \y \ker_c(d\bt|_U)=\left< \overleftarrow{A}|_U\right>_{\CI_c(U)}.\]
		Using this one can show that:
		\[\bs^{-1}\CF=\bt^{-1}\CF=\left< (\overrightarrow{A}|_U+\overleftarrow{A}|_U)\right>_{\CI_c(U)}=\ker(d\bs|_U)+\ker(d\bt|_U).\]
	\end{proof}
	
	\begin{ex} Let $\CF$ be a regular foliation on a manifold $M$. Then the Lie algebroid of $\Pi(\CF)$ and $\CH(\CF)$ is $F$ as in example \ref{ex:L.alg.F}.
	\end{ex}

	\section{Pullbacks of Lie groupoids and Lie algebroids}\label{sec:pull.grpd.algd}
	
	We will start by following the book \cite{MK2} by Mackenzie to present the pullback for Lie algebroids and Lie groupoids. Later in this section, we state an original result from \cite{ME2018} by Marco Zambon and I, namely Lemma \ref{lem:pullbackalgfol}, which relates these two notions with the pullbacks of singular foliations.
	
	Before we start, we recall two lemmas that will be used repeatedly. The first one is Lemma \ref{lem:subcon}, which we already introduced in \S \ref{sec:ME1}:
	
	{\bf Lemma \ref{lem:subcon}.} {\it Let $A$ and $B$ be manifolds, { $k\ge 0$}, and $f\colon A\fto B$ a surjective submersion with $k$-connected fibers. If $B$ is $k$-connected then $A$ is $k$-connected.}

	The second one is a well known fact on topology and differential geometry:
	
	\begin{lem}\label{lem:openmapABC} Let $A,B,C$ be topological spaces (or manifolds) and $f\colon A\fto C$ {a continuous map} (or smooth map). If $g\colon B\fto C$ is a continuous and \emph{open} map (or a submersion), then {$\text{Pr}_1\colon A {}_f\!\times_g B\fto A$} is also a {continuous and \emph{open} map} (or a submersion), where the domain is endowed with the subspace topology.
		
		{Moreover, if {$g$} is surjective then {$\text{Pr}_1$} also is}.
		\[\begin{tikzcd}
			A {}_f\!\times_g B \arrow[r,"\Pr_2"] \arrow[d,swap, "\Pr_1"]& B \arrow[d, "g"]  \\
			A \arrow[r, "f"] & C\\
		\end{tikzcd}\]
		
	\end{lem}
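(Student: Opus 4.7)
The plan is to treat the topological and smooth statements in parallel, since in both cases the argument reduces to a local computation on basic open sets (or on coordinate neighborhoods adapted to submersions). Throughout, I would write elements of $A\,{}_f\!\times_g B$ as pairs $(a,b)$ with $f(a)=g(b)$, and endow this fiber product with the subspace topology (resp.\ submanifold structure) inherited from $A\times B$.

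First I would verify continuity/smoothness of $\mathrm{Pr}_1$, which is immediate as it is the restriction of the projection $A\times B\to A$. For the open map statement, the key observation is that the collection of sets of the form $(V\times W)\cap (A\,{}_f\!\times_g B)$, with $V\subset A$ and $W\subset B$ open, forms a basis for the subspace topology on the fiber product. A direct set-theoretic computation gives
\[
\mathrm{Pr}_1\bigl((V\times W)\cap (A\,{}_f\!\times_g B)\bigr)
= V\cap f^{-1}(g(W)).
\]
Since $g$ is open, $g(W)$ is open in $C$; since $f$ is continuous, $f^{-1}(g(W))$ is open in $A$; and therefore the image is open. As $\mathrm{Pr}_1$ sends basic opens to opens, it is an open map.

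For the smooth/submersion case I would first recall that when $g$ is a submersion, $A\,{}_f\!\times_g B$ is automatically a submanifold of $A\times B$ (transversality of the graph maps), so $\mathrm{Pr}_1$ is smooth. The cleanest way to show it is a submersion is to use the local normal form: around any $b_0\in B$ with $g(b_0)=c_0$, there exist coordinates in which $B\cong C\times F$ locally and $g$ becomes the projection onto the first factor. In these coordinates the fiber product is locally diffeomorphic to $A\times F$, and $\mathrm{Pr}_1$ becomes the obvious projection, which is visibly a submersion. Alternatively, one can compute directly that $d_{(a,b)}\mathrm{Pr}_1$ is surjective onto $T_aA$: given $v\in T_aA$, use the surjectivity of $d_bg$ to find $w\in T_bB$ with $d_bg(w)=d_af(v)$, so that $(v,w)\in T_{(a,b)}(A\,{}_f\!\times_g B)$ maps to $v$.

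Finally, the surjectivity statement is a one-line check: given $a\in A$, pick any $b\in g^{-1}(f(a))$ (nonempty by surjectivity of $g$); then $(a,b)\in A\,{}_f\!\times_g B$ and $\mathrm{Pr}_1(a,b)=a$. I do not anticipate any serious obstacle here; the only point requiring a bit of care is making sure the subspace topology on the fiber product really does have a basis of ``rectangles'' of the stated form, which is standard.
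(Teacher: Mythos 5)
Your proof is correct and follows essentially the same route as the paper: the paper's argument also reduces to the rectangle computation, taking a basic open $(U_A\times U_B)\cap(A\,{}_f\!\times_g B)$ around a point of $U$ and observing that $U_A\cap f^{-1}(g(U_B))$ is an open neighborhood contained in $\mathrm{Pr}_1(U)$, which is exactly your identity $\mathrm{Pr}_1\bigl((V\times W)\cap(A\,{}_f\!\times_g B)\bigr)=V\cap f^{-1}(g(W))$ phrased pointwise. The paper only writes out the open-map case and leaves the smooth and surjective cases as exercises, so your normal-form (or tangent-space) argument for the submersion case and the one-line surjectivity check simply supply those omitted details correctly.
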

	\begin{proof} We will only prove that the map $\text{Pr}_1$ is open. We leave as exercise the smooth and surjective cases.
		
		Take $U$ an open set in $A {}_f\!\times_g B$ and $a\in \text{Pr}_1(U)$, it suffices to show that $a$ is an interior point of $\text{Pr}_1(U)$.
		
		There exists $b\in B$ such that $(a,b)\in U$, moreover because $U$ is open there exists open sets $U_A\subset A$ and $U_B\subset B$ such that $(a,b)\in (U_A\times U_B)\cap (A {}_f\!\times_g B)\subset U$.
		
		Take $U_A':=U_A\cap f^{-1}(g(U_B))$ note that $U_A'$ is an open neighborhood of $a$ and moreover that $U_A'\subset \text{Pr}_1(U)$, which proves that $a$ is in the interior of $\text{Pr}_1(U)$.	
	\end{proof}
	Now we are ready to define the \textbf{pullback of a Lie groupoid}. Please note that this definition is analogous for \textbf{topological groupoids}.
	
	\begin{defi}\label{def:pullbackgroid}
		{Given a Lie groupoid $\CG\soutar M$ (or a topological groupoid) and a surjective submersion $\pi:P\fto M$ (or a continuous map), the manifold $$\pi^{-1} \CG:= P{}_\pi\!\times_{\bt} \CG {}_{\bs}\!\times_\pi P$$ is the space of arrows of a  Lie groupoid over $P$. (The   source and target maps are the first and third projections  {and the multiplication is induced by the multiplication on $\CG$}).} This Lie groupoid is called the {\bf pullback groupoid} of $\CG$ by $\pi$.
	\end{defi}
	
	\begin{prop}\label{prop:pullback.grpd}
		If $\CG\soutar M$ is a Lie groupoid (or an open topological groupoid) and $\pi\colon P\fto M$ is a surjective submersion (or a continuous open and surjective map), then $\pi^{-1} \CG$ is a Lie groupoid (or an open topological groupoid).
	\end{prop}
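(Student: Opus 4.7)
The plan is to verify the defining data of a (Lie or open topological) groupoid on $\pi^{-1}\CG$ step by step, with all the ingredients essentially inherited from $\CG$ together with the projections.

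First, I would construct $\pi^{-1}\CG = P{}_\pi\!\times_\bt \CG {}_\bs\!\times_\pi P$ as an iterated fibre product. In the Lie case, since $\bs$ is a submersion and $\pi$ is a submersion, the intermediate space $E := \CG {}_\bs\!\times_\pi P$ is a smooth manifold. By Lemma \ref{lem:openmapABC} applied to the pullback square involving $\bs$ and $\pi$, the first projection $E \to \CG$ is a submersion, so composing with $\bt$ gives a submersion $E \to M$. The fibre product of this map with $\pi\colon P \to M$ is therefore again a smooth manifold, which is exactly $\pi^{-1}\CG$. In the open topological case, I would instead endow $\pi^{-1}\CG$ with the subspace topology from $P\times\CG\times P$, and observe that the same iteration of Lemma \ref{lem:openmapABC} ensures each intermediate projection is continuous and open.

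Next, I would verify that the proposed source and target maps $\bs_{\pi^{-1}\CG} = \mathrm{Pr}_1$ and $\bt_{\pi^{-1}\CG} = \mathrm{Pr}_3$ from $\pi^{-1}\CG$ to $P$ are surjective submersions (respectively, surjective continuous open maps). Iterated application of Lemma \ref{lem:openmapABC} to the fibre products above immediately delivers this. Surjectivity is automatic: for any $p \in P$, the triple $\bigl(p, e_{\pi(p)}, p\bigr)$ lies in $\pi^{-1}\CG$ and projects to $p$ under both $\mathrm{Pr}_1$ and $\mathrm{Pr}_3$.

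Finally, I would define the structure maps and check that they are smooth (respectively continuous). Multiplication is given by $(p, g, q)\circ(q, h, r) = (p, g\circ h, r)$, which is well-defined because $\bs(g) = \pi(q) = \bt(h)$ ensures $g\circ h$ exists in $\CG$; it is smooth/continuous because so is the composition on $\CG$. The identity section is $e_{\pi^{-1}\CG}\colon P \to \pi^{-1}\CG$, $p \mapsto \bigl(p, e_{\pi(p)}, p\bigr)$, and the inverse is $(p, g, q)\mapsto (q, g^{-1}, p)$; both inherit their regularity from $\CG$. The groupoid axioms reduce directly to those of $\CG$. Nothing in this proof is genuinely deep; the one place to be careful is the topological case, where Lemma \ref{lem:openmapABC} is exactly what is needed to promote openness of $\bs, \bt, \pi$ to openness of the source and target of $\pi^{-1}\CG$.
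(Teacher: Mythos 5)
Your proposal is correct and takes essentially the same route as the paper: both arguments apply Lemma \ref{lem:openmapABC} twice to the iterated fibre product to conclude that the source and target maps of $\pi^{-1}\CG$ are submersions (resp.\ surjective continuous open maps), with the remaining structure maps inherited directly from $\CG$. One small slip worth fixing: with the convention $\pi^{-1}\CG = P{}_{\pi}\!\times_{\bt}\CG{}_{\bs}\!\times_{\pi}P$ the \emph{target} is the first projection and the \emph{source} the third, otherwise your multiplication $(p,g,q)\circ(q,h,r)=(p,g\circ h,r)$ is inconsistent with the composability condition that the source of the first factor equal the target of the second.
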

	\begin{proof}
		{We show that the target map of $\pi^{-1} \CG$ is a submersion (or open map). 
			The first projection $\text{Pr}_1$ of
			$\CG {}_\bs\!\times_\pi P$ is a submersion by lemma \ref{lem:openmapABC}, since $\pi\colon P \to M$ is a submersion. Hence the composition $\bt\circ\text{Pr}_1$ is a submersion. Again by lemma \ref{lem:openmapABC}, this implies that the first projection of 
			$P {}_\pi \!\times_{\bt\circ\text{Pr}_1}\left(\CG {}_\bs\!\times_\pi P\right)= \pi^{-1} G$ is a submersion, and this is precisely the target map of $\pi^{-1} \CG$.} For the source map, proceed similarly.
		For open topological groupoids proceed similarly.
	\end{proof}
	
	Moreover in the setting of Lemma \ref{GtoA}, if $\CG$ is source $k$-connected and $\pi$ has $k$-connected fibers, by lemma \ref{lem:subcon} we get that that $\pi^{-1}\CG$ is source $k$-connected.
	
	Now we introduce the \textbf{pullback of a Lie algebroid}:
	
	\begin{defi}\label{def:pullbackLA}
		Given a Lie algebroid $A$ over a manifold $M$ with anchor $\#:A\fto TM$, and a surjective submersion $\pi\colon P\fto M$, one checks that 
		$$\pi^{-1} A:= \pi^*(A) {}_\# \!\times _{d\pi} TP$$ is the total space of a vector bundle over $P$. It has a  natural Lie algebroid structure, with anchor $\widehat{\#}:=\text{pr}_2\colon \pi^{-1} A \fto TP$ being the second projection. {The Lie bracket is determined by its {restriction to} ``pullback sections'', which is given by the Lie brackets on $\CX(P)$ and $\SEC(A)$}.  We call this Lie algebroid the  pullback algebroid of $A$ over $\pi$. 
	\end{defi}
	
	These two definitions are nicely related by the following lemma:
	
	\begin{lem}\label{GtoA}   
		Consider a surjective submersion $\pi:P\fto M$.
		\begin{enumerate}
			\item[(i)] Let $\CG$ be a Lie groupoid over $M$, denote by $A$ its Lie algebroid. The Lie algebroid of the Lie groupoid $\pi^{-1} \CG$ is $\pi^{-1} A$.
			\item[(ii)] Let $A$ be an integrable Lie algebroid over $M$, denote by $\CG$ the source simply connected Lie groupoid integrating it. If the map $\pi$ has simply connected fibers, then the source simply connected Lie groupoid integrating $\pi^{-1} A$ is $\pi^{-1} \CG$.
		\end{enumerate}
	\end{lem}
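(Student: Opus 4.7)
The plan for part (i) is to directly identify the three pieces of data (vector bundle, anchor, bracket) of the Lie algebroid of $\pi^{-1}\CG$ with those of $\pi^{-1}A$. First I would describe the identity section of $\pi^{-1}\CG$ as $P\to\pi^{-1}\CG,\ p\mapsto (p,1_{\pi(p)},p)$, so that at such a point a tangent vector in $\ker(d\bs')$ has the form $(v_1,X,0)$ with $X\in T_{1_{\pi(p)}}\CG$ satisfying the fibre-product compatibilities $d\pi(v_1)=d\bt(X)$ and $d\bs(X)=0$. Since $\ker(d_{1_{\pi(p)}}\bs)=A_{\pi(p)}$ and $d\bt|_A$ is the anchor $\#$ of $A$, this identifies the Lie algebroid of $\pi^{-1}\CG$ fibrewise with $\{(X,v_1)\in A_{\pi(p)}\oplus T_pP : \#(X)=d\pi(v_1)\}$, which is exactly $\pi^{-1}A$. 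The anchor of $\pi^{-1}\CG$ is $d\bt'$, and under this identification $d\bt'(v_1,X,0)=v_1$, which is the anchor $\widehat{\#}$ of $\pi^{-1}A$.

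For the bracket I would appeal to the fact that a Lie algebroid bracket is determined by its values on any set of sections that generates the full module locally and is closed under the anchor map to $\CX(P)$. The natural ``pullback'' sections on $\pi^{-1}A$ are pairs $(\pi^*\alpha, Y)$ with $\alpha\in\SEC(A)$ and $Y\in\CX(P)$ satisfying $d\pi(Y)=\#(\alpha)\circ\pi$, and these generate $\SEC(\pi^{-1}A)$ as a $C^\infty(P)$-module. On the groupoid side, such a section is represented by the right-invariant vector field on $\pi^{-1}\CG$ coming from the pair $(\alpha,Y)$, and a direct computation (using the product groupoid structure and the invariance properties of right translations) shows that the bracket of two such right-invariant fields agrees with the known bracket formula on pullback sections of $\pi^{-1}A$, in terms of $[-,-]_A$, $[-,-]_{TP}$ and the anchor. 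This is the technically heaviest step, but it is a standard manipulation with invariant vector fields.

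For part (ii), once part (i) is established the source simply connected integration of $\pi^{-1}A$ is unique up to isomorphism, so it is enough to check that $\pi^{-1}\CG$ is source simply connected. I would fix $p\in P$ and describe the source fibre
\[
\bs'^{-1}(p)=\{(p_1,g)\in P\times\CG : \pi(p_1)=\bt(g),\ \bs(g)=\pi(p)\}=P\,{}_{\pi}\!\times_{\bt}\bs^{-1}(\pi(p)).
\]
The second projection $\bs'^{-1}(p)\to\bs^{-1}(\pi(p))$ is a surjective submersion (by Lemma \ref{lem:openmapABC} applied to the submersion $\pi$) whose fibre over $g$ is $\pi^{-1}(\bt(g))$, a fibre of $\pi$. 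By hypothesis the fibres of $\pi$ are simply connected and $\bs^{-1}(\pi(p))$ is simply connected, so Lemma \ref{lem:subcon} applied with $k=1$ shows that $\bs'^{-1}(p)$ is simply connected. Combined with part (i), this identifies $\pi^{-1}\CG$ as the source simply connected integration of $\pi^{-1}A$.

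The main obstacle I anticipate is the bracket computation in part (i): describing a suitable local frame of $\pi^{-1}A$ by pullback-type sections and checking compatibility with right-invariant vector fields on $\pi^{-1}\CG$. Everything else is a careful, but fairly mechanical, unfolding of definitions and an application of Lemma \ref{lem:subcon}.
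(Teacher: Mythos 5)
Your proposal is correct. For part (ii) it takes exactly the paper's route: identify $\bs'^{-1}(p)\simeq P\,{}_{\pi}\!\times_{\bt}\bs^{-1}(\pi(p))$, note the projection to $\bs^{-1}(\pi(p))$ is a surjective submersion whose fibers are the (simply connected) $\pi$-fibers, and apply Lemma \ref{lem:subcon}. The only difference is in part (i), which the paper does not prove but delegates to \cite[\S 4.3]{MK2}; your fibrewise identification of $\ker(d\bs')$ along the unit section with $\pi^*(A)\,{}_{\#}\!\times_{d\pi}TP$, the matching of anchors, and the bracket check on pullback sections via right-invariant vector fields of the form $(Y,\overrightarrow{\alpha},0)$ is precisely the standard argument contained in that reference, so this is a filling-in of cited details rather than a genuinely different approach.
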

	\begin{proof}
		The proof of part (i)   can be found in \cite[\S 4.3]{MK2}, so we address only the proof of part (ii). The Lie groupoid $\pi^{-1} \CG$ integrates $\pi^{-1} A$ by part (i). Therefore we only need to show that $\pi^{-1} \CG$ is source simply connected. Take $p\in P$. Its source fiber is
		\[\bs^{-1}(p)=\{(q,g,p) \st \pi(p)=\bs(g) \y \pi(q)=\bt (g)\}\simeq P {}_\pi \!\times_\bt \bs^{-1}(\pi(p)).\]
		The canonical submersion $\bs^{-1}(p)\fto \bs^{-1}(\pi(p))$ has simply connected fibers, {since the $\pi$-fibers are simply connected.} Using that $\bs^{-1}(\pi(p))$ is simply connected { and lemma \ref{lem:subcon}} we conclude that $\bs^{-1}(p)$ is simply connected.
	\end{proof}
	
	The following original result relates the two definitions above with the pullback of foliations:
	
	\begin{lem}  \label{lem:pullbackalgfol}
		Consider a surjective submersion $\pi:P\fto M$. Let $A$ be a Lie algebroid over $M$ with anchor $\#:A\fto TM$. Then the foliation of the pullback Lie algebroid $\CF_P:=\widehat{\#}(\SEC_c(\pi^{-1} A))$   equals $\pi^{-1}(\CF_M)$, where $\CF_M:=\#\SEC_c(A)$. 
	\end{lem}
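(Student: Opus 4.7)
The plan is to prove the equality by double inclusion, after first unwinding the definitions. A section of $\pi^{-1}A$ is a pair $(\sigma,Y)$ with $\sigma \in \Gamma(\pi^*A)$ and $Y \in \CX(P)$ satisfying $\#\sigma = d\pi(Y)$, and it is compactly supported iff both $\sigma$ and $Y$ are. Since $\widehat{\#}$ is the second projection,
\[\CF_P = \bigl\{\,Y \in \CX_c(P) : \exists\,\sigma \in \Gamma_c(\pi^*A)\text{ with } \#\sigma = d\pi(Y)\,\bigr\}.\]
On the other side, since $\pi$ is a submersion, Lemma \ref{lem:pullback} gives $\pi^{-1}\CF_M = \langle (d\pi)^{-1}\pi^*\CF_M\rangle_{\CI_c(P)}$, so every element of $\pi^{-1}\CF_M$ is a compactly supported $\CI_c(P)$-linear combination of projectable vector fields $Z$ with $d\pi(Z) = X \circ \pi$ for some $X \in \CF_M$.

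For the inclusion $\pi^{-1}\CF_M \subset \CF_P$, since $\CF_P$ is a $\CI_c(P)$-module it suffices to treat elements of the form $Y = gZ$ with $g \in \CI_c(P)$ and $Z$ projectable with $d\pi(Z) = (\#a)\circ\pi$ for some $a \in \Gamma_c(A)$. Then $(g\,\pi^*a,\;gZ)$ is a well-defined section of $\pi^{-1}A$, because $\#(g\,\pi^*a) = g\,\pi^*(\#a) = d\pi(gZ)$; it is compactly supported because $g$ is; and $\widehat{\#}$ sends it to $gZ = Y$, so $Y \in \CF_P$.

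For the reverse inclusion $\CF_P \subset \pi^{-1}\CF_M$, take $(\sigma, Y) \in \Gamma_c(\pi^{-1}A)$; then $Y$ has compact support and $d\pi(Y) = \#\sigma$, so it remains to show $\#\sigma \in \langle \pi^*\CF_M\rangle_{\CI_c(P)}$. By compactness of $\text{supp}(\sigma)$, pick a finite cover by open sets $W_k \subset P$ such that each $\pi(W_k)$ lies in a trivializing open $V_k \subset M$ for $A$, and let $b_1^k,\dots,b_r^k$ be a local frame of $A$ over $V_k$. A subordinate partition of unity allows us to write $\sigma = \sum_k h_k\,\pi^*b_k$ as a finite sum, with $h_k \in \CI_c(P)$ whose supports lie in the $W_k$. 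Choosing cutoffs $\chi_k \in \CI_c(V_k)$ with $\chi_k \equiv 1$ on the compact set $\pi(\text{supp}\,h_k)$, the global sections $a_k := \chi_k b_k \in \Gamma_c(A)$ satisfy $h_k\,\pi^*a_k = h_k\,\pi^*b_k$. Applying the anchor gives $\#\sigma = \sum_k h_k\,\pi^*(\#a_k)$ with $\#a_k \in \CF_M$, which is precisely the statement $\#\sigma \in \langle \pi^*\CF_M\rangle_{\CI_c(P)}$.

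The main obstacle is the bookkeeping in the last step: one must choose the cover $\{W_k\}$ so that each $\pi(W_k)$ fits inside a single trivializing open for $A$, and the cutoffs $\chi_k$ must be supported in those trivializing opens yet still equal $1$ on $\pi(\text{supp}\,h_k)$, so that the replacement of the local frame $b_k$ by the globally defined $a_k = \chi_k b_k$ does not alter the relevant terms. Once this compatibility is arranged the algebraic manipulation is straightforward, and the argument carries over verbatim if one prefers to reason via the sheaf-theoretic description of foliations from \S \ref{sec:fol.sheaf}.
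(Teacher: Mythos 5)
Your proposal is correct and follows essentially the same route as the paper's proof: both directions come down to writing a compactly supported section of $\pi^*A$ as a finite $\CI_c(P)$-combination of pullbacks of sections in $\Gamma_c(A)$ and pairing it with the appropriate vector field to produce (or analyse) a section of $\pi^{-1}A$. The only differences are minor: you carry out that decomposition by hand with local frames, a partition of unity and cutoffs where the paper simply invokes the Serre--Swan theorem, and in the inclusion $\CF_P\subset\pi^{-1}\CF_M$ you conclude directly from Definition \ref{def:pullback} instead of lifting the projected generators to vector fields on $P$ and absorbing the difference into $\Gamma_c(\ker d\pi)$.
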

	\begin{proof}
		For the inclusion ``$\supset$'' we argue as follows. For all $X \in\pi^{-1}(\cF_M)$ we have:
		\begin{equation} \label{MEGrFl}
			d\pi(X)=f_1 \pi^*(Y_1)+\dots f_n \pi^*(Y_n)
		\end{equation}
		for some $Y_1\dots Y_n\in \CF_M$ and $f_1,\dots,f_n\in \CI_c(P)$. There exists $\alpha_1,\dots,\alpha_n\in \Gamma_c(A)$ such that $\#(\alpha_i)=Y_i$. Denote $\widehat{\beta}:=f_1 \pi^* \alpha_1+\dots+f_n\pi^* \alpha_n$, a section in $\SEC\pi^*A$. Using eq.  \eqref{MEGrFl} we get that $(\widehat{\beta},X)\in \SEC_c(\pi^{-1} A)$ and moreover $\#(\widehat{\beta})=X$, so $X\in \#(\SEC_c(\pi^{-1} A))$.
		
		For the other inclusion take $(\widehat{\beta},X)\in \SEC_c(\pi^{-1} A)$. {The module of sections of $A$ is finitely generated due to} the Serre-Swan theorem, hence $\widehat{\beta}\in \SEC_c(\pi^{*}(A))$ can be written as $\widehat{\beta}= f_1 \pi^* \alpha_1+\dots+f_n\pi^* \alpha_n$ for some $f_1,\dots,f_n\in \CI_c(P)$ and $\alpha_1,\dots,\alpha_n\in \Gamma_c(A)$.
		{Since $\pi$ is a submersion, each $\# \alpha_i\in \CF_M$ can be lifted via $\pi$ to a vector field $X_i$ on $P$. By construction  $\sum_i f_iX_i$ lies  in $\pi^{-1}(\CF_M)$. The conclusion follows since
			the difference $X- \sum_i f_iX_i$ lies in $\Gamma_c(\ker \pi_*)$, and therefore in $\pi^{-1}(\CF_M)$. }
	\end{proof}
	
	\section{Morita Equivalence for Lie groupoids and Lie algebroids} \label{sec:ME.gpd}

	In examples \ref{ex:Gid} (the identity groupoid) and \ref{ex:cech.grpd} (the Čech groupoid), we saw two Lie groupoids that are similar, in the sense that both represent the manifold $M$ in two different ways. By definition the Čech groupoid $\CG\soutar \widehat{M}$ is the pullback of the identity groupoid $M\soutar M$ under the canonical surjective submersion $\widehat{M}\fto M$, where $\widehat{M}$ is the disjoint union of an open cover of $M$. Having this in mind helps to illustrate the following definition by Mackenzie in \cite{MK2}:
	
	\begin{defi}\label{def:MEgroid}
		Two Lie groupoids $\CG\rightrightarrows M$ and $\CH\rightrightarrows N$ are \textbf{Morita equivalent} if  there exists a (Hausdorff) manifold $P$, and two surjective submersions $\pi_M:P\fto M$ and $\pi_N:P\fto N$ such that $\pi^{-1}_M \CG\cong \pi_N^{-1}\CH$ as Lie groupoids.
		
		In this case, we call $(P,\pi_M,\pi_N)$ a Morita equivalence between $\CG$ and $\CH$.
		
		If $\CG$ and $\CH$ are open topological groupoids, it is asked for $P$ to be a topological space, $\pi_M$, $\pi_N$ to be open surjective maps and $\pi^{-1}_M \CG\cong \pi_N^{-1}\CH$ as topological groupoids.
	\end{defi}	
	
	\begin{rem} Recall that provided an open topological groupoid $\CG\soutar M$, and $\pi\colon P\fto M$ a surjective open map then $\pi^{-1}\CG$ is an open topological groupoid over $P$.
	\end{rem}
	
	One of the most important implications of this section is proposition \ref{prop: implications}. There we will show a relation between Morita equivalence of Lie groupoids, Lie algebroids and Hausdorff Morita equivalence of singular foliations.
	
	\subsection*{Equivalent characterizations of Morita equivalence for Lie groupoids}
	
	{Definition \ref{def:MEgroid}  is equivalent to several other characterizations of Morita equivalence for Lie groupoids, as   was proved in \cite{MdMkGrd},   {(see also \cite{LauMatXu}).} An analogous statement holds also for open topological groupoids, upon replacing   submersions by continuous open  maps. In this subsection we recall these results and prove implications that are used later, {the main one being corollary \ref{cor:SConnMEGrpd2}.}
		
		{We start recalling the notion of weak equivalence, {as given in \cite[\S 1.3]{DPronk}}, and of  bitorsor.}

		\begin{defi}\label{def:weakeq}
			Let $\CH\soutar M$ and $\CG\soutar P$ be two Lie groupoids (respectively, topological groupoids). A morphism $\widehat{\pi}\colon\CG\fto \CH$ is a {\bf weak equivalence} if:
			\begin{enumerate}
				\item[(i)] $\CG\fto \pi^{-1} \CH; \;\;\gamma\mapsto (\bt(\gamma),\widehat{\pi}(\gamma),\bs(\gamma))$ is an isomorphism,
				\item[(ii)] $\bt\circ\text{Pr}_1\colon \CH {}_{\bs}\!\times_{\pi} P\fto M$ is a surjective submersion (resp. a surjective continuous and open map).
			\end{enumerate}
			{Here $\pi\colon P\to M$ denotes the base map covered by $ \widehat{\pi}$.}
		\end{defi}
		
		\begin{rem}\label{rem:weakeq}
			\begin{itemize}
				\item[i)] Looking at a groupoid as a small category, a weak equivalence is the same thing as a {fully}  faithful and essentially surjective functor.
				\item[ii)] When a map $\pi\colon P\fto M$ is completely transverse (transverse to the orbits and meeting every orbit) to a Lie groupoid $\CH\soutar M$, then the projection $\pi^{-1} \CH \fto \CH$ is a weak equivalence.
				\item[iii)] {If $\CH$ is an open topological groupoid and ${\pi}$ is a continuous, open and surjective map, then condition (ii) in definition \ref{def:weakeq} is automatically satisfied, as can be showed using}
				lemma \ref{lem:openmapABC}.
			\end{itemize}
		\end{rem}
		Recall definition \ref{def:grpd.lact} (of Lie groupoid actions), in which a Lie groupoid $\CG\soutar M$ acts on a not necessarily Hausdorff manifold $P$ via a moment map $\pi\colon P\fto M$.  	
		\begin{defi} If the left $\CG$-action is free and proper (the map $\star \colon \CG {}_\bs\!\times_\pi P \fto P$ is proper) then $P/\CG$ is a manifold and we say that $P$ is a {\bf $\CG$-principal left bundle}. Similarly with a right $\CG$-action.
		\end{defi}	
		\begin{defi}A $(\CG,\CH)$-bimodule $P$ that is principal with respect to both actions and such that $P/ \CG\cong N$ and $\CH \backslash P\cong M$  is called a {\bf $(\CG,\CH)$-bitorsor}.
		\end{defi}
		
		\begin{ex} Any Lie groupoid $\CG\soutar M$ is a $(\CG,\CG)$-bitorsor, with the canonical actions given by left and right multiplication on $\CG$. 
		\end{ex}
		
		The following statement can be found in \cite[\S 2.5]{MdMkGrd}
		\begin{lem}\label{lem:bitorsor}
			Consider a Lie groupoid $\Gamma\soutar K$, a $\Gamma$-principal bundle $S$, a $\Gamma$-module $Q$ and a map $f\colon Q\fto S$ preserving the $\Gamma$ actions. Then $Q/\Gamma$ is a manifold.
		\end{lem}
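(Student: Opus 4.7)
The plan is to reduce the statement to the standard fact that a free and proper Lie groupoid action admits a smooth manifold quotient. In other words, I would show that the hypotheses of the lemma force the $\Gamma$-action on $Q$ to be free and proper, and then invoke the standard quotient theorem.

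First I would verify \textbf{freeness}. Suppose $q\in Q$ and $\gamma\in\Gamma$ satisfy $q\cdot\gamma=q$. Applying the equivariant map $f$ and using $f(q\cdot\gamma)=f(q)\cdot\gamma$, one gets $f(q)\cdot\gamma=f(q)$. Since $S$ is a $\Gamma$-principal bundle, the $\Gamma$-action on $S$ is free, so $\gamma=e_{\pi_S(f(q))}$, proving that the $\Gamma$-action on $Q$ is free.

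Next I would establish \textbf{properness}. Consider the action map $\alpha_Q\colon Q{}_{\pi_Q}\!\times_\bt \Gamma\to Q\times Q$, $(q,\gamma)\mapsto(q,q\cdot\gamma)$, and similarly $\alpha_S$ for $S$. The equivariance of $f$ yields a commutative diagram relating $\alpha_Q$ and $\alpha_S$ via $f\times f$ on the target and $f\times\mathrm{id}$ on the source. Given a sequence $(q_n,\gamma_n)$ such that $(q_n,q_n\gamma_n)\to(q,q')$ in $Q\times Q$, its image $(f(q_n),f(q_n)\gamma_n)$ converges in $S\times S$. Properness of the $\Gamma$-action on $S$ then forces $\gamma_n$ to have a convergent subsequence, and since $q_n\to q$, so does $(q_n,\gamma_n)$. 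This shows $\alpha_Q$ is proper.

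Once the action of $\Gamma$ on $Q$ is shown to be free and proper, the standard quotient theorem for Lie groupoid actions (the analogue for groupoids of the quotient theorem for free and proper Lie group actions; see e.g.\ \cite{MK2} or \cite{MdMkGrd}) yields that $Q/\Gamma$ carries a unique smooth manifold structure making $Q\to Q/\Gamma$ a surjective submersion. The main (and essentially the only) conceptual step is the properness argument via $f$; the freeness and the invocation of the quotient theorem are routine. A minor technical point I would flag is that $Q$ is allowed to be non-Hausdorff, but the argument above is purely local and the standard quotient theorem still applies in that generality.
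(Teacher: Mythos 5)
There is a genuine gap, and it sits exactly at the point you dismiss as a ``minor technical point''. In this thesis the lemma is invoked in the proof of Proposition \ref{prop:AllME} with $Q=Q_\CG\times_K Q_\CH$ a \emph{non-Hausdorff} manifold (and $S=Q_\CG$ possibly non-Hausdorff as well), and the conclusion there is explicitly a \emph{not necessarily Hausdorff} quotient $P=Q/\Gamma$. In that setting your reduction fails twice. First, the $\Gamma$-action on $Q$ need not be proper: take $\Gamma=\RR$ acting on $S=\RR$ by translations (a principal bundle over a point), $Q=\RR\times F$ with $F$ the line with two origins, the action on the first factor, and $f=\mathrm{pr}_1$; the preimage under the action map $(t,(s,x))\mapsto((t+s,x),(s,x))$ of the compact set $([-1,1]\times K_a)\times([-1,1]\times K_b)$, where $K_a,K_b\subset F$ are the two compact copies of $[-1,1]$ through the two origins, is $\{(t,s): t+s\in[-1,1],\ s\in[-1,1]\}\times\bigl([-1,0)\cup(0,1]\bigr)$, which is not compact. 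Note that your sequential argument does go through in this example, which shows that ``preimages of convergent sequences subconverge'' does not imply properness once Hausdorffness (hence uniqueness of limits and the sequential characterisation of properness) is lost. Second, even granting some form of properness, the standard free-and-proper quotient theorem cannot be what finishes the proof: since the quotient map is open, a free action with closed embedded orbit relation (which is what that theorem provides and uses, via Godement) forces $Q/\Gamma$ to be Hausdorff, whereas $Q/\Gamma\cong F$ in the example above, and $P$ in the application inside Proposition \ref{prop:AllME}, are genuinely non-Hausdorff. So the theorem you invoke is either inapplicable or has a false conclusion precisely in the case the lemma is needed for; it is not ``purely local''.

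For comparison: the thesis does not prove this lemma at all but cites \cite[\S 2.5]{MdMkGrd}, and the mechanism there is different from yours. One uses local triviality of the principal bundle $S\fto S/\Gamma$: choose a local section $\sigma$ of $S\fto S/\Gamma$ over an open set $U$, and use the division map of the principal action (sending a pair $(s',s)$ in the same fibre to the unique arrow taking $s$ to $s'$) composed with $f$ to retract $(p\circ f)^{-1}(U)$ onto the slice $f^{-1}(\sigma(U))$; this identifies $Q/\Gamma$ locally with such slices, and the charts glue to a smooth, possibly non-Hausdorff, structure on $Q/\Gamma$ with $Q\fto Q/\Gamma$ a submersion. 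No properness of the action on $Q$ is needed (nor available, as above). Your freeness argument via $f$ is correct and is exactly what makes the division map single-valued on $Q$-orbits, so that part can be kept; the properness step should be replaced by this local-triviality/slice argument, or the lemma should simply be quoted from \cite{MdMkGrd} as the author does.
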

		
		The following proposition gives a equivalent characterizations of ME for Lie groupoids.
		\begin{prop}\label{prop:AllME} Let $\CG\soutar M$ and $\CH\soutar N$ be Lie groupoids. The following statements are equivalent: 
			\begin{enumerate}
				\item[(i)] There exists a Lie groupoid $\Gamma$ and two weak equivalences $\Gamma\fto \CG$ and $\Gamma\fto \CH$.
				\item[(ii)] There exists a $(\CG,\CH)$-bitorsor $P$.
				\item[(iii)] $\CG$ and $\CH$ are Morita equivalent ({Def. \ref{def:MEgroid}}).
			\end{enumerate}
		\end{prop}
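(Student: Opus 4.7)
The plan is to prove the equivalence by establishing the cycle (iii) $\Rightarrow$ (i) $\Rightarrow$ (ii) $\Rightarrow$ (iii), handling the smooth and open-topological cases in parallel (the only substantive difference is replacing ``surjective submersion'' by ``continuous open surjective map'' and invoking Lemma \ref{lem:openmapABC} in the topological direction). I will rely on Definition \ref{def:pullbackgroid}, Proposition \ref{prop:pullback.grpd}, and Lemma \ref{lem:bitorsor}, all stated above.

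For (iii) $\Rightarrow$ (i): Given a Morita equivalence $(P,\pi_M,\pi_N)$ with $\pi_M^{-1}\CG \cong \pi_N^{-1}\CH$, I will set $\Gamma := \pi_M^{-1}\CG = \pi_N^{-1}\CH$ and take as weak equivalences the canonical projection morphisms $\widehat{\pi}_M \colon \Gamma \fto \CG$ (covering $\pi_M$) and $\widehat{\pi}_N \colon \Gamma \fto \CH$ (covering $\pi_N$). Condition (i) of Definition \ref{def:weakeq} holds tautologically. For condition (ii) applied to $\widehat{\pi}_M$, I need $\bt \circ \mathrm{Pr}_1 \colon \CG {}_{\bs}\!\times_{\pi_M} P \fto M$ to be a surjective submersion. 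Surjectivity follows since $\pi_M$ is surjective (take $(e_{\pi_M(p)},p)$); submersivity follows from Lemma \ref{lem:openmapABC} applied to $\pi_M$, composed with the submersion $\bt$. The same argument works for $\widehat{\pi}_N$.

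For (i) $\Rightarrow$ (ii): Let $\widehat{\pi}\colon\Gamma\fto\CG$ cover $\pi\colon K\fto M$ and $\widehat{\sigma}\colon\Gamma\fto\CH$ cover $\sigma\colon K\fto N$ be the two weak equivalences. The idea is to glue, using $\Gamma$ as an equivalence relation, two standard ``graph'' bibundles associated to a weak equivalence. First I will show that $Q := \CG {}_\bs\!\times_\pi K$ is a $(\CG,\Gamma)$-bitorsor: the left $\CG$-action is by multiplication on the first factor with moment map $\bt\circ\mathrm{Pr}_1\colon Q\to M$ (surjective submersion by condition (ii) of weak equivalence), and the right $\Gamma$-action is $(g,k)\cdot \gamma := (g\cdot\widehat{\pi}(\gamma), \bs(\gamma))$, which is free and proper because $\Gamma\cong\pi^{-1}\CG$ (condition (i) of weak equivalence) and the quotient is identified with $M$. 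Analogously $R := K {}_\sigma\!\times_\bt \CH$ is a $(\Gamma,\CH)$-bitorsor. I then define the $(\CG,\CH)$-bitorsor as the fibred product quotient
\[
P := (Q {}_{\mathrm{Pr}_2}\!\times_{\mathrm{Pr}_1} R)/\Gamma,
\]
where $\Gamma$ acts diagonally (from the right on $Q$, from the left on $R$). Lemma \ref{lem:bitorsor} ensures this quotient is a manifold, and the commuting $\CG$- and $\CH$-actions descend to principal, free, proper actions on $P$ with appropriate quotients; this verification is routine bookkeeping with fibred products.

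For (ii) $\Rightarrow$ (iii): Given a $(\CG,\CH)$-bitorsor $P$ with moment maps $\pi_M\colon P\fto M$ and $\pi_N\colon P\fto N$, I will observe that both moment maps are surjective submersions: indeed, being principal bundles, $\pi_M$ coincides with the quotient projection $P\fto P/\CH\cong M$ and similarly for $\pi_N$. I then define a map $\Phi\colon \pi_M^{-1}\CG \fto \pi_N^{-1}\CH$ by sending $(q,g,p)$ (so $\pi_M(q)=\bt(g)$, $\bs(g)=\pi_M(p)$) to $(q,h,p)$, where $h\in\CH$ is the unique arrow such that $q = (g\star p)\star h$; existence and uniqueness of $h$ follow from the right $\CH$-action being free and transitive on fibres of $\pi_M$, after noting $\pi_M(q)=\pi_M(g\star p)$. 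The inverse map is constructed symmetrically using the left $\CG$-action, and both compositions are straightforward to check by the freeness of both actions. The main technical obstacle is to verify smoothness of $\Phi$ and $\Phi^{-1}$: this will use that the ``division maps'' of a free and proper action (the maps $P\times_{\pi_M} P \fto \CH$ and $P\times_{\pi_N} P\fto \CG$ extracting the unique group element relating two equivalent points) are smooth, which is a standard consequence of the principal-bundle structure. Finally, I will check that $\Phi$ intertwines source, target, and multiplication, completing the isomorphism $\pi_M^{-1}\CG\cong\pi_N^{-1}\CH$ of Lie groupoids.
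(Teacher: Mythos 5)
Your cycle $(iii)\Rightarrow(i)\Rightarrow(ii)\Rightarrow(iii)$ follows the paper's proof almost step for step: $(iii)\Rightarrow(i)$ via $\Gamma:=\pi_M^{-1}\CG\cong\pi_N^{-1}\CH$ with the natural projections as weak equivalences, and $(i)\Rightarrow(ii)$ via the two ``graph'' bitorsors $\CG {}_{\bs}\!\times_{\pi} K$ and $K {}_{\sigma}\!\times_{\bt} \CH$ glued by the quotient $(Q\times_K R)/\Gamma$, made a manifold by Lemma \ref{lem:bitorsor}. In $(ii)\Rightarrow(iii)$ you build the isomorphism $\pi_M^{-1}\CG\cong\pi_N^{-1}\CH$ directly through the smooth division maps of the two principal actions, whereas the paper routes it through the intermediate Lie groupoid $\Gamma=\CG {}_\bs\!\times_{\pi_M} P {}_{\pi_N}\!\times_\bt \CH\soutar P$ and its two explicit isomorphisms onto the pullback groupoids; composing the paper's two isomorphisms gives essentially your $\Phi$, so that difference is cosmetic (your bookkeeping of whether the divided element is $h$ or $h^{-1}$, and which slot is source and which is target, needs a convention fix, but that is routine).

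The genuine gap is Hausdorffness of the base in $(ii)\Rightarrow(iii)$. In this paper a $(\CG,\CH)$-bimodule, hence a bitorsor, is allowed to be a \emph{not necessarily Hausdorff} manifold (Def.~\ref{def:bimod.grpd}), and indeed the $P=(Q_\CG\times_K Q_\CH)/\Gamma$ that your own step $(i)\Rightarrow(ii)$ produces via Lemma \ref{lem:bitorsor} is in general only non-Hausdorff. By contrast, Def.~\ref{def:MEgroid} requires the Morita equivalence to be implemented by a \emph{Hausdorff} manifold $P$ with surjective submersions to $M$ and $N$. So after obtaining $\pi_M^{-1}\CG\cong\pi_N^{-1}\CH$ over $P$ you are not yet done: you must replace $P$ by $\tilde{P}:=\sqcup_i U_i$, the disjoint union of a Hausdorff open cover, with its canonical submersion $\pi\colon \tilde{P}\fto P$, and check that $(\tilde{P},\pi_M\circ\pi,\pi_N\circ\pi)$ is a Morita equivalence, e.g.\ by pulling your isomorphism back along $\pi$ and using $(\pi_M\circ\pi)^{-1}\CG\cong\pi^{-1}(\pi_M^{-1}\CG)$. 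This Hausdorffization is exactly the last step of the paper's proof of $(ii)\Rightarrow(iii)$; without it the cycle does not close, because the bitorsor fed into $(ii)\Rightarrow(iii)$ coming from $(i)\Rightarrow(ii)$ need not be Hausdorff.
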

		
		The proof of this statement can be found in \cite{MdMkGrd} and \cite[prop. 2.4]{LauMatXu}. We review its proof here.  
		
		\begin{proof}
			$(i)\Rightarrow (ii)$: Consider a Lie groupoid $\Gamma\soutar K$ with weak equivalences $\widehat{\pi}_M\colon\Gamma\fto \CG$ and $\widehat{\pi}_N\colon\Gamma\fto \CH$. Therefore $\Gamma\cong\pi_M^{-1} \CG\cong \pi_N^{-1} \CH$. We get that $Q_\CG:=\CG {}_{\bs}\!\times_{\pi_M} K$ is a $(\CG,\Gamma)$-bitorsor. Using a similar argument we get that $Q_\CH$ is a $(\Gamma,\CH)$-bitorsor.   The (not necessarily Hausdorff) manifold $Q:=(Q_\CG\times_K Q_\CH)$ 
			{has a diagonal $\Gamma$-action with the canonical map to  $K$ {as moment map}. Applying Lemma \ref{lem:bitorsor} to the map $Q\fto Q_\CG$ we see that
				$$P:=(Q_\CG\times_K Q_\CH)/\Gamma$$ is a (not necessarily Hausdorff) manifold.} One can  check that it is a $(\CG,\CH)$-bitorsor. 
			
			$(ii)\Rightarrow (iii)$:  Consider a $(\CG,\CH)$-bitorsor $P$, with moment maps $\pi_M\colon P\fto M$ and $\pi_N\colon P\fto N$. 
			{Then $$\Gamma= \CG {}_\bs\!\times_{\pi_M} P {}_{\pi_N}\!\times_\bt \CH$$  has a} natural structure of Lie groupoid over $P$ with $\bt(g,p,h)=gph$, $\bs(g,p,h)=p$ and multiplication given canonically by $\CG$ and $\CH$. Then the maps $\Gamma\fto \pi_M^{-1}G; \;(g,p,h)\mapsto (p,g^{-1},gph)$ and $\Gamma\fto \pi_N^{-1} H;\;(g,p,h)\mapsto (p,h,gph)$ are isomorphisms of Lie groupoids.
			
			This shows that $\pi_M^{-1}\CG\cong \pi_N^{-1} \CH$ as Lie groupoids over {the} not necessarily Hausdorff manifold $P$. Now take  a Hausdorff cover $\{U_i\}_{i\in I}$ of $P$ and let $\tilde{P}:=\sqcup_i U_i$. There is a canonical submersion $\pi\colon \tilde{P}\fto P$. It is easy to see that $(\tilde{P},\pi_M\circ \pi, \pi_N\circ \pi)$ is a Morita equivalence. 
			
			$(iii)\Rightarrow (i)$: Given  a Morita equivalence $(P,\pi_M,\pi_N)$ between $\CG$ and $\CH$, call $\Gamma:=\pi_M^{-1}\CG\cong \pi_N^{-1}\CH$. The {natural projections $\Gamma\fto \CG$ and $\Gamma\fto \CH$ are weak equivalences}.
		\end{proof}
		
		\begin{rem}\label{rem:topopenME}
			{Proposition \ref{prop:AllME} also holds for open topological groupoids,
				as can be proven using Lemma \ref{lem:openmapABC}}. {For arbitrary topological groupoids, this is not the case.}
		\end{rem}
		
		\begin{cor}\label{cor:SConnMEGrpd2}
			{Let $k\ge0$.} If $\CG\soutar M$ and $\CH\soutar N$ are source $k$-connected Morita equivalent {Hausdorff} Lie groupoids, then {there exists a Hausdorff $(\CG,\CH)$-bitorsor $P$}.
			Moreover this bitorsor is a Morita equivalence with $k$-connected fibres ({in the sense of Def. \ref{def:MEgroid})}.
		\end{cor}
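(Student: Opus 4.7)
The plan is to refine the chain of implications in Proposition \ref{prop:AllME} so that the Hausdorff and $k$-connectedness properties are carried through. Start with a Morita equivalence $(P_0,\pi_M,\pi_N)$ between $\CG$ and $\CH$ in the sense of Def. \ref{def:MEgroid}; by hypothesis $P_0$ is a Hausdorff manifold. Setting $\Gamma:=\pi_M^{-1}\CG\cong \pi_N^{-1}\CH$ (a Lie groupoid over $P_0$), produce the auxiliary $(\CG,\Gamma)$-bitorsor $Q_\CG:=\CG\,{}_\bs\!\times_{\pi_M}\! P_0$ and $(\Gamma,\CH)$-bitorsor $Q_\CH:=P_0\,{}_{\pi_N}\!\times_{\bt}\!\CH$, and then define
\[
P:=(Q_\CG\times_{P_0} Q_\CH)/\Gamma,
\]
exactly as in the proof of (i)$\Rightarrow$(ii) in Proposition \ref{prop:AllME}. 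The aim is to show that this $P$ is Hausdorff, and that $(P,\pi_M^{P},\pi_N^{P})$ has $k$-connected fibres.

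For Hausdorffness, observe that $\CG$, $\CH$ and $P_0$ are Hausdorff by assumption, so the fibre products $Q_\CG$, $Q_\CH$ and $Q_\CG\times_{P_0} Q_\CH$ are Hausdorff. The diagonal right $\Gamma$-action on $Q_\CG\times_{P_0} Q_\CH$ is free and proper, because its restriction to the first factor $Q_\CG$ already is (that factor is a principal $\Gamma$-bundle). The quotient of a Hausdorff manifold by a free and proper Lie groupoid action is Hausdorff, hence $P$ is a Hausdorff manifold. Consequently, in the proof of (ii)$\Rightarrow$(iii) of Proposition \ref{prop:AllME} there is no need to pass to a disjoint union of Hausdorff opens: the moment maps $\pi_M^P\colon P\to M$ and $\pi_N^P\colon P\to N$ are surjective submersions, and the very same groupoid isomorphism $(\pi_M^P)^{-1}\CG\cong (\pi_N^P)^{-1}\CH$ exhibits $(P,\pi_M^P,\pi_N^P)$ as a Morita equivalence in the sense of Def. \ref{def:MEgroid}.

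It remains to verify the $k$-connectedness of the fibres of $\pi_M^P$ and $\pi_N^P$. By the bitorsor structure, $\pi_M^P$ is a right principal $\CH$-bundle with $\CH$-orbits as fibres, and for any $p_0\in P$ with $\pi_M^P(p_0)=m$, the action map
\[
\bt^{-1}(\pi_N^P(p_0))\longrightarrow (\pi_M^P)^{-1}(m),\qquad h\longmapsto p_0\cdot h,
\]
is a diffeomorphism (free and transitive on each fibre). Since $\CH$ is source $k$-connected and the inversion map of $\CH$ swaps source and target fibres, every $\bt$-fibre of $\CH$ is $k$-connected, and therefore so is $(\pi_M^P)^{-1}(m)$. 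The identical argument applied to $\CG$ and $\pi_N^P$ yields $k$-connectedness of the fibres of $\pi_N^P$, completing the proof.

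The technical heart of the argument is the Hausdorffness step: one must be confident that the $\Gamma$-action on the fibre product is not merely free but proper, so that the quotient is a Hausdorff manifold rather than just a non-Hausdorff one. Once this is in place, the Morita-equivalence and $k$-connectedness conclusions follow directly from Proposition \ref{prop:AllME} and the principal bundle structure of the bitorsor, combined with Lemma \ref{lem:subcon} for the latter.
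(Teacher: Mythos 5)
Your proposal is correct and follows essentially the same route as the paper: build the bitorsor $P=(Q_\CG\times_{P_0}Q_\CH)/\Gamma$ via the implications $(iii)\Rightarrow(i)\Rightarrow(ii)$ of Proposition \ref{prop:AllME}, observe it is Hausdorff so that $(ii)\Rightarrow(iii)$ applies without passing to a Hausdorff cover, and deduce $k$-connectedness of the fibres from the principal $\CH$- (resp. $\CG$-) bundle structure together with source $k$-connectedness. The only difference is that you spell out the Hausdorffness step via freeness and properness of the diagonal $\Gamma$-action, where the paper leaves this implicit (relying on Lemma \ref{lem:bitorsor}); your verification is sound.
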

		
		\begin{proof}   Following the implications $(iii)\Rightarrow (i)\Rightarrow (ii)$ in the proof of proposition \ref{prop:AllME}, one sees that the bitorsor $P$ constructed there is Hausdorff. {Then use the implication $(ii)\Rightarrow(iii)$ to prove that $P$ is a Morita equivalence.}
			
			{Note that} $P$ being a bitorsor, the fibres of $\pi_M\colon P\fto P/\CH{\cong M}$ are {diffeomorphic} to the source fibers of $\CH$, {which are $k$-connected by assumption}. A similar argument holds for the fibres of $\pi_N\colon P\fto P/\CG\cong N$.
		\end{proof}

		\begin{rem}
			The Morita equivalence $P$   in corollary \ref{cor:SConnMEGrpd2} is a (global) bisubmersion for the underlying foliations, as we now show.
			Using the implication ``$(ii)\Rightarrow (iii)$'' in proposition \ref{prop:AllME} we get an isomorphism of {Lie} groupoids $$\pi_M^{-1}\CG\cong \pi_N^{-1} \CH\cong \CG\ltimes P\rtimes \CH.$$ 
			Denote by $\CF_M$ and $\CF_N$ the  foliations underlying $\CG$ and $\CH$. Using {lemma \ref{GtoA} i)} and lemma \ref{lem:pullbackalgfol} we get that the foliations  underlying $\pi_M^{-1}\CG$ and $\pi_N^{-1} \CH$ are $\pi_M^{-1}\CF_M$ and $\pi_N^{-1}\CF_N$ respectively.
			Since $P$ is a {$(\CG,\CH)$-bitorsor,} the foliation underlying  the Lie groupoid $\CG\ltimes P\rtimes \CH$ is $\Gamma_c(ker(d\pi_M))+\Gamma_c(ker(d\pi_N))$. Hence $$\pi_M^{-1}\CF_M=\pi_N^{-1}\CF_N=\Gamma_c(ker(d\pi_M))+\Gamma_c(ker(d\pi_N)).$$
		\end{rem}
		
		\subsection*{Morita equivalence for Lie algebroids}
		The definition of Morita equivalence was extended to Lie algebroid by Viktor Ginzburg  \cite{GinzburgGrot}:
		
		\begin{defi}\label{def:MEalgoid}
			Consider Lie algebroids $A_M$ and $A_N$ over manifolds $M$ and $N$ respectively. We say they are \textbf{Morita equivalent} if there exists a manifold $P$ and two surjective submersions $\pi_M\colon P\fto M$ and $\pi_N:P\fto N$ with  simply connected fibres such that $\pi_M^{-1}( A_M)\cong \pi_N^{-1}(A_N)$ as Lie algebroids over $P$.
		\end{defi}
		This definition can be motivated by the following proposition:
		
		\begin{prop}\label{prop:MEalggroids}
			(i) If $\CG_M$ and $\CG_N$ are Morita equivalent {Hausdorff} Lie groupoids with source simply connected fibres, then their Lie algebroids are Morita equivalent.
			
			(ii) If $A_M$ and $A_N$ are Morita equivalent integrable Lie algebroids, then the  source simply connected Lie groupoids integrating them are Morita equivalent.
		\end{prop}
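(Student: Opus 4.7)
The plan is to reduce both parts to Lemma \ref{GtoA}, which tells us that the pullback operations $\pi^{-1}(-)$ on Lie groupoids and Lie algebroids are compatible (and that, under a simple-connectedness hypothesis on the fibres of $\pi$, pullback preserves source simple connectedness). The key point is that both notions of Morita equivalence (Def.~\ref{def:MEgroid} and Def.~\ref{def:MEalgoid}) are phrased in terms of such pullbacks agreeing, so the two statements become parallel.

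For part (i), start from a Morita equivalence of Hausdorff Lie groupoids $\CG_M$ and $\CG_N$ whose source fibres are simply connected. Applying Corollary~\ref{cor:SConnMEGrpd2} with $k=1$, I can upgrade the Morita equivalence to a Hausdorff one $(P,\pi_M,\pi_N)$ whose fibres are simply connected, together with a Lie groupoid isomorphism $\pi_M^{-1}\CG_M\cong \pi_N^{-1}\CG_N$. Differentiating this isomorphism and invoking Lemma~\ref{GtoA}(i) gives an isomorphism of Lie algebroids $\pi_M^{-1}A_M\cong \pi_N^{-1}A_N$, where $A_M$, $A_N$ are the Lie algebroids of $\CG_M$, $\CG_N$. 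Since $\pi_M$ and $\pi_N$ are surjective submersions with simply connected fibres, this is exactly a Morita equivalence of Lie algebroids in the sense of Def.~\ref{def:MEalgoid}.

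For part (ii), start from a Morita equivalence of integrable Lie algebroids $A_M$ and $A_N$ via $(P,\pi_M,\pi_N)$ with simply connected fibres, so that $\pi_M^{-1}A_M\cong \pi_N^{-1}A_N$. Let $\CG_M$ and $\CG_N$ be the source simply connected Lie groupoids integrating $A_M$ and $A_N$. By Lemma~\ref{GtoA}(ii), the source simply connected integration of $\pi_M^{-1}A_M$ is precisely $\pi_M^{-1}\CG_M$, and similarly for $\pi_N^{-1}A_N$. Since source simply connected integrations are unique up to canonical isomorphism (Lie's second theorem for Lie algebroids), the Lie algebroid isomorphism $\pi_M^{-1}A_M\cong \pi_N^{-1}A_N$ integrates to a Lie groupoid isomorphism $\pi_M^{-1}\CG_M\cong \pi_N^{-1}\CG_N$, which is the required Morita equivalence.

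The main technical obstacle is ensuring the auxiliary manifold $P$ can be chosen Hausdorff in part~(i): a priori, pullback constructions and bitorsor quotients can leave the non-Hausdorff category. This is exactly what Corollary~\ref{cor:SConnMEGrpd2} is designed to handle, and it is the reason source simple-connectedness of the Lie groupoids appears as a hypothesis. Part (ii) has no analogous subtlety once one is willing to cite Lie's second theorem for the integration of Lie algebroid morphisms into source simply connected integrations.
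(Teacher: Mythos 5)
Your proposal is correct and follows essentially the same route as the paper: part (i) combines Corollary \ref{cor:SConnMEGrpd2} (with $k=1$) with Lemma \ref{GtoA}(i), and part (ii) rests on Lemma \ref{GtoA}(ii) together with the uniqueness of source simply connected integrations. Your write-up merely makes explicit the appeal to Lie's second theorem that the paper leaves implicit in saying part (ii) is ``clear''.
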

		\begin{proof}
			(i): By corollary \ref{cor:SConnMEGrpd2} we {obtain the existence of} a Morita equivalence with source simply connected fibres, then using part (1) of lemma \ref{GtoA} we get the desired result.
			
			(ii): is clear by part (2) of lemma \ref{GtoA}.
		\end{proof}

		\begin{rem}
			The essential difference between  Morita equivalence for Lie algebroids and {Hausdorff Morita equivalence} for singular foliations (Def. \ref{def:MEalgoid} and Def. \ref{def:defMEfol}) is that the former requires \emph{simply} connected fibres whereas the latter only connected fibres. This difference is reflected at the groupoid level too, as we now explain.
			
			On the one hand, given two Morita equivalent integrable Lie algebroids, their source simply connected Lie groupoids are Morita equivalent, see proposition \ref{prop:MEalggroids}(ii). On the other hand, singular foliations  also have an associated groupoid, namely the holonomy groupoid defined by Androulidakis and Skandalis  \cite{AndrSk}. In theorem \ref{thm:MEfolgroids} we will show that if two singular foliations are {Hausdorff} Morita equivalent, then their holonomy groupoids are also Morita equivalent. But the holonomy groupoid of a foliation does not have  simply connected fibres in general: on the contrary, it is an adjoint groupoid.
		\end{rem}
		
		Recall that Lie groupoids give rise to Lie algebroids, which in turn give rise to singular foliations (see  example \ref{ex:Algd,Gprd} and proposition \ref{prop:Folie.Algd}). The following proposition gives a link between Morita equivalence in these three different settings:
		
		\begin{prop}\label{prop: implications}
			Let $\CG\soutar M$ and $\CH\soutar N$ $k$-connected {Hausdorff} Lie groupoids for $k\geq 1$, denote their Lie algebroids by $A_M$ and $A_N$, and denote by    $\CF_M=\#(\Gamma_c(A_M))$ and $\CF_N=\#(\Gamma_c(A_N))$ the corresponding singular foliations. Each of the following statements implies the following one (i.e.  $(i)\Rightarrow (ii)   \Rightarrow (iii)$):
			\begin{enumerate}
				\item[(i)]  $\CG$ and $\CH$ are Morita equivalent,
				\item[(ii)]  there exists a  manifold $P$ and surjective submersions with $k-connected$ fibres $\pi_M\colon P\fto M$ and $\pi_N:P\fto N$ satisfying $\pi_M^{-1}A_M\cong \pi_N^{-1} A_N$,
				\item[(iii)] the foliated manifolds $(M,\CF_M)$ and $(N,\CF_N)$ are Hausdorff Morita equivalent.
			\end{enumerate}    
		\end{prop}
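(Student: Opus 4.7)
The plan is to chain together results already established in the excerpt: Corollary \ref{cor:SConnMEGrpd2} for producing a good bitorsor, Lemma \ref{GtoA}(i) to pass from groupoids to algebroids, and Lemma \ref{lem:pullbackalgfol} to pass from algebroids to foliations.

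For the implication $(i)\Rightarrow(ii)$, I would start by invoking Corollary \ref{cor:SConnMEGrpd2} applied to the Hausdorff source $k$-connected Lie groupoids $\CG$ and $\CH$: this yields a Hausdorff $(\CG,\CH)$-bitorsor $P$ together with surjective submersions $\pi_M\colon P\fto M$ and $\pi_N\colon P\fto N$ whose fibres are $k$-connected, and such that $\pi_M^{-1}\CG\cong \pi_N^{-1}\CH$ as Lie groupoids over $P$. Taking the Lie algebroid of both sides and applying Lemma \ref{GtoA}(i) — which identifies the Lie algebroid of $\pi^{-1}\CG$ with $\pi^{-1}A$ — yields the isomorphism $\pi_M^{-1}A_M\cong \pi_N^{-1}A_N$ of Lie algebroids over $P$. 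This is precisely the data required by (ii).

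For the implication $(ii)\Rightarrow(iii)$, suppose we are given $P$, $\pi_M$, $\pi_N$ with $k$-connected fibres (in particular connected, since $k\ge 1$) and with $\pi_M^{-1}A_M\cong \pi_N^{-1}A_N$. By Lemma \ref{lem:pullbackalgfol}, the singular foliation associated to the pullback Lie algebroid $\pi_M^{-1}A_M$ is exactly $\pi_M^{-1}\CF_M$, and analogously on the $N$-side we get $\pi_N^{-1}\CF_N$. Since an isomorphism of Lie algebroids covering the identity induces equality of the images of their anchors on compactly supported sections, we conclude $\pi_M^{-1}\CF_M=\pi_N^{-1}\CF_N$. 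Together with $\pi_M$ and $\pi_N$ being surjective submersions with connected fibres, this is exactly Definition \ref{def:defMEfol} of Hausdorff Morita equivalence between $(M,\CF_M)$ and $(N,\CF_N)$.

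No step seems to be a serious obstacle, since the hard technical work has already been done in the earlier results. The only care needed is bookkeeping: in $(i)\Rightarrow(ii)$ one must check that Corollary \ref{cor:SConnMEGrpd2} applies (Hausdorff hypothesis and source $k$-connectedness are both given) and that the bitorsor produced there is actually used as a Morita equivalence in the sense of Definition \ref{def:MEgroid}; in $(ii)\Rightarrow(iii)$ one must note that the isomorphism of pullback algebroids intertwines the anchors, so that the associated foliations coincide, and that $k\ge 1$ is used only to guarantee connectedness of the fibres of $\pi_M,\pi_N$.
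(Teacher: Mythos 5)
Your argument is correct and follows exactly the paper's route: Corollary \ref{cor:SConnMEGrpd2} to produce the Hausdorff bitorsor with $k$-connected fibres, Lemma \ref{GtoA}(i) applied to both sides for $(i)\Rightarrow(ii)$, and Lemma \ref{lem:pullbackalgfol} applied twice for $(ii)\Rightarrow(iii)$. The extra bookkeeping you mention (anchors intertwined by the algebroid isomorphism, $k\ge 1$ giving connected fibres) is exactly the implicit content of the paper's brief proof.
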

		\begin{proof}
			For the first implication, use corollary \ref{cor:SConnMEGrpd2} to get $P$, then we use   twice   part (i) of lemma \ref{GtoA}.
			The second implication   follows using twice  lemma \ref{lem:pullbackalgfol}. 
		\end{proof}
		
		\begin{rem}
			If $A_M$ and $A_N$ are Morita equivalent Lie algebroids then their singular foliations are Morita equivalent. Indeed, the fibres of the maps appearing in Def. \ref {def:MEalgoid} are in particular connected, so the above proposition applies.\end{rem}
		
		\begin{ex} { 
				Consider two {connected} Lie groups $G_1,G_2$ acting freely and properly on a manifold $P$ with commuting actions. The transformation groupoids $G_2\ltimes (P/G_1)$ and $G_1\ltimes (P/G_2)$ are Morita equivalent, since under the quotient maps {$P\to P/G_i$} they pull back
				to the transformation groupoid  $(G_1\times G_2)\ltimes P$. Hence applying proposition \ref{prop: implications} we obtain an alternative proof of the statement of corollary \ref{cor:action}, i.e. the respective foliations are Hausdorff Morita equivalent.}
			
			Moreover, if $G_1,G_2$ are simply connected, then their corresponding Lie algebroids are Morita equivalent.
		\end{ex}

		\cleardoublepage

		\chapter{Holonomy groupoids and Morita equivalence for foliations}\label{ch:conclusion}
		
		In this chapter we summarize the results of article \cite{ME2018} by Marco Zambon and I. It is the most important chapter of this thesis.
		
		We start this chapter describing the notion of holonomy groupoid of a singular foliation given in \cite{AndrSk}. Later we discuss smoothness issues of this groupoid. In the third section we will prove that the pullback groupoid of the holonomy groupoid, under certain maps, is the holonomy groupoid of the pullback foliation. Then we will give the notion of holonomy transformation for elements in the holonomy groupoid.
		
		To finish this chapter we show how Hausdorff Morita equivalence of singular foliations is related to Morita equivalence of their associated holonomy groupoids.
		
		\section{The holonomy groupoid of a singular foliation}\label{sec:holconstr}
		
		We review the   construction of the holonomy groupoid of a foliated manifold, which was first introduced by Androulidakis and Skandalis in \cite{AndrSk}. For this section we follow \cite[\S 2, \S 3.1]{AndrSk}. We also present some original results on the underlying topology of this groupoid.
		
		In this whole section we fix a foliated manifold $(M,\CF)$. The "building blocks" for the holonomy groupoid of $\CF$ are the path-holonomy bisubmersions (definition \ref{def:bisub}).
		
		Let $(M,\CF)$ be a foliated manifold, $x_0\in M$ a point and $X_1,\dots, X_k\in \CF$ a collection of vector fields such that their modular classes under $I_{x_0} \CF$, which we denote by $[X_1] ,\cdots,[X_k]$, are a basis for $\CF_{x_0}:= \CF/I_{x_0} \CF$. Recall that the path holonomy bisubmersions are given by an open neighborhood $W\subset \RR^k\times M$ of $(0,x_0)$, with $\bs=\mathrm{Pr}_M$ and $\bt$ given the flows of $X_1,\cdots,X_k$. 
		
		Note that neighbourhoods of points of the form $(0,x)$ in a path holonomy bisubmersion carry the identity diffeomorphism on $M$ and can be embedded in any bisubmersion that also carries the identity diffeomorphism. This follows from  {proposition} \ref{path} (ii) and corollary \ref{carry} (ii).
		
		The holonomy groupoid will be given by a family of path holonomy bisubmersions, glued together by an equivalence relation, which we describe as follows:
		
		\begin{defi}\label{def:eq.rel}
			Let $(M,\CF)$ be a foliated manifold, $(U_1,\bt_1,\bs_1)$ and $(U_2,\bt_2,\bs_2)$ bisubmersions for $\CF$, $u_1\in U_1$ and $u_2\in U_2$. We say that $u_1$ is \textbf{equivalent} to $u_2$ if there is a local morphism of bisubmersions $\varphi\colon U_1\supset U'_1\fto U_2$ with $\varphi(u_1)=u_2$.
		\end{defi}
		
		An equivalent description of this equivalence is given by the following lemma:
		
		\begin{lem}\label{prop:equiv2} Let $(M,\CF)$ be a foliated manifold and $(U_1,\bt_1,\bs_1)$, $(U_2,\bt_2,\bs_2)$ bisubmersions for $\CF$. The elements $u_1\in U_1$ and $u_2\in U_2$ are equivalent if and only if $U_1$ and $U_2$ carry the same local difeomorphism at $u_1$ and $u_2$ respectively.
		\end{lem}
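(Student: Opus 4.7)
The plan is to prove the two implications separately; both turn out to follow rather directly from results already established in this chapter, so the lemma is essentially a conceptual reformulation.

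For the implication ``equivalent $\Rightarrow$ same local diffeomorphism'', suppose there is a local morphism of bisubmersions $\varphi\colon U_1'\fto U_2$ with $\varphi(u_1)=u_2$, where $U_1'\subset U_1$ is open. By Lemma \ref{lem:e.bisect} (applied shrinking $U_1'$ if necessary), there exists a bisection $\sigma\colon M'\fto U_1'$ through $u_1$, carrying a local diffeomorphism $\phi:=\bt_1\circ\sigma$ of $M$. I would then form $\varphi\circ\sigma\colon M'\fto U_2$: since $\bs_2\circ\varphi=\bs_1$ it is a section of $\bs_2$ through $u_2$, and since $\bt_2\circ\varphi=\bt_1$ we have $\bt_2\circ(\varphi\circ\sigma)=\bt_1\circ\sigma=\phi$, which is a diffeomorphism, forcing the image of $\varphi\circ\sigma$ to be transverse to the fibres of $\bt_2$. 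Hence $\varphi\circ\sigma$ is a bisection of $U_2$ through $u_2$ carrying $\phi$, which is precisely the observation made in the text just before Proposition \ref{path}. Thus both $U_1$ at $u_1$ and $U_2$ at $u_2$ carry $\phi$.

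For the converse ``same local diffeomorphism $\Rightarrow$ equivalent'', this is exactly the content of Corollary \ref{carry}(i): given that both bisubmersions carry the same local diffeomorphism $\phi$ at $u_1$ and $u_2$ respectively, that corollary produces a neighbourhood $V_1'$ of $u_1$ in $U_1$ and a morphism $f\colon V_1'\fto U_2$ with $f(u_1)=u_2$, which is precisely the morphism required by Definition \ref{def:eq.rel}.

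The only step requiring any genuine verification is the transversality check in the forward direction, and this is immediate from the fact that morphisms of bisubmersions intertwine source and target. So I do not expect any serious obstacle: the lemma is really a packaging together of Lemma \ref{lem:e.bisect}, the elementary behaviour of morphisms on bisections, and Corollary \ref{carry}(i). Its usefulness lies in replacing the ``existence of a morphism'' picture of equivalence by the more geometric ``same carried diffeomorphism'' picture, which will be needed to extend the holonomy-equivalence characterisation of Proposition \ref{prop:eq.reg.hol} from the regular to the singular setting.
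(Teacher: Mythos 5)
Your proof is correct and takes essentially the same route as the paper: the paper's proof simply cites Corollary \ref{carry}(i) for the ``same diffeomorphism $\Rightarrow$ equivalent'' direction, which is exactly your converse step. Your forward direction just spells out the observation already recorded in the text after the definition of morphism (a morphism sends a bisection through $u_1$, supplied by Lemma \ref{lem:e.bisect}, to a bisection through $u_2$ carrying the same diffeomorphism), including the routine transversality check, so there is no gap.
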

		\begin{proof}
			It is a direct consequence of {corollary \ref{carry} (i)}.
		\end{proof}
		
		It is easy to see that definition \ref{def:eq.rel} gives an equivalence relation on any family of bisubmersions.

		\begin{defi}\label{def:atlas.bi}\label{def:adapted}
			Let $\CU=\{U_i\}_{i\in I}$ a family of bisubmersions of $\CF$.
			\begin{itemize}
				\item A bisubmersion $U'$ is \textbf{adapted} to $\CU$ if for any $u'\in U'$ there is $u\in U\in \CU$ which is equivalent to $u'$. A family of bisubmersions $\CU'$ is adapted to $\CU$ if any element $U'\in \CU'$ is adapted to $\CU$.
				\item We say that $\CU$ is an atlas if:
				\begin{enumerate}
					\item For all $x\in M$ there exists $U\in \CU$ that carries  the identity diffeomorphism locally at $x$.
					\item The inverses and finite compositions of elements in $\CU$ are adapted to $\CU$.
				\end{enumerate}
				\item Two atlases are \textbf{equivalent} if they are adapted to each other.
			\end{itemize}
		\end{defi}
		
		\begin{ex}\label{ex:UdeG}
			Let $\CG\soutar M$ be a Lie groupoid and $\CF_\CG$ its associated foliation. If $\CG$ is Hausdorff then $(\CG,\bt,\bs)$ is a bisubmersion and an atlas for $\CF_\CG$. Indeed through the identity bisection $e\colon M\fto \CG$ we have that $\CG$ satisfies condition 1. of definition \ref{def:atlas.bi}; and thanks to the inverse and composition of $\CG$, it also satisfies condition 2.
			
			If $\CG$ is not Hausdorff, take $\CU_\CG:=\{U_i\}_{i\in I}$ a Hausdorff cover for $\CG$. Then $\CU_\CG$ is an atlas of bisubmersions for $\CF_\CG$. 
		\end{ex}
		
		\begin{prop}
			[{\bf Groupoid of an atlas}]
			\label{prop:groidatlas} Let $(M,\CF)$ be a foliated manifold and $\CU=\{(U_i,\bt_i,\bs_i)\}_{i\in I}$ an atlas of bisubmersions for $\CF$.
			\begin{enumerate}
				\item[(i)] Denote the set: 
				$$\CG(\CU):= \left\{\left(\sqcup_{i\in I} U_i\right) / \sim \right\},$$
				
				where $\sim$ is the equivalence relation given in definition \ref{def:eq.rel}.
				
				Let $Q=(q_i)_{i\in I}\colon \sqcup_i U_i \fto \CG(\CU)$ be the quotient map.
				\item[(ii)] There are maps $\bt,\bs\colon \CG(\CU)\fto M$ such that $\bs\circ q_i=\bs_i$ and $\bt\circ q_i=\bt_i$.
				\item[(iii)] There is a groupoid structure on $\CG(\CU)$ with set of objects $M$, source and target maps $\bs$ and $\bt$ defined above and such that $q_i(u)q_j(v)=q_{U_i\circ U_j}(u,v)$.
			\end{enumerate}
		\end{prop}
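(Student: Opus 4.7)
My plan is to first check that the relation $\sim$ from Definition \ref{def:eq.rel} really is an equivalence relation on $\sqcup_{i\in I} U_i$, then construct the maps $\bs,\bt$, then define the multiplication, identity and inverse, verifying at each stage that the construction is independent of the chosen representatives. The central observation throughout is Lemma \ref{prop:equiv2}: two elements of (possibly different) bisubmersions are equivalent iff they carry the same local diffeomorphism of $M$. This reduces every compatibility check to a statement about diffeomorphisms of $M$.

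For part (i), reflexivity of $\sim$ is clear via the identity map of each $U_i$. Symmetry follows from Corollary \ref{carry}(ii): a local morphism of bisubmersions sending $u_1 \in U_1$ to $u_2 \in U_2$ admits a local inverse sending $u_2$ back to $u_1$. Transitivity is immediate by composing local morphisms. Thus the quotient $\CG(\CU)$ and the map $Q=(q_i)_{i\in I}$ make sense. For part (ii), Lemma \ref{lem:bisect.diff} shows that any equivalence $u_1\sim u_2$ forces $\bs_1(u_1)=\bs_2(u_2)$ and $\bt_1(u_1)=\bt_2(u_2)$, since morphisms of bisubmersions commute with source and target. Hence $\bs$ and $\bt$ descend to well-defined maps $\CG(\CU)\to M$.

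For part (iii), the key step is to define the multiplication. Given $q_i(u), q_j(v)\in \CG(\CU)$ with $\bs(q_i(u))=\bt(q_j(v))$, i.e.\ $\bs_i(u)=\bt_j(v)$, the pair $(u,v)$ lives in the composition bisubmersion $U_i\circ U_j$. Since $\CU$ is an atlas, $U_i\circ U_j$ is adapted to $\CU$, so $(u,v)$ is equivalent to some element of some $U_k\in\CU$, and we set $q_i(u)\cdot q_j(v):=q_{U_i\circ U_j}(u,v)$, interpreted as the corresponding class in $\CG(\CU)$. To see this is independent of representatives, suppose $u\sim u'$ in $U_{i'}$ and $v\sim v'$ in $U_{j'}$. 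By Lemma \ref{prop:equiv2}, $u$ and $u'$ carry the same local diffeomorphism $\phi$ of $M$, and similarly $v,v'$ carry some $\psi$; then both $(u,v)\in U_i\circ U_j$ and $(u',v')\in U_{i'}\circ U_{j'}$ carry the composition $\phi\circ\psi$, so they are equivalent by Lemma \ref{prop:equiv2} again. The identity at $x\in M$ is defined as the class of any element $u\in U\in\CU$ carrying the identity diffeomorphism locally at $x$ (such $u$ exists by atlas axiom 1, and any two such elements are equivalent by Lemma \ref{prop:equiv2}). The inverse of $q_i(u)$ is the class of $u$ regarded as an element of $U_i^{-1}\in\CU$ (or adapted to $\CU$), which carries the inverse of the diffeomorphism carried by $u$.

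Finally, the groupoid axioms (associativity, unit and inverse laws) all follow from the analogous statements for local diffeomorphisms of $M$ together with Lemma \ref{prop:equiv2}: associativity of the product reduces to associativity of composition of diffeomorphisms, and the unit and inverse identities reduce to the elementary facts $\mathrm{id}\circ\phi=\phi=\phi\circ\mathrm{id}$ and $\phi\circ\phi^{-1}=\mathrm{id}$. I expect the main obstacle to be the well-definedness of the multiplication; this is where one must carefully combine the atlas property (to guarantee that $U_i\circ U_j$ is adapted to $\CU$, so that $q_{U_i\circ U_j}(u,v)$ makes sense as a class in $\CG(\CU)$) with the ``local-diffeomorphism'' characterization of equivalence from Lemma \ref{prop:equiv2}.
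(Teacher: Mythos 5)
Your proposal is correct and follows essentially the same route the paper (and Androulidakis--Skandalis, from which this proposition is taken) relies on: symmetry and transitivity of $\sim$ via Corollary \ref{carry}, well-definedness of $\bs,\bt$ because morphisms of bisubmersions commute with source and target, and well-definedness of the multiplication, units, inverses and the groupoid axioms reduced to composition of carried diffeomorphisms through Lemma \ref{prop:equiv2} together with the atlas axioms guaranteeing that $U_i\circ U_j$ and $U_i^{-1}$ are adapted to $\CU$. The only blemish is the citation in part (ii): the relevant fact is the defining property of a morphism of bisubmersions (commutation with $\bs$ and $\bt$), not Lemma \ref{lem:bisect.diff}, but you also state the correct reason, so nothing is missing.
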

		
		Given a foliated manifold $(M,\CF)$ and an atlas of bisubmersions $\CU=\{U_i\}_{i\in I}$  for $\CF$, we endow $\CG(\CU)$ with the quotient topology, i.e.  the finest topology that makes the quotient map: $Q\colon\sqcup_i U_i \fto \CG(\CU)$ continuous.
		
		\begin{lem}\label{lem:openmap}
			Given a foliated manifold $(M,\CF)$ and  an atlas of bisubmersions $\CU=\{U_i\}_{i\in I}$ for $\CF$, the quotient map $Q\colon\sqcup_i U_i \fto \CG(\CU)$ is open.
		\end{lem}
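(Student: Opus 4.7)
The plan is to unpack the definition of the quotient topology: the map $Q$ is open if and only if, for every open set $V\subset \sqcup_i U_i$, the saturation $Q^{-1}(Q(V))$ is open in $\sqcup_i U_i$. So I would reduce the problem to showing that if $u\in \sqcup_i U_i$ is equivalent (under the relation of definition \ref{def:eq.rel}) to some $v\in V$, then there exists an open neighborhood of $u$ entirely contained in $Q^{-1}(Q(V))$.

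Fix such $u\in U_i$ and $v\in V\cap U_j$ with $u\sim v$. The definition of $\sim$ gives a local morphism of bisubmersions going in one direction; however, the key input is corollary \ref{carry} (ii), which supplies a local morphism in the \emph{opposite} direction. Applying it, there is an open neighborhood $U_i'\subset U_i$ of $u$ and a smooth morphism of bisubmersions $\psi\colon U_i'\to U_j$ with $\psi(u)=v$. Since $\psi$ is smooth, hence continuous, the set $W:=\psi^{-1}(V\cap U_j)$ is open in $U_i'$ (and therefore in $U_i$) and contains $u$.

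Finally I would verify that $W\subset Q^{-1}(Q(V))$. For any $u'\in W$ the restriction of $\psi$ to a neighborhood of $u'$ is still a morphism of bisubmersions sending $u'$ to $\psi(u')\in V$, so $u'\sim \psi(u')$ by definition \ref{def:eq.rel}, and hence $Q(u')=Q(\psi(u'))\in Q(V)$. This gives the required open neighborhood of $u$ inside $Q^{-1}(Q(V))$, completing the argument.

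The only real subtlety, and the step I would flag as the main point, is the use of corollary \ref{carry} (ii) to flip the direction of the morphism carrying the equivalence. The relation $\sim$ is \emph{defined} asymmetrically, so without this reversal one would naïvely push the open set $V$ forward by $\varphi$, and there is no reason for $\varphi(V)$ to be open (the morphism need not be an open map a priori). Pulling back via the reverse morphism $\psi$ is what makes continuity, and hence openness, do the work.
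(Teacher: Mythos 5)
Your proposal is correct and follows essentially the same route as the paper: reduce openness of $Q$ to openness of saturations $Q^{-1}(Q(V))$, then use the (symmetric, via corollary \ref{carry}) equivalence to obtain a morphism of bisubmersions \emph{from} a neighborhood of $u$ toward the point of $V$, and conclude by continuity. The only cosmetic difference is that the paper views the open set itself as a bisubmersion and gets a morphism landing directly in it, whereas you pull the open set back along $\psi$; these are the same argument, and your remark about not pushing forward (morphisms need not be open) is exactly the point the paper's phrasing finesses.
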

		\begin{proof}
			We will prove that given an open subset $A$   of $\sqcup_i U_i$, the preimage $Q^{-1}(Q(A))$ is open.
			
			Take $x\in Q^{-1}(Q(A))$. {Denote by $U\in\CU$ the bisubmersion} such that $x\in U$. There exists $y\in A$ such that $Q(x)=Q(y)$. {Notice that $A$ itself is a bisubmersion. By the definition of the equivalence relation in proposition \ref{prop:groidatlas},} 
			there is a neighbourhood $U'\subset U$ of $x$ and a  morphism of bisubmersions $f\colon U'\fto {A}$ sending $x$ to $y$. {Hence $Q(U')\subset Q(A)$, or in other words }$U'\subset Q^{-1}(Q(A))$, therefore $x$ is an interior point of $Q^{-1}(Q(A))$.
		\end{proof}

		\begin{lem}\label{lem:adapt}
			If the atlas $\CU_1$ is adapted to $\CU_2$ then
			\begin{enumerate}
				\item[(i)] there is a {canonical injective open morphism of topological} groupoids $\varphi\colon \CG(\CU_1)\fto \CG(\CU_2)$,
				\item[(ii)] $\varphi$ is surjective if and only if $\CU_2$ equivalent to $\CU_1$. In that case $\varphi$ is an isomorphism of topological groupoids.
			\end{enumerate}
		\end{lem}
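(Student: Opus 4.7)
The plan is to build $\varphi$ element-wise on representatives using the adaptedness hypothesis, then verify all the structural claims via the characterization of the equivalence relation given in lemma \ref{prop:equiv2} (namely, two points are equivalent exactly when the corresponding bisubmersions carry the same local diffeomorphism).

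First I would construct $\varphi$ as follows. Given $[u]\in\CG(\CU_1)$, pick a representative $u\in U\in\CU_1$. By the adaptedness of $\CU_1$ to $\CU_2$, there exists some $v\in V\in\CU_2$ equivalent to $u$, and I set $\varphi([u]):=[v]\in\CG(\CU_2)$. Well-definedness (independence of the choices of $u$ and $v$) follows from transitivity of the equivalence relation in definition \ref{def:eq.rel}. Equivalently, via lemma \ref{prop:equiv2}, $\varphi([u])$ is just the class containing any bisection-representative of the local diffeomorphism carried by $U$ at $u$, so $\varphi$ is canonically attached to the assignment ``class of a point in an atlas $\mapsto$ germ of carried diffeomorphism''.

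Next I would check that $\varphi$ is a morphism of groupoids. Commutation with $\bs,\bt$ is immediate because an equivalence $u\sim v$ forces $\bs_1(u)=\bs_2(v)$ and $\bt_1(u)=\bt_2(v)$. For multiplication, I use proposition \ref{prop:groidatlas}(iii): if $[u_1][u_2]$ is a composable pair in $\CG(\CU_1)$, its product is represented by $(u_1,u_2)\in U_1\circ U_2$, and choosing $v_i\in\CU_2$ equivalent to $u_i$ (via local morphisms $f_i\colon U_i'\to V_i$), the product $f_1\times f_2$ is a morphism of bisubmersions $U_1'\circ U_2'\to V_1\circ V_2$ sending $(u_1,u_2)$ to $(v_1,v_2)$; this witnesses the equivalence. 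Injectivity is immediate from the definition: $\varphi([u])=\varphi([u'])$ means $u$ and $u'$ become equivalent through a common element of $\sqcup_j V_j$, hence $[u]=[u']$ in $\CG(\CU_1)$ by transitivity.

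For the topological part, openness will follow from lemma \ref{lem:openmap}: writing $Q_i\colon \sqcup U^i\fto\CG(\CU_i)$ for the quotient maps ($i=1,2$), we have $\varphi\circ Q_1=Q_2\circ F$ locally for an appropriate local submersion $F$ (pieced together from the local morphisms $f\colon U_1'\fto V$ produced by adaptedness, possibly post-composing with the injection of an open subset of $V$). Since each $f$ is a morphism of bisubmersions and hence a submersion (both $f$ and its local inverse are morphisms, so it is locally a diffeomorphism onto a submanifold of $V$), the image $F(Q_1^{-1}(A))$ is open in $\sqcup U^2_j$ for every open $A\subset\CG(\CU_1)$, and openness of $Q_2$ then gives openness of $\varphi$. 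Continuity of $\varphi$ is equivalent (by the quotient topology on the source) to continuity of $\varphi\circ Q_1 = Q_2\circ F$, which is clear.

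Finally, for (ii): $\varphi$ is surjective iff every $[v]\in\CG(\CU_2)$ has a representative equivalent to some element of $\sqcup_i U^1_i$, which is literally the statement that $\CU_2$ is adapted to $\CU_1$; in that case $\CU_1$ and $\CU_2$ are equivalent. A bijective open morphism of topological groupoids is automatically an isomorphism of topological groupoids (the inverse is continuous because $\varphi$ is open and bijective, and it is a groupoid morphism because $\varphi$ is). The main delicate point I foresee is verifying multiplicativity cleanly: one must handle the fact that the local morphisms $f_1,f_2$ have possibly shrunken domains, so care is needed to ensure $(u_1,u_2)$ actually lies in $U_1'{}_{\bs}\!\times_{\bt}U_2'$; this is arranged by shrinking $U_2'$ so that $\bt(U_2')\subset \bs(U_1')$, which is possible because $\bt_2\circ f_2=\bt_2$ and both source and target maps are submersions.
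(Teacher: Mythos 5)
Your construction of $\varphi$, the check that it is a well-defined injective morphism of groupoids, the continuity argument, and part (ii) all match the paper's proof in substance: the paper defines $\widehat{\varphi}\colon \sqcup_{U\in\CU_1}U\fto\CG(\CU_2)$, $u\mapsto[f(u)]$ for $f$ any local morphism of bisubmersions into an element of $\CU_2$, and factors it through $Q_1$. However, your argument for openness contains a genuine gap. You assert that each local morphism of bisubmersions $f\colon U_1'\fto V$ is a submersion, indeed ``locally a diffeomorphism onto a submanifold of $V$'', because a morphism exists in the opposite direction. Neither claim is justified: a morphism of bisubmersions is only required to commute with $\bs$ and $\bt$, and the two bisubmersions may have different dimensions (for instance the inclusion $U\fto U\times\RR^k$, $u\mapsto(u,0)$, into the bisubmersion of lemma \ref{lem:sub.bisub} is a morphism that is not a submersion); moreover the reverse morphism supplied by corollary \ref{carry}(ii) is not a local inverse of $f$ --- it only matches $f$ at the chosen point. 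Even granting a diffeomorphism onto a submanifold, the image would not be open in $V$ unless that submanifold were open, so the key step ``$F(Q_1^{-1}(A))$ is open in $\sqcup_{V\in\CU_2}V$'' fails in general.

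What is actually needed is not openness of the image of $F$ but openness of its $Q_2$-saturation, and this is where the argument of lemma \ref{lem:openmap} (which you cite but do not really use) enters: given an open $A\subset\sqcup_{U\in\CU_1}U$ and a point $v$ of $\sqcup_{V\in\CU_2}V$ with $Q_2(v)\in\widehat{\varphi}(A)$, the point $v$ is equivalent to some $u\in A$; since $A$ is itself (a union of) bisubmersions, corollary \ref{carry}(i) yields a morphism of bisubmersions from a neighbourhood $V'$ of $v$ into $A$ sending $v$ to $u$, and then every point of $V'$ is equivalent to a point of $A$, so $V'\subset Q_2^{-1}(\widehat{\varphi}(A))$. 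This shows $\widehat{\varphi}$, and hence $\varphi$, is open with no submersion hypothesis on the morphisms. With this repair your proof coincides with the paper's; the rest of your write-up (well-definedness via transitivity, multiplicativity via $f_1\times f_2$ on $U_1'\circ U_2'$, injectivity, and the surjectivity criterion in (ii)) is correct, and your worry about shrinking domains for composability is unnecessary, since $(u_1,u_2)$ automatically lies in $U_1'{}_{\bs}\!\times_{\bt}U_2'$.
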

		\begin{proof}{The map $\varphi$ is induced by morphisms of bisubmersions.
				More precisely:} 
			there is a well defined map $\widehat{\varphi}\colon \sqcup_{U\in \CU_1}U \fto \CG(\CU_2)$, given by   {$u\mapsto [f(u)]$ where $f$ is any  morphism of bisubmersions from a neighbourhood of $u$ to a bisubmersion in $\CU_2$}. Following the same argument as in the proof of Lemma \ref{lem:openmap} we get that $\widehat{\varphi}$ is an open map. Also $\widehat{\varphi}$ factors through the quotient map $Q_1$, {yielding an open, continuous and injective map $\varphi$.}
			\begin{equation} \label{diag:cont1}
				\begin{tikzcd}\sqcup_{U\in \CU_1} U \arrow[d,"Q_1"] \arrow[dr,"\widehat{\varphi}"] & \\
					\CG(\CU_1) \arrow[r,"\varphi"]   & \CG(\CU_2).
				\end{tikzcd}
			\end{equation}
			This map is surjective if and only if for any $u_2\in U_2\in \CU_2$ there is an $u_1\in U_1\in \CU_1$ equivalent to it, i.e. if $\CU_2$ is adapted to $\CU_1$.
		\end{proof}

		\begin{defi}\label{def:pathholatlas}
			Let $(M,\CF)$ be a foliated manifold. {A {\bf path holonomy atlas} is an atlas generated by a family of path holonomy bisubmersions $\{(U_i,\bt_i,\bs_i)\}_{i\in I}$ such that $\cup\bs_i(U_i)=M$}. 
		\end{defi}
		
		{The following lemma implies easily corollary \ref{cor:phatlas} and corollary \ref{cor:adapted}, which together are the content of  \cite[Examples 3.4(3)]{AndrSk}.} 
		
		\begin{lem}\label{lem:smallelements} {Let $(M,\cF)$ be a foliated manifold.
				Let $U\subset \RR^n\times M$ be a path holonomy bisubmersion, and $\CV$ any atlas of bisubmersions for $\cF$. Then $U$ is adapted to $\CV$.}
		\end{lem}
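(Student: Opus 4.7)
My plan is to show that every $u\in U$ is equivalent, in the sense of Definition \ref{def:eq.rel}, to a point of some bisubmersion in $\CV$; by Lemma \ref{prop:equiv2}, this reduces to verifying that each local diffeomorphism carried by $U$ at a point of $U$ is also carried by some bisubmersion in $\CV$.

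I would first handle the base case near the points $(0,x)\in U$, at which $U$ carries the identity diffeomorphism. By axiom (1) of an atlas, there exists $V_{x}\in\CV$ carrying the identity at $x$, and Remark \ref{rem:invertmorph} provides an open neighbourhood of $(0,x)$ in $U$ which embeds as a morphism of bisubmersions into $V_x$. Hence each point in this neighbourhood of $U$ is equivalent to a point of $V_x\in\CV$.

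For an arbitrary point $u=(v,x)\in U$, the local diffeomorphism $\phi$ carried by $U$ at $u$ (using the horizontal bisection $y\mapsto(v,y)$, which is legitimate after possibly shrinking its domain) equals $\exp(X)$ with $X:=\sum_i v_iX_i\in\CF$. I would decompose $\phi=\psi_{N-1}\circ\cdots\circ\psi_0$ with $\psi_k:=\exp(X/N)$ acting near $x_k:=\exp(kX/N)(x)$, using the one-parameter group property of flows. For $N$ sufficiently large, each $\psi_k$ is close to the identity at $x_k$; by constructing a path-holonomy bisubmersion $\tilde U_k$ at $x_k$ whose defining vector fields arrange for $X$ to lie in their $\CI$-span (e.g.\ by including among a basis of $\CF_{x_k}$ as many of the $X_i$ as possible), each $\psi_k$ is carried by $\tilde U_k$ at a point arbitrarily close to $(0,x_k)$ for large $N$. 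The base case applied to $\tilde U_k$ and to a $V_{x_k}\in\CV$ (which exists by axiom (1)) then shows that $\psi_k$ is carried by some $V_k\in\CV$. Axiom (2) of atlases finally ensures that the composition $V_{N-1}\circ\cdots\circ V_0$ is adapted to $\CV$, so some $V'\in\CV$ carries $\phi$ at some point $v'$, yielding $u\sim v'$ as required.

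The main obstacle is the faithful realisation of each small-time flow $\psi_k$ by a path-holonomy bisubmersion at $x_k$. Since Proposition \ref{path} requires the defining vector fields of a path-holonomy bisubmersion to form a basis of $\CF_{x_k}$, one cannot simply adjoin $X_1,\ldots,X_n$ to an existing basis; and writing $X=\sum_i f_i(y)Y_i^k(y)$ with $Y_i^k$ local generators of $\CF$ near $x_k$ does not produce $\exp(X/N)$ exactly from the natural bisection of the corresponding path-holonomy bisubmersion, only up to a correction of order $1/N^2$. The delicate point is to make these errors quantitatively compatible with Remark \ref{rem:invertmorph} uniformly in $k\in\{0,\ldots,N-1\}$, so that all the intermediate points still lie within neighbourhoods of $(0,x_k)$ embedding into bisubmersions of $\CV$.
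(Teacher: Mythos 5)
Your overall strategy --- reduce to points carrying the identity via Remark \ref{rem:invertmorph}, cut the time-one flow into $N$ small flows along its integral curve, use compactness to get a uniform $N$, and then compose inside the atlas --- is the same as the paper's. But the obstacle you flag at the end is exactly the crux, and as you leave it it is a genuine gap: equivalence of points of bisubmersions means carrying the \emph{same germ} of diffeomorphism (Lemma \ref{prop:equiv2}), so realizing each $\psi_k=\exp(X/N)$ only up to an $O(1/N^2)$ correction is not merely a matter of delicate uniform estimates --- the composed element would carry a diffeomorphism close to, but different from, $\exp(X)$, and nothing in the equivalence relation absorbs such an error. Note also that a constant bisection of a path-holonomy bisubmersion built from fields $Y_1,\dots,Y_m$ carries the flow of a \emph{constant}-coefficient combination $\sum_i\lambda_iY_i$ only; writing $X=\sum_i f_iY^k_i$ with non-constant $f_i$ therefore produces no point carrying $\exp(X/N)$ on the nose, so ``including as many of the $X_i$ as possible in a basis of $\CF_{x_k}$'' does not by itself give the exact realization you need (and the class of $X$ in $\CF_{x_k}$ may even vanish, in which case $X$ cannot be part of a basis at all).

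The paper closes this gap with an invariance trick rather than estimates. First, extend $v$ to a linear basis $v^1=v,v^2,\dots,v^n$ of $\RR^n$ and pass to the path-holonomy bisubmersion built from $\sum_iv^1_iX_i,\dots,\sum_iv^n_iX_i$; the point $u=(v,x)$ is equivalent to $((1,0,\dots,0),x)$ there (the constant bisections carry the same diffeomorphism), so one may assume $v=e_1$, i.e.\ the relevant diffeomorphism is $\exp(X_1)$ for a single member $X_1$ of the generating family. Second, at each point $\gamma(h)=\exp_x(hX_1)$ of the integral curve, take as generating family the pushforwards of $X_1,\dots,X_n$ by $\exp(hX_1)$: these lie in $\cF$ and generate it near $\gamma(h)$ because the flow of $X_1\in\cF$ preserves $\cF$ (Proposition \ref{prop:exp.in.aut}), and --- the key point --- the first member is exactly $X_1$, a vector field being invariant under its own flow. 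Hence the point $(\tfrac1N e_1,\gamma(h))$ of the resulting bisubmersion carries $\exp(\tfrac1NX_1)$ \emph{exactly} and lies as close to $(0,\gamma(h))$ as desired, so Remark \ref{rem:invertmorph} applies verbatim; compactness of $\gamma([0,1])$ gives a single $N$ valid for all factors, and the composition of the $N$ elements is equivalent to $u$ while lying in a composition of bisubmersions admitting morphisms into $\CV$, so $u$ is adapted to $\CV$ by axiom (2) of Definition \ref{def:atlas.bi}. Replacing your approximate realization of the $\psi_k$ by this exact one makes your argument go through.
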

		\begin{proof} We have to show that around any point of $U$ there is a locally defined morphism of bisubmersions to an element of $\CV$, see definition \ref{def:adapted}.
			
			Let $u=(v,x)\in U$. {If $v=0$ we can simply apply Remark \ref{rem:invertmorph}, so in the following we assume $v\neq 0$.}
			Denote by $X_1,\dots,X_n$ the vector fields in $\cF$ used to construct the path holonomy bisubmersion $U$.  Extend $v$ to a basis $v^1:=v, v^2,\dots,v^n$ of $\RR^n$, and consider the path holonomy bisubmersion $\tilde{U}$ given by the local generators $\sum_i v_i^1X_i,\dots, \sum_i v_i^nX_i$ of $\cF$. The points $u\in U$ and $((1,0,\dots,0),x)\in \tilde{U}$ are equivalent by {corollary \ref{carry}(i)}, since the constant bisections through them carry the same diffeomorphism. Hence in the rest of the proof we can assume that $v=(1,0,\dots,0)$.
			
			Since $X_1$ is compactly supported and hence complete, we can consider the path $\gamma\colon [0,1]\to M, \gamma(h)=\mathrm{exp}_x(hX_1)$, where $\mathrm{exp}$ denotes the time one flow. For every $h\in [0,1]$, apply the diffeomorphism $\exp(hX_1)$ to $X_1,\dots,X_n$. This yields    elements of $\cF$, the first one being $X_1$, which form a generating set for $\cF$ near $\gamma(h)$. Denote by $U_h$  the path holonomy bisubmersion  they give rise to. 
			
			By remark \ref{rem:invertmorph} there exists an open neighborhood $U_h'$ of $(0,\gamma(h))$ and a morphism of bisubmersions from $U_h'$ to a bisubmersion in $\CV$.
			Shrinking $U_h'$ if necessary, we can assume that it is of the form ${B_{{r_h}}}\times M'_h$ where  $B_{{r_h}}\subset \RR^n$ is the open ball with radius $r_h$ and $M'_h\subset M$.
			
			By the compactness of $[0,1]$, there are finitely many $h_1,\dots,h_k\in [0,1]$ such that $M'_{h_1},\dots,M'_{h_k}$ cover the image of $\gamma$. Hence
			there is a positive integer $N$ such that, for all $h\in [0,1]$, the point $(\frac{1}{N}v,\gamma(h))$ is contained in one of the $U_{h_i}'$.
			The composition 
			\begin{equation}\label{eq:comp}
				\left(\frac{v}{N}, \gamma\Big(\frac{N-1}{N}\Big)\right)\circ\dots\circ \left(\frac{v}{N}, \gamma\Big(\frac{1}{N}\Big)\right)\circ\left(\frac{v}{N},x\right)
			\end{equation} is well-defined\footnote{For instance, $\bt((\frac{v}{N},x))=
				exp_x(\frac{1}{N}X_1)=\gamma(\frac{1}{N})$.}.
			Further, it is equivalent to $u=(v,x)\in U$ since the constant bisections through {
				$u$ and through the composition \eqref{eq:comp}} 
			both carry the  diffeomorphism  $exp(X_1)$. Since each of the elements we are composing in \eqref{eq:comp} lies in the domain of a morphism of bisubmersions  to a bisubmersion in $\CV$, the composition also does.
		\end{proof}
		
		\begin{cor}\label{cor:phatlas}
			\begin{enumerate}
				\item [(i)] A {path holonomy  atlas} is adapted to any  atlas. 
				\item [(ii)] {Any two path holonomy atlases are equivalent. Therefore any of them defines the same (up to isomorphism) topological groupoid.} 
			\end{enumerate}
		\end{cor}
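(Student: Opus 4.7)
The essential ingredient has already been isolated as Lemma \ref{lem:smallelements}: a single path holonomy bisubmersion is adapted to any atlas of bisubmersions for $\CF$. The plan is simply to upgrade this statement about \emph{one} path holonomy bisubmersion to a statement about a whole path holonomy \emph{atlas}, and then deduce (ii) formally from (i) via Lemma \ref{lem:adapt}.

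For part (i), let $\CU_{ph}$ be a path holonomy atlas generated by path holonomy bisubmersions $\{U_i\}_{i\in I}$ whose source images cover $M$, and let $\CV$ be an arbitrary atlas of bisubmersions for $\CF$. I would first observe that each generator $U_i$ is adapted to $\CV$ by Lemma \ref{lem:smallelements} directly. The remaining bisubmersions in $\CU_{ph}$ are obtained by iterated inverses and finite compositions (and open restrictions), so the task is to check that the property ``adapted to $\CV$'' is stable under these two operations. For an inverse $U^{-1}$: any locally defined morphism of bisubmersions $\varphi\colon U\to V$ is automatically also a morphism $U^{-1}\to V^{-1}$ at the same point, hence every point of $U^{-1}$ is equivalent to a point of $V^{-1}$ for some $V\in\CV$; and since $\CV$ is itself an atlas, by item~2 of Definition \ref{def:atlas.bi} the bisubmersion $V^{-1}$ is adapted to $\CV$, so composing the two equivalences shows $U^{-1}$ is adapted to $\CV$. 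For a composition $U_1\circ U_2$: at any point $(u_1,u_2)$ we pick locally defined morphisms $\varphi_i\colon U_i\supset U_i'\to V_i\in\CV$ with $\varphi_i(u_i)=v_i$, and these assemble into a morphism of bisubmersions $U_1'\circ U_2'\to V_1\circ V_2$; then $V_1\circ V_2$ is adapted to $\CV$ because $\CV$ is an atlas, and concatenating once more gives the required equivalence to an element of $\CV$.

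Part (ii) is then essentially a formal consequence. Given two path holonomy atlases $\CU_{ph}^1$ and $\CU_{ph}^2$, applying (i) with $\CV=\CU_{ph}^2$ shows $\CU_{ph}^1$ is adapted to $\CU_{ph}^2$, and symmetrically $\CU_{ph}^2$ is adapted to $\CU_{ph}^1$; thus they are equivalent atlases in the sense of Definition \ref{def:atlas.bi}. Lemma \ref{lem:adapt}(ii) then provides a canonical isomorphism of topological groupoids $\CG(\CU_{ph}^1)\cong\CG(\CU_{ph}^2)$, giving a well-defined (up to canonical isomorphism) topological groupoid attached to any path holonomy atlas.

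The main obstacle of the whole corollary has already been absorbed into Lemma \ref{lem:smallelements}, whose proof is the genuinely nontrivial step (using completeness of the vector fields, the time-one flow along $X_1$, compactness of $[0,1]$, and breaking the bisection into $N$ small pieces). After that, the only care one needs is in the bookkeeping of the closure argument under compositions and inverses; here the slight subtlety is that one should not confuse the generating family of path holonomy bisubmersions with the full atlas they generate, and one must remember to invoke the atlas condition on $\CV$ (rather than on $\CU_{ph}$) when transporting equivalences through $V^{-1}$ or $V_1\circ V_2$.
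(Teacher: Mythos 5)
Your proof is correct and follows essentially the same route as the paper: Lemma \ref{lem:smallelements} is the key ingredient, part (i) is obtained by propagating adaptedness through inverses and finite compositions of the generating path holonomy bisubmersions, and part (ii) follows formally from mutual adaptedness together with Lemma \ref{lem:adapt}. The only difference is that the paper compresses the closure-under-composition/inversion bookkeeping into a single sentence, whereas you spell it out (correctly using the atlas property of $\CV$ and transitivity of the equivalence).
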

		\begin{proof}
			{(i) 
				A path holonomy atlas consists of  finite compositions of path holonomy bisubmersions. Hence the statement follows from lemma \ref{lem:smallelements}.}

			(ii)  {An immediate consequence of part (i) is that any pair of path holonomy atlases are adapted to each other, therefore they define the same  topological groupoid.}
		\end{proof}
		
		\begin{defi}\label{def:holgroidph}
			Let $(M,\CF)$ be a foliated manifold. The groupoid over $M$ associated to a path holonomy atlas, as in Proposition \ref{prop:groidatlas}, is called {\bf holonomy groupoid} and  is denoted $\CH(\CF)$.
		\end{defi}

		\begin{cor}\label{cor:adapted}
			There exists a {canonical} injective open {morphism of topological groupoids}
			\[\varphi\colon \CH(\CF)\fto \CG,\]
			where $\CG$ is any groupoid given by an atlas of bisubmersions for $\CF$ as in Proposition \ref{prop:groidatlas}.
		\end{cor}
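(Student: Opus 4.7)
My plan is to obtain the corollary as a direct synthesis of Corollary \ref{cor:phatlas} and Lemma \ref{lem:adapt}. Fix an atlas of bisubmersions $\CV$ for $\CF$ yielding $\CG=\CG(\CV)$, and choose a path holonomy atlas $\CU$, so that $\CH(\CF)=\CG(\CU)$ by definition \ref{def:holgroidph}. First I would invoke Corollary \ref{cor:phatlas}(i): since $\CU$ is a path holonomy atlas, it is adapted to every atlas, in particular to $\CV$. Then I would apply Lemma \ref{lem:adapt}(i) to the pair $(\CU,\CV)$, which produces an injective open morphism of topological groupoids
\[
\varphi\colon \CG(\CU)\fto \CG(\CV),
\]
i.e. $\varphi\colon \CH(\CF)\fto \CG$, as desired.

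The remaining point is canonicity, i.e.\ independence of the path holonomy atlas used to model $\CH(\CF)$. For two path holonomy atlases $\CU_1,\CU_2$, Corollary \ref{cor:phatlas}(ii) says they are equivalent, so by Lemma \ref{lem:adapt}(ii) the induced map $\psi\colon \CG(\CU_1)\to \CG(\CU_2)$ is an isomorphism of topological groupoids. Applying Lemma \ref{lem:adapt}(i) twice produces $\varphi_1\colon \CG(\CU_1)\to \CG(\CV)$ and $\varphi_2\colon \CG(\CU_2)\to \CG(\CV)$; unwinding the explicit construction of these maps from diagram \eqref{diag:cont1}, both $\varphi_1$ and $\varphi_2\circ\psi$ are obtained by sending the class of a point $u\in U\in \CU_1$ to the class in $\CG(\CV)$ of any point $v$ in an element of $\CV$ for which there is a local morphism of bisubmersions carrying $u\mapsto v$. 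Hence $\varphi_1=\varphi_2\circ\psi$, which exhibits the morphism as canonical modulo the canonical identification of $\CG(\CU_1)$ with $\CG(\CU_2)$.

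I do not expect a significant obstacle: all the nontrivial work, including the openness of the quotient maps (Lemma \ref{lem:openmap}), the factorization through the quotient (the diagram in the proof of Lemma \ref{lem:adapt}), and the adaptedness of path holonomy bisubmersions (Lemma \ref{lem:smallelements}), has already been established. The proof is therefore a one-line composition of these results, with a short verification of canonicity as above.
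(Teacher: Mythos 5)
Your argument is correct and is essentially the paper's own proof: the paper also deduces the corollary directly from Corollary \ref{cor:phatlas}(i) applied to the atlas defining $\CG$, combined with Lemma \ref{lem:adapt}(i). Your additional check of canonicity via Corollary \ref{cor:phatlas}(ii) and Lemma \ref{lem:adapt}(ii) is a harmless (and welcome) elaboration of a point the paper leaves implicit.
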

		\begin{proof}
			This follows from   corollary \ref{cor:phatlas} (i)  {applied to the atlas defining $\CG$, and from   {lemma \ref{lem:adapt} (i)}.}
		\end{proof}
		
		Because of corollary \ref{cor:adapted}, we can see the holonomy groupoid as an open neighborhood of the identity for any groupoid $\CG$ given by an atlas of bisubmersions. We will later prove that it is the source connected component of $\CG$.
		
		\begin{defi}\label{def:scon.bisub} Let $\CS$ be a family of source connected bisubmersions such that $\cap_{U\in \CS} \bs(U)=M$ and for every $u\in U\in \CS$ there is an element $e_u\in U$ carrying the identity {diffeomorphism} nearby $\bs(u)$. The atlas $\CU$ generated by $\CS$ is called a \textbf{source connected atlas}.	
		\end{defi}
		
		In particular any path holonomy atlas is a source connected atlas.
		
		\begin{lem}\label{lem:sconnholgroupid}
			Take $\CU$ a source connected atlas, then the groupoid $\CG(\CU)$ is source connected.
		\end{lem}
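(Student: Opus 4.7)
The plan is to introduce the set
\[
\CG_0 := \{\, g \in \CG(\CU) \st g \text{ lies in the path-component of } e_{\bs(g)} \text{ in } \bs^{-1}(\bs(g)) \,\}
\]
and show that $\CG_0 = \CG(\CU)$. First I would verify that $\CG_0$ is closed under the groupoid multiplication. Given composable $g,h \in \CG_0$, with continuous paths $\gamma_g$ in $\bs^{-1}(\bs(g))$ from $g$ to $e_{\bs(g)}$ and $\gamma_h$ in $\bs^{-1}(\bs(h))$ from $h$ to $e_{\bs(h)}$, the right-translation $t \mapsto \gamma_g(t)\cdot h$ is defined (since $\bs(\gamma_g(t)) = \bs(g) = \bt(h)$ for all $t$) and continuous (as groupoid multiplication is continuous). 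Its source is constantly $\bs(h)$, so it stays inside $\bs^{-1}(\bs(h))$ and connects $gh$ to $e_{\bs(g)}\cdot h = h$; concatenation with $\gamma_h$ then connects $gh$ to $e_{\bs(h)}$.

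Next I would prove that for every $w \in W \in \CS$, both $[w]$ and $[w]^{-1}$ belong to $\CG_0$. For $[w]$, source-connectedness of $W$ yields a path in $\bs_W^{-1}(\bs(w))$ from $w$ to an identity element $e_{W,\bs(w)}$; by Lemma~\ref{lem:openmap} the quotient map $Q$ is continuous, so its image is a path in $\bs^{-1}(\bs(w)) \subset \CG(\CU)$ from $[w]$ to $e_{\bs(w)}$. For $[w]^{-1}$, the key observation is to work inside the bisubmersion $W\circ W^{-1}$: the two elements $(w,w)$ and $(e_{W,\bs(w)},w)$ both lie in the source fibre over $\bt(w)$, and they are joined by the path $t\mapsto (\gamma(t), w)$ where $\gamma$ is as above. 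Under the quotient map these endpoints become $[w]\cdot[w]^{-1} = e_{\bt(w)}$ and $e_{\bs(w)}\cdot[w]^{-1} = [w]^{-1}$ respectively, so we obtain a path in $\bs^{-1}(\bt(w))$ from $e_{\bt(w)}$ to $[w]^{-1}$.

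Finally, I would conclude. Since $\CU$ is generated by $\CS$, any $g \in \CG(\CU)$ has a representative in some bisubmersion adapted to a finite composition $V_1\circ\cdots\circ V_n$ with each $V_i \in \CS\cup\CS^{-1}$; writing the representative as $(v_1,\dots,v_n)$ we get $g = [v_1]\cdots[v_n]$, where each factor is of the form $[w_i]^{\pm 1}$ for some $w_i \in W_i \in \CS$. The previous two steps show each factor is in $\CG_0$, and closure under multiplication gives $g \in \CG_0$. Hence $\CG(\CU) = \CG_0$, which is exactly source-connectedness.

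The main obstacle is step two in the case of inverses: source-connectedness of a bisubmersion $W$ does not imply target-connectedness, so the source fibres of $W^{-1}$ need not be path-connected and one cannot directly lift a path to $W^{-1}$. Working inside the product $W\circ W^{-1}$ and exploiting the interplay between the two factors is the device that overcomes this; the rest of the argument is then a straightforward combination of the continuity of groupoid multiplication, the openness of the quotient map, and the generation property of the atlas.
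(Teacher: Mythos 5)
Your proof is correct, and its skeleton is the same as the paper's: connect the class of each point of a generating bisubmersion to a unit via the $Q$-image of a source-fibre path (using source-connectedness of the elements of $\CS$ and the identity-carrying points from Def.~\ref{def:scon.bisub}), and then propagate this to all of $\CG(\CU)$ using that $\CS$ generates the atlas; the paper organizes the propagation as an induction on the length of compositions $U_k\circ\dots\circ U_1$, building the connecting curve $\gamma(t)\times u_{k-1}\times\dots\times u_1$ inside the composition bisubmersion and pushing it by $Q$, while you phrase it as ``$\CG_0$ is closed under multiplication and contains the generators''. Two differences are worth recording. First, your treatment of inverses via the path $t\mapsto(\gamma(t),w)$ in $W\circ W^{-1}$ is a genuine addition: the paper's induction only handles words whose leftmost factor is an element of $\CS$ (that is where source-connectedness and the identity-carrying point are used), and it never says how classes coming from $W^{-1}$, $W\in\CS$, are reached; your device closes exactly that case, and your diagnosis that one cannot simply lift paths to $W^{-1}$ is right. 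Second, your closure step appeals to continuity of the multiplication of $\CG(\CU)$, which has not been verified at this point of the text; it is true -- continuity of right translation by $h=[v]$, $v\in V$, follows from continuity of the map $U\circ V\to\CG(\CU)$ (adaptedness, as in Lemma~\ref{lem:adapt}) together with openness of $Q$ (Lemma~\ref{lem:openmap}) and Prop.~\ref{prop:groidatlas}(iii) -- but you should either include this small argument or avoid it altogether by doing what the paper does, namely constructing the entire connecting path inside a single composition bisubmersion and applying $Q$, whose continuity is immediate from the quotient topology.
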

		\begin{proof}
			Take $\CS$ the family of bisubmersion of definition \ref{def:scon.bisub} with atlas $\CU$. Denote by $Q\colon \sqcup_{U\in \CU} U\fto \CG(\CU)$ the {(surjective)} quotient open map.
			
			{ Note that, by hypothesis for all $U\in\CS$, the points of $Q(U)$ \emph{can be connected to the identity through {a continuous path in} an $\bs$-fiber of $\CG(\CU)$}. Now we  prove {the same statement for any point of} $Q(U_k\circ\dots\circ U_1)$, where {$k\ge 2$} and $U_k,\dots ,U_1\in \CU$.}
			
			By induction, suppose {that the statement holds for all points of $Q(U_{k-1}\circ\dots\circ U_1)$}.
			Take  ${u:=}u_k\times\dots \times u_1 \in U_k\circ \dots \circ U_{1}$, {and denote $p:=\bs(u_k)\in M$.} By hypothesis there exists a curve $\gamma(t)$ in {a source fibre of} $U_k$ {joining $u_k$ with $u_p\in U_k$ that carries the identity at $p$.} Then $Q(\gamma(t)\times\dots \times u_1)$
			{is a curve in an $\bs$-fibre of $\CH(\cF)$
				that connects $Q(u)$ with an element of $Q(U_{k-1}\circ\dots\circ U_1)$. Hence the statement holds for $Q(U_k\circ U_{k-1}\circ\dots\circ U_1)$.}
		\end{proof}
		
		\begin{cor}\label{cor:scon.op.hol}
			The holonomy groupoid of a foliated manifold $(M,\CF)$ is a source connected, open topological groupoid.
		\end{cor}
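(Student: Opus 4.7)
The plan is to assemble the corollary directly from the infrastructure already set up. By Definition \ref{def:holgroidph}, the holonomy groupoid $\CH(\cF)$ is $\CG(\CU)$ for a path holonomy atlas $\CU = \{(U_i,\bt_i,\bs_i)\}_{i\in I}$; by the remark immediately preceding Definition \ref{def:scon.bisub}, every path holonomy atlas is a source connected atlas in the sense of Definition \ref{def:scon.bisub}. Hence the source connectedness half of the statement is immediate from Lemma \ref{lem:sconnholgroupid} applied to $\CU$.

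For the openness claim, the key ingredients are already in place: Lemma \ref{lem:openmap} tells us that the quotient map $Q\colon \sqcup_i U_i \to \CG(\CU)$ is open (and it is tautologically surjective and continuous), and each $\bs_i\colon U_i \to M$ is a submersion, hence an open map. First I would record the identity $\bs \circ Q = \sqcup_i \bs_i$, which follows from part (ii) of Proposition \ref{prop:groidatlas}. Then, given any open $V \subset \CH(\cF)$, I would observe that $Q^{-1}(V)$ is open in $\sqcup_i U_i$ by continuity of $Q$, and compute
\[
\bs(V) \;=\; \bs\bigl(Q(Q^{-1}(V))\bigr) \;=\; (\sqcup_i \bs_i)\bigl(Q^{-1}(V)\bigr),
\]
where the first equality uses surjectivity of $Q$. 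Since each $\bs_i$ is an open map, the right-hand side is open in $M$, so $\bs$ is open. The same argument with $\bt_i$ in place of $\bs_i$ shows that $\bt$ is open.

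There is no real obstacle here: both halves are essentially one-line consequences of lemmas already proved. The only thing to be slightly careful about is checking that $\sqcup_i \bs_i$ is genuinely an open map on the disjoint union (which is immediate since openness is local on the source), and that $Q$ being open in Lemma \ref{lem:openmap} is used in the correct direction. The hardest conceptual work has already been done in establishing Lemma \ref{lem:openmap} and Lemma \ref{lem:sconnholgroupid}; this corollary is the clean packaging.
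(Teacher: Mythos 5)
Your proposal is correct and takes essentially the same route as the paper: source connectedness follows from Lemma \ref{lem:sconnholgroupid} applied to a path holonomy atlas (which is a source connected atlas), and openness of the source and target maps follows from the identity $\bs\circ Q=\sqcup_i\bs_i$ of Proposition \ref{prop:groidatlas}(ii), the quotient topology (continuity and surjectivity of $Q$), and the fact that submersions are open. The only cosmetic difference is that you invoke Lemma \ref{lem:openmap} (openness of $Q$), which your computation never actually uses -- continuity and surjectivity of $Q$ suffice, exactly as in the paper's argument.
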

		\begin{proof}
			By   Lemma \ref{lem:sconnholgroupid} any path holonomy atlas $\CU$, which is a source connected atlas, gives rise to a source connected groupoid, therefore $\CH(\CF):=\CG(\CU)$ is source connected.
			
			To prove that $\CH(\CF)$ is an \textbf{open} groupoid, denote by $Q\colon \sqcup_{U\in \CU} U\fto \CH(\CF)$ the quotient map. 
			The following diagram commutes, {where we denote by $\bs_H$ the source map of the holonomy groupoid}:
			\[\begin{tikzcd}
				\sqcup_{U\in \CU} U\arrow[dr," {\bs}"] \arrow[d,swap,"Q"] & \\
				\CH(\CF) \arrow [r,swap,"\bs_H"] & M 
			\end{tikzcd}\]
			{Recall that $\CH(\CF)$ is endowed with the quotient topology}.
			Using that $Q$ is continuous {and surjective}, and that $ {\bs}$ is a submersion and therefore   an open map, it  follows that $\bs_H$ is open map. A similar argument can be used for $\bt_H$.
		\end{proof} 
		
		A new result is the following:
		\begin{prop}\label{prop:sconn.grpd} If $\CU$ is a source connected atlas then $\CG(\CU)\simeq \CH(\CF)$.
		\end{prop}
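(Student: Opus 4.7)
The plan is to apply Corollary \ref{cor:adapted} and Lemma \ref{lem:adapt}(ii). Since $\CU$ is an atlas of bisubmersions for $\CF$, Corollary \ref{cor:adapted} supplies a canonical injective open morphism of topological groupoids $\varphi \colon \CH(\CF) \to \CG(\CU)$ covering the identity on $M$. By Lemma \ref{lem:adapt}(ii), it then suffices to prove that $\varphi$ is surjective.

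The key step will be to establish a general fact about open topological groupoids: if $H$ is an open subgroupoid of an open topological groupoid $G$ that contains every unit, and if the source fibres of $G$ are connected, then $H = G$. I would prove this by a coset argument. Fix a point $p$ in the base and consider on the source fibre $\bs^{-1}(p)$ the relation $g_1 \sim g_2$ iff $g_1 g_2^{-1} \in H$; this is well defined and reflexive/symmetric/transitive precisely because $H$ is a subgroupoid containing all units. Each equivalence class has the form $(H \cap \bs^{-1}(\bt(g_0))) \cdot g_0$ for a representative $g_0$, so it is the image of an open subset of $\bs^{-1}(\bt(g_0))$ under right multiplication $R_{g_0}$; since $R_{g_0}$ is continuous with continuous inverse $R_{g_0^{-1}}$, it is a homeomorphism onto $\bs^{-1}(p)$, and each class is therefore open. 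The partition of $\bs^{-1}(p)$ into open (hence clopen) classes combined with connectedness forces a single class, and since $e_p \in H$ this class is $H \cap \bs^{-1}(p) = \bs^{-1}(p)$.

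To finish, I would take $H := \varphi(\CH(\CF)) \subset \CG(\CU)$. It is open because $\varphi$ is an open map, it is a subgroupoid because $\varphi$ is a groupoid morphism, and it contains every identity because $\varphi$ covers the identity on $M$. The groupoid $\CG(\CU)$ is itself an open topological groupoid with connected source fibres: the first property follows by the same reasoning as in Corollary \ref{cor:scon.op.hol} (the source map of $\CG(\CU)$ factors through the open quotient map $Q$ and the submersive source maps of the bisubmersions), and the second is exactly Lemma \ref{lem:sconnholgroupid} applied to the source connected atlas $\CU$. Hence the general fact above applies, $H = \CG(\CU)$, and $\varphi$ is an isomorphism of topological groupoids by Lemma \ref{lem:adapt}(ii).

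The hard part of this plan will be confirming all topological details of the coset argument at the level of generality of an open topological groupoid—in particular that right multiplication by a fixed arrow genuinely is a homeomorphism between the relevant source fibres, so that the cosets are indeed open. Once that is in place, everything else reduces to bookkeeping of what has already been established for $\CH(\CF)$ and $\CG(\CU)$ in Corollary \ref{cor:adapted}, Lemma \ref{lem:adapt}, Lemma \ref{lem:openmap} and Lemma \ref{lem:sconnholgroupid}.
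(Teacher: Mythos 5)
Your proposal is correct and follows essentially the same route as the paper: obtain the injective open morphism $\varphi\colon \CH(\CF)\fto\CG(\CU)$ from Corollary \ref{cor:adapted}, reduce via Lemma \ref{lem:adapt} (ii) to surjectivity, and force surjectivity from the source connectedness of $\CG(\CU)$ given by Lemma \ref{lem:sconnholgroupid}. The only difference is that where the paper cites the general fact that a source connected topological groupoid is generated by any symmetric neighbourhood of the identities, you prove the equivalent statement (an open wide subgroupoid of a source connected topological groupoid is everything) directly by the clopen coset argument, which is sound since right multiplication by a fixed arrow is a homeomorphism between source fibres by continuity of composition and inversion.
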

		\begin{proof} By Cor.  \ref{cor:adapted} 
			we   get the existence of a natural injective open morphism {of topological groupoids} {$\varphi\colon \CH(\CF)\fto \CG(\CU)$}. 
			
			It is sufficient to show that $\varphi$ is surjective. Note that by lemma \ref{lem:sconnholgroupid}, $\CH(\CF)$ and $\CG(\CU)$ are $\bs$-connected. {It is a general fact that} any $\bs$-connected topological groupoid is generated by any symmetric neighbourhood of the identities \cite{MK2}. Using that $\varphi$ is an open morphism covering the identity we get that $\CG(\CU)$ is generated by the image of $\varphi$, which implies that $\varphi$ is surjective.	
		\end{proof}
		
		Moreover, there is a minimality property for the holonomy groupoid, given by the following corollary:
		
		\begin{cor}\label{rem:hol.sconn} Let $\CG\soutar P$ be a source connected Lie groupoid and $\CF_{\CG}$ its associated foliation. Then  $\CH(\CF_\CG)\cong \CG/\sim$  where the latter equivalence relation identifies two points when they carry the same local diffeomorphism. This implies that there is a surjective morphism $\CG\fto \CH(\CF_\CG)$.
		\end{cor}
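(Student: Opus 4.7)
The strategy is to realize $\CG$ itself (or a Hausdorff cover of it) as a source connected atlas of bisubmersions for $\cF_\CG$, and then apply Proposition \ref{prop:sconn.grpd} to identify the resulting groupoid with $\CH(\cF_\CG)$. The asserted quotient description will then follow directly from Lemma \ref{prop:equiv2}.

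First, I would choose a Hausdorff open cover $\CU=\{U_i\}_{i\in I}$ of $\CG$ such that each $U_i$ is $\bs$-connected and such that the identity section $e(P)$ is contained in $\cup_i U_i$. Such a cover exists because $\CG$ is locally Euclidean and we can shrink any initial Hausdorff cover. By Proposition \ref{prop:grpd.bisub} each $(U_i,\bt|_{U_i},\bs|_{U_i})$ is a bisubmersion for $\cF_\CG$. Moreover, every point of $e(P)\cap U_i$ lies on the identity bisection, which of course carries the identity diffeomorphism of $M$ near its source. Combined with the $\bs$-connectedness of each $U_i$, this shows that $\CU$ generates a source connected atlas in the sense of Definition \ref{def:scon.bisub}.

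Second, Proposition \ref{prop:sconn.grpd} applied to this atlas yields a canonical isomorphism of topological groupoids $\CH(\cF_\CG)\cong \CG(\CU)$. By construction $\CG(\CU)=\left(\sqcup_{i\in I} U_i\right)/\!\sim$, where the equivalence relation is the one of Definition \ref{def:eq.rel}. Lemma \ref{prop:equiv2} characterises this relation: two points $u_1,u_2$ are equivalent precisely when the bisubmersions through them carry the same local diffeomorphism of $M$ at $u_1,u_2$. Since the natural open surjection $\sqcup_{i\in I}U_i\twoheadrightarrow \CG$ identifies points only when they coincide as arrows of $\CG$, and since any two bisections of $\CG$ through the \emph{same} arrow carry the same local diffeomorphism, the equivalence relation descends to the relation $\sim$ on $\CG$ itself, giving $\CG(\CU)\cong \CG/\!\sim$. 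Composing gives $\CH(\cF_\CG)\cong \CG/\!\sim$, and the required surjective morphism $\CG\to \CH(\cF_\CG)$ is the induced quotient map.

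The mildly delicate point — the main obstacle in an otherwise straightforward argument — is the final identification of the equivalence relation on $\sqcup_i U_i$ with $\sim$ on $\CG$. One must check that if two arrows $g,h\in\CG$ are distinct but carry the same local diffeomorphism (through some choices of bisections), then they are indeed equivalent under the Definition \ref{def:eq.rel} relation when viewed inside the atlas, and conversely that equivalent points in $\sqcup_i U_i$ that map to different arrows of $\CG$ cannot occur. Both directions reduce, via Corollary \ref{carry} applied to bisections of $\CG$, to the statement that near any arrow $g\in \CG$ the local diffeomorphism associated with the constant bisection through $g$ is uniquely determined by $g$; this is the standard relation between bisections of a Lie groupoid and local diffeomorphisms of its base, so the argument goes through.
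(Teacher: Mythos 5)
There is a genuine gap at the first step, exactly where you invoke Proposition \ref{prop:sconn.grpd}. The atlas generated by an arbitrary Hausdorff, $\bs$-connected open cover of \emph{all} of $\CG$ is in general \emph{not} a source connected atlas in the sense of Definition \ref{def:scon.bisub}: that definition requires, for \emph{every} point $u$ of \emph{every} member $U$ of the generating family, an element $e_u\in U$ carrying the identity diffeomorphism near $\bs(u)$. A chart $U_i$ of your cover that does not meet the identity bisection contains no identity-carrying elements at all (e.g.\ in the pair groupoid $\RR\times\RR\soutar\RR$, only unit arrows carry the identity, so a small chart around the arrow $(5,0)$ fails the condition), and even a chart meeting $e(P)$ only has identity-carrying elements over $\bs(e(P)\cap U_i)$, not over all of $\bs(U_i)$. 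So your justification (``every point of $e(P)\cap U_i$ lies on the identity bisection'') does not establish the hypothesis of Proposition \ref{prop:sconn.grpd}, and the direct identification $\CH(\CF_\CG)\cong\CG(\CU)$ for your atlas is unjustified. The paper avoids this by taking $\CS_\CG$ to be a Hausdorff cover of a \emph{neighbourhood of the identity bisection} with $e(\bs(U))\subset U$ for each $U\in\CS_\CG$; the atlas $\CU_\CS$ it generates is genuinely source connected, so $\CG(\CU_\CS)\cong\CH(\CF_\CG)$, and only then is this compared with the full atlas $\CU_\CG$ of Example \ref{ex:UdeG} via adaptedness (Lemma \ref{lem:adapt}), the source connectedness of $\CG$ entering to show that every arrow is equivalent to a finite composition of arrows near the identities, whence $\CG(\CU_\CS)\cong\CG(\CU_\CG)\cong\CG/\!\sim$. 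Your argument is missing precisely this comparison step, which is the real content of the proof.

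A secondary but symptomatic error: your claim that ``any two bisections of $\CG$ through the same arrow carry the same local diffeomorphism'' (repeated in the last paragraph as ``the local diffeomorphism \ldots is uniquely determined by $g$'') is false. In the pair groupoid of $\RR$, the bisections $x\mapsto(x,x)$ and $x\mapsto(2x,x)$ both pass through the unit arrow at $0$ and carry the identity and $x\mapsto 2x$ respectively. What the identification $\CG(\CU_\CG)\cong\CG/\!\sim$ actually needs is weaker and true: if $u_1\in U_i$ and $u_2\in U_j$ map to the same arrow $g$, then any bisection of $U_i$ through $u_1$ can be shrunk to lie in $U_i\cap U_j$, so the two points carry a common diffeomorphism and are equivalent by Lemma \ref{prop:equiv2}; conversely, distinct arrows carrying a common diffeomorphism are equivalent by Corollary \ref{carry}. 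With that repaired, and with the missing adaptedness argument supplied as above, your outline becomes the paper's proof.
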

		\begin{proof} Let $\CS_{\CG}$ be a Hausdorff cover of a neighborhood of the identity bisection in $\CG$ such that $e(\bs(U))\subset U$ for all $U\in \CS_{\CG}$. Denote $\CU_{\CS}$ the atlas generated by $\CS_{\CG}$. {Then} {$\CU_{\CS}$ is a {source connected atlas}, so that $\CG(\CU_\CG) \cong \CH(\CF_\CG)$ by Prop. \ref{prop:sconn.grpd}}. One can show that $\CU_{\CS}$ is adapted to the atlas $\CU_{\CG}$ given in example \ref{ex:UdeG}, and therefore that $\CG(\CU_\CG)\cong \CG/\sim$. Consequently, $\CH(\CF_\CG)\cong \CG/\sim$.
		\end{proof}
		
		\begin{rem} A Lie groupoid is called effective if the only bisection carrying the identity is the identity bisection. Any Lie groupoid $\CG\soutar M$ can be turned into a effective open topological groupoid, by $\CG/\sim$ where the latter equivalence relation identifies two points when they carry the same local diffeomorphism. Then any effective source connected Lie groupoid is isomorphic to the holonomy groupoid of its associated singular foliation.	
		\end{rem}
		
		\begin{cor}\label{rem:hol.regfol} If $\CF$ is a regular foliation, then $\CH(\CF)$ coincides with the usual notion of holonomy groupoid given in Section \ref{sec:holconstr}.
		\end{cor}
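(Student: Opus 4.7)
The plan is to realize the classical holonomy groupoid $\Pi(\CF)/\!\sim_{\mathrm{hol}}$ as the groupoid $\CG(\CU)$ associated to a source connected atlas of bisubmersions built directly from the monodromy groupoid, and then to invoke Proposition \ref{prop:sconn.grpd} to identify this with the new notion $\CH(\CF)$.

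First I would choose a Hausdorff open cover $\CS=\{U_i\}_{i\in I}$ of $\Pi(\CF)$; by the result at the end of \S\ref{sec:hol.grpd}, each $(U_i,\bt|_{U_i},\bs|_{U_i})$ is automatically a bisubmersion for $\CF$, and through every $x\in M$ the identity section of $\Pi(\CF)$ gives a point in some $U_i$ carrying the identity diffeomorphism near $x$. Since the source fibres of $\Pi(\CF)$ are connected (they cover the corresponding leaf), the family $\CS$ fits Definition \ref{def:scon.bisub}, and the atlas $\CU$ it generates is source connected. Proposition \ref{prop:sconn.grpd} then gives a canonical isomorphism of topological groupoids $\CG(\CU)\cong\CH(\CF)$.

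Next I would show that the equivalence relation of Definition \ref{def:eq.rel} on $\sqcup_i U_i$ is exactly the restriction of $\sim_{\mathrm{hol}}$ to $\Pi(\CF)$: this is precisely the content of Proposition \ref{prop:eq.reg.hol}, which states that two elements of $\Pi(\CF)$ are connected by a local morphism of bisubmersions if and only if their holonomy transformations coincide. So there is a well-defined injection $\Pi(\CF)/\!\sim_{\mathrm{hol}}\,\hookrightarrow\CG(\CU)$, and it is easily seen to be a groupoid morphism compatible with source, target and composition.

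The step I expect to be the main obstacle is the surjectivity, because the atlas $\CU$ contains not only the $U_i$ but also all finite compositions $U_{i_1}\circ\cdots\circ U_{i_k}$, whose points a priori need not come from $\Pi(\CF)$ itself. Here I would exploit that $\Pi(\CF)$ is a Lie groupoid: the groupoid multiplication $\mu\colon U_{i_1}\circ\cdots\circ U_{i_k}\to\Pi(\CF)$ is a smooth surjective submersion commuting with $\bs$ and $\bt$, hence a morphism of bisubmersions into a Hausdorff open piece of $\Pi(\CF)$, which means that every point of a composition bisubmersion is equivalent to a single point of $\Pi(\CF)$. Consequently every class in $\CG(\CU)$ has a representative in $\Pi(\CF)$, giving the desired identification $\CG(\CU)\cong\Pi(\CF)/\!\sim_{\mathrm{hol}}$; combining this with $\CG(\CU)\cong\CH(\CF)$ yields the corollary. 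A final remark would verify that the isomorphism intertwines the smooth/quotient topologies, which is automatic since the quotient map $\sqcup_i U_i\to\CG(\CU)$ factors through the étale quotient $\Pi(\CF)\to\Pi(\CF)/\!\sim_{\mathrm{hol}}$ described in \S\ref{sec:hol.grpd}.
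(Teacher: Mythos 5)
Your overall route is the same as the paper's: realize the classical groupoid $\Pi(\CF)/\!\sim_{\mathrm{hol}}$ as the groupoid of an atlas built from $\Pi(\CF)$, identify the atlas equivalence relation with holonomy equivalence via Proposition \ref{prop:eq.reg.hol}, and use the source-connected-atlas machinery (Proposition \ref{prop:sconn.grpd}) to land on $\CH(\CF)$; your handling of compositions through the multiplication map, viewed as a morphism of bisubmersions, is exactly the adaptedness argument the paper leaves implicit.

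One step, however, does not go through as written: an arbitrary Hausdorff open cover $\{U_i\}$ of $\Pi(\CF)$ does not satisfy Definition \ref{def:scon.bisub}. That definition requires \emph{each member} of the generating family to be source connected and to contain, for \emph{every} one of its points $u$, an element $e_u$ carrying the identity diffeomorphism near $\bs(u)$; a small Hausdorff open around a class $[\gamma]$ with nontrivial holonomy contains no such $e_u$, and its $\bs$-fibers need not be connected. Your justification (connectedness of the source fibers of $\Pi(\CF)$, and the fact that the identity section meets some $U_i$) checks global properties of $\Pi(\CF)$ rather than the memberwise conditions, so Proposition \ref{prop:sconn.grpd} cannot be applied to this atlas directly. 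The repair is the one used in Corollary \ref{rem:hol.sconn}: generate the atlas from Hausdorff opens covering a neighborhood of the identity bisection, chosen with connected $\bs$-fibers and with $e(\bs(U))\subset U$; this family does satisfy Definition \ref{def:scon.bisub}, and your own multiplication-morphism argument, combined with the source connectedness of $\Pi(\CF)$ (every element is a finite product of elements near the identities), shows the resulting atlas is equivalent to the one generated by all Hausdorff opens of $\Pi(\CF)$. After that, the rest of your argument, in particular the appeal to Proposition \ref{prop:eq.reg.hol}, goes through unchanged; alternatively you can simply quote Corollary \ref{rem:hol.sconn} for $\CG=\Pi(\CF)$, which is in effect what the paper does.
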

		\begin{proof} Note that $\Pi(\CF)$ is a source connected atlas for $\CF$. By Cor.\ref{rem:hol.regfol}, it is sufficient to show that two homotopy classes of curves $[\gamma_1],[\gamma_2]\in \Pi(\CF)$ have the same holonomy transformation if and only if there exists a local map of bisubmersions sending $[\gamma_1]\mapsto[\gamma_2]$. This is a clear consequence of Proposition \ref{prop:eq.reg.hol}.
		\end{proof}
		
		\section{Smooth structure on the holonomy groupoid}\label{sec:smth.hol}
		
		The smooth structure on the holonomy groupoid, when it exists, is as follows:
		
		\begin{defi}\label{def:smoothgr}
			Given a foliated manifold $(M,\CF)$ and  an atlas of bisubmersions $\CU=\{U_i : i\in I\}$ for $\CF$, we say that $\CG(\CU)$ is {\bf smooth} if there exists a ({necessarily unique})  smooth structure on it that makes the quotient map $Q\colon \sqcup_i U_i \fto \CG(\CU)$ a submersion.
		\end{defi}
		
		It is easy to see that if $\CG(\CU)$ is smooth, then it is a Lie {groupoid}. One of the most important results of this section is theorem \ref{thm:smt.hol}, in which it is shown that the smooth structure of the holonomy groupoid does not depend on the chosen atlas. 
		
		With this aim, we check the compatibility of smooth structures on equivalent atlases, using the canonical maps of Lemma \ref{lem:adapt}.
		
		\begin{lem}\label{lem:smt.adapt}
			Consider two atlases of bisubmersions $\CU_1$ and $\CU_2$, with $\CU_1$ adapted to $\CU_2$:
			\begin{enumerate}
				\item[(i)] If there is a smooth structure on $\CG(\CU_1)$ and $\CG(\CU_2)$, then the {morphism of} groupoids $\varphi\colon \CG(\CU_1)\fto \CG(\CU_2)$ of Lemma \ref{lem:adapt} is an injective (smooth) submersion and a morphism of Lie groupoids.
				\item[(ii)] Assume that $\CG(\CU_1)$ is smooth and $\CU_2$ is adapted to $\CU_1$ (or equivalently that the map $\varphi\colon \CG(\CU_1)\fto \CG(\CU_2)$ is surjective). Then $\CG(\CU_2)$ is smooth and $\varphi$ is an isomorphism of Lie groupoids.
			\end{enumerate}
		\end{lem}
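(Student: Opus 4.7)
The strategy for part (i) is to exploit the universal property of the surjective submersion $Q_1\colon \sqcup_i U_{1,i} \to \CG(\CU_1)$. First I would show $\varphi$ is smooth. Consider $\widehat{\varphi} := \varphi \circ Q_1$ as constructed in diagram \eqref{diag:cont1}. Locally on each bisubmersion $U \in \CU_1$ and a small neighbourhood $U' \subset U$ of a given point, the hypothesis that $\CU_1$ is adapted to $\CU_2$ supplies a morphism of bisubmersions $f\colon U' \to V \in \CU_2$ such that $\widehat{\varphi}|_{U'} = Q_2 \circ f$. Since $Q_2$ is a submersion (hence smooth) and $f$ is smooth, $\widehat{\varphi}$ is smooth. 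By the universal property of the surjective submersion $Q_1$, it follows that $\varphi$ is smooth.

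Next I would show $\varphi$ is a submersion. The key observation is that $\widehat{\varphi}$ and $Q_1$ have the \emph{same} fibres as set-theoretic maps out of $\sqcup_i U_{1,i}$: by lemma \ref{prop:equiv2}, two points are equivalent in the sense of definition \ref{def:eq.rel} iff they carry the same local diffeomorphism of $M$, a condition intrinsic to the bisubmersions and independent of the chosen atlas. Consequently $\ker d\widehat{\varphi} \supseteq \ker dQ_1$, so the rank of $d\widehat{\varphi}$ is bounded above by $\dim \CG(\CU_1)$ pointwise. On the other hand, $\varphi$ is open by lemma \ref{lem:adapt}(i); if at some point the rank of $d\widehat{\varphi}$ were strictly less than $\dim \CG(\CU_2)$, then by the rank theorem the image of $\varphi$ would be locally contained in a lower-dimensional submanifold of $\CG(\CU_2)$, contradicting openness. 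Therefore $\dim \CG(\CU_1) = \dim \CG(\CU_2)$ and $\widehat{\varphi}$ has constant maximal rank, i.e.\ it is a submersion; hence so is $\varphi$. Being a smooth submersion that is injective, $\varphi$ is a local diffeomorphism onto an open subset of $\CG(\CU_2)$, and in particular a Lie groupoid morphism.

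For part (ii), by lemma \ref{lem:adapt}(ii) the map $\varphi$ is bijective, continuous and open, hence a homeomorphism of topological groupoids. I would transport the smooth structure from $\CG(\CU_1)$ to $\CG(\CU_2)$ via $\varphi^{-1}$, so that $\varphi$ is tautologically a diffeomorphism and therefore an isomorphism of Lie groupoids. To confirm this agrees with the smoothness requirement of definition \ref{def:smoothgr}, I must verify that $Q_2\colon \sqcup_j U_{2,j} \to \CG(\CU_2)$ is a submersion, equivalently that $\widehat{\psi} := \varphi^{-1}\circ Q_2$ is a submersion into $\CG(\CU_1)$. The argument is symmetric to part (i), using that $\CU_2$ is adapted to $\CU_1$: locally $\widehat{\psi} = Q_1 \circ g$ for a morphism of bisubmersions $g$, giving smoothness via $Q_1$ being a submersion, and the same fibre-coincidence plus rank argument (combined with openness of $\varphi^{-1}$) yields the submersion property. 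This confirms $\CG(\CU_2)$ is smooth, and the uniqueness clause in definition \ref{def:smoothgr} ensures there is no ambiguity.

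The hardest step will be the submersion property in step 2 of part (i). The obstacle is that a morphism of bisubmersions $f$ is not a submersion in general --- in the proof of corollary \ref{carry}(i), $f$ appears as the composition $f_2^{-1}\circ f_1$ of a submersion with a local section of a submersion --- so one cannot naively conclude that $Q_2 \circ f$ is a submersion by composition. The argument must instead be indirect, relying on the coincidence of fibres of $\widehat{\varphi}$ with those of $Q_1$ (which are embedded submanifolds of constant codimension $\dim \CG(\CU_1)$ in each $U_{1,i}$, as $Q_1$ is a submersion), together with openness of $\varphi$ to force the rank via the rank theorem. A subtle point to address is that the rank of $d\widehat{\varphi}$ is not a priori locally constant, which is handled by combining upper semicontinuity of rank with the constancy of fibre dimension and the openness of $\varphi(\CG(\CU_1)) \subset \CG(\CU_2)$.
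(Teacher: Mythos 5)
Your smoothness argument for $\varphi$ in part (i) and your idea in part (ii) of transporting the smooth structure along the homeomorphism $\varphi$ and then checking that $Q_2$ is a submersion both agree with the paper. The gap is in the submersion step, and it occurs in both parts. You try to force surjectivity of $d\varphi$ from purely formal facts: that the fibres of $\widehat{\varphi}=\varphi\circ Q_1$ coincide with those of $Q_1$, that the $Q_1$-fibres have constant codimension, and that $\varphi$ is injective and open. These facts do not imply the submersion property: the map $(x,y)\mapsto (x^3,y)$ is smooth, injective and open, and (taking $Q_1=\mathrm{id}$) its fibres are exactly the $Q_1$-fibres, yet its differential fails to be surjective along $\{x=0\}$. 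Openness does not exclude points where the rank drops, the rank theorem is unavailable precisely because constant rank is what is unknown, and the rank of a smooth map is lower (not upper) semicontinuous, so the semicontinuity you invoke points the wrong way. Invariance of domain does give $\dim \CG(\CU_1)=\dim\CG(\CU_2)$, but that is not enough, and in part (i) your argument never uses the essential hypothesis that $Q_2$ is a submersion.

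The missing ingredient, which is what the paper uses, is a locally defined morphism of bisubmersions in the \emph{reverse} direction, available exactly at the points where surjectivity is needed. Given $x\in\CG(\CU_1)$ and $y=\varphi(x)$, choose representatives $u_1\in U_1\in\CU_1$ of $x$ and $u_2\in U_2\in\CU_2$ of $y$; they carry the same local diffeomorphism, so corollary \ref{carry} yields a morphism $\phi_y\colon U_y\fto \sqcup_{U\in\CU_1}U$ defined on a neighbourhood $U_y\subset U_2$ of $u_2$ with $\phi_y(u_2)=u_1$. Since morphisms preserve equivalence classes, $\varphi\circ (Q_1\circ\phi_y)=Q_2|_{U_y}$, and $Q_2$ being a submersion forces $d_x\varphi$ to be surjective; note that this needs only $y\in\mathrm{img}(\varphi)$, not that $\CU_2$ be adapted to $\CU_1$. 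The same device completes part (ii): with the transported structure $\varphi$ is a diffeomorphism, and to see that $Q_2$ is a submersion (as required by definition \ref{def:smoothgr}) one writes $Q_2|_{U_2'}=\varphi\circ(Q_1\circ g)$ with $g\colon U_2'\fto U_1$ the morphism given by adaptedness, and then shows $Q_1\circ g$ is a submersion by precomposing with a reverse morphism $h\colon U_1'\fto U_2'$ from corollary \ref{carry}, which gives $(Q_1\circ g)\circ h=Q_1|_{U_1'}$, a submersion. Your plan of writing $Q_2$ locally as $\varphi\circ Q_1\circ g$ cannot conclude by itself since, as you yourself note, $g$ need not be a submersion, and falling back on the rank/openness argument reopens the same gap.
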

		\begin{proof}
			(i) Consider $x\in \CG(\CU_1)$ and $y\in \mathrm{img}(\varphi)\subset \CG(\CU_2)$. Because $\CU_1$ is adapted to $\CU_2$, there is a bisubmersion $U_x\in \CU_1$ and a smooth map $\varphi_x\colon U_x \fto \sqcup_{\CU_2} U$. Because $y\in \mathrm{img}(\varphi)$, there is a bisubmersion $U_y\in \CU_2$ and a smooth map $\phi_y\colon U_y \fto \sqcup_{\CU_1} U$. Because of the smooth structures on $\CG(\CU_1)$ and $\CG(\CU_2)$, the maps $Q_1:\sqcup_{\CU_1} U\fto \CG(\CU_1)$ and $Q_2:\sqcup_{\CU_2} U\fto \CG(\CU_2)$ are smooth submersions. Therefore we have the following commutative diagram:
			\[\begin{tikzcd}
				U_x \arrow[d,"i"] \arrow[drr,"\varphi_x", near start] & & U_y \arrow[d,"i"] \arrow[dll,crossing over,swap,"\phi_y", near start] \\
				\sqcup_{\CU_1} {U} \arrow[d,"{Q_1}"]   &   & \sqcup_{\CU_2} U  \arrow[d,"Q_2"] \\
				\CG(\CU_1) \arrow[rr,"\varphi"] &  &   \CG(\CU_2)  \\
			\end{tikzcd}\]
			Because of the commutativity of the diagram, $\varphi$ must be a smooth map and a submersion.
			
			(ii) Since $\varphi$ is a homeomorphism by lemma \ref{lem:adapt} (ii), we can use it to transport the smooth structure on $\CG(\CU_1)$ to $\CG(\CU_2)$. With this structure the map $\varphi$ is a submersion (it is a diffeomorphism).
			By the proof of statement (i), for all $y\in \sqcup_{\CU_2} U$, the following diagram commutes:
			\[\begin{tikzcd}
				& & U_y \arrow[d,"i"] \arrow[dll,crossing over,swap,"\phi_y", near start] \\
				\sqcup_{\CU_1} {U} \arrow[d,"{Q_1}"]   &   & \sqcup_{\CU_2} U  \arrow[d,"Q_2"] \\
				\CG(\CU_1) \arrow[rr,"\varphi"] &  &   \CG(\CU_2)  \\
			\end{tikzcd}\]
			Here $Q_1\circ \phi_y$ and $\varphi$ are submersions, so $Q_2$ is a submersion.
		\end{proof}
		
		\begin{theorem}\label{thm:smt.hol}
			If for a source connected atlas $\CU$ we have that $\CG(\CU)$ is smooth, then the groupoids of any two source connected atlases are smooth and diffeomorphic. In other words, the smooth structure on the holonomy groupoid is well defined.
		\end{theorem}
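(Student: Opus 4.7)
The plan is to reduce everything to a path holonomy atlas, which serves as a canonical reference point, and then apply Lemma \ref{lem:smt.adapt}(ii) twice. Let $\CU$ be the source connected atlas whose groupoid $\CG(\CU)$ is smooth by hypothesis, let $\CU'$ be any other source connected atlas, and fix a path holonomy atlas $\CU_{ph}$ (which is itself source connected).

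First I would collect the ``adapted to'' relations between these three atlases. By Corollary \ref{cor:phatlas}(i), the path holonomy atlas $\CU_{ph}$ is adapted both to $\CU$ and to $\CU'$; this gives canonical injective open morphisms of topological groupoids $\varphi\colon \CG(\CU_{ph}) \to \CG(\CU)$ and $\varphi'\colon \CG(\CU_{ph}) \to \CG(\CU')$ via Lemma \ref{lem:adapt}(i). Next I would use Proposition \ref{prop:sconn.grpd}: since $\CU$, $\CU'$, $\CU_{ph}$ are all source connected, each of $\CG(\CU)$, $\CG(\CU')$, $\CG(\CU_{ph})$ is isomorphic to $\CH(\cF)$ as a topological groupoid. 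In particular the morphisms $\varphi$ and $\varphi'$ are surjective, which by Lemma \ref{lem:adapt}(ii) means that $\CU$ is adapted to $\CU_{ph}$ and $\CU'$ is adapted to $\CU_{ph}$. Hence $\CU$, $\CU'$ and $\CU_{ph}$ are pairwise equivalent atlases.

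Now I apply Lemma \ref{lem:smt.adapt}(ii) twice. First, with $\CU_1 = \CU$ and $\CU_2 = \CU_{ph}$: we have $\CU$ adapted to $\CU_{ph}$ (from the previous step), $\CG(\CU)$ is smooth by hypothesis, and $\CU_{ph}$ is adapted to $\CU$ by Corollary \ref{cor:phatlas}(i). The lemma then yields a smooth structure on $\CG(\CU_{ph})$ for which the canonical map is a Lie groupoid isomorphism $\CG(\CU) \cong \CG(\CU_{ph})$. Second, with $\CU_1 = \CU_{ph}$ (now known to be smooth) and $\CU_2 = \CU'$: we have $\CU_{ph}$ adapted to $\CU'$ and $\CU'$ adapted to $\CU_{ph}$, so the same lemma produces a smooth structure on $\CG(\CU')$ and a Lie groupoid isomorphism $\CG(\CU_{ph}) \cong \CG(\CU')$. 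Composing gives the desired Lie groupoid isomorphism $\CG(\CU) \cong \CG(\CU')$.

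There is no real obstacle here beyond bookkeeping; the conceptual content is already absorbed into the earlier results. The only place one must be careful is the direction of the canonical morphisms in Lemma \ref{lem:adapt} versus the hypotheses of Lemma \ref{lem:smt.adapt}(ii). The trick that makes everything symmetric is Proposition \ref{prop:sconn.grpd}, which upgrades the injective open morphism between two source-connected-atlas groupoids to an isomorphism of topological groupoids, and thus ensures the ``adapted to'' relation runs in both directions whenever both atlases are source connected.
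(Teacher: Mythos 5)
Your proof is correct and takes essentially the same route as the paper, whose proof is just the one-line citation of Lemma \ref{lem:smt.adapt}. Your write-up simply makes explicit the bookkeeping that the paper leaves implicit: mutual adaptedness of the source connected atlases to a path holonomy atlas (via Corollary \ref{cor:phatlas}(i), Proposition \ref{prop:sconn.grpd} and Lemma \ref{lem:adapt}(ii)), followed by two applications of Lemma \ref{lem:smt.adapt}(ii).
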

		\begin{proof}
			This is a direct consequence of Lemma \ref{lem:smt.adapt}.
		\end{proof}
		
		\begin{corollary} The smooth structure on the holonomy groupoid of a regular foliation is well defined and it is diffeomorphic to the classical notion given in Section \ref{sec:holconstr}.
		\end{corollary}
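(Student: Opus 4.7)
The plan is to show that the classical holonomy groupoid $\CH(\CF)_{c}$ of a regular foliation (constructed in Section \ref{sec:hol.grpd}) provides, via a Hausdorff cover, a source connected atlas of bisubmersions whose associated groupoid is smooth. Theorem \ref{thm:smt.hol} will then yield well-definedness of the smooth structure on $\CH(\CF)$, and a direct comparison will yield the diffeomorphism.

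First I would recall that, for a regular foliation $\CF$, the classical holonomy groupoid $\CH(\CF)_{c}$ is a source connected Lie groupoid whose associated singular foliation is $\CF$ itself, and which is effective (i.e., the only bisection carrying the identity diffeomorphism is the identity bisection). Let $\CU_{c} = \{U_i\}_{i\in I}$ be a Hausdorff open cover of $\CH(\CF)_{c}$ containing a neighbourhood of the identity section. By Proposition \ref{prop:grpd.bisub}, each triple $(U_i, \bt|_{U_i}, \bs|_{U_i})$ is a bisubmersion for $\CF$, and by Example \ref{ex:UdeG} this is an atlas. Choosing the cover so that each $U_i$ is $\bs$-connected and contains its portion of the identity section, $\CU_{c}$ becomes a source connected atlas in the sense of Definition \ref{def:scon.bisub}.

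Next I would observe that $\CG(\CU_{c})$ carries a natural smooth structure: the disjoint union $\sqcup_i U_i$ is a smooth manifold, the quotient map $Q\colon \sqcup_i U_i \to \CG(\CU_{c})$ factors through $\CH(\CF)_{c}$ (two points being equivalent in the sense of Definition \ref{def:eq.rel} corresponds, via Corollary \ref{rem:hol.sconn} together with effectiveness of $\CH(\CF)_{c}$, to being equal in $\CH(\CF)_{c}$), and the resulting map $\sqcup_i U_i \to \CH(\CF)_{c}$ is a submersion since each inclusion $U_i \hookrightarrow \CH(\CF)_{c}$ is. Thus $\CG(\CU_{c})$ is smooth in the sense of Definition \ref{def:smoothgr}, and the identification $\CG(\CU_{c})\cong \CH(\CF)_{c}$ is an isomorphism of Lie groupoids. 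Since $\CU_{c}$ is a source connected atlas, Proposition \ref{prop:sconn.grpd} gives a canonical isomorphism of topological groupoids $\CH(\CF) \cong \CG(\CU_{c})$ (where $\CH(\CF)$ denotes the Androulidakis--Skandalis holonomy groupoid). Applying Theorem \ref{thm:smt.hol}, the smooth structure transported to $\CH(\CF)$ does not depend on the choice of source connected atlas, so it is well defined, and the composite isomorphism $\CH(\CF) \cong \CG(\CU_{c}) \cong \CH(\CF)_{c}$ is a diffeomorphism of Lie groupoids.

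The only subtle point is verifying the claim used above that the equivalence relation of Definition \ref{def:eq.rel}, when restricted to elements of a Hausdorff cover of $\CH(\CF)_{c}$, coincides with equality in $\CH(\CF)_{c}$; this is where effectiveness of the classical holonomy groupoid is essential, and it is precisely the content of Corollary \ref{rem:hol.sconn} applied to the effective Lie groupoid $\CH(\CF)_{c}$. Everything else reduces to combining Theorem \ref{thm:smt.hol}, Proposition \ref{prop:sconn.grpd}, and Corollary \ref{rem:hol.regfol} (which already provides the topological identification).
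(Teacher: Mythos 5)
Your overall strategy (exhibit a source connected atlas whose groupoid is smooth and identified with the classical holonomy groupoid, then invoke Theorem \ref{thm:smt.hol}) is viable, and it differs from the paper's route, which instead checks smoothness on the atlas coming from the monodromy groupoid $\Pi(\CF)$, using that the quotient $Q\colon \Pi(\CF)\fto \CH(\CF)$ is \'etale, and then combines Corollary \ref{rem:hol.regfol} with Theorem \ref{thm:smt.hol}. However, there is a genuine gap in the step where you build your atlas: you require a Hausdorff open cover $\{U_i\}$ of \emph{all} of $\CH(\CF)_{c}$ in which every $U_i$ is $\bs$-connected and contains its portion of the identity section. Such a cover need not exist, precisely because $\CH(\CF)_{c}$ is in general a non-Hausdorff manifold. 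In the standard examples (e.g.\ a suspension foliation whose return map is the identity on one side and not on the other, or the Reeb foliation) there is a holonomy element $g\neq 1_{\bs(g)}$ that cannot be separated from the unit $1_{\bs(g)}$ by disjoint open sets; hence \emph{every} open set containing both $g$ and $1_{\bs(g)}$ is non-Hausdorff. Since $\CH(\CF)_{c}$ is effective, the element $e_u$ required by Definition \ref{def:scon.bisub} must be a unit, so any member of your cover containing such a $g$ would be forced to contain $1_{\bs(g)}$ as well, contradicting Hausdorffness. So the family you describe is not available exactly in the interesting (non-Hausdorff) cases.

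The argument can be repaired within your approach, at the cost of the extra care your write-up skips: take, as in the proof of Corollary \ref{rem:hol.sconn}, a Hausdorff cover $\CS$ of a \emph{neighbourhood of the identity section only}, with $e(\bs(U))\subset U$ and $U$ $\bs$-connected for each $U\in\CS$, and let $\CU$ be the (source connected) atlas it \emph{generates}; its members are finite compositions $U_{i_1}\circ\dots\circ U_{i_k}$. Effectiveness plus Lemma \ref{prop:equiv2} identifies the equivalence classes with points of $\CH(\CF)_{c}$ (this is Corollary \ref{rem:hol.sconn}, using that a source connected groupoid is generated by a neighbourhood of the units), and the quotient maps $U_{i_1}\circ\dots\circ U_{i_k}\fto \CH(\CF)_{c}$ are now the iterated multiplication maps, which are submersions because multiplication in a Lie groupoid is a submersion — this replaces your "each inclusion is a submersion" step, which only applies to the members of $\CS$ themselves. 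With these corrections your proof goes through and yields the same conclusion as the paper's; the paper's choice of $\Pi(\CF)$ avoids the issue altogether because smoothness is verified on charts of the monodromy groupoid, where $Q$ is a local diffeomorphism.
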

		\begin{proof} This is a direct consequence of Corollary \ref{rem:hol.regfol}, Theorem \ref{thm:smt.hol} and the fact that the quotient map $Q\colon \Pi(\CF)\fto \CH(\CF)$ is a smooth submersion (indeed, it is a local diffeomorphism).
		\end{proof}
		
		Moreover, we have the following proposition, which is a direct consequence of \cite[\S 1.2]{AZ5}. It states explicitly when there is a smooth structure on the holonomy groupoid.
		
		\begin{prop}\label{prop:proy.fol}
			Given a foliated manifold $(M,\CF)$, we have that $\CH(\CF)$ is smooth if and only if $\CF$ is projective.
		\end{prop}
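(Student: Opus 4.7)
The plan is to prove both directions by relating the Lie algebroid of a smooth $\CH(\cF)$ to the fiber $\cF_x$, and using the integrability of almost injective Lie algebroids for the converse.

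For the forward implication, assume $\CH(\cF)$ is smooth and fix $x_0 \in M$. Let $k = \dim \cF_{x_0}$. By definition of smooth holonomy groupoid, $\CH(\cF)$ is a Lie groupoid; denote its Lie algebroid by $A$. The anchor $\rho \colon A \to TM$ induces a surjective map $A_{x_0} \twoheadrightarrow \cF_{x_0}$ (after composing the inclusion $\cF \hookrightarrow \CX_c(M)$ with quotienting by $I_{x_0}$), so $\dim A_{x_0} \ge k$. On the other hand, pick a path holonomy bisubmersion $(W,\bt,\bs)$ around $(0,x_0)$, with $W$ an open subset of $\RR^k \times M$, constructed from vector fields whose classes are a basis of $\cF_{x_0}$. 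By Definition \ref{def:smoothgr}, the quotient map $Q \colon W \to \CH(\cF)$ is a submersion, so $\dim \CH(\cF) \le \dim W = k + \dim M$ near $e_{x_0}$, which forces $\dim A_{x_0} \le k$. Hence $\dim A_{x_0} = k = \dim \cF_{x_0}$. Since $A$ is a vector bundle, $\dim \cF_x = \dim A_x$ is locally constant, and by Lemma \ref{lem:fiber.proy} the foliation $\cF$ is projective.

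For the converse, assume $\cF$ is projective. By Lemma \ref{lem:fiber.proy}, there is a vector bundle $A \to M$ with $\cF \cong \SEC_c(A)$, and transporting the Lie bracket of $\cF$ endows $A$ with the structure of a Lie algebroid whose anchor $\rho \colon A \to TM$ is injective on the open dense set where $\dim \cF_x = \dim F_x$. Such an almost injective Lie algebroid is integrable (Debord's theorem); let $\CG$ be a source connected Lie groupoid integrating $A$. The singular foliation associated to $\CG$ equals $\rho(\SEC_c(A)) = \cF$, so by Corollary \ref{rem:hol.sconn} we have $\CH(\cF) \cong \CG/\!\sim$, where $\sim$ identifies elements carrying the same local diffeomorphism.

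The main obstacle, and the step I expect to require most care, is showing that this quotient $\CG/\!\sim$ inherits a smooth structure compatible with $Q$ being a submersion from the bisubmersions of any path holonomy atlas. The point is that because the anchor $\rho$ is injective on an open dense subset, any local bisection of $\CG$ carrying the identity diffeomorphism must coincide with the identity bisection on that dense open subset, hence everywhere by smoothness; so $\CG$ is already effective and $\CG/\!\sim = \CG$. It follows that $\CH(\cF) \cong \CG$ is a Lie groupoid. Alternatively, one invokes \cite{AZ5} directly, where this equivalence (projective $\Leftrightarrow$ smooth holonomy) is established in more technical detail, and Theorem \ref{thm:smt.hol} guarantees that the resulting smooth structure is independent of the choice of atlas.
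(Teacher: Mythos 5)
You should first note that the thesis does not actually prove this proposition: it is stated as a direct consequence of \cite[\S 1.2]{AZ5}, so any self-contained argument is necessarily a different route, and has to be judged on its own. Your forward direction is a reasonable sketch: the upper bound $\mathrm{rank}\,A\le \dim\cF_{x_0}$ does follow from the fact that $Q\colon W\fto \CH(\cF)$ is a submersion on a path holonomy bisubmersion $W\subset \RR^k\times M$ (indeed $dQ$ maps $\ker d\bs_W$ onto $\ker d\bs_{\CH(\cF)}$). But your lower bound rests entirely on the assertion that the anchor induces a surjection $A_{x_0}\twoheadrightarrow \cF_{x_0}$, which is exactly the statement that the singular foliation induced by the Lie algebroid of $\CH(\cF)$ is $\cF$ itself, as a module and not merely pointwise (recall from \S\ref{sec:def.singfol} that pointwise tangency does not determine the module, cf.\ $\CF^0$ versus $\CF^1$). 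This is true but needs an argument — for instance producing, from the curves $t\mapsto Q(te_i,x)$, local sections $\alpha_i$ of $A$ with $\rho(\alpha_i)=X_i$ — and it is essentially what \cite{AZ5} and Debord's longitudinal smoothness supply; as written it is a gap, though a fillable one.

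The genuine error is in the converse. Your key claim — that dense injectivity of the anchor forces every local bisection of a source connected integration $\CG$ carrying the identity diffeomorphism to be the identity bisection, so that $\CG/\!\sim\;=\CG$ — is false. Injectivity of the anchor at a point only makes the isotropy Lie group there \emph{discrete}, not trivial. Concretely, take $M=S^1$ with the full (regular) foliation, so $A=TS^1$ has everywhere injective anchor; the monodromy groupoid $\Pi(S^1)$ is a source connected integration, and the bisection sending $y$ to the homotopy class of the loop based at $y$ going once around carries the identity diffeomorphism without being the identity bisection. More generally, for any regular foliation with non–simply connected leaves and trivial holonomy one has $\Pi(\cF)\neq \CH(\cF)=\Pi(\cF)/\!\sim$, even though the anchor is injective everywhere. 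The entire content of Debord's theorem \cite{DebordJDG} is precisely that the effective quotient $\CG/\!\sim$ (equivalently $\CH(\cF)$) is \emph{smooth}, not that the quotient is trivial, so the step you flag as the "main obstacle" is resolved incorrectly. Your fallback of invoking \cite{AZ5} is of course legitimate — it is what the thesis itself does — but then the preceding argument is doing no work; a self-contained proof of the converse would require either reproducing Debord's construction of a smooth effective (quasi-graphoid) integration of the almost injective algebroid and identifying it with $\CH(\cF)$ via Corollary \ref{rem:hol.sconn} and Theorem \ref{thm:smt.hol}, or proving directly that the quotient of $\CG$ by $\sim$ carries the required smooth structure.
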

		
		\subsection*{Longitudinal smooth structure on the Holonomy groupoid}
		
		The holonomy groupoid of a foliated manifold $(M,\cF)$ is not always smooth, but by results of Claire Debord   \cite{Debord2013}, for any point $x\in M$ there is a smooth structure on the restriction of the holonomy {groupoid} to the leaf $L$ through $x$, {making it a Lie groupoid} (and  consequently on the isotropy group at $x$, {making it a Lie group}). More precisely, following  {\cite[Def. 2.8]{AZ1}, there exists a smooth structure on $H(\cF)_L$ -- the restriction of the holonomy groupoid to the leaf--, such that for any  path holonomy atlas $\{U_i\}$ of $\CF$, the quotient map $Q_L\colon \sqcup_i (U_{i})_L \fto H(\cF)_L$ is a submersion.} 
		
		\section{Pullback holonomy groupoid}\label{sec:pullb.grpd}
		
		In this section we show that the holonomy groupoid of the pullback foliation is the pullback of the holonomy groupoid of the foliation. {This fact will be used later in \S \ref{sec:me.hol.grpd}, where we will derive relations between Hausdorff Morita equivalence of singular foliations and Morita equivalence of their respective holonomy groupoids.}
		
		\subsubsection{An isomorphism of {topological} groupoids}
		
		We prove the following isomorphism of topological groupoids, which is one of the main results of this thesis.
		
		\begin{thm}\label{thm:pullbackgroid}
			Let $(M,\CF)$ be a foliated manifold and $\pi\colon P\fto M$ a surjective submersion {with connected fibres}.
			Then there is a canonical isomorphism of topological groupoids $$\CH(\pi^{-1}(\CF))\cong \pi^{-1}(\CH(\CF)).$$
		\end{thm}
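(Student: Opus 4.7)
The plan is to build an explicit atlas of bisubmersions $\widetilde{\CU}$ for $\pi^{-1}\CF$ whose associated groupoid $\CG(\widetilde{\CU})$ is manifestly isomorphic to $\pi^{-1}\CH(\CF)$, and then invoke Proposition \ref{prop:sconn.grpd} to identify $\CG(\widetilde{\CU})$ with $\CH(\pi^{-1}\CF)$ once source-connectedness is established. Concretely, starting from a path holonomy atlas $\CU=\{(U_i,\bt_i,\bs_i)\}$ of $\CF$, for each $U_i$ I set
\[
\widetilde{U}_i \;:=\; P\;{}_\pi\!\times_{\bt_i}\; U_i\;{}_{\bs_i}\!\times_\pi\; P,
\]
with $\widetilde{\bt}:=\mathrm{pr}_1$ and $\widetilde{\bs}:=\mathrm{pr}_3$, and take $\widetilde{\CU}$ to be the atlas they generate. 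The auxiliary projection $\pi_U\colon \widetilde{U}_i\to U_i$ is a surjective submersion with connected fibres (since $\pi$ is), and I have $\pi\circ\widetilde{\bs}=\bs_i\circ\pi_U$, $\pi\circ\widetilde{\bt}=\bt_i\circ\pi_U$.

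To see that each $\widetilde{U}_i$ is a bisubmersion for $\pi^{-1}\CF$, I will first use transitivity of the pullback of foliations together with the bisubmersion identity $\bs_i^{-1}\CF=\bt_i^{-1}\CF$ on $U_i$ to obtain $\widetilde{\bs}^{-1}(\pi^{-1}\CF)=\widetilde{\bt}^{-1}(\pi^{-1}\CF)$; then the equality with $\ker_c d\widetilde{\bs}+\ker_c d\widetilde{\bt}$ will follow by a local fibred-product tangent computation exploiting the bisubmersion condition $\bs_i^{-1}\CF=\ker_c d\bs_i+\ker_c d\bt_i$ on $U_i$ and the submersivity of $\pi$. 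Source-connectedness of $\widetilde{\CU}$ is immediate: the source fibre of $\widetilde{U}_i$ above $p_2\in P$ is $P\,{}_\pi\!\times_{\bt_i}\bs_i^{-1}(\pi(p_2))$, connected since both path-holonomy source fibres and $\pi$-fibres are connected (this is where the connected-fibre hypothesis enters decisively via Lemma \ref{lem:subcon}). The identity at $p\in P$ is carried by $(p,(0,\pi(p)),p)\in\widetilde{U}_i$, and closure under inverse/composition reduces to the corresponding statements for $\CU$.

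It remains to produce a topological-groupoid isomorphism $\Phi\colon\CG(\widetilde{\CU})\to\pi^{-1}\CH(\CF)$ by the tautological formula $[(p_1,u,p_2)]\mapsto(p_1,[u]_{\CH(\CF)},p_2)$. Surjectivity is trivial (lift any $h\in\CH(\CF)$ to a representative $u\in U_i\in\CU$). Injectivity follows because any morphism of bisubmersions $f\colon U_i\to U_j$ sending $u\mapsto u'$ lifts to the morphism $\widetilde{f}(q_1,v,q_2):=(q_1,f(v),q_2)$ of $\widetilde{U}_i\to\widetilde{U}_j$. Well-definedness is the substantive step, and I expect it to be the main obstacle: if $(p_1,u,p_2)\in\widetilde{U}_i$ and $(p_1,u',p_2)\in\widetilde{U}_j$ are equivalent as elements of $\widetilde{\CU}$, one must show $u\sim u'$ in $\CU$. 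The argument goes by choosing a local section $s\colon V\to P$ of $\pi$ with $s(\pi(p_2))=p_2$, writing the two equivalent bisections of $\widetilde{U}_i,\widetilde{U}_j$ as $\widetilde{\sigma}(q)=(\phi(q),\tau(q),q)$ and $\widetilde{\sigma}'(q)=(\phi(q),\tau'(q),q)$ with the \emph{same} $\phi$, and then restricting: $\tau\circ s$ and $\tau'\circ s$ become ordinary bisections of $U_i,U_j$ both carrying the local diffeomorphism $\pi\circ\phi\circ s$ of $M$, whence by corollary \ref{carry} they carry the same local diffeomorphism of $M$, so $u\sim u'$. That $\Phi$ is a groupoid morphism is immediate from the construction of compositions in both atlases, and bicontinuity follows from openness of the quotient maps (Lemma \ref{lem:openmap}) on both sides, combined with the fact that $\mathrm{id}\times Q\times\mathrm{id}\colon P\times U_i\times P\to P\times\CH(\CF)\times P$ is open. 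Finally Proposition \ref{prop:sconn.grpd} applied to the source-connected atlas $\widetilde{\CU}$ gives $\CG(\widetilde{\CU})\cong\CH(\pi^{-1}\CF)$, completing the isomorphism $\CH(\pi^{-1}\CF)\cong\pi^{-1}\CH(\CF)$.
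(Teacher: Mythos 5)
Your proposal is correct and follows essentially the same route as the paper: you pull back the path-holonomy atlas to the bisubmersions $P\,{}_\pi\!\times_{\bt}U\,{}_{\bs}\!\times_\pi P$, prove the tautological map $[(p_1,u,p_2)]\mapsto(p_1,[u],p_2)$ is a well-defined open bijective groupoid morphism (your section-of-$\pi$ argument for well-definedness and the lifted morphism $\widetilde f$ for injectivity are exactly the two directions of the paper's key claim in Proposition \ref{super}), and then use connectedness of the $\pi$-fibres with Proposition \ref{prop:sconn.grpd} to identify the groupoid of this source-connected atlas with $\CH(\pi^{-1}\CF)$, as in Proposition \ref{prop:injmap}.
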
 
		{The pullback of a topological groupoid  is defined as in Def. \ref{def:pullbackgroid}}. For the sake of exposition, we first sketch a proof of Theorem \ref{thm:pullbackgroid} in the case of regular foliations.
		
		\begin{proof}[Proof of theorem. \ref{thm:pullbackgroid} for regular foliations]
			Assume that $\CF$ is a regular foliation, then $\pi^{-1}\CF$ is also a regular foliation. In this case the holonomy groupoid is given by holonomy classes of paths, as in Section \ref{sec:holconstr}. Define the {groupoid morphism}
			
			\begin{align*}
				\varphi\colon \CH(\pi^{-1}(\CF))&\fto \pi^{-1}(\CH(\CF))\\
				[\tau]\;\;&\mapsto(\tau(1),[\pi(\tau)],\tau(0)).
			\end{align*}
			
			We show that this map is  injective. Take two paths $\tau,\tilde{\tau}$ in leaves of $P$ with the same initial point $p_0$ and  final point $p_1$.
			For $i=0,1$, if $\Sigma_i$ is a transversal to $\pi^{-1}(\CF)$ at $p_i$ then $\pi(\Sigma_i)$ is a transversal of $\CF$ at $\pi(p_i)$. Let $\Phi, \tilde{\Phi}\colon\Sigma_0\fto\Sigma_1$ be the holonomy maps given by $\tau$ and $\tilde{\tau}$ respectively, and $\phi,\tilde{\phi}\colon \pi(\Sigma_0)\fto \pi(\Sigma_1)$ the holonomy maps given by $\pi(\tau)$ and $\pi(\tilde{\tau})$. The following diagram commutes:
			\[\begin{tikzcd}
				\Sigma_0 \arrow[d,"\pi"] \arrow[r,"\Phi"] & \Sigma_1 \arrow[d,"\pi"] \\
				\pi(\Sigma_0) \arrow[r,"\phi"] & \pi(\Sigma_1) \\ 
			\end{tikzcd},\]
			and the analog diagram for $\tilde{\Phi}, \tilde{\phi}$ too.
			The vertical maps $\pi\colon \Sigma_i \fto \pi(\Sigma_i)$ are diffeomorphisms (notice that the codimensions of $\CF$ and $\pi^{-1}\CF$ are equal).
			{Hence if $\pi(\tau)$ and $\pi(\tilde{\tau})$ have the same holonomy, i.e.  $\phi=\tilde{\phi}$, then $\Phi=\tilde{\Phi}$}.
			
			To prove the surjectivity of $\varphi$, take $(p,[\gamma],q)\in \pi^{-1}(\CH(\CF))$ where $p,q\in P$ and $\gamma$ is a curve   {in a leaf} of $\CF$ that connects $\pi(q)$ with $\pi(p)$. 
			The hypotheses on $\pi$ imply that $\text{Pr}_1\colon \gamma^{*}P:=[0,1] {}_\gamma \!\times_\pi P \fto [0,1]$ is a surjective submersion with connected fibres, hence  $\gamma^{*}P$ is a connected manifold and  therefore a path connected space. Take a curve  $\sigma\colon [0,1]\fto \gamma^{*}P$ that connects $(0,q)$ with $(1,p)$.
			We have the following commutative diagram:
			\[\begin{tikzcd}
				\left[0,1 \right] \arrow[r,"\sigma"] &\gamma^{*}P=[0,1] {}_\gamma \!\times_\pi P \arrow[d,"\text{Pr}_1"] \arrow[r,"\text{Pr}_2"] & P \arrow[d,"\pi"] \\
				&\left[0,1 \right] \arrow[r,"\gamma"] & M.
			\end{tikzcd}\]
			The curve
			$\widehat{\gamma}:= \text{Pr}_2\circ \sigma$ lies in a leaf of $\pi^{-1}(\cF)$ and joins $q$ with $p$. Since $\text{Pr}_1 \circ\sigma\colon [0,1]\fto[0,1]$ is a continuous and surjective function homotopic to the identity, from the commutativity of the  diagram it follows that $\pi\circ \widehat{\gamma}$ and $\gamma$ are homotopy equivalent and so holonomy equivalent. {Hence} $\varphi([\widehat{\gamma}])=(p,[\gamma],q)$,   proving that $\varphi$ is surjective and therefore bijective. 
		\end{proof}

		We now turn to the proof of Theorem \ref{thm:pullbackgroid}. {The first step is to state and prove proposition \ref{super},} which requires some preparation. We first focus on
		pullbacks of atlases {of bisubmersions}, which are relevant for
		the l.h.s. of the isomorphism claimed there.
		We state first   \cite[Lemma 2.3]{AndrSk}, which allows us to pull back bisubmersions.
		
		\begin{lem}\label{pullbi}
			Let $(M, \CF)$ be a foliated manifold, $(U, \bt, \bs)$ a bisubmersion for $\CF$ and $\pi\colon P\fto M$ a surjective submersion. Consider the preimages $P^\bs:=\pi^{-1}(\bs(U))$ and  $P^\bt:=\pi^{-1}(\bt(U))$. Define $$\pi^{-1}(U):=P^\bt {}_{\pi} \!\times_{\bt} U {}_\bs \!\times_\pi P^\bs.$$ Let $\tau,\sigma\colon \pi^{-1}(U)\fto P$ be the  projections {onto the first and third component. Then $(\pi^{-1}(U),\tau,\sigma)$} is bisubmersion for $\pi^{-1}(\CF)$.
		\end{lem}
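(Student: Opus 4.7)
The plan is to verify the three defining properties of a bisubmersion for the triple $(\pi^{-1}(U),\tau,\sigma)$: that $\pi^{-1}(U)$ carries a natural smooth structure, that $\tau,\sigma$ are submersions, and that $\tau^{-1}(\pi^{-1}\cF) = \sigma^{-1}(\pi^{-1}\cF) = \ker_c(d\tau) + \ker_c(d\sigma)$. Smoothness is immediate: since $\pi$ is a surjective submersion, $P^\bt,P^\bs$ are open in $P$, and the iterated fibre product is smooth because $\pi,\bt,\bs$ are all submersions. The middle projection $\rho\colon V\to U$, $(p_1,u,p_2)\mapsto u$, is a submersion with fibre $\pi^{-1}(\bt(u))\times \pi^{-1}(\bs(u))$ over $u$. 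To see that $\tau$ is a submersion, given $v_1\in T_{p_1}P^\bt$ I would lift $d\pi(v_1)$ through the submersion $\bt$ to some $v\in T_uU$, then lift $d\bs(v)$ through the submersion $\pi$ to $v_2\in T_{p_2}P^\bs$; the triple $(v_1,v,v_2)$ satisfies the fibre-product constraints and projects to $v_1$. A symmetric argument handles $\sigma$.

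The first foliation equality $\tau^{-1}(\pi^{-1}\cF) = \sigma^{-1}(\pi^{-1}\cF)$ follows from the commutative squares $\pi\circ\tau=\bt\circ\rho$ and $\pi\circ\sigma=\bs\circ\rho$: by the composition formula for pullback foliations (part (3) of the proposition following Def.~\ref{def:pullback}), one has $\tau^{-1}(\pi^{-1}\cF)=(\pi\tau)^{-1}\cF=(\bt\rho)^{-1}\cF=\rho^{-1}(\bt^{-1}\cF)$, and similarly $\sigma^{-1}(\pi^{-1}\cF)=\rho^{-1}(\bs^{-1}\cF)$. The bisubmersion hypothesis $\bt^{-1}\cF=\bs^{-1}\cF$ on $U$ then closes this step.

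The substantive remaining equality is $\rho^{-1}(\bt^{-1}\cF) = \ker_c(d\tau)+\ker_c(d\sigma)$. The inclusion $\supseteq$ is trivial: $\ker_c(d\tau)\subseteq \tau^{-1}(\pi^{-1}\cF)$ because $d\tau$ annihilates it, and $\ker_c(d\sigma)\subseteq \sigma^{-1}(\pi^{-1}\cF)=\tau^{-1}(\pi^{-1}\cF)$ by the previous paragraph. For the reverse inclusion, since $\rho$ is a submersion, Lemma~\ref{lem:pullback} yields $\rho^{-1}(\bt^{-1}\cF)=\left\langle (d\rho)^{-1}\rho^{*}(\bt^{-1}\cF)\right\rangle_{\CI_c(V)}$, into which I substitute the bisubmersion decomposition $\bt^{-1}\cF=\ker_c(d\bt)+\ker_c(d\bs)$. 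Working locally in coordinates $(y,a,c,z,b)$ adapted simultaneously to $\pi,\bt,\bs$ (so that $\tau=(y,a)$, $\sigma=(z,b)$, $\rho=(y,z,c)$, $\bt=y$, $\bs=z$), any element of $\rho^{-1}(\bt^{-1}\cF)$ splits as a sum of a term whose $d\rho$-image lies in $\ker d\bt$ and a term whose $d\rho$-image lies in $\ker d\bs$. The key observation is that any $\rho$-lift of a section of $\ker d\bt$ can be chosen to have vanishing $d\tau$-component, by lifting independently through the submersion $\pi$ on the $P^\bs$-factor; this places the first summand in $\ker_c(d\tau)$, and symmetrically the second in $\ker_c(d\sigma)$. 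A partition-of-unity argument on $V$ globalises the local construction.

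The main obstacle will be exactly this last step: the submodule $\rho^{-1}(\ker_c d\bt)$ is strictly larger than $\ker_c(d\tau)$ (it only enforces verticality of $d\tau(X)$ with respect to $\pi$, not the vanishing of $d\tau(X)$), so one must carefully exploit the freedom in the $\pi$-lifts to project the "excess" into $\ker_c(d\rho)$, which itself splits into $\ker_c(d\tau)+\ker_c(d\sigma)$ by an elementary pointwise check. The cleanest execution uses adapted coordinates as above together with a partition of unity, and it is here that the bisubmersion hypothesis on $U$ — rather than a weaker transversality statement — gets used essentially.
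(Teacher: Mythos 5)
Your overall strategy is sound and it is genuinely different from the paper's. The paper disposes of the lemma in two lines: it applies Lemma \ref{lem:sub.bisub} to the submersion $\mathrm{Pr}_U\colon \pi^{-1}(U)\fto U$ (your middle projection $\rho$), concluding that $(\pi^{-1}(U),\bt\circ\mathrm{Pr}_U,\bs\circ\mathrm{Pr}_U)$ is a bisubmersion for $\CF$ over $M$, and then passes to $\pi^{-1}\CF$ via the commutativity $\pi\circ\tau=\bt\circ\mathrm{Pr}_U$, $\pi\circ\sigma=\bs\circ\mathrm{Pr}_U$. You instead verify the axioms directly: the composition formula for pullbacks gives $\tau^{-1}(\pi^{-1}\CF)=\rho^{-1}(\bt^{-1}\CF)=\rho^{-1}(\bs^{-1}\CF)=\sigma^{-1}(\pi^{-1}\CF)$, and the identification with $\ker_c(d\tau)+\ker_c(d\sigma)$ is obtained by lifting the decomposition $\bt^{-1}\CF=\ker_c(d\bt)+\ker_c(d\bs)$ through $\rho$ and absorbing the discrepancy into $\ker_c(d\rho)\subset \ker_c(d\tau)+\ker_c(d\sigma)$. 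This makes explicit exactly the kernel comparison that the paper's phrase ``using the commutativity of the diagram'' leaves implicit, so the extra length buys a more transparent argument.

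One concrete claim in your write-up is false as stated: in general there are no local coordinates on $U$ in which $\bt$ and $\bs$ are simultaneously projections onto independent coordinate blocks ($\bt=y$, $\bs=z$). That would force $(\bt,\bs)\colon U\fto M\times M$ to be a submersion, which fails for most bisubmersions; for the identity bisubmersion $(M,\mathrm{id},\mathrm{id})$ of the zero foliation the image of $(\bt,\bs)$ is the diagonal, and for a path holonomy bisubmersion near $(0,x_0)$ the image of $(d\bt,d\bs)$ has dimension $\dim M+\dim F_{x_0}$, which is less than $2\dim M$ unless the foliation is full at $x_0$. Fortunately your mechanism does not need these coordinates: to lift a (local) section $Y$ of $\ker(d\bt)$ into $\ker(d\tau)$ you only need a local $\pi$-lift $Z$ of $d\bs\,Y$ on the $P^\bs$-factor and may take the triple $(0,Y,Z)$, which satisfies the fibre-product constraints since $d\pi(0)=0=d\bt Y$; symmetrically for $\ker(d\bs)$ and $\ker(d\sigma)$. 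For this, charts adapted to $\pi$ alone suffice, and the rest of your argument (the pointwise splitting of $\ker(d\rho)$ into $\ker(d\tau)\cap\ker(d\rho)$ plus $\ker(d\sigma)\cap\ker(d\rho)$, and the partition of unity to handle supports) goes through unchanged. With that correction the proof is complete.
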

		\begin{proof}
			The following 
			diagram commutes:
			\[
			\begin{tikzcd}
				\pi^{-1}(U) \arrow[d,"\text{Pr}_U"] \arrow[r,shift right=.3em,swap,"{\sigma}"] 
				\arrow[r,shift left=.3em,"\tau"] & P \arrow[d,"\pi"] \\
				U \arrow[r,shift right=.3em,swap,"\bs"] \arrow[r,shift left=.3em,"\bt"] & M
			\end{tikzcd}
			\]
			Moreover, since $\pi$ is a  submersion one can prove that  $\tau$ and $\sigma$ are submersions. {For the same reason $\text{Pr}_U$ is a submersion, and applying lemma \ref{lem:sub.bisub} {to it} we obtain that $(\pi^{-1}(U),\bt\circ \text{Pr}_U, \bs\circ \text{Pr}_U)$ is a bisubmersion for $\CF$. Using the commutativity of the diagram  we get that $(\pi^{-1}(U),\tau, \sigma)$ is a bisubmersion for $\pi^{-1}\CF$.}
		\end{proof}
		
		\begin{defi}
			We call the bisubmersion $\pi^{-1}(U)$ given in lemma \ref{pullbi} the \textbf{pullback bisubmersion} of $U$.
		\end{defi}
		\begin{lem}\label{lem:pullbackatlas}
			Let $\CU$ be an atlas of bisubmersions for $\CF$. Then $\pi^{-1} \CU:=\{\pi^{-1}(U) \st U\in\CU\}$ is an atlas of bisubmersion for $\pi^{-1}(\CF)$.
		\end{lem}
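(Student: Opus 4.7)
The plan is to verify the two axioms of Def.~\ref{def:atlas.bi} for the family $\pi^{-1}\CU$. Throughout, I rely on a simple \emph{functoriality} principle: any morphism of bisubmersions $f\colon V\to V'$ over $M$ induces a morphism of pullback bisubmersions $\pi^{-1}(f)\colon \pi^{-1}(V)\to \pi^{-1}(V')$ over $P$ defined by $(p,v,q)\mapsto (p,f(v),q)$. This is well-defined and smooth because $f$ commutes with $\bt,\bs$, so fibered-product constraints are preserved; and it trivially commutes with $\tau,\sigma$. Combined with the fact (Lemma~\ref{pullbi}) that each $\pi^{-1}(U)$ is a bisubmersion for $\pi^{-1}(\CF)$, this functoriality is the workhorse for transferring adaptedness from $\CU$ to $\pi^{-1}\CU$.

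For the identity-carrying axiom, pick $p\in P$ and $U\in \CU$ together with a bisection $\sigma_U\colon M'\to U$ through a point $u_0\in U$ carrying the identity near $\pi(p)$. I would define
\[
\tilde\sigma(q):=(q,\sigma_U(\pi(q)),q)
\]
on a neighborhood of $p$ in $\pi^{-1}(M')$. The compatibility $\pi(q)=\bt(\sigma_U(\pi(q)))=\bs(\sigma_U(\pi(q)))$ follows because $\sigma_U$ is an $\bs$-section with $\bt\circ\sigma_U=\mathrm{id}_{M'}$, so $\tilde\sigma$ does land in $\pi^{-1}(U)$. A direct check shows $\sigma\circ\tilde\sigma=\mathrm{id}$ and $\tau\circ\tilde\sigma=\mathrm{id}$, so $\tilde\sigma$ is a bisection through $(p,u_0,p)$ carrying the identity diffeomorphism of $P$ near $p$.

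For the inverse axiom, I observe that $\pi^{-1}(U)^{-1}\cong \pi^{-1}(U^{-1})$ canonically: both bisubmersions share the same underlying fibered product (up to trivial reordering of the outer factors) and both simply swap the source-target roles. Since $U^{-1}$ is adapted to $\CU$, for every point there is a local morphism of bisubmersions into some $V\in\CU$; applying the functoriality principle lifts this to a morphism $\pi^{-1}(U^{-1})\to \pi^{-1}(V)$, giving the required adaptedness.

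For the composition axiom, I would introduce the ``forget the middle point'' map
\[
\Psi\colon \pi^{-1}(U_1)\circ\pi^{-1}(U_2)\to \pi^{-1}(U_1\circ U_2),\quad ((p_1,u_1,p_2),(p_2,u_2,p_3))\mapsto (p_1,(u_1,u_2),p_3),
\]
which is readily seen to be a smooth morphism of bisubmersions over $P$ (it commutes with source and target, which in both cases are $p_3$ and $p_1$). Since $U_1\circ U_2$ is adapted to $\CU$, every point admits a local morphism into some $V\in\CU$; lifting via functoriality gives a morphism $\pi^{-1}(U_1\circ U_2)\to \pi^{-1}(V)$, and precomposing with $\Psi$ produces the morphism needed to witness adaptedness of $\pi^{-1}(U_1)\circ\pi^{-1}(U_2)$ to $\pi^{-1}\CU$. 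The main obstacle is bookkeeping: checking that $\tilde\sigma$, the identification of inverses, and the map $\Psi$ all respect the various $\pi$-fibered product constraints and the source/target maps, so that each is a legitimate morphism of bisubmersions. These checks are elementary but require one to unwind the definitions carefully.
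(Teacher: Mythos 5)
Your proof is correct and follows essentially the same route as the paper: the lifted (``functorial'') morphism $(p,v,q)\mapsto(p,f(v),q)$ together with the forget-the-middle-point map $\pi^{-1}(U_1)\circ\pi^{-1}(U_2)\to\pi^{-1}(U_1\circ U_2)$ and the adaptedness of compositions and inverses in $\CU$ is exactly the paper's argument. You merely spell out in more detail the identity-carrying condition (via the explicit bisection $q\mapsto(q,\sigma_U(\pi(q)),q)$) and the inverse case (via $\pi^{-1}(U)^{-1}\cong\pi^{-1}(U^{-1})$), which the paper treats as clear or ``similar''.
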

		\begin{proof}
			It is clear that the union of the elements of $\pi^{-1} \CU$ covers $P$. We now check that the compositions of elements in $\pi^{-1} \CU$ are adapted to $\pi^{-1} \CU$. 
			To do so, take $U_2,U_1\in \CU$. Note that we have a canonical morphism of {bisubmersions} 
			\begin{equation}\label{eq:morphcomp}
				\pi^{-1}(U_2)\circ \pi^{-1}(U_1)  \fto \pi^{-1}(U_2\circ U_1);\;\;(p,u_2,a)\times(a,u_1,q)\mapsto(p,u_2\times u_1,q).  
			\end{equation}
			Moreover, since $\CU$ is an atlas, at   each point $u_2\times u_1$ the bisubmersion $U_2\circ U_1$ is adapted to some $U_{u_1\times u_2}\in \CU$. This means that there is a small neighbourhood $V'\subset U_1\circ U_2$ containing $u_2\times u_1$, and a morphism of bisubmersions $f\colon V'\fto U_{u_1\times u_2}$. {Composing a suitable restriction of the morphism \eqref{eq:morphcomp} with the natural ``lift'' of $f$ we obtain
				a morphism of bisubmersions}  $$(p,u_2,a)\times(a,u_1,q)\mapsto(p,f(u_2\times u_1),q)\in \pi^{-1}(U_{u_1\times u_2})$$
			{into an element of $\pi^{-1}\CU$.}
			{For inverses of elements in $\pi^{-1} \CU$ one  proceeds  similarly}.
		\end{proof} 
		
		\begin{prop}\label{super}
			Let $\CU$ be an atlas of bisubmersions on a foliated manifold $(M,\CF)$, and denote by $\CG(\CU)$ the groupoid given by $\CU$. Let $\pi\colon P\fto M$ be a surjective submersion,  $\pi^{-1} \CU$ the pullback atlas, and denote by $\CG(\pi^{-1} \CU)$ the groupoid of this atlas. Then there is a canonical {isomorphism of topological groupoids} 
			$$\CG(\pi^{-1} \CU)\cong\pi^{-1}(\CG(\CU)).$$
		\end{prop}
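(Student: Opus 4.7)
The plan is to build a canonical map
$$\Phi\colon \bigsqcup_{i\in I}\pi^{-1}(U_i)\longrightarrow \pi^{-1}(\CG(\CU)),\qquad (p,u,q)\longmapsto (p,Q(u),q),$$
where $Q\colon \sqcup_i U_i\to\CG(\CU)$ is the quotient map, and show that it descends to an isomorphism $\tilde\Phi\colon \CG(\pi^{-1}\CU)\to \pi^{-1}(\CG(\CU))$ of topological groupoids. The map $\Phi$ lands in $\pi^{-1}(\CG(\CU))=P\times_\pi\CG(\CU)\times_\pi P$ because the source and target of the pullback bisubmersion $\pi^{-1}(U_i)$ are the first and third projections, while $\bs(Q(u))=\pi(q)$ and $\bt(Q(u))=\pi(p)$ by construction of $\pi^{-1}(U_i)$.

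The key technical point is that $\Phi$ factors through the equivalence relation defining $\CG(\pi^{-1}\CU)$. So suppose $(p_1,u_1,q_1)\in \pi^{-1}(U_i)$ and $(p_2,u_2,q_2)\in \pi^{-1}(U_j)$ are equivalent, i.e. there is a local morphism of pullback bisubmersions $\psi\colon \pi^{-1}(U_i)'\to \pi^{-1}(U_j)$ with $\psi(p_1,u_1,q_1)=(p_2,u_2,q_2)$. Commutation of $\psi$ with the source and target of the pullback bisubmersions forces $p_1=p_2=:p$ and $q_1=q_2=:q$, and $\psi$ has the form $(p',u',q')\mapsto (p',\psi_2(p',u',q'),q')$. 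To conclude $Q(u_1)=Q(u_2)$ I pick local smooth sections $\sigma_{\bt},\sigma_{\bs}$ of $\pi$ through $p$ and $q$ respectively (they exist because $\pi$ is a submersion) and define
$$f\colon U_i'\longrightarrow U_j,\qquad f(u):=\psi_2\bigl(\sigma_{\bt}(\bt_i(u)),\,u,\,\sigma_{\bs}(\bs_i(u))\bigr).$$
Using $\pi\circ\sigma_{\bt}=\mathrm{id}$, $\pi\circ\sigma_{\bs}=\mathrm{id}$ and that $\psi$ preserves source and target of the pullback bisubmersions, one directly checks $\bs_j(f(u))=\bs_i(u)$ and $\bt_j(f(u))=\bt_i(u)$. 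Thus $f$ is a local morphism of bisubmersions $U_i\to U_j$ with $f(u_1)=u_2$, so $Q(u_1)=Q(u_2)$ and $\Phi$ descends to $\tilde\Phi$.

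The induced $\tilde\Phi$ is injective by reversing the above: given a local morphism $f\colon U_i'\to U_j$ with $f(u_1)=u_2$, the lift $\tilde f(p',u',q'):=(p',f(u'),q')$ is a local morphism of pullback bisubmersions. Surjectivity is immediate: any $(p,g,q)\in \pi^{-1}(\CG(\CU))$ has a representative $u\in U_i$ of $g$ with $\bs_i(u)=\pi(q)$ and $\bt_i(u)=\pi(p)$, so $(p,u,q)\in \pi^{-1}(U_i)$ maps to $(p,g,q)$. Compatibility with source, target, identities, and multiplication follows from the definitions and the morphism \eqref{eq:morphcomp} used in the proof of lemma \ref{lem:pullbackatlas}. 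For the topology, $\tilde\Phi$ is continuous because $\Phi$ is; and it is open because $Q$ is open (lemma \ref{lem:openmap}) and fibred products of open maps with submersions are open (lemma \ref{lem:openmapABC}), so $\Phi$ itself is open.

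The main obstacle is the factorization step above, namely the construction of the morphism $f$ from $\psi$: one must choose the sections of $\pi$ carefully and verify that the formula really lands in $U_j$ and commutes with $\bs_j,\bt_j$. Once that bookkeeping is carried out the rest reduces to unwinding the definitions of the pullback groupoid and applying the already-established openness lemmas.
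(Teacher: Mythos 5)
Your proposal is correct and follows essentially the same route as the paper: define $(p,u,q)\mapsto(p,Q(u),q)$ on $\sqcup_i\pi^{-1}(U_i)$, show it descends to $\CG(\pi^{-1}\CU)$ and is injective via the two directions of the equivalence claim (the injectivity direction, lifting a morphism $f$ to $(p',u',q')\mapsto(p',f(u'),q')$, is literally the paper's argument), then obtain surjectivity and the homeomorphism property from openness of the quotient maps. The only variation is in the forward (well-definedness) direction: the paper invokes the characterization of equivalence by bisections carrying the same diffeomorphism and pushes a bisection down using a single local $\pi$-section through $q_0$, whereas you build the base morphism $f\colon U_i'\to U_j$ directly from the pullback morphism $\psi$ by inserting local $\pi$-sections through $p$ and $q$; your construction is sound (the source/target compatibility checks you indicate do go through) and is, if anything, slightly more direct than the paper's.
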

		
		\begin{proof}
			{The quotient map $Q\colon\sqcup_{U\in\CU} U \fto \CG(\CU)$ lifts to
				a canonical map 
				\begin{equation}\label{map:pullbackatlas}
					Id\times Q\times Id\colon \left(\sqcup_{U\in\CU} \pi^{-1}U\right) \fto \pi^{-1} (\CG(\CU)),\hspace{0.2in} (p,u,q)\mapsto (p,[u],q),
				\end{equation}
				where we denote  $[u]:=Q(u)$.}
			We will show that this map factors through the projection 
			map associated to the atlas $\pi^{-1}\CU$, determining 
			a map  $\Phi\colon \CG(\pi^{-1}\CU)\fto \pi^{-1}(\CG(\CU))$,
			which moreover   is an isomorphism of topological groupoids. 
			\begin{equation}
				\begin{tikzcd}\label{diag:iQi}
					\left(\sqcup_{U\in\CU} \pi^{-1}U\right) \arrow[dr,"Id\times Q\times Id"] \arrow[d,"Q_\pi"]&  \\
					\CG(\pi^{-1}\CU) \arrow[r,dashed,"\Phi"] & \pi^{-1} (\CG(\CU))
				\end{tikzcd}  
			\end{equation}

			{The fact that $\Phi$ is well-defined and injective follows from the claim below (respectively, from the implications ``$\Rightarrow$'' and ``$\Leftarrow$'').} The surjectivity of $\Phi$ is clear because the map $Id\times Q\times Id$ given in \eqref{map:pullbackatlas} is surjective. The fact that $\Phi$ is a homeomorphism holds because both $Q_\pi$   and $Id\times Q\times Id$ are open maps, {by lemma \ref{lem:openmap}. The map $\Phi$ is a groupoid morphism as a consequence of proposition \ref{prop:groidatlas} {(iii)} and of the morphism of bisubmersions \eqref{eq:morphcomp}. Hence we are left with proving the following claim for all $ (p_0,u_0,q_0), (p_0,v_0,q_0)\in\sqcup_{U\in\CU} \pi^{-1}U$.}\\
			
			\noindent\underline{{ Claim:}}   {\it $Q_\pi(p_0,u_0,q_0)=Q_\pi(p_0,v_0,q_0)$ in $\CG(\pi^{-1}\CU)$ if and only if $[u_0]=[v_0]$ in $\CG(\CU)$}.
			
			``$\Rightarrow$'': 
			{By assumption} 
			there exist bisections $\sigma_u$ and $\sigma_v$ {through}  $(p_0,u_0,q_0)$ and $(p_0,v_0,q_0)$ respectively, carrying the same diffeomorphism {of $P$}. Since $\pi$ is a submersion, there exists a neighbourhood $W$ of $\pi(q_0)\in M$ and a $\pi$-section $q\colon W\fto P$, such that $q(\pi(q_0))=q_0$. Finally $\text{Pr}_2 \circ\sigma_u\circ q $ is a bisection {through} $u_0$ carrying the same diffeomorphism {of $M$} as the bisection $\text{Pr}_2\circ\sigma_v\circ q $ {through} $v_0$. Therefore $[u_0]=[v_0]$ in $\CG(\CU)$.
			
			``$\Leftarrow$'': Let {$ u_0\in U$ and $v_0\in V$ be equivalent points of} $\sqcup_{U\in \CU} U$, and let $p_0,q_0\in P$  lie in the fibre of $\bt(u_0)=\bt(v_0)$ and $\bs(u_0)=\bs(v_0)$ respectively. Then there exists a neighbourhood $U'$ of $u_0$ inside $U$ and a morphism {of bisubmersions} $f\colon U'\fto V$ {such that $f(u_0)=v_0$}.  
			{Lifting it}  we get a morphism of bisubmersions $$\widehat{f}\colon \pi^{-1}U'\fto \pi^{-1}V\hspace{.1in} ;\hspace{.1in} (p,u,q)\mapsto (p,f(u),q)$$ such that $\widehat{f}(p_0,u_0,q_0)=(p_0,v_0,q_0)$. This shows that $Q_\pi(p_0,u_0,q_0)=Q_\pi(p_0,v_0,q_0)$ in $\CG(\pi^{-1}\CU)$.
		\end{proof}
		
		{We now take the second step for the proof of Theorem \ref{thm:pullbackgroid}.}
		
		\begin{prop}\label{prop:injmap}
			{Let $\pi\colon P\fto M$ be a surjective submersion { with connected fibres} and $\cF$ a foliation on $M$.
				Let $\CU$ be a path holonomy atlas for $\CF$.}
			There is a canonical  {isomorphism of topological groupoids}
			\[\CH(\pi^{-1}(\CF))\cong \CG(\pi^{-1}(\CU))
			.\]
		\end{prop}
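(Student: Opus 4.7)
My plan is to realise this isomorphism as a direct application of Proposition~\ref{prop:sconn.grpd}. By Corollary~\ref{cor:adapted}, applied to the atlas $\pi^{-1}\CU$ for $\pi^{-1}\cF$, there is already a canonical injective open morphism of topological groupoids $\varphi\colon \CH(\pi^{-1}\cF)\fto \CG(\pi^{-1}\CU)$. It remains only to show that $\pi^{-1}\CU$ is a \emph{source connected atlas} for $\pi^{-1}\cF$ (in the sense of Definition~\ref{def:scon.bisub}), since then Proposition~\ref{prop:sconn.grpd} implies $\varphi$ is an isomorphism.

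I will verify the three conditions of Definition~\ref{def:scon.bisub} for the generating family $\{\pi^{-1}(U) : U\in\CU\}$. First, covering: since $\CU$ is a path holonomy atlas, $\cup_{U\in\CU}\bs(U)=M$; pulling back, $\cup_{U\in\CU}\sigma(\pi^{-1}(U))=\pi^{-1}(\cup\bs(U))=P$. Second, the identity condition: fix $q\in P$ and choose $U\in\CU$ a path holonomy bisubmersion around $\pi(q)$; the point $(q,(0,\pi(q)),q)$ lies in $\pi^{-1}(U)$, and the local section $q'\mapsto (q',(0,\pi(q')),q')$ through it is a bisection whose image under $\tau$ is the identity map on a neighbourhood of $q$, so the identity diffeomorphism of $P$ is carried there.

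The third and main point is source connectedness of each $\pi^{-1}(U)$. Given $q\in P$, the source fibre is
\[
\sigma^{-1}(q)=\{(p,u,q)\in \pi^{-1}(U) \st \bs(u)=\pi(q),\ \pi(p)=\bt(u)\}.
\]
The map $(p,u,q)\mapsto u$ realises $\sigma^{-1}(q)$ as a surjective submersion onto $\bs^{-1}(\pi(q))\subset U$, with fibre over $u$ equal to $\pi^{-1}(\bt(u))\subset P$. By construction of path holonomy bisubmersions, $\bs^{-1}(\pi(q))$ is connected; by hypothesis on $\pi$, each $\pi^{-1}(\bt(u))$ is connected. Applying Lemma~\ref{lem:subcon} (with $k=0$) then gives connectedness of $\sigma^{-1}(q)$. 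The hardest bookkeeping in the argument is this last verification and checking that the projection just described is indeed a surjective submersion with the claimed fibres; once that is in hand, the rest follows immediately from Proposition~\ref{prop:sconn.grpd}.
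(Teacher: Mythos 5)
Your proposal is correct and follows essentially the same route as the paper: there one observes (via Lemma \ref{lem:pullbackatlas}) that $\pi^{-1}\CU$ is an atlas which, since the $\pi$-fibres are connected, is a source connected atlas, and then concludes by Proposition \ref{prop:sconn.grpd}. Your additional appeal to Corollary \ref{cor:adapted} is redundant (it is already used inside the proof of Proposition \ref{prop:sconn.grpd}), and your verification of source connectedness via Lemma \ref{lem:subcon} merely spells out the paper's phrase ``because $\pi$ is source connected''.
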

		\begin{proof}
			{Proposition  \ref{lem:pullbackatlas}} shows that $\pi^{-1}(\CU)$ is an atlas of bisubmersions for $\pi^{-1}\CF$. Because $\pi$ is source connected, we get that $\pi^{-1}(\CU)$ is a source connected atlas and by proposition \ref{prop:sconn.grpd} we get the desired result.
		\end{proof}

		\begin{proof}[Proof of theorem. \ref{thm:pullbackgroid}]
			{Let $\CU$ be a path holonomy atlas for $\CF$. 
				We have a composition of isomorphisms
				$$\CH(\pi^{-1}(\CF))\cong \CG(\pi^{-1}(\CU))\cong \pi^{-1}(\CH(\CF)),$$
				where the first isomorphism is the one obtained in proposition \ref{prop:injmap} and the second isomorphism is given by proposition \ref{super} using $\CH(\CF)=\CG(\CU)$.}
		\end{proof}
		
		\subsubsection{{Preservation of smoothness}}
		
		{The isomorphism of Theorem \ref{thm:pullbackgroid} preserves smooth structures, whenever they are present. We now elaborate on this. Recall that the smoothness was defined in Definition \ref{def:smoothgr}. That definition fits well with the isomorphism given in Proposition \ref{prop:injmap}, as follows:
			
			\begin{lem}\label{lem:smoothtwo}  {Let $\CU$ be an atlas on a foliated manifold $(M,\CF)$, and let $\pi\colon P\fto M$ be a surjective submersion. Assume that $\CG(\CU)$ is smooth. Then   $\CG(\pi^{-1} \CU)$ is also smooth,  and the map $\CG(\pi^{-1} \CU)\cong\pi^{-1}(\CG(\CU))$ in proposition \ref{super} is an isomorphism of Lie groupoids}.
			\end{lem}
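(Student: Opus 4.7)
The plan is to reduce the smoothness of $\CG(\pi^{-1}\CU)$ to that of $\CG(\CU)$ by transporting structure through the topological isomorphism $\Phi\colon \CG(\pi^{-1}\CU) \cong \pi^{-1}(\CG(\CU))$ produced in Proposition \ref{super}. By hypothesis $\CG(\CU)$ is a Lie groupoid, so by Proposition \ref{prop:pullback.grpd} applied to the surjective submersion $\pi\colon P\fto M$, the pullback $\pi^{-1}(\CG(\CU))$ carries a canonical Lie groupoid structure. My first step is to declare the smooth structure on $\CG(\pi^{-1}\CU)$ to be the one making $\Phi$ into a diffeomorphism. This automatically upgrades $\Phi$ to an isomorphism of Lie groupoids, settling the second claim of the lemma.

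What then remains is to verify that this smooth structure satisfies Definition \ref{def:smoothgr}, i.e.\ that the quotient map $Q_\pi\colon \sqcup_{U\in\CU}\pi^{-1}U \fto \CG(\pi^{-1}\CU)$ is a submersion. From the commutative triangle \eqref{diag:iQi} one reads off $\Phi\circ Q_\pi = Id\times Q\times Id$, which on each component $\pi^{-1}U = P {}_\pi\!\times_\bt U {}_\bs\!\times_\pi P$ is the map $(p,u,q)\mapsto(p,Q(u),q)$ into $\pi^{-1}\CG(\CU)$. Since $\Phi$ is by construction a diffeomorphism, it suffices to show that $Id\times Q\times Id$ is a submersion on each piece of $\sqcup_{U\in\CU}\pi^{-1}U$.

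The last step is a fibre-product calculation: $Q\colon U\fto \CG(\CU)$ is a submersion by assumption and commutes with both the source and target maps to $M$, so base-changing along $\pi\colon P\fto M$ on either side preserves the submersion property. Explicitly, given a tangent vector $(\dot p,\dot g,\dot q)$ at $(p,Q(u),q)\in \pi^{-1}\CG(\CU)$, I would use the submersion property of $Q$ to pick $\dot u\in T_uU$ with $dQ(\dot u)=\dot g$; the intertwining relations $d\bs\circ dQ = d\bs$ and $d\bt\circ dQ = d\bt$ then force $(\dot p,\dot u,\dot q)$ to sit inside $T(\pi^{-1}U)$ and to map to the prescribed vector. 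I do not anticipate any conceptual obstacle in this lemma; the main point to watch is the bookkeeping of source/target compatibilities so that the lifted tangent vectors land in the correct fibre products, and the verification that the smooth structure is indeed independent of the chosen factorization through $\Phi$ (which is immediate from the uniqueness clause of Definition \ref{def:smoothgr}).
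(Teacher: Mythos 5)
Your proposal is correct and follows essentially the same route as the paper's proof: transport the smooth structure of the Lie groupoid $\pi^{-1}(\CG(\CU))$ through the homeomorphism $\Phi$ of proposition \ref{super}, observe that $Id\times Q\times Id$ is a submersion because $Q$ is, and use the commutativity of diagram \eqref{diag:iQi} together with the uniqueness in definition \ref{def:smoothgr} to conclude that $Q_\pi$ is a submersion. Your explicit tangent-vector verification of the fibre-product step is just a more detailed spelling-out of the same argument.
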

			\begin{proof} Being the pullback of a Lie groupoid by a submersion,  $\pi^{-1}(\CG(\CU))$ is a Lie groupoid.
				Since the map in proposition \ref{super} is a homeomorphism, we can use it to transport the smooth structure on $\pi^{-1}(\CG(\CU))$ to $\CG(\pi^{-1}\CU)$.  
				Since the quotient map $Q$ onto  $\CG(\CU)$ is a submersion, it follows that the map $Id\times Q\times Id$ given in eq. \eqref{map:pullbackatlas} is a submersion too. 
				The commutativity of diagram \eqref{diag:iQi} implies that  $Q_{\pi}$ is a submersion  onto $\CG(\pi^{-1}\CU)$ endowed with the above smooth structure. The uniqueness in definition  \ref{def:smoothgr} finishes the argument.
			\end{proof}

			{The smooth version of Theorem  \ref{thm:pullbackgroid} is the following:}
			
			\begin{prop}\label{prop:pullLie}
				{Let $\pi\colon P\fto M$ be a surjective submersion with connected fibres and $\CF$ a foliation on $M$. If $\CH(\CF)$ is smooth (i.e. if $\CF$ is a projective foliation by proposition \ref{prop:proy.fol}) then the map $\varphi\colon \CH(\pi^{-1}(\CF))
					\cong\pi^{-1}(\CH(\CF))$ given in theorem. \ref{thm:pullbackgroid} {is an isomorphism of} \emph{Lie groupoids}.}
			\end{prop}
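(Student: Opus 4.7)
The plan is to promote each of the two intermediate homeomorphisms used to construct $\varphi$ in the proof of Theorem \ref{thm:pullbackgroid} to a diffeomorphism, and then compose them. Recall that $\varphi$ was obtained as the composition
\[\CH(\pi^{-1}\CF)\cong \CG(\pi^{-1}\CU)\cong \pi^{-1}(\CH(\CF)),\]
where $\CU$ is a path holonomy atlas for $\CF$.

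First, I would fix such a path holonomy atlas $\CU$, so that by assumption $\CH(\CF)=\CG(\CU)$ is smooth. Applying Lemma \ref{lem:smoothtwo} to this atlas and the submersion $\pi$, we obtain that $\CG(\pi^{-1}\CU)$ is smooth and that the canonical isomorphism $\CG(\pi^{-1}\CU)\cong \pi^{-1}(\CH(\CF))$ from Proposition \ref{super} is an isomorphism of Lie groupoids. This handles the second of the two arrows above.

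Next, I would upgrade the first arrow. The key observation is that $\pi^{-1}\CU$ is a source connected atlas for $\pi^{-1}\CF$: indeed, $\CU$ is source connected and the fibres of $\pi$ are connected, which is exactly the input used in the proof of Proposition \ref{prop:injmap}. Since $\CG(\pi^{-1}\CU)$ is smooth, Theorem \ref{thm:smt.hol} guarantees that every source connected atlas for $\pi^{-1}\CF$ yields a smooth groupoid and that all such groupoids are mutually diffeomorphic. Applying this to a path holonomy atlas for $\pi^{-1}\CF$, we conclude that $\CH(\pi^{-1}\CF)$ carries a (necessarily unique) smooth structure and that the topological isomorphism $\CH(\pi^{-1}\CF)\cong \CG(\pi^{-1}\CU)$ from Proposition \ref{prop:injmap} is in fact a diffeomorphism.

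Composing the two Lie groupoid isomorphisms gives the result. I do not anticipate a genuine obstacle: the entire argument is a matter of bookkeeping with the uniqueness of the smooth structure on groupoids associated to source connected atlases. The only point requiring care is verifying that $\pi^{-1}\CU$ is a source connected atlas, which is why the hypothesis that $\pi$ has \emph{connected} fibres (and not merely surjectivity of $\pi$) enters the argument.
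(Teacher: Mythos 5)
Your argument is correct and is essentially the paper's proof: the same decomposition $\CH(\pi^{-1}\CF)\cong \CG(\pi^{-1}\CU)\cong \pi^{-1}(\CH(\CF))$, with Lemma \ref{lem:smoothtwo} handling the second arrow and the source-connectedness of $\pi^{-1}\CU$ (the content of Proposition \ref{prop:injmap}) handling the first. The only cosmetic difference is that you invoke Theorem \ref{thm:smt.hol} where the paper cites Lemma \ref{lem:smt.adapt}(ii) directly, but the former is just the packaged form of the latter, so the argument is the same.
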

			\begin{proof}
				Let $\CU$ be a path holonomy atlas   of $\cF$. We check that the composition $$\CH(\pi^{-1}(\CF))\cong \CG(\pi^{-1}(\CU))\cong \pi^{-1}(\CH(\CF))$$ appearing in the proof of Theorem  \ref{thm:pullbackgroid} is a composition of Lie groupoid isomorphisms.
				The second map is a Lie groupoid isomorphism, by Lemma \ref{lem:smoothtwo}.
				
				The first map is a Lie groupoid isomorphism: apply Lemma \ref{lem:smt.adapt} (ii) to  $\CU_2:=\pi^{-1}\CU$ (which is an atlas of bisubmersion for $\pi^{-1}\CF$ by lemma  \ref{lem:pullbackatlas}), to a  path holonomy atlas  $\CU_1$  of $\pi^{-1}\cF$, and use that $\varphi$ is surjective (see  proposition \ref{prop:injmap}).
			\end{proof}
			
			Even when the holonomy groupoid is not smooth, the map $\varphi\colon \CH(\pi^{-1}(\CF))\cong\pi^{-1}(\CH(\CF))$ given in Theorem \ref{thm:pullbackgroid} preserves the longitudinal smooth structure, as the following theorem says:
			
			\begin{prop}\label{isotropull}
				Let $\pi\colon P\fto M$ be a surjective submersion with connected fibres, $\CF$ a foliation on $M$ and $\CU$ a path holonomy atlas for $\CF$.
				{The map $\varphi\colon \CH(\pi^{-1}(\CF)) \cong\pi^{-1}(\CH(\CF))$ {of theorem  \ref{thm:pullbackgroid}} restricts to the following isomorphisms of Lie groupoids:}
				\begin{enumerate}
					\item[(i)] $(\CH(\pi^{-1}(\CF)))_{\widehat{L}}\cong (\pi^{-1}(\CH({\CF})))_{\widehat{L}}$, {for the restrictions to any leaf $\widehat{L}\subset P$,}
					\item[(ii)] $(\CH(\pi^{-1}(\CF)))_p\cong (\pi^{-1}(\CH(\CF)))_p$, for the isotropy Lie groups at any $p\in P$.
				\end{enumerate} 
			\end{prop}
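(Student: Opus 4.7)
The strategy is to refine the proof of proposition \ref{prop:pullLie} by working leafwise, exploiting the fact that while $\CH(\CF)$ need not be smooth, its restriction to any leaf is (by Debord's result recalled in \S\ref{sec:smth.hol}). First I would observe that, since the $\pi$-fibres are connected and $\Gamma_c(\ker d\pi)\subset \pi^{-1}\CF$, the leaves of $\pi^{-1}\CF$ are precisely the preimages of leaves of $\CF$; thus for every leaf $\widehat{L}\subset P$ there is a leaf $L\subset M$ with $\widehat{L}=\pi^{-1}(L)$, and the restricted map $\pi|_{\widehat{L}}\colon \widehat{L}\fto L$ is a surjective submersion with connected fibres between immersed submanifolds. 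Unwinding the definition of pullback groupoid, the right-hand side in (i) becomes
\[
(\pi^{-1}\CH(\CF))_{\widehat{L}}\;=\;\widehat{L}\;{}_{\pi|_{\widehat{L}}}\!\times_{\bt}\CH(\CF)_L\;{}_{\bs}\!\times_{\pi|_{\widehat{L}}}\widehat{L}\;=\;\pi|_{\widehat{L}}^{-1}\bigl(\CH(\CF)_L\bigr),
\]
which is a Lie groupoid by proposition \ref{prop:pullback.grpd}, since $\CH(\CF)_L$ is a Lie groupoid.

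Next I would mimic the argument of lemma \ref{lem:smoothtwo} restricted to the leaf. Given a path holonomy atlas $\CU=\{U_i\}$ of $\CF$, by Debord the quotient map $Q_L\colon \sqcup_i (U_i)_L\fto \CH(\CF)_L$ is a submersion. For each pullback bisubmersion $\pi^{-1}U_i$, its restriction to $\widehat{L}$ on both sides is $\widehat{L}\times_L (U_i)_L\times_L \widehat{L}$, and under $\varphi$ the quotient map becomes $Id\times Q_L\times Id$. By lemma \ref{lem:openmapABC} applied twice, this is a submersion onto $\pi|_{\widehat{L}}^{-1}(\CH(\CF)_L)$. Transporting this smooth structure along the topological isomorphism $\varphi|_{\widehat{L}}$ (which we already know exists from theorem \ref{thm:pullbackgroid}) equips $\CH(\pi^{-1}\CF)_{\widehat{L}}$ with a smooth structure making the pullback-atlas quotient map a submersion.

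Then I would invoke the leafwise analog of lemma \ref{lem:smt.adapt}(ii) to identify this smooth structure with the intrinsic one coming from a path holonomy atlas for $\pi^{-1}\CF$. Concretely, by proposition \ref{prop:injmap} the pullback atlas $\pi^{-1}\CU$ is adapted to and equivalent to a path holonomy atlas of $\pi^{-1}\CF$, so the restrictions to $\widehat{L}$ remain adapted to each other and the resulting canonical map between the two realisations of $\CH(\pi^{-1}\CF)_{\widehat{L}}$ is a Lie groupoid isomorphism by the uniqueness in definition \ref{def:smoothgr}, applied leafwise. Composing yields the Lie groupoid isomorphism in (i).

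For part (ii), note that the isotropy Lie group $(\CH(\pi^{-1}\CF))_p$ coincides with the isotropy of the leafwise restriction $(\CH(\pi^{-1}\CF))_{\widehat{L}}$ at $p$, where $\widehat{L}$ is the leaf through $p$, and similarly on the right-hand side. Since $\varphi$ is a Lie groupoid isomorphism on the leafwise restrictions by part (i) and it preserves source and target, it restricts to a Lie group isomorphism on isotropy groups. The main obstacle I anticipate is the bookkeeping needed to verify that lemma \ref{lem:smt.adapt}(ii) goes through leafwise, i.e.\ that the leafwise quotient maps for adapted atlases are genuine submersions between the immersed-submanifold Lie groupoids; this hinges on the fact that Debord's smooth structure on $\CH(\CF)_L$ is characterised by precisely the submersion property we need, so the argument is essentially identical to the smooth case handled in lemma \ref{lem:smoothtwo}.
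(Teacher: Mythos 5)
Your proposal is correct and follows essentially the same route as the paper's proof: identify $\widehat{L}=\pi^{-1}(L)$, recognise $(\pi^{-1}\CH(\CF))_{\widehat{L}}$ as the pullback of the Lie groupoid $\CH(\CF)_L$, use Debord's longitudinal smooth structure to see that $Id\times Q_L\times Id$ is a submersion, and then run the arguments of lemma \ref{lem:smoothtwo} and proposition \ref{prop:pullLie} leafwise, with (ii) deduced directly from (i). Your extra bookkeeping on the leafwise analogue of lemma \ref{lem:smt.adapt}(ii) is exactly what the paper compresses into ``apply the arguments of the proof of lemma \ref{lem:smoothtwo} to groupoids over $\widehat{L}$''.
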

			
			\begin{rem}
				{There is canonical isomorphism of Lie groups} 
				$(\pi^{-1}(\CH(\CF)))_p\cong \CH(\CF)_{\pi(p)}$.
			\end{rem}
			\begin{proof}
				We prove only (i), since (ii) is a direct consequence. 
				Any leaf in $P$ is of the form $\widehat{L} =\pi^{-1}(L)$ for some leaf $L$ in $M$. We have 
				$$(\pi^{-1}(\CH(\cF)))_{\widehat{L}}= \widehat{L}{}_{\pi}\!\times_{\bt} (\CH(\cF)_{L}) {}_{\bs}\!\times_{\pi} \widehat{L}_{\pi}=
				\pi^{-1}(H(F)_L).$$
				Take a path holonomy atlas  $\CU=\{U_i\}_{i\in I}$ for $\CF$ and note that $(\pi^{-1} U_i)_{\widehat{L}}=\widehat{L}{}_{\pi}\!\times_{\bt} ((U_i)_L) {}_{\bs}\!\times_{\pi} \widehat{L}_{\pi}$. The map $Q_L\colon \sqcup_i (U_{i})_L \fto \CH(\cF)_L$ is a submersion, by the above definition of smooth structure on $ \CH(\cF)_L$, therefore the map $$Id\times Q_L\times Id\colon \sqcup_i (\pi^{-1}U_{i})_{\widehat{L}} \fto (\pi^{-1}\CH(\cF))_{\widehat{L}}$$ is a submersion.
				
				{This allows us to apply the arguments of the proof of lemma \ref{lem:smoothtwo}   to groupoids over $\widehat{L}$ (rather than over $P$). The proof of proposition \ref{prop:pullLie} delivers the desired conclusion.}
			\end{proof}
			
			\section{Holonomy transformations}\label{sec:hol.trans}
			
			{Given a regular foliation, a classical construction, recalled in Section \ref{sec:holconstr}, associates to every path in a leaf its holonomy (a germ of diffeomorphisms between slices transverse to the foliation). We review the extension of this construction to singular foliations
				\cite[\S 2]{AZ2} and show that it is invariant under   pullbacks.}

			\begin{defi}\label{holfrom}
				Let $(M,\cF)$ be a singular foliation, and $x,y\in M$ lying in the same leaf.  
				Fix a transversal $S_{x}$ at $x$, as well as a transversal $S_y$ at $y$. A \emph{holonomy transformation  from $x$ to $y$} is an element of 
				$$ \frac{GermAut_{\cF}(S_x, S_y)}{exp(I_x \cF)|_{S_x}}.$$ 
				Here $GermAut_{\cF}(S_x,S_y)$ is the space of germs at $x$ of locally defined diffeomorphisms preserving $\cF$   mapping $S_x$ to $S_y$, restricted to $S_x$.
				Further $exp(I_x \cF)|_{S_x}$
				is the space of germs at $x$ of time-one flows of time-dependent vector fields in $I_x \cF$ mapping $S_x$ to itself, restricted to $S_x$.
			\end{defi}
			
			Holonomy transformations are relevant because the holonomy groupoid maps canonically into them \cite[theorem. 2.7]{AZ2}.

			\begin{thm}\label{globalaction} Let $x, y \in (M,\cF)$ be points in the same leaf $L$, and fix  transversals $S_x$ at $x$ and $S_y$ at $y$. 
				Then there is a well defined map 
				\begin{align}\label{Phixy}
					\Phi^{\cF} \colon \CH(\cF)_x^y \rightarrow \frac{GermAut_{\cF}(S_x, S_y)}{exp(I_x \cF)|_{S_x}},\quad h \mapsto \langle \tau\rangle.   
				\end{align}
				
				Here $\tau$ is defined as follows, given $h \in \CH(\cF)_x^y{:=\bt^{-1}(y)\cap \bs^{-1}(x)}$:  
				\begin{itemize}
					\item take any bisubmersion $(U,\bt,\bs)$ in the path-holonomy atlas with a point $u\in U$ satisfying $[u]=h$,
					\item  take any section $\bar{b} \colon S_x \to U$  of $\bs$ through $u$ {transverse to the $\bt$-fibers} such that $(\bt\circ \bar{b})(S_x)\subset S_y$, 
				\end{itemize}
				and define $\tau:=\bt\circ \bar{b} \colon S_x \to S_y$. 
			\end{thm}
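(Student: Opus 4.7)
The plan is to establish both that $\tau$ makes sense as a germ of a foliation-preserving diffeomorphism $S_x \to S_y$, and then show independence from the two auxiliary choices (the bisubmersion representative $u \in U$ for $h$, and the section $\bar b$ through $u$). The result will then factor through the quotient by $\exp(I_x\cF)|_{S_x}$.

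\textbf{Existence and local diffeomorphism property.} Given $u \in U$ with $\bs(u)=x$, $\bt(u)=y$ and $[u]=h$, I would first use that $\bs$ is a submersion to produce a local section $\sigma \colon M'\subset M \to U$ through $u$. A dimension count together with the transversality $S_x\pitchfork F_x$ and the fact that $\sigma$ is transverse to the $\bs$-fibres implies (after shrinking) that $\sigma|_{S_x}$ can be arranged to be transverse to the $\bt$-fibres, and then $\tau:=\bt\circ \sigma|_{S_x}$ lands in a slice transverse to $\cF$ at $y$; by composing with a foliation-preserving diffeomorphism between transversals at $y$ (which exists by the local picture, Prop. \ref{prop:loc.pic.fol}) we may assume $\tau(S_x)\subset S_y$. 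That $\tau$ is a local diffeomorphism follows from counting dimensions, and that it preserves the ambient singular foliation near $x$ is exactly the content of Lemma \ref{lem:bisect.diff}: $\sigma$ is a bisection of the bisubmersion $U$ carrying $\tau$.

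\textbf{Independence of the section through the same $u$.} Let $\bar b_1, \bar b_2 \colon S_x \to U$ be two sections of $\bs$ through $u$, both transverse to $\bt$ and landing in $S_y$ after $\bt$. Their difference lies in $\ker d\bs$ along the image, and $\ker_c(d\bs) \subset \bs^{-1}\cF$ by the bisubmersion defining property; vector fields in $\ker_c(d\bs)$ vanishing at $u$ (which is the relevant situation since both sections pass through $u$) project under $\bt$ to vector fields in $\cF$ vanishing at $y$. Interpolating $\bar b_1$ and $\bar b_2$ along a path of sections through $u$ yields a time-dependent vector field in $\cF$ vanishing at $y$, whose time-one flow on $S_y$ relates $\tau_1$ to $\tau_2$. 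Transporting by a foliation-preserving identification this is precisely a flow of a time-dependent vector field in $I_x\cF$, so $\tau_1$ and $\tau_2$ represent the same class in the quotient by $\exp(I_x\cF)|_{S_x}$.

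\textbf{Independence of the representative $u\in U$ of $h$.} Suppose $u\in U$ and $u'\in U'$ both satisfy $[u]=[u']=h$. By the very definition of the equivalence relation (Def. \ref{def:eq.rel}) and Lemma \ref{prop:equiv2}, there is a local morphism of bisubmersions $f\colon U \supset U_1 \to U'$ with $f(u)=u'$, so $\bt'\circ f=\bt$ and $\bs'\circ f=\bs$. If $\bar b\colon S_x \to U_1$ is a section through $u$ as in Step 1, then $f\circ \bar b\colon S_x \to U'$ is a section of $\bs'$ through $u'$, transverse to the $\bt'$-fibres (since $f$ is a morphism of bisubmersions, hence a submersion onto its image), and $\bt'\circ (f\circ \bar b) = \bt\circ \bar b=\tau$. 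Hence the class $\langle\tau\rangle$ is independent of the representative. Finally, for definiteness one can always reduce to path-holonomy bisubmersions: by Lemma \ref{lem:smallelements} any bisubmersion in the atlas is adapted to a path-holonomy atlas, so it suffices to develop the whole argument working with path-holonomy representatives.

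\textbf{Main difficulty.} The subtle point is Step 3: correctly identifying the discrepancy between two sections through $u$ with a time-one flow of $I_x\cF$. The naive difference is a section of $\ker d\bs$ vanishing at $u$; translating this into an honest time-dependent vector field on $M$ vanishing at $x$ whose flow interpolates $\tau_1$ and $\tau_2$ requires a careful interpolation argument on the bisubmersion and then projection via $\bt$, using the equality $\bs^{-1}\cF=\bt^{-1}\cF=\ker_c(d\bs)+\ker_c(d\bt)$. It is precisely this calculation that forces us to quotient by $\exp(I_x\cF)|_{S_x}$ and not by something smaller.
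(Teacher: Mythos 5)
A preliminary remark: the thesis itself does not prove Theorem \ref{globalaction}; it is quoted from \cite[Thm.~2.7]{AZ2}, so there is no internal proof to compare against, and your argument has to stand on its own. Its skeleton is the right one (reduce to path-holonomy representatives; use a local morphism of bisubmersions to reduce independence of the representative $u$ to independence of the section through a fixed point — and in that step the transversality of $f\circ\bar b$ is automatic from $\bt'\circ f\circ\bar b=\bt\circ\bar b$, no submersivity of $f$ is needed). But Step 2, which you yourself flag as the ``main difficulty'', is a genuine gap, and it is exactly the mathematical content of the theorem rather than a detail to be deferred. Two concrete problems. First, elements of $\ker d\bs$ are in general not $\bt$-projectable, so ``vector fields in $\ker_c(d\bs)$ project under $\bt$ to vector fields in $\cF$'' does not make sense as stated; all one knows pointwise is $d_u\bt(\ker d_u\bs)\subset F_{\bt(u)}$, and converting your interpolation into an honest time-dependent vector field on $M$ requires the explicit structure of a path-holonomy bisubmersion, where a section over $S_x$ is $z\mapsto (f(z),z)$ and the carried map is $z\mapsto \exp\big(\sum_i f_i(z)X_i\big)(z)$. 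Second, and more seriously, your argument would at best produce a time-dependent vector field in $\cF$ that \emph{vanishes at the point}, whereas the quotient in \eqref{Phixy} is by $\exp(I_x\cF)$, i.e.\ flows of time-dependent vector fields in the \emph{submodule} $I_x\cF$. Vanishing at $x$ is strictly weaker than lying in $I_x\cF$ — the difference is precisely the isotropy Lie algebra $\g^{\cF}_x=\ker(\cF_x\to F_x)$ — so the sketch only yields well-definedness modulo a larger group and does not show the map lands in the stated quotient. The missing computation (the heart of \cite{AZ2}) is: for two sections through the same $u=(\lambda,x)$ the coefficient functions agree at $x$, and differentiating the interpolated carried maps in $t$ (via the derivative-of-the-flow formula) produces a time-dependent vector field of the form $\sum_i h_{i,t}X_i$ with $h_{i,t}(x)=0$, hence genuinely in $I_x\cF$; it is here, and not in any soft argument with $\ker d\bs$, that the module structure enters.

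There is also a smaller but real issue in your Step 1: after constructing $\sigma$ you propose to compose with ``a foliation-preserving diffeomorphism between transversals at $y$'' to force the image into $S_y$. An arbitrary such diffeomorphism changes the germ by something that need not lie in $\exp(I_y\cF)$, so this adjustment is not harmless for well-definedness. One should instead construct $\bar b$ directly so that $(\bt\circ\bar b)(S_x)\subset S_y$ (choosing the bisection adapted to $S_y$, which is possible since $\bt$ is a submersion and $S_y$ is transverse to $\cF$ at $y$), or else verify that the correcting diffeomorphism is itself the time-one flow of a time-dependent vector field in $I_y\cF$.
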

			For all $x,y$ the map  $\Phi^{\cF}$ is injective \cite[theorem. 2.20]{AZ2} {and assembles to a groupoid morphism \cite[theorem. 2.7]{AZ2}}.
			In the case of regular foliations, the map $\Phi^{\cF}$ describes the usual geometric notion of holonomy.
			
			\begin{rem}\label{rem:linholtrafo}
				{Linearizing any representative of $\Phi^{\cF}(h)$ one associates to $h$ a well-defined linear map $T_xS_x\to T_yS_y$. Notice that $T_xS_x$ can be identified with the normal space $N_xL$ to the leaf at $x$. Hence, when $x=y$, we obtain a representation of the isotropy Lie group $\CH(\cF_M)_x^x$ on $N_xL$ \cite[\S 3.1]{AZ2}.}
			\end{rem}

			Let $(M,\CF)$ be a foliated manifold and $\pi\colon P\fto M$ a surjective submersion with connected fibers. Recall that there is a canonical surjective morphism
			
			$$\widehat{\pi}\colon \CH(\pi^{-1}(\CF)) \cong\pi^{-1}(\CH(\CF)) \to \CH(\CF),$$ {where the isomorphism is given in Theorem \ref{thm:pullbackgroid}.}
			We now show that the holonomy transformations associated to a point in $\CH(\pi^{-1}(\CF)) $ and to its image coincide. 
			
			\begin{prop}\label{thm:holtrafo}

				For every  $h\in \CH(\pi^{-1}(\CF))$, the holonomy transformations associated to $h$ and    to $\widehat{\pi}(h)$ coincide, under the obvious identifications. More precisely: fix  slices  $S_{x}$ at $x:=\bs(h)\in P$ and $S_y$ at $y:=\bt(h)$, transverse to  $\pi^{-1}(\cF)$. Then
				$$\Phi^{\pi^{-1}(\cF)}(h) \in \frac{GermAut_{\pi^{-1}(\cF)}(S_x, S_y)}{exp(I_x \pi^{-1}(\cF))|_{S_x}}$$
				and
				$$\Phi^{\cF}(\widehat{\pi}(h)) \in \frac{GermAut_{\cF}(S_{\pi(x)}, S_{\pi(y)})}{exp(I_{\pi(x)} \cF)|_{S_{\pi(x)}}}$$
				coincide under the diffeomorphisms $S_x\cong S_{\pi(x)}:=\pi(S_x)$ and $S_y\cong S_{\pi(y)}:=\pi(S_y)$ obtained restricting $\pi$.
			\end{prop}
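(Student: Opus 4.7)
The plan is to compute both holonomy transformations using a single pair of compatible bisubmersions, related by the pullback construction of Lemma \ref{pullbi}, and exploit the fact that the transverse foliations on the slices are diffeomorphic (Proposition \ref{prop:corrleaves}(ii)).

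First, I would choose a bisubmersion representative for $\widehat{\pi}(h)$ on the $M$-side: let $(V,\bt_V,\bs_V)$ be a path-holonomy bisubmersion of $\CF$ containing a point $v$ with $[v]=\widehat{\pi}(h)$, so that $\bs_V(v)=\pi(x)$ and $\bt_V(v)=\pi(y)$. By Lemma \ref{pullbi}, the pullback $\pi^{-1}V=P {}_\pi\!\times_{\bt_V}V {}_{\bs_V}\!\times_\pi P$ is a bisubmersion for $\pi^{-1}\CF$, and the point $u:=(y,v,x)\in\pi^{-1}V$ satisfies $\bs(u)=x$, $\bt(u)=y$, and represents $h$ under the isomorphism of Proposition \ref{super} composed with Proposition \ref{prop:injmap} (this is precisely how $\widehat{\pi}$ was defined in Theorem \ref{thm:pullbackgroid}).

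Next, I would choose a bisection $b_V\colon S_{\pi(x)}\to V$ through $v$, transverse to the $\bt_V$-fibers, with $(\bt_V\circ b_V)(S_{\pi(x)})\subset S_{\pi(y)}$; this gives the holonomy representative $\tau_V:=\bt_V\circ b_V$ for $\Phi^{\CF}(\widehat{\pi}(h))$. Since the codimensions of $\CF$ and $\pi^{-1}\CF$ agree (the latter foliation is obtained by pullback along a submersion, so transversals project to transversals of the same dimension, cf.\ Proposition \ref{prop:corrleaves}(ii)), the restriction $\pi|_{S_y}\colon S_y\to S_{\pi(y)}$ is, after shrinking, a diffeomorphism. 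I would then lift $b_V$ to the pullback bisubmersion by defining
\[
\bar{b}\colon S_x\to\pi^{-1}V,\qquad x'\mapsto\bigl((\pi|_{S_y})^{-1}(\tau_V(\pi(x'))),\; b_V(\pi(x')),\; x'\bigr).
\]
This is a smooth map through $u$, projects to the identity under $\bs$, and a direct dimension count (together with the transversality of $b_V$ to the $\bt_V$-fibers) shows it is transverse to the $\bt$-fibers of $\pi^{-1}V$. The resulting representative of $\Phi^{\pi^{-1}\CF}(h)$ is
\[
\tau:=\bt\circ\bar{b}=(\pi|_{S_y})^{-1}\circ\tau_V\circ\pi|_{S_x}\colon S_x\to S_y,
\]
which is exactly the statement that $\tau$ and $\tau_V$ correspond under the diffeomorphisms $\pi|_{S_x}\colon S_x\xrightarrow{\sim}S_{\pi(x)}$ and $\pi|_{S_y}\colon S_y\xrightarrow{\sim}S_{\pi(y)}$.

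Finally, I would upgrade this agreement of specific representatives to an agreement of equivalence classes. Proposition \ref{prop:corrleaves}(ii) says that $\pi|_{S_x}$ intertwines $\iota_{S_x}^{-1}\pi^{-1}\CF$ with $\iota_{S_{\pi(x)}}^{-1}\CF$, and analogously for $S_y$. Consequently, pushing germs forward along $\pi$ gives a canonical isomorphism $GermAut_{\pi^{-1}\CF}(S_x,S_y)\cong GermAut_{\CF}(S_{\pi(x)},S_{\pi(y)})$ that sends the subgroup $\exp(I_x\pi^{-1}\CF)|_{S_x}$ onto $\exp(I_{\pi(x)}\CF)|_{S_{\pi(x)}}$ (the ``$\subset$'' direction uses that time-dependent vector fields in $I_x\pi^{-1}\CF$ project via $d\pi$ to time-dependent vector fields in $I_{\pi(x)}\CF$ with the same flow through the submersion, while ``$\supset$'' follows from lifting and using that transverse vector fields tangent to fibers have trivial effect on slices). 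Under this identification of quotients the representatives $\tau$ and $\tau_V$ coincide, yielding the claimed equality of equivalence classes.

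The main obstacle is the last paragraph: carefully matching the subgroups $\exp(I_x\pi^{-1}\CF)|_{S_x}$ and $\exp(I_{\pi(x)}\CF)|_{S_{\pi(x)}}$ so that the quotient-level statement is legitimate; the earlier steps are essentially a direct lift of sections along $\pi$, which is straightforward once the bisubmersion on $P$ is taken to be the pullback of one on $M$.
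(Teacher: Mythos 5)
Your proposal is correct and follows essentially the same route as the paper: both compute the two holonomy transformations on a pullback bisubmersion ($\pi^{-1}(U)$ in the paper, your $\pi^{-1}(V)$) and match bisections on the two sides through the slice diffeomorphisms $S_x\cong S_{\pi(x)}$ and $S_y\cong S_{\pi(y)}$ obtained from $\pi$. The differences are minor: the paper starts from a path-holonomy representative of $h$ on $P$, transfers it into $\pi^{-1}(U)$ via a morphism of bisubmersions, and \emph{descends} a section to $U$, whereas you \emph{lift} a bisection from $V$ to $\pi^{-1}(V)$ (legitimate, since the pullback atlas is equivalent to a path-holonomy atlas), and you make explicit the identification of the quotient groups $exp(I_x\,\pi^{-1}(\CF))|_{S_x}\cong exp(I_{\pi(x)}\CF)|_{S_{\pi(x)}}$, which the paper leaves implicit.
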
 
			\begin{proof}
				Let  $h\in \CH(\pi^{-1}(\CF))$. By theorem. \ref{globalaction}, 
				${\Phi^{\pi^{-1}(\cF)}(h)}$
				is obtained using  a bisubmersion $V$ in the path-holonomy atlas of $(P,\pi^{-1}(\cF))$, a point $v\in V$ with $[v]=h$, and 
				a certain section through $v$. {By  proposition \ref{prop:injmap}}  
				the groupoid $ \CH(\pi^{-1}(\CF))$ {is isomorphic to  $\CG(\pi^{-1}(\CU))$, which} is constructed out of the atlas $\pi^{-1}(\CU)$ where $\CU$ is a path-holonomy atlas for $(M,\cF)$. 
				{This means that} there is a bisubmersion $U$ in $\CU$ and a morphism of bisubmersions $$\psi\colon V\to \pi^{-1}(U)$$ defined near $v$. We have $\psi(v)=(y,u,x)\in \pi^{-1}(U)$ for some $u\in U$. Further, applying $\psi$ to any bisection of $V$ we obtain a bisection of 
				$\pi^{-1}(U)$ carrying the same diffeomorphism. Hence we can work on the latter bisubmersion  instead of on $V$.
				
				Take any section $\bar{b} \colon S_x \to \pi^{-1}(U)$ of $\bs$ through $(y,u,x)$  {transverse to the $\bt$-fibres} such that $(\bt\circ \bar{b})(S_x)\subset S_y$.
				{Due to the diffeomorphism $S_x\cong S_{\pi(x)}$}, there is a unique section $b\colon S_{\pi(x)}\to U$ through $u$ such that $$\bar{b}(p)=(*,b(\pi(p)),p)$$ for any $p\in S_x$. (Here $*$ denotes the unique point of $S_y$ that corresponds to $(\bt\circ b)(\pi(p))$ under the identification $S_y \cong  S_{\pi(y)}$.)
				The diffeomorphisms $$\bt\circ \bar{b}\colon S_x \to S_y \;\;\text{      and      }\;\;
				\bt\circ  {b}\colon S_{\pi(x)} \to S_{\pi(y)}$$ coincide under the natural identification between slices. The former is a representative of $\Phi^{\pi^{-1}(\cF)}(h)$, while the latter is a representative  of $\Phi^{\cF}([u])$. {We conclude noticing that $[u]=\widehat{\pi}(h)$, as can be seen using the proof of} proposition \ref{super}.
			\end{proof}
			
			\section{Morita equivalent holonomy groupoids}\label{sec:me.hol.grpd}
			
			{The holonomy groupoid of a singular foliation (see definition \ref{def:holgroidph}) is not a Lie groupoid in general, but just an open topological groupoid. {Recall that we defined Morita equivalence for open topological groupoids in Section \ref{sec:ME.gpd}, more precisely in { Definition \ref{def:MEgroid}.}
					
					{We now can finally state the main results of this thesis:}
					
					\begin{thm}\label{thm:MEfolgroids}
						Given two foliated manifolds $(M,\CF_M)$ and $(N,\CF_N)$. If they are Hausdorff Morita equivalent then:
						
						\begin{itemize}
							\item Their holonomy groupoids $\CH(\CF_M)$ and $\CH(\CF_N)$ are Morita equivalent as open topological groupoids.
							\item For $L_M$ a leaf of $\CF_M$ and $L_N$ the corresponding leaf of $\CF_N$, we have that $\CH(\CF_M)|_{L_M}$ and $\CH(\CF_N)|_{L_N}$ are Morita equivalent as Lie groupoids. (Therefore the isotropy Lie groups are isomorphic).
							\item If $\CF_M$ is projective (for example a regular foliation), then $\CF_N$ is projective and the Lie groupoids $\CH(\CF_M)$ and $\CH(\CF_N)$ are Morita equivalent as Lie groupoids.
						\end{itemize} 
					\end{thm}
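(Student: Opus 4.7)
The plan is to combine the pullback isomorphism of Theorem \ref{thm:pullbackgroid} with the data of a Hausdorff Morita equivalence. Suppose $(P,\pi_M,\pi_N)$ realises $(M,\CF_M)\simeq_{ME}(N,\CF_N)$, so that $\pi_M$ and $\pi_N$ are surjective submersions with connected fibres and $\pi_M^{-1}\CF_M = \pi_N^{-1}\CF_N =: \CF_P$. Applying Theorem \ref{thm:pullbackgroid} to each of $\pi_M$ and $\pi_N$, we obtain canonical isomorphisms of topological groupoids
\[
\pi_M^{-1}\CH(\CF_M) \;\cong\; \CH(\CF_P) \;\cong\; \pi_N^{-1}\CH(\CF_N).
\]
Since surjective submersions are in particular surjective open continuous maps, and $\CH(\CF_M)$, $\CH(\CF_N)$ are open topological groupoids by Corollary \ref{cor:scon.op.hol}, this is exactly the data required by Definition \ref{def:MEgroid} for Morita equivalence of open topological groupoids. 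This proves the first assertion.

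For the second assertion, let $L_M\subset M$ be a leaf and $L_N\subset N$ the corresponding leaf under the homeomorphism of leaf spaces from Proposition \ref{prop:corrleaves}(i). Setting $\widehat{L}:=\pi_M^{-1}(L_M)=\pi_N^{-1}(L_N)\subset P$, Proposition \ref{isotropull}(i) upgrades the above isomorphisms to Lie groupoid isomorphisms after restricting to $\widehat{L}$:
\[
(\pi_M^{-1}\CH(\CF_M))|_{\widehat{L}} \;\cong\; \CH(\CF_P)|_{\widehat{L}} \;\cong\; (\pi_N^{-1}\CH(\CF_N))|_{\widehat{L}}.
\]
The remaining point is to identify $(\pi_M^{-1}\CH(\CF_M))|_{\widehat{L}}$ with the pullback of the Lie groupoid $\CH(\CF_M)|_{L_M}$ along $\pi_M|_{\widehat{L}}\colon \widehat{L}\to L_M$; this follows directly from the definition of the pullback groupoid and the fact that $\bs,\bt$ of $\CH(\CF_M)$ take $\widehat{L}$ into $L_M$. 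Since $\pi_M$ has connected fibres, the restriction $\pi_M|_{\widehat{L}}$ is a surjective submersion with connected fibres, and similarly for $\pi_N|_{\widehat{L}}$, so $\widehat{L}$ together with these restricted maps furnishes a Morita equivalence of Lie groupoids.

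For the third assertion, by Proposition \ref{prop:MEregproj}(ii) the foliation $\CF_N$ is projective whenever $\CF_M$ is, and the same argument applied to $\pi_M$ shows that $\CF_P$ is projective as well. Hence Proposition \ref{prop:proy.fol} guarantees that $\CH(\CF_M)$, $\CH(\CF_N)$ and $\CH(\CF_P)$ are all smooth (i.e.\ Lie groupoids). Now Proposition \ref{prop:pullLie} promotes both isomorphisms in the display above to isomorphisms of Lie groupoids, so $(P,\pi_M,\pi_N)$ directly realises a Morita equivalence of the Lie groupoids $\CH(\CF_M)$ and $\CH(\CF_N)$ in the sense of Definition \ref{def:MEgroid}.

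The main conceptual obstacle is not in the pullback step itself, which is essentially a formal consequence of Theorem \ref{thm:pullbackgroid}, but in verifying that the resulting data genuinely fits the chosen definition of Morita equivalence in each category (open topological, restricted Lie, and fully Lie). The delicate point is that $P$ is required to be Hausdorff in Definition \ref{def:MEgroid}, which is automatic here since $P$ comes from the Hausdorff Morita equivalence; and that the connected-fibres hypothesis is preserved after restriction to $\widehat{L}$, which is where Lemma \ref{lem:subcon} and the identification $\widehat{L}=\pi_M^{-1}(L_M)=\pi_N^{-1}(L_N)$ play their role.
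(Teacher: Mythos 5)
Your proposal is correct and follows essentially the same route as the paper's (very terse) proof: apply Theorem \ref{thm:pullbackgroid} twice to get $\pi_M^{-1}\CH(\CF_M)\cong\CH(\CF_P)\cong\pi_N^{-1}\CH(\CF_N)$ for the first item, Proposition \ref{isotropull} twice (with the restriction-equals-pullback-of-restriction identification already noted in its proof) for the second, and Propositions \ref{prop:proy.fol} and \ref{prop:pullLie} for the projective case. Your extra verifications (openness of submersions, Hausdorffness of $P$, projectivity of $\CF_N$ via Proposition \ref{prop:MEregproj}) are exactly the details the paper leaves implicit; note only that connectedness of the fibres of $\pi_M|_{\widehat{L}}$ is not actually required by Definition \ref{def:MEgroid}, so that remark is harmless but superfluous.
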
 
					
					\begin{proof}
						{For the first part apply twice theorem \ref{thm:pullbackgroid}, noticing that submersions are open maps.}
						
						For the second part use twice Proposition \ref{isotropull}.
						
						For the projective case, Proposition \ref{prop:proy.fol} and Proposition \ref{prop:pullLie} give us the desired result.
					\end{proof}
					
					Furthermore, combining   with  Corollary \ref{cor:SConnMEGrpd2} we obtain:
					
					\begin{prop}\label{prop:equivproj}
						Provided  their holonomy groupoids are Hausdorff, two projective  foliations  are Hausdorff Morita equivalent  if{f} their holonomy groupoids  are Morita equivalent as  Lie groupoids. 
					\end{prop}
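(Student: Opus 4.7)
The plan is to prove both implications using results already assembled in the text. For the forward direction, since both foliations are projective, Proposition \ref{prop:proy.fol} guarantees that their holonomy groupoids carry smooth structures making them Lie groupoids, and by assumption they are Hausdorff. Then Theorem \ref{thm:MEfolgroids} (third bullet) applies verbatim to conclude that $\CH(\CF_M)$ and $\CH(\CF_N)$ are Morita equivalent as Lie groupoids. This direction requires no new work beyond citing what has already been proved.

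For the converse, I would start from the assumption that $\CH(\CF_M)$ and $\CH(\CF_N)$ are Morita equivalent Hausdorff Lie groupoids. By Corollary \ref{cor:scon.op.hol} both are source connected ($k=0$), so Corollary \ref{cor:SConnMEGrpd2} produces a Hausdorff $(\CH(\CF_M),\CH(\CF_N))$-bitorsor $P$ that simultaneously serves as a Morita equivalence $(P,\pi_M,\pi_N)$ with \emph{connected} fibres. In particular, $\pi_M$ and $\pi_N$ are surjective submersions with connected fibres and there is an isomorphism of Lie groupoids $\pi_M^{-1}\CH(\CF_M)\cong \pi_N^{-1}\CH(\CF_N)$.

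Next I would transfer this isomorphism down to the level of foliations. Let $A_M$ and $A_N$ denote the Lie algebroids of $\CH(\CF_M)$ and $\CH(\CF_N)$, which by Proposition \ref{prop:proy.fol} and Example \ref{ex:L.alg.F} have respective anchor images equal to $\CF_M$ and $\CF_N$. Applying Lemma \ref{GtoA}(i) on each side, the above groupoid isomorphism induces an isomorphism of pullback Lie algebroids $\pi_M^{-1}A_M\cong \pi_N^{-1}A_N$ over $P$. Then Lemma \ref{lem:pullbackalgfol} identifies the singular foliation induced by $\pi_M^{-1}A_M$ with $\pi_M^{-1}\CF_M$, and similarly on the $N$ side, so that $\pi_M^{-1}\CF_M=\pi_N^{-1}\CF_N$. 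By Definition \ref{def:defMEfol} this is precisely a Hausdorff Morita equivalence between $(M,\CF_M)$ and $(N,\CF_N)$ realised by $P$.

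I do not expect a serious obstacle: every step is a direct application of a previously established result, and the only subtle point is verifying that the bitorsor produced by Corollary \ref{cor:SConnMEGrpd2} really is Hausdorff and has connected fibres so that it qualifies as a Hausdorff Morita equivalence in the sense of Definition \ref{def:defMEfol}. This is exactly what that corollary delivers, so the argument is essentially a concatenation of existing statements.
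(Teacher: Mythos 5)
Your proposal is correct and follows essentially the same route as the paper: the forward direction is the third bullet of Theorem \ref{thm:MEfolgroids}, and the converse combines Corollary \ref{cor:SConnMEGrpd2} (producing a Hausdorff bitorsor with connected fibres) with Lemma \ref{GtoA}(i) and Lemma \ref{lem:pullbackalgfol}, exactly the ingredients used in the remark following that corollary. The only implicit step, that the Lie algebroid of $\CH(\CF_M)$ has anchor image $\CF_M$ for a projective foliation, is likewise taken for granted in the paper, so there is no gap relative to its argument.
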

					
					\subsection*{Revisiting Hausdorff Morita equivalence of singular foliations}
					
					In the definition of {Hausdorff} Morita equivalence between two singular foliations  $(M,\cF_M)$ and $(N,\cF_N)$, Def. \ref{def:defMEfol}, it is required that the maps
					$\pi_M\colon P\fto M$ and $\pi_N\colon P\fto N$ be \emph{surjective submersions  with connected fibers}.
					It is tempting to think that {Hausdorff} Morita equivalence of  singular foliations
					can be phrased weakening these three conditions, i.e. that adopting weaker conditions one obtains the same equivalence classes of singular foliations. This is not the case:
					
					\begin{prop}\label{prop:3conditions}
						We do not obtain the same equivalence classes of singular foliations if we replace any of the three conditions in Def. \ref{def:defMEfol} as follows:
						\begin{itemize}
							\item ``Surjective'' by ``meets every leaf of the singular foliation'',
							\item ``Submersion'' by ``is transverse to the singular foliation''  {\cite[Def. 19]{AndrSk}},
							\item ``{With} connected fibres`` by ``such that the preimages of leaves are connected''.
						\end{itemize}
					\end{prop}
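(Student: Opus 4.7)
The plan is to prove the proposition by exhibiting, for each of the three proposed weakenings, a counterexample: a pair of foliated manifolds $(M,\CF_M)$ and $(N,\CF_N)$ that are related by some triple $(P,\pi_M,\pi_N)$ satisfying the weakened conditions (while the other two original conditions remain in force) but that are \emph{not} Hausdorff Morita equivalent in the original sense of Def.~\ref{def:defMEfol}. The non-equivalence in the original sense will be certified by invoking the invariants from Prop.~\ref{prop:corrleaves} (homeomorphism of leaf spaces, isomorphism of isotropy Lie algebras, diffeomorphism of transverse slice foliations) or, when these coincide, by the finer groupoid invariant obtained by combining Theorem~\ref{thm:MEfolgroids} with Prop.~\ref{prop:equivproj}.

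For the first weakening (``surjective'' replaced by ``meets every leaf''), the natural strategy is to exploit that a submersion onto a proper open subset of $M$ can already meet every leaf when leaves are large. One then takes $N$ to differ from $M$ by an ``extra'' region (for instance, an extra leaf or component attached to the image) that is detected by the leaf-space homeomorphism of Prop.~\ref{prop:corrleaves}(i) but missed by the image of $\pi_M$; the identity on the image gives the weakened data, while the presence of the extra region obstructs a genuine Hausdorff Morita equivalence.

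For the second weakening (``submersion'' replaced by ``transverse''), the essential new phenomenon is that a transverse map with $\dim P < \dim M$ gives a transverse slice, whose pullback foliation captures only transverse data and forgets all holonomy along leaves. A prototypical counterexample is the horizontal foliation on the M\"obius band versus the analogous foliation on the cylinder: both pull back via a transverse line to the zero foliation on $\RR$, yet the core circle of the M\"obius case carries holonomy $\ZZ/2\ZZ$ while the cylinder has trivial holonomy, and the leaf spaces $[0,\infty)$ and $\RR$ are not homeomorphic, so Prop.~\ref{prop:corrleaves}(i) rules out original Morita equivalence.

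For the third weakening (``connected fibres'' replaced by ``preimages of leaves connected''), the distinction activates only for positive-dimensional leaves, where the universal-cover-type maps (e.g.\ $\RR\to S^1$) have disconnected fibres but connected preimages of leaves that fill the circle. Coupling such a covering on one side with a direct map on the other yields a triple $(P,\pi_M,\pi_N)$ satisfying the weakened condition, while the original Hausdorff Morita equivalence is obstructed by the refined invariant provided by the holonomy groupoids together with Prop.~\ref{prop:equivproj}. The main obstacle throughout is that for the third weakening the slice and isotropy invariants are still preserved (since condition~2 is kept, slices diffeomorph under the remaining submersions), so one must work harder and invoke a global invariant such as the holonomy groupoid Morita class to detect the failure of original equivalence; a fallback strategy, should such invariants prove elusive, is to show that the weakened relation fails transitivity (the fibre-product construction of Lemma~\ref{lem:HausMEfolER} need not preserve the weakened hypothesis) and hence is strictly different from the original equivalence relation even set-theoretically.
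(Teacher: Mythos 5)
Your overall strategy coincides with the paper's: for each weakening, exhibit a single map $\pi\colon P\fto (M,\CF)$ satisfying the weakened hypotheses, observe that $(P,\pi^{-1}\CF)$ and $(M,\CF)$ would then be identified by the modified definition, and rule out genuine Hausdorff Morita equivalence by Theorem \ref{thm:MEfolgroids} together with invariants of the holonomy groupoids (leaf spaces, isotropy groups). The gaps are in the three concrete examples, none of which works as stated.

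For the first item, your parenthetical ``an extra leaf or component \dots missed by the image'' is incompatible with the retained requirement that the map meet every leaf: if a whole leaf is missed, the weakened hypothesis already fails. The missing region must be leafwise redundant while still changing an invariant; in the paper's example (a M\"obius band with one point of the core circle removed) the leaf space does \emph{not} change, and the detection is via the isotropy group dropping from $\ZZ_2$ to the trivial group -- so your proposed leaf-space test would not even see that example, though a leaf-space example such as $\RR\setminus\{0\}\hookrightarrow \RR$ with the full foliation can be made to work. For the second item, your transverse line into the M\"obius band cannot be surjective (a smooth map from a $1$-manifold never covers a surface), while surjectivity is \emph{retained} in this item; hence your triple does not satisfy the weakened definition and proves nothing. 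The paper keeps surjectivity by taking $P=\RR\sqcup(\RR\setminus\{0\})\fto\RR$ with the full foliation, collapsing the first copy to $0$: this is surjective, has connected fibres, is transverse but not a submersion, and the pullback has three leaves versus one. For the third item you never actually produce an example: the natural instantiation of ``$\RR\to S^1$ coupled with a direct map'' (say with full foliations) fails, since any two connected manifolds with the full foliations are Hausdorff Morita equivalent; moreover your assertion that slice and isotropy invariants are automatically preserved is exactly what must be refuted -- the isotropy Lie \emph{algebras} are preserved (the slice argument in Prop.\ \ref{prop:corrleaves} only uses the submersion), but the discrete holonomy groups are not, and the paper exploits this with a $3$-to-$1$ covering from a punctured M\"obius band onto a smaller M\"obius band, which has connected preimages of leaves, disconnected fibres, and kills the $\ZZ_2$ isotropy of the core circle. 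Finally, your fallback of showing failure of transitivity would not establish the proposition: the claim concerns the resulting equivalence classes, and a non-transitive relation could in principle still generate the same equivalence relation.
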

					
					\begin{rem}
						{The first two items above are motivated by what occurs for Lie groupoids.}
						The Morita equivalence of two Lie groupoids $\CG\rightrightarrows M$ and $\CH\rightrightarrows N$  can be equivalently phrased by replacing the condition 
						that the maps $\pi_M\colon P\fto M$ and $\pi_N\colon P\fto N$ in Def. \ref{def:MEgroid} are   surjective submersions with the following condition: these maps are transversal to the orbits of the Lie groupoids  ($G$ and  $H$ respectively) and
						meet every orbit. This fact can be found in \cite{MK2} and \cite{JoaoCrainic2017}, {and  follows also from Proposition \ref{prop:AllME}.}
						
					\end{rem}

					{To prove Proposition \ref{prop:3conditions}} it suffices to display examples of maps $\pi\colon P\fto (M,\CF)$ in which {each} of the conditions on the left hand side is weakened and so that the holonomy groupoid $\CH(\pi^{-1}\CF)$ is \emph{not} Morita equivalent to $\CH(\CF)$. Indeed, in this case, $(P,\pi^{-1}\CF)$ and  $(M,\CF)$ can not be {Hausdorff} Morita equivalent, due to  {Theorem \ref{thm:MEfolgroids}}.
					
					We now display the examples mentioned above, involving only regular foliations. 
					
					\begin{ex}\label{ex:surj}({\bf ``Surjective'' is needed}) Take $M$ to be the Moebius band  $$M:= \RR\times (-1,1)/\sim$$ where $(x,y)\sim (x+3k,(-1)^k y)$ for $k\in \ZZ$. Take $$P:= M\backslash \{\overline{(2,0)}\},$$ the Moebius band without a point in the ``middle circle" (the equivalence class of $(2,0)$).

						\noindent Let  $$\pi:P\hookrightarrow M$$ be the inclusion. On $M$ take the regular (rank one) foliation $\CF$ given by horizontal vector fields, then $\pi^{-1}(\CF)$ is also given by horizontal vector fields. Note that  $\pi$ is a submersion with connected fibres that meets every orbit, but it is not surjective.
						
						The isotropy group at $\overline{(0,0)}\in M$ of the holonomy groupoid $\CH(\CF)$ is  
						isomorphic to $\ZZ_2$. But the isotropy group at $\overline{(0,0)}\in P$ of the holonomy groupoid $\CH(\pi^{-1}\CF)$ is trivial (the leaf through that point is contractible). Therefore the two holonomy groupoids can not be Morita equivalent.
						
						\begin{figure}[h]
							\caption{The {manifold} $P$}
							\centering
							\scalebox{0.35}{\includegraphics{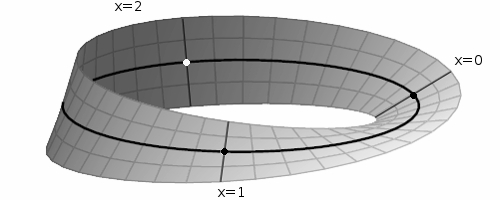}}
						\end{figure}
					\end{ex}

					\begin{ex}({\bf ``Submersion'' is needed}) Take  $P:=\RR \sqcup \left(\RR\backslash\{0\}\right)$, $M:=\RR$, and define $\pi\colon P\fto M$ {so that it sends the copy of $\RR$ to the point $0\in M$ and $\RR\backslash\{0\}$ to $M$ by the inclusion}. On $M$ take the full foliation. The map $\pi$ is surjective, has connected fibres and it is transverse to the foliation in $M$, but it is not a submersion. 
						
						The pullback foliation on $P$ is also the full foliation, but $P$ has three connected components. Hence the spaces of leaves are not homeomorphic and the holonomy groupoids  are not Morita equivalent. 
					\end{ex}
					
					\begin{ex}({\bf ``{With} connected fibres'' is needed}) 
						This example is a variation of Ex. \ref{ex:surj}.
						Take the Moebius band $M$  as in that example. {Let $$M':= \RR\times (-1,1)/\sim'$$ where $(x,y)\sim' (x+k,(-1)^k y)$ for $k\in \ZZ$. Notice that $M'$ is a smaller Moebius band, and since the equivalence classes of $\sim$ are contained in those of $\sim'$, there is a 
							natural quotient map  $q\colon M\to M'$ which is a $3$ to $1$ covering map.} 
						
						Let $P$ be $M$ with a point removed,  as in  Ex. \ref{ex:surj}. {Let} $$\pi '\colon P\fto M'$$ be  {the restriction of $q$ to $P$}. On $M'$ take the regular (rank one) foliation given by horizontal vector fields. Then $\pi '$ is a surjective submersion with connected preimages of leaves, but whose fibres are not connected (all fibres consist of three points, except for one that consists of two points). As in the first example,  the isotropy groups of the corresponding holonomy groupoids are $\ZZ_2$ at the point $\overline{(0,0)}\in M '$ and  the trivial group at $\overline{(0,0)}\in P$. Hence the  holonomy groupoids can not be Morita equivalent.
					\end{ex}
					
					\subsection*{{An extended   equivalence for singular foliations}}
					
					{Our notion of  Hausdorff 
						Morita equivalence (Def. \ref{def:defMEfol}) has certain drawbacks, which originate from the fact that {the}  space of arrows of a Lie groupoid is not necessarily Hausdorff:}
					
					\begin{itemize}
						\item If two non-Hausdorff source connected Lie groupoids are Morita equivalent, then their singular foliations might  {not} be Hausdorff Morita equivalent. (Compare with Proposition \ref
						{prop: implications}).
						\item As a consequence, we have to add a Hausdorffness assumption\footnote{Notice  that a  Hausdorffness assumption is needed also to match the Morita equivalence of Lie groupoids and Lie algebroids, see Proposition \ref{prop:MEalggroids}.} in Proposition \ref{prop:equivproj} on projective foliations.
					\end{itemize}  
					
					In an attempt to extend the notion of Hausdorff Morita equivalence so that the above drawbacks do not occur, we propose to allow the manifold $P$ in Def. \ref{def:defMEfol} to be \emph{non-Hausdorff}.
					
					A first issue to address is the notion of singular foliation on a  non-Hausdorff manifold. In section \S \ref{sec:fol.sheaf} we saw that on a (Hausdorff) manifold,
					Def. \ref{defi:sing.fol}  (in terms of compactly supported vector fields) is    equivalent to the characterisation given in that section (in terms of subsheaves). 
					On a non-Hausdorff manifold   $V$, this is no longer the case. Indeed the notion obtained extending trivially Def. \ref{defi:sing.fol} is quite restrictive, the main reason being that there might be points $p\in V$ where all compactly supported  vector fields vanish. However the sheaf of smooth vector fields on $V$ (a sheaf of $C^{\infty}$-modules) is well-behaved. Hence we propose to define a  singular foliation on a possibly non-Hausdorff manifold $V$ as \emph{an involutive, locally finitely generated subsheaf of the sheaf of smooth vector fields.} 
					
					A second issue to address is how to
					extend the notion of pullback foliation to a non-Hausdorff manifold. In section \S \ref{sec:fol.sheaf}, for a Hausdorff manifold the sheaf associated to a pullback foliation is given by 
					$\widehat{\iota_U^{-1}(\pi^{-1}\CF)}=\widehat{\pi|_U^{-1}\CF}$ for every open subset $U$.
					For a non-Hausdorff manifold $V$ and a submersion $\pi:V\fto M$ to a manifold, we define the pullback foliation as the following 
					{subsheaf} $\CS^{\pi^{-1}\CF}$ of the sheaf of vector fields: for any open (possibly non-Hausdorff) subset $U\subset V$,
					\[\CS^{\pi^{-1}\CF}(U):=\{X\in\CX(U) \st X|_H\in \widehat{\pi|_H^{-1} \CF} \text{ for all open Hausdorff subsets }H\subset U\}\]
					
					With the above ingredients at hand we can propose the following definition.
					
					\begin{defi}\label{def:NHMEfol}
						Two singular foliations  $(M,\cF_M)$ and $(N,\cF_N)$
						are {\bf Morita equivalent} if there exists a \emph{possibly non-Hausdorff} manifold $P$ and two \emph{surjective submersions with  connected Hausdorff fibres} $\pi_M\colon P\fto M$ and $\pi_N\colon P\fto N$  such that $\CS^{\pi_M^{-1}\CF_M}=\CS^{\pi_N^{-1}\CF_N}$ as subsheaves of $\CX_P$.
						\[\begin{tikzcd}
							& P \arrow[dl,swap, "\pi_M"] \arrow[dr, "\pi_N"]   &\\
							(M,\CF_M)& &(N,\CF_N)   
						\end{tikzcd}\]
					\end{defi}

					We then expect 
					\begin{itemize}
						\item  the following extension of Proposition \ref{prop: implications}: \emph{if two ({possibly} non-Hausdorff) Lie groupoids are Morita equivalent, then their singular foliations are Morita equivalent}.
						\item  to carry out the construction of the holonomy groupoid (Def. \ref{def:holgroidph}) starting from the sheaf-theoretic characterization of 
						singular foliation, even for a non-Hausdorff foliated manifold. Further we expect the following improvement of Theorem \ref{thm:MEfolgroids} to hold:  
						
						\emph{Morita equivalent singular foliations have holonomy groupoids which are Morita equivalent as open topological groupoids.} 
						\item  the following improvement of Proposition \ref{prop:equivproj}:
						\emph{
							Two projective singular foliations are Morita equivalent if{f} their holonomy groupoids  are Morita equivalent as  Lie groupoids.}
					\end{itemize} 
					
					\cleardoublepage
					
					
					\chapter{Quotients of foliated manifolds}\label{ch:5}
					
					In this section we assume the following setting: a foliated manifold $(P,\CF)$ with a surjective submersion $\pi:P\fto M$ with connected fibers. Here $M$ is seen as a quotient of $P$. We first show that, under some compatibility conditions between $\CF$ and $\pi$, there exists a singular foliation $\CF_M$ on $M$ given by a push-forward of $\CF$. Then we study the foliated manifold $(M,\CF_M)$ as a quotient of $(P,\CF)$.
					
					In the first section we will show that there exists a surjective morphism of topological groupoids $\Xi\colon \CH(\CF)\fto \CH(\CF_M)$, which implies that $\CH(\CF_M)$ is a quotient of $\CH(\CF)$. In the second section we give some preliminaries on quotients of Lie groupoids.
					
					In the third section we will recall Lie $2$-groups and their relation with quotients of groupoids. Finally in the fourth section we give a characterization of the $\Xi$-fibers, and describe a Lie $2$-group action on $\CH(\CF)$ with orbits lying inside these fibers.
					
					\section{{Quotients of holonomy groupoids}}\label{sec:Qfolman}
					
					In this section we show that given  a {surjective} submersion with connected fibers $\pi\colon P \to M$ and an ``invariant'' singular foliation $\cF$ on $P$ there is an induced singular foliation $\cF_M$ on $M$, which can be regarded as a quotient of the former.
					The main statement of this section is that the holonomy groupoid of $\cF_M$ is a quotient of the 
					holonomy groupoid of $\cF$, see Thm. \ref{thm:sur.hol}.
					We also give an explicit characterization of the quotient map when $\cF$ is a  pullback-foliation.}
				
				\subsection*{The main Theorem}\label{subsec:main}
				
				We recall \cite[Lemma 3.2]{AZ1}, about quotients of foliated manifolds. 
				
				\begin{prop}\label{prop:submfol} Let $\pi : P \to M$ be a {surjective} submersion with connected fibers. Let $\cF$ be a singular foliation on $P$, such that $\Gamma_c(\ker d\pi) \subset \cF$. Then there is a unique singular foliation  $\cF_M$ on $M$
					with $\pi^{-1}(\cF_M) = \cF$. 
				\end{prop}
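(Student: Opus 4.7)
The plan is to construct $\cF_M$ locally by pulling back $\cF$ along local sections of $\pi$, verify that this is independent of the chosen section, and then check the pullback identity $\pi^{-1}(\cF_M) = \cF$; uniqueness will be immediate from the construction.

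For existence, pick $m_0 \in M$, choose a neighborhood $V \subset M$ of $m_0$ and a local section $\sigma\colon V \to P$ of $\pi$. Since $\Gamma_c(\ker d\pi) \subset \cF$ we have $\ker d_{\sigma(m)}\pi \subseteq F_{\sigma(m)}$ for every $m \in V$, and because $\img(d\sigma)$ is complementary to $\ker d\pi$ inside $TP$ we obtain $\img(d\sigma) + F_{\sigma(m)} = T_{\sigma(m)}P$. Thus $\sigma$ is transverse to $\cF$ in the sense of Section \ref{sec:trans.maps}, so by \cite[Prop.~1.10]{AndrSk} the pullback $\sigma^{-1}(\cF)$ is a singular foliation on $V$, which we declare to be $\cF_M|_V$.

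The first technical point is to show this definition does not depend on $\sigma$. Given two sections $\sigma_1, \sigma_2\colon V \to P$ of $\pi$, shrinking $V$ if necessary we can write in  coordinates $\pi(x,y) = y$, $\sigma_i(y) = (\bar x_i(y), y)$, and consider the compactly supported vertical vector field $Z(x,y) = \chi(x)(\bar x_2(y) - \bar x_1(y))\partial_x$ with $\chi$ a bump function equal to $1$ near the images of the $\sigma_i$. Then $Z \in \Gamma_c(\ker d\pi) \subset \cF$, so by Proposition \ref{prop:exp.in.aut} its time-one flow $\mathrm{exp}(Z)$ lies in $\mathrm{Aut}(\cF)$, and by construction $\sigma_2 = \mathrm{exp}(Z) \circ \sigma_1$. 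Functoriality of pullback along composition and the fact that $\mathrm{exp}(Z)^{-1}(\cF) = \cF$ yield $\sigma_2^{-1}(\cF) = \sigma_1^{-1}(\mathrm{exp}(Z)^{-1}(\cF)) = \sigma_1^{-1}(\cF)$. Patching the consistent local data gives a well-defined submodule $\cF_M \subset \CX_c(M)$, which is locally finitely generated and involutive because each $\sigma^{-1}(\cF)$ is.

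To verify $\pi^{-1}(\cF_M) = \cF$, work over $\pi^{-1}(V)$ and use functoriality to write $\pi^{-1}(\cF_M|_V) = \pi^{-1}(\sigma^{-1}(\cF)) = (\sigma \circ \pi)^{-1}(\cF)$. In the same local coordinates, $\sigma \circ \pi$ sends $(x,y)$ to $(\bar x(y), y)$, i.e., it is $\Phi \circ \pi^{-1}(\pi)$ for the diffeomorphism $\Phi(x,y) = (\bar x(y) + (x - x_0), y)$ composed with a suitable vertical slide; by the same argument as above this slide is the time-one flow of a vertical, hence $\cF$-preserving, vector field. Consequently $(\sigma \circ \pi)^{-1}(\cF) = \mathrm{id}^{-1}(\cF) = \cF|_{\pi^{-1}(V)}$. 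Patching gives $\pi^{-1}(\cF_M) = \cF$. Uniqueness is then automatic: if $\cF_M'$ also satisfies $\pi^{-1}(\cF_M') = \cF$, applying $\sigma^{-1}$ to both sides and using $\pi \circ \sigma = \mathrm{id}_V$ yields $\cF_M'|_V = \sigma^{-1}(\cF) = \cF_M|_V$.

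The main obstacle is precisely the identification $(\sigma \circ \pi)^{-1}(\cF) = \cF|_{\pi^{-1}(V)}$, i.e., showing that the collapse map $\sigma \circ \pi$ and the identity map pull $\cF$ back to the same foliation. This is the step that uses the hypothesis $\Gamma_c(\ker d\pi) \subset \cF$ in its full strength, via Proposition \ref{prop:exp.in.aut}, since it guarantees that the vertical diffeomorphisms interpolating between $\sigma \circ \pi$ and the identity are automorphisms of $\cF$; without this hypothesis one cannot convert ``pointwise equality of vectors on a section'' into ``equality of submodules of $\CX_c$''.
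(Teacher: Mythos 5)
Your overall strategy (define $\cF_M$ locally as $\sigma^{-1}\cF$ for local sections $\sigma$ of $\pi$, verify $\pi^{-1}\cF_M=\cF$, and get uniqueness by applying $\sigma^{-1}$ to $\pi^{-1}\cF_M'=\cF$) is sensible, and the uniqueness step is correct; note that the thesis itself does not prove this statement but quotes it from \cite[Lemma 3.2]{AZ1}. The existence part of your argument, however, has two genuine gaps. First, your independence-of-the-section argument only compares sections whose images can be joined by a single straight-line vertical interpolation inside one adapted chart. To patch the local data into a well-defined $\cF_M$ you must compare \emph{arbitrary} sections over the same $V$, whose values in a given fiber may lie far apart. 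This is exactly where the hypothesis of \emph{connected} fibers has to enter, yet your proof never uses it; the statement is false without it (take $P=M\sqcup M$, where $\Gamma_c(\ker d\pi)=0\subset\cF$ holds trivially, with different foliations on the two copies). One needs, say, a chain of time-one flows of compactly supported vertical fields along a path in the fiber, or an open-and-closed argument in the fiber, to get consistency of the local definitions.

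Second, the crucial identity $(\sigma\circ\pi)^{-1}\cF=\cF|_{\pi^{-1}(V)}$ is not established by what you wrote: $\sigma\circ\pi$ collapses the fibers, so it is not a diffeomorphism, and in particular it is not ``the identity composed with a vertical slide''; Proposition \ref{prop:exp.in.aut} cannot be applied to it. What the flow argument legitimately gives is, for each $p\in\pi^{-1}(V)$, a fiber-preserving local diffeomorphism preserving $\cF$ and carrying $p$ into $\sigma(V)$ (again only after using connectedness of the fiber to reach the section). One must then still convert this into the equality of modules $\cF=\pi^{-1}(\sigma^{-1}\cF)$, for instance by proving that $\cF$ is locally generated by $\Gamma_c(\ker d\pi)$ together with $\pi$-projectable vector fields whose projections lie in $\sigma^{-1}\cF$. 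This is not automatic even near the section: knowing that a $\CI_c$-combination of elements of $\cF$ has a prescribed horizontal part \emph{on} $\sigma(V)$ does not yet produce an element of $\cF$ with that horizontal part on a whole neighborhood, and away from the section nothing has been said at all. This generation-by-projectable-fields statement is precisely the substance of the proof of \cite[Lemma 3.2]{AZ1} (compare Lemma \ref{lem:projgen2} i)), and it is the part your proposal leaves unproved.
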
 
				
				In this section we consider the following set-up:
				\begin{center}
					\fbox{
						\parbox[c]{12.6cm}{\begin{center}
								A foliated manifold $(P,\cF)$ and a surjective submersion with connected fibers $\pi\colon P\fto M$, such that 
								\begin{equation}\label{eq:bracketpres}
									[\Gamma_c(\ker d\pi),\CF] \subset \Gamma_c(\ker d\pi)+\CF.
								\end{equation}
								Denote by $\CF_M$ the singular foliation on $M$ satisfying
								\begin{equation}\label{eq:pullpres}
									\CF^\text{big}=\pi^{-1}\CF_M.
								\end{equation}
								where $\CF^\text{big}:=\Gamma_c(\ker d\pi)+\CF$, as in proposition \ref{prop:submfol}
							\end{center}
					}}
				\end{center}

				The foliation $\cF_M$ is obtained from $\cF$ by a quotient procedure, {more precisely $\cF_M=\pi_*\cF$, see Lemma \ref{lem:projgen2}
					ii)} at the end of this section. By this fact it is natural to ask whether the same is true for the respective holonomy groupoids. The answer is true and we will discuss it in Theorem \ref{thm:sur.hol}. To continue we use the following lemma which is proven at the end of this section.
				
				\begin{lemma}\label{lem:all.sconbisub}	
					Any source connected atlas $\CU$ for $\CF$ satisfies that the family $$\pi\CU:=\{(U,\pi\circ \bt,\pi\circ\bs) \st U\in\CU\},$$
					generates a source connected atlas for $\CF_M$.
				\end{lemma}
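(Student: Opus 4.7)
The plan is to verify the four conditions defining a source connected atlas for $\CF_M$ with generating family $\pi\CU$: that each $(U,\pi\circ\bt,\pi\circ\bs)$ is a bisubmersion for $\CF_M$ with connected source fibres, that these cover $M$, and that identity carriers exist. The maps $\pi\circ\bs$ and $\pi\circ\bt$ are submersions as compositions of submersions. Using the functoriality of pullback foliations (part 3 of the proposition following Definition \ref{def:pullback}) we identify $(\pi\circ\bs)^{-1}\CF_M=\bs^{-1}(\pi^{-1}\CF_M)=\bs^{-1}\CF^{\mathrm{big}}$, and applying the same functoriality to the zero foliation on $M$ yields $\bs^{-1}\Gamma_c(\ker d\pi)=\ker_c d(\pi\circ\bs)$. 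Combining the decomposition $\CF^{\mathrm{big}}=\CF+\Gamma_c(\ker d\pi)$ with the bisubmersion identity $\bs^{-1}\CF=\ker_c d\bs+\ker_c d\bt$ produces
\[\bs^{-1}\CF^{\mathrm{big}}=\ker_c d\bt+\ker_c d(\pi\circ\bs),\]
and symmetrically $\bt^{-1}\CF^{\mathrm{big}}=\ker_c d\bs+\ker_c d(\pi\circ\bt)$.

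The main obstacle will be to establish the equality $\bs^{-1}\CF^{\mathrm{big}}=\bt^{-1}\CF^{\mathrm{big}}$; once this is proved, the bisubmersion identity $(\pi\circ\bs)^{-1}\CF_M=(\pi\circ\bt)^{-1}\CF_M=\ker_c d(\pi\circ\bs)+\ker_c d(\pi\circ\bt)$ follows by straightforward double inclusion, using $\ker_c d\bs\subset\ker_c d(\pi\circ\bs)$ and $\ker_c d\bt\subset\ker_c d(\pi\circ\bt)$. Since being a bisubmersion is a local property (Lemma \ref{lem:bisub.loc}), I would verify the equality pointwise, reducing locally around any $u\in U$ via Corollary \ref{carry} to a path holonomy bisubmersion built from vector fields $X_1,\dots,X_k\in\CF$. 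At a point $(v,p)$ in such a bisubmersion, writing $\phi$ for the time-one flow of $\sum v_iX_i$, the image of $(d\bs,d\bt)$ is described by the relation $d\bt(X)-\phi_*(d\bs(X))\in F_{\bt(v,p)}$. The bracket assumption \eqref{eq:bracketpres}, combined with the trivial involutivity of $\CF$ and of $\Gamma_c(\ker d\pi)$, implies that $\CF^{\mathrm{big}}$ is involutive, hence its leaves are preserved by the flow $\phi$. This yields $\phi_*(F^{\mathrm{big}}_p)=F^{\mathrm{big}}_{\phi(p)}$ and the pointwise identity $(d\bs)^{-1}(F^{\mathrm{big}}_p)=(d\bt)^{-1}(F^{\mathrm{big}}_{\bt(v,p)})$, from which $\bs^{-1}\CF^{\mathrm{big}}=\bt^{-1}\CF^{\mathrm{big}}$ follows.

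For the remaining three conditions, source connectedness of $(\pi\circ\bs)^{-1}(m)=\bs^{-1}(\pi^{-1}(m))$ follows from Lemma \ref{lem:subcon}: $\bs$ restricts to a surjective submersion onto the connected manifold $\pi^{-1}(m)$ with connected fibres (the source fibres of $U$, connected by assumption). Covering of $M$ is immediate: $\bigcup_{U\in\CU}(\pi\circ\bs)(U)=\pi\bigl(\bigcup_U\bs(U)\bigr)=\pi(P)=M$, using surjectivity of $\pi$ and the covering property of the atlas $\CU$. Finally, if $\sigma$ is a bisection through an identity carrier $e_u\in U$ with $\bt\circ\sigma=\bs\circ\sigma$, then $(\pi\circ\bt)\circ\sigma=(\pi\circ\bs)\circ\sigma$, and transversality of $\sigma$ to $\bt$ implies transversality to $\pi\circ\bt$ since $\ker d(\pi\circ\bt)\supset\ker d\bt$; thus $e_u$ carries the identity diffeomorphism of $M$ near $(\pi\circ\bs)(u)$.
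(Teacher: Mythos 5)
Your overall skeleton is sound and close in spirit to the computation the paper performs: the identification $(\pi\circ\bs)^{-1}\CF_M=\bs^{-1}\CF^{\mathrm{big}}$, the decomposition $\bs^{-1}\CF^{\mathrm{big}}=\ker_c d\bt+\ker_c d(\pi\circ\bs)$ (and its mirror), and the observation that everything reduces to the single equality $\bs^{-1}\CF^{\mathrm{big}}=\bt^{-1}\CF^{\mathrm{big}}$ are all correct and match the chain of equalities in the paper's proof. The genuine gap is in how you justify that equality. You pass from ``the flow $\phi$ preserves the leaves of $\CF^{\mathrm{big}}$'' to the pointwise statement $\phi_*(F^{\mathrm{big}}_p)=F^{\mathrm{big}}_{\phi(p)}$ and then conclude equality of the \emph{modules} $\bs^{-1}\CF^{\mathrm{big}}$ and $\bt^{-1}\CF^{\mathrm{big}}$. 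That last inference is invalid: a singular foliation is not determined by its pointwise evaluations $F_x$ (the paper's own example $\CF^0=\langle x\de_x+y\de_y,\,y\de_x-x\de_y\rangle$ versus $\CF^1=\langle x\de_x,y\de_y,y\de_x,x\de_y\rangle$ on $\RR^2$ has identical tangent distributions but distinct modules), so two pullback modules with the same pointwise image need not coincide. What you actually need is a module-level, \emph{parametrized} invariance statement: that the map $(v,p)\mapsto (v,\exp(\Sigma_i v_iX_i)(p))$ preserves $\mathrm{pr}_P^{-1}\CF^{\mathrm{big}}$, i.e.\ a family version of Proposition \ref{prop:exp.in.aut} for $\CF^{\mathrm{big}}$ under flows of elements of $\CF$ (which is plausible, since eq.\ \eqref{eq:bracketpres} gives $[\CF,\CF^{\mathrm{big}}]\subset\CF^{\mathrm{big}}$ and the proof of Proposition \ref{prop:exp.in.aut} produces coefficients depending smoothly on parameters), but you do not prove it, and without it the central equality is unestablished. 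Note also that your reduction via Corollary \ref{carry} should go to finite \emph{compositions} of path holonomy bisubmersions (an element of an arbitrary source connected atlas is only equivalent, via Prop.\ \ref{prop:sconn.grpd} and Lemma \ref{lem:adapt}, to a point of such a composition), where the single-flow description of $\bt$ is no longer literally available.

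The paper circumvents exactly this difficulty by never proving the invariance upstairs: using eq.\ \eqref{eq:bracketpres} it first shows (Lemma \ref{lem:projgen2}) that $\CF$ is locally generated by $\pi$-projectable vector fields, builds path holonomy bisubmersions $U$ from such generators, and observes that $(Id_{\RR^n},\pi)$ is then a morphism onto a path holonomy bisubmersion for $\CF_M$, so $(U,\pi\circ\bt,\pi\circ\bs)$ is manifestly a bisubmersion for $\CF_M$ (Lemma \ref{prop:carry.diffM}); for a general atlas element the equality $(\pi\circ\bt)^{-1}\CF_M=(\pi\circ\bs)^{-1}\CF_M$ is then pulled back along a morphism of bisubmersions into (compositions of) these model ones, and the kernel computation you also have finishes the argument. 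Two smaller points: your connectedness argument assumes $\bs$ maps $(\pi\circ\bs)^{-1}(m)$ \emph{onto} $\pi^{-1}(m)$, but $\bs(U)$ need not contain the whole fibre $\pi^{-1}(m)$, so Lemma \ref{lem:subcon} does not apply as stated; and a bisection of $(U,\pi\circ\bt,\pi\circ\bs)$ must be a section over an open subset of $M$, so your identity-carrier argument needs the (easy) extra step of precomposing the $\bs$-bisection with a local section of $\pi$.
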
 
				
				Recall that a source connected atlas is an atlas generated by a family of bisubmersion which carry the identity diffeomorphism and are source connected, as definition \ref{def:scon.bisub}. The groupoid of a source connected atlas for $\CF$ is the holonomy groupoid of $\CF$ (See Prop. \ref{prop:sconn.grpd}).
				
				\begin{thm}\label{thm:sur.hol} Let $\CU$ be a source connected atlas for $\CF$, by Lemma \ref{lem:all.sconbisub}, the family $\pi\CU:=\{\pi U:=(U,\pi\circ \bt,\pi\circ\bs) \st U\in\CU\}$ generates a source connected atlas for $\CF_M$. The map $\Xi$ given as follows is a surjective open morphism of topological groupoids covering $\pi$:
					\begin{equation}\label{eq:Xidescr}
						\Xi \colon \CH(\CF)\fto \CH(\CF_M);	[u]\mapsto [u]_M,
					\end{equation}
					where $[u]_M$ is the class of $u\in \pi U\in \pi \CU$ as a bisubmersion for $\CF_M$. Note that the construction of $\Xi$ is canonical.
					
					Moreover, if $\CH(\CF)$ and $\CH(\CF_M)$ are smooth then $\Xi$ is a surjective submersion.
				\end{thm}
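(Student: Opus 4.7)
The plan is to construct $\Xi$ by factoring a natural set map through the quotient defining $\CH(\CF)$. Define $\widehat\Xi\colon \sqcup_{U\in\CU} U \to \CH(\CF_M)$ by $u\mapsto [u]_M$, where $[u]_M$ is the class in $\CH(\CF_M)$ of $u$ regarded as an element of the $\CF_M$-bisubmersion $\pi U$; this makes sense by Lemma \ref{lem:all.sconbisub}. To see that $\widehat\Xi$ descends along the quotient map $Q_P\colon\sqcup U\to \CH(\CF)$, observe that any local morphism of $\CF$-bisubmersions $f\colon U\to V$ commutes with $\bs$ and $\bt$ and therefore automatically with $\pi\bs$ and $\pi\bt$, so $f$ is equally well a local morphism of $\CF_M$-bisubmersions $\pi U\to\pi V$. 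Hence the equivalence on $\sqcup U$ defining $\CH(\CF)$ is a refinement of the one inducing $\widehat\Xi$, and $\widehat\Xi$ descends to a well-defined $\Xi$. That $\Xi$ covers $\pi$ is immediate; the groupoid-morphism property follows from the natural inclusion of fibre products $U_1\circ U_2\hookrightarrow \pi U_1\circ \pi U_2$, which is itself a morphism of $\CF_M$-bisubmersions.

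For openness, let $Q_M$ denote the quotient map to $\CH(\CF_M)$ from the disjoint union of the atlas generated by $\pi\CU$, which is open by Lemma \ref{lem:openmap}. One observes that $\widehat\Xi$ is simply the restriction of $Q_M$ to the open subset $\sqcup_{U\in\CU}\pi U$. Hence $\widehat\Xi$ is open; combined with $\Xi\circ Q_P=\widehat\Xi$ and the continuous surjectivity of $Q_P$, this gives that $\Xi$ is open. Under the additional assumption that both $\CH(\CF)$ and $\CH(\CF_M)$ are smooth, Definition \ref{def:smoothgr} yields that $Q_P$ and $Q_M$ are submersions, and the same diagram makes $\Xi$ a smooth submersion.

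The substantive content is the surjectivity of $\Xi$. The strategy is to prove that $\mathrm{image}(\Xi)$ is an open wide subgroupoid of the source-connected topological groupoid $\CH(\CF_M)$, and then invoke the standard fact that such a subgroupoid must be the whole groupoid. This fact is proved by noting that, for any $m$, $\mathrm{image}(\Xi)\cap\bs_M^{-1}(m)$ is open in the connected fibre $\bs_M^{-1}(m)$, and closed by the identity-neighbourhood trick: if $g_n\to g$ with $g_n$ in the image, then $g\circ g_n^{-1}$ tends to $e_{\bt(g)}\in\mathrm{image}(\Xi)$ and so lies in the image for $n$ large, whence $g=(g\circ g_n^{-1})\circ g_n$ also does. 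Openness of $\mathrm{image}(\Xi)$ is from the previous paragraph; wideness follows from $\Xi$ covering the surjective $\pi$; closure under inverses is automatic. Closure under composition is non-trivial: given $\Xi([u]),\Xi([v])\in\mathrm{image}(\Xi)$ with $\pi\bs(u)=\pi\bt(v)=m$, the product lies immediately in $\mathrm{image}(\Xi)$ whenever $\bs(u)=\bt(v)$ in $P$ (since then $[u]\circ[v]\in\CH(\CF)$), but in general $\bs(u)$ and $\bt(v)$ need only agree up to a $\pi$-fibre.

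Resolving this fibre mismatch is the main obstacle I anticipate. The plan is to exploit the connectedness of $\pi$-fibres via Lemma \ref{lem:subcon}: applied to the submersion $\bs\colon\bs^{-1}(\pi^{-1}(m))\to\pi^{-1}(m)$, whose base $\pi^{-1}(m)$ is connected by hypothesis and whose fibres are $\bs$-fibres of $U$ and therefore connected, this lemma yields that $(\pi\bs)^{-1}(m)$ is connected -- precisely the source connectedness of $\pi\CU$ granted by Lemma \ref{lem:all.sconbisub}. Combined with the source connectedness of $\CH(\CF_M)$, the plan is then to propagate the composition-closure of $\mathrm{image}(\Xi)$ from the diagonal locus $\bs(u)=\bt(v)$ to the full fibre-product locus by a continuity and open-closed argument along these connected source fibres of $\pi U$. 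Making this propagation rigorous -- essentially, showing that any composable pair in $\CH(\CF_M)$ admits representatives that can be composed in $\CH(\CF)$ after small deformations along connected source fibres -- is the technical step on which the whole surjectivity rests.
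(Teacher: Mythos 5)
Your construction of $\Xi$ and the verification of all its properties except surjectivity coincide with the paper's proof: well-definedness because a morphism of $\CF$-bisubmersions commutes with $\bs,\bt$ and hence with $\pi\circ\bs,\pi\circ\bt$; the morphism property via the inclusion $U_1\circ U_2\hookrightarrow \pi U_1\circ \pi U_2$; openness by writing $\Xi\circ Q_P$ as the restriction of the open quotient map $Q_M$ (Lemma \ref{lem:openmap}) to $\sqcup_{U\in\CU}\pi U$; and the smooth case by the same diagram with submersions. Up to that point there is nothing to object to.

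The gap is in the surjectivity, and it is exactly the step you yourself flag as unresolved. The paper argues as follows: $\Xi(\CH(\CF))=Q_M(\sqcup_{U\in\CU}\pi U)$ is an open symmetric neighbourhood of the identities of $\CH(\CF_M)$ which is closed under composition, and an $\bs$-connected topological groupoid is generated by any such neighbourhood, so the image is everything. You correctly isolate the only nontrivial link in that chain, namely closure of the image under composition when $\bs(u)$ and $\bt(v)$ agree only up to a $\pi$-fibre (the paper dispatches it in a single clause, so in a sense you are more explicit about where the content lies), but your proposal does not supply the missing argument, and the two devices you sketch cannot supply it. The ``closedness'' half of your open--closed argument, $g=(g\circ g_n^{-1})\circ g_n$, already assumes that a product of two elements of the image lies in the image, i.e.\ the very composition-closure at stake (and it also relies on sequential limits in a quotient topology that need not be Hausdorff). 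And connectedness of the $\pi$-fibres via Lemma \ref{lem:subcon} only re-derives the source-connectedness of $\pi U$ already given by Lemma \ref{lem:all.sconbisub}; it does not produce matching representatives. What is actually needed is that the set of sources of $\Xi$-preimages of $\Xi([u])$ and the set of targets of $\Xi$-preimages of $\Xi([v])$, two subsets of the connected fibre $\pi^{-1}(m)$, intersect --- and two nonempty (even open) subsets of a connected space need not meet, so no open--closed propagation along the fibre can be run on this alone. The paper's own example after Theorem \ref{thm:sur.hol} (the slit plane, where the map $\phi\mapsto(\Xi(\phi),\bs(\phi))$ fails to be surjective) shows that one cannot in general prescribe the source of a preimage of a given element, so the ``small deformations along connected source fibres'' you envisage have no evident implementation; note moreover that moving a representative $u$ along an $\bs$-fibre of $\pi U$ changes the class $[u]_M$, so such deformations do not even preserve the composable pair you are trying to realize. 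In short: everything before surjectivity matches the paper, but the surjectivity --- the substantive claim of the theorem --- is left as an acknowledged gap, and the route proposed to close it would not work as described.
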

				
				\begin{proof}	
					The map $\Xi$ is well defined. If $u_1\in U_1\in \CU$ and $u_2\in U_2\in \CU$ are equivalent, then exists a morphism of bisubmersions sending $u_1$ to $u_2$, using this same morphism it is clear that $u_1\in \pi U_1\in \pi\CU$ is equivalent to $u_2\in \pi U_2\in \pi \CU$ as bisubmersions for $\CF_M$.
					
					The image of $\Xi$ lies indeed inside $\CH(\CF_M)$. By construction $\Xi(\CH(\CF))=Q_M(\sqcup_{U\in \CU} \pi U)$. Then by Lemma \ref{lem:all.sconbisub} the family $\pi \CU$ generates a source connected atlas $\CU_M$ for $\CF_M$, then $Q_M(\sqcup_{U\in \CU} \pi U)\subset Q_M(\sqcup_{U\in \CU_M} \pi U)= \CH(\CF_M)$ is an open subset. It is also clear that $\Xi$ covers $\pi$ and sends the identity bisection of $\CH(\CF)$ to the identity bisection of $\CH(\CF_M)$, then $\Xi(\CH(\CF))$ is an neighborhood of the identities of $\CH(\CF_M)$.
					
					To prove that it is a morphism of set theoretic groupoids we only need to prove that it preserves the composition. This is equivalent to the following statement: for any $U_1,U_2\in \CU$ we have that the map $\pi(U_1\circ U_2)\fto \pi U_1\circ \pi U_2 ; (u_1,u_2)\mapsto(u_1,u_2)$ is a morphism of bisubmersions for $\CF_M$, which is clearly true.
					
					To see that $\Xi$ is an open morphism of topological groupoids we only need to check that it is a continuous open map. This is clear because for any $U\in \CU$ the identity map $U\fto \pi U;u\mapsto u$ is a continuous open map. Recall that the topology on $\CH(\CF)$ and $\CH(\CF_M)$ is the quotient topology, which makes the maps $Q\colon U\fto \CH(\CF)$ and $Q_M\colon \pi U\fto \CH(\CF_M)$ continuous. One sees also that these maps are open under this topology (see Lemma \ref{lem:openmap}).
					
					Note that, if $\CH(\CF)$ and $\CF(\CF_M)$ are Lie groupoids, using the same argument as before, one check that $\Xi$ is a submersion (the maps $Q\colon U\fto \CH(\CF)$ and $Q_M\colon \pi U\fto \CH(\CF_M)$ are submersions).
					
					The map $\Xi$ is surjective. Recall that $\Xi(\CH(\CF))$ is a neighborhood of the identities of $\CH(\CF_M)$. Because $\Xi$ is a morphism of topological groupoids $\Xi(\CH(\CF))$ is a symmetric set closed under compositions. It is well known that any $\bs$-connected topological groupoid is generated by any symmetric neighborhood of the identities \cite{MK2}. Because $\CH(\CF_M)$ is $\bs$-connected then $\Xi(\CH(\CF))=\CH(\CF_M)$ and $\Xi$ is surjective.
				\end{proof}
				
				\begin{rem} Note that by the proof of Theorem \ref{thm:sur.hol}, the family $\pi\CU:=\{\pi U:=(U,\pi\circ \bt,\pi\circ\bs) \st U\in\CU\}$ is indeed a source connected atlas for $\CF_M$.
				\end{rem}
				
				{For regular foliations, the morphism $\Xi$ admits a familiar description.}
				
				{\begin{prop}\label{prop:regXi} When both $\cF$ and $\cF_M$ are regular foliations, the morphism $\Xi\colon \CH(\CF)\fto \CH(\CF_M)$ can be easily described by
						$$\Xi([\gamma]_{hol})=[\pi\circ \gamma]_{hol},$$
						for each curve $\gamma\colon[0,1]\fto P$ inside a leaf of $\CF$. {Here $[-]_{hol}$ denotes  holonomy classes.}
				\end{prop}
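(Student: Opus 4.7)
The plan is to unwind the bisubmersion description of $\Xi$ given in equation \eqref{eq:Xidescr} and match it with the classical path-holonomy description for regular foliations (Section \ref{sec:hol.grpd} and Corollary \ref{rem:hol.regfol}). The starting observation is that the hypothesis $\cF^{\text{big}} = \pi^{-1}\cF_M$ implies that $\pi$ maps leaves of $\cF$ into leaves of $\cF_M$, so that for any curve $\gamma\colon [0,1]\to P$ inside a leaf of $\cF$ the projected curve $\pi\circ\gamma$ sits inside a single leaf of $\cF_M$, and $[\pi\circ\gamma]_{hol}$ makes sense.

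First I would reduce to the local model of a path-holonomy bisubmersion. Given $\gamma$ as above, choose vector fields $X_1,\dots, X_k\in\cF$ whose flows can be concatenated to realise $\gamma$ (by compactness of $[0,1]$ together with proposition \ref{path}, at the price of working on a finite composition of path-holonomy bisubmersions $U = U_n\circ\cdots\circ U_1$). Let $u\in U$ be the point of the form $(v_n,\ldots,v_1; x)$ whose associated bisection carries the time-one flow giving the concatenation, so that $[u]=[\gamma]_{hol}\in\CH(\cF)$ in the sense of Corollary \ref{rem:hol.regfol} (via Proposition \ref{prop:eq.reg.hol}).

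The key step is to observe that, by the very definition of $\Xi$, we have $\Xi([u])=[u]_M$, where now $u$ is regarded as a point of the bisubmersion $\pi U:=(U,\pi\circ\bt,\pi\circ\bs)$ for $\cF_M$. The constant bisection of $\pi U$ through $u$ carries the diffeomorphism of $M$ obtained by projecting, via $\pi$, the time-one flows of $X_1,\dots,X_k$ restricted to a slice transverse to $\cF_M$. Because each $X_i\in\cF\subset\pi^{-1}\cF_M$ is projectable (up to elements of $\Gamma_c(\ker d\pi)$, which act trivially on such a transverse slice) to a vector field in $\cF_M$ whose flow traces out $\pi\circ\gamma$, this diffeomorphism equals the holonomy transformation $\widehat{\pi\circ\gamma}$ of the projected curve on the transversal. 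Applying Proposition \ref{prop:eq.reg.hol} backwards then gives $[u]_M = [\pi\circ\gamma]_{hol}$ in $\CH(\cF_M)$, which is the desired formula.

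The main obstacle I anticipate is the bookkeeping in the last step: ensuring that the diffeomorphism of $M$ carried by $u$ in the pushed-forward bisubmersion $\pi U$ coincides \emph{on a transversal to $\cF_M$} with the classical holonomy of $\pi\circ\gamma$. The cleanest way to handle this is to invoke Proposition \ref{thm:holtrafo} (invariance of holonomy transformations under the surjective morphism between holonomy groupoids) together with Corollary \ref{rem:hol.regfol}, which identifies bisubmersion-equivalence with holonomy-equivalence in the regular case. Once this identification is in place, well-definedness on holonomy classes (as opposed to homotopy classes) is automatic, and the formula $\Xi([\gamma]_{hol})=[\pi\circ\gamma]_{hol}$ follows.
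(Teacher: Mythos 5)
Your overall skeleton --- represent $[\gamma]_{hol}$ by a point $u$ in a composition of path-holonomy bisubmersions, use the characterization $\Xi([u])=[u]_M$ from Theorem \ref{thm:sur.hol}, and then identify $[u]_M$ with $[\pi\circ\gamma]_{hol}$ by comparing carried diffeomorphisms --- is viable, and it is more local than the paper's argument, which simply observes that $\widehat{\pi}\colon[\gamma]_{hol}\mapsto[\pi\circ\gamma]_{hol}$ is a submersive groupoid morphism, so that $(\CH(\CF),\pi\circ\bt,\pi\circ\bs)$ is a bisubmersion for $\CF_M$ and $\widehat{\pi}$ is a morphism of bisubmersions onto the source connected atlas $\CH(\CF_M)$, whence the formula drops out of Theorem \ref{thm:sur.hol}. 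However, the step where your proposal does the real work contains two concrete problems.

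First, it is not true that an arbitrary $X\in\CF$ is ``projectable up to an element of $\Gamma_c(\ker d\pi)$'': Lemma \ref{lem:projgen2}(i) only says that $\CF$ is the $\CI_c(P)$-span of projectable elements of $\widehat{\CF}$, and a product $fY$ with $Y$ projectable and $f$ not basic is in general neither projectable nor of the form (projectable)$+$(vertical); moreover, even when $X=Y+Z$ with $Z\in\Gamma_c(\ker d\pi)$, the time-one flow of $X$ is not the flow of $Y$ composed with something acting trivially transversally, so the parenthetical argument does not go through. The repair is to choose the generating vector fields projectable from the start (possible by Lemma \ref{lem:projgen2}(i); this is exactly what Lemma \ref{prop:carry.diffM} does), accepting that their concatenated flows only realize a path leafwise homotopic rel endpoints to $\gamma$, which suffices because holonomy classes, and their images under $\pi$, are unchanged under such homotopies. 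Second, the tool you invoke to finish, Proposition \ref{thm:holtrafo}, does not apply here: it concerns the projection $\CH(\pi^{-1}\CF)\fto\CH(\CF)$ for the \emph{pullback} foliation, where $\Gamma_c(\ker d\pi)$ is contained in the foliation upstairs and $\pi$ restricts to diffeomorphisms between transversals. In the present quotient setting $\CF$ need not contain $\Gamma_c(\ker d\pi)$, the codimension of $\CF$ in $P$ is in general strictly larger than that of $\CF_M$ in $M$, and $\pi$ does not carry transversals of $\CF$ to transversals of $\CF_M$; so that proposition says nothing about $\Xi$. With projectable generators you do not need it: as in the proof of Lemma \ref{prop:carry.diffM}, the map $(\mathrm{Id}_{\RR^n},\pi)$ is a morphism of bisubmersions from $\pi U$ to the path-holonomy bisubmersion of $\CF_M$ built from the projected generators, and the image of $u$ there represents the holonomy class of the projected flow path, which equals $[\pi\circ\gamma]_{hol}$ by Proposition \ref{prop:eq.reg.hol} and Corollary \ref{rem:hol.regfol}.
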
}
				
				\begin{proof} {The {Lie} groupoids $\CH(\CF)$ and $\CH(\CF_M)$ are source connected atlases for $\CF$ and $\CF_M$ respectively. 
						The map $\widehat{\pi}\colon \CH(\CF)\fto \CH(\CF_M);[\gamma]_{hol}\mapsto[\pi\circ \gamma]_{hol}$ is a submersion, {since its a Lie groupoid morphism integrating the fiber-wise surjective Lie algebroid morphism $\pi_*$}. This implies that $(\CH(\CF),\bt_M \circ \widehat{\pi},\bs_M \circ \widehat{\pi})$  is a bisubmersion for $\CF_M$, by \cite[Lemma 2.3]{AndrSk}. Notice that the latter triple equals $(\Pi(\CF),\pi\circ \bt,\pi\circ \bs)$, which hence is a bisubmersion.}
					
					{ We can thus compute 
						$$\Xi([\gamma]_{hol})=[\pi\circ\gamma]_{hol},$$
						where the first equality holds by the above description of $\Xi$ and the fact that
						$\widehat{\pi}\colon (\CH(\CF),\pi\circ \bt,\pi\circ \bs)\fto (\CH(\CF_M),\bt_M,\bs_M)$ is a morphism of bisubmersions.}
				\end{proof}
				
				We present an example where $\cF$ is a regular foliation  and $\CF_M$ is a genuinely singular foliation. Notice that the holonomy groupoid of the former foliation has discrete isotropy groups, whereas for the latter the isotropy groups are not discrete in general.
				
				\begin{ex}\label{ex:non.sing.q}
					{
						Consider the cylinder $P:=S^1\times \RR$ with coordinates $(\theta,y)$ and
						the   regular foliation\footnote{The foliation $\cF$ is the quotient by the natural $\ZZ$-action of the foliation on the $x$-$y$-plane whose leaves are given by $graph(e^{x+c})$  on the open upper plane, $graph(-e^{x+c})$ on the open lower plane (with $c$ varying through all real numbers), and the line $\{y=0\}$.} $\cF$  given by the  integral curves of the nowhere vanishing vector field $X:=\de_{\theta}+y\de_{y}$.
						The circle $U(1)$ acts on the cylinder $P$ by rotations of the first factor, preserving the foliation $\cF$. The singular foliation $\CF^\text{big}$ on $P$ has three leaves (two open leaves, separated by the middle circle).
						The quotient map $$\pi\colon P=S^1 \times \RR \to  M:=P/U(1)\cong \RR$$ is the second projection.  On the quotient, the induced foliation is $\cF_M=\langle y\de_{y}\rangle$, a genuinely singular foliation.}
					
					{For the holonomy groupoids, we have $\CH(\cF)=\RR\times P$, the transformation groupoid of the action of the Lie group $\RR$ on $P$ by the flow of $X$, which reads $\phi_t(\theta \text { mod }2\pi, y)=(\theta +t \text { mod }2\pi,e^ty)$. Further  $\CH(\cF_M)=\RR\times M$, the transformation groupoid of the action of the Lie group $\RR$ on $M$ by the flow of $y\de_{y}$, which reads $\phi_t(y)=e^ty$. This follows from \cite[Ex. 3.7 (ii)]{AZ1}. The canonical surjective morphism of Thm. \ref{thm:sur.hol} is
						$$\Xi \colon \RR\times P\to \RR\times M,\;\; (t,p)\mapsto(t,\pi(p)).$$
						This can be seen 
						from   {eq. \eqref{eq:Xidescr}, since the vector field $X$ $\pi$-projects to $y\de_{y}$.} 
						Notice that, at points $S^1\times \{0\}$, the isotropy groups of $\CH(\cF)$ are discrete, as for all regular foliations, while the  isotropy group of $\CH(\cF_M)$ at the point $0\in M$ is isomorphic to $\RR$. 
					}
				\end{ex}
				
				\begin{figure}[h]\label{fig:moebius}
					\centering
					\scalebox{.3}{\includegraphics{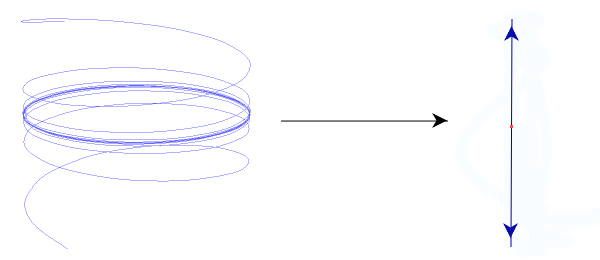}}
					\caption{The foliated manifold in Example \ref{ex:non.sing.q}}
				\end{figure}
				
				\subsection*{A characterization of the quotient map for pullback-foliations}
				
				We make the map $\Xi$ in Theorem \ref{thm:sur.hol} more explicit in the special case that {$\CF=\CF^{big}$ (i.e. when $\Gamma_c(\ker d\pi) \subset \CF$). In this case $\cF$ is the pullback of $\cF_M$ by $\pi$.}
				
				We will need theorem \ref{thm:pullbackgroid} from Section \ref{sec:pullb.grpd} which gives an isomorphism $\varphi\colon \CH(\CF) \xrightarrow{\sim} \pi^{-1}(\CH(\CF_M))$. Our alternative description of the map $\Xi$  is as follows:
				
				\begin{prop}\label{prop:normalq}
					Let $\pi \colon P \to M$ be a {surjective} submersion with connected fibers. Let $\cF$ be a singular foliation on $P$, such that $\Gamma_c(\ker d\pi) \subset \CF$. Denote $\CF_M$ the unique singular foliation on $M$ such that $\pi^{-1}(\CF_M)=\CF$.
					
					Under the canonical isomorphism $\varphi\colon \CH(\CF) \xrightarrow{\sim} \pi^{-1}(\CH(\CF_M)) $  given in theorem \ref{thm:pullbackgroid}, the following two morphisms coincide:
					\begin{itemize}
						\item the map $\Xi\colon \CH(\CF)\fto \CH(\CF_M)$    given by Theorem \ref{thm:sur.hol},
						\item the second projection $pr_2\colon \pi^{-1}(\CH(\CF_M))=P\times_M \CH(\CF_M) \times_MP\fto \CH(\CF_M)$.
					\end{itemize}
				\end{prop}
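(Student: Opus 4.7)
The plan is to compute both maps explicitly using the same well-chosen atlas and verify they agree on every equivalence class.

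First I would fix a path holonomy atlas $\CU'$ for $\cF_M$. By Lemma \ref{lem:pullbackatlas}, $\pi^{-1}\CU' := \{\pi^{-1}V : V \in \CU'\}$ is an atlas of bisubmersions for $\cF = \pi^{-1}\cF_M$. Because $\CU'$ is source connected and $\pi$ is a surjective submersion with connected fibers, Lemma \ref{lem:subcon} (applied to source fibers of compositions of elements of $\pi^{-1}\CU'$) guarantees that $\pi^{-1}\CU'$ is itself a source connected atlas. Hence by Proposition \ref{prop:sconn.grpd} it can be used to compute $\CH(\cF)$, and by Theorem \ref{thm:sur.hol} it can equally be used to compute the map $\Xi$.

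Next I would unpack the isomorphism $\varphi$. Recall from the proof of Theorem \ref{thm:pullbackgroid} that $\varphi$ is the composition $\CH(\cF) \cong \CG(\pi^{-1}\CU') \cong \pi^{-1}(\CH(\cF_M))$, where the second isomorphism comes from Proposition \ref{super}. For any point $u = (p,v,q) \in \pi^{-1}V$ with $V \in \CU'$, chasing through these identifications yields
\[
\varphi([u]) = (p,\,[v]_M,\,q) \in P \times_M \CH(\cF_M) \times_M P,
\]
so $\mathrm{pr}_2(\varphi([u])) = [v]_M$, where $[v]_M$ denotes the class of $v$ in $\CH(\cF_M)$.

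Now I would compute $\Xi$. By Theorem \ref{thm:sur.hol}, $\Xi([u])$ is obtained by regarding $u \in \pi^{-1}V$ as an element of the bisubmersion $(\pi^{-1}V,\, \pi\circ\tau,\, \pi\circ\sigma)$ for $\cF_M$ and taking its class in $\CH(\cF_M)$. The natural second projection
\[
\rho\colon \pi^{-1}V \to V, \qquad (p',v',q') \mapsto v',
\]
is a morphism of bisubmersions for $\cF_M$: indeed $\bs_V(v') = \pi(q') = (\pi\circ\sigma)(p',v',q')$ and $\bt_V(v') = \pi(p') = (\pi\circ\tau)(p',v',q')$, and $\rho$ is smooth (a submersion, being the projection of a fiber product). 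Therefore $u \in \pi^{-1}V$ and $v = \rho(u) \in V$ define the same class in $\CH(\cF_M)$, so $\Xi([u]) = [v]_M$.

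Combining the two computations yields $\Xi([u]) = [v]_M = \mathrm{pr}_2(\varphi([u]))$ for every $u \in \pi^{-1}V$ and every $V \in \CU'$. Since every element of $\CH(\cF)$ is of this form, we conclude $\Xi = \mathrm{pr}_2 \circ \varphi$. The main (minor) obstacle is verifying that $\pi^{-1}\CU'$ is source connected, so that Theorem \ref{thm:sur.hol} applies with this choice of atlas; this reduces to checking connectedness of the source fibers of $\pi^{-1}V_k \circ \cdots \circ \pi^{-1}V_1$, which follows from the connectedness of the $\pi$-fibers together with Lemma \ref{lem:subcon} and the corresponding property of $\CU'$.
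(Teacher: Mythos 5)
Your proposal is correct and follows essentially the same route as the paper's proof: compute $\Xi$ on the pullback atlas $\pi^{-1}\CU'$ (which is source connected, as established for Theorem \ref{thm:pullbackgroid}), observe that the projection $\pi^{-1}V\to V$, $(p,v,q)\mapsto v$, is a morphism of bisubmersions for $\cF_M$ so that $\Xi([(p,v,q)])=[v]_M$, and compare with $\varphi([(p,v,q)])=(p,[v]_M,q)$. The only difference is that you re-verify the source-connectedness of the pullback atlas and the morphism property explicitly, which the paper simply cites from its earlier results.
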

				
				\begin{rem}\label{rem:varphi} The isomorphism $\varphi$ from Theorem \ref{thm:pullbackgroid} is described as follows.
					Let $\CU$ be a path holonomy atlas for $\CF_M$. In theorem \ref{thm:pullbackgroid} it is proven that $\pi^{-1}\CU:=\{P{}_\pi \!\times_\bt U {}_\bs \!\times_\pi P: U\in \CU\}$ is a {source connected} atlas for $\pi^{-1}\CF_M$. Then 
					$$\varphi[(p,u,q)]= (p,[u],q).$$
				\end{rem}
				
				\begin{proof}
					Fix a path holonomy atlas $\CU$ for $\CF_M$. By Remark \ref{rem:varphi} the family $\pi^{-1}\CU:=\{P{}_\pi \!\times_\bt U {}_\bs \!\times_\pi P\}$ is a {source connected} atlas for $\CF=\pi^{-1}\CF_M$. By the characterization of $\Xi$ given in the proof of Thm. \ref{thm:sur.hol}  we get $\Xi([(p_0,u_0,q_0)])=[(p_0,u_0,q_0)]_M$.  It is sufficient to show that $[(p_0,u_0,q_0)]_M=[u_0]$.
					
					Note that $\hat{\pi}\colon (\pi^{-1}U,\pi\circ \bt,\pi\circ \bs)\fto U; (p,u,q)\mapsto u$ is a {morphism of} bisubmersions for $\CF_M$, therefore $(p_0,u_0,q_0)\in(\pi^{-1}U,\pi\circ \bt,\pi\circ \bs)$ is equivalent to $u_0\in (U,\bt,\bs)$. Then $\Xi([(p_0,u_0,q_0)])=[u_0]$.
				\end{proof}

				\subsection*{Proof of Lemma \ref{lem:all.sconbisub}}\label{app:thmXi}
				
				To prove Lemma \ref{lem:all.sconbisub} we give first the following statement.
				
				\begin{lem}\label{lem:projgen2}
					Let $(P,\CF)$ be a foliated manifold, $\pi : P \to M$ be a {surjective} submersion with connected fibers satisfying eq. (\ref{eq:bracketpres}).
					
					i) The set
					\begin{align*}
						& {\widehat{\cF}}^{proj}:=\{X\in {\widehat{\cF}}: X\text{ is $\pi$-projectable to a vector field on $M$}\}  
					\end{align*}
					generates $\cF$ as a ${C^{\infty}_c(M)}$-module. 
					
					ii) The singular foliation $\CF_M$  on $M$ satisfying eq. (\ref{eq:pullpres}) admits the following description:
					$$\cF_M=\pi_*(\cF):=Span_{{C_c^{\infty}(M)}}\{\pi_*X : X\in {\widehat{\cF}}^{proj}\}$$
				\end{lem}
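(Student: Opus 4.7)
The plan is to prove part (i) by a local argument exploiting the pullback description of $\cF^{\text{big}}$ from Lemma \ref{lem:pullback}, and then to derive part (ii) immediately from the construction.

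The key step for (i) is to show that every generator of $\cF_M$ admits a $\pi$-projectable lift living in $\widehat{\cF}$. Given a compactly supported generator $Y$ of $\cF_M$, I would take a compactly supported $\pi$-projectable lift $\tilde{Y} \in \vX_c(P)$, which lies in $\cF^{\text{big}} = \Gamma_c(\ker d\pi) + \cF$ by the assumption \eqref{eq:pullpres} and Lemma \ref{lem:pullback}. Decomposing yields $\tilde{Y} = V + X$ with $V \in \Gamma_c(\ker d\pi)$ and $X \in \cF$. Because $V$ is vertical, $X = \tilde{Y} - V$ remains $\pi$-projectable with $\pi_* X = Y$, and it lies in $\cF \subset \widehat{\cF}$, so $X \in \widehat{\cF}^{\text{proj}}$. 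Thus every generator of $\cF_M$ is realized as the pushforward of an element of $\widehat{\cF}^{\text{proj}}$.

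By Lemma \ref{lem:pullback}, $\cF^{\text{big}}$ is locally the $\CI_c(P)$-span of projectable lifts of $\cF_M$-generators together with vertical vector fields. Replacing each such projectable lift by the corrected $X_i \in \widehat{\cF}^{\text{proj}}$ constructed above, any $X_0 \in \cF \subset \cF^{\text{big}}$ becomes, locally, a $\CI_c(P)$-combination of elements of $\widehat{\cF}^{\text{proj}}$ plus a vertical remainder $V_0 \in \Gamma(\ker d\pi)$. Since $X_0$ and the combination both lie in $\cF$, the difference $V_0$ also lies in $\cF$; being vertical it projects to zero, so $V_0 \in \widehat{\cF}^{\text{proj}}$ as well. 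A partition of unity on $P$ patches these local expressions into a global one, proving (i).

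Part (ii) follows almost at once. The inclusion $\pi_*(\cF) \subset \cF_M$ is obtained by noting that $\widehat{\cF}^{\text{proj}} \subset \widehat{\cF^{\text{big}}} = \widehat{\pi^{-1}\cF_M}$, so pushforwards of elements of $\widehat{\cF}^{\text{proj}}$ lie in $\widehat{\cF_M}$, and compactly supported $\CI_c(M)$-combinations of these land in $\cF_M$. The reverse inclusion $\cF_M \subset \pi_*(\cF)$ is immediate from the construction in part (i), which produces for each generator $Y$ of $\cF_M$ an element $X \in \widehat{\cF}^{\text{proj}}$ with $\pi_* X = Y$. The main subtlety will be making the local-to-global patching in (i) fully coherent: the corrected lifts $X_i$ obtained in different charts will differ by vertical elements of $\cF$, which must be absorbed into the $\widehat{\cF}^{\text{proj}}$ remainder. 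The sheaf-theoretic characterization of singular foliations from \S \ref{sec:fol.sheaf}, together with Proposition \ref{prop:fol.loc.pro}, should make this routine.
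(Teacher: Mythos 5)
Your overall strategy is close to the paper's: write an element of $\CF\subset\CF^{\text{big}}=\pi^{-1}\CF_M$ in terms of projectable generators of the pullback (Lemma \ref{lem:pullback}), correct each generator by its vertical part, and observe that the remaining vertical discrepancy lies in $\CF$ and is projectable (to zero), hence belongs to ${\widehat{\cF}}^{proj}$; the paper organizes this by first proving the statement in the special case $\Gamma_c(\ker d\pi)\subset\CF$ and then obtains (ii) from the uniqueness in Proposition \ref{prop:submfol} by checking $\pi^{-1}(\pi_*\CF)=\CF^{\text{big}}=\pi^{-1}\CF_M$, rather than by your direct double inclusion. So the architecture is fine; the problem is the step that produces the ``corrected lifts''.

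That step, as written, fails: a \emph{compactly supported} $\pi$-projectable lift $\tilde Y\in\CX_c(P)$ with $\pi_*\tilde Y=Y$ does not exist in general. If $Y_x\neq 0$ then $d\pi\,\tilde Y_p=Y_x\neq 0$ for every $p\in\pi^{-1}(x)$, so the whole fiber $\pi^{-1}(x)$ sits inside $\mathrm{supp}(\tilde Y)$ and would have to be compact; for $P=M\times\RR$ (and in the later applications, where the fibers are orbits of a possibly non-compact group $G$) no such $\tilde Y$ exists for $Y\neq 0$. This is not cosmetic, because your decomposition $\tilde Y=V+X$ with $V\in\Gamma_c(\ker d\pi)$ and $X\in\CF$ is ``by definition of $\CF^{\text{big}}$'' only for compactly supported elements. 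The repair is to take an arbitrary projectable lift $\tilde Y$ (not compactly supported), note that it lies only in the global hull $\reallywidehat{\CF^{\text{big}}}$ (since $f\tilde Y\in\pi^{-1}\CF_M$ for all $f\in\CI_c(P)$), and then \emph{prove} the splitting $\tilde Y=X+V$ with $X\in{\widehat{\cF}}^{proj}$ and $V\in\Gamma(\ker d\pi)$: multiply by bump functions to land in $\CF^{\text{big}}$, split there, and glue the pieces with a partition of unity (the $\CF$-parts glue to an element of $\widehat{\cF}$, which is projectable because it differs from $\tilde Y$ by a vertical field). This is precisely the step the paper performs when writing each $Y_j\in\reallywidehat{\CF^{\text{big}}}{}^{proj}$ as $\widehat Y_j+Z_j$; you cannot sidestep it by splitting the compactly supported products $f_iW_i\in\CF^{\text{big}}$ instead, since removing a vertical part from $f_iW_i$ leaves a field projecting to $f_i\cdot\pi^*(\pi_*W_i)$, which is not basic. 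Finally, in (ii) your inclusion $\pi_*(\CF)\subset\CF_M$ (``pushforwards of elements of ${\widehat{\cF}}^{proj}$ lie in $\widehat{\CF_M}$'') also needs a local-section/partition-of-unity argument rather than being formal, and the reverse inclusion rests on the flawed lifting step; the paper's route via the uniqueness statement of Proposition \ref{prop:submfol} avoids both issues.
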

				
				\begin{proof}
					We first make a claim.
					
					\noindent{\bf Claim:} \emph{{Lemma \ref{lem:projgen2} holds in the special case that $\Gamma_c(\ker d\pi)\subset \CF$.}}
					
					\noindent{Indeed, in this special case, by Prop. \ref{prop:submfol} there is  a unique singular foliation  $\cF_M$ on $M$ with $\pi^{-1}(\cF_M) = \cF$.
						Given this, i) is a consequence of Definition \ref{def:pullback}. For ii), note that $\pi^{-1}(\pi_*(\cF))=\CF$, as can be checked using i). Since} $\CF=\pi^{-1}\CF_M$, we obtain
					{$\cF_M=\pi_*(\cF)$ by the uniqueness statement in Proposition \ref{prop:submfol}. This proves the claim.} 
					
					\smallskip
					Take $\CF^{\text{big}}:=\Gamma_c(\ker d\pi)+\CF$, a singular foliation satisfying condition of the above claim.

					i)  {By the claim}, 	 ${\widehat{\cF^{\text{big}}}}^{proj}$
					generates $\cF^{\text{big}}$ as a ${C^{\infty}_c(M)}$-module.
					Take $X\in \CF\subset \CF^{\text{big}}$.
					There exist finitely many $Y_j\in {\widehat{\cF^{\text{big}}}}^{proj}$ and $f_j\in \CI_c(P)$ such that
					$X= \sum_j f_j Y_j$.
					By definition of $\CF^{\text{big}}$, we can write $Y_j= \widehat{Y}_j+Z_j$ with $\widehat{Y}_j\in {\widehat{\cF}}^{proj}$  and $Z_j\in \Gamma(Ker(d\pi))$. Then:
					\[X=\sum_j f^j_i Y_j= \sum_j f^j_i \widehat{Y}_j + \sum_j f^j_i Z_j. \]
					The last term $\sum_j f^j_i Z_j=X-\sum_j f^j_i \widehat{Y}_j$   lies in {$\cF$ as the difference of two elements of $\cF$, and is $\pi$-projectable (to the zero vector field on $M$). Hence this last term lies} in ${\widehat{\cF}}^{proj}$, and we have proven i).
					
					ii) {We have $\pi^{-1} (\pi_* (\CF))= \pi^{-1}( \pi_* (\CF^\text{big}))=\CF^\text{big}$ by the claim, and $\CF^\text{big}=\pi^{-1}\CF_M$ by definition.}  Using the uniqueness in Proposition \ref{prop:submfol} we get $\CF_M =\pi_* (\CF)$.
				\end{proof}
				
				\begin{lem}\label{prop:carry.diffM} Let $\pi:P\fto M$, $\CF$ and $\CF_M$ be as in Lemma \ref{lem:all.sconbisub}.   Then there exists a family of path holonomy bisubmersions $\CS$ for $\CF$ such that 
					{\begin{itemize}
							\item[i)] for every $x\in P$ there is $u_x\in U_x\in \CS$ carrying the identity {diffeomorphism} nearby $x$, 
							\item[ii)]  for any $U\in \CS$ we have that $(U,\pi \circ\bt, \pi\circ\bs)$ is a source connected bisubmersion for $\CF_M$. Further it is adapted to a path holonomy bisubmersions for $\CF_M$.
					\end{itemize}} 
				\end{lem}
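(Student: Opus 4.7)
The plan is to build $\CS$ out of path holonomy bisubmersions built from $\pi$-projectable generators of $\CF$, so that projecting the source and target along $\pi$ essentially automatically yields bisubmersions for $\CF_M$. By Lemma \ref{lem:projgen2}(i), at each $x \in P$ we can choose $\tilde X_1, \dots, \tilde X_k \in \widehat{\CF}^{proj}$ whose classes form a basis of $\CF_x$. After multiplying by a bump function $\rho$ equal to $1$ near $x$, I obtain $X_i := \rho \tilde X_i \in \CF$ with unchanged basis property. With these I form the path holonomy bisubmersion $U_x = W \times V \subset \RR^k \times P$, with $W$ a convex open ball around $0$ and $V$ a $\pi$-adapted chart around $x$ chosen small enough that $V \cap \pi^{-1}(q)$ is connected for every $q$ nearby, and small enough that on $U_x$ the flows of $\sum v_i X_i$ remain in $\{\rho = 1\}$. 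On this $U_x$ we then have $\bt(v,p) = \exp(\sum v_i \tilde X_i)(p)$, and the element $u_x := (0,x)$ carries the identity diffeomorphism of $P$ near $x$, giving (i).

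For (ii), since the $\tilde X_i$ are $\pi$-related to $Y_i := \pi_* \tilde X_i \in \widehat{\CF_M}$, the flows intertwine with $\pi$ and yield
\[
\pi \circ \bs(v,p) = \pi(p), \qquad \pi \circ \bt(v,p) = \exp\!\Bigl(\sum_i v_i Y_i\Bigr)(\pi(p)),
\]
both surjective submersions with connected source fibres $W \times (V \cap \pi^{-1}(q))$ by the choice of $V$. The bisubmersion identity for $\CF_M$ is the core computation: using that $\Gamma_c(\ker d\pi)$ is the (regular) foliation by $\pi$-fibres so that $\bs^{-1}\Gamma_c(\ker d\pi) = \Gamma_c(\ker d(\pi\bs))$, and that $(U_x,\bt,\bs)$ is a bisubmersion for $\CF$, I compute
\[
(\pi\bs)^{-1}\CF_M = \bs^{-1}\CF^{\text{big}} = \bs^{-1}\CF + \bs^{-1}\Gamma_c(\ker d\pi) = \ker_c d(\pi\bs) + \ker_c d\bt,
\]
which is already $\subseteq \ker_c d(\pi\bs) + \ker_c d(\pi\bt)$. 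The main obstacle is the reverse inclusion, which is where projectability of the generators is genuinely used. Given $Z \in \ker d(\pi\bt)$, the vector $d\bt(Z)$ lies in $\ker d\pi$; because each $\tilde X_i$ is $\pi$-projectable, the diffeomorphism $\exp(\sum v_i \tilde X_i)$ preserves the $\pi$-fibres, hence its inverse differential maps $d\bt(Z)$ to a vector in $\ker d_p \pi$, which I lift horizontally (via $\partial_{x_j}$-directions) to $Z_1 \in \ker d(\pi\bs)$ with $d\bt(Z_1) = d\bt(Z)$; then $Z - Z_1 \in \ker d\bt$. The decomposition is smooth in $(v,p)$ and preserves compact support, yielding the required module equality.

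Finally, for the adaptation to a path holonomy bisubmersion of $\CF_M$, I use Corollary \ref{carry}(i) pointwise: at each $(v,p) \in U_x$ the bisubmersion $(U_x, \pi\bt, \pi\bs)$ carries the local diffeomorphism $q \mapsto \exp(\sum v_i Y_i)(q)$ of $M$ near $\pi(p)$, and the same diffeomorphism is carried at $(1,\pi(p))$ by the one-parameter path holonomy bisubmersion of $\CF_M$ generated by the single vector field $\sum v_i Y_i \in \widehat{\CF_M}$ (which belongs to $\CF_M$ after the same cutoff procedure). Corollary \ref{carry}(i) then supplies a locally defined morphism of bisubmersions from a neighbourhood of $(v,p)$ in $(U_x, \pi\bt, \pi\bs)$ to a path holonomy bisubmersion of $\CF_M$, which is exactly the adaptation required. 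Setting $\CS := \{U_x : x \in P\}$ completes the construction.
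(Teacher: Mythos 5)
Your construction of $\CS$ and your treatment of (i) and of the bisubmersion identity follow essentially the paper's route: projectable local generators obtained from Lemma \ref{lem:projgen2}(i), the path holonomy bisubmersions they define, and the fact that $\pi\circ\bt$ is the flow of the projected fields composed with $\pi$. Your direct verification that $(\pi\circ\bs)^{-1}\CF_M=\ker_c(d(\pi\circ\bs))+\ker_c(d(\pi\circ\bt))$, using fibre-preservation of the flows to get $\ker(d(\pi\circ\bt))\subset \ker(d(\pi\circ\bs))+\ker(d\bt)$ (you should also state the symmetric decomposition needed for the $\bt$-side), is correct and parallels the computation the paper carries out in the proof of Lemma \ref{lem:all.sconbisub}; the paper itself obtains the bisubmersion property more quickly by writing $\pi\circ\bs=\bs_W\circ(Id_{\RR^k},\pi)$ and $\pi\circ\bt=\bt_W\circ(Id_{\RR^k},\pi)$ for the bisubmersion $W\subset\RR^k\times M$ of $\CF_M$ built from the projected generators, and applying Lemma \ref{lem:sub.bisub} (i.e.\ \cite[Lemma 2.3]{AndrSk}). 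Your explicit care with source-connectedness (choosing $V$ so that $V\cap\pi^{-1}(q)$ is connected) addresses a point the paper glosses over.

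The genuine gap is the final adaptedness step. You apply Corollary \ref{carry}(i) against ``the one-parameter path holonomy bisubmersion of $\CF_M$ generated by the single vector field $\sum_i v_iY_i$'', but for a general $\CF_M$ a single vector field does not generate $\CF_M$ near $\pi(p)$, so the triple $W'\subset\RR\times M$ with $\bs(t,q)=q$ and $\bt(t,q)=\exp(t\sum_i v_iY_i)(q)$ is in general not a bisubmersion for $\CF_M$ at all (let alone a path holonomy one); Corollary \ref{carry}(i) requires both objects to be bisubmersions for the same foliation, so it cannot be invoked here (your argument would only work when the fibres of $\CF_M$ are one-dimensional). The repair is exactly the paper's observation: since $\CF_M=\pi_*\CF$ by Lemma \ref{lem:projgen2}(ii), the projections $Y_1,\dots,Y_k$ of your projectable generators locally generate $\CF_M$, hence define a path holonomy bisubmersion $W\subset\RR^k\times M$ for $\CF_M$, and $(Id_{\RR^k},\pi)\colon (U_x,\pi\circ\bt,\pi\circ\bs)\fto W$ is a morphism of bisubmersions; this single morphism yields adaptedness at every point of $U_x$ at once (and, via Lemma \ref{lem:sub.bisub}, reproves the bisubmersion property). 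Alternatively, you could salvage the pointwise argument by extending $\sum_i v_iY_i$ to a full local generating family of $\CF_M$ and arguing as in Lemma \ref{lem:smallelements}, but that is more work than the morphism $(Id_{\RR^k},\pi)$ already provides.
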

				
				\begin{proof}
					{We first show that $\cF$ is locally finitely generated by $\pi$-projectable vector-fields in $\widehat{\cF}$.} For any $p\in P$ there are a neighborhood $U\subset P$ and finitely many generators ${Y_1,\dots,Y_k}\in \CX(U)$  of $\iota_U^{-1}(\cF)$,  for $\iota_U$ the inclusion. Take any precompact open set  $V\subset U$ containing $p$ and $\rho_V\in \CI_c(U)$ such that $\rho_V=1$ on $V$. For each $Y_i$, since $\rho_V Y_i\in \cF$, condition (i) of Lemma \ref{lem:projgen2}  assures that there is a finite number of projectable elements $X^j_i\in {\widehat{\cF}}^{proj}$
					{and $f^j_i\in \CI_c(P)$}
					such that:
					\[\rho_V Y_i=\sum_j {f^j_i} X^j_i.\]
					{Therefore} every element of $\iota^{-1}_V(\cF)$ is a $\CI_c(V)$-linear combination of the $X^j_i$, which are $\pi$-projectable {and lie in $\widehat{\cF}$}.

					{Now, for every point of $P$, take   be a minimal set of $\pi$-projectable elements
						$\{X_1,\dots,X_n\}$ in $\widehat{\cF}$ that are local generators of $\cF$ nearby that point.} 
					Let $(U,\bt,\bs)$ be the corresponding  {path-holonomy}  bisubmersion, where $U\subset \RR^n\times P$. Then 
					$(U,\pi\circ\bt,\pi\circ\bs)$ is a bisubmersion for $\cF_M$,
					with source map $(\lambda,p)\mapsto \pi(p)$ and target map $(\lambda,p)\mapsto 
					\exp_{\pi(p)}(\sum \lambda_i \pi_*X_i).$
					A way to see this is to apply \cite[Lemma 2.3]{AndrSk} to the {path-holonomy}  bisubmersion $W   \subset {\RR^n\times M}$ for $\cF_M$ corresponding to the generators $\{\pi_*X_1,\dots,\pi_*X_n\}$ and to the submersion\footnote{{More precisely, to its  restriction to  $(Id_{\RR^n},\pi)^{-1}(W)\cap U$.}} $(Id_{\RR^n},\pi)\colon \RR^n\times P\to \RR^n\times M$.
					We observe that $(Id_{\RR^n},\pi)$ is a morphism of bisubmersions from $(U,\pi\circ\bt,\pi\circ\bs)$  to $W$. {This shows that the former bisubmersion is adapted 
						{(see Def. \ref{def:atlas.bi})} 
						to the latter}.
				\end{proof}

				Let $\CS$ the family given in Lemma \ref{prop:carry.diffM}. {Note that the atlas generated by the family $\pi\CS:=\{ (U,\pi\circ\bt,\pi\circ\bs) \st U\in \CS\}$ is a source connected atlas for $\CF_M$. We finally prove Lemma \ref{lem:all.sconbisub}.
					
					\begin{proof}[Proof of Lemma \ref{lem:all.sconbisub}:] 
						Let $\CS$ the family of path holonomy bisubmersions for $\CF$ given by Lemma \ref{prop:carry.diffM}, and $\CU$ the source connected atlas for $\CF$ generated by $\CS$. We will start showing that $\CU$ satisfies the condition in Lemma \ref{lem:all.sconbisub}, then we generalize to any source conneted atlas. 
						
						Take $U\in \CU$, without loss of generality consider $U= U_1\circ \dots \circ U_k$ for $U_i\in \CS$. Denote $\pi U_i :=(U_i, \pi\circ \bt,\pi\circ\bs)$, which are bisubmersions for $\CF_M$. Note that the inclusion map $\omega\colon	U_1\circ \dots \circ U_k\fto \pi U_1\circ \dots \circ \pi U_k$ makes the following diagram commute:
						\[\begin{tikzcd}
							U \ar[d, shift right=.2em, swap, "\bt"]\ar[d,shift left=.2em,"\bs"] \ar[r,"\omega"] & \pi U_1\circ \dots \circ \pi U_k \ar[d, shift right=.2em, swap,"\bt"]\ar[d,shift left=.2em,"\bs"] \\
							P \ar[r,"\pi"] & M
						\end{tikzcd}\]	
						Because of the commutative diagram and since $\pi U_1\circ \dots \circ \pi U_k$ is a bisubmersion for $\CF_M$ one gets that
						
						\begin{equation}\label{eq:igual1}
							A:= (\pi\circ \bt)^{-1}\CF_M=(\pi\circ \bs)^{-1}\CF_M.
						\end{equation}
						
						Note also that $\pi^{-1}\CF_M=\CF+\ker_c(d\pi)$\footnote{We do not distinguish between the map of sections $d\pi\colon \CX(P)\fto \Gamma(\pi^* TM)$ and the fiber wise map $d\pi$. Moreover $\ker_c(d\pi)=\Gamma_c(\ker d\pi)$}, therefore by equation (\ref{eq:igual1}) we get the following:
						
						\[A=\bt^{-1}(\CF+\ker_c(d\pi))=\bs^{-1}(\CF+\ker_c(d\pi)),\]	
						now using that $\bs$ and $\bt$ are submersions we get the following:
						\[A=\bt^{-1}(\CF)+\bt^{-1}(\ker_c(d\pi))=\bs^{-1}(\CF)+\bs^{-1}(\ker_c(d\pi)),\]
						using that $U$ is a bisubmersion for $\CF$ one gets
						\[A=\ker_c(d\bs)+\ker_c(d\bt)+\bt^{-1}(\ker_c(d\pi))=\ker_c(d\bs)+\ker_c(d\bt)+\bs^{-1}(\ker_c(d\pi)).\]	
						Then we get:
						\[A= \bt^{-1}(\ker_c(d\pi))+\bs^{-1}(\ker_c(d\pi))= \ker_c(d(\pi\circ\bs))+\ker_c(d(\pi\circ\bt)).\]
						
						Hence $(U,\pi\circ\bt,\pi\circ\bs)$ is a bisubmersion for $\CF_M$. 
						
						The family $\pi\CU$ generates a source connected atlas because $\CU$ is a source connected atlas. Using the map $\omega$ one sees clearly that $\pi\CU$ is adapted to the source connected atlas for $\CF_M$ generated by $\pi\CS$.
						
						Now let $\CU'$ be any source connected atlas for $\CF$. Then $\CU'$ is adapted to $\CU$ (See Prop. \ref{prop:sconn.grpd} and Lemma \ref{lem:adapt}). This means that for any element $x\in U'\in \CU'$ there exists a neighborhood $U'_x\subset U'$ and a bisubmersion $U\in \CU$ with a morphism of bisubmersions $\omega_x$ making the following diagram commute:
						\[\begin{tikzcd}
							U'_x \ar[d, shift right=.2em, swap, "\bt"]\ar[d,shift left=.2em,"\bs"] \ar[r,"\omega_x"] & U \ar[d, shift right=.2em, swap,"\pi\circ\bt"]\ar[d,shift left=.2em,"\pi\circ\bs"] \\
							P \ar[r,"\pi"] & M
						\end{tikzcd}\]
						
						The triple $(U'_x,\bt\circ\pi,\bs\circ\pi)$ is a bisubmersion for $\CF_M$, since the argument for $U$ can be applied identically to the diagram above. Moreover, since being a bisubmersion is a local property (see Prop. \ref{prop:fol.loc.pro}) we have that $(U',\bt\circ\pi,\bs\circ\pi)$ is a bisubmersion for $\CF_M$.
						
						The family $\pi\CU'$ generates a source connected atlas because of the same reason $\pi\CU$ does.
					\end{proof}
					
					A consequence of Lemma \ref{lem:all.sconbisub} is as follows:
					
					\begin{cor} If $(U,\bt,\bs)$ is any path holonomy bisubmersion of $\CF$, then $\pi U:=(U,\pi\circ\bt,\pi\circ\bs)$ is a bisubmersion for $\CF_M$. 
						
						If $\CG\soutar P$ is a source connected groupoid whose foliation is $\CF$, then for any Hausdorff open set $U\subset \CG$ we get that $\pi U$ is a bisubmersion for $\CF_M$.
					\end{cor}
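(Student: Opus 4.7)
Both parts reduce to Lemma \ref{lem:all.sconbisub}, which says that for every source connected atlas $\CU$ of $\CF$ and every $U\in\CU$, the triple $\pi U=(U,\pi\circ\bt,\pi\circ\bs)$ is a bisubmersion for $\CF_M$.

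For the first statement, a path holonomy bisubmersion $(U,\bt,\bs)$ is (by construction) an element of a path holonomy atlas, which is a source connected atlas in the sense of Definition \ref{def:scon.bisub}. Hence Lemma \ref{lem:all.sconbisub} immediately yields that $\pi U$ is a bisubmersion for $\CF_M$.

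For the second statement, the strategy is to reduce to the first statement locally. Fix a Hausdorff open $U\subset\CG$. Being a bisubmersion is a local property (Proposition \ref{prop:fol.loc.pro} / Lemma \ref{lem:bisub.loc}), so it suffices to show that for every $u\in U$ there exists an open neighbourhood $V_u\subset U$ of $u$ on which $(V_u,\pi\bt,\pi\bs)$ is a bisubmersion for $\CF_M$. Let $\phi$ be a local diffeomorphism carried by $U$ at $u$ (existence by Lemma \ref{lem:e.bisect}), and modify $U$ near $u$ to a bisubmersion $(U,\phi^{-1}\bt,\bs)$ that carries the identity at $u$ (this is a bisubmersion for $\CF$ since $\phi\in\mathrm{Aut}(\CF)$ by Lemma \ref{lem:bisect.diff}). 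Invoking Lemma \ref{lem:sub.to.U} with a path holonomy bisubmersion $U^{\mathrm{ph}}$ of $\CF$ containing $(0,\bs(u))$, we obtain an open $V_u\subset U$ and a \emph{submersion} $f\colon V_u\fto U^{\mathrm{ph}}$ which is a morphism of bisubmersions for $\CF$, with $\bt_{U^{\mathrm{ph}}}\circ f=\phi^{-1}\bt_{V_u}$ and $\bs_{U^{\mathrm{ph}}}\circ f=\bs_{V_u}$.

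By the first statement of the corollary, $(U^{\mathrm{ph}},\pi\bt_{U^{\mathrm{ph}}},\pi\bs_{U^{\mathrm{ph}}})$ is a bisubmersion for $\CF_M$. Composing with the submersion $f$ and using Lemma \ref{lem:sub.bisub}, $(V_u,\pi\bt_{U^{\mathrm{ph}}}\circ f,\pi\bs_{U^{\mathrm{ph}}}\circ f)$ is a bisubmersion for $\CF_M$. The former equals $\pi\bs_{V_u}$, while the latter equals $\pi\circ\phi^{-1}\circ \bt_{V_u}$, so it only remains to check that replacing $\pi\bt_{V_u}$ by $\pi\circ\phi^{-1}\circ\bt_{V_u}$ does not affect the bisubmersion property. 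This is because $\phi$ preserves $\CF^{\mathrm{big}}=\pi^{-1}\CF_M$ (the bracket assumption \eqref{eq:bracketpres} ensures that $\mathrm{Aut}(\CF)\subset\mathrm{Aut}(\CF^{\mathrm{big}})$ on the relevant open), so $(\pi\circ\phi^{-1}\circ\bt_{V_u})^{-1}\CF_M=(\pi\bt_{V_u})^{-1}\CF_M$ and the kernels $\ker d(\pi\circ\phi^{-1}\circ\bt)$ and $\ker d(\pi\bt)$ agree.

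The one delicate point is the final compatibility step: verifying that the modification of $\bt$ by the diffeomorphism $\phi$ does not spoil the bisubmersion-for-$\CF_M$ identity. This is where the bracket condition \eqref{eq:bracketpres} must intervene, and I would handle it by arguing that $\phi$, being generated by the flow of vector fields in $\CF$, preserves the subsheaf $\CF^{\mathrm{big}}$, hence passes through $\pi$ to a local symmetry of $\CF_M$. Once this is in place, the conclusion follows by Lemma \ref{lem:bisub.loc} applied to the cover $\{V_u\}_{u\in U}$ of $U$.
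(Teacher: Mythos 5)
Your first statement is handled correctly and exactly as the paper intends: a path holonomy bisubmersion sits inside a path holonomy atlas, which is a source connected atlas, and (the proof of) Lemma \ref{lem:all.sconbisub} then gives that $\pi U$ is a bisubmersion for $\CF_M$. The second statement, however, has a genuine gap at precisely the step you flag as delicate, and the justification you give there is false. The inclusion $\mathrm{Aut}(\CF)\subset\mathrm{Aut}(\CF^{\mathrm{big}})$ does \emph{not} follow from \eqref{eq:bracketpres}: take $P=\RR^2$, $\CF=0$, $\pi$ the projection to the first coordinate (so \eqref{eq:bracketpres} holds trivially and $\CF^{\mathrm{big}}=\Gamma_c(\ker d\pi)$), and $\phi$ any diffeomorphism not mapping $\pi$-fibres to $\pi$-fibres; then $\phi\in\mathrm{Aut}(\CF)$ but $\phi\notin\mathrm{Aut}(\CF^{\mathrm{big}})$. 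For the same reason your parenthetical claim $\ker d(\pi\circ\phi^{-1}\circ\bt)=\ker d(\pi\circ\bt)$ is wrong in general: it would force $d\phi(\ker d\pi)=\ker d\pi$, i.e.\ that $\phi$ permutes the $\pi$-fibres, which even an honest time-one flow of an element of $\CF$ need not do (flow $f(x,y)\de_x$ on $\RR^2$ with the same $\pi$). This is not a cosmetic issue: your argument never uses that $\CG$ is source connected, and without that hypothesis the statement itself is false --- this is exactly the remark following this corollary in the text, where $(P,Id,\phi)$ is a bisubmersion for $\CF=0$ (an open subset of the non-source-connected \'etale groupoid generated by $\phi$) whose composition with $\pi$ is not a bisubmersion for $\CF_M$.

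What your ``delicate point'' should be replaced by is, in effect, the second half of the proof of Lemma \ref{lem:all.sconbisub}. Source connectedness enters to guarantee (via Prop. \ref{prop:sconn.grpd} and Cor. \ref{rem:hol.sconn}) that any Hausdorff open $U\subset\CG$ is adapted to a source connected atlas, in particular to the atlas built in Lemma \ref{prop:carry.diffM} from $\pi$-projectable generators of $\CF$; the local morphisms of bisubmersions $\omega_x$ into that atlas give the key identity $(\pi\circ\bt)^{-1}\CF_M=(\pi\circ\bs)^{-1}\CF_M$ near each point, and then the module computation in that proof (using $\pi^{-1}\CF_M=\CF+\ker_c(d\pi)$ and the bisubmersion identity for $(U,\bt,\bs)$) yields that this common pullback equals $\ker_c(d(\pi\circ\bt))+\ker_c(d(\pi\circ\bs))$; locality (Lemma \ref{lem:bisub.loc}) finishes. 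If you insist on your route, you must (i) choose $\phi$ not arbitrarily but as a composition of time-one flows of elements of $\widehat{\CF}$, which is where source connectedness of $\CG$ is needed, (ii) prove by the flow argument of Prop. \ref{prop:exp.in.aut} together with \eqref{eq:bracketpres} that such $\phi$ preserve $\CF^{\mathrm{big}}$, and (iii) replace the kernel claim by the computation just described: once $\phi$ preserves $\CF^{\mathrm{big}}$ you do get $(\pi\circ\phi^{-1}\circ\bt)^{-1}\CF_M=(\pi\circ\bt)^{-1}\CF_M$, but the equality of the two kernels is neither true nor needed.
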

					
					\begin{rem}
						Not every bisubmersion $(U,\bt,\bs)$ for $\cF$ satisfies that $(U,\pi\circ\bt,\pi\circ\bs)$ is a bisubmersion for $\cF_M$. For instance take $P:=\RR^2$, $\cF=0$ and $M=\RR$ with map $\pi\colon P\fto M$ given by the first projection. Then $\cF_M=0$. Now take any diffeomorphism $\phi\colon P\fto P$  that does not preserve the foliation $\pi^{-1}\cF_M$ 
						by the fibers of $\pi$. Then $(P,Id, \phi)$ is a bisubmersion for $\cF$ but $(P,\pi,\pi\circ \phi)$ is not a bisubmersion for $\cF_M$.
						
						This is not a counterexample to Lemma \ref{lem:all.sconbisub}. One can show that $(P,Id, \phi)$ is not adapted to a source connected atlas. Indeed, if $U$ is a bisubmersion for $\CF=0$ adapted to a source connected atlas, the only diffeomorphism carried by $U$ is the identity diffeomorphism. The bisubmersion $(P,Id, \phi)$ carries $\phi$ and $\phi$ is any diffeomorphism of $P$.
					\end{rem}
					
					\section{Quotients of Lie groupoids}\label{sec:quo.grpd}
					
					In this subsection we will introduce to the reader the structure behind quotients of Lie groupoids. As a reference we use the book \cite{MK2} by Mackenzie.
					
					Remember that Lie groupoids can be seen as group-like and manifold-like structures. Let us recall quotients for groups and quotients for manifolds.
					
					In the category of groups, a quotient map is defined to be a surjective homomorphism. Let $G$, $H$ two groups and $\Xi\colon G\fto H$ a surjective homomorphism. Then the subgroup $K:=\ker(\Xi)\subset G$ is normal, it acts canonically on $G$ and its orbits are the fibers of $\Xi$.
					
					In the category of manifolds, a quotient map is defined as a surjective submersion. Let $P,M$ be manifolds and $\pi\colon P\fto M$ a surjective submersion. Then the equivalence relation $R:= P\times_M P$ on $P$ is an embedded, wide Lie subgroupoid of the pair groupoid $P\times P$.
					
					\begin{defi} An equivalence relation $R$ on a manifold $P$ is called \textbf{smooth} if it is an embedded, wide Lie subgroupoid of the pair groupoid $P\times P$.	
					\end{defi}
					
					It is well known, by the Godement criterion, that there is a bijection between quotients maps for manifolds and smooth equivalence relations.
					
					In the category of Lie groupoids, the quotient maps are defined as fibrations.
					
					\begin{defi} Let $\CG\soutar P$ and $\CH\soutar M$ be Lie groupoids. A groupoid morphism of Lie groupoids $\Xi\colon \CG\fto \CH$ covering the smooth map $\pi\colon P\fto M$ is called a \textbf{fibration} if and only if $\Xi$ and $\pi$ are surjective submersions and the map $\CG\fto \CH {}_\bs\!\times_\pi P ;\phi\mapsto (\Xi(\phi), \bs(\phi))$ is also a surjective submersion.
						
						For open topological groupoids in \cite{RM2016} fibrations are defined replacing in the above definition surjective submersions by surjective open maps.
					\end{defi}
					
					The condition for the map $\CG\fto \CH {}_\bs\!\times_\pi P ;\phi\mapsto (\Xi(\phi), \bs(\phi))$ to be a surjective submersion assures that the composition on $\CH$ is entirely given by the composition in $\CG$, more clearly that for two composable elements in $\CH$ there exists composable preimages in $\CG$.
					
					In \cite{MK2} there are other two notions to describe a quotient map of Lie groupoids: smooth congruences and normal subgroupoid systems. They correspond to smooth equivalences on manifolds and to normal subgroups respectively. 
					
					\begin{defi}\label{def:smt.cong} Let $\CG\soutar P$ be a Lie groupoid. A \textbf{smooth congruence} on $\CG$ consists of two smooth equivalences $\CR$ on $\CG$ and $R$ on $P$, such that:
						\begin{itemize}
							\item $\CR\soutar R$ is a Lie subgroupoid of the Cartesian product $\CG\times \CG \soutar P\times P$.
							
							\item The map $\CR\fto \CG {}_\bs \!\times_{\mathrm{Pr}_1} R;(g_2,g_1)\mapsto (g_2, \bs(g_2),\bs(g_1))$ is a surjective submersion.\footnote{In \cite{MK2} the author described this condition as certain square diagram being "versal".}
						\end{itemize}	
					\end{defi}
					
					On the other hand, normal subgroupoid systems can be thought of as the group counterpart to represent fibrations. Note that, if $\CK$ is a closed embeded wide Lie subgroupoid of $\CG$ then, by the Godement criterion, the set 
					$$ \CK\backslash \CG=\{ \CK\circ g \st g\in \CG\}$$
					has an unique manifold structure making the quotient map $q\colon \CG \fto \CK\backslash \CG; g\mapsto \CK g$ a surjective submersion. Note also that, the source $\bs\colon \CG\fto P$ quotients to a well-defined surjective submersion that we also denote as $\bs\colon(\CK\backslash \CG)\fto P$.
					
					\begin{defi}
						A normal subgroupoid system in $\CG\soutar P$ is a triple $(\CK,R,\theta)$ where $\CK$ is a closed, embeded, wide Lie subgroupoid of $\CG$; $R$ is a smooth equivalence on $P$; and $\theta$ is an action of $R$ on the map $\bs\colon(\CK\backslash \CG)\fto P$ such that For all $(p,q)\in R$ the following conditions hold:
						\begin{enumerate}
							\item Let $g\in \CG$ with $\bs(g)=q$ and $g_1\in \CG$ such that $\theta(p,q)(\CK g)=\CK g_1$ then $(\bt(g_1),\bt(g))\in R$.
							\item $\theta(p,q)(\CK e(q))=\CK e(p)$.
							\item Let $h$ and $g$ composable elements in $\CG$ such that $\bs(g)=q$. Consider $g_1$ and $h_1$ such that $\theta(p,q)(\CK g)=\CK g_1$ and $\theta(\bt(g_1),\bt(g))(\CK h)=\CK h_1$, then:
							$$\theta(p,q)(\CK hg)=\CK h_1g_1.$$
						\end{enumerate}
					\end{defi}

					The proof of the following theorem can be found in \cite{MK2}.
					
					\begin{thm}\label{thm:norm.sub.sys}\label{thm:smt.cong}
						\begin{enumerate}
							\item If $\Xi$ is a fibration covering $\pi$ then the pais $(\CR,R)$ is a congruence, where $\CR:=\CG\times_\Xi \CG$ and $R:= P\times_\pi P$. Conversely given a congruence the quotient map is a fibration.
							\item If $(\CK,R,\theta)$ is a normal sub-groupoid system then $R$ together with the following relation in $\CG$ given by:
							$$\CR=\{(h,g)\in \CG^2 \st (\bs(h),\bs(g))\in R \y \theta(\bs(h),\bs(g))\CK g=\CK h\},$$
							generate a smooth congruence on $\CG$.
							
							\item Let $(\CR,R)$ be a smooth congruence on $\CG$. Then the class of the identity $\CK$ is a closed embedded wide Lie subgroupoid of $\CG$.
							
							Moreover, for every $(p,q)\in R$ and $g\in \CG$ with source $q$ we define
							$$\theta(p,q)(\CK g)=\CK h$$
							where $h$ is any element related to $g$ and with source $p$. Then $\theta$ is a well defined action on $\CK\backslash \CG$.
							
							Finally $(\CK,R,\theta)$ is a normal subgroupoid system.
						\end{enumerate}
					\end{thm}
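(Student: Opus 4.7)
The plan is to prove the three statements in turn, using Godement's criterion -- the bijection between surjective submersions and smooth equivalence relations -- as the central tool.

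For statement (1), in the forward direction, given a fibration $\Xi\colon\CG\fto\CH$ covering $\pi\colon P\fto M$, set $\CR:=\CG\times_\Xi\CG$ and $R:=P\times_\pi P$; these are smooth equivalence relations by Godement. That $\CR\soutar R$ is a wide Lie subgroupoid of $\CG\times\CG\soutar P\times P$ is immediate from $\Xi$ being a groupoid morphism, hence preserving identities and composition. The versality condition of Definition \ref{def:smt.cong} translates exactly to the third clause in the definition of a fibration: given $g\in\CG$ and $p\in P$ with $(p,\bs(g))\in R$, producing $h\in\CG$ with $\bs(h)=p$ and $(h,g)\in\CR$ is equivalent to surjectivity of $\CG\fto\CH{}_\bs\!\times_\pi P$. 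For the converse, applying Godement to the smooth congruence yields $M:=P/R$ and $\CH:=\CG/\CR$ as manifolds with submersive quotient maps; the subgroupoid property lets source, target, identity and inverse descend smoothly, while the versality condition is exactly what ensures that every pair of composable arrows in $\CH$ lifts to a composable pair in $\CG$, so that multiplication descends as a well-defined smooth map.

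For statement (2), given $(\CK,R,\theta)$, I would first verify that $\CR$ as defined is an embedded submanifold of $\CG\times\CG$: it is the preimage of a diagonal under a smooth map built from the quotient $\CG\fto\CK\backslash\CG$ and the action $\theta$, intersected with $(\bs,\bs)^{-1}(R)$. Reflexivity of $\CR$ follows from condition (2) on $\theta$; symmetry from $\theta$ being an $R$-action, so $\theta(q,p)=\theta(p,q)^{-1}$; transitivity from the action property $\theta(r,q)\circ\theta(q,p)=\theta(r,p)$. The subgroupoid property of $\CR\soutar R$ is precisely condition (3), the compatibility of $\theta$ with composition. For the versality, given $g\in\CG$ and $(p,\bs(g))\in R$, an element $h\in\CG$ with $\CK h=\theta(p,\bs(g))(\CK g)$ and $\bs(h)=p$ exists by surjectivity of $\bs\colon\CK\backslash\CG\fto P$, and smoothness of such a choice locally is ensured by $\theta$ being a smooth action.

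For statement (3), given a smooth congruence $(\CR,R)$, define $\CK$ as the union of the $\CR$-equivalence classes of identities, that is, $\CK:=\{k\in\CG:(k,e_{\bs(k)})\in\CR\}$. Closedness and embeddedness of $\CK$ follow from those of $\CR$; wideness from reflexivity; and the subgroupoid property from $\CR$ being a subgroupoid of $\CG\times\CG$. For the action $\theta$, given $(p,q)\in R$ and $g\in\CG$ with $\bs(g)=q$, the versality condition produces $h\in\CG$ with $\bs(h)=p$ and $(h,g)\in\CR$; any two such choices differ by an element of $\CK$ via the subgroupoid property, so $\theta(p,q)(\CK g):=\CK h$ is well-defined. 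The three defining conditions of a normal subgroupoid system then reduce to straightforward consequences of $\CR$ being an equivalence and a subgroupoid. The main obstacle throughout will be the careful handling of the versality condition, which plays three different but parallel roles: in (1) it is exactly what makes multiplication descend; in (2) it must be derived from $\theta$ being everywhere defined; and in (3) it supplies the element needed to define the action. The delicate interplay between versality, the groupoid structure and the two equivalence relations is what makes normal subgroupoid systems behave as the groupoid analogue of normal subgroups.
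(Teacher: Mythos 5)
First, a point of comparison: the paper does not prove this theorem at all — it is quoted from Mackenzie's book \cite{MK2} ("The proof of the following theorem can be found in \cite{MK2}"), so there is no in-paper argument to measure you against. Your outline reconstructs the standard correspondence in essentially the way it is done there: kernel pairs of a fibration give the congruence, Godement plus the versality condition produce the quotient groupoid, and the normal subgroupoid system is extracted from the class of the identities. The overall route is the right one.

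That said, a few steps are asserted more quickly than they can actually be carried out, and two of them are exactly where the smooth-category content lives. First, versality is a \emph{surjective submersion} condition, while in part (1) you only match surjectivity of $\CR\fto\CG {}_\bs\!\times_{\mathrm{Pr}_1}R$ against surjectivity of $\CG\fto\CH {}_\bs\!\times_\pi P$; the submersion halves, in both directions, still need to be checked through the identifications $\CR\cong\CG\times_\Xi\CG$ and $R\cong P\times_\pi P$, and in the converse direction they (via local sections) are what make the descended multiplication smooth, not merely well defined. Second, in part (3) the claim that closure of $\CK$ under composition "follows from $\CR$ being a subgroupoid of $\CG\times\CG$" fails as stated: for composable $k_1,k_2\in\CK$ the pairs $(k_1,e_{\bs(k_1)})$ and $(k_2,e_{\bs(k_2)})$ are \emph{not} composable in $\CG\times\CG$ unless $\bs(k_2)=\bt(k_2)$. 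One must first use that $\CR\soutar R$ contains its units, i.e. $(e_p,e_q)\in\CR$ for every $(p,q)\in R$, to conclude (by transitivity) that each $k\in\CK$ is $\CR$-related to both $e_{\bs(k)}$ and $e_{\bt(k)}$, and only then compose $(k_1,e_{\bt(k_2)})$ with $(k_2,e_{\bt(k_2)})$ to get $(k_1\circ k_2,e_{\bt(k_2)})\in\CR$. Third, the smooth-structure claims — embeddedness and closedness of $\CK$ in (3), and the submanifold property of $\CR$ in (2) — do not follow from bare "preimage" reasoning, since preimages of embedded submanifolds require transversality; this is precisely where the versal surjective submersion and the submersion $\CG\fto\CK\backslash\CG$ have to be brought in (also, reflexivity of $\CR$ in (2) comes from unitality of the action $\theta$, not from its condition (2)). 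None of these is fatal — they are the details supplied in \cite{MK2} — but a complete write-up must include them.
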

					
					The theorem above says that smooth congruences, fibrations and normal subgroupoids systems are equivalent descriptions for the quotients of Lie groupoids.
					
					On an equivalent way, replacing submersions with open surjective maps, we get the same descriptions for open topological groupoids.
					
					Now we want to relate this notion of fibrations with the map $\Xi$ of theorem $\ref{thm:sur.hol}$. We get that in a special case the map $\Xi$ is indeed a fibration.
					
					\begin{prop}
						As in Prop. \ref{prop:submfol}, let $\pi\colon P\fto M$ be a surjective submersion with connected fibers. Let $\CF$ be a singular foliation on $P$, such that $\Gamma_c(\ker d\pi)\subset \CF$. Denote $\CF_M$ the unique singular foliation in $M$ such that $\pi^{-1}\CF_M=\CF$.
						
						The map $\Xi\colon \CH(\CF)\fto \CH(\CF_M)$ of Thm. \ref{thm:sur.hol} is a fibration.
					\end{prop}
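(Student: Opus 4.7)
The plan is to reduce the statement entirely to Proposition~\ref{prop:normalq}, which identifies $\Xi$ with a concrete projection, and then verify the two conditions in the definition of a fibration directly from Lemma~\ref{lem:openmapABC} on openness of projections along open maps.

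First, I would invoke Proposition~\ref{prop:normalq} to replace $\Xi\colon\CH(\CF)\to\CH(\CF_M)$ by the second projection
\[
\mathrm{pr}_2\colon P\times_M\CH(\CF_M)\times_M P\longrightarrow\CH(\CF_M),\qquad (p,g,q)\mapsto g,
\]
via the canonical isomorphism $\varphi\colon\CH(\CF)\xrightarrow{\sim}\pi^{-1}(\CH(\CF_M))$ of Theorem~\ref{thm:pullbackgroid}. Since $\pi\colon P\to M$ is a surjective submersion, it is in particular a surjective open map, so Lemma~\ref{lem:openmapABC} (applied twice) shows that $\mathrm{pr}_2$ is surjective and open; moreover the base map $\pi$ itself is tautologically a surjective open map. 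This handles the first condition in the definition of a fibration.

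Second, I would check the ``versality'' condition: that the map
\[
\Psi\colon\CH(\CF)\longrightarrow\CH(\CF_M){}_{\bs}\!\times_{\pi}P,\qquad \phi\mapsto(\Xi(\phi),\bs_{\CH(\CF)}(\phi)),
\]
is a surjective open map. Translating through $\varphi$, whose source map is the third projection, $\Psi$ becomes
\[
\widetilde{\Psi}\colon P\times_M\CH(\CF_M)\times_M P\longrightarrow\CH(\CF_M){}_{\bs}\!\times_{\pi}P,\qquad (p,g,q)\mapsto(g,q),
\]
which is just a projection after rewriting the domain as $P{}_\pi\!\times_{\bt\circ\mathrm{pr}_1}\bigl(\CH(\CF_M){}_{\bs}\!\times_\pi P\bigr)$. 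Openness of $\widetilde{\Psi}$ is then another application of Lemma~\ref{lem:openmapABC}, since $\pi$ is open. Surjectivity follows from the surjectivity of $\pi$: given $(g,q)$ with $\bs(g)=\pi(q)$, choose any $p\in\pi^{-1}(\bt(g))$ to obtain a preimage $(p,g,q)$.

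There is essentially no obstacle here beyond bookkeeping; the substance of the argument has already been done in Section~\ref{sec:pullb.grpd} (giving the isomorphism $\varphi$) and in Proposition~\ref{prop:normalq} (matching $\Xi$ with $\mathrm{pr}_2$). If one additionally assumes that $\CF_M$ is projective, then $\CH(\CF_M)$ is a Lie groupoid by Proposition~\ref{prop:proy.fol}, $\CH(\CF)\cong\pi^{-1}\CH(\CF_M)$ is Lie by Proposition~\ref{prop:pullLie}, and the same argument with ``open map'' upgraded to ``submersion'' (again via Lemma~\ref{lem:openmapABC}) shows that $\Xi$ is a fibration in the smooth sense.
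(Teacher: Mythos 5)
Your proposal is correct and follows essentially the same route as the paper: identify $\Xi$ with the second projection $\mathrm{pr}_2\colon \pi^{-1}(\CH(\CF_M))\to\CH(\CF_M)$ via the isomorphism $\varphi$ of Theorem \ref{thm:pullbackgroid} together with Prop. \ref{prop:normalq}, and observe that this projection is a fibration. The only difference is that you spell out, via Lemma \ref{lem:openmapABC}, the openness and surjectivity checks that the paper dismisses as ``clearly'', which is a harmless (and welcome) elaboration.
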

					
					\begin{proof} This is a direct consequence of Prop. \ref{prop:submfol}.  The map $\varphi\colon \CH(\CF)\fto \pi^{-1}(\CH(\CF_M))$ is an homeomorphism of topological groupoids and the projection $\text{pr}_2\colon \pi^{-1}(\CH(\CF_M))\fto \CH(\CF_M)$ is clearly a fibration.
					\end{proof}
					
					It is important no notice that the map $\Xi$ of theorem \ref{thm:sur.hol} is not allways a fibration.
					
					\begin{ex} Let $P=\RR^2-(\{0\}\times \RR^+)$, $M=\RR$ and $\pi\colon P\fto M$ the first projection. Take $\CF$ the foliation given by the horizontal lines, then $\CF_M$ is the full foliation on $M$.
						
						In this case, the map $\CH\fto \CH' {}_\bs\!\times_\pi P ;\phi\mapsto (\Xi(\phi), \bs(\phi))$ is not surjective. Take $\theta\in \CH(\CF_M)$ such that $\bs(\theta)=1$ and $\bt(\theta)=-1$. The element $(\theta,(-1,1))\in \CH {}_\bs\!\times_\pi P$, nevertheless there is no $\gamma$ in $\CH(\CF)$ satisfying $\bs(\gamma)=(-1,1)$ and $\Xi(\gamma)=\theta$.
					\end{ex}
					
					We will show a different case when $\Xi$ is a fibration. To do so we will assume that $\pi$ is given as a quotient by a group action.
					
					\subsection*{{Induced groupoid actions}}\label{subsec:groidaction}
					
					Motivated by the case of holonomy groupoids  we will address in \S\ref{sec:general},
					in this section we consider the following abstract setting:
					\begin{enumerate}
						\item a free and proper action of a Lie group $G$ on a manifold $P$, with quotient map $\pi\colon P\fto M:=P/G$,
						\item a surjective open morphism of topological groupoids (or a surjective submersion morphism of Lie groupoids when $\CH$ and $\CH'$ are smooth) $\Xi\colon \CH\fto \CH'$ covering $\pi$
						\item a group action $\vec{\star}$ of  $G$ on $\CH$ by groupoid automorphisms covering the $G$-action on $P$ and preserving each fiber of $\Xi$.
					\end{enumerate}

					\[\begin{tikzcd}
						\CH \ar[d, shift right=.2em, swap,]\ar[d,shift left=.2em,] \ar[r,"\Xi"] & \CH' \ar[d, shift right=.2em, swap]\ar[d,shift left=.2em] \\
						P \ar[r,"\pi"] & M
					\end{tikzcd}\]
					
					\begin{prop}
						Assuming 1. 2. 3. above, the map the map $\aleph\colon\CH\fto \CH' {}_\bs\!\times_\pi P ;\phi\mapsto (\Xi(\phi), \bs(\phi))$ is surjective.
					\end{prop}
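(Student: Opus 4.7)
The plan is to establish surjectivity by a direct pointwise argument that combines the surjectivity of $\Xi$ with the transitivity of the $G$-action on $\pi$-fibers. Fix an arbitrary element $(\theta, p)\in \CH'{}_\bs\!\times_\pi P$, so that $\bs(\theta)=\pi(p)$. The goal is to produce some $\phi\in \CH$ with $\Xi(\phi)=\theta$ and $\bs(\phi)=p$.

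First, since $\Xi$ is surjective, I can pick any $\phi_0\in \CH$ with $\Xi(\phi_0)=\theta$. Because $\Xi$ covers $\pi$ (in particular it intertwines sources with the quotient map), we automatically get $\pi(\bs(\phi_0))=\bs(\Xi(\phi_0))=\bs(\theta)=\pi(p)$. Thus $\bs(\phi_0)$ and $p$ lie in the same $\pi$-fiber, which is precisely the same $G$-orbit in $P$. Since the $G$-action on $P$ is free and transitive on its orbits, there exists a unique $g\in G$ such that $g\cdot \bs(\phi_0)=p$.

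The crucial step is then to use hypothesis 3: the $G$-action $\vec{\star}$ on $\CH$ covers the $G$-action on $P$ and preserves each $\Xi$-fiber. Set
\[
\phi := g\,\vec{\star}\,\phi_0 \in \CH.
\]
Because $\vec{\star}$ covers the action on $P$ and acts by groupoid automorphisms, one has $\bs(\phi)=g\cdot \bs(\phi_0)=p$. Because $\vec{\star}$ preserves $\Xi$-fibers, one has $\Xi(\phi)=\Xi(\phi_0)=\theta$. Hence $\aleph(\phi)=(\theta,p)$, proving surjectivity of $\aleph$.

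No serious obstacle arises here: the statement is essentially a formal consequence of the three hypotheses, with the only mildly delicate point being that the choice of $g$ is guaranteed precisely because the $\pi$-fibers coincide with the $G$-orbits. In particular, no smoothness or openness of the maps is used in this argument; it works purely at the level of underlying sets. (The smoothness/openness hypotheses on $\Xi$ would only be relevant if one wanted to upgrade $\aleph$ itself to a submersion or an open map, which is not being claimed here.)
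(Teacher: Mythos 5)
Your argument is correct and is essentially the paper's own proof: lift $\theta$ via surjectivity of $\Xi$, note that $\bs(\phi_0)$ and $p$ lie in the same $\pi$-fiber (hence the same $G$-orbit), and translate by the unique $g\in G$, using that $\vec{\star}$ covers the $G$-action on $P$ and preserves the $\Xi$-fibers. Your write-up just makes the intermediate steps slightly more explicit than the paper does.
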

					
					\begin{proof}  	
						Let $(\theta,p)\in \CH(\CF_M) {}_\bs\!\times_\pi P$, there exists a $\phi\in \CH(\CF)$ such that $\Xi(\phi)=\theta$. Then $\pi(\bs(\phi))=\pi(p)$ what means that there exists a unique $g\in G$ such that $g\bs(\phi)=p$. Then $\aleph(g\vec{\star}\phi)=(\theta,p)$ i.e. $\aleph$ is surjective.
					\end{proof}
					\begin{rem} If $\aleph$ is open, then $\Xi$ is a (topological) fibration. We believe it is open, but we still need to get a full proof.
					\end{rem}
					
					Denote $\CK:=\ker(\Xi)$, which is a topological subgroupoid of $\CH$ with space of objects $P$.  Note that since the  action of $G$ on $\CH$ preserves each fiber of $\Xi$, we obtain by restriction a group action of $G$ on $\CK$, also by groupoid automorphisms.
					
					For the groupoid $R:=P\times_\pi P$, there is an action $\theta$ on $\bs\colon \CK\backslash \CH\fto P$ given by the following equation:
					$$\theta(p,q)\CK\phi=\CK([p/q]\vec{\star}\phi),$$
					where $[p/q]\in G$ is the unique element such that $p=[p/q]q$.
					
					\begin{theorem}
						Assuming 1. 2. 3. above, if $\CH$ and $\CH'$ are Lie groupoids then $(\CK,R,\theta)$ is a normal subgroupoid system.
						
						We have also that the map $\Xi$ is a (smooth) fibration.
					\end{theorem}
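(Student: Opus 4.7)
My plan is to verify separately the three axioms of a normal subgroupoid system and then to deduce the fibration property by analyzing the map $\aleph$. First I would check the underlying data. The kernel $\CK:=\Xi^{-1}(e(M))$ is the preimage of the unit submanifold under the surjective submersion $\Xi$, hence a closed embedded submanifold of $\CH$; it is wide and closed under composition and inversion because $\Xi$ is a groupoid morphism, so $\CK$ is a closed embedded wide Lie subgroupoid. The relation $R=P{}_\pi\!\times_\pi P$ is an embedded Lie subgroupoid of the pair groupoid since $\pi$ is a surjective submersion -- in fact $R$ is isomorphic to the action groupoid $G\ltimes P$ of the free and proper $G$-action, so it is a smooth equivalence. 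For $\theta$ to be well defined on $\CK\backslash\CH$ I would rely on two facts: first, $\vec{\star}$ preserves $\CK$, since it preserves each $\Xi$-fiber and hence $\Xi^{-1}(e(M))$; second, $\CK$ is normal in $\CH$ in the groupoid sense, since for composable $h\in\CH$ and $k\in\CK$ one has $\Xi(hkh^{-1})=\Xi(h)\Xi(h)^{-1}$, a unit, so $hkh^{-1}\in\CK$. The action identities $\theta(p,q)\theta(q,r)=\theta(p,r)$ and $\theta(p,p)=\mathrm{id}$ then follow from $[p/q][q/r]=[p/r]$ and $[p/p]=e$.

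Next I would verify the three axioms. For axiom (1), if $\theta(p,q)\CK g=\CK g_1$ then $g_1=k\cdot([p/q]\vec{\star} g)$ for some $k\in\CK$, and since $\vec{\star}$ covers the $G$-action on $P$ we have $\bt(g_1)=[p/q]\bt(g)$, whence $(\bt(g_1),\bt(g))\in R$ with $[\bt(g_1)/\bt(g)]=[p/q]$. Axiom (2) is immediate from $[p/q]\vec{\star} e(q)=e([p/q]q)=e(p)$. For axiom (3) I would expand, using that $\vec{\star}$ acts by groupoid automorphisms,
\[
\theta(p,q)\CK(hg)=\CK\bigl(([p/q]\vec{\star} h)([p/q]\vec{\star} g)\bigr),
\]
and then identify this with $\CK h_1 g_1$. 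Writing $g_1=k_1([p/q]\vec{\star} g)$ and $h_1=k_2([p/q]\vec{\star} h)$, the identification requires exactly that $([p/q]\vec{\star} h)\,k_1\,([p/q]\vec{\star} h)^{-1}\in\CK$, which is the normality of $\CK$ already established.

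Finally, to show $\Xi$ is a smooth fibration I must verify that $\aleph\colon\CH\fto \CH'{}_\bs\!\times_\pi P$, $\phi\mapsto(\Xi(\phi),\bs(\phi))$, is a surjective submersion; surjectivity is already established in the text. The main obstacle will be submersiveness. At $\phi\in\CH$ and given $(a,b)\in T(\CH'{}_\bs\!\times_\pi P)$, I would first pick $v_1\in T_\phi\CH$ with $d\Xi(v_1)=a$ (possible since $\Xi$ is a submersion) and observe that $d\bs(v_1)-b$ lies in $\ker d\pi|_{\bs(\phi)}$. The remaining step is to realize this element as $d\bs(v_2)$ for some $v_2\in\ker d\Xi$, so that $v_1-v_2$ lifts $(a,b)$. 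I expect to resolve this by invoking the infinitesimal $G$-action on $\CH$: for $\xi\in\g$ the fundamental vector field $\xi_\CH$ lies in $\ker d\Xi$ (since $\vec{\star}$ preserves $\Xi$-fibers) and projects via $\bs$ to $\xi_P$ on $P$; since the $G$-action on $P$ is free, the values $\{\xi_P(\bs(\phi)):\xi\in\g\}$ exhaust $\ker d\pi|_{\bs(\phi)}$, producing the required $v_2$ and finishing the argument.
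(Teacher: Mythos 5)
Your proof of the first claim follows the same architecture as the paper's (exhibit $(\CK,R,\theta)$ as a normal subgroupoid system), only more explicitly: the paper merely records that $\CK$ is an embedded wide Lie subgroupoid, that $R$ is a smooth equivalence, and that $\theta$ is a well-defined smooth action, whereas you verify the three axioms. For the second claim your route is genuinely different: the paper deduces that $\Xi$ is a fibration abstractly from the correspondence between normal subgroupoid systems, smooth congruences and fibrations (Theorem \ref{thm:norm.sub.sys}), which strictly speaking still requires identifying the quotient of $\CH$ by the induced congruence with $\CH'$ so that the resulting quotient fibration is $\Xi$ itself; you instead check the defining condition directly, proving $\aleph$ is a submersion by lifting tangent vectors with fundamental vector fields of $\vec{\star}$, which lie in $\ker d\Xi$ (fiber preservation) and $\bs$-project onto the infinitesimal generators of the free $G$-action, which span $\ker d\pi$. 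This direct argument is correct and self-contained, provided hypothesis 3 is read as a \emph{smooth} action in the Lie case---exactly the assumption the paper's own proof makes.

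There is, however, a concrete misstep in your axiom check. In axiom (1) the claim $\bt(g_1)=[p/q]\bt(g)$ is false in general: $g_1=k_1\circ([p/q]\vec{\star}g)$ with $k_1\in\CK$, and elements of $\CK$ need not have equal source and target; one only knows $\pi(\bt(k_1))=\pi(\bs(k_1))$, because $\Xi(k_1)$ is a unit. That weaker statement is all axiom (1) needs, so (1) survives, but the error propagates into your axiom (3): there $h_1$ must be taken of the form $k_2\circ(a\vec{\star}h)$ with $a:=[\bt(g_1)/\bt(g)]$, which differs from $[p/q]$ whenever $\bt(k_1)\neq\bs(k_1)$. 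The repair uses only facts already in your argument: since $\vec{\star}$ preserves each $\Xi$-fiber, $\Xi(a\vec{\star}h)=\Xi(h)=\Xi([p/q]\vec{\star}h)$, so applying $\Xi$ to $h_1\circ g_1\circ([p/q]\vec{\star}g)^{-1}\circ([p/q]\vec{\star}h)^{-1}$ yields a unit, hence this element lies in $\CK=\ker\Xi$, which is precisely axiom (3). Two minor caveats, shared with the paper: closedness of $\CK$ requires the unit section of $\CH'$ to be closed (i.e.\ $\CH'$ Hausdorff), and well-definedness of $\theta$ needs only $\vec{\star}$-invariance of $\CK$ together with the automorphism property, not normality.
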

					\begin{proof} By theorems \ref{thm:smt.cong} and \ref{thm:norm.sub.sys} it is enough to show that $(\CK,R,\theta)$ is a normal subgroupoid system.
						
						Because $\Xi$ and $\pi$ are surjective submersion we have that $\CK$ is an embeded wide Lie subgroupoid of $\CH$ and $R$ is a smooth equivalence.
						
						Because $\vec{\star}$ is a smooth action by groupoid automorphisms covering the $G$-action on $P$ and 
						preserving each $\Xi$-fiber, we have that $\theta$ is indeed a smooth and well defined action of $R$ on $\bs\colon \CK\backslash \CH\fto P$.
					\end{proof}
					
					\begin{lem}\label{prop:fib.xi0}
						The fibers of $\Xi$ are given by the orbits of the action of $G$ on $\CH$ composed\footnote{Recall that the composition (multiplication) of the groupoid $\CH$ is denoted by $\circ$.}
						with elements in $\CK$. More precisely, the fiber through $\xi\in \CH$ is 
						$$\CK\circ (G\vec{\star} \xi) :=\{\chi\circ(g\vec{\star} \xi)  :\chi\in \CK \y g\in G\}.$$	
					\end{lem}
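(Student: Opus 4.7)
The plan is to prove the claimed equality of sets by a direct double inclusion, exploiting only the three assumptions (freeness of the $G$-action on $P$, $\Xi$ being a groupoid morphism covering $\pi$, and the $G$-action on $\CH$ being by groupoid automorphisms that preserve each $\Xi$-fiber). No smoothness or openness of $\Xi$ is needed for this set-theoretic statement.

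For the inclusion $\supseteq$, I would take $g\in G$ and $\chi\in \CK$ composable with $g\vec{\star}\xi$, and compute $\Xi(\chi\circ(g\vec{\star}\xi))=\Xi(\chi)\circ \Xi(g\vec{\star}\xi)$ using that $\Xi$ is a groupoid morphism. The first factor is an identity element since $\chi\in \ker(\Xi)$, and the second equals $\Xi(\xi)$ because the $G$-action preserves each $\Xi$-fiber. Hence the composition lies in $\Xi^{-1}(\Xi(\xi))$.

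For the inclusion $\subseteq$, let $\eta\in \CH$ satisfy $\Xi(\eta)=\Xi(\xi)$. Applying the source maps and using that $\Xi$ covers $\pi$, I get $\pi(\bs(\eta))=\pi(\bs(\xi))$, so $\bs(\eta)$ and $\bs(\xi)$ lie in the same $\pi$-fibre, i.e.\ the same $G$-orbit. Since the $G$-action on $P$ is free, there is a unique $g\in G$ with $\bs(\eta)=g\cdot\bs(\xi)$; because the $G$-action on $\CH$ covers the one on $P$, this reads $\bs(\eta)=\bs(g\vec{\star}\xi)$. Then the element $\chi:=\eta\circ (g\vec{\star}\xi)^{-1}$ is well-defined and, applying $\Xi$, lies in $\CK$; by construction $\eta=\chi\circ(g\vec{\star}\xi)\in \CK\circ(G\vec{\star}\xi)$.

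The proof is almost entirely formal; the only point requiring some care is keeping track of composability, which is automatic once one observes that $g\vec{\star}\xi$ has the same target as $\eta$ (a consequence of $\Xi(\eta)=\Xi(g\vec{\star}\xi)$ together with $\Xi$ covering $\pi$ and freeness of the $G$-action on $P$ applied at the target as well, or equivalently by using the identity $\bt(\chi)=\bt(\eta)$ that one reads off from the definition of $\chi$). Hence no step is a real obstacle; the statement is essentially a bookkeeping consequence of the three standing hypotheses.
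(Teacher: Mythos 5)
Your proposal is correct and follows essentially the same route as the paper: the inclusion $\supseteq$ is immediate from $\Xi$ being a groupoid morphism whose composition with the lifted action is fiber-preserving, and for $\subseteq$ the paper likewise picks $g\in G$ with $g\bs(\xi)=\bs(\eta)$ and shows $\eta\circ(g\vec{\star}\xi)^{-1}\in\CK$ by applying $\Xi$. The only cosmetic difference is that you invoke freeness for uniqueness of $g$ (only existence is needed) and your closing remark about composability is slightly overcomplicated, since $\bs(\eta)=\bs(g\vec{\star}\xi)$ already makes $\eta\circ(g\vec{\star}\xi)^{-1}$ well-defined.
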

					\begin{proof}
						{Fix $\xi_1\in \CH$. The above subset $\CK\circ (G\vec{\star} \xi_1)$ is certainly contained in the $\Xi$-fiber through $\xi_1$.}
						
						{ To show the converse,} let $\xi_2$ lie in the same $\Xi$-fiber as $\xi_1$, then $\bs(\xi_2)$ and $\bs(\xi_1)$ lie in the same $\pi$-fiber. 
						Let $g\in G$ such that $g\bs(\xi_1)=\bs(\xi_2)$. {As this equals $\bs(g\vec{\star} \xi_1)$, the groupoid composition} 
						{$\xi_2\circ (g\vec{\star} \xi_1)^{-1}$} is well-defined, and
						$$\Xi\left(\xi_2\circ (g\vec{\star} \xi_1)^{-1}\right)=\Xi(\xi_2)\circ\Xi(g\vec{\star} \xi_1)^{-1}=\Xi (\xi_2)\circ \Xi(\xi_1)^{-1}= 1_{\pi(\bt(\xi_2))},$$
						where we used that $\Xi$ is a groupoid morphism and the action of $G$ preserves the $\Xi$ fibers, respectively in the {first and second equality}. As a consequence, {$\xi_2\circ (g\vec{\star} \xi_1)^{-1}\in \ker \Xi=\CK$}.
					\end{proof}
					
					\begin{rem}
						While the fibers of a group morphisms are just translates of the kernel, for morphisms of groupoids over different bases this is no longer true. This explains why the description of the fibers in Lemma \ref{prop:fib.xi0} is slightly involved.
					\end{rem}
					
					We will now describe the the fibers of $\Xi\colon \CH\fto \CH'$, as follows:
					
					\begin{prop}\label{cor:grpd.act.hol} 
						There is a Lie groupoid structure on $\CK\times G$ and a groupoid action of $\CK\times G$ on $\bt\colon \CH\fto P$, whose orbits coincide with the fibers of $\Xi$.	
					\end{prop}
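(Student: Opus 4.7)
The plan is to realise $\CK\times G$ as the semidirect product Lie groupoid $\CK\rtimes G$ over $P$, where $G$ acts on the wide Lie subgroupoid $\CK\subset\CH$ by restriction of $\vec{\star}$. This restriction is well-defined because $\vec{\star}$ preserves $\Xi$-fibres (by hypothesis) and in particular preserves $\Xi^{-1}$ of the identity bisection, and it is by Lie groupoid automorphisms of $\CK$ since it is by groupoid automorphisms of $\CH$.

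I would then equip $\CK\times G$ with the following structure over $P$: source, target, unit, and inverse
\[\bt(\kappa,g):=\bt(\kappa),\qquad \bs(\kappa,g):=g^{-1}\!\cdot \bs(\kappa),\qquad 1_p:=(1_p,e),\qquad (\kappa,g)^{-1}:=(g^{-1}\vec{\star}\kappa^{-1},\,g^{-1}),\]
together with the multiplication
\[(\kappa_1,g_1)\circ(\kappa_2,g_2):=\bigl(\kappa_1\circ(g_1\vec{\star}\kappa_2),\,g_1g_2\bigr),\]
defined exactly when $g_1^{-1}\bs(\kappa_1)=\bt(\kappa_2)$, i.e.\ precisely when the $\CH$-composition $\kappa_1\circ(g_1\vec{\star}\kappa_2)$ is defined. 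Smoothness of all the structure maps follows from smoothness of $\vec{\star}$, of the $G$-action on $P$ and of the Lie subgroupoid structure on $\CK$; surjective submersivity of $\bs$ and $\bt$ is immediate by composing with the unit section. The groupoid axioms reduce to the two core identities $g\vec{\star}(\kappa\circ\kappa')=(g\vec{\star}\kappa)\circ(g\vec{\star}\kappa')$ (since $g\vec{\star}$ is a groupoid automorphism) and $g_1\vec{\star}(g_2\vec{\star}\kappa)=(g_1g_2)\vec{\star}\kappa$ (since $\vec{\star}$ is an action).

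Next I would define the action of $\CK\rtimes G$ on the moment map $\bt\colon\CH\to P$ by
\[(\kappa,g)\cdot \xi:=\kappa\circ(g\vec{\star}\xi).\]
The compatibility $\bs_{\CK\rtimes G}(\kappa,g)=\bt(\xi)$ rewrites as $\bs(\kappa)=g\cdot\bt(\xi)=\bt(g\vec{\star}\xi)$, which is precisely the condition for $\kappa\circ(g\vec{\star}\xi)$ to be defined in $\CH$; the equality $\bt((\kappa,g)\cdot\xi)=\bt(\kappa)=\bt(\kappa,g)$ is automatic. The unit $(1_{\bt(\xi)},e)$ clearly fixes $\xi$, and associativity of the action is the same computation as associativity of the groupoid multiplication on $\CK\rtimes G$.

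Finally, the orbit of $\xi$ under this action is
\[\bigl\{\kappa\circ(g\vec{\star}\xi):g\in G,\ \kappa\in\CK,\ \bs(\kappa)=g\cdot\bt(\xi)\bigr\}\;=\;\CK\circ(G\vec{\star}\xi),\]
which is exactly the $\Xi$-fibre through $\xi$ by Lemma \ref{prop:fib.xi0}. Since the whole construction is a standard semidirect product, no conceptual obstacle is expected; the only nontrivial point is the bookkeeping of sources, targets and the interplay between $\circ$ and $\vec{\star}$ when verifying the axioms.
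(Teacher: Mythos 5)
Your proposal is correct and follows essentially the same route as the paper: the same semidirect product groupoid structure on $\CK\times G$ (identical source, target, unit, inverse and composition formulas), the same action $(\kappa,g)\cdot\xi=\kappa\circ(g\vec{\star}\xi)$ on the moment map $\bt\colon\CH\fto P$, and the same appeal to Lemma \ref{prop:fib.xi0} to identify the orbits with the $\Xi$-fibers. The bookkeeping of composability conditions you carry out is exactly the verification the paper leaves to the reader.
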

					
					To prove Prop.  \ref{cor:grpd.act.hol} we first need the following construction.
					
					Since the Lie group $G$ acts by groupoid automorphisms on the groupoid $\CK$, we can form the semidirect product groupoid (see \cite[\S 2]{BrownCoefficients}  and \cite[\S 11.4]{TopGrpds}). We obtain a groupoid structure on $\CK\times G$ with space of objects $P$, as follows:
					\begin{itemize}
						\item[a)] the source and target maps are respectively
						
						$\CK\times G \to P;(\xi,g)\mapsto g^{-1}\bs(\xi)$ and $\CK\times G \to P;(\xi,g)\mapsto \bt(\xi)$,
						\item[b)] the composition is
						$$\circ\colon (\CK\times G)\times_P (\CK\times G)\fto (\CK\times G); \;\;(\xi_2,g_2)\circ (\xi_1,g_1)\mapsto (\xi_2\circ (g_2\vec{\star} \xi_1),g_2 g_1),$$
						\item[c)] the {unit map is} $1\colon P\fto \CK\times G; \; p\mapsto(1_p,e_G)$, where $e_G$ denotes the unit element of the group $G$,
						\item[d)] the   inverse map is  $(-)^{-1}\colon \CK\times G\fto \CK\times G; \;(\xi,g)\mapsto (g^{-1}\vec{\star} \xi^{-1},g^{-1})$.
					\end{itemize}
					One checks 
					that the groupoid $\CK\times G$ acts on the map $\bt\colon \CH\fto P$ via
					\begin{equation}\label{eq:stargroid}
						\smallwhitestar\colon (\CK\times G) \times_P \CH {\to \CH};\;\; \left(( {\chi},g),{\xi}\right) \mapsto ( {\chi},g)\smallwhitestar {\xi}:={\chi}\circ(g\vec{\star} {\xi}).
					\end{equation}
					
					\begin{proof}[Proof of Prop. \ref{cor:grpd.act.hol}] The orbits of the groupoid action ${\smallwhitestar}$ of $\CK\times G$ on $\bt\colon \CH\fto P$ are precisely the fibers of $\Xi$, by Lemma \ref{prop:fib.xi0}. 
					\end{proof}

					\section{{Lie $2$-group actions on holonomy groupoids}}
					\label{sec:quotgrouppull}
					
					{We start reviewing Lie 2-groups and Lie 2-group actions.
						In \S\ref{subsec:ex.lie.2.grp} we present an important special case of Thm. \ref{thm:sur.hol} in which the map $\Xi$   is the quotient map of a Lie 2-group action on $\CH(\CF)$ (see Thm. \ref{thm:act.q.fol} and Prop. \ref{prop:afterthm:act.q.fol}). We will revisit this special case later on, in Prop. \ref{prop:sameaction}.}

					\subsection*{{Background on Lie 2-groups}}\label{subsec:Lie.2.grp}
					
					In the sequel will need the notion of Lie 2-group, which we recall here.
					
					\begin{defi} 
						{	A \textbf{Lie 2-group} is a group in the category of Lie groupoids.}
					\end{defi} 
					{In other words,  a Lie 2-group is a Lie groupoid $\CG\soutar G$ such that $\CG$ and $G$ are Lie groups, so that the group multiplication and group inverse are Lie groupoid morphisms,
						and the inclusion of the neutral elements is a Lie groupoid morphism.}

					\begin{rem} Equivalently, a Lie 2-group is a groupoid in the category of Lie groups, i.e.  a Lie groupoid $\CG\soutar G$ such that $\CG$ and $G$ are Lie groups; and the source, the target, the composition and the inverse maps are homomorphisms.
					\end{rem}
					
					\begin{ex}\label{ex:GH}
						Let $G$ be a Lie group and $H\subset G$ a normal Lie subgroup. Then $H$ acts on $G$ by left multiplication, leading to the action Lie groupoid $H\times G\soutar G$. {In particular, the groupoid composition is $$(h_2,h_1  g)\circ(h_1,g)=(h_2 h_1, g).$$}
						Note that its  space of arrows has a group structure, namely the semidirect product by the conjugation action $C_g(h)=g h g^{-1}$ {of $G$ on $H$}. Explicitly, {the group multiplication} is given by
						$$(h_1,g_1)\cdot(h_2,g_2)=(h_1C_{g_1} (h_2), g_1g_2).$$
						{We write $H\rtimes G$ for $H\times G$ endowed with this group structure.}
						
						{One can check that $H\rtimes G\soutar G$ is a Lie 2-group.}
					\end{ex}
					
					\begin{rem}\label{rem:xm}
						For the sake of completeness, we provide the description of a Lie 2-group in full generality.
						A crossed module of Lie groups consists of  Lie groups   $H$ and $G$, Lie group morphisms $C\colon G\fto \mathrm{Aut}(H); g\mapsto C_g$ and $\bt\colon H\fto G$ such that $\bt(C_g(h))=g\bt(h)g^{-1}$ and $C_{\bt(h)}(j)=hjh^{-1}$ for all $g\in G$ and $h,j\in H$.
						There is a bijection between Lie 2-groups and crossed module of Lie groups \cite{CroMod}.
						Given a  Lie 2-group $\CG\soutar G$, the associated crossed module is given by $G$, by $H:=\ker(\bs)$ (a normal subgroup of  $\CG$), by the restriction $\bt\colon H\fto G$  of the target map, and the Lie group morphisms $C\colon G\fto \mathrm{Aut}(H); g\mapsto C_g(h):=g h {g^{-1}}$. Then $\CG$ as a Lie group is isomorphic to the semidirect product of $G$ and $H$ by the action $C$, and as a Lie groupoid it is isomorphic to the transformation groupoid of the $H$-action on $G$ by left multiplication with $\bt(\cdot)$.
					\end{rem}

					\begin{defi}
						A \textbf{Lie 2-group action} is a  {group} action in the category of Lie groupoids. 
					\end{defi}
					
					Hence an action of a Lie 2-group $\CG\soutar G$ on a Lie groupoid $\CH\soutar P$ consists of    {group} actions of $\CG$ on $\CH$ and of $G$ on $P$ such that the action map
					\[\begin{tikzcd}
						\CG \times \CH \ar[d, shift right=.2em, swap,"\bt\times \bt"]\ar[d,shift left=.2em,"\bs\times \bs"] \ar[r] & \CH \ar[d, shift right=.2em, swap,"\bt"]\ar[d,shift left=.2em,"\bs"] \\
						G\times P \ar[r] & P
					\end{tikzcd}\]
					is a Lie groupoid map.
					{Notice that such an action is not by Lie groupoid automorphisms of $\CH$. Nevertheless the following result holds:}
					
					\begin{prop}\label{prop:lie2grp.q}
						{Consider a free and proper action of the  Lie 2-group} $ H\rtimes G \soutar G$ on a Lie groupoid $\CH\soutar P$. 
						
						Define 
						$$R:=\{(x,gx)\in P\times P \st x\in p \y g\in G\},$$  $$\CR:=\{(\theta,(h,g)\theta)\in\CH \st \theta\in \CH \y (h,g)\in H\rtimes G\}.$$
						Then $(\CR,R)$ is a smooth congruence for $\CH$.
						
						This implies that $\CH':= \CH/(H\rtimes G)$ and $M:= P/G$ are manifolds, and that $\CH'\soutar M$ acquires a canonical  Lie groupoid structure. Further the projection $\CH\to \CH'$ is  a fibration.
					\end{prop}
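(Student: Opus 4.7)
The plan is to verify that $(\CR, R)$ satisfies the three conditions of Definition \ref{def:smt.cong} for a smooth congruence, and then to invoke Theorem \ref{thm:smt.cong}(1) to obtain the Lie groupoid structure on $\CH' = \CH/(H \rtimes G)$ together with the fibration $\CH \to \CH'$. The argument rests on three ingredients: the hypothesis that both actions are free and proper (which supplies the smooth quotients via Godement's criterion), the fact that the Lie $2$-group action intertwines source and target of $\CH$, and the explicit description of $H \rtimes G \soutar G$ from Example \ref{ex:GH}, which will be used to construct a smooth section witnessing the ``versality'' condition.

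First, since the action of $G$ on $P$ and of $H \rtimes G$ on $\CH$ are free and proper, Godement's criterion ensures that $M = P/G$ and $\CH' = \CH/(H \rtimes G)$ are manifolds, and that $R = P \times_M P$ and $\CR = \CH \times_{\CH'} \CH$ are closed embedded wide Lie subgroupoids of the corresponding pair groupoids; in particular, $R$ and $\CR$ are smooth equivalence relations. Next, I would verify that $\CR \soutar R$ is a Lie subgroupoid of $\CH \times \CH \soutar P \times P$. Using that the action map is a Lie groupoid morphism together with the structure of $H \rtimes G \soutar G$ from Example \ref{ex:GH} (arrows $(h,g)$ having source $g$ and target $hg$), one computes
$$\bs\bigl((h,g)\cdot\theta\bigr) = g \cdot \bs(\theta), \qquad \bt\bigl((h,g)\cdot\theta\bigr) = (hg) \cdot \bt(\theta).$$
Since both $g$ and $hg$ lie in $G$, the source and target of any element of $\CR$ lie in $R$, so $\CR \soutar R$ is a subgroupoid of the pair groupoid over $R$; its smoothness and embeddedness were established in the previous step.

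For the versality condition, I would exhibit an explicit smooth section of the map
$$\CR \to \CH \,{}_{\bs}\!\times_{\mathrm{Pr}_1} R, \qquad (\theta_2, \theta_1) \mapsto (\theta_2, \bs(\theta_2), \bs(\theta_1)).$$
Given $(\theta_2, (\bs(\theta_2), y))$ in the target, freeness of the $G$-action yields a unique $g \in G$ with $g \cdot y = \bs(\theta_2)$, and this $g$ depends smoothly on the data because $\pi\colon P \to M$ is a principal $G$-bundle. Setting $\theta_1 := (e_H, g^{-1}) \cdot \theta_2$ gives $\bs(\theta_1) = y$ and $\theta_2 = (e_H, g) \cdot \theta_1$, so $(\theta_2, \theta_1) \in \CR$ maps to the prescribed target. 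The assignment is smooth in $(\theta_2, y)$, hence the original map admits a smooth section and is a surjective submersion. With the three conditions in place, Theorem \ref{thm:smt.cong}(1) delivers the Lie groupoid structure on $\CH'$ and the fact that $\CH \to \CH'$ is a fibration.

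The main obstacle I anticipate is the bookkeeping around the semidirect product: tracking the order of composition in $H \rtimes G$, the precise form of its source and target, and verifying that the section constructed above genuinely lands in $\CR$ (rather than a larger relation) ultimately hinges on combining freeness with the Lie $2$-group axioms. Once these compatibilities are spelled out, the remaining smoothness claims follow from Godement's theorem and from the principal bundle structure of $\pi$.
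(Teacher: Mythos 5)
Your overall plan (verify the three conditions of Definition \ref{def:smt.cong} and then apply Theorem \ref{thm:smt.cong}) is exactly the paper's, and your first step and the computation $\bs((h,g)\theta)=g\,\bs(\theta)$, $\bt((h,g)\theta)=(hg)\bt(\theta)$ are fine. The genuine gap is in your second step: showing that sources and targets of elements of $\CR$ land in $R$ does \emph{not} make $\CR\soutar R$ a Lie subgroupoid of $\CH\times\CH\soutar P\times P$; you must also check that $\CR$ contains the units over $R$ and is closed under composition and inversion, and the closure under composition is precisely where the hypotheses do real work. Given composable elements $(\theta_1,(h_1,g_1)\theta_1)$ and $(\theta_2,(h_2,g_2)\theta_2)$ of $\CR$, composability in $\CH\times\CH$ gives $g_1\bs(\theta_1)=(h_2g_2)\bt(\theta_2)=(h_2g_2)\bs(\theta_1)$, and it is the \emph{freeness of the $G$-action on $P$} that forces $g_1=h_2g_2$, i.e.\ that $(h_1,g_1)$ and $(h_2,g_2)$ are composable arrows of $H\rtimes G\soutar G$; only then does the Lie 2-group axiom (the action map is a groupoid morphism) yield $((h_1,g_1)\theta_1)\circ((h_2,g_2)\theta_2)=\bigl((h_1,g_1)\circ(h_2,g_2)\bigr)(\theta_1\circ\theta_2)$, so that the composite again lies in $\CR$. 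Together with the analogous check for inverses, $(\theta,(h,g)\theta)^{-1}=(\theta^{-1},(h^{-1},hg)\theta^{-1})$, this is the heart of the paper's proof and is absent from your write-up; your closing remark about ``bookkeeping'' gestures at it but does not supply it.

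There is also a smaller leap in the versality step: a smooth global section of $\tau\colon \CR\fto \CH{}_{\bs}\!\times_{\mathrm{Pr}_1}R$ gives surjectivity, but it only guarantees that $d\tau$ is surjective along the image of the section, and your section hits only the points with trivial $H$-component (the $\tau$-fiber over $(\theta_2,\bs(\theta_2),y)$ is a whole copy of $H$). To conclude that $\tau$ is a submersion everywhere you must add an argument, e.g.\ translate along the fibers by the free $H$-action $(\theta_2,\theta_1)\mapsto(\theta_2,(h_0,e)\theta_1)$, which acts by fiber-preserving diffeomorphisms of $\CR$; alternatively, argue as the paper does: by freeness and properness the map $(\theta,h,g)\mapsto(\theta,(h,g)\theta)$ is a diffeomorphism $\CH\times H\times G\cong\CR$, and $\CH{}_{\bs}\!\times_{\mathrm{Pr}_1}R\cong\CH\times G$, under which $\tau$ becomes the projection $(\theta,h,g)\mapsto(\theta,g)$, manifestly a surjective submersion. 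With these two repairs your argument coincides with the paper's proof.
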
 
					
					\begin{proof}\footnote{For a proof in a special case, see also \cite[Prop. 2.10]{CZ2}.} By Thm. \ref{thm:smt.cong}, we only need to prove that $(\CR,R)$ is a smooth congruence. Following the definition of smooth congruences \ref{def:smt.cong} we need to prove three statements:
						\begin{enumerate}
							\item The sets $\CR$ and $R$ are smooth equivalence relations:
							
							Because $H\rtimes G$ and $G$ act freely and properly we get that $\CR$ and $R$ are smooth equivalence relations. So $\CH':=\CH/\CR= \CH/(H\rtimes G)$ and $M:= P/R=P/G$ are manifolds.
							
							\item $\CR\soutar R$ is a Lie subgroupoid of the Cartesian product $\CH\times \CH \soutar P\times P$:
							
							Because $\CR\subset \CH\times \CH$ and $R\subset P\times P$ are embedded submanifolds one only needs to prove the following.
							\begin{itemize}
								\item {\bf The manifold $R$ is the base for $\CR$:} For $(p,gp)\in R$ there is $(e_p,(e,g)e_p)\in \CR$ as its identity. Also, for any $(\theta,(h,g)\theta)\in \CR$ we get: $$(\bs\times\bs)((\theta,(h,g)\theta))=(\bs(\theta),g\bs(\theta))\in R$$
								$$(\bt\times\bt)((\theta,(h,g)\theta))=(\bt(\theta),(\bt(h)g)\bt(\theta))\in R$$

								\item {\bf Closeness under composition:} Take composable elements
								$(\theta_1,(h_1,g_1) \theta_1),(\theta_2,(h_2,g_2) \theta_2)\in \CR$, then:
								\begin{align*}
									\bs(\theta_1)&=\bt(\theta_2)\\
									g_1\bs(\theta_1)=\bs((h_1,g_1) \theta_1)&=\bt((h_2,g_2) \theta_2)=(\bt(h_2) g_2)\bt( \theta_2)
								\end{align*}
								
								By the second equality above and because the $G$-action is free on $P$ then $g_1=\bt(h_2) g_2$, which implies that $(h_1,g_1),(h_2,g_2)\in H\rtimes G$ are also composable. By axioms of 2-Lie group actions we get:
								$$(\theta_1\circ \theta_2,(h_1,g_1) \theta_1\circ (h_2,g_2) \theta_2)=(\theta_1\circ \theta_2, ((h_1,g_1)\circ (h_2,g_2)) (\theta_1\circ \theta_2))\in \CR.$$
								
								\item {\bf Closeness under inverse:}
								$$(\theta,(h,g)\theta)^{-1}=(\theta^{-1},(h^{-1},hg)\theta^{-1})\in \CR.$$
								
							\end{itemize}
							
							\item The map $\tau\colon \CR\fto \CH {}_\bs \!\times_{\mathrm{Pr}_1} R;(\theta_2,\theta_1)\mapsto (\theta_2, \bs(\theta_2),\bs(\theta_1))$ is a surjective submersion:
							
							Note that $\CR$ is diffeomorphic to $\CH\times H\times G$. Also $R$ is diffeomorphic to $P\times G$, therefore $\CH\times_M R$ is diffeomorphic to $\CH\times G$.
							
							The map $\tau$ under the diffeomorphisms given above, is the following map 
							$$\tau\colon \CH\times H\times G\fto \CH\times G; (\theta,h,g)\mapsto(\theta,g),$$ 
							which is clearly a surjective submersion.	
						\end{enumerate}	
					\end{proof}
					
					Although we do not need the above result, we mention it here because it puts in perspective Theorem \ref{thm:act.q.fol} below.

					\subsection*{A Lie 2-group action on the holonomy groupoid of a pullback foliation}\label{subsec:ex.lie.2.grp}

					Fix  a foliated manifold $(P,\cF)$ and a free and proper action of a connected Lie group $G$ on $P$ preserving $\CF$. We assume that the infinitesimal generators of the $G$-action lie in the global hull $\widehat{\CF}$, i.e.  $\ker_c(d\pi)\subset \CF$. This occurs exactly when $\cF$ is the pullback of $\cF_M$ by $\pi$, as in Prop. \ref{prop:normalq}.
					
					\begin{thm}\label{thm:act.q.fol}
						Let $G$ be a connected Lie group acting freely and properly on a foliated manifold $(P,\CF)$. 
						Assume {that $\Gamma_c(\ker d\pi)\subset \CF$.}  
						
						Then there is a canonical Lie 2-group action of $G\rtimes G\soutar G$  on the holonomy groupoid $\CH(\CF)$.
					\end{thm}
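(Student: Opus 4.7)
The plan is to reduce to the pullback-groupoid description of proposition \ref{prop:normalq}. Under the standing assumption $\Gamma_c(\ker d\pi)\subset \CF$, proposition \ref{prop:submfol} gives a unique foliation $\CF_M$ on $M=P/G$ with $\pi^{-1}\CF_M=\CF$, and combining theorem \ref{thm:pullbackgroid} with proposition \ref{prop:normalq} yields a canonical isomorphism of (open topological) groupoids
\[
\varphi\colon \CH(\CF)\xrightarrow{\;\sim\;}\pi^{-1}(\CH(\CF_M))=P\times_M \CH(\CF_M)\times_M P.
\]
I will define the $G\rtimes G$-action first on the pullback groupoid, where its structure is transparent, and then transport it to $\CH(\CF)$ via $\varphi$; canonicity of $\varphi$ propagates to canonicity of the resulting action.

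On $P$ I take the given $G$-action, and on the pullback groupoid I set
\[
(h,g)\smallwhitestar (p,\theta,q)\;:=\;(hg\cdot p,\;\theta,\;g\cdot q).
\]
This is well-defined because $g$ preserves the $\pi$-fibres, giving $\pi(g\cdot q)=\bs_M(\theta)$ and $\pi(hg\cdot p)=\bt_M(\theta)$. The verifications are then mechanical: source/target compatibility with $\bs(h,g)=g$ and $\bt(h,g)=hg$ is immediate; the group-action axiom follows from the semidirect-product formula $(h_1,g_1)\cdot(h_2,g_2)=(h_1g_1h_2g_1^{-1},g_1g_2)$ and direct substitution; and preservation of units and of groupoid composition under the action map is a short check using the multiplication $(p_2,\theta_2,q_2)\circ(p_1,\theta_1,q_1)=(p_2,\theta_2\circ\theta_1,q_1)$ in the pullback. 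The formula is smooth in all its arguments, so it defines a genuine Lie $2$-group action whenever $\CH(\CF)$ is smooth (i.e.\ whenever $\CF$ is projective, by proposition \ref{prop:proy.fol}); in general it is an action in the topological category.

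The main obstacle I anticipate lies in making the action \emph{canonical}, i.e.\ in exhibiting a description independent of the auxiliary identification $\varphi$. The expected intrinsic form is
\[
(h,g)\smallwhitestar \alpha\;=\;\sigma_h\bigl(g\cdot \bt(\alpha)\bigr)\circ (g\vec{\star}\alpha),
\]
for $\alpha\in \CH(\CF)$, where $g\vec{\star}$ is the groupoid automorphism of $\CH(\CF)$ induced by the foliation-preserving diffeomorphism $g\cdot$, and $\sigma_g\colon P\to \CH(\CF)$ is the bisection built from the $G$-action as follows: for $g=\exp(\zeta)$ with $\zeta\in\g$, the infinitesimal generator $X_\zeta$ lies in $\widehat{\CF}$, so its time-one flow preserves $\CF$ (proposition \ref{prop:exp.in.aut}) and is carried locally by a bisubmersion (corollary \ref{cor:diff.bisect}); injectivity of the holonomy transformation $\Phi^{\CF}$ then guarantees that this procedure assembles into a well-defined and multiplicative family $\sigma_g$, equivariant in the sense that $g\vec{\star}\sigma_h=\sigma_{ghg^{-1}}\circ (g\cdot)$. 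The technical heart of the intrinsic argument is the conjugation identity $\sigma_h(\bt\alpha)\circ\alpha\circ\sigma_h(\bs\alpha)^{-1}=h\vec{\star}\alpha$, which identifies conjugation by $\sigma_h$ with the canonical automorphism $h\vec{\star}$ and lets one match the two descriptions under $\varphi$.
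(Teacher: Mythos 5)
Your core argument is exactly the paper's proof: both use the canonical isomorphism $\varphi\colon \CH(\CF)\xrightarrow{\sim}\pi^{-1}(\CH(\CF_M))$ from Theorem \ref{thm:pullbackgroid} and define the action by $(h,g)*(p,[v],q)=(hgp,[v],gq)$, with the routine verifications (the paper streamlines them via the Lie 2-group isomorphism $G\rtimes G\cong G\times G$, $(h,g)\mapsto(hg,g)$). The intrinsic, $\varphi$-free description you sketch at the end is not needed for the theorem — canonicity of $\varphi$ suffices — and is precisely what the paper establishes separately in Prop.~\ref{prop:sameaction}.
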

					
					Here $G\rtimes G\soutar G$ is endowed with the Lie 2-group structure of Ex. \ref{ex:GH}.
					
					\begin{proof} We make use  the canonical isomorphism $\CH(\CF) \cong \pi^{-1}(\CH(\CF_{M}))$ given in  Theorem \ref{thm:pullbackgroid}.
						
						There is a canonical Lie $2$-group action of $G\rtimes G$ on   $\pi^{-1}(\CH(\CF_{M}))$, extending the given action of $G$ on the base $P$,
						given by
						\begin{equation}\label{eq:star}
							(h,g)* (p,[v],q)=(hgp,[v],gq).
						\end{equation}
						It can be checked by computations that this defines a group action and groupoid morphism. Alternatively, as mentioned in \cite[\S3]{CZ2}, we can use the isomorphism of Lie 2-groups to    $G \times G \soutar G$ (the pair groupoid, with product group structure) given by  $G\rtimes G\cong G\times G, (h,g)\mapsto (hg,g)$. Under this isomorphism,   
						\eqref{eq:star} becomes 
						$$ (G\times G)\times {\pi^{-1}(\CH(\CF_{M}))}\to {\pi^{-1}(\CH(\CF_{M}))},\;\; \left((h,g),(p,[v],q)\right) \mapsto (hp,[v],gq),$$
						which is easily checked to   be a Lie $2$-group action.
					\end{proof}
					
					\begin{prop}\label{prop:afterthm:act.q.fol}
						Assume the set-up of Theorem \ref{thm:act.q.fol}.
						
						The orbits of the Lie 2-group action of $G\rtimes G\soutar G$ on $\CH(\CF)$ are exactly the fibers of the canonical map $\Xi \colon \CH(\CF)\fto \CH(\CF_M)$. In particular, the quotient of $\CH(\CF)$ by the action is canonically isomorphic to $\CH(\CF_M)$.
						
					\end{prop}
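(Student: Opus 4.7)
The plan is to reduce the statement to a direct verification on the model $\pi^{-1}(\CH(\CF_M))$ using the identifications established in the preceding results. First I would invoke the canonical isomorphism $\varphi\colon \CH(\CF)\cong\pi^{-1}(\CH(\CF_M))=P\,{}_\pi\!\times_{\bt}\CH(\CF_M)\,{}_\bs\!\times_\pi P$ of Theorem \ref{thm:pullbackgroid}, transporting everything in sight to this pullback groupoid. Under $\varphi$ the map $\Xi$ is, by Prop. \ref{prop:normalq}, simply the second projection $\mathrm{pr}_2\colon (p,[v],q)\mapsto [v]$, while the Lie 2-group action of $G\rtimes G$ is the explicit action
\[
(h,g)\smallwhitestar (p,[v],q)=(hgp,[v],gq)
\]
appearing in the proof of Theorem \ref{thm:act.q.fol}. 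In this model both sides of the claimed equality are easy to describe.

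Next I would check the inclusion ``orbits $\subset$ fibers''. This is immediate from the formula above, since every element in the orbit of $(p,[v],q)$ has $[v]$ as its middle coordinate, so it lies in $\mathrm{pr}_2^{-1}([v])$.

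For the reverse inclusion, fix $(p,[v],q)$ and let $(p',[v],q')$ be any other point of $\mathrm{pr}_2^{-1}([v])$. By definition of the fibre product one has $\pi(p')=\pi(p)=\bt([v])$ and $\pi(q')=\pi(q)=\bs([v])$. Since the $G$-action on $P$ is free with quotient $\pi$, there exist unique $g_1,g_2\in G$ with $p'=g_1 p$ and $q'=g_2 q$. Setting $g:=g_2$ and $h:=g_1 g_2^{-1}$, a direct computation gives $(h,g)\smallwhitestar(p,[v],q)=(hgp,[v],gq)=(p',[v],q')$, which shows that the orbit of $(p,[v],q)$ fills the entire fibre. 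This establishes the first assertion of the proposition.

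Finally, the ``in particular'' statement follows formally: since $\Xi$ is surjective by Theorem \ref{thm:sur.hol} and its fibres coincide with the orbits of the $G\rtimes G$-action, the induced map $\CH(\CF)/(G\rtimes G)\to \CH(\CF_M)$ is a bijection, and it is canonical as a composition of canonical maps. No serious obstacle is expected; the only point requiring attention is the careful bookkeeping between the two actions $(hgp,[v],gq)$ and $(hp,[v],gq)$ arising from the isomorphism $G\rtimes G\cong G\times G$ used in the proof of Theorem \ref{thm:act.q.fol}, but either presentation yields the same orbits, so this is purely a matter of notation.
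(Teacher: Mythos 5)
Your proposal is correct and follows essentially the same route as the paper's (much terser) proof: transport everything to $\pi^{-1}(\CH(\CF_M))$ via $\varphi$, use Prop.~\ref{prop:normalq} to identify $\Xi$ with the second projection, and read off from the explicit formula \eqref{eq:star} that the orbits are exactly the $\mathrm{pr}_2$-fibers, with freeness of the $G$-action giving the reverse inclusion. The extra detail you supply (the explicit choice $g=g_2$, $h=g_1g_2^{-1}$, and the remark on the two equivalent presentations of the action) is exactly the computation the paper leaves implicit.
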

					\begin{proof}
						{The formula   \eqref{eq:star} makes clear what the orbits are, and Prop. \ref{prop:normalq} shows that they agree with the $\Xi$-fibers.}
					\end{proof}

					\section{{Quotients of foliations by group actions: the general case}}
					\label{sec:general}
					In this section we consider the following set-up:
					\begin{center}
						\fbox{
							\parbox[c]{12.6cm}{\begin{center}
									a foliated manifold $(P,\cF)$,\\
									a free and proper action of a connected Lie group $G$ on $P$ preserving $\cF$.\end{center}
						}}
					\end{center}
					Note that this implies the set-up of \S \ref{sec:Qfolman}, then the condition of equation \eqref{eq:bracketpres} in Thm. \ref{thm:sur.hol} is satisfied. Hence we obtain a surjective groupoid morphism 
					\begin{equation}\label{eq:Xi}
						\Xi\colon \CH(\CF)\fto \CH(\CF_M)  
					\end{equation}
					covering the projection   $\pi \colon P\to M:=P/G$, where the latter is endowed with the foliation $\CF_M$ specified there.
					
					Unlike the special case considered in \S \ref{subsec:ex.lie.2.grp}, the $\Xi$-fibers are not the orbits of a Lie 2-group action in general. In this section we make two general statements about the $\Xi$-fibers.
					First, after lifting the $G$-action on $P$ to an action on $\CH(\CF)$ by groupoid automorphisms, we characterize the fibers of $\Xi$ as the orbits of a \emph{groupoid}  action   (see Proposition \ref{prop:fib.xi}). 
					Second we establish the existence of a canonical  
					\emph{Lie 2-group action} on $\CH(\CF)$ whose orbits lie inside the $\Xi$-fibers, but which might fail to be the whole fiber (see Prop. \ref{prop:Lie.2.grpACTION} and Cor. \ref{prop:Lie.2.grpFIBER}).

					\subsection*{Groupoid actions on the holonomy groupoid}\label{sec:groidorbits}

					Here we characterize the fibers of $\Xi$ as the orbits of a groupoid action. We start by showing that the $G$ action on $P$ admits a canonical lift to $\CH(\CF)$.
					
					\begin{lem}\label{def:gW}
						Let $\widehat{g}\colon P\fto P$ be the diffeomorphism given by the action of $g\in G$. Take a path holonomy atlas $\CU$ and a bisubmersion $W\in \CU$. The  triple $$gW:=(W,\bs_g:=\widehat{g}\circ\bs,\bt_g:=\widehat{g}\circ\bt)$$ is a bisubmersion. Moreover $gW$ is adapted to $\CU$.
					\end{lem}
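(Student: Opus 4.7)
The plan is twofold: first verify that $gW = (W, \widehat{g}\circ\bs, \widehat{g}\circ\bt)$ satisfies the bisubmersion axioms, and second produce a local morphism of bisubmersions from $gW$ into a path holonomy bisubmersion, which by Lemma \ref{lem:smallelements} is automatically adapted to the atlas $\CU$.

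For the first step, the key observation is that $\widehat{g}$ is a diffeomorphism of $P$ satisfying $\widehat{g}^{-1}\CF = \CF$ (this is exactly the invariance hypothesis of the section). Therefore $(\widehat{g}\circ\bs)^{-1}\CF = \bs^{-1}(\widehat{g}^{-1}\CF) = \bs^{-1}\CF$, and analogously for $\bt$; moreover $\ker(d(\widehat{g}\circ\bs)) = \ker(d\bs)$ because $\widehat{g}$ is a diffeomorphism. Combining with the assumption that $W$ is a bisubmersion for $\CF$ yields
\[
(\widehat{g}\circ\bs)^{-1}\CF \;=\; (\widehat{g}\circ\bt)^{-1}\CF \;=\; \ker_c(d(\widehat{g}\circ\bs)) + \ker_c(d(\widehat{g}\circ\bt)),
\]
which is the bisubmersion condition.

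For the second step, I would use the explicit form of $W$ as a neighborhood of $(0, x_0) \in \RR^n \times P$ built from vector fields $X_1, \dots, X_n \in \CF$ whose classes form a basis of $\CF_{x_0}$, with $\bs(\lambda, p) = p$ and $\bt(\lambda, p) = \exp_p(\sum_i \lambda_i X_i)$ (see Proposition \ref{path}). Since $\widehat{g}$ preserves $\CF$, the pushforwards $Y_i := \widehat{g}_* X_i$ lie in $\CF$, and their classes in $\CF_{\widehat{g}(x_0)}$ form a basis because $\widehat{g}$ induces an isomorphism of fibres. Let $W' \subset \RR^n \times P$ be the path holonomy bisubmersion around $\widehat{g}(x_0)$ constructed from the $Y_i$, with structure maps $\bs', \bt'$. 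Consider
\[
\Phi_g \colon W \to \RR^n \times P, \qquad (\lambda, p) \mapsto (\lambda, \widehat{g}(p)).
\]
Using the naturality of the flow under diffeomorphisms, $\widehat{g}\circ \exp(\sum_i \lambda_i X_i) = \exp(\sum_i \lambda_i Y_i)\circ \widehat{g}$, one checks directly that $\bs'\circ \Phi_g = \widehat{g}\circ \bs$ and $\bt'\circ \Phi_g = \widehat{g}\circ \bt$. Hence on the (open) set where $\Phi_g$ lands in $W'$, it is a morphism of bisubmersions from $gW$ to $W'$.

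Finally, by Lemma \ref{lem:smallelements}, the path holonomy bisubmersion $W'$ is adapted to $\CU$. Composing $\Phi_g$ (on a suitable neighborhood of any $w \in W$) with the local morphisms $W' \to U \in \CU$ given by that lemma shows that $gW$ is adapted to $\CU$. The only technical care required is in matching up open domains — shrinking neighborhoods so that $\Phi_g$ takes values in $W'$ and then further so that the image lies in the domain of the morphism into $\CU$ — but since being adapted is a pointwise/local condition, this is routine and constitutes the main (very minor) obstacle.
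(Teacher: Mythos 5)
Your verification that $gW$ is a bisubmersion (via $\widehat{g}^{-1}\CF=\CF$ and the composition property of pullbacks) and your adaptedness argument via the pushforward vector fields $Y_i=\widehat{g}_*X_i$ and the translation map $(\lambda,p)\mapsto(\lambda,\widehat{g}(p))$ are exactly the ideas in the paper's proof. However, as written your argument only covers the case where $W$ is itself a path holonomy bisubmersion in the sense of Proposition \ref{path}: you invoke ``the explicit form of $W$ as a neighborhood of $(0,x_0)\in\RR^n\times P$ built from vector fields $X_1,\dots,X_n$''. But the statement takes $W$ to be an arbitrary element of the path holonomy \emph{atlas} $\CU$ (Def. \ref{def:pathholatlas}), and such elements are in general finite compositions (and inverses) of path holonomy bisubmersions, for which no such explicit description is available. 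This is the missing step: you need the observation that $g(W_1\circ W_2)=gW_1\circ gW_2$ (and similarly for inverses), so that adaptedness of $gW$ for a general $W\in\CU$ reduces to the path-holonomy case you treated, using that compositions and inverses of bisubmersions adapted to $\CU$ are again adapted to $\CU$ since $\CU$ is an atlas.

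A smaller imprecision: you take $W'$ to be ``the'' path holonomy bisubmersion around $\widehat{g}(x_0)$ built from the $Y_i$ and then restrict to where $\Phi_g$ lands in $W'$; but if $W'$ is chosen as a small neighbourhood of $(0,\widehat{g}(x_0))$, points $w\in W$ with $\Phi_g(w)\notin W'$ are not covered, and shrinking around $w$ does not help. The clean fix (and what the paper does) is to take $W':=\Phi_g(W)=\{(\lambda,\widehat{g}(p))\st (\lambda,p)\in W\}$ with the structure maps coming from the flows of the $Y_i$: your flow-naturality identity shows $\Phi_g$ intertwines the structure maps, so $W'$ inherits the bisubmersion property from $gW$ and is itself a path holonomy bisubmersion, to which Lemma \ref{lem:smallelements} applies globally. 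With these two repairs your proof coincides with the paper's.
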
  
					\begin{proof}
						{Because the $G$-action preserves $\cF$, the pullback foliation $\widehat{g}^{-1}\CF$ equals $\CF$, implying that $gW$ is a bisubmersion.}
						
						{We prove that $gW$ is adapted to $\CU$. Notice that   $g(W_1\circ W_2)=gW_1\circ gW_2$ for any $W_1,W_2\in \CU$. Hence it is sufficient to assume that $W$ is a path holonomy bisubmersion, as any element in the path-holonomy atlas is a composition of such elements.
							
							Denote by $v_1,\cdots,v_n\in \CF$ the vector fields that give rise to the path holonomy bisubmersion $W$ (hence $W\subset \RR^n\times P$). Consider the push-forward vector fields $\widehat{g}_*v_1,\cdots,\widehat{g}_*v_n \in \CF$. The associated path-holonomy bisubmersion is defined on
							$$W':=\{(v,gp)\st (v,p)\in W\}\subset \RR^n\times P.$$
							Since $gW\fto W', (v,p)\mapsto (v,gp)$ is an isomorphism of bisubmersions and since $W'$ is adapted to $\CU$ (being a path holonomy bisubmersion), we conclude that $gW$ is adapted to $\CU$.}
					\end{proof}
					
					Now {we introduce} the {\bf lifted action} 
					
					\begin{equation}\label{eq:vecstar}
						\vec{\star}\colon{G} \times \CH(\cF)\fto \CH(\cF), \hspace{1cm}
						g\vec{\star} [v]:=[v]_{gW}  
					\end{equation}
					where, for any $v$ in a path holonomy bisubmersion $W$, we denote by 
					{$[v]_{gW}$}  the class of $v$ regarded as an element of $gW$.
					{This is clearly well-defined and indeed a Lie group action.}
					{Further, this action is by groupoid automorphisms:}

					{\begin{lem}\label{lem:cover}
							For all $g\in G$ the map $g\vec{\star} (-)\colon \CH(\cF)\fto \CH(\cF)$ is a groupoid morphism covering the diffeomorphism $\widehat{g}\colon P\fto P$. 
					\end{lem}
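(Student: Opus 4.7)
\medskip

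The plan is to verify in turn that $g\vec{\star}(-)$ is well defined, that it is compatible with source and target (i.e.\ covers $\widehat{g}$), and that it preserves composition and identities. All four facts follow from one observation: if $\varphi\colon W_1 \supset W_1' \to W_2$ is a morphism of bisubmersions between two elements of the path-holonomy atlas $\CU$, then the very same underlying smooth map is also a morphism of bisubmersions $gW_1' \to gW_2$, because postcomposing the source and target maps on both sides with $\widehat{g}$ preserves the commuting diagrams. Combined with Lemma \ref{def:gW}, which guarantees that each $gW$ is adapted to $\CU$, this already shows that the assignment $[v]_W \mapsto [v]_{gW}$ descends to a well-defined map on $\CH(\cF)$.

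To see that the map covers $\widehat{g}$, note that for $v\in W$ the class $[v]_{gW}$ has source $\bs_g(v)=\widehat{g}(\bs(v))=\widehat{g}(\bs[v])$ and target $\widehat{g}(\bt[v])$, straight from the definition of $gW$. For composition, take composable classes $[v_1],[v_2]\in \CH(\cF)$ with representatives $v_i\in W_i\in\CU$. On the one hand $[v_1]\circ[v_2]=[(v_1,v_2)]_{W_1\circ W_2}$, hence $g\vec\star([v_1]\circ[v_2])=[(v_1,v_2)]_{g(W_1\circ W_2)}$. On the other hand $(g\vec\star[v_1])\circ(g\vec\star[v_2])=[(v_1,v_2)]_{gW_1\circ gW_2}$. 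The key point is that the underlying manifolds $g(W_1\circ W_2)$ and $gW_1\circ gW_2$ coincide (both are $W_1{}_\bs\!\times_\bt W_2$, since the fibre product condition $\bs_g(w_1)=\bt_g(w_2)$ is equivalent to $\bs(w_1)=\bt(w_2)$ as $\widehat g$ is a diffeomorphism), and their source and target maps are the same pair of composites with $\widehat g$. Hence the identity map is an isomorphism of bisubmersions between them, giving the desired equality of classes in $\CH(\cF)$.

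Finally, for identities, pick any path-holonomy bisubmersion $W\subset \RR^n\times P$ and the point $(0,p)\in W$, which represents the unit $1_p\in \CH(\cF)$. Regarded inside $gW$, the point $(0,p)$ has source $\widehat g(p)$, and the bisection $\tau(x):=(0,\widehat g^{-1}(x))$ through it is a section of $\bs_g$ satisfying $\bt_g\circ\tau=\mathrm{id}$, so it carries the identity diffeomorphism near $\widehat g(p)$. Therefore $g\vec\star[(0,p)]=[(0,p)]_{gW}=1_{\widehat g(p)}$, which is exactly the unit at the image point, as required. I do not expect any serious obstacle here: the only subtlety is the book-keeping in the identity step, namely noticing that bisections of $gW$ are obtained from bisections of $W$ by precomposing the parametrisation with $\widehat g^{-1}$, which is what turns $(0,p)$ back into a unit of $\CH(\cF)$ at $\widehat g(p)$ rather than into an element carrying the diffeomorphism $\widehat g$ itself.
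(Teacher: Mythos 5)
Your proof is correct and takes essentially the same route as the paper's, whose entire argument is that the source and target maps of $gW$ commute with $\widehat{g}$ by construction and that composition is preserved because $g(W_1\circ W_2)=gW_1\circ gW_2$ as bisubmersions. The additional details you supply (well-definedness by transporting morphisms of bisubmersions, and the unit check via the bisection $\tau(x)=(0,\widehat{g}^{-1}(x))$) are accurate refinements of points the paper treats as immediate.
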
}
					
					\begin{proof}
						Using the construction of $g\vec{\star}(-)$ it is clear that 
						the source and target map commute with the map $\widehat{g}$. {Further
							$g\vec{\star}(-)$ preserves the groupoid composition since
							$g(W_1\circ W_2)=gW_1\circ gW_2$ for any path-holonomy bisubmersions $W_1,W_2$.} 
					\end{proof}

					{\begin{lem}\label{lem:orbits}
							The  orbits of the lifted action $\vec{\star}$ lie in the fibers of $\Xi:\CH(\CF)\fto \CH(\CF_M)$.
					\end{lem}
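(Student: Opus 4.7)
The plan is to unpack the definitions and reduce the statement to the tautology $\pi\circ\widehat{g}=\pi$. I would fix $g\in G$ and a class $[v]\in\CH(\CF)$ represented by a point $v$ in a path holonomy bisubmersion $W\in\CU$, and show that $\Xi(g\vec{\star}[v])=\Xi([v])$. By the definition \eqref{eq:vecstar} of the lifted action, one has $g\vec{\star}[v]=[v]_{gW}$, where $gW=(W,\widehat{g}\circ\bs,\widehat{g}\circ\bt)$ is the bisubmersion for $\CF$ provided by Lemma \ref{def:gW}. Applying the explicit description of $\Xi$ from Theorem \ref{thm:sur.hol}, the image $\Xi([v])$ is the class of $v$ in the bisubmersion $\pi W:=(W,\pi\circ\bt,\pi\circ\bs)$ for $\CF_M$, while $\Xi(g\vec{\star}[v])$ is the class of $v$ in $\pi(gW):=(W,\pi\circ\widehat{g}\circ\bt,\pi\circ\widehat{g}\circ\bs)$.

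The crucial step will be the observation that, because $\pi$ is the quotient map for the $G$-action, one has $\pi\circ\widehat{g}=\pi$ as maps $P\to M$. This is immediate from the definition of $M=P/G$, and it implies that $\pi(gW)$ and $\pi W$ coincide as triples, so $v$ defines the same class in both bisubmersions for $\CF_M$. Hence $\Xi(g\vec{\star}[v])=\Xi([v])$. Since $g\in G$ and $[v]\in\CH(\CF)$ were arbitrary, the entire orbit $G\vec{\star}[v]$ is contained in the fiber $\Xi^{-1}(\Xi([v]))$, which is the claim. I do not anticipate any substantive obstacle here: once the definitions of $\Xi$ and $\vec{\star}$ are written in parallel, the argument collapses to the $G$-invariance of $\pi$, and the only care needed is to keep track of which bisubmersion structure on the underlying manifold $W$ is being used at each step.
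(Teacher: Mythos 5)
Your proposal is correct and follows essentially the same route as the paper's proof: both compute $\Xi$ via its description on a source connected atlas from Theorem \ref{thm:sur.hol}, observe that $g\vec{\star}[v]=[v]_{gW}$, and reduce the claim to the identity $\pi\circ\widehat{g}=\pi$, which forces $\pi(gW)=\pi W$ and hence equal classes in $\CH(\CF_M)$.
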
}
					\begin{proof} 
						The morphism $\Xi$ is induced by the identity map from any source connected atlas $\CU$ for $\CF$ to the atlas $\pi\CU:=\{(U, \pi\circ\bt, \pi\circ\bs): U\in \CU\}$ for $\cF_M$,  see  the characterization of $\Xi$ given in the proof of Thm \ref{thm:sur.hol}.
						
						Fix $g\in G$, and $u\in U\in \CU$. {By the above and since $\pi \circ \widehat{g}=\pi$,} the images under $\Xi$ of both $[u]$ and $g\vec{\star} [u]=[u]_{gU}$
						are the class of the element $u\in (U, \pi\circ\bt, \pi\circ\bs)$. In particular,
						$\Xi([u])=\Xi(g\vec{\star} [u])$, showing the desired statement.
					\end{proof}

					\begin{ex}\label{ex:ri.regfol}({\bf Regular foliations}) As discussed in Rem. \ref{rem:hol.regfol}, if $\CF$ is a regular foliation there is a groupoid morphism $Q\colon \Pi(\CF)\fto \CH(\CF)$ where $\Pi(\CF)$ is the {monodromy groupoid, consisting of homotopy classes of paths lying in the leaves of $\CF$.} The Lie group $G$ acts canonically on $\Pi(\CF)$ 
						{by translating paths}: $G\times \Pi(\CF)\fto \Pi(\CF); (g,[\gamma])\mapsto [g\gamma]$. It is easy to see that {$Q$ is $G$-equivariant, i.e.} 
						$Q(g[\gamma])=g\vec{\star}(Q[\gamma])$.
					\end{ex}

					{Notice that the Lie group action of $G$ on $P$, the groupoid morphism 
						$\Xi\colon \CH(\CF)\fto \CH(\CF_M)$ (see eq. \eqref{eq:Xi}) and the lifted Lie group action
						$\vec{\star}$ of $G$ on  $\CH(\cF)$  (see eq. \eqref{eq:vecstar}) fit in the abstract setting described at in Prop. \ref{cor:grpd.act.hol}, thanks to Lemma \ref{lem:cover} and Lemma \ref{lem:orbits}.} 
					Therefore we can apply Prop. \ref{cor:grpd.act.hol} to obtain a complete description of the $\Xi$-fibers:
					
					{\begin{prop}\label{prop:fib.xi}
							Denote $\CK:= \ker \Xi$. The topological groupoid $\CK\times G$ defined in Prop. \ref{cor:grpd.act.hol}  acts on  $\CH(\CF)$ by	
							$$((\chi,g),\xi)\mapsto \chi\circ(g\vec{\star}\xi).$$
							The orbits of this action are precisely the fibers of $\Xi$.	
					\end{prop}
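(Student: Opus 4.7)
The plan is to derive Prop. \ref{prop:fib.xi} as a direct instance of the abstract construction in Prop. \ref{cor:grpd.act.hol} together with Lemma \ref{prop:fib.xi0}. The main task is to verify that the data under consideration fit the hypotheses (1), (2), (3) of \S\ref{subsec:groidaction}; once this is done, both the action property and the description of the orbits are essentially automatic.

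First I would check that we are in the abstract set-up. Condition (1) is immediate: the $G$-action on $P$ is free and proper by assumption, and $\pi\colon P\to M=P/G$ is a surjective submersion with connected fibres, since $G$ is connected. For condition (2), Thm. \ref{thm:sur.hol} supplies the canonical surjective open morphism of topological groupoids $\Xi\colon \CH(\cF)\to \CH(\cF_M)$ covering $\pi$. For condition (3), the lifted action $\vec{\star}\colon G\times \CH(\cF)\to \CH(\cF)$ defined in equation \eqref{eq:vecstar} is a Lie group action by groupoid automorphisms covering the $G$-action on $P$ by Lemma \ref{lem:cover}, and it preserves each $\Xi$-fibre by Lemma \ref{lem:orbits}.

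Having verified conditions (1)--(3), I would apply the construction of \S\ref{subsec:groidaction} verbatim. The kernel $\CK:=\ker\Xi$ is a (topological) wide subgroupoid of $\CH(\cF)$ over $P$, and the $G$-action restricts to an action on $\CK$ by groupoid automorphisms (since $G$ preserves each $\Xi$-fibre and $\Xi$ is a morphism). The semidirect-product construction preceding Prop. \ref{cor:grpd.act.hol} then endows $\CK\times G$ with a topological groupoid structure over $P$, and the formula
\[
\smallwhitestar\colon (\CK\times G)\times_P \CH(\cF)\to \CH(\cF),\qquad \bigl((\chi,g),\xi\bigr)\mapsto \chi\circ(g\vec{\star}\xi),
\]
of equation \eqref{eq:stargroid} defines a groupoid action of $\CK\times G$ on $\bt\colon \CH(\cF)\to P$. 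This yields the first assertion of the proposition.

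For the orbit description, I would invoke Lemma \ref{prop:fib.xi0}, which tells us that in the abstract set-up the $\Xi$-fibre through any $\xi\in \CH(\cF)$ equals $\CK\circ(G\vec{\star}\xi)=\{\chi\circ(g\vec{\star}\xi):\chi\in\CK,\ g\in G\}$. But by the very definition of $\smallwhitestar$ this set is precisely the $(\CK\times G)$-orbit of $\xi$, which gives the second assertion. I do not anticipate a real obstacle: the only substantive content not already packaged by the abstract proposition is the verification of hypothesis (3), and that has been done in Lemmas \ref{lem:cover} and \ref{lem:orbits}.
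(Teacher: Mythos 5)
Your proposal is correct and follows exactly the route the paper takes: it observes that the free and proper $G$-action, the morphism $\Xi$ from Theorem \ref{thm:sur.hol}, and the lifted action $\vec{\star}$ (via Lemmas \ref{lem:cover} and \ref{lem:orbits}) place us in the abstract setting of \S\ref{subsec:groidaction}, and then applies Prop. \ref{cor:grpd.act.hol} (equivalently, the semidirect-product construction together with Lemma \ref{prop:fib.xi0}) to identify the orbits with the $\Xi$-fibers. Your unfolding of the orbit identification through Lemma \ref{prop:fib.xi0} is precisely how the paper's own proof of Prop. \ref{cor:grpd.act.hol} proceeds, so there is no gap and no genuinely different idea involved.
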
}
					
					An instance of Prop. \ref{prop:fib.xi} is Example \ref{ex:non.sing.q}, where we have $G=S^1$ and $\ker\Xi =1_P$.

					\subsection*{A canonical Lie 2-group action on the holonomy groupoid}\label{subsec:2gract}

					Here we prove that there always is a Lie 2-group action on $\CH(\CF)$ whose orbits lie inside the fibers of the morphism $\Xi$. In general however the orbits do not coincide with the (connected components of) the $\Xi$-fibers.
					The formulae for this Lie 2-group action are suggested by the special case we will spell out in \S\ref{subsec:alternative}.
					
					Denote the Lie algebra of $G$ by $\g$, and by $v_x\in \vX(P)$ the generator of the action corresponding to $x\in \g$.
					
					\begin{lem}\label{lem:ideal}
						The subspace $\h:=\{x\in \g: v_x\in \widehat{\cF}\}$ is a Lie ideal of $\g$.
					\end{lem}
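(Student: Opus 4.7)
The claim has two parts. Linearity of $\h$ is immediate: the infinitesimal action $\g\to\CX(P)$, $y\mapsto v_y$ is $\RR$-linear, and $\widehat{\cF}$ is closed under $\RR$-linear combinations. To prove $[\h,\g]\subset\h$, fix $x\in\h$ and $y\in\g$. Up to sign (depending on left/right action conventions), $v_{[x,y]}=\pm[v_x,v_y]$, so it suffices to show $[v_x,v_y]\in\widehat{\cF}$, i.e.\ that $f[v_x,v_y]\in\cF$ for every $f\in\CI_c(P)$. Expanding via the Leibniz identity,
\[
f[v_x,v_y]\;=\;[fv_x,v_y]+v_y(f)\,v_x,
\]
the second summand lies in $\cF$ because $v_x\in\widehat{\cF}$ and $v_y(f)\in\CI_c(P)$. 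Since $fv_x\in\cF$ as well, the whole problem reduces to the infinitesimal preservation statement
\[
[v_y,\cF]\subset\cF \qquad\text{for every }y\in\g. \qquad (\star)
\]

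To prove $(\star)$, take $Y\in\cF$. Since each $\widehat{\exp(ty)}\in\mathrm{Aut}(\cF)$, the one-parameter family $Y_t:=(\phi^{v_y}_t)_*Y$ lies in $\cF$ for every $t$ and depends smoothly on $t$, with $-[v_y,Y]=\tfrac{d}{dt}|_{t=0}Y_t$. Around any point $p\in\mathrm{supp}(Y)$, the basis lemma of \S\ref{sec:def.singfol} furnishes a neighborhood $V$ and generators $Y^1,\dots,Y^N\in\cF$ of $\iota_V^{-1}\cF$ whose classes form a basis of $\cF_p$. I would cut off $v_y$ by $\chi v_y\in\CX_c(P)$ with $\chi\equiv 1$ on a neighborhood of $\mathrm{supp}(Y)\cap V$: the flows of $\chi v_y$ and $v_y$ agree there for small $t$, and the matrix-exponential formula derived in the proof of Prop.~\ref{prop:exp.in.aut} would express $Y_t$ on $V$ as a $\CI(V)$-linear combination of the $Y^i$ with coefficients smooth in $t$. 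Differentiating these coefficients at $t=0$ shows $[v_y,Y]|_V\in\cF|_V$, and a partition-of-unity argument glues these local statements into $[v_y,Y]\in\cF$.

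The main obstacle is precisely step $(\star)$: $\cF$ is not closed under limits in any natural topology, so pointwise membership $Y_t\in\cF$ does not by itself imply that the $t$-derivative lies in $\cF$. Resolving this requires exploiting the local finite generation of $\cF$ together with the explicit formula of Prop.~\ref{prop:exp.in.aut} to produce coefficients smooth in $t$ relative to generators adapted to a basis of $\cF_p$. The Leibniz reduction in the first step is elementary; all of the subtlety is packaged into the passage from the integrated preservation $\widehat{g}\in\mathrm{Aut}(\cF)$ to the infinitesimal statement $(\star)$, for a vector field $v_y$ that is not necessarily compactly supported nor a priori in $\widehat{\cF}$.
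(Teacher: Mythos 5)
Your overall route is the same as the paper's: reduce the ideal property to the infinitesimal invariance $(\star)$, namely $[v_y,\cF]\subset\cF$ for all $y\in\g$, and then conclude from $v_{[y,x]}=\pm[v_y,v_x]$. Your Leibniz computation in the first paragraph is correct and in effect supplies the equivalence between $[v_y,\cF]\subset\cF$ and $[v_y,\widehat{\cF}]\subset\widehat{\cF}$, which the paper's proof uses without comment; the paper then simply states that $G$-invariance of $\cF$ gives $[v_y,\cF]\subset\cF$ and stops there.

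The gap is in your justification of $(\star)$: the appeal to the matrix-exponential formula from the proof of Prop.~\ref{prop:exp.in.aut} is circular. That formula is obtained only after writing $[Y^i,X]=\sum_j\gamma^i_j Y^j$ for the chosen local generators, i.e.\ its derivation presupposes that the bracket of $X$ with the generators lies again in the local module. For $X=\chi v_y$ this is exactly the statement you are trying to prove: on the region where $\chi\equiv 1$ one has $[Y^i,\chi v_y]=[Y^i,v_y]$, and unlike in Prop.~\ref{prop:exp.in.aut} (where $X\in\rho_U\cF$ and involutivity gives the claim) the field $v_y$ is in general neither compactly supported nor in $\widehat{\cF}$, so no such argument is available. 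Prop.~\ref{prop:exp.in.aut} goes from the infinitesimal condition to the integrated one, not conversely; knowing only that each flow $\varphi^t_{v_y}=\widehat{\exp(ty)}$ preserves $\cF$ gives you, for each fixed $t$, \emph{some} expression $Y_t=\sum_i c_i^t Y^i$, but no control on the $t$-dependence of the coefficients, and since $\cF$ is not closed under $C^\infty$-limits (as you yourself point out) the derivative at $t=0$ does not automatically remain in $\cF$. So the passage from integrated to infinitesimal invariance --- which the paper asserts without proof --- is still unproven in your argument, and it cannot be closed by the formula of Prop.~\ref{prop:exp.in.aut}; it requires an independent argument producing coefficients smooth in $t$ (or some other mechanism) from the $G$-invariance of $\cF$.
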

					\begin{proof}
						Since $\cF$ is $G$-invariant, for all $y\in \g$ we have $[v_y,\cF]\subset \cF$, or equivalently $[v_y,\widehat{\cF}]\subset \widehat{\cF}$.
						Let $x\in \h$. Then for all $y\in \g$ we have $v_{[y,x]}=[v_y,v_x]\in \widehat{\cF}$, that is,
						$[y,x]\in \h$. 
					\end{proof}
					
					Denote by $H$ the unique connected Lie subgroup of $G$ with Lie algebra $\h$. 
					Lemma \ref{lem:ideal} implies that $H$ is a normal subgroup, hence as in Example \ref{ex:GH} we obtain a Lie 2-group $H\rtimes G\soutar G$.

					We  define {a Lie group action of $H$} of $\CH(\CF)$. It is not by groupoid automorphisms, unlike the  lifted $G$-action $\vec{\star} $ introduced in \S \ref{sec:groidorbits}, {but rather it preserves every source fiber}. In order to do so, we need a lemma.
					
					\begin{lem}\label{lem:phi}
						{There is a canonical groupoid morphism   $$\phi\colon {H}\times P\to \CH(\CF),$$ where ${H}\times P$ denotes the transformation groupoid of the ${H}$-action on $P$  {obtained restring the action of $G$.}
							
							{The morphism $\phi$ can be described as follows:} take $({h},p)\in {H}\times P$ and denote by $\widehat{{h}}\colon P\fto P$ the diffeomorphism corresponding to ${h}$ under the $G$-action. Then
							$\phi({h},p)$ is the unique element of $\CH(\CF)$ carrying the diffeomorphism $\widehat{{h}}$ near $p$.} 
					\end{lem}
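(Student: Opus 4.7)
\textbf{Proof plan for Lemma \ref{lem:phi}.}

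The plan is to establish uniqueness of the putative $\phi(h,p)$ pointwise, then construct it locally near the identity of $H$, extend by the group law, and finally verify smoothness and the groupoid morphism property. Uniqueness comes for free: if two bisubmersions carry the same local diffeomorphism at corresponding points, then by Corollary \ref{carry}(i) those points are equivalent, so they define the same class in $\CH(\cF)$. Consequently, at most one element of $\CH(\cF)$ carries $\widehat{h}$ near $p$, and this will immediately entail that $\phi$ is a groupoid morphism whenever it is well-defined: $\phi(h_1 h_2, p)$ and $\phi(h_1,\widehat{h_2}(p))\circ\phi(h_2,p)$ both carry $\widehat{h_1}\circ\widehat{h_2}=\widehat{h_1h_2}$ near $p$, hence coincide.

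For existence near the identity, I would argue as follows. Fix $p_0\in P$ and $x\in \h$ small, and choose $\rho\in \CI_c(P)$ with $\rho\equiv 1$ on an open neighborhood $V\ni p_0$. Since $v_x\in\widehat{\cF}$, the compactly supported vector field $\rho v_x$ lies in $\cF$. For $x$ sufficiently small, the time-one flow $\exp(\rho v_x)$ and the diffeomorphism $\widehat{\exp(x)}$ coincide on a (possibly smaller) neighborhood of $p_0$, because the trajectories of both vector fields starting from points close to $p_0$ remain inside $V$ during the time interval $[0,1]$. By Proposition \ref{prop:exp.in.aut}, $\exp(\rho v_x)\in\mathrm{Aut}(\cF)$; and I would then invoke the construction of path-holonomy bisubmersions from Proposition \ref{path}, together with Lemma \ref{lem:smallelements}, to exhibit a bisubmersion in (a bisubmersion adapted to) the path-holonomy atlas of $\cF$ carrying this flow near $p_0$. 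This produces the required element $\phi(\exp(x),p_0)\in\CH(\cF)$.

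For general $h\in H$, use that $H$ is connected to write $h=h_1\cdots h_k$ with each $h_i=\exp(x_i)$ close to the identity, and set
\[
\phi(h,p) \;:=\; \phi(h_1,\widehat{h_2\cdots h_k}(p))\circ\cdots\circ\phi(h_k,p),
\]
a well-defined composition in $\CH(\cF)$. By uniqueness, the right-hand side depends only on $h$ and $p$, not on the decomposition, since it carries $\widehat{h}$ near $p$. Smoothness near $(e_H,p_0)$ is visible from the construction in Step~2, where the bump function, local generators of $\cF$, and the exponential map of $H$ all depend smoothly on the parameters; smoothness at an arbitrary point $(h_0,p_0)$ then follows by the factorization $\phi(h,p)=\phi(hh_0^{-1},\widehat{h_0}(p))\circ\phi(h_0,p)$, reducing to the case at the identity.

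The main obstacle is the bookkeeping required in Step~2, namely verifying that the bisubmersion we produce to carry $\exp(\rho v_x)$ is genuinely adapted to the path-holonomy atlas (so that its class lies in $\CH(\cF)$ and not merely in some larger groupoid of bisubmersions). This amounts to showing that the time-one flow of a single element $Y\in\cF$ can be realised inside the path-holonomy atlas; concretely, one decomposes this flow into a concatenation of ``frozen'' short-time flows $\exp(\tfrac{1}{N}Y|_{q_j})$, each of which sits inside a standard path-holonomy bisubmersion built from a local basis of generators of $\cF_{q_j}$ via Proposition \ref{path}, and then composes these bisubmersions. Uniqueness (Corollary \ref{carry}(i)) ensures the resulting class is independent of the discretization, completing the argument.
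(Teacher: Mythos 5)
Your overall skeleton (uniqueness via Corollary \ref{carry}(i), hence the morphism property for free; local existence near $e_H$; extension by connectedness of $H$ and the group law) is a reasonable strategy, and it is genuinely different from the paper's proof: the paper constructs nothing by hand, but instead notes that the orbits of the free $H$-action form a regular foliation $\cF_H\subset\cF$ whose holonomy groupoid is exactly the transformation groupoid $H\times P$ (effectiveness coming from freeness), and then invokes functoriality of holonomy groupoids under inclusions of foliations (\cite[Lemma 4.4]{SingSub} applied to the pair groupoid over $P$) to obtain $\phi$ at once; the stated description of $\phi$ then follows because $\phi$ covers $Id_P$. However, your route has a genuine gap at precisely the step you flag as the main obstacle. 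A constant bisection through a point $(v,q)$ of a path-holonomy bisubmersion built from a basis $X_1,\dots,X_n$ of $\cF_q$ carries the time-one flow of the \emph{constant}-coefficient combination $\sum_i v_i X_i$, and nothing else comes for free. Your vector field $Y=\rho v_x$ is in general not a constant-coefficient combination of any chosen local basis near the points $q_j$ (indeed, if $[\rho v_x]_{q_j}=0$ in $\cF_{q_j}$ it cannot even be completed to a basis), so the ``frozen'' short-time flows $\exp(\frac{1}{N}Y)$ are not carried by constant bisections of the bisubmersions supplied by Proposition \ref{path}, and replacing $Y$ by its frozen approximation at each step produces a concatenation that only approximates $\exp(Y)$. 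Since membership in a class of $\CH(\cF)$ requires the diffeomorphism to be carried \emph{exactly} (Lemma \ref{prop:equiv2}), taking $N\to\infty$ does not close this; as written, the discretization does not realise $\exp(\rho v_x)$ inside the path-holonomy atlas.

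The repair is not a finer discretization but a different bisubmersion: since $\rho v_x\in\cF$, enlarge a local generating family by adjoining the cut-off fundamental vector fields $\rho v_{e_1},\dots,\rho v_{e_k}$ of a basis of $\h$, and use the bisubmersion associated to this (possibly non-minimal) generating family, as in \cite[Prop.~2.10]{AndrSk} --- note that Proposition \ref{path} of the thesis is stated only for minimal families, so it alone does not suffice. In that bisubmersion the point $((x,0,\dots,0),p_0)$ carries $\exp(\rho v_x)=\widehat{\exp(x)}$ near $p_0$ via its constant bisection, the vanishing-class case is handled automatically, and such bisubmersions (shrunk to have connected source fibres, and containing identity-carrying points over $(0,p)$) generate a source-connected atlas, so by Proposition \ref{prop:sconn.grpd} their classes genuinely define elements of $\CH(\cF)$. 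This also gives continuity of $\phi$ near the identity more honestly than ``visible from the construction'', since for $x$ in a fixed neighbourhood of $0\in\h$ one works inside a single bisubmersion and $\phi(\exp(x),p)$ is the class of a point depending smoothly on $(x,p)$. With that step replaced, your uniqueness argument, the group-law extension, and the morphism property go through.
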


					\begin{proof}[Proof of Lemma \ref{lem:phi}]
						{Denote by $\cF_{H}$ the regular foliation on $P$ by orbits of the $H$-action.
							Its holonomy groupoid is exactly $H\times P$, as follows from \cite[Ex. 3.4(4)]{AndrSk} (use that the Lie groupoid $H\times P$ gives rise to the foliation $\cF_{H}$ and is effective, i.e. the identity diffeomorphism on $M$ is carried only by identity elements of the Lie groupoid,  
							due to the freeness of the action).} 
						
						Since $\cF_{H}\subset \cF$, we are done applying \cite[Lemma 4.4]{SingSub} in the special case of the pair groupoid over $P$. 
						
						The description of $\phi$ given in the statement holds since $\phi$ is a groupoid morphism covering $Id_P$.
					\end{proof}

					\begin{lem}\label{lem:Lie.2.grpFIBER}
						{   We have $\phi(H\times P)\subset \CK:=\ker(\Xi)$. }
					\end{lem}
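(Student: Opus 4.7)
The plan is to show directly that $\Xi \circ \phi$ lands in the identity bisection of $\CH(\CF_M)$, by exploiting the crucial fact that $H \subset G$ acts along the $\pi$-fibers, i.e.\ $\pi \circ \widehat{h} = \pi$ for every $h \in H$.

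First, I would fix $(h,p) \in H \times P$ and unfold the definition of $\phi(h,p)$ via the description in Lemma \ref{lem:phi}. Using Corollary \ref{cor:diff.bisect} together with Corollary \ref{cor:phatlas}, pick a path holonomy bisubmersion $(U,\bt,\bs)$ in a chosen path holonomy atlas $\CU$, together with an element $u \in U$ and a bisection $\sigma \colon V \to U$ (defined on a neighborhood $V$ of $p$) such that $\sigma(p)=u$ and $\bt \circ \sigma = \widehat{h}|_V$. By construction $[u] = \phi(h,p)$ in $\CH(\CF)$.

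Next, I would invoke the characterization of $\Xi$ from the proof of Theorem \ref{thm:sur.hol}: in terms of the atlas $\pi\CU$ for $\CF_M$, we have $\Xi([u]) = [u]_M$, where $[u]_M$ denotes the class of $u$ regarded as an element of the bisubmersion $\pi U := (U,\pi\circ\bt,\pi\circ\bs)$. By Corollary \ref{carry}(i) (or equivalently Lemma \ref{prop:equiv2}), $[u]_M$ is the identity at $\pi(p)$ precisely when $\pi U$ carries the identity diffeomorphism of $M$ at $u$. So it suffices to exhibit a bisection of $\pi U$ through $u$ carrying $\mathrm{id}_M$ near $\pi(p)$.

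For this, choose a local section $\lambda \colon V' \to P$ of $\pi$ near $\pi(p)$ with $\lambda(\pi(p))=p$ and $\lambda(V') \subset V$, and set $\tau := \sigma \circ \lambda \colon V' \to U$. Then
\[
(\pi \circ \bs) \circ \tau = \pi \circ \lambda = \mathrm{id}_{V'},
\qquad
(\pi \circ \bt) \circ \tau = \pi \circ \widehat{h} \circ \lambda = \pi \circ \lambda = \mathrm{id}_{V'},
\]
where in the last line we used $\pi \circ \widehat{h} = \pi$ because $h \in H \subset G$. The first identity shows that $\tau$ is a section of the source of $\pi U$ with $\tau(\pi(p))=u$; the second shows that $\tau$ carries $\mathrm{id}_M$ near $\pi(p)$ (and in particular is transverse to the fibres of $\pi \circ \bt$). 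Hence $[u]_M$ is the identity of $\CH(\CF_M)$ at $\pi(p)$, so $\Xi(\phi(h,p)) = 1_{\pi(p)}$, giving $\phi(h,p) \in \CK$.

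The only subtle point is making sure that such a representative $u$ in a path holonomy bisubmersion exists, but this is guaranteed by Corollary \ref{cor:diff.bisect}: any element of $\mathrm{Aut}(\CF)$ is locally carried by a path holonomy bisubmersion. The rest is a straightforward computation, with the fiberwise nature of the $H$-action ($\pi \circ \widehat{h} = \pi$) doing all of the work.
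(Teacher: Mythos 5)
Your proof is correct and follows essentially the same route as the paper: take a representative $u$ of $\phi(h,p)$ in a bisubmersion $U$ of the atlas, view $u$ in $(U,\pi\circ\bt,\pi\circ\bs)$, and use that $\widehat{h}$ preserves the $\pi$-fibres to see that $u$ carries $\mathrm{Id}_M$ there, so $\Xi(\phi(h,p))$ is an identity element by the characterization of $\Xi$ from Theorem \ref{thm:sur.hol}. You merely make explicit (via the bisection $\tau=\sigma\circ\lambda$) what the paper states in one line, and the existence of the representative with its bisection is really just the definition of $\CH(\CF)$ together with Lemma \ref{prop:equiv2} rather than Corollary \ref{cor:diff.bisect}, but this is a cosmetic point.
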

					\begin{proof}
						We use Lemma \ref{lem:phi}.
						A point $\phi(h,p)$ of the l.h.s.  carries near $p$ the diffeomorphism $\widehat{{h}}$ 
						(the diffeomorphism corresponding to ${h}$ under the $G$-action). The element $\phi(h,p)$ admits a representative $u$ in a source connected atlas 
						$(U,\bt, \bs)$ for $\cF$. By Lemma \ref{lem:all.sconbisub} the trio $(U,\pi \circ\bt, \pi\circ\bs)$ is a bisubmersion for $\CF_M$. The point $u$, viewed as a point in $(U,\pi \circ\bt, \pi\circ\bs)$, carries $Id_M$, {since $\widehat{{h}}$ preserves each $\pi$-fiber}. This implies that  $\Xi([u])=1_{\pi(q)}$, by   the characterization of $\Xi$ given in the proof of Thm. \ref{thm:sur.hol}.
					\end{proof}
					
					{\begin{ex}\label{ex:le.regfol}({\bf Regular foliations}) As discussed in Rem. \ref{rem:hol.regfol}, if $\CF$ is a regular foliation there is a groupoid morphism  $Q\colon \Pi(\CF)\fto \CH(\CF)$. The orbits of the $H$-action lie inside the leaves of $\CF$, {hence} for every path $h(t)$ {in $H$} and $p\in P$ the homotopy class $[h(t)p]$ {is an element of} $\Pi(\CF)$. Moreover, the freeness of the $G$-action implies that the elements $[h(t)p],[\widetilde{h}(t)p]\in \Pi(\CF)$ have the same holonomy if and only if $h(1)=\widetilde{h}(1)$ and $h(0)=\widetilde{h}(0)$. Therefore there is a well-defined injective groupoid morphism 
							$$H\times P\fto \CH(\CF);\;\; (h,p)\mapsto Q[h(t)p],$$
							where $h(t)$ is any path in $H$ with $h(0)=e$ and $h(1)=h$. This morphism is precisely $\phi$. It is clear using Prop. \ref{prop:regXi} that its image lies inside $\CK=\ker(\Xi)$.
					\end{ex}}

					{Consider now the following {map}, obtained applying the morphism $\phi$ of Lemma \ref{lem:phi} and left-multiplying:}

					\begin{equation}\label{eq:left}
						\cev{\star}\colon {H} \times \CH(\cF)\fto \CH(\cF), \hspace{1cm}
						h\cev{\star} {\xi}:={\phi(h,\bt({\xi}))\circ {\xi}}.
					\end{equation}
					Notice that $\phi$ being a groupoid morphism implies that $\cev{\star}$ is group action. {We now assemble the group action $\cev{\star}$ and the lifted action $\vec{\star}$:}

					\begin{prop}\label{prop:Lie.2.grpACTION}
						The map $$\star\colon (H\rtimes G)\times \CH(\cF)\fto \CH(\cF), \;\;(h,g)\star \xi:=h\cev{\star} \left(g\vec{\star} \xi\right)$$
						is a Lie 2-group action.
					\end{prop}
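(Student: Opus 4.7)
My plan is to reduce the proof to verifying two compatibility identities between $\vec{\star}$, $\cev{\star}$ and the morphism $\phi$ of Lemma \ref{lem:phi}; once they are in place, the group--action axiom and the fact that the action map is a groupoid morphism will both follow from short, essentially formal calculations. Continuity (and smoothness, when $\CH(\cF)$ happens to be a Lie groupoid) is automatic because $\star$ is built out of $\phi$, $\vec{\star}$ and the groupoid multiplication on $\CH(\cF)$, each of which is already continuous/smooth.

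The two identities I would establish are
\[
(\mathrm{I})\quad g\vec{\star}\, \phi(h,p)=\phi(C_g(h),\,gp)\qquad(g\in G,\ h\in H,\ p\in P),
\]
\[
(\mathrm{II})\quad h\vec{\star}\, \xi=\phi(h,\bt\xi)\circ\xi\circ\phi(h,\bs\xi)^{-1}\qquad(h\in H,\ \xi\in\CH(\cF)).
\]
Both rest on Lemma \ref{lem:phi}, which characterises $\phi(h,p)$ as the unique element of $\CH(\cF)$ carrying the diffeomorphism $\widehat{h}$ near $p$. For (I), the description of $\vec{\star}$ given in \S\ref{sec:groidorbits} shows that $g\vec{\star}\phi(h,p)$ carries $\widehat{g}\circ\widehat{h}\circ\widehat{g}^{-1}=\widehat{C_g(h)}$ near $gp$, and Lemma \ref{lem:phi} then identifies it with $\phi(C_g(h),gp)$. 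For (II), both sides have source $h\bs\xi$ and target $h\bt\xi$, and both carry $\widehat{h}\circ\psi\circ\widehat{h}^{-1}$ near $h\bs\xi$ whenever $\xi$ carries $\psi$ near $\bs\xi$; since elements of $\CH(\cF)$ are identified precisely by the germ of local diffeomorphism they represent, the two sides coincide.

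Granted (I) and (II), the group--action axiom follows from the computation
\[
g\vec{\star}(h\cev{\star}\eta)=g\vec{\star}\bigl(\phi(h,\bt\eta)\circ\eta\bigr)=\phi(C_g(h),g\bt\eta)\circ(g\vec{\star}\eta)=C_g(h)\cev{\star}(g\vec{\star}\eta),
\]
which, combined with the fact that $\cev{\star}$ and $\vec{\star}$ are themselves group actions, yields $(h_1,g_1)\star\bigl((h_2,g_2)\star\xi\bigr)=\bigl(h_1 C_{g_1}(h_2),\,g_1 g_2\bigr)\star\xi$, matching the semidirect product law recalled in Ex. \ref{ex:GH}. For the groupoid--morphism property, source, target and identity compatibilities are immediate, while for composition I would take composable pairs $((h_i,g_i),\xi_i)$, i.e.\ $g_1=h_2 g_2$ and $\bs\xi_1=\bt\xi_2$, rewrite $g_1\vec{\star}\xi_1=h_2\vec{\star}(g_2\vec{\star}\xi_1)$, expand this using (II), and then cancel $\phi(h_2,g_2\bt\xi_2)^{-1}$ against the adjacent $\phi(h_2,g_2\bt\xi_2)$ before combining the two leftmost $\phi$--terms into $\phi(h_1 h_2,g_2\bt\xi_1)$ via the functoriality of $\phi$. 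The main obstacle is identity (II), whose proof hinges on the (somewhat subtle) fact that an element of $\CH(\cF)$ is determined by the germ of diffeomorphism it carries; this is precisely the defining equivalence relation on the path--holonomy atlas, and it is what allows the argument to go through uniformly, regardless of whether $\CH(\cF)$ is smooth.
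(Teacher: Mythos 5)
Your proposal is correct and follows essentially the same route as the paper: your identities (I) and (II) are exactly the paper's facts (i) and (ii) (note $\phi(h,\bs\xi)^{-1}=\phi(h^{-1},h\bs\xi)$ since $\phi$ is a groupoid morphism), both justified, as in the paper, by observing that the two sides carry the same local diffeomorphism and invoking Lemma \ref{prop:equiv2}, and your subsequent computations for the group-action law and the groupoid-morphism property (cancelling the adjacent $\phi$-factors and using functoriality of $\phi$ together with $\vec{\star}$ acting by groupoid automorphisms) are the same "straightforward computation" the paper outlines.
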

					\begin{proof}
						We first observe that if $\xi\in \CH(\cF)$ carries a diffeomorphism $\psi$, then $g\,\vec{\star}\,\xi$ carries the diffeomorphism  $\widehat{g}\psi\widehat{g}^{-1}$.
						
						We also observe the following two facts, which hold because both the left and the right side carry  the same diffeomorphism  (as can be seen using the above observation), the definition of the holonomy groupoid and because of  Lemma \ref{prop:equiv2}:
						
						i) the map $\phi\colon H\times P\to \CH(\CF)$ satisfies the following equivariance property: $$g\,\vec{\star} \,\phi(h,p)=\phi(c_gh,gp),$$
						where $c_g$ denotes conjugation by $g$.

						ii) For all $h\in H$ and $\xi \in\CH(\CF)$ we have $$h\,\vec{\star}\,\xi=\phi(h,\bt(\xi))\circ \xi \circ \phi(h^{-1},h\bs(\xi)).$$
						
						To show that $\star$ is a group action, the main requirement   is to show that
						$((h_1,g_1)(h_2,g_2))\star\xi$ equals $(h_1,g_1)\star \big((h_2,g_2)\star\xi\big)$ for all $(h_i,g_i)\in H\rtimes G$ and $\xi \in\CH(\CF)$.
						This holds by a straightforward computation, in which the second term is re-written using the fact that $\phi$ is a groupoid morphism and is $G$-equivariant (fact i) above).
						
						To show that $\star$ is a groupoid morphism,  the main requirement is to show that $(h_1h_2,g_2)\star(\xi_1\circ \xi_2)$ and $\big((h_1,g_1)\star\xi_1\big)\circ \big((h_2,g_2)\star\xi_2\big)$ agree, where $g_1=h_2g_2$ and $\bs(\xi_1)=\bt(\xi_2)$. Upon using that  $\phi$ is a groupoid morphism and the action $\vec{\star}$ is by groupoid automorphisms, this boils down to applying\footnote{with $h:=h_2$ and $\xi:=g_2\vec{\star}\xi_1$} fact ii) above.
					\end{proof}

					{Lemma   \ref{lem:orbits} and Lemma \ref{lem:Lie.2.grpFIBER}  imply:}
					\begin{cor}\label{prop:Lie.2.grpFIBER}
						The orbits of the Lie 2-group action $\star$ of Prop. \ref{prop:Lie.2.grpACTION} lie inside the $\Xi$-fibers.
					\end{cor}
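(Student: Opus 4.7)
The goal is to show that for every $\xi\in\CH(\cF)$ and every $(h,g)\in H\rtimes G$, the element $(h,g)\star\xi$ lies in the same $\Xi$-fiber as $\xi$, i.e.\ $\Xi((h,g)\star\xi)=\Xi(\xi)$. My plan is to unwind the definition of the action in two steps (first applying $\vec{\star}$, then $\cev{\star}$), and observe that each step preserves $\Xi$-fibers for a reason already established in the lemmas immediately preceding the corollary.

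First I would recall that by definition $(h,g)\star\xi=h\cev{\star}(g\vec{\star}\xi)$. For the inner step, Lemma \ref{lem:orbits} asserts that the orbits of the lifted action $\vec{\star}$ lie in the fibers of $\Xi$, so $\Xi(g\vec{\star}\xi)=\Xi(\xi)$. For the outer step, I would use that $\cev{\star}$ is defined via left multiplication by the image of the morphism $\phi$: explicitly, writing $\eta:=g\vec{\star}\xi$, we have $h\cev{\star}\eta=\phi(h,\bt(\eta))\circ\eta$. Since $\Xi$ is a morphism of (topological) groupoids by Theorem \ref{thm:sur.hol}, this yields
\[
\Xi((h,g)\star\xi)=\Xi\bigl(\phi(h,\bt(\eta))\bigr)\circ\Xi(\eta).
\]

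Now I would invoke Lemma \ref{lem:Lie.2.grpFIBER}, which says $\phi(H\times P)\subset\CK=\ker(\Xi)$; hence $\Xi(\phi(h,\bt(\eta)))$ is the identity arrow at $\pi(\bt(\eta))$ in $\CH(\cF_M)$, and left-composing by an identity does nothing. Combining this with $\Xi(\eta)=\Xi(\xi)$ from the previous paragraph, we conclude $\Xi((h,g)\star\xi)=\Xi(\xi)$, so the orbit through $\xi$ sits in the $\Xi$-fiber through $\xi$.

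There is no genuine obstacle in this argument: everything has been arranged in the preceding lemmas so that the corollary is a one-line bookkeeping consequence. The only thing to be mildly careful about is to make sure the composition $\phi(h,\bt(\eta))\circ\eta$ is actually well-defined (the source of $\phi(h,\bt(\eta))$ equals $\bt(\eta)$ by construction of $\phi$ via the transformation groupoid $H\times P$), and that applying the groupoid morphism $\Xi$ to this composition is legal. Both are immediate from the set-up.
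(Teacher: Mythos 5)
Your proof is correct and follows exactly the paper's route: the paper deduces the corollary directly from Lemma \ref{lem:orbits} and Lemma \ref{lem:Lie.2.grpFIBER}, and your argument simply spells out that implication (using that $\Xi$ is a groupoid morphism so left-composition with an element of $\CK=\ker\Xi$ does not change the $\Xi$-image). No gaps.
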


					\subsection*{An alternative description for the Lie 2-group action of Thm \ref{thm:act.q.fol}}\label{subsec:alternative}
					
					We obtained the formula for the Lie 2-group action of $H\rtimes G$ in \S\ref{subsec:2gract} by considering a special case, as we now explain. Assume the set-up of Thm. \ref{thm:act.q.fol}, in particular that  $\Gamma_c(\ker d\pi)\subset \CF$ (i.e., $\widehat{\CF}$ contains the infinitesimal generators of the  $G$-action). There  we  defined an action  of $G\rtimes G$ on $\CH(\CF)$ by means of the canonical isomorphism 
					$\varphi\colon\CH(\CF)\xrightarrow{\sim}\pi^{-1}\CH(\CF_M)$. 
					
					The goal of this subsection is to prove the following proposition:
					
					\begin{prop}\label{prop:sameaction}
						{Consider these Lie 2-group actions of $G\rtimes G$ on $\CH(\CF)$:
							\begin{itemize}
								\item the action\footnote{Note that, under our assumptions on $\CF$, in the setting of Prop. \ref{prop:Lie.2.grpACTION} we have $H=G$, because the Lie subalgebra $\h$ introduced in Lemma \ref{lem:ideal} equals  the whole of $\g$.}
								$\star$ described in Prop. \ref{prop:Lie.2.grpACTION},
								\item  the action described in Thm. \ref{thm:act.q.fol}. 
							\end{itemize}
							These two actions coincide. In other words, under the canonical isomorphism $\varphi\colon \CH(\CF)\xrightarrow{\sim} \pi^{-1}\CH(\CF_{M})$ given in Thm. \ref{thm:pullbackgroid} we have}
						$$\varphi((h,g)\star \xi)=(h,g)* \varphi(\xi)$$ 
						for all $(g,h)\in G\rtimes G\y \xi\in\CH(\CF)$, where  $*$ is the action given in eq. \eqref{eq:star}.	
					\end{prop}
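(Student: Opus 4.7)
The plan is to unwind the action $\star$ through the isomorphism $\varphi$, using the explicit description of $\varphi$ recalled in Remark \ref{rem:varphi}. Fix a path holonomy atlas $\CU$ for $\cF_M$ and represent $\xi\in \CH(\cF)$ as $\xi = Q_\pi(p',u_0,q')$ with $(p',u_0,q')\in \pi^{-1}U$ for some $U\in \CU$, so that $\varphi(\xi) = (p',[u_0],q')$ with $\bt(\xi)=p'$ and $\bs(\xi)=q'$. Since $\varphi$ is an isomorphism of Lie groupoids (Proposition \ref{prop:pullLie}) and the definition of $\star$ reads $(h,g)\star\xi = \phi(h,gp')\circ(g\vec{\star}\xi)$, the statement reduces to computing $\varphi(g\vec{\star}\xi)$ and $\varphi(\phi(h,gp'))$ separately and multiplying them in $\pi^{-1}\CH(\cF_M)$.

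The heart of the argument is the computation of $\varphi(g\vec{\star}\xi)$. Since $\pi$ is $G$-invariant, I claim that
\[
\psi\colon g(\pi^{-1}U) \longrightarrow \pi^{-1}U, \qquad (p,u,q) \longmapsto (gp,u,gq),
\]
is a morphism of bisubmersions for $\cF$: it intertwines the source $\widehat{g}\circ\mathrm{pr}_3$ and target $\widehat{g}\circ\mathrm{pr}_1$ of $g(\pi^{-1}U)$ given by Lemma \ref{def:gW} with the source $\mathrm{pr}_3$ and target $\mathrm{pr}_1$ of $\pi^{-1}U$, and it lands in $\pi^{-1}U$ because $\pi\circ\widehat{g}=\pi$. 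Applying $\psi$ to the point $(p',u_0,q')$ exhibits $g\vec{\star}\xi$ as the class $Q_\pi(gp',u_0,gq')$ in $\CH(\cF)\cong\CG(\pi^{-1}\CU)$, hence
\[
\varphi(g\vec{\star}\xi) = (gp',[u_0],gq').
\]

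For the $\cev{\star}$-factor, Lemma \ref{lem:Lie.2.grpFIBER} places $\phi(h,gp')$ in $\CK=\ker\Xi$, and by Proposition \ref{prop:normalq} the isomorphism $\varphi$ identifies $\CK$ with $\ker\mathrm{pr}_2$, namely elements of the form $(p_1,1_m,p_2)$ with $\pi(p_1)=\pi(p_2)=m$. Since $\bs(\phi(h,gp'))=gp'$ and $\bt(\phi(h,gp'))=hgp'$, this pins down $\varphi(\phi(h,gp')) = (hgp',1_{\pi(gp')},gp')$. Combining the two computations and using that $\varphi$ preserves groupoid composition gives
\[
\varphi((h,g)\star\xi) = (hgp',1_{\pi(gp')},gp')\circ(gp',[u_0],gq') = (hgp',[u_0],gq'),
\]
which coincides with $(h,g)*\varphi(\xi)$ by the definition of $*$ in eq.~\eqref{eq:star}. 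The only non-routine step is producing the intertwiner $\psi$ and checking that it is a genuine morphism of bisubmersions; once this is in hand, everything else reduces to bookkeeping with the kernel of $\Xi$ and the composition law of the pullback groupoid.
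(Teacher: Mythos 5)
Your argument is correct and, for the $G$-factor, is essentially the paper's: your intertwiner $\psi\colon g(\pi^{-1}U)\fto \pi^{-1}U$, $(p,u,q)\mapsto(gp,u,gq)$, is exactly the isomorphism of bisubmersions used in the paper's Lemma \ref{lem:raction} to show $\varphi(g\vec{\star}\xi)=(e,g)*\varphi(\xi)$. Where you genuinely diverge is the $H$-factor. The paper proves Lemma \ref{lem:laction} by a carried-diffeomorphism argument: it takes a local diffeomorphism $f$ carried at $(p,u,q)$, notes that both $\phi(h,p)\circ[(p,u,q)]$ and $(hp,u,q)$ carry $\widehat{h}\circ f$, and concludes $h\cev{\star}[(p,u,q)]=[(hp,u,q)]$ via Lemma \ref{prop:equiv2}; the identification of $\mathrm{im}\,\phi$ with $\ker\Xi$ (Remark \ref{rem:imker}) is then a consequence. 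You go the other way: you pin down $\varphi(\phi(h,gp'))$ abstractly, using only the inclusion $\mathrm{im}\,\phi\subset\ker\Xi$ (Lemma \ref{lem:Lie.2.grpFIBER}), the identification $\Xi=\mathrm{pr}_2\circ\varphi$ (Prop. \ref{prop:normalq}), and the fact that an element of $\ker\mathrm{pr}_2=P\times_\pi M\times_\pi P$ is uniquely determined by its source and target, since $\varphi$ covers $Id_P$. This is a clean shortcut that avoids all germ/bisection bookkeeping, at the price of leaning on Prop. \ref{prop:normalq}; the paper's route is more hands-on and simultaneously produces the explicit formula $h\cev{\star}[(p,u,q)]=[(hp,u,q)]$.

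Two small points to tighten. First, your computation of $g\vec{\star}\xi$ tacitly assumes that the lifted action, defined in eq. \eqref{eq:vecstar} via representatives in \emph{path holonomy} bisubmersions, can be evaluated on the representative $(p',u_0,q')\in\pi^{-1}U$ by regarding the same point in $g(\pi^{-1}U)$; this needs the (routine) observation that a morphism of bisubmersions $\tau\colon W\fto\pi^{-1}U$ from a path holonomy bisubmersion is also a morphism $gW\fto g(\pi^{-1}U)$, which is exactly how the paper handles it. Second, the isomorphism $\varphi$ you need is the topological one of Thm. \ref{thm:pullbackgroid} (covering $Id_P$ and preserving composition); citing Prop. \ref{prop:pullLie} is off the mark here, since smoothness of $\CH(\CF)$ is not available in general and is not needed.
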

					
					We will prove this statement by analyzing the restrictions of the actions to 
					$\{e\}\times G$ and $G\times\{e\}$. (Notice that these two subgroups generate $G\rtimes G$ as a group, since every element $(h,g)$ can be written as $(h,e)(e,g)$).
					{We denote by ${\varphi}\colon \CH(\CF)\xrightarrow{\sim} \pi^{-1}\CH(\CF_{M})$  the canonical isomorphism  given in Thm. \ref{thm:pullbackgroid}.}
					\begin{lem}\label{lem:raction}
						Under the  isomorphism ${\varphi}$,  
						the lifted action $\vec{\star} $ of $G$ introduced in \S\ref{sec:groidorbits} 
						and the restriction of the Lie $2$-group action $*$ of {eq. \eqref{eq:star}}
						agree:  $${\varphi}(g\vec{\star} \xi)=(e,g)* {\varphi}(\xi)$$
						for all $g\in G$ and $\xi\in \CH(\CF)$.
					\end{lem}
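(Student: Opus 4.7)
The plan is to unpack all ingredients in terms of the atlas $\pi^{-1}\CU$ (for a path-holonomy atlas $\CU$ of $\CF_M$), which — by Prop.~\ref{prop:injmap} and Rem.~\ref{rem:varphi} — provides an explicit and convenient model for both sides of the identity. First I would observe that, although the lifted action $\vec{\star}$ was introduced on path-holonomy bisubmersions for $\CF$, the assignment $W\mapsto gW:=(W,\widehat{g}\circ\bt,\widehat{g}\circ\bs)$ makes sense for \emph{any} bisubmersion for $\CF$, still produces a bisubmersion (since $\widehat{g}^{-1}\CF=\CF$), and is compatible with morphisms of bisubmersions; consequently the formula $g\vec{\star}[w]=[w]_{gW}$ is valid on any bisubmersion adapted to a path-holonomy atlas, in particular on any $\pi^{-1}U\in\pi^{-1}\CU$.

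Next I would fix $\xi\in\CH(\CF)$ and represent it as $\xi=[(p,u,q)]$ for some $u\in U\in \CU$, so that by Rem.~\ref{rem:varphi}
\[
\varphi(\xi)=(p,[u],q),\qquad (e,g)*\varphi(\xi)=(gp,[u],gq),
\]
using eq.~\eqref{eq:star}. On the other hand, by the preceding paragraph,
\[
g\vec{\star}\xi=[(p,u,q)]_{g(\pi^{-1}U)},
\]
where $g(\pi^{-1}U)=(\pi^{-1}U,\widehat{g}\circ\bt,\widehat{g}\circ\bs)$. The key step is then to exhibit a morphism of bisubmersions
\[
\Phi\colon g(\pi^{-1}U)\to \pi^{-1}U,\qquad (p',u',q')\mapsto (gp',u',gq'),
\]
which is well defined because $\pi(gp')=\pi(p')=\bt_U(u')$ and $\pi(gq')=\pi(q')=\bs_U(u')$, and which manifestly intertwines source and target: $\bs_{\pi^{-1}U}(gp',u',gq')=gq'=\widehat{g}(\bs_{\pi^{-1}U}(p',u',q'))$ and similarly for $\bt$. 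Applying $\Phi$ to the distinguished point $(p,u,q)$ yields the equivalence
\[
[(p,u,q)]_{g(\pi^{-1}U)}=[(gp,u,gq)]_{\pi^{-1}U}\quad\text{in }\CH(\CF),
\]
and then Rem.~\ref{rem:varphi} gives $\varphi(g\vec{\star}\xi)=(gp,[u],gq)$, matching $(e,g)*\varphi(\xi)$.

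I expect no serious obstacle beyond the bookkeeping above; the only delicate point is the first paragraph, namely justifying that the formula defining $\vec{\star}$ extends unambiguously from the path-holonomy atlas used in Def.~\ref{def:gW} to the pullback atlas $\pi^{-1}\CU$, so that we may compute $g\vec{\star}\xi$ using the representative $(p,u,q)\in\pi^{-1}U$. This is a direct consequence of Cor.~\ref{carry} together with $G$-equivariance of morphisms of bisubmersions under the assignment $W\mapsto gW$, since for any morphism $f\colon W_1\to W_2$ of bisubmersions for $\CF$, the same map $f\colon gW_1\to gW_2$ is again a morphism. Once this is in place, the rest of the argument is the purely formal identification above.
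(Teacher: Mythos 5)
Your proposal is correct and follows essentially the paper's own argument: both reduce to the explicit model given by the pullback atlas $\pi^{-1}\CU$ via Rem.~\ref{rem:varphi} and hinge on the same morphism of bisubmersions $g(\pi^{-1}U)\to \pi^{-1}U$, $(p',u',q')\mapsto (gp',u',gq')$. The only cosmetic difference is that the paper unwinds the definition of $\vec{\star}$ by taking a path-holonomy representative $w\in W$ and transporting it through a morphism $\tau\colon W\to \pi^{-1}U$ (observing that $\tau$ is also a morphism $gW\to g(\pi^{-1}U)$), whereas you justify computing $\vec{\star}$ directly on the pullback-atlas representative via the $G$-equivariance of morphisms of bisubmersions; these amount to the same observation.
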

					
					\begin{proof}
						Let $\CU$ be a path holonomy atlas for $\CF_M$. Recall that by Rem. \ref{rem:varphi}, the pullback-atlas $\pi^{-1}\CU$ is an atlas for $\pi^{-1}(\CF_M)$  equivalent to a path-holonomy atlas.  
						Fix $\xi\in \CH(\CF)$. Take a representative $w$ in a bisubmersions $W$ in the
						path holonomy atlas of $\pi^{-1}(\CF_M)$.  
						By the above, there is a path holonomy bisubmersion $U$ in $\CU$ and a locally defined morphism of bisubmersions $$\tau\colon (W,\bt,\bs)\fto   (\pi^{-1} U,\bt,\bs)$$ 
						mapping $w$ to some point $(p,v,q)$.
						By definition,  $\varphi([w])=(p,[v],q)$.
						
						Now fix $g\in G$. Recall that the bisubmersion $gW:=(W, \widehat{g}\circ\bs, \widehat{g}\circ\bt)$ was defined in Lemma \ref{def:gW}.	
						The same map $\tau$ is also a morphism of bisubmersions $$gW \fto   (\pi^{-1} U,\widehat{g}\circ\bt,\widehat{g}\circ\bs).$$ 
						The latter bisubmersion is isomorphic to  $(\pi^{-1} U, \bt,\bs)$ via
						$(p',v',q')\mapsto (gp',v',gq')$. By composition we obtain a morphism of bisubmersions
						$gW \to (\pi^{-1} U, \bt,\bs)$ mapping $w$ to $(gp,v,gq)$. Hence $g\vec{\star} [w]:=[w]_{gW}$ agrees with $[(gp,v,gq)]\in \CH(\CF)$, and therefore under $\varphi$ it is mapped to 
						$(gp,[v],gq)=(e,g)* \varphi([w])$.
					\end{proof}
					
					\begin{lem}\label{lem:laction}
						{Under the  isomorphism ${\varphi}$,  
							the  action $\cev{\star} $ of $H$ introduced in eq. \eqref{eq:left} 
							and the restriction of the Lie $2$-group action $*$ of {eq. \eqref{eq:star}}
							agree:} $$\varphi(h\cev{\star} \xi)=(h,e)* \varphi(\xi)$$  for all $h\in G$ and $\xi\in \CH(\CF)$.
					\end{lem}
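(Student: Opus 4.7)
The plan is to exploit that $\varphi$ is a groupoid morphism (as established in Proposition \ref{prop:pullLie}/Theorem \ref{thm:pullbackgroid}) and to reduce the statement to understanding what $\varphi$ does to the elements $\phi(h,p)$ coming from the morphism $\phi$ of Lemma \ref{lem:phi}. More precisely, set $p:=\bt(\xi)$, $q:=\bs(\xi)$, and write $\varphi(\xi)=(p,[v],q)$ for some $[v]\in\CH(\CF_M)$ with $\bt_M[v]=\pi(p)$, $\bs_M[v]=\pi(q)$. Since $h\cev{\star}\xi=\phi(h,p)\circ \xi$ and $\varphi$ preserves the groupoid composition, it suffices to compute $\varphi(\phi(h,p))$ and then take the product in $\pi^{-1}\CH(\CF_M)$.

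The key claim is that
\[
\varphi(\phi(h,p))=(hp,\,1_{\pi(p)},\,p).
\]
To prove this, I would pick a basis $e_1,\dots,e_k$ of $\h=\mathrm{Lie}(H)=\g$ (since in the setting of Thm.~\ref{thm:act.q.fol} we have $\h=\g$) and consider the path-holonomy bisubmersion $W\subset \RR^k\times P$ generated by the infinitesimal generators $v_{e_1},\dots,v_{e_k}\in\CF$. Choosing a suitable $\lambda\in\RR^k$ with $\exp(\sum\lambda_iv_{e_i})=\widehat{h}$ near $p$, the point $w:=(\lambda,p)\in W$ carries $\widehat{h}$ and satisfies $[w]=\phi(h,p)\in\CH(\CF)$ by Lemma~\ref{lem:phi} and the injectivity of holonomy transformations. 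Now apply Lemma~\ref{lem:all.sconbisub}: the trio $(W,\pi\circ\bt,\pi\circ\bs)$ is a bisubmersion for $\CF_M$ and it is adapted to some path-holonomy bisubmersion $U$ for $\CF_M$. Because $\pi\circ\widehat{h}=\pi$, at the point $w$ this bisubmersion carries the identity diffeomorphism of $M$ near $\pi(p)$; hence any morphism of bisubmersions $\tau\colon W\to \pi^{-1}U$ sends $w$ to a triple $(hp,u,p)$ in which $u\in U$ carries $\mathrm{Id}_M$ near $\pi(p)$. By the description of $\varphi$ recalled in Remark~\ref{rem:varphi} we conclude $\varphi(\phi(h,p))=(hp,[u],p)=(hp,1_{\pi(p)},p)$.

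Combining these two steps, and using the composition rule in $\pi^{-1}\CH(\CF_M)$ from Def.~\ref{def:pullbackgroid}, we obtain
\[
\varphi(h\cev{\star}\xi)=\varphi(\phi(h,p))\circ\varphi(\xi)=(hp,1_{\pi(p)},p)\circ(p,[v],q)=(hp,[v],q),
\]
which is exactly $(h,e)\ast(p,[v],q)=(h,e)\ast\varphi(\xi)$ by the formula \eqref{eq:star}. This proves the lemma.

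The main obstacle will be the identification $\varphi(\phi(h,p))=(hp,1_{\pi(p)},p)$: one has to justify carefully that the element $w$ constructed in the path-holonomy bisubmersion of $\CF_H$ indeed represents $\phi(h,p)$ in $\CH(\CF)$ (using that $\phi$ is the canonical morphism induced by the inclusion $\CF_H\subset\CF$, and the fact that two points in source-connected bisubmersions are equivalent iff they carry the same local diffeomorphism). Once this is granted, the rest is a direct computation with the definitions of $\varphi$, $\cev{\star}$, and the product on $\pi^{-1}\CH(\CF_M)$. Together with Lemma~\ref{lem:raction}, this will establish Proposition~\ref{prop:sameaction}, since any element of $G\rtimes G$ factors as $(h,e)(e,g)$ and both $\star$ and $\ast$ are group actions.
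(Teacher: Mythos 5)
Your strategy is genuinely different from the paper's: you use that $\varphi$ is a groupoid morphism, factor $h\cev{\star}\xi=\phi(h,\bt(\xi))\circ\xi$, and reduce everything to the single identity $\varphi(\phi(h,p))=(hp,1_{\pi(p)},p)$, whereas the paper never decomposes in this way -- it shows directly, by comparing the diffeomorphisms carried inside the pullback bisubmersion $\pi^{-1}U$, that $h\cev{\star}[(p,u,q)]=[(hp,u,q)]$ and then applies the explicit formula for $\varphi$ of Remark \ref{rem:varphi}. The reduction itself, and the final computation in $\pi^{-1}\CH(\CF_M)$, are correct. The problem is how you try to establish the key identity.

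The triple $W\subset\RR^k\times P$ built from the generators $v_{e_1},\dots,v_{e_k}$ of the action is a path holonomy bisubmersion for the \emph{orbit} foliation $\CF_H=\Gamma_c(\ker d\pi)$, not for $\CF$: since $\CF$ is strictly larger than $\CF_H$ whenever $\CF_M\neq 0$, one has $\ker_c(d\bs)+\ker_c(d\bt)=\bs^{-1}\CF_H\neq\bs^{-1}\CF$ on $W$. Hence Lemma \ref{lem:all.sconbisub}, which concerns source connected atlases of bisubmersions \emph{for $\CF$}, cannot be invoked for $W$, and its conclusion actually fails: $(W,\pi\circ\bt,\pi\circ\bs)$ is in general not a bisubmersion for $\CF_M$, because $\ker_c\, d(\pi\circ\bs)+\ker_c\, d(\pi\circ\bt)$ is contained in $\bs^{-1}\CF_H$, while $(\pi\circ\bs)^{-1}\CF_M=\bs^{-1}\CF$ is strictly bigger. (Concretely, take $P=\RR^2$, $G=\RR$ acting by translation in $y$, $\CF=\left<\de_y, x\de_x\right>_{\CI_c(\RR^2)}$: then $\pi\circ\bs=\pi\circ\bt$ on $W$ and the sum of the two kernels misses $x\de_x$.) For the same reason the morphism $\tau\colon W\fto\pi^{-1}U$ cannot come from Corollary \ref{carry}, which compares bisubmersions of one and the same foliation; and strictly speaking a point of $W$ does not define a class in $\CH(\CF)$ at all, only a class in $\CH(\CF_H)=H\times P$ which is then sent to $\CH(\CF)$ by $\phi$. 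There is also the minor point that a general $h\in H$ need not be a single exponential, though connectedness of $H$ together with multiplicativity of both sides of your identity repairs this. The gap is easy to close with tools already available and without any auxiliary bisubmersion: by Lemma \ref{lem:phi}, $\phi(h,p)$ has source $p$ and target $hp$, and by Lemma \ref{lem:Lie.2.grpFIBER} (whose proof is independent of the present lemma) $\Xi(\phi(h,p))=1_{\pi(p)}$; since $\mathrm{pr}_2\circ\varphi=\Xi$ by Prop. \ref{prop:normalq} and $\varphi$ covers $\mathrm{Id}_P$, this forces $\varphi(\phi(h,p))=(hp,1_{\pi(p)},p)$, after which your computation finishes the proof. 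Alternatively one argues as the paper does, by comparing carried diffeomorphisms directly in $\pi^{-1}U$.
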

					
					\begin{proof} Take {an arbitraty element $\xi\in \CH(\CF)$ and} a path holonomy atlas $\CU$ for $\CF_M$. Let $(p,u,q)\in \pi^{-1}U\in \pi^{-1}\CU$ { be a representative of $\xi$}, and let $f$ be a local diffeomorphism carried at $(p,u,q)$.
						
						Fix $h\in G$, and  denote by {$\widehat{h}\colon P\fto P$} the diffeomorphism {corresponding to $h$ under the $G$-action}. Note that {the transformation groupoid $G\times P$ carries}  the diffeomorphism $\widehat{h}$ at $(h,p)$. Hence any representative  of ${\phi(h,p)}\in \CH(\CF)$ in $\pi^{-1}\CU$ carries this diffeomorphism, {where $\phi$ is the groupoid morphism of Lemma \ref{lem:phi}}.
						In turn, this implies that any representative   of $\phi(h,p)\circ[(p,u,q)]\in \CH(\CF)$ will carry $\widehat{h}\circ f$. Note that $(hp,u,q)\in \pi^{-1}U$ also carries $\widehat{h}\circ f$. {By the definition of holonomy groupoid (using the definition of the holonomy groupoid and Lemma \ref{prop:equiv2}) it follows that} $h\cev{\star} [(p,u,q)]=[(hp,u,q)]$.
						
						We conclude that
						$$\varphi(h\cev{\star} [(p,u,q)])=\varphi([(hp,u,q)])=(hp,[u],q)=(h,e)* \varphi([(p,u,q)]),$$
						{using in the second equality the description of the isomorphism $\varphi$ given in Rem. \ref{rem:varphi}.}
					\end{proof}
					
					\begin{proof}[Proof of Prop. \ref{prop:sameaction}] The proposition follows from
						$$\varphi((h,g)\star \xi) = \varphi(h\cev{\star}\left(g\vec{\star} \xi\right))=(h,e)*((e,g)*\varphi( \xi)) = (h,g)*\varphi( \xi),$$
						{where we used Lemmas \ref{lem:raction} and \ref{lem:laction} in the second equality.}
					\end{proof}
					
					\begin{rem}\label{rem:imker}
						{The image of $\phi\colon G\times P\fto \CH(\CF)$ is the kernel of $\Xi\colon \CH(\CF)\fto \CH(\CF_M)$.} 
						{Indeed, for every $(h,q)\in G\times P$, we have $\phi(h,q)=h\cev{\star} 1_q\in \CH(\CF)$. Under the identification  $\varphi\colon \CH(\CF)\xrightarrow{\sim} \pi^{-1}\CH(\CF_{M})$,   this element corresponds to $(h,e)* \varphi(1_q)=(hq,1_{\pi(q)},q)\in \pi^{-1}\CH(\CF_{M})$ by Lemma \ref{lem:laction}. Under the same identification, $\ker(\Xi)$ corresponds to $\ker(pr_2)=P\times_{\pi} M \times_{\pi} P$, by Prop. \ref{prop:normalq}.}
					\end{rem}

					\cite{Garmendia2019}
					
					
					
					
					\backmatter
					
					\includebibliography
					\newcommand{\etalchar}[1]{$^{#1}$}

					
					
				\end{document}